\newcommand{\mc}[1]{\mathscr{#1}}
\patchcmd{\thebibliography}{*}{}{}{}
\pretocmd\thebibliography{\csname c@secnumdepth\endcsname=-2}{}{}
\DeclareMathSymbol{A}{\mathalpha}{operators}{`A}%
\DeclareMathSymbol{B}{\mathalpha}{operators}{`B}%
\DeclareMathSymbol{C}{\mathalpha}{operators}{`C}%
\DeclareMathSymbol{D}{\mathalpha}{operators}{`D}%
\DeclareMathSymbol{E}{\mathalpha}{operators}{`E}%
\DeclareMathSymbol{F}{\mathalpha}{operators}{`F}%
\DeclareMathSymbol{G}{\mathalpha}{operators}{`G}%
\DeclareMathSymbol{H}{\mathalpha}{operators}{`H}%
\DeclareMathSymbol{I}{\mathalpha}{operators}{`I}%
\DeclareMathSymbol{J}{\mathalpha}{operators}{`J}%
\DeclareMathSymbol{K}{\mathalpha}{operators}{`K}%
\DeclareMathSymbol{L}{\mathalpha}{operators}{`L}%
\DeclareMathSymbol{M}{\mathalpha}{operators}{`M}%
\DeclareMathSymbol{N}{\mathalpha}{operators}{`N}%
\DeclareMathSymbol{O}{\mathalpha}{operators}{`O}%
\DeclareMathSymbol{P}{\mathalpha}{operators}{`P}%
\DeclareMathSymbol{Q}{\mathalpha}{operators}{`Q}%
\DeclareMathSymbol{R}{\mathalpha}{operators}{`R}%
\DeclareMathSymbol{S}{\mathalpha}{operators}{`S}%
\DeclareMathSymbol{T}{\mathalpha}{operators}{`T}%
\DeclareMathSymbol{U}{\mathalpha}{operators}{`U}%
\DeclareMathSymbol{V}{\mathalpha}{operators}{`V}%
\DeclareMathSymbol{W}{\mathalpha}{operators}{`W}%
\DeclareMathSymbol{X}{\mathalpha}{operators}{`X}%
\DeclareMathSymbol{Y}{\mathalpha}{operators}{`Y}%
\DeclareMathSymbol{Z}{\mathalpha}{operators}{`Z}%
\newcommand{\im}{\mathrm{im}}
\newcommand{\isomto}{\overset{\sim}{\longrightarrow}}
\newcommand{\mto}{\mapsto}
\DeclareMathOperator*{\colim}{colim}
\newcommand{\Aff}{\mathrm{Aff}}
\newcommand{\IndAff}{\mathrm{IndAff}}
\newcommand{\QCoh}{\mathrm{QCoh}}
\newcommand{\Sch}{\mathrm{Sch}}
\newcommand{\Mod}{\mathrm{Mod}}
\newcommand{\Set}{\mathrm{Set}}
\newcommand{\CL}{\mathrm{CL}}
\newcommand{\CM}{\mathrm{CM}}
\newcommand{\PSh}{\mathrm{PSh}}
\newcommand{\Alg}{\mathrm{Alg}}
\newcommand{\Ell}{\mathrm{Ell}}
\newcommand{\LT}{\mathrm{LT}}
\newcommand{\Biring}{\mathrm{Biring}}
\newcommand{\Q}{\mathbf{Q}}
\newcommand{\Sh}{\mathrm{Sh}}
\newcommand{\Z}{\mathbf{Z}}
\newcommand{\N}{\mathbf{N}}
\newcommand{\F}{\mathbf{F}}
\newcommand{\R}{\mathbf{R}}
\newcommand{\G}{\mathbf{G}}
\newcommand{\C}{\mathbf{C}}
\newcommand{\Id}{\mathrm{Id}}
\newcommand{\End}{\mathrm{End}}
\newcommand{\Isom}{\mathrm{Isom}}
\newcommand{\Hom}{\mathrm{Hom}}
\newcommand{\Aut}{\mathrm{Aut}}
\newcommand{\ideal}[1]{\mathfrak{#1}}
\renewcommand{\a}{\ideal{a}}
\renewcommand{\b}{\ideal{b}}
\renewcommand{\l}{\ideal{l}}
\renewcommand{\d}{\ideal{d}}
\renewcommand{\c}{\ideal{c}}
\newcommand{\m}{\ideal{m}}
\newcommand{\p}{\ideal{p}}
\newcommand{\f}{\ideal{f}}
\newcommand{\g}{\ideal{g}}
\renewcommand{\P}{\ideal{P}}
\newcommand{\D}{\ideal{D}}
\renewcommand{\G}{\ideal{G}}
\newcommand{\univ}{\mathrm{univ}}
\newcommand{\tors}{\mathrm{tors}}
\newcommand{\ab}{\mathrm{ab}}
\newcommand{\sep}{\mathrm{sep}}
\newcommand{\ur}{\mathrm{ur}}
\newcommand{\arch}{\mathrm{arch}}
\newcommand{\fin}{\mathrm{fin}}
\newcommand{\red}{\mathrm{red}}
\newcommand{\an}{\mathrm{an}}
\newcommand{\str}{\mathrm{str}}
\newcommand{\et}{\textup{\'et}}
\newcommand{\per}{\mathrm{per}}
\newcommand{\id}{\mathrm{id}}
\newcommand{\Fr}{\mathrm{Fr}}
\newcommand{\rk}{\mathrm{rk}}
\newcommand{\Inf}{\mathrm{Inf}}
\newcommand{\Spec}{\mathrm{Spec}}
\newcommand{\Spf}{\mathrm{Spf}}
\newcommand{\Lie}{\mathrm{Lie}}
\newcommand{\Prin}{\mathrm{Prin}}
\newcommand{\Ner}{\mathrm{N\acute{e}r}}
\newcommand{\Eq}{\mathrm{Eq}}
\newcommand{\In}{\mathrm{In}}
\numberwithin{equation}{subsection}
\newtheorem*{theorem*}{Theorem}
\begin{document}
\frontmatter
\setcounter{page}{-7}
\vspace*{-2cm}\enlargethispage{2cm}
\bgroup
\begin{center}\vspace*{5cm}
\huge\bfseries\MakeUppercase{Elliptic curves with complex multiplication and $\Lambda$-structures}
\vfil
{\LARGE Lance Rory Gurney\par\vspace*{.5cm}
\par\vspace*{.5cm}}
{\large July 2015 \par\vspace*{.5cm}
\par\vspace*{.5cm}}
\vspace*{1.5cm} \normalsize{\fontfamily{ptm} Thesis submitted for the degree Doctor of Philosophy \\ of the Australian National University}
\vfil\vfil
         \includegraphics[width=5cm]{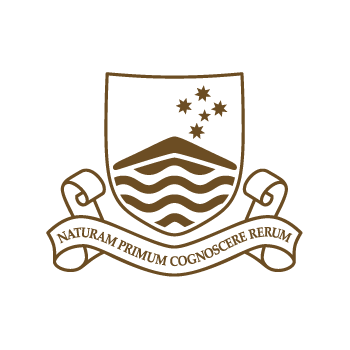}
\mbox{ }
\end{center}
\cleardoublepage

\newpage\thispagestyle{empty}\vspace*{3cm}
\begin{flushleft}\textbf{\MakeUppercase{Declaration}}\\\vspace{3cm}The work in this thesis is my own except where otherwise stated.\end{flushleft} \vspace*{3cm} \begin{flushright}\includegraphics[width=4.5cm]{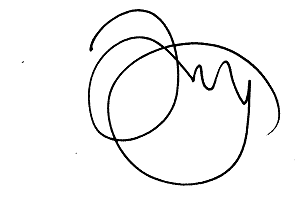} 

Lance Rory Gurney \end{flushright}
\tableofcontents\newpage\null\thispagestyle{empty}\newpage

\chapter*{Introduction}
This thesis began as an attempt to answer the question: \begin{quotation} Q: What do elliptic curves with complex multiplication and $\Lambda$-structures have to do with one another?\end{quotation}  The reader is probably somewhat familiar with first term, less likely so with the second. Even if he or she is familiar with both, why such a question might have an answer worth finding is probably not clear at all. So we will begin by explaining how both of these terms are related to a third: class field theory.

Let us remind the reader of the relationship between elliptic curves with complex multiplication and class field theory. For all that follows we fix an imaginary quadratic field $K$, with ring of integers $O_K$. If $L/K$ is a finite extension, an elliptic curve with complex multiplication by $O_K$ over $L$, here on called a CM elliptic curve over $L$, is an elliptic curve $E/L$ with the property that its ring of endomorphisms $\End_L(E)$ is isomorphic to $O_K$. For a general elliptic curve $E/L$, the Tate module $T(E)=\lim_n E[n](L^\sep)$ is a rank two $\widehat{\Z}$-module equipped with an action of $G(L^\sep/L)$. However, when $E/L$ is a CM elliptic curve, this rank two $\widehat{\Z}$-module becomes a rank one $\widehat{\Z}\otimes_{\Z}O_K$-module and so the action of $G(L^\sep/L)$ is defined by a character \[\rho_{E/L}: G(L^\ab/L)=G(L^\sep/L)^\ab\to (\widehat{\Z}\otimes_{\Z}O_K)^\times.\] In particular, it follows that extensions of $L$ generated by torsion points of $E$ are abelian over $L$. It is then known that if one can find a CM elliptic curve $E$ defined over $K$ itself, the resulting character \[\rho_{E/K}: G(K^\ab/K)\to (\widehat{\Z}\otimes_{\Z} O_K)^\times\] is injective from which it follows that every abelian extension of $K$ is a sub-extension of one generated by the torsion points of $E$ --- thus realising explicitly the class field theory of $K$.

In general there do not exist CM elliptic curves defined over $K$. The smallest field of definition of a CM elliptic curve is the Hilbert class field $H/K$ (the maximal abelian, everywhere unramified extension of $K$). If $E/H$ is such a curve then the extensions of $H$ generated by its torsion points are abelian over $H$, but they are not necessarily abelian over $K$. If one is still interested in the class field theory of $K$, this problem can be overcome by considering certain $O_K^\times=\Aut_H(E)$ invariant maps \[w: E\to \mathbf{P}^1_H,\] called Weber functions. The extensions of $K$ generated by the co-ordinates of the images of torsion points of $E$ under a fixed Weber function are abelian, and every abelian extension of $K$ is a sub-extension of one of these --- again realising the class field theory of $K$. Therefore, if one is interested in the class field theory of $K$ one need go no further than CM elliptic curves.

Now let us explain the relationship between $\Lambda$-structures and class field theory. First, a $\Lambda$-structure on a flat $\Spec(O_K)$-scheme $X$ (we will say more about the non-flat case later) is nothing more than a commuting family endomorphisms \[\psi^\a: X\to X\] indexed by the non-zero ideals $\a$ of $O_K$ such that for any two ideals $\a, \b$ we have $\psi^\a\circ \psi^\b=\psi^{\a\b}$ and with the property that for each prime ideal $\p$, the restriction of $\psi^\p$ to the fibre $X_\p:=X\times_{\Spec(O_K)}\Spec(O_K/\p)$ is the $N\p$-power Frobenius endomorphism \[\Fr^{N\p}: X_\p\to X_\p.\] We call the endomorphisms $\psi^\a$ (for all $\a$) a commuting family of Frobenius lifts. The resulting notion of a $\Lambda$-morphism of $\Lambda$-schemes $f:X\to Y$ being one that commutes with the Frobenius lifts. The $O_K$-scheme $\Spec(O_K)$ has a unique $\Lambda$-structure with Frobenius lifts all equal to the identity and if $L/K$ is an abelian extension with ring of integers $O_L$ then (ignoring the ramified primes) the finite locally free $O_K$-scheme $\Spec(O_L)$ admits a unique $\Lambda$-structure as well. More generally, if $S$ is any finite locally free $\Spec(O_K)$-scheme equipped with a $\Lambda$-structure then the extension of $K$ generated by the co-ordinates of $S$ (in any affine embedding) is an abelian extension of $K$. The link between $\Lambda$-structures and class field theory appears. These observations also have the following implication. If $X$ a $\Lambda$-scheme over $\Spec(O_K)$ and $0_X: \Spec(O_K)\to X$ a $\Lambda$-morphism then (under certain hypotheses) for each ideal $\a$ the scheme $X[\a]:=\psi^{\a*}(0_X)\subset X$ is a finite locally free $\Lambda$-scheme over $\Spec(O_K)$. Therefore, the extension of $K$ generated by the coordinates of $X[\a]$ will be abelian.

With this in hand, let us return to CM elliptic curves. It now turns out that if $E/K$ is a CM elliptic curve, writing $\mc{E}\to \Spec(O_K)$ for the N\'eron model of $E/K$, the flat $O_K$-scheme $\mc{E}$ admits a unique $\Lambda$-structure (ignoring the primes of bad reduction for $E/K$) and the morphism \[0_{\mc{E}}: \Spec(O_K)\to \mc{E}\] is a $\Lambda$-morphism. Moreover, for each integer $n\geq 0$, viewing $(n)\subset O_K$ as an ideal, we have that $\psi^{(n)*}(0_{\mc{E}})=\mc{E}[n]$ is the $n$-torsion of $\mc{E}$ which is now a finite locally free $\Lambda$-scheme. It is now also natural to ask whether in general there exist CM elliptic curves $E/H$ defined over the Hilbert class field whose N\'eron models admit $\Lambda$-structures and this turns out to be a subtle question.

At this point, we should note that everything we have said so far is the work of others. Indeed, the theory of complex multiplication and its relationship with class field theory is classical and has a very long history and to give a list of names would be very difficult. The theory of $\Lambda$-schemes and $\Lambda$-structures is due to Borger (\cite{Borger11}, \cite{Borger11bis}), and the relationship between $\Lambda$-structures and class field theory is due to Borger-de Smit (\cite{BorgerdeSmit04}, \cite{BorgerdeSmit08}) and indeed it was my advisor James Borger who originally posed the question at the beginning of this introduction and the more specific one asking for the existence of CM elliptic curves $E/H$ over the Hilbert class field with $\Lambda$-structures. This second question we can now answer:

\begin{theorem*}
\begin{enumerate}[label=\textup{(\roman*)}]
\item There always exists a \textup{CM} elliptic curve $E/H$ over the Hilbert class field whose N\'eron model admits a $\Lambda$-structure.
\item The N\'eron model of a \textup{CM} elliptic curve $E/H$ admits a $\Lambda$-structure if and only if the extension of $K$ generated by its torsion is abelian over $K$.
\end{enumerate}
\end{theorem*}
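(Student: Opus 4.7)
I would attack both parts via the Galois character $\rho_{E/H}$ on the Tate module, using that $H/K$ is everywhere unramified. Observe first that $\Spec(O_H)$ carries a unique $\Lambda$-structure over $\Spec(O_K)$: its Frobenius lifts are forced to be the Artin symbols $\sigma_\a\in G(H/K)$. A $\Lambda$-structure on $\mathcal{E}/O_K$ is then, for each nonzero ideal $\a$ of $O_K$, an $O_H$-isogeny $\psi^\a:\mathcal{E}\to \sigma_\a^{*}\mathcal{E}$ (viewed as an $O_K$-morphism $\mathcal{E}\to\mathcal{E}$) lifting the $N\p$-power Frobenius mod each $\p$ and satisfying $\psi^\a\circ\psi^\b=\psi^{\a\b}$.

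The easy direction of \textup{(ii)} follows immediately from the principle recalled in the excerpt: if $\mathcal{E}$ carries such a $\Lambda$-structure then $0_\mathcal{E}$ is automatically a $\Lambda$-morphism (each $\psi^\a$ is a group-scheme morphism and hence preserves the zero section), so each $\mathcal{E}[n]=\psi^{(n)*}(0_\mathcal{E})$ is a finite locally free $\Lambda$-scheme over $\Spec(O_K)$, and the $\bar K$-coordinates of its points generate an abelian extension of $K$.

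For the converse of \textup{(ii)}, assume $K(E_\tors)/K$ is abelian. By class field theory this is equivalent to the Hecke character attached to $E/H$ descending through the norm $\mathrm{Nm}_{H/K}$ to a character of $\A_K^\times/K^\times$, which in turn provides coherent $K$-scheme isomorphisms $E^{\sigma_\a}\isomto E$ intertwining the $O_K$-actions. I would then define $\psi^\a$ as the composite
\[
E\longrightarrow E/E[\a]\isomto E^{\sigma_\a}\isomto E
\]
of the canonical quotient, the Main Theorem of Complex Multiplication isomorphism, and this descent isomorphism. Coherence in $\a$ forces the composition law $\psi^\a\circ\psi^\b=\psi^{\a\b}$; each $\psi^\a$ extends to $\mathcal{E}$ by the Néron mapping property; and its reduction at a prime $\p$ of good reduction is the $N\p$-Frobenius by Deuring's analysis of the reduction of the CM isogeny $E\to E/E[\p]$. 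At primes of bad reduction the lift property is verified on the identity component after a finite unramified base change. Part \textup{(i)} then reduces to exhibiting at least one such $E/H$: starting from any CM $E_0/H$, the obstruction to the above descent is a class in $H^1(G(H/K),\Hom(G(H^\ab/H),(\widehat{\Z}\otimes O_K)^\times))$, which can be trivialised by twisting $E_0$ with an appropriate finite-order idele class character of $K$, in the spirit of Shimura's construction of CM Hecke characters.

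The main obstacle is this converse direction: the isomorphisms $E/E[\a]\isomto E^{\sigma_\a}$ and $E^{\sigma_\a}\isomto E$ are each only canonical up to $O_K^\times$, and coherently choosing them in $\a$---so that the resulting system is a genuine $\Lambda$-structure and not merely a collection of lifts of Frobenius---is precisely what the descent of the Hecke character provides. A secondary technical point is the Frobenius-lift verification at inert supersingular primes and at primes of bad reduction, each requiring a careful analysis of the kernel of the CM isogeny upon reduction.
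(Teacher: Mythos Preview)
Your easy direction of (ii) matches the paper. For the converse, however, there is a genuine confusion in the composite $E\to E/E[\a]\isomto E^{\sigma_\a}\isomto E$. The final ``descent isomorphism'' $E^{\sigma_\a}\isomto E$ does not exist as an $H$-morphism when $\a$ is non-principal: under the Shimura-type hypothesis one has $\sigma_\a^*(E)\cong E\otimes_{O_K}\a^{-1}$ over $H$, but $E\otimes_{O_K}\a^{-1}\not\cong E$ unless $[\a]=1$ in $\CL_{O_K}$. Your characterisation of Shimura type via the Hecke character factoring through $N_{H/K}$ is correct, but this does \emph{not} produce a descent of $E$ to $K$ nor isomorphisms $E^{\sigma_\a}\cong E$; what it buys is precisely that the Main-Theorem isomorphism $E/E[\a]\isomto\sigma_\a^*(E)$, a priori defined only over $\overline K$, is already defined over $H$. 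The relative Frobenius lift should simply be $\psi^\a_{\mathcal E/S}:\mathcal E\to\mathcal E\otimes_{O_K}\a^{-1}\isomto\sigma_\a^*(\mathcal E)$, with no further arrow.

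With that correction your outline is in the right spirit, but the paper resolves the $O_K^\times$-ambiguity by a different mechanism than your Hecke-character suggestion, which as stated does not obviously produce a coherent system. The paper first shows that a Shimura-type $E/H$ \emph{never} has good reduction everywhere, so there is a nontrivial $\g$ with $\mathcal E$ good over $S=\Spec(O_H[\g^{-1}])$; after enlarging $\g$ so that it separates units, one passes to the abelian, unramified-away-from-$\g$ cover $S'=\Spec(O_{H(E[\g])}[\g^{-1}])$, over which $\mathcal E$ acquires a level-$\g$ structure and hence pulls back from the universal curve on $M_\CM^{(\g)}$. On that universal curve each $\psi^\p$ is \emph{unique} (pinned down by its reduction), and the commutation $\sigma_\l^*(\psi^\p)\circ\psi^\l=\sigma_\p^*(\psi^\l)\circ\psi^\p$ is verified by observing that any discrepancy is an $\epsilon\in O_K^\times$ which acts trivially on the constant $\g$-torsion, hence lies in $O_K^{\times,\g}=\{1\}$. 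One then descends back to $S$. This moduli-theoretic route entirely sidesteps your concerns about supersingular fibres and bad reduction: the bad primes are simply excluded from $P$, and the statement is about a $\Lambda_P$-structure. For (i), the paper does not twist an arbitrary $E_0$ to kill a cohomological obstruction; it reproduces Shimura's direct construction of a suitable $\rho:G(K^\ab/H)\to A_{O_K}^\times$, using a prime $\p$ with $N\p\equiv 1\pmod w$ and $(N\p-1)/w$ coprime to $w$ so that the inclusion $O_K^\times\hookrightarrow(O_K/\p)^\times$ admits a retraction.
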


CM elliptic curves (over arbitrary abelian extensions of $K$) with the property that the extensions generated by their torsion is abelian over $K$ were introduced originally by Shimura and are now called CM elliptic curves of Shimura type. Indeed, it is using results of Shimura that, after proving (ii) we are able to prove (i). It worth pointing out that CM elliptic curves of Shimura type have been studied by several authors, with particular reference to their $L$-functions and the Birch-Swinnerton-Dyer Conjecture. Indeed, the papers of Coates-Wiles \cite{CoatesWiles1977} and Rubin \cite{Rubin1981} concern curves of this type.

Let us now also answer the question posed in the first paragraph: \begin{quotation} Q: What do elliptic curves with complex multiplication and \mbox{$\Lambda$-structures} have to do with one another?\ \\
A: Everything!
\end{quotation}

The central theorem we aim to prove is the following and explains our answer to the question above.

\begin{theorem*}\label{theo:intro-main} Let $\mc{M}_\CM$ denote the moduli stack of \textup{CM} elliptic curves and let $\mc{E}\to \mc{M}_\CM$ denote the universal \textup{CM} elliptic curve. Then both $\mc{E}$ and $\mc{M}_\CM$ admit canonical $\Lambda$-structures and the morphism $\mc{E}\to \mc{M}_\CM$ is a $\Lambda$-morphism.
\end{theorem*}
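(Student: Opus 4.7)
The plan is to construct the $\Lambda$-structures by exhibiting explicit commuting families of Frobenius lifts given by quotient isogenies. For each non-zero ideal $\a \subset O_K$ and each \textup{CM} elliptic curve $E/S$, the $\a$-torsion $E[\a]\subset E$ is a finite locally free $O_K$-stable subgroup scheme, so the quotient $E/E[\a]$ inherits an $O_K$-action making it again a \textup{CM} elliptic curve over $S$. This gives an endomorphism $\psi^\a_{\mc{M}}: \mc{M}_\CM \to \mc{M}_\CM$ sending $E \mto E/E[\a]$, and the canonical $O_K$-linear isogeny $\lambda_\a: E \to E/E[\a]$ defines a morphism $\psi^\a_{\mc{E}}: \mc{E}\to \mc{E}$ lying over $\psi^\a_{\mc{M}}$. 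Since both $E[\a]$ and $\lambda_\a$ are preserved by the $O_K$-linear automorphisms of $E$, these constructions descend to honest endomorphisms of the stacks.

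To verify the multiplicative law $\psi^\a \circ \psi^\b = \psi^{\a\b}$ (up to coherent isomorphism at the stacky level), I would use the identity $\lambda_\b^{-1}\bigl((E/E[\b])[\a]\bigr)=E[\a\b]$ together with the third isomorphism theorem for finite flat group schemes, which gives a canonical isomorphism $(E/E[\b])\big/(E/E[\b])[\a]\cong E/E[\a\b]$ compatible with the quotient isogenies. Naturality of this isomorphism promotes $\{\psi^\a\}$ to a commuting family on both $\mc{M}_\CM$ and $\mc{E}$, with the total-space family covering the base family tautologically.

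The heart of the proof is the Frobenius condition: modulo each prime $\p\subset O_K$, the endomorphism $\psi^\p$ must coincide with the $N\p$-power Frobenius. Unwinding definitions, this reduces to the following assertion, which I expect to be the main obstacle: for any \textup{CM} elliptic curve $E$ over an $O_K/\p$-scheme $S$, the subgroup scheme $E[\p]$ is precisely the kernel of the relative $N\p$-Frobenius $\Fr^{N\p}_{E/S}$, and $\lambda_\p$ is identified with $\Fr^{N\p}_{E/S}$ itself under the resulting isomorphism. This is a universal-family form of the classical Deuring reduction theorem: once the \textup{CM} normalisation is fixed, the element $\p\in O_K$ acts trivially on $\Lie(E)\otimes_{O_K} O_K/\p$, so $E[\p]$ is connected of rank $N\p$, and a degree count then forces its identification with the Frobenius kernel.

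The residual issues---primes ramifying in $K/\Q$, bad reduction, and coherence with the stabilisers $\Aut(E)=O_K^\times$---are to be handled as in the setup sketched earlier in the introduction, by first establishing the structure on the good open substack (e.g.\ over $\Spec(O_K[\D^{-1}])$) and then appealing to flatness together with the rigidity of $\Lambda$-structures on torsion-free schemes to propagate and glue the $\psi^\a$ over the remaining locus.
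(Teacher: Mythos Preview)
Your core mechanism is the right one and is exactly what the paper uses: the Frobenius lifts are given by $E\mapsto E\otimes_{O_K}\a^{-1}$ (which is your $E/E[\a]$), the multiplicativity is the natural isomorphism $(E\otimes_{O_K}\a^{-1})\otimes_{O_K}\b^{-1}\cong E\otimes_{O_K}(\a\b)^{-1}$, and the crucial Frobenius-lift verification is precisely the statement that $E[\p]=\ker(\Fr^{N\p}_{E/S})$ over $\F_\p$-bases, proven in the paper as a corollary of the fact that over a $\p$-adic base the $\p$-divisible group $E[\p^\infty]$ is a Lubin--Tate $O_{K_\p}$-module (see (\ref{prop:p-torsion-classification}) and (\ref{coro:tensor-p-equals-frobenius})).

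Where your proposal diverges is in what is actually being proven. The paper explicitly declines to define $\Lambda$-structures on stacks (to avoid $2$-monadic machinery), so the theorem in the introduction is informal. Its precise content is the \emph{canonical lift} construction: for every ind-affine $S$ and every CM elliptic curve $E/S$ there is a functorial CM elliptic curve $W^*_\CM(E)$ over the Witt vectors $W^*(S)$, equipped with a $\Lambda_{W^*(S)}$-structure, with $g_{(1)}^*(W^*_\CM(E))\cong E$ and $\mu^*_{W^*(S)}(W^*_\CM(E))\cong W^*_\CM(W^*_\CM(E))$. The proof runs your Frobenius-lift construction not on the stack itself but on the flat scheme $M_\CM^{(\f)}$ for $\f$ separating units (which is $\Spec(O_{K(\f)}[\f^{-1}])$), where Frobenius lifts and $\Lambda$-structures in the honest sense make perfect sense, and then descends to arbitrary bases using the affine \'etale cover furnished by level structures. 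The additional subtlety handled this way is that one needs a genuine $\Lambda$-structure rather than merely the $\Psi$-structure (commuting Frobenius lifts) your construction produces; on flat schemes these coincide, and the paper leverages this together with a rigidity result for abelian schemes (\ref{prop:lambda-and-psi-homomorphisms-of-abelian-varieties}) to promote $\Psi$-data to $\Lambda$-data.

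Finally, your last paragraph is off the mark: there are no bad primes to exclude. The moduli stack $\mc{M}_\CM$ and the universal curve $\mc{E}\to\mc{M}_\CM$ are defined and smooth over all of $\Spec(O_K)$, and the $\Lambda$-structure is constructed integrally at every prime. Bad reduction is a phenomenon for individual CM elliptic curves over number fields (indeed, the paper proves there are no CM elliptic curves of Shimura type over $H$ with everywhere good reduction), but it does not afflict the universal family. No ``gluing over the remaining locus'' is required.
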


The reader will probably have noticed that we have not even defined what it means for a general (non-flat) scheme, let alone a stack, to have a $\Lambda$-structure and in fact we do not propose to define $\Lambda$-structures on stacks in this thesis (not because it isn't possible to do so, only because doing so would lead us into the nightmarish realm of 2-monads on 2-categories). In any case, the definition of $\Lambda$-structure we have given for a flat $O_K$-scheme admits an obvious naive generalisation --- that of a commuting family of Frobenius lifts --- though it is not the correct one. Before we explain the correct definition, and the actual meaning of (\ref{theo:intro-main}), let us describe the naive $\Lambda$-structure on $\mc{M}_\CM$.

If $S$ is an $O_K$-scheme then $\mc{M_\CM}(S)$ is the category of CM elliptic curves $E$ over $S$. In particular, the objects $E/S\in \mc{M}_\CM(S)$ are $O_K$-modules and for each non-zero ideal $\a\subset O_K$ it is possible to make sense of the $O_K$-module $E\otimes_{O_K}\a^{-1}$ which is again a CM elliptic curve over $S$. This defines for each ideal $\a$ an endofunctor \begin{equation*}-\otimes_{O_K}\a^{-1}:\mc{M}_\CM\to \mc{M}_\CM: E\mto E\otimes_{O_K}\a^{-1},\end{equation*} and for different ideals $\a$ these endo-functors all `commute' in the obvious sense. More important is the following fact: if $\p\subset O_K$ is prime and $S$ is an $O_K$-scheme of characteristic $\p$, i.e.\ the morphism $S\to \Spec(O_K)$ factors through $\Spec(O_K/\p)$, then for all CM elliptic curves $E/S$ there is a canonical isomorphism \[E\otimes_{O_K}\p^{-1}\isomto \Fr^{N\p*}(E)\] between $E\otimes_{O_K}\p^{-1}$ and the pull-back of $E$ along the $N\p$-power Frobenius $\Fr^{N\p}: S\to S$ (this construction in its simplest form is due to Serre). In other words, the functor $-\otimes_{O_K}\p^{-1}$ is a lift of the $N\p$-power Frobenius and we have defined a naive $\Lambda$-structure on $\mc{M}_\CM$.

However, as we have already noted, the naive notion of a $\Lambda$-structure as a commuting family of Frobenius lifts is not the correct one. Let us now at least say enough about the non-naive notion of $\Lambda$-structure so that we may explain to the reader some of the actual meaning of (\ref{theo:intro-main}).

There is a functor \[W^*: \Sch_{O_K}\to \Sch_{O_K}: X\mto W^*(X)\] sending an $O_K$-scheme $X$ to its scheme $W^*(X)$ of (big $O_K$-typical) Witt vectors (technically speaking $W^*(X)$ is actually an ind-scheme, but for the purposes of this introduction we will ignore this fact). We will not say much here about the geometry of the Witt vectors $W^*(X)$ themselves, other than to remark that they have many miraculous properties, chief among which is that if $X$ is an $O_K$-scheme of characteristic $\p$ for some prime of $O_K$, then (very roughly speaking) the Witt vectors $W^*(X)$ of $X$ have characteristic $0$. The properties of the Witt vectors $W^*(X)$ which are of interest to us presently are the following:
\begin{enumerate}[label=(\roman*)]
\item $W^*(X)$ possesses a family of commuting Frobenius lifts \[\psi^{\a}: W^*(X)\to W^*(X)\] indexed by the ideals $\a$ of $O_K$ (i.e.\ a naive $\Lambda$-structure),
\item there is a morphism $g_{(1)}: X\to W^*(X)$, and
\item for all flat $\Lambda$-schemes $S$ (the definition of which we have given) and all morphisms $X\to S$ there is a unique morphism $W^*(X)\to S$ compatible with the Frobenius lifts on $W^*(X)$ and $S$ such that the following diagram commutes \begin{equation}\label{eqn:def-lambda-intro}\begin{gathered}\xymatrix{& W^*(X)\ar@{.>}[d]\\
X\ar[ur]^{g_{(1)}} \ar[r] & S.}\end{gathered}\end{equation}
\end{enumerate}

If we now let $S$ be an arbitrary $O_K$-scheme, the diagram (\ref{eqn:def-lambda-intro}) can be taken as the \textit{definition} of a $\Lambda$-structure on $S$: a $\Lambda$-structure on an $O_K$-scheme $S$ is, for each morphism $X\to S$, a \textit{canonical lifting} $W^*(X)\to S$ of that morphism making the diagram (\ref{eqn:def-lambda-intro}) commute (together with certain iterated compatibilities which we will not give here). It is not unreasonable to make the comparison between the construction of Serre-Tate of the canonical lift to the ($p$-typical) Witt vectors of ordinary elliptic curve over a finite field.

We can now give the reader a better sense of the meaning of (\ref{theo:intro-main}). If $S$ is an $O_K$-scheme and $E/S$ is a CM elliptic curve, i.e.\ if one is given a morphism $S \stackrel{E}{\to} \mc{M}_\CM$, then there is a functorially defined CM elliptic curve $W^*_\CM(E)$ over the (big $O_K$-typical) Witt vectors $W^*(S)$ of $S$, i.e.\ there is a morphism $W^*(S)\stackrel{W_\CM^*(E)}{\longrightarrow} \mc{M}_\CM$, together with a canonical isomorphism $g_{(1)}^*(W_\CM^*(E))\isomto E$. This to say we have a `commutative' diagram: \[\xymatrix{& W^*(S)\ar@{.>}[d]^{W^*_\CM(E)}\\
S\ar[ur]^{g_{(1)}} \ar[r]^-{E} & \mc{M}_\CM}\] and this is what we mean when we say that $\mc{M}_\CM$ admits a $\Lambda$-structure. It follows that CM elliptic curves can be lifted canonically to the Witt vectors of the base. We would like to point out that the base $S$ here is arbitrary.

We shall not anything more about (\ref{theo:intro-main}) here, nor the $\Lambda$-structure on $\mc{E}$. However, what (\ref{theo:intro-main}) does do is equip $\mc{E}\to \mc{M}_\CM$ with an incredible amount of very rich structure --- structure which is of interest in and of itself in the world of $\Lambda$-geometry, but which can also be exploited to prove new results in both the theory of CM elliptic curves and the arithmetic of imaginary quadratic fields. The following result is an example of this phenomenon:

\begin{theorem*} Let $K(\f)$ be the ray class field of conductor $\f$ and let $E/K(\f)$ be a \textup{CM} elliptic curve of Shimura type. If the $\f$-torsion $E[\f]$ is constant then $E/K(\f)$ admits a global minimal model away from $\f$. In particular, if $\f=O_K$ so that $K(\f)=H$ then \textup{every} \textup{CM} elliptic curve $E/H$ of Shimura admits a \textup{global} minimal model.
\end{theorem*}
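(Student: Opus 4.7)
The plan is to use the $\Lambda$-structure on the N\'eron model $\mc{E}/O_{K(\f)}$ (provided by the Shimura-type hypothesis via the preceding theorem, extended from $H$ to the ray class field $K(\f)$ under the assumption that $E[\f]$ is constant) to kill the standard obstruction to a global minimal Weierstrass equation, which lives in $\mathrm{Pic}(O_{K(\f)}[1/\f])$. One obtains a commuting family of endomorphisms $\psi^\a:\mc{E}\to\mc{E}$ for $\a$ coprime to $\f$ that lift Frobenius on fibres and cover the Artin symbol $\sigma_\a\in\mathrm{Gal}(K(\f)/K)$ on the base, making $\mc{E}$ a $\Lambda$-pullback from the universal $\Lambda$-scheme $\mc{M}_\CM$ of the central theorem.

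Differentiating each $\psi^\a$ at the zero section yields a $\sigma_\a$-semilinear map $d\psi^\a:\sigma_\a^*\omega_{\mc{E}}\to\omega_{\mc{E}}$. Since $\psi^\a$ restricts to Frobenius on fibres above primes dividing $\a$ and Frobenius annihilates the cotangent, a local analysis combined with the Shimura-Taniyama identification of $\psi^\a$ on the generic fibre as the CM isogeny $E\to E\otimes_{O_K}\a^{-1}$ (whose derivative has image $\a\cdot\omega_E$) identifies the image of $d\psi^\a$ as the subsheaf $\a\cdot\omega_{\mc{E}}\subset\omega_{\mc{E}}$ globally. This produces the identity $\sigma_\a^*[\omega_{\mc{E}}]=[\omega_{\mc{E}}]\cdot[\a]$ in $\mathrm{Pic}(O_{K(\f)}[1/\f])$ for all $\a$ coprime to $\f$, and the parallel identity $\sigma_\a^*[\mathfrak{d}_{\mc{E}}]=[\mathfrak{d}_{\mc{E}}]\cdot[\a]^{12}$ for the minimal discriminant ideal (obtained by applying the same analysis to the twelfth-power discriminant section $\Delta$); hence the obstruction class $[\omega_{\mc{E}}^{12}\otimes\mathfrak{d}_{\mc{E}}^{-1}]$ is $\mathrm{Gal}(K(\f)/K)$-invariant. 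The class-field-theoretic description of $\Lambda$-line bundles then identifies this Galois-invariant class as one descending from a Hecke character of $K$ of conductor dividing $\f$, which therefore becomes trivial on $\Spec(O_K[1/\f])$ and hence on $\Spec(O_{K(\f)}[1/\f])$. A trivialising global section of $\omega_{\mc{E}}^{12}\otimes\mathfrak{d}_{\mc{E}}^{-1}$ is precisely a global invariant differential whose Weierstrass discriminant generates the minimal discriminant ideal, which is the required global minimal model away from $\f$.

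The specialisation $\f=O_K$ makes the hypothesis on $E[\f]$ vacuous and recovers the stronger assertion that every Shimura-type CM elliptic curve over $H$ admits a global minimal model. The main obstacle in the plan is the second step: the integral identification of the image of $d\psi^\a$ as the ideal $\a\cdot\omega_{\mc{E}}$ at every prime (the integral version of the Shimura-Taniyama formula), together with the class-field-theoretic triviality of the resulting Hecke-character class after inverting $\f$. This is precisely the junction between $\Lambda$-structures, the Shimura-Taniyama formula for CM isogenies, and class field theory that is the recurring theme of the thesis.
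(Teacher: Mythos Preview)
Your differentiation of the Frobenius lifts is correct and matches the paper: the $\Lambda_{P,S}$-structure on $\mc{E}$ (for $P=\Id_{O_K}^{(\g)}$) yields isomorphisms $\nu_\a:T\otimes_{O_K}\a^{-1}\isomto\sigma_\a^*(T)$ on the Lie algebra $T=\underline{\Lie}_{\Ner_{O_L}(E)/\Spec(O_L)}$, which is exactly your identity $\sigma_\a^*[\omega]=[\omega]\cdot[\a]$ in dual form. But the argument breaks down at the descent step, and the detour through $\omega^{12}\otimes\mathfrak{d}^{-1}$ does not repair it.

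The gap is this: Galois invariance of a class in $\mathrm{Pic}(O_{K(\f)}[\f^{-1}])$ neither implies that the class descends to $\mathrm{Pic}(O_K[\f^{-1}])$ (there is an $H^1(G,O_{K(\f)}^\times)$ obstruction), nor that a descended class is trivial (the class group $\CL_{O_K[\f^{-1}]}$ need not vanish). Your appeal to a ``Hecke character of conductor dividing $\f$'' conflates characters with ideal classes and does not produce a trivialisation. The paper does something sharper than check invariance of a class: it upgrades the relations $\nu_\a$ to genuine Galois \emph{descent data}. This is where the hypothesis that $E[\f]$ is constant enters --- it forces, for $\a$ principal with $\sigma_\a=\id$, the self-map $\nu_\a:T\otimes_{O_K}\a^{-1}\to T$ to be $1\otimes l(\a)$ with $l(\a)\equiv 1\bmod\f$. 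The paper then invokes a strengthened Tannaka principal ideal theorem (proved in the appendix) to extend $l$ to a cocycle $l:\Id_{O_K}^{(\g)}\to K(\f)^\times$ with $l(\a)\cdot O_{K(\f)}=\a\cdot O_{K(\f)}$ and $l(\a\b)=l(\a)\sigma_\a(l(\b))$. The twisted maps $t_\a=\nu_\a\circ(1\otimes l(\a)^{-1})$ are then honest isomorphisms $T\isomto\sigma_\a^*(T)$ satisfying the cocycle condition, so $T[\f^{-1}]$ descends to a rank one module $T_0$ over $O_K[\f^{-1}]$. Finally the Hauptidealsatz (every rank one $O_K$-module becomes free over $O_H$, and $H\subset K(\f)$) makes $T_0\otimes O_{K(\f)}[\f^{-1}]\cong T[\f^{-1}]$ free.

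Two further remarks. First, the obstruction the paper kills is $[T]$ itself, not $[\omega^{12}\otimes\mathfrak{d}^{-1}]$; freeness of the Lie algebra of the N\'eron model is the stated meaning of ``global minimal model away from $\f$'', and introducing the discriminant only weakens what you can conclude (at best you get $[T]^{12}$ trivial). Second, you have misplaced the role of the constancy hypothesis: the Shimura-type assumption alone already gives the $\Lambda$-structure on $\mc{E}$; the constancy of $E[\f]$ is used later, precisely to feed the congruence $l(\a)\equiv 1\bmod\f$ into the Tannaka theorem.
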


In the special case when $\mathrm{disc}(K/\Q)$ is prime and $\f=O_K$ this result was proven by Gross (Corollary 4.4 \cite{Gross82}),\\

We now give an overview of the chapters:

Chapter 1: We recall the local and global reciprocity maps, define Lubin--Tate modules and study their moduli stack $\mc{M}_\LT$. We show that $\mc{M}_\LT$ admits a certain torsor structure and using this explain how to derive the local reciprocity map directly from $\mc{M}_\LT$ using only its formal properties. We then give an overview of a certain special case of global reciprocity, and present some basic constructions regarding local systems of rank one.

Chapter 2: We undertake a quite detailed study of CM elliptic curves over arbitrary bases and their moduli stack $\mc{M}_\CM$. We show that $\mc{M}_\CM$ admits a certain torsor structure analogous to that of $\mc{M}_\LT$. We also give CM analogues of some classical theorems for general elliptic curves and, in a similar vein to Lubin-Tate modules, we explain how to derive a certain global reciprocity map directly from $\mc{M}_\CM$ using only its formal properties. Using this we classify all CM elliptic curves over fields (of arbitrary characteristic) in terms of their associated Galois representations. Finally, we consider the moduli stacks with level structure and consider their representability in the fine and coarse setting.

Chapter 3: We give a short overview of the general theory of $\Lambda$-schemes, Witt vectors and arithmetic jets. We then prove a handful of (slightly technical) new results for later use.

Chapter 4: Here we prove the main theorem regarding lifting CM elliptic curves over arbitrary bases to their big $O_K$-typical Witt vectors. We then prove that a CM elliptic curve is of Shimura type if and only if its N\'eron model admits a $\Lambda$-structure. We then consider the minimal models of CM elliptic curves of Shimura type and prove their global existence under certain hypotheses. Next we consider quotients of CM elliptic curves by their groups of automorphisms and prove that (suitable reinterpreted) they always exist and we show that the quotient of the universal CM elliptic curve descends to the coarse space of the moduli stack of CM elliptic curves. We show how this descended curve admits a canonical $\Lambda$-structure and allows one to construct the ray class fields of $K$ in a choice free, integral and coherent manner and we also show show that this descended curve is nothing but a (global) projective line. We then construct a flat, affine and pro-smooth cover of the moduli stack of CM elliptic curves, which comes equipped with a $\Lambda$-structure compatible with that on $\mc{M}_\CM$. Finally, we exhibit an interesting relationship between certain deformations of CM elliptic curves with $\Lambda$-structures and their Tate modules.

Appendix: We give an abstract and formal definition of smooth formal groups, we consider the general properties of `Serre's tensor product' and we give a short overview of Faltings' generalised Cartier duality, needed to prove certain results in Chapter 1. Finally, we prove a strengthening of an old principal ideal theorem for arbitrary number fields.

\mainmatter
\chapter*{Foundations, conventions and terminology}
\section{Sheaves}
\subsection{} In order to have a nice common ground for all the objects we would like to work with, we shall here define the basic categories of (pre)sheaves in which all objects we consider will live. We have decided to not to bother ourselves with set theoretic issues of `size' (which really only come up when one tries to sheafify wild presheaves for large topologies \cite{Waterhouse75} --- something we will have no need to do), however the concerned reader may add the word `universe' whenever he or she sees fit.

Let $\Aff$ denote the category of affine schemes and $\PSh$ the category of presheaves of sets on $\Aff$, i.e.\ the category of functors \[X:\Aff^\circ\to \Set.\] We write $\Sch$ for the category of schemes and as usual we embed $\Aff$ and $\Sch$ in $\PSh$ by sending a scheme to the functor it represents.

We shall be working with sheaves for the fpqc and \'etale topologies on $\Aff$ which we now recall. The covers of an affine scheme $S$ for the fpqc (resp. \'etale) topology are given by flat (resp. \'etale) families $(S_i\to S)_{i\in I}$ indexed by a finite set $I$ which are covers in the usual sense. A sheaf for the fpqc topology will just be called a sheaf and the category of such sheaves will be denoted $\Sh\subset \PSh$. A sheaf for the \'etale topology will be called an $\et$-sheaf and we write $\Sh^{\et}\subset \PSh$ for the corresponding category. We have the inclusions \[\Aff\subset \Sch\subset \Sh\subset \Sh^\et\subset \PSh.\]

\subsection{} If $f: S'\to S$ is a morphism of presheaves and $X\to S$ is an $S$-presheaf then we denote the fibre product by $X\times_S S'$, $f^*(X)$, or when $f$ is clear by $X_{S'}$. Viewed as a functor $f^*: \PSh_{S}\to \PSh_{S'}$, the right adjoint to $f^*$ is denoted by $f_*$ and the left adjoint by $f_!$. Recall that if $X'\to S'$ is an $S'$-presheaf then $f_!(X')$ is the $S$-presheaf $X'\to S'\stackrel{f}{\to} S$ and $f_*$ is the $S$-presheaf defined by \[\Spec(A)\mto \Hom_{S'}(\Spec(A), f_*(X'))=\Hom_{S'}(f^*(\Spec(A)), X').\] The functor $f_!$ will be used rarely, and only in the case when $f$ is an isomorphism, in which case it is isomorphic to $g^*$ where $g=f^{-1}$.

\begin{exem} An important family of sheaves for the fpqc topology are the ind-affine schemes, the category of which we denote by $\IndAff$. Recall an ind-affine scheme $X$ is a pre-sheaf $X$ which can be written as filtered a colimit $X=\colim_i \Spec(A_i)$ of affine schemes (it follows automatically  from this that it is a sheaf). One of the main examples we work with is the following: if $S=\Spec(A)$ is an affine scheme and $I$ is an ideal we write $\Spf_I(A)$ (or just $\Spf(A)$ when $I$ is clear from the context) for the ind-affine scheme $\colim_n \Spec(A/I^{n+1})$. If $X\to \Spec(A)$ is a presheaf over $\Spec(A)$ we say that $X$ is $I$-adic or that $I$ is locally nilpotent on $X$ if the morphism $X\to \Spec(A)$ factors through $\Spf(A)\subset \Spec(A)$. If $\Spec(B)\to \Spec(A)$ is an affine $\Spec(A)$-scheme then $\Spec(B)$ is $I$-adic if and only if the ideal $I B\subset B$ is nilpotent.
\end{exem}

\subsection{} If $A$ is a set and $S$ is an $\et$-sheaf then we write \[\underline{A}_S=\coprod_{a\in A}S\] for the constant $\et$-sheaf over $S$ associated to $A$. If $S$ is actually a sheaf, so is $\underline{A}_S$. When $S$ is a fixed base (usually the spectrum of some Dedekind domain) we will drop the sub-script and just write $\underline{A}$.

\subsection{} By a cover of a sheaf $S$ (resp. $\et$-sheaf) we just mean a family $(S_i\to S)_{i\in I}$ of morphisms of sheaves (resp. $\et$-sheaves) such that $\coprod_{i\in I}S_i\to S$ is an epimorphism. When referring to properties or making claims which are compatible with base change we will use the word local to mean after base change along a cover.

\subsection{} We say that a morphism of pre-sheaves $f: X\to S$ is representable, or that $X$ is representable over $S$, if for each affine scheme $S'\to S$ the pre-sheaf $X\times_S S'$ is (representable by) a scheme. In general, for a morphism of sheaves (or $\et$-sheaves) to be representable is not local. However, it is the case for $f$ which are representable by open immersions, or when $f$ is representable by affine morphisms. In both of these cases we will just say that $f$ is open immersion, or that $f$ is affine. Similarly for any other condition of $f$ that includes affine in its definition: finite, finite locally free, a closed immersion and so on.

\begin{prop} If $f: X\to Y$ is a finite locally free \'etale morphism of $\et$-sheaves then the inclusion of the image $f(X)\to Y$ in $\Sh^\et$ is an open and closed immersion, the inclusion of the complement $Y-f(X)\to Y$ is also an open and closed immersion and $Y\amalg (Y-f(X))=Y$. Moreover, if $X$ and $Y$ are sheaves, so are the $\et$-sheaves $f(X)$ and $Y-f(X)$.
\end{prop}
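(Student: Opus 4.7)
The plan is to reduce to the case of a finite étale morphism of affine schemes, where the statement is classical, and then check that the clopen decompositions glue to sheaf-theoretic subobjects of $Y$. Since the hypothesis ``finite locally free étale'' implicitly includes affine (by the conventions of this chapter), the morphism $f$ is representable: for any affine $T$ and any map $T\to Y$, the base change $g:=f\times_Y T:X\times_Y T\to T$ is a finite locally free étale morphism of affine schemes. Such a $g$ is simultaneously open (it is flat and of finite presentation) and closed (it is finite, hence proper), so its set-theoretic image $U_T\subset T$ is open and closed, and one has the clopen decomposition $T=U_T\amalg (T-U_T)$.

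My strategy is then to package these local clopen decompositions into subsheaves of $Y$. The assignments $T\mapsto U_T$ and $T\mapsto T-U_T$ are compatible with arbitrary base change $T''\to T$ because the image of a finite étale morphism of schemes is stable under pullback, i.e.\ the image of $g\times_T T''$ is $U_T\times_T T''$. These assignments therefore define subsheaves $F,G\subset Y$ in $\Sh^\et$ whose inclusions are representable by clopen immersions, and the identity $F\amalg G=Y$ in $\Sh^\et$ holds because it holds after pulling back along any affine $T\to Y$. It remains to identify $F$ with the categorical image $f(X)$. On one hand, $f$ factors through $F$ since any $x\in X(T)$ provides a section of $g$, forcing $U_T=T$. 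On the other hand, if $y\in F(T)$ then $g$ is a surjective finite étale morphism, hence already an $\et$-cover of $T$; the tautological section in $X(X\times_Y T)$ shows that $y$ lies $\et$-locally in the image of $f$, whence $y\in f(X)(T)$.

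For the final assertion, assume now that $X$ and $Y$ are sheaves for the fpqc topology. The inclusions $f(X)\hookrightarrow Y$ and $(Y-f(X))\hookrightarrow Y$ have been shown to be representable by clopen immersions of schemes, which are in particular affine morphisms. By the standard fpqc descent statement that a presheaf representable by affine (indeed, by any scheme-valued) morphisms over a sheaf is itself a sheaf, $f(X)$ and $Y-f(X)$ are fpqc sheaves. The main obstacle I anticipate is keeping the bookkeeping straight when identifying $F$ with $f(X)$: one must translate between the concrete $T$-point description (surjectivity of the pulled-back finite étale morphism) and the abstract definition of the image as the $\et$-sheafification of the presheaf-theoretic image. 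The observation that makes this painless is precisely that a surjective finite étale morphism of schemes is already itself an $\et$-cover, so no further sheafification step is needed.
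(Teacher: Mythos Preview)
The paper states this proposition without proof, so there is no author's argument to compare against. Your argument is correct and is the natural one: reduce to the affine base via representability, use that a finite \'etale morphism of schemes has clopen image, check compatibility with base change to define the subsheaves, and then identify the resulting clopen subsheaf with the $\et$-sheaf image using that a surjective finite \'etale morphism is already an $\et$-cover. The final fpqc statement follows, as you say, because clopen immersions are affine and a subfunctor of an fpqc sheaf representable by affine morphisms is itself an fpqc sheaf (this is exactly the content of the descent proposition stated immediately after in the paper). One cosmetic remark: the displayed equation in the statement, $Y\amalg (Y-f(X))=Y$, is a typo for $f(X)\amalg (Y-f(X))=Y$; you have (correctly) proved the intended version.
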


The following result will be used often (it follows from Th\'eor\`eme 2.1 of Expos\'e VIII in \cite{SGA1}) cited as `by descent':

\begin{prop}\label{prop:affine-fpqc-descent} Let $X\to S$ be a morphism of ($\et$-)sheaves and let $S'\to S$ be an epimorphism of ($\et$-)sheaves. Then $X\to S$ is affine if and only if $X\times_S S'\to S'$ is affine. 
\end{prop}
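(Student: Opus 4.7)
The forward direction is immediate: affineness of morphisms of schemes is stable under arbitrary base change, and this property survives the passage to ($\et$-)sheaves by the convention adopted in the paper (``affine'' for a morphism of sheaves means representable by an affine morphism of schemes). So the whole content is the converse, and my plan is to reduce it to the classical fpqc descent result for affine schemes cited from \cite{SGA1}.

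The first step is to reduce to the case where the base is an affine scheme. Take an arbitrary affine scheme $T$ together with a morphism $T\to S$; by definition, it suffices to show that $X\times_S T$ is (representable by) an affine scheme. Pulling back the epimorphism $S'\to S$ along $T\to S$ we obtain an epimorphism $S'\times_S T\to T$ of ($\et$-)sheaves with $T$ affine. By the definition of epimorphism in the fpqc (resp.\ \'etale) topology, this map admits, locally for the topology on $T$, a section; concretely, we may choose a finite family $(T_i\to T)_{i\in I}$ of flat (resp.\ \'etale) morphisms from affine schemes that jointly cover $T$ and such that each $T_i\to T$ factors through $S'\times_S T\to T$. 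Setting $T'=\coprod_{i}T_i$, we obtain a single fpqc (resp.\ \'etale) cover $T'\to T$ by an affine scheme which factors through $S'\times_S T$.

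Now the hypothesis that $X\times_S S'\to S'$ is affine, combined with the factorisation $T'\to S'\times_S T\to S'$, gives
\[
X\times_S T' \;=\; (X\times_S S')\times_{S'}T',
\]
and the right-hand side is an affine scheme because it is the fibre product of two affine schemes over $S'$ where one of the maps (namely $X\times_S S'\to S'$) is affine. The morphism $X\times_S T'\to T'$ is therefore an affine morphism of schemes, and it is canonically equipped with a descent datum relative to $T'\to T$ (pulled back from the sheaf $X\times_S T$). The main step is then to invoke Th\'eor\`eme 2.1 of Expos\'e VIII of \cite{SGA1}, which says precisely that affine schemes descend along fpqc covers: there is a (necessarily unique) affine scheme $Y\to T$ whose base change to $T'$ recovers $X\times_S T'$ together with its descent datum. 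Comparing $Y$ and $X\times_S T$ as ($\et$-)sheaves on $\Aff_T$, both represent the same equaliser against $T'\to T$, hence $X\times_S T\cong Y$ is affine, as required.

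The only place where real work hides is the application of SGA1 VIII 2.1, which itself relies on the fact that the category of quasi-coherent modules satisfies fpqc descent; I would simply cite it. The bookkeeping step that requires a little care is the reduction from an abstract epimorphism of sheaves to an honest fpqc (or \'etale) cover by an affine scheme, but this is standard and uses nothing beyond the definition of the topology together with the affineness of $T$.
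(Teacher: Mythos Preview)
Your argument is correct and follows exactly the approach the paper indicates: the paper does not spell out a proof but simply remarks that the statement ``follows from Th\'eor\`eme 2.1 of Expos\'e VIII in \cite{SGA1}'', and your write-up supplies precisely the reduction from ($\et$-)sheaves over an arbitrary base to the classical affine fpqc descent statement over an affine scheme that this citation presupposes. There is nothing to add.
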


\subsection{} We now say a little about what we mean by a quasi-coherent $\mc{O}_S$-module on a sheaf $S$. We have the relatively representable sheaf of rings \begin{equation}\label{def:structure-sheaf}\mc{O}_S:=\mathbf{A}^1_{S}=\Spec(\Z[T])\times_{\Spec(\Z)} S\end{equation} and the abelian category of $\mc{O}_S$-modules $\Mod(\mc{O}_S)$. For any map of sheaves $f: S' \to S$, as $\mc{O}_S\times_S S'=\mc{O}_{S'}$, we obtain the functor \[f^*: \Mod(\mc{O}_S)\to \Mod(\mc{O}_{S'}): \mc{M}\mto \mc{M}\times_S S'=\mc{M}_{S'}.\] Note that this functor is exact for any map $f$.
 
The category of quasi-coherent sheaves $\QCoh(\mc{O}_S)\subset \Mod(\mc{O}_S)$ is the full sub-category of $\mc{O}_S$-modules $\mc{M}$ such that there exists a cover $(S_i\to S)_{i\in I}$ and for each $i\in I$ an exact sequence \[\mc{O}_{S_i}^{M_i}\to \mc{O}_{S_i}^{N_i}\to\mc{M}_{S_i}\to 0\] where $M_i$ and $N_i$ are sets. When $S$ is a scheme this category coincides with the usual category of quasi-coherent sheaves over $S$ and the functor $f^*$ defined above has its usual meaning. However, the inclusion $\QCoh(\mc{O}_S)\subset \Mod(\mc{O}_S)$ is not exact (to be precise it does not preserve kernels). This explains why $f^*: \Mod(\mc{O}_S)\to \Mod(\mc{O}_{S'})$ is exact for any $f$ while the same is not true for $\QCoh(\mc{O}_S)\to \QCoh(\mc{O}_S)$.

We shall be almost exclusively concerned with locally free finite rank $\mc{O}_S$-modules --- or what is the same vector bundles --- the equivalence between which is a tautology with our definition $\mc{O}_S:=\mathbf{A}^1_S$.

\section{Group actions}

\subsection{} Let $A\to B$ be a homomorphism of rings and let $G$ be some group of $A$-automorphisms of $B$. The example to have in mind here is a Galois extension of fields $K\to L$ and $G=G(L/K)$.

Given $\sigma\in G$, in order to avoid the cumbersome notation $\Spec(\sigma): \Spec(B)\to \Spec(B)$, we will just write $\sigma: \Spec(B)\to \Spec(B)$. However, associating an affine scheme to a ring is contravariant, so that this notation becomes confusing when considering compositions $\sigma\circ \tau$ of elements in $G$. In order to avoid problems here, we make the convention that the product of two elements of $\sigma, \tau\in G$, will be denoted by $\sigma\tau$ so that $\sigma\tau\in G$ is the automorphism \[B\stackrel{\tau}{\to} B\stackrel{\sigma}{\to} B.\] We will only use the composition symbol $\circ$ when viewing $\sigma$ and $\tau$ as automorphisms of $\Spec(B)$ so that $\sigma\circ \tau$ will denote the $\Spec(A)$-automorphism of $\Spec(B)$ \[\Spec(B)\stackrel{\tau}{\longrightarrow} \Spec(B)\stackrel{\sigma}{\longrightarrow} \Spec(B).\] With this convention the following three symbols denote the same $\Spec(A)$-automorphism of $\Spec(B)$ \[\sigma\circ \tau=\Spec(\tau\sigma)=\tau\sigma: \Spec(B)\to \Spec(B).\]

\section{Stacks} 

\subsection{} Let $C$ be a site (the examples we have in mind are $C=\Aff$, $\IndAff$, $\Sh$). A fibred category over $C$ is a category $\mc{X}$ equipped with a functor \[p: \mc{X}\to C\] together with, for each morphism $f: S'\to S$ of $C$, a pull-back functor $f^*: \mc{X}(S)\to \mc{X}(S')$ where $\mc{X}(S)$ denotes the fibre of $p$ over $S$ (objects of $\mc{X}$ mapping to $S$ and morphisms those mapping to $\id_S$) together with various natural transformations between their compositions satisfying certain identities. There is also the notion of a morphism of fibred categories $f: \mc{X}'\to \mc{X}$ being a functor (strictly) compatible with the functors from $\mc{X}'$ and $\mc{X}$ to $C$ together with certain compatibility relations between the pull-back functors of $\mc{X}'$ and $\mc{X}$. Finally, a stack over $C$ is a fibred category whose objects and morphisms satisfy descent with respect to the topology on $C$.

To keep things (relatively) concrete we shall say the following for $C=\Aff$ with the fpqc topology --- it will also apply when working with other sites $C$.

The main point we want to make is that when defining stacks $\mc{X}\to \Aff$ we shall often skip the details and define only the fibres $\mc{X}(S)$ for $S\in \Aff$ and the pull-back functors $f^*:\mc{X}(S)\to \mc{X}(S')$ for $f: S'\to S$ in $\Aff$ (and sometimes not even these when they are clear). Similarly a morphism of stacks will be defined only on the fibres, the various compatibilities which must be satisfied will be obvious from the context.

\subsection{} To each sheaf $X$ there is an associated stack $\Aff_{/X}\to \Aff$ and for each morphism $f: X\to Y$ a morphism \[\Aff_{/X}\to \Aff_{/Y}: (S\to X)\mto (S\to X\stackrel{f}{\to} Y).\] Moreover, every morphism of stacks \[\Aff_{/X}\to \Aff_{/Y}\] is uniquely isomorphic to one of this form. In other words, when considering sheaves as the more general objects stacks, via $X\mto \Aff_{/S}$, we do not lose or gain anything especially important and so when considering a sheaf as a stack we shall continue to denote it by $X$.

\subsection{} \label{def:coarse-sheaf} Finally, if $\mc{X}\to \Aff$ is a stack and $S\in \Aff$ we write $\mc{X}(S)/\sim$ for the set of isomorphism classes of objects of $\mc{X}(S)$. The pull-back maps $f^*: \mc{X}(S)\to \mc{X}(S')$ for $f: S'\to S$ define maps $\mc{X}(S)/\sim\to \mc{X}(S')/\sim$ and this defines a (separated) pre-sheaf \[\Aff^{\circ}\to \Set: S\mto \mc{X}(S)/\sim\] whose sheafification $C(\mc{X})=X$ (which will be denoted by the print form of the cursive letter denoting the stack) is called the coarse sheaf associated to $\mc{X}$. There is an induced morphism $c_{\mc{X}}: \mc{X}\to X$ and for each sheaf $Y$ and each morphism $f:\mc{X}\to Y$ there is a unique morphism $f': X\to Y$ such that $f'\circ c_{\mc{X}}=f$.

\section{Dedekind domains} Here we fix some general notation for Dedekind domains and various related objects.

\subsection{} Let $O$ be a Dedekind domain with field of fractions $K$ and finite residue fields. An integral ideal of $O$ is any non-zero ideal and a prime ideal will mean a non-trivial prime ideal. We write $\Id_{O}$ (resp. $\Prin_O$) for the monoid of integral (resp. and principal) ideals of $O$ and $\Id_K$ (resp. $\Prin_K$) for the group of fractional (resp. principal) ideals of $O$. If $\a, \b\in \Id_{O}$ we write $(\a, \b)=\a+\b$ so that $\a$ and $\b$ are relatively prime if and only if $(\a, \b)=O$. We write $\CL_{O}$ for the class group of $O$, i.e.\ the group of isomorphism classes of rank one projective $O$-modules.

Let $\f$ be an integral ideal. We write $\Id_O^{(\f)}\subset \Id_O$ (resp.\ $\Id_{K}^{(\f)}\subset \Id_K$) for the sub-monoid (resp.\ sub-group generated) by the prime ideals prime to $\f$. If $a\in K^\times$ then we say $a=1\bmod \f$ to mean that the ideal $(a-1)$ can be written as $\f\a\b^{-1}$ where $\a, \b\subset O$ are ideals (if $a=1$ we allow $\a=(0)$) with $\b\neq (0)$ and $(\b, \f)=O$. If $\f|\g$ is another ideal then we write $\Prin_{1\bmod \f}^{(\g)}$ to denote the group of principal fractional ideals $\a=(a)$ prime to $\g$ with $a=1\bmod \f$.

We write $N\f$ for the cardinality of $O/\f$ and if $\f=\p$ is prime we write $\F_\p=O/\p$. If $A$ is an $\F_\p$-algebra then we write $\Fr^{N\p}: A\to A$ for the $N\p$-power Frobenius endomorphism.

We write $O^{\times, \f}$ for the kernel of the homomorphism $O^{\times}\to (O/\f)^\times$ and we say that $\f$ separates units if this homomorphism is injective (this is not often the case, but will be used constantly in the text).

We write $O[\f^{-1}]$ for the sub-$O$-algebra of $K$ generated by the elements of $\f^{-1}\subset K$. If $X\to\Spec(O)$ is any $\Spec(O)$-sheaf then we write $X[\f^{-1}]=X\times_{\Spec(O)}\Spec(O[\f^{-1}])$. We say that $\f$ is invertible on $X$ if $X[\f^{-1}]=X$. If $\p$ is a prime ideal we say that $X$ has characteristic $\p$ if the structure map $X\to \Spec(O)$ factors through $\Spec(\F_\p)\to \Spec(O)$.

If $\p$ is a prime we write \[\Spf_\p(O)=\colim_{n\geq 0} \Spec(O/\p^{n+1})\subset \Spec(O).\] We say that $X$ is $\p$-adic, or that $\p$ is nilpotent on $S$, if the structure morphism $X\to \Spec(O)$ factors through $\Spf_\p(O)\subset \Spec(O)$. For an affine $\Spec(O)$-scheme $\Spec(A)$ to be $\p$-adic is equivalent to the ideal $\p A$ being nilpotent.
\newpage\null\thispagestyle{empty}\newpage

\chapter{Local and global reciprocity}
In \S\S 1.1 and 1.3 of this chapter we recall the local and global reciprocity maps of class field theory. In \S 1.2 we define Lubin--Tate $O$-modules (in families) for a non-archimedian local field $K$ with ring of integers $O$ and show that the moduli stack $\mc{M}_\LT$ of Lubin--Tate $O$-modules is a torsor under the stack $\mc{CL}_O$ of rank one $O$-local systems (\ref{prop:lubin-tate-torsor}). We show how using this structure one can derive the local reciprocity map directly from the stack $\mc{M}_\LT$ using only the formal properties of Lubin--Tate $O$-modules (\ref{theo:main-theorem-lubin-tate}). The construction we give is an analogue, for non-archimedian local fields, of the derivation of the global reciprocity map for imaginary quadratic fields using CM elliptic curves (which we address in Chapter 2). In \S 1.4 we recall how, for global fields $K$ equipped with a certain special place $\infty$ and associated ring of integers $O_K$, the reciprocity map associated to the maximal abelian extension of $K$ which is totally split at $\infty$ can be reinterpreted in terms of certain class groups associated to $O_K$ (such pairs $(K, \infty)$ were first considered by Drinfel'd \cite{Drinfeld74}). Finally, in \S 1.5 we define and give the basic properties of rank one $O_K$-local systems, their level-$\f$ structures and moduli for use in the later chapters.

The only new results (or perhaps observations) in this chapter are the $\mc{CL}_O$-torsor structure on $\mc{M}_\LT$ (which in any case we prove using results of Faltings) and the derivation of the local reciprocity map directly from $\mc{M}_\LT$.

\section{The local reciprocity map} The purpose of this section is to recall the basic properties of local fields and the local reciprocity map. Everything here is contained in Chapters I and VI of \cite{CasselsFrohlich67}).

\subsection{} Let $K$ be a local field. There are two cases and for each we fix the following notation:
\begin{enumerate}[label=\textup{(\roman*)}]
\item $K$ is archimedian and is isomorphic to $\R$ or $\C$. We write $|-|_K$ for the usual absolute value if $K\isomto \R$ and the square of the usual absolute value if $K\isomto \C$.
\item $K$ is non-archimedian and is the field of fractions of a discrete valuation ring $O\subset K$ with maximal ideal $\p$ and finite residue field $\F_\p$. We equip it with the absolute value $|a|_K=N \p^{-v_\p(a)}$ where $a\cdot O=\p^{v_\p(a)}\subset K$).
\end{enumerate} We also fix maximal abelian and separable extensions $K\subset K^\ab\subset K^\sep$. If $K$ is non-archimedian these extensions are not complete but from the point of view of Galois theory nothing is lost.
\subsection{} Let $K$ be a non-archimedian local field and let $K\subset L\subset K^\sep$ be a (not necessarily finite) Galois extension of $K$. Write $O_L\subset L$ for the ring of integers of $L$, $\p_L\subset K$ for its unique maximal ideal and $\F_{\p_L}=O_L/\p_L$ for its residue field. The reduction homomorphism $G(L/K)\to G(\F_{\p_L}/\F_\p)$ is surjective and is bijective whenever $L/K$ is unramified. In this case we write $\sigma_{L/K}\in G(L/K)$ for the unique element lifting the $N\p$-power Frobenius automorphism of $\F_{\p_L}$.
We denote by $K\subset K^\ur\subset K^\sep$ the maximal unramified extension of $K$. The map \[\widehat{\Z}\to G(K^\ur/K): n\mto \sigma_{K^\ur/K}^{n}\] is a continuous isomorphism whose inverse is denoted by $v_K: G(K^\ur/K)\to \widehat{\Z}$. If $L/K^\ur$ is any extension we also write $v_K: G(L/K)\to \widehat{\Z}$ for the map $\sigma\mto v_K(\sigma|_{K^\ur})$ and define $W(L/K):=v_K^{-1}(\Z)\subset G(L/K)$ (this is the `Weil group').
\begin{theo}\label{theo:archimedian-local-reciprocity} There is a unique isomorphism \begin{equation} K^\times \to W(K^\ab/K): a\mto (a, K^\ab/K)\label{eqn:local-rec-map}\end{equation} such that
\begin{enumerate}[label=\textup{(\roman*)}]
\item for all finite extensions $K\subset L\subset K^\ab$ , the kernel of the composition $a\mto (a, K^\ab/K)|_{L}$ is $N_{L/K}(L^\times)$ and the induced map \[K^\times/N_{L/K}(L^\times)\to G(L/K)\] is an isomorphism, and
\item the diagram \[\xymatrix{K^\times\ar[d]_{v_\p}\ar[rr]^-{(-, K^\ab/K)} && \ar[d] W(K^\ab/K)\\
\Z \ar[rr]^-{n\mto \sigma_{K^\ur/K}^n} && W(K^\ur/K)}\] commutes.
\end{enumerate}
\end{theo}
\begin{proof} Uniqueness and existence are Proposition 6, \S 2.8 of Chapter VI and Theorem 2, \S 2.2 Chapter VI of \cite{CasselsFrohlich67} respectively.
\end{proof}

\subsection{}\label{rema:local-rec-properties} We list the following further property of the reciprocity homomorphism (see \S 2 Chapter VI of \cite{CasselsFrohlich67}): If $K'/K$ is any finite separable extension and $K'^\ab/K'$ a maximal abelian extension of $K'$, then the diagram \[\xymatrix{K'^\times\ar[rr]^-{(-, K'^\ab/K')}\ar[d]_{N_{K'/K}} && W(K'^\ab/K')\ar[d]\\
K^\times\ar[rr]^-{(-, K^\ab/K)} && W(K^\ab/K)}\] commutes.

\subsection{} We also recall the reciprocity map $(-, K^\ab/K)$ associated to an archimedian local field $K$. It is the unique continuous homomorphism \[(-, K^\ab/K):K^\times\to G(K^\ab/K)\] sending $-1$ to the unique generator of $G(K^\ab/K)$ which is either trivial (if $K\isomto \C$) or cyclic of order two (if $K\isomto \R$).
\section{The local reciprocity map and Lubin--Tate $O$-modules}\label{sec:lubin-tate-modules} We now define rank one $O$-local systems over $\p$-adic sheaves and their moduli stack $\mc{CL}_O$. We then define and study families of Lubin--Tate $O$-modules over $\p$-adic sheaves and show that $\mc{CL}_O$ acts on the moduli stack $\mc{M}_\LT$ of Lubin--Tate $O$-modules, making it a torsor under $\mc{CL}_O$ (see (\ref{prop:lubin-tate-torsor}) and (\ref{exem:lubin-tate-modules-exist})). Finally, we explain in (\ref{subsec:lubin-tate-reciprocity}) how this action, combined with the basic properties of Lubin--Tate $O$-modules, allows one to construct the reciprocity map of (\ref{theo:archimedian-local-reciprocity}).

\subsection{} We shall be working with the category of $\p$-adic sheaves, i.e.\ sheaves $S\to \Spf(O)=\colim_n \Spec(O/\p^{n+1})$. If $S\to \Spf(O)$ is a $\p$-adic sheaf then we write $S_n=S\times_{\Spf(O)}\Spec(O/\p^{n+1})$ so that $S=\colim_{n\geq 0} S_n$. Unless otherwise stated $S$ denotes a $\p$-adic sheaf.

\subsection{}\label{sub-sec-p-adic-formal} For each $\p$-adic sheaf $S$ we write $\widehat{O}_S$ for the pro-constant sheaf of rings \[\widehat{O}_S:=\lim_n \underline{O/\p^{n+1}}_{S}.\] If $L$ is any finitely generated $O$-module then we write $\widehat{L}_S$ for the pro-constant sheaf of $\widehat{O}_S$-modules \[\widehat{L}_S:=\lim_n \underline{L/\p^{n+1} L}_{S}.\] If $F$ and $G$ are two $\widehat{O}_S$-modules over $S$ we write $F\otimes_{O} G$ for the $\widehat{O}_S$-module $F\otimes_{\widehat{O}_S} G$ and $\underline{\Hom}_S^{O}(F, G)$ for the sheaf of $\widehat{O}_S$-homomorphisms $F\to G$. Moreover, if $G=\widehat{L}_S$ for some finite rank projective $O$-module $L$ we shall just write $F\otimes_{O} L$ for $F\otimes_{\widehat{O}_S} \widehat{L}_S$.

\subsection{} A rank one $O$-local system over a $\p$-adic sheaf $S$ is a sheaf $\mc{L}$ of $\widehat{O}_{S}$-modules with the property that there exists a cover $(S_i\to S)_{i\in I}$ and rank one projective $O$-modules $(L_i)_{i\in I}$ such that $\mc{L}\times_{S}S_i\isomto \underline{\widehat{L}_i}_{S_i}$. We denote by $\mc{CL}_{O}$ the moduli stack of rank one $O$-local systems over $\Sh_{\Spf(O)}$. We list the following (usual) constructions and properties of $O$-local systems:
\begin{enumerate}[label=(\roman*)]
\item The tensor product $\mc{L}\otimes_{O}\mc{L}'$ of two rank one $O$-local systems $\mc{L}$ and $\mc{L}'$ (in the category of $\widehat{O}_S$-modules) is again a rank one $O$-local system.
\item The sheaf of $\widehat{O}_S$-homomorphisms $\underline{\Hom}_{S}^{O}(\mc{L}, \mc{L}')$ is again a rank one $O$-local system and defining $\mc{L}^\vee:=\underline{\Hom}_S^{O}(\mc{L},\widehat{O}_S)$ we have $\underline{\Hom}_{S}^{O}(\mc{L}, \mc{L}')\isomto \mc{L}'\otimes_{O}\mc{L}^\vee$.
\item The sheaf of automorphisms $\underline{\Aut}_S^{O}(\mc{L})$ of a rank one $\widehat{O}_S$-local system $\mc{L}$ is isomorphic to $\widehat{O}_S^\times:=\lim_{n}\underline{(O/\p^{n+1})^\times}_S$.
\item The sheaf of $\widehat{O}_S$-isomorphisms $\underline{\Isom}_S(\mc{L}, \mc{L}')$ is pro-finite and \'etale over $S$ and is an $\widehat{O}^\times_S$-torsor over $S$. We denote by $\rho_{\mc{L}/S}\in H^1(S, \widehat{O}^\times_S)$ the class of the torsor $\underline{\Isom}_S(\mc{L}, \widehat{O}_S)$ so that the map \[\mc{L}\mto \rho_{\mc{L}/S}\in H^1(S, \widehat{O}^\times_S)\] defines a bijection between isomorphism classes of rank one $O$-local systems over $S$ and $H^1(S, \widehat{O}^\times_S)$.
\end{enumerate}

\subsection{} Let $\widehat{O}_S\to \mc{O}_{S}$ be the unique homomorphism whose pull-back to $S_n=S\times_{\Spf(O)}\Spec(O/\p^{n+1})$ is \[\widehat{O}_{S_n}\to \underline{O/\p^{n+1}}_{S_n}\to \mc{O}_{S_n}\] where the second map is induced by the structure map $S_n\to \Spec(O/\p^{n+1})$.

A strict formal $O$-module over $S$ is a formal group $F$ over $S$ equipped with the structure of a $\widehat{O}_S$-module which is strict with respect to the homomorphism $\widehat{O}_S\to \mc{O}_S$ (cf.\ (\ref{subsec:strict-formal-modules})). Recall that this means that the two actions of $\widehat{O}_S$ on the $\mc{O}_S$-module $\underline{\Lie}_{F/S}$, coming from the action of $\widehat{O}_S$ on $F$ and the homomorphism $\widehat{O}_S\to \mc{O}_S$, coincide.

If $\mc{L}$ is a rank one $O$-local system over $S$ then $\mc{L}$ (as an $\widehat{O}_S$-module) satisfies condition (P) of (\ref{subsec:sheaves-of-rings}) as locally it is isomorphic to $\widehat{O}_S$. So we may apply (\ref{coro:strict-tensor}) to see that if $F$ is a strict formal $O$-module over $S$ and $\mc{L}$ is a rank one $O$-local system over $S$ then $F\otimes_{O} \mc{L}$ is again a strict formal $O$-module over $S$ (of the same dimension as $F$).

Finally, we define the $\p^n$-torsion of a strict formal $O$-module $F$ over $S$ to be the kernel of the homomorphism \[i_{\p^n}: F\to F\otimes_{O}\p^{-n}\] induced by the inclusion $O\to \p^{-n}$.

\begin{prop}\label{prop:formal-o-mod-frob} Let $F$ be a strict formal $O$-module of dimension one over $S$. Then
\begin{enumerate}[label=\textup{(\roman*)}]
\item locally on $S$ there exists an isomorphism $\rho: F\isomto \widehat{\mathbf{A}}^1_S$ with the property that for all $\zeta\in \mu_{N\p-1}\subset O^\times$ we have $\rho\circ [\zeta]_F=\zeta\cdot \rho$,
\item $\colim_n F[\p^n]=F$, and
\item \label{item:frob-is-p-torsion} $\ker(\Fr_{F_0/S_0}^{N\p^n})\subset F_0[\p^n]$.
\end{enumerate}
\end{prop}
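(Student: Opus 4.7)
The plan is to prove (i) by a Teichm\"uller averaging trick, use (i) to establish (iii), and then deduce (ii) by a nilpotence induction whose base case is (iii).

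For (i), since $F$ is a smooth formal group of relative dimension one, it admits locally on $S$ some coordinate $\rho_0: F \isomto \widehat{\mathbf{A}}^1_S$. Because $N\p - 1 \equiv -1 \bmod \p$, the cardinality of $\mu_{N\p-1} \subset O^\times$ is a unit of $O$, and the averaged coordinate
\[
\rho := \frac{1}{N\p - 1} \sum_{\zeta \in \mu_{N\p-1}} \zeta^{-1} \cdot (\rho_0 \circ [\zeta]_F)
\]
is well defined. Reindexing the sum by $\eta := \zeta\zeta'$ shows that $\rho \circ [\zeta']_F = \zeta' \cdot \rho$ for every $\zeta' \in \mu_{N\p-1}$. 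Strictness of $F$ forces $[\zeta]_F$ to act on $\underline{\Lie}_{F/S}$ as the scalar $\zeta$, so each summand $\zeta^{-1}(\rho_0 \circ [\zeta]_F)$ has the same derivative $\rho_0'(0)$ at the origin; hence $\rho'(0) = \rho_0'(0)$ is a unit, making $\rho$ a formal isomorphism.

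For (iii), I work in a $\mu_{N\p-1}$-equivariant coordinate $\rho$ on $F_0$ supplied by (i) and expand $[\pi]_{F_0}(x) = \sum_{k \geq 1} a_k x^k$. The commutativity $[\pi]_{F_0} \circ [\zeta]_{F_0} = [\zeta]_{F_0} \circ [\pi]_{F_0}$ becomes $a_k(\zeta^k - \zeta) = 0$ for every $\zeta \in \mu_{N\p-1}$, forcing $a_k = 0$ unless $k \equiv 1 \bmod (N\p - 1)$. Strictness identifies $a_1$ with the image of $\pi$, which vanishes on $S_0$, so $[\pi]_{F_0}(x) \in (x^{N\p})$; iterating yields $[\pi^n]_{F_0}(x) \in (x^{N\p^n})$, which is exactly the statement that $[\pi^n]_{F_0}$ vanishes on $\ker(\Fr^{N\p^n}_{F_0/S_0})$.

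For (ii), I induct on the smallest $N$ with $\p^N \mc{O}_T = 0$ for an affine test scheme $T \to S$. If $N = 1$, a point $x \in F(T)$ corresponds under $\rho$ to a nilpotent section with $x^M = 0$ for some $M$, and choosing $n$ with $N\p^n \geq M$ gives $x \in \ker(\Fr^{N\p^n}_{F_0/S_0}) \subset F_0[\p^n]$ by (iii). For $N \geq 2$, let $T' \hookrightarrow T$ be cut out by $I := \p^{N-1}\mc{O}_T$; the inductive hypothesis provides $n$ for which $y := [\pi^n]_F(x)$ vanishes on $T'$, so its coordinate lies in $I$. Since $\pi \cdot I = 0$ and $I^2 \subset \p^{2N-2}\mc{O}_T = 0$, the expansion $[\pi]_F(y) = \pi y + \sum_{k \geq 2} a_k y^k$ collapses to zero, whence $[\pi^{n+1}]_F(x) = 0$. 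I expect the main obstacle to be (iii), where the $\mu_{N\p-1}$-equivariance of the coordinate must be combined delicately with strictness to kill every subleading coefficient of $[\pi]_{F_0}$; once that is secured, both (i) and (ii) become essentially formal.
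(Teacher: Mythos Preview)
Your proof is correct and follows essentially the same strategy as the paper: both use the $\mu_{N\p-1}$-equivariant coordinate (you spell out the averaging argument that the paper simply cites from Lubin) together with strictness to force $[\pi](T)\equiv \pi T \bmod T^{N\p}$, and then iterate. The only cosmetic difference is in (ii), where the paper observes directly that $[\pi^r](T)\equiv 0 \bmod T^{N\p}$ once $\p^r A=0$, rather than inducting on the nilpotency level of $\p$ as you do.
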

\begin{proof} The claims are local on $S$ so that we assume that $S=\Spec(A)$, $\p^r A=(0)$ for some $r\geq 1$ as $S$ is $\p$-adic, and by (\ref{prop:formal-explicit-intrinsic}) we may also assume that $F=\widehat{\mathbf{A}}^1_S$.

(i) This is Lemma 4.1.2 of \cite{Lubin64}.

Before we show (ii) and (iii) let us fix some notation and make some reductions. If $a\in O$ let us write $[a](T)\in A[[T]]$ for the power series defining the multiplication by $a$ map \[a:F= \widehat{\mathbf{A}}^1_S\to F=\widehat{\mathbf{A}}^1_S.\] We also choose a generator $(\pi)=\p$ so that $\ker(\pi^n)=F[\p^n]$ for all $n\geq 0$. Considering the coefficients of the series \[[\pi](T)=c_1 T+c_2 T^2+\cdots \in A[[T]],\] the strictness of the action of $\widehat{O}_S$ shows that $c_1=\pi$ and by (i) we may assume that $[\zeta](T)=\zeta T$ for $\zeta\in \mu_{N\p-1}\subset O^\times$. The relation \[\zeta [\pi](T)=[\zeta]([\pi](T))=[\pi]([\zeta](T))=[\pi](\zeta T)\] for all $\zeta\in \mu_{N\p-1}$ then shows that $c_2=\cdots =c_{N\p-1}=0$. Thus we may assume that \[[\pi](T)=\pi T \bmod T^{N\p}.\] We note that as $\p^r A=(0)$ we have \[[\pi^r](T)=0\bmod T^{N\p}.\]

(ii) It is enough to show that for all $A$-algebras $B$ and $b\in F(\Spec(B))=\widehat{\mathbf{A}}^1_S(\Spec(B))=\mathrm{nil}(B)$ there is an $n$ such that $[\pi^n](b)=0$. However, as $[\pi^r](T)=0\bmod T^{N\p}$ we see that $[\pi^{rm}](b)=0$ whenever $b^{m N\p}=0$ which shows the claim.

(iii) We may assume that $S=S\times_{\Spf(O)} \Spec(O/\p)=S_0$ so that $\p A=0$. Then \[[\pi](T)=c_{N\p} T^{N\p}\bmod T^{N\p+1}\] and hence \[[\pi^n](T)=c_{N\p}^{\frac{N\p^{n}-1}{N\p-1}}T^{N\p^n}\bmod T^{N\p^{n}+1}.\] Thus if $\Spec(B)$ is an affine $\p$-adic $\Spec(A)$-scheme and if $b\in \mathrm{nil}(B)$ satisfies $\Fr_B^{N\p^n}(b)=b^{N\p^n}=0$ we have \[[\pi^n](b)=0\bmod b^{N\p^n}=0\] and therefore $\ker(\Fr_{F/S}^{N\p^n})\subset F[\p^n]$ for all $n\geq 1$.
\end{proof}

\subsection{}\label{subsec:definition-lubin-tate-module} A Lubin--Tate $O$-module over $S$ is a strict formal $O$-module $F$ over $S$ of dimension one (cf.\ (\ref{subsec:dimension-formal-groups})) with the property that the homomorphism $i_\p: F\to F\otimes_{O}\p^{-1}$ is finite locally free of degree $N\p$ (in particular it is affine and faithfully flat). A morphism of Lubin--Tate $O$-modules is just a homomorphism of the underlying $\widehat{O}_S$-modules. We denote by $\mc{M}_{LT}$ the moduli stack of Lubin--Tate $O$-modules over $\Sh_{\Spf(O)}$.

\begin{prop} If $F/S$ is a Lubin--Tate $O$-module and $\mc{L}/S$ is an $O$-local system then $F\otimes_{O}\mc{L}/S$ is a Lubin--Tate $O$-module.
\end{prop}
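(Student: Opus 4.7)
The plan is to reduce the claim to the fact that $F$ itself is a Lubin--Tate $O$-module by trivialising $\mc{L}$ locally. By the definition of Lubin--Tate $O$-module there are two things to check: that $F\otimes_O \mc{L}$ is a strict formal $O$-module of dimension one, and that the canonical homomorphism $i_\p$ on $F\otimes_O \mc{L}$ is finite locally free of degree $N\p$. The first condition is already handled in the discussion preceding the proposition, which invokes (\ref{coro:strict-tensor}) to show that tensoring a strict formal $O$-module by a rank one $O$-local system yields a strict formal $O$-module of the same dimension.

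For the second condition, first observe that being finite locally free of a fixed degree is local on the base for the fpqc topology (see (\ref{prop:affine-fpqc-descent})), so one may pass to a cover trivialising $\mc{L}$. By the definition of a rank one $O$-local system there is a cover $(S_i\to S)_{i\in I}$ together with rank one projective $O$-modules $L_i$ with $\mc{L}|_{S_i}\isomto \widehat{L_i}_{S_i}$. Since $O$ is a discrete valuation ring, every rank one projective $O$-module is free, so one may take $L_i=O$, giving $\mc{L}|_{S_i}\isomto \widehat{O}_{S_i}$.

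After this trivialisation, the canonical isomorphism $(F\otimes_O \mc{L})|_{S_i}\isomto F|_{S_i}$, together with the functoriality of $-\otimes_O \p^{-1}$, identifies the $i_\p$ homomorphism for $F\otimes_O \mc{L}$ over $S_i$ with the $i_\p$ homomorphism for $F|_{S_i}$ (up to rearrangement of tensor factors, $i_\p$ on the product is simply $i_\p^F\otimes_O \id_{\mc{L}}$). Since $F$ is Lubin--Tate, the latter is finite locally free of degree $N\p$, so the same holds on each $S_i$; by fpqc descent the original $i_\p$ has the property on all of $S$. There is no real obstacle in the argument: the whole proposition comes down to the observation that rank one $O$-local systems admit trivialising covers, thanks to $O$ being a discrete valuation ring, and that the formation of $i_\p$ commutes with tensoring in the second factor.
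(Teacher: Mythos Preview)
Your proof is correct and follows essentially the same approach as the paper: invoke (\ref{coro:strict-tensor}) for the strict formal $O$-module structure of dimension one, then check that $i_\p$ is finite locally free of degree $N\p$ by passing to a cover trivialising $\mc{L}$ and reducing to the case of $F$ itself. The paper's proof is simply terser, saying only that the condition on $i_\p$ ``can be checked locally, e.g.\ when $\mc{L}\isomto \widehat{O}_S$''; your observation that $O$ is a discrete valuation ring (so rank one projective $O$-modules are free) is exactly what justifies that the local model is $\widehat{O}_S$ rather than some other $\widehat{L}_S$.
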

\begin{proof} We have that $F\otimes_{O}\mc{L}$ is a strict $O$-module of dimension one by (\ref{coro:strict-tensor}). The homomorphism \[i_\p: F\otimes_{O}\mc{L}\to (F\otimes_{O}\mc{L})\otimes_{O}\p^{-1}\] is finite locally free of degree $N\p$ as this can be checked locally, e.g.\ when $\mc{L}\isomto \widehat{O}_S$, and in this case it is obvious.
\end{proof}

\begin{coro}\label{prop:cm-tensor-homs-lt} For each pair $\mc{L}, \mc{L}'$ of rank one $O$-local systems over $S$ and each pair $F, F'$ of Lubin--Tate $O$-modules over $S$ the natural map \[\underline{\Hom}_S^{O}(F, F')\otimes_{O}\underline{\Hom}_S^{O_K}(\mc{L}, \mc{L}')\to \underline{\Hom}_S^{O}(F\otimes_{O_K}\mc{L}, F'\otimes_{O_K}\mc{L}')\] is an isomorphism.
\end{coro}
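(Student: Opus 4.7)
The plan is to reduce to the case where both rank one local systems are trivial by passing to a suitable cover, in which case the claim becomes a formal identity for Serre's tensor product.

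First, since the source and target of the natural map are both fpqc sheaves on $S$, and since the formation of $F \otimes_O \mc{L}$ as a strict formal $O$-module commutes with pull-back (by the very construction of Serre's tensor product, treated in the appendix), the assertion that the natural map is an isomorphism is local on $S$. I would therefore choose a single cover $(S_i \to S)_{i \in I}$ of $S$ trivializing both $\mc{L}$ and $\mc{L}'$ simultaneously, so that over each $S_i$ one has $\mc{L}|_{S_i} \isomto \widehat{L_i}_{S_i}$ and $\mc{L}'|_{S_i} \isomto \widehat{L'_i}_{S_i}$ for rank one projective $O$-modules $L_i, L'_i$. Because $O$ is a DVR, every rank one projective $O$-module is free of rank one, so after refining we may assume both $\mc{L}$ and $\mc{L}'$ are isomorphic to $\widehat{O}_S$.

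In this trivialized case, the canonical isomorphisms $F \otimes_O \widehat{O}_S \isomto F$ (and similarly for $F'$) together with $\underline{\Hom}_S^O(\widehat{O}_S, \widehat{O}_S) \isomto \widehat{O}_S$ identify the natural map of the statement with the canonical isomorphism
\[\underline{\Hom}_S^O(F, F') \otimes_O \widehat{O}_S \isomto \underline{\Hom}_S^O(F, F'),\]
which is an isomorphism by inspection.

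The main subtlety I expect is bookkeeping rather than substance: one must verify that the natural map constructed from tensoring a homomorphism of Lubin--Tate modules with a homomorphism of local systems really does intertwine, under the chosen trivializations, with the canonical isomorphism above. This is a formal compatibility statement for Serre's tensor product handled in the appendix, and amounts to unwinding the definitions of the four Hom and tensor objects involved.
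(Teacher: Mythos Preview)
Your proposal is correct and essentially matches the paper's approach. The paper simply cites the general appendix result (\ref{prop:tensor-properties-lie-hom})(i), whose proof proceeds by the same reduction you give: localise on $S$ to replace the $\mc{A}$-modules satisfying condition (P) by free ones (there via a direct-summand-of-$\mc{A}^n$ argument, in your case directly since $O$ is a DVR), whereupon the map becomes the identity.
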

\begin{proof} This follows from (\ref{prop:tensor-properties-lie-hom}).
\end{proof}

\begin{lemm}\label{lemm:p-torsion-lifts-frob-in-lubin-tate} Let $S$ be a sheaf of characteristic $\p$, $n\geq 0$ and $F$ be a Lubin--Tate $O$-module over $S$. Then $F[\p^n]=\ker(\Fr^{N\p^n}_{F/S})$.
\end{lemm}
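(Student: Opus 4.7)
The plan is to prove equality by showing that both $\ker(\Fr^{N\p^n}_{F/S})$ and $F[\p^n]$ are finite locally free of the same degree $N\p^n$ over $S$, and then invoke the containment $\ker(\Fr^{N\p^n}_{F/S})\subset F[\p^n]$ already established in Proposition \ref{prop:formal-o-mod-frob}(iii). A closed immersion between finite locally free $S$-schemes of the same degree is automatically an isomorphism (the corresponding surjection of locally free $\mc{O}_S$-modules of equal rank is an isomorphism), so this will conclude the proof.

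First I would analyse $F[\p^n]$. By definition of a Lubin--Tate $O$-module, the map $i_\p:F\to F\otimes_{O}\p^{-1}$ is finite locally free of degree $N\p$. Writing the inclusion $O\hookrightarrow \p^{-n}$ as the composition of the $n$ successive inclusions $\p^{-k}\hookrightarrow \p^{-(k+1)}$ and tensoring with $F$, the map $i_{\p^n}:F\to F\otimes_O\p^{-n}$ factors as a composition of morphisms each obtained from some $i_\p$ by tensoring with an $O$-local system (namely $\p^{-k}$). Since being finite locally free of degree $N\p$ is preserved by such tensor operations (this is local on $S$, and trivialises the $O$-local system), each factor is finite locally free of degree $N\p$, and hence so is $i_{\p^n}$, now of degree $N\p^n$. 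Consequently $F[\p^n]=\ker(i_{\p^n})$ is finite locally free of degree $N\p^n$ over $S$.

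Next I would compute the degree of $\ker(\Fr^{N\p^n}_{F/S})$ working locally on $S$. By Proposition \ref{prop:formal-o-mod-frob}(i), after passing to a cover we may assume $F\isomto \widehat{\mathbf{A}}^1_S=\Spf(\mc{O}_S[[T]])$. Because $S$ has characteristic $\p$ and $N\p^n$ is a power of the residue characteristic, the relative $N\p^n$-power Frobenius $\Fr^{N\p^n}_{F/S}:F\to F^{(N\p^n)}$ is given on formal coordinates by $T\mapsto T^{N\p^n}$. Thus its kernel is locally isomorphic to $\Spec(\mc{O}_S[T]/(T^{N\p^n}))$, which is finite locally free of degree $N\p^n$ over $S$.

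I do not expect serious obstacles here; the only subtle point is the bookkeeping showing that finite locally free degrees multiply under iterating $i_\p$, which is essentially formal once one uses that the relevant pullback is along a morphism that itself is finite locally free. With both degrees in hand and the inclusion from (iii), the equality $F[\p^n]=\ker(\Fr^{N\p^n}_{F/S})$ follows.
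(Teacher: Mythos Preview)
Your proposal is correct and follows essentially the same approach as the paper: use the inclusion $\ker(\Fr^{N\p^n}_{F/S})\subset F[\p^n]$ from Proposition~\ref{prop:formal-o-mod-frob}(iii), show both sides are finite locally free of degree $N\p^n$ (working locally so that $F\isomto\widehat{\mathbf{A}}^1_S$), and conclude that the closed immersion is an isomorphism. You spell out the degree computations for $i_{\p^n}$ and for the Frobenius kernel more explicitly than the paper does, but the argument is the same.
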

\begin{proof} We may work locally on $S$ and so assume that $S=\Spec(A)$ and that $F =\widehat{\mathbf{A}}^1_S$. By (iii) of (\ref{prop:formal-o-mod-frob}) we have $\ker(\Fr^{N\p^n}_{F/S})\subset F[\p^n]$. As $F = \widehat{\mathbf{A}}^1_S$, it follows that $\ker(\Fr^{N\p^n}_{F/S})$ is finite locally free of rank $N\p^n$ over $S$. As $F[\p^n]$ is also finite locally free of rank $N\p^n$ the closed immersion $\ker(\Fr^{N\p^n}_{F/S})\subset F[\p^n]$ must be an isomorphism.
\end{proof}

\begin{coro}\label{coro:tensor-p-equals-frobenius-lubin-tate} For each $n\geq 0$ there is a unique isomorphism of functors \[\nu_{\p^n}: -\otimes_{O}\p^{-n} \isomto \Fr^{N\p^n*}(-)\] on $\mc{M}_{LT}\times_{\Spf(O)}\Spec(\F_\p)$ such that for all Lubin--Tate $O$-modules $F$ over characteristic $\p$-sheaves $S$ the diagram \[\xymatrix{&F\ar[dr]^{\Fr_{F/S}^{N\p^n}}\ar[dl]_-{i_{\p^n}}&\\
F\otimes_{O}\p^{-n} \ar[rr]^{\nu_{\p^n}}_\sim && \Fr^{N\p*}(F)}\] commutes.
\end{coro}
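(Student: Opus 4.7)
The plan is to produce $\nu_{\p^n}$ by recognising both $F\otimes_O\p^{-n}$ and $\Fr^{N\p^n*}(F)$ as the quotient of $F$ by its $\p^n$-torsion. Given a Lubin--Tate $O$-module $F$ over a characteristic-$\p$ sheaf $S$, the map $i_{\p^n}: F\to F\otimes_O\p^{-n}$ is, by iteration of the defining property in (\ref{subsec:definition-lubin-tate-module}), finite locally free of degree $N\p^n$ and hence faithfully flat, with kernel equal to $F[\p^n]$ by the very definition of the $\p^n$-torsion. On the other hand, since $F$ is (locally on $S$) isomorphic to $\widehat{\mathbf{A}}^1_S$, the relative Frobenius $\Fr^{N\p^n}_{F/S}:F\to \Fr^{N\p^n*}(F)$ is also finite locally free of degree $N\p^n$, and its kernel is precisely $F[\p^n]$ by (\ref{lemm:p-torsion-lifts-frob-in-lubin-tate}).

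Hence both $i_{\p^n}$ and $\Fr^{N\p^n}_{F/S}$ realise $F\otimes_O\p^{-n}$ and $\Fr^{N\p^n*}(F)$ as the fppf quotient $F/F[\p^n]$. The universal property of this quotient (applied in the category of sheaves of $\widehat{O}_S$-modules, where both targets naturally live) yields a unique $\widehat{O}_S$-linear morphism $\nu_{\p^n}:F\otimes_O\p^{-n}\to \Fr^{N\p^n*}(F)$ with $\nu_{\p^n}\circ i_{\p^n}=\Fr^{N\p^n}_{F/S}$, and the symmetric argument produces an inverse. The uniqueness statement in the corollary is then immediate: $i_{\p^n}$ is an epimorphism of fppf sheaves, so any $\nu_{\p^n}$ making the displayed triangle commute is determined.

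For the functoriality in $F$, given a morphism $f:F\to F'$ of Lubin--Tate $O$-modules over $S$, the square comparing $\nu_{\p^n}$ for $F$ and $F'$ along $f$ is characterised by its behaviour after precomposition with $i_{\p^n}:F\to F\otimes_O\p^{-n}$, and both routes through the square then reduce, by naturality of $i_{\p^n}$ and of Frobenius, to the single morphism $\Fr^{N\p^n*}(f)\circ \Fr^{N\p^n}_{F/S}=\Fr^{N\p^n}_{F'/S}\circ f$. This gives an isomorphism of functors on $\mc{M}_{LT}\times_{\Spf(O)}\Spec(\F_\p)$.

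The only slightly delicate point is the identification of $F\otimes_O\p^{-n}$ with the fppf quotient $F/F[\p^n]$ (rather than a merely abstract quotient): here one uses that $i_{\p^n}$ is finite locally free and that the formation of $-\otimes_O\p^{-n}$ commutes with base change, so the identification may be checked locally where $\p^{-n}\cong O$ and $i_{\p^n}$ becomes multiplication by $\pi^n$; everything else in the argument is then formal. I expect this bookkeeping with the tensor product to be the only real obstacle; the heart of the statement is simply that two finite locally free quotients of $F$ with the same kernel must coincide.
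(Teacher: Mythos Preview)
Your proof is correct and follows the same approach as the paper: both $i_{\p^n}$ and $\Fr^{N\p^n}_{F/S}$ are epimorphisms of abelian sheaves with the same kernel $F[\p^n]$ (by (\ref{lemm:p-torsion-lifts-frob-in-lubin-tate})), so they identify their targets with the common quotient $F/F[\p^n]$. The paper's proof is a single sentence to this effect; you have simply spelled out the functoriality and uniqueness, and your worry in the last paragraph is unnecessary --- since $i_{\p^n}$ is finite locally free (hence faithfully flat, hence an epimorphism of fppf sheaves), the identification $F\otimes_O\p^{-n}\cong F/F[\p^n]$ is automatic from the first isomorphism theorem and needs no local check.
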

\begin{proof} For any Lubin--Tate $O$-module $F/S$ the two homomorphisms $i_{\p^n}$ and $\Fr_{F/S}^{N\p}$ are epimorphisms with the same kernel (\ref{lemm:p-torsion-lifts-frob-in-lubin-tate}) and the claim follows.
\end{proof}

\begin{rema} The result (\ref{coro:tensor-p-equals-frobenius-lubin-tate}) can be read as saying that the moduli stack $\mc{M}_\LT$ admits an endomorphism $-\otimes_{O}\p^{-1}: \mc{M}_\LT\to \mc{M}_\LT$ which (upto canonical isomorphism) lifts the $N\p$-power Frobenius endomorphism. This kind of structure is very closely related to the notions of $\Lambda$-structures, Witt vectors and arithmetic jets (due to Borger and Buium) which we define and study in Chapter 3. It is essentially the topic of Chapter 4 to study and exploit this relationship in the context of CM elliptic curves (for which we prove an analogue of (\ref{coro:tensor-p-equals-frobenius-lubin-tate}) in Chapter 2, see (\ref{coro:tensor-p-equals-frobenius})).
\end{rema}

\begin{prop}\label{prop:lubin-tate-faltings} \begin{enumerate}[label=\textup{(\roman*)}] 
\item If $F$ is a Lubin--Tate $O$-module over $S$ the natural homomorphism \[\widehat{O}_S\to \underline{\End}_S^O(F)\] is an isomorphism.
\item If $F, F'$ are a pair of Lubin--Tate $O$-modules over $S$ then $\underline{\Hom}_S^{O}(F, F')$ is an $O$-local system over $S$ and the evaluation homomorphism \[F\otimes_{O}\underline{\Hom}_S^{O}(F, F')\to F'\] is an isomorphism.
\end{enumerate}
\end{prop}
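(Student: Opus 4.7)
Plan: Both assertions are local on $S$ in the fpqc topology, so by (\ref{prop:formal-o-mod-frob})(i) I may assume $S = \Spec(A)$ with $\p^r A = 0$ for some $r$ and that $F \cong \widehat{\mathbf{A}}^1_S$ with $\mu_{N\p-1} \subset O^\times$ acting by scalars. The spine of the argument is the filtration $F = \colim_n F[\p^n]$ from (\ref{prop:formal-o-mod-frob})(ii). Iterating the hypothesis that $i_\p$ is finite locally free of degree $N\p$, each $i_{\p^n} : F \to F\otimes_O \p^{-n}$ is finite locally free of degree $N\p^n$, so $F[\p^n]$ is a finite locally free $S$-group scheme of rank $N\p^n$ carrying a natural action of $O/\p^n$, and $\underline{\Hom}_S^O(F, F') = \lim_n \underline{\Hom}_S^O(F[\p^n], F'[\p^n])$.

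For (i), the claimed map factors as the inverse limit of the natural maps $\underline{O/\p^n}_S \to \underline{\End}_S^O(F[\p^n])$, and it suffices to show each of these is an isomorphism. This is the content of Faltings' generalised Cartier duality (reviewed in the appendix) applied to the $O$-module scheme $F[\p^n]$: the defining property of a Lubin--Tate $O$-module forces $F[\p^n]$ to be a ``rank one'' object for this duality, whose $O$-linear endomorphism ring is therefore forced to be precisely the scalar ring $O/\p^n$. Passing to the limit in $n$ recovers $\widehat{O}_S$.

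For (ii), once (i) is granted, $\underline{\Hom}_S^O(F, F')$ is automatically a module over $\widehat{O}_S$, and to see that it is a rank one $O$-local system and that the evaluation map is an isomorphism it is enough to exhibit, fpqc locally on $S$, an $O$-linear isomorphism $F \isomto F'$. By the classical Lubin--Tate existence theorem, any two one-dimensional formal $O$-modules become isomorphic over a sufficiently large extension of the residue field, and the rigidity from (i) ensures that such an isomorphism lifts uniquely from $S_0$ to each truncation $S_n$, hence to $S$ by $\p$-adic completeness (or by another application of Faltings duality to the Hom scheme between the $F[\p^n]$'s and $F'[\p^n]$'s). Under any such trivialisation, $\underline{\Hom}_S^O(F,F') \cong \underline{\End}_S^O(F) \cong \widehat{O}_S$, and the evaluation map is identified with the canonical isomorphism $F \otimes_O \widehat{O}_S \isomto F$ followed by the chosen isomorphism $F \cong F'$.

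The main obstacle is step (i): computing $\underline{\End}_S^O(F[\p^n]) = \underline{O/\p^n}_S$. The weaker fact that $F[\p^n]$ is finite locally free of rank $N\p^n$ with an $O/\p^n$-action does not by itself preclude extra $O$-linear endomorphisms, and ruling them out is exactly what Faltings' generalised Cartier duality for finite flat $O$-module schemes is designed to accomplish. Once (i) is in hand, both the local-system statement and the isomorphy of the evaluation map in (ii) reduce to the classical rigidity and existence results for Lubin--Tate modules together with (\ref{prop:cm-tensor-homs-lt}).
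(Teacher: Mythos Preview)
Your approach and the paper's both rest on Faltings' generalised Cartier duality, but the organisation differs in a way worth noting. The paper (in the appendix, \S\ref{appendix:faltings-strict-modules}) first proves that $F \mapsto D_\pi(F) := \lim_n \underline{\Hom}_S^{O,\str}(F[\p^n], F_\pi[\p^n])$ is an anti-equivalence between Lubin--Tate $O$-modules over $S$ and rank one $O$-local systems over $S$, with quasi-inverse $\mc{L} \mapsto F_{\pi/S} \otimes_O \mc{L}^\vee$. With that equivalence in hand, (i) is a one-liner: $\underline{\End}_S^O(F) \cong \underline{\End}_S^O(D_\pi(F)) \cong \widehat{O}_S$, the last isomorphism being obvious for a rank one local system. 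Likewise (ii): every $F'$ is \emph{globally} of the form $F \otimes_O \mc{L}$ (take $\mc{L} = D_\pi(F) \otimes_O D_\pi(F')^\vee$), so (\ref{prop:cm-tensor-homs-lt}) gives $\underline{\Hom}_S^O(F,F') \cong \mc{L}$ and the evaluation map becomes the identity --- no localisation or lifting argument is needed.

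Your level-by-level route for (i) has a subtlety you do not address: Faltings' duality is an equivalence for \emph{strict} homomorphisms, so what it computes directly is $\underline{\End}_S^{O,\str}(F[\p^n])$, not $\underline{\End}_S^O(F[\p^n])$. The paper sidesteps this by observing once that every $O$-homomorphism between Lubin--Tate modules is strict (a consequence of formal smoothness, which supplies the required lifts to the thickenings) and then working at the ind-level; if you stay at finite level you must either prove strictness for maps between the $F[\p^n]$ separately, or note that only the inverse limit $\underline{\End}_S^O(F)$ matters and that this coincides with $\lim_n \underline{\End}_S^{O,\str}(F[\p^n])$. For (ii), your local-trivialisation-plus-lifting argument is viable once (i) is established, but what you call the ``classical Lubin--Tate existence theorem'' is really the classification of height-one formal $O$-modules over an algebraically closed field; the paper's global anti-equivalence gives the same conclusion without passing to geometric points at all.
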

\begin{proof} The proof of these statements is an application of Faltings' generalised Cartier duality \cite{Faltings02}. It would take us too far afield to give the proof here and so we defer it to the appendix (see (i) and (ii) of (\ref{coro:lubin-tate-faltings})).
\end{proof}

\begin{prop}\label{prop:lubin-tate-torsor} The functor \[\mc{M}_{LT}\times \mc{CL}_{O}\to \mc{M}_{LT}\times \mc{M}_{LT}: (F,\mc{L})\mto (F, F\otimes_{O}\mc{L})\] is an equivalence of stacks.
\end{prop}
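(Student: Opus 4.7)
The strategy is to exhibit an explicit quasi-inverse
\[\Phi\colon \mc{M}_{LT}\times \mc{M}_{LT}\longrightarrow \mc{M}_{LT}\times \mc{CL}_{O},\qquad (F, F')\longmapsto (F,\, \underline{\Hom}_S^{O}(F, F')).\]
This is well defined on objects because part (ii) of (\ref{prop:lubin-tate-faltings}) asserts that $\underline{\Hom}_S^{O}(F, F')$ really is a rank one $O$-local system, and it is manifestly compatible with base change. Writing $\Psi$ for the functor in the statement, the task then reduces to producing natural isomorphisms $\Psi\circ \Phi\cong \id$ and $\Phi\circ \Psi\cong \id$.

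For $\Psi\circ \Phi\cong \id$ the evaluation homomorphism of (\ref{prop:lubin-tate-faltings})(ii) yields a canonical isomorphism $F\otimes_O \underline{\Hom}_S^O(F, F')\isomto F'$, and hence $\Psi(\Phi(F, F'))=(F,F\otimes_O \underline{\Hom}_S^O(F, F'))$ is canonically identified with $(F, F')$.

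For $\Phi\circ \Psi\cong \id$, the required input is a natural isomorphism $\underline{\Hom}_S^O(F, F\otimes_O \mc{L})\isomto \mc{L}$. Applying (\ref{prop:cm-tensor-homs-lt}) with second Lubin--Tate module taken to be $F$ itself and the two local systems taken to be $\widehat{O}_S$ and $\mc{L}$ gives
\[\underline{\Hom}_S^O(F, F)\otimes_O \underline{\Hom}_S^O(\widehat{O}_S, \mc{L})\isomto \underline{\Hom}_S^O(F, F\otimes_O \mc{L}),\]
and by (\ref{prop:lubin-tate-faltings})(i) the left side collapses to $\widehat{O}_S\otimes_O \mc{L}=\mc{L}$.

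The argument is essentially tautological once (\ref{prop:lubin-tate-faltings}) is granted: it is the one genuinely non-trivial input, deduced in the appendix from Faltings' generalised Cartier duality, and (\ref{prop:cm-tensor-homs-lt}) is a formal consequence. The only point needing care is verifying that the two natural isomorphisms constructed above are compatible with pullback in $S$, but this is automatic since $\underline{\Hom}$ and $\otimes_O$ both commute with base change, so the equivalence holds at the level of fibered categories rather than merely fiberwise.
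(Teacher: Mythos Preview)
Your proof is correct. The paper's own proof defers to the appendix (part (iii) of (\ref{coro:lubin-tate-faltings})), where the argument runs differently: having established that Faltings' duality $D_\pi$ gives an anti-equivalence between Lubin--Tate $O$-modules and rank one $O$-local systems with quasi-inverse $\mc{L}\mapsto F_{\pi/S}\otimes_O\mc{L}^\vee$, the paper observes that the functor in question can be identified (via this equivalence) with a product of equivalences, hence is an equivalence.

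Your route is instead to construct an explicit quasi-inverse $(F,F')\mapsto(F,\underline{\Hom}_S^O(F,F'))$ and verify the two triangle identities directly from (\ref{prop:lubin-tate-faltings}) and (\ref{prop:cm-tensor-homs-lt}). This is exactly the strategy the paper adopts for the analogous statement about CM elliptic curves (Theorem \ref{theo:mcm-is-a-torsor}), where essential surjectivity comes from the evaluation isomorphism and full faithfulness from the Hom-tensor identification. Both approaches ultimately rest on the Faltings machinery, since (\ref{prop:lubin-tate-faltings}) is itself proved via $D_\pi$; your argument simply packages the consequences differently. The advantage of your version is that it is self-contained once (\ref{prop:lubin-tate-faltings}) is granted and does not require invoking the duality again; the paper's version has the virtue of making transparent that the torsor structure is inherited directly from the trivial torsor structure of $\mc{CL}_O$ over itself.
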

\begin{proof} As with (\ref{prop:lubin-tate-faltings}), the proof of this statement will be given in the appendix (see (iii) of (\ref{coro:lubin-tate-faltings})).
\end{proof}

\begin{rema} It would be preferable to have elementary proofs of  (\ref{prop:lubin-tate-faltings}) and (\ref{prop:lubin-tate-torsor}) which do not rely on the machinery of Faltings' generalised Cartier duality.
\end{rema}

\begin{exem}\label{exem:lubin-tate-modules-exist} Let us now at least tell the reader that there do exist Lubin--Tate $O$-modules. First, if $O=\Z_p$ then the $p$-power torsion in $\mathbf{G}_{\mathrm{m}/\Spf(O)}$: \[\mu_{p^\infty}=\colim_n \mu_{p^n}\subset \mathbf{G}_{\mathrm{m}/\Spf(O)}\] is a Lubin-Tate $\Z_p$-module and it also has the property that the multiplication by $p$ map $p: \mu_{p^\infty}\to \mu_{p^\infty}$ reduces to the $p$-power Frobenius after base change along $\Spec(\F_p)\to \Spf(\Z_p)$.

In fact, for any $O$, given a generator $\pi$ of the prime ideal $\p$, there is a unique (upto isomorphism) Lubin--Tate $O$-module $F_\pi$ over $\Spf(O)$ with the property that the endomorphism $\pi: F_\pi\to F_\pi$ reduces to the $N\p$-power Frobenius map after base change along $\Spec(\F_\p)\to \Spf(O)$ (see \S 3.5 Chapter VI \cite{CasselsFrohlich67}). Moreover, there exists a unique isomorphism $F_\pi\isomto \widehat{\mathbf{A}}^1_{\Spf(O)}$ under which the multiplication by $\pi$ is represented by the power series $[\pi](T)=\pi T+T^{N\p}$.

A consequence of this is that the morphism $\mc{M}_\LT\to \Spf(O)$ admits a section. The statement of (\ref{prop:lubin-tate-torsor}) above can now be interpreted as saying that $\mc{M}_\LT$ is (in a stack theoretic sense) a torsor under the (stack theoretic) group $\mc{CL}_{O}$ albeit a trivial one:

\end{exem}
\begin{coro}\label{coro:lubin-tate-moduli-trivial} Fixing a Lubin--Tate $O$-module $F$ over $\Spf(O)$ the functor \[\mc{CL}_O\to \mc{M}_\LT: \mc{L}/S\mto F_S\otimes_{O}\mc{L}/S\] is an equivalence of stacks.
\end{coro}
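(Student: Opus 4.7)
The statement is a formal consequence of \ref{prop:lubin-tate-torsor} (``any torsor with a global section is trivial''), together with the existence of the global section provided by \ref{exem:lubin-tate-modules-exist}. The plan is to deduce the corollary by pulling back the equivalence of \ref{prop:lubin-tate-torsor} along the chosen $F$.

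First I would view the equivalence $\mu: \mc{M}_{\LT}\times \mc{CL}_{O}\to \mc{M}_\LT\times \mc{M}_\LT$, $(G,\mc{L})\mto (G,G\otimes_O\mc{L})$, of \ref{prop:lubin-tate-torsor} as a morphism of stacks over $\mc{M}_\LT$ via the first projection on either side. The fixed Lubin--Tate module $F$ determines a morphism $F:\Spf(O)\to \mc{M}_\LT$, so we can form the $2$-pullback of $\mu$ along $F$. Since equivalences of stacks are stable under $2$-pullback, the induced functor on pullbacks is again an equivalence.

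Next I would identify the two pulled-back stacks explicitly. On the source side, $\Spf(O)\times_{\mc{M}_\LT}(\mc{M}_\LT\times \mc{CL}_O)$ is canonically equivalent to $\mc{CL}_O$, since a triple $(S\to \Spf(O), (G,\mc{L}), F_S\isomto G)$ is the same datum as a rank one $O$-local system $\mc{L}$ over $S$. On the target side, $\Spf(O)\times_{\mc{M}_\LT}(\mc{M}_\LT\times \mc{M}_\LT)$ is canonically equivalent to $\mc{M}_\LT$ via the second projection, by the same argument. Under these identifications the pulled-back functor reads $\mc{L}/S\mto F_S\otimes_O \mc{L}/S$, which is exactly the functor in the statement.

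There is no real obstacle: the entire content has been packaged into \ref{prop:lubin-tate-torsor} and \ref{exem:lubin-tate-modules-exist}. If one preferred a more hands-on argument avoiding the torsor formulation, one could establish the two halves of the equivalence separately: for essential surjectivity, given a Lubin--Tate $O$-module $G$ over $S$ set $\mc{L}:=\underline{\Hom}_S^O(F_S,G)$, which is a rank one $O$-local system and satisfies $F_S\otimes_O \mc{L}\isomto G$ via evaluation by \ref{prop:lubin-tate-faltings}(ii); for full faithfulness, apply \ref{prop:cm-tensor-homs-lt} with $F=F'=F_S$ and use $\underline{\End}_S^O(F_S)=\widehat{O}_S$ from \ref{prop:lubin-tate-faltings}(i) to obtain $\underline{\Hom}_S^O(\mc{L},\mc{L}')\isomto \underline{\Hom}_S^O(F_S\otimes_O\mc{L},F_S\otimes_O\mc{L}')$.
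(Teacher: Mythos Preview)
Your proposal is correct and matches the paper's approach. The paper gives no explicit proof for this corollary: it is stated immediately after the example noting that $\mc{M}_\LT\to\Spf(O)$ admits a section, with the remark that (\ref{prop:lubin-tate-torsor}) then says $\mc{M}_\LT$ is a trivial $\mc{CL}_O$-torsor---precisely your first argument. Your alternative hands-on route via (\ref{prop:lubin-tate-faltings}) and (\ref{prop:cm-tensor-homs-lt}) is also valid and is essentially how the analogous statement for CM elliptic curves is argued later in (\ref{theo:mcm-is-a-torsor}).
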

\begin{coro}\label{coro:lift-lubin-tate} Let $S_0\to S$ be a nilpotent immersion of $\p$-adic sheaves. The functor \[\mc{M}_{LT}(S)\to \mc{M}_{LT}(S_0): F\mto F_{S_0}\] is an equivalence of categories.
\end{coro}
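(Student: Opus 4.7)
The plan is to deduce this from (\ref{coro:lubin-tate-moduli-trivial}) by transporting the lifting problem from Lubin--Tate $O$-modules to rank one $O$-local systems, where it becomes a statement about the invariance of the \'etale topos under nilpotent thickenings.

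First, fix a Lubin--Tate $O$-module $F$ over $\Spf(O)$ as provided by (\ref{exem:lubin-tate-modules-exist}). Its restriction to $S_0$ is obtained by base change along the composition $S_0\to S\to \Spf(O)$, so the two equivalences of (\ref{coro:lubin-tate-moduli-trivial}) fit into a square
\[
\xymatrix{
\mc{CL}_O(S)\ar[r]^-\sim \ar[d] & \mc{M}_\LT(S) \ar[d]\\
\mc{CL}_O(S_0)\ar[r]^-\sim & \mc{M}_\LT(S_0)
}
\]
which commutes up to canonical isomorphism via $\mc{L}\mapsto F_S\otimes_O \mc{L}$. Hence it suffices to show that the restriction functor $\mc{CL}_O(S)\to \mc{CL}_O(S_0)$ is an equivalence.

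Next, I would unpack this into two statements. For fully faithfulness: given two rank one $O$-local systems $\mc{L},\mc{L}'$ over $S$, the sheaf $\underline{\Hom}_S^O(\mc{L},\mc{L}')$ is again a rank one $O$-local system (property (ii) of rank one local systems), so by taking tensor with $\mc{L}^\vee$ we are reduced to proving that the restriction $\widehat O_S^\times(S)\to \widehat O_{S_0}^\times(S_0)$ on automorphisms is a bijection, and likewise that $H^1(S,\widehat O_S^\times)\to H^1(S_0,\widehat O_{S_0}^\times)$ is a bijection in order to get essential surjectivity (property (iv) of rank one local systems).

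Both bijections follow from the fact that $\widehat O_S^\times=\lim_n \underline{(O/\p^{n+1})^\times}_S$ is a pro-constant \'etale sheaf, combined with the topological invariance of the small \'etale site: since $S_0\to S$ is a nilpotent immersion of $\p$-adic sheaves, the pull-back functor from \'etale sheaves on $S$ to \'etale sheaves on $S_0$ is an equivalence, under which $\underline{(O/\p^{n+1})^\times}_S$ corresponds to $\underline{(O/\p^{n+1})^\times}_{S_0}$. Taking the limit over $n$ identifies global sections and first cohomology of $\widehat O_S^\times$ on the two sides. I expect the one point requiring care is verifying this invariance for sheaves which are not necessarily representable; the standard reduction is to test on affine schemes mapping into $S$, where classical topological invariance of the \'etale site (EGA IV, SGA 1) applies directly and then one reassembles by descent. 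This is the main (and only) obstacle, and it is essentially bookkeeping once one has fixed the conventions for what a nilpotent immersion of sheaves means.
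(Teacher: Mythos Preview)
Your proposal is correct and follows exactly the approach of the paper: the paper's proof reads in its entirety ``This follows from (\ref{coro:lubin-tate-moduli-trivial}) and the corresponding obvious claim for $\mc{CL}_{O}$ (which is a moduli space of pro-\'etale objects).'' You have simply unpacked what the paper calls obvious, namely that rank one $O$-local systems, being pro-finite \'etale, are insensitive to nilpotent thickenings by topological invariance of the \'etale site.
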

\begin{proof} This follows from (\ref{coro:lubin-tate-moduli-trivial}) and the corresponding obvious claim for $\mc{CL}_{O}$ (which is a moduli space of pro-\'etale objects).
\end{proof}

\begin{coro}\label{prop:lubin-tate-hom} Let $f: F\to F'$ be a homomorphism of Lubin--Tate $O$-modules over $S$. Then there is a unique decomposition $S=S_{(0)}\amalg_{0\leq n<\infty} S_{\p^n}$ such that $f_{S_{(0)}}$ is the zero map and such that $\ker(f_{S_{\p^n}})=F_{S_{\p^n}}[\p^n]$ for $0\leq n <\infty$.
\end{coro}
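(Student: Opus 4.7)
My plan is to interpret $f$ as a section of a rank one $O$-local system and then decompose $S$ according to the $\p$-adic valuation of that section. By (\ref{prop:lubin-tate-faltings})(ii), the sheaf $\mc{L} := \underline{\Hom}_S^{O}(F, F')$ is a rank one $O$-local system on $S$, and $f$ corresponds to a global section $\tilde f$ of $\mc{L}$. Since $\mc{L}$ is locally trivial, I would pass to a cover of $S$ on which $\mc{L}\isomto \widehat{O}_S$, and then $\tilde f$ corresponds to a compatible family $(\tilde f_n)_{n \geq 0}$ with $\tilde f_n : S \to \underline{O/\p^{n+1}}_S$.

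Because each $\underline{O/\p^{n+1}}_S$ is the constant sheaf $\coprod_{a \in O/\p^{n+1}} S$, the morphism $\tilde f_n$ partitions $S$ into open-and-closed sub-sheaves indexed by $O/\p^{n+1}$. I would then define $S_{\p^n}$ to be the sub-sheaf where $\tilde f_{n-1}=0$ and $\tilde f_n$ lands in $\p^n/\p^{n+1}\setminus\{0\}\subset O/\p^{n+1}$ (with the convention $\tilde f_{-1}=0$), and $S_{(0)}$ to be the sub-sheaf where $\tilde f_n = 0$ for every $n$. This produces the decomposition locally; since the pieces are intrinsically characterised by the $\p$-adic valuation of $\tilde f$, the definition is independent of the trivialisation and descends to a canonical decomposition $S = S_{(0)}\amalg\coprod_{0\leq n<\infty} S_{\p^n}$.

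To verify the kernel condition, I would work on $S_{\p^n}$ after trivialising $\mc{L}$, so that $\tilde f = \pi^n u$ for a uniformiser $\pi$ and a unit $u \in \widehat{O}^\times(S_{\p^n})$. By (\ref{prop:lubin-tate-faltings})(ii) the evaluation isomorphism $F \otimes_O \mc{L} \isomto F'$, combined with the trivialisation of $\mc{L}$, yields an isomorphism $F_{S_{\p^n}} \isomto F'_{S_{\p^n}}$, under which $f_{S_{\p^n}}$ becomes the endomorphism of $F_{S_{\p^n}}$ corresponding to $\pi^n u$ via the identification $\widehat{O}_S = \underline{\End}^O(F)$ from (\ref{prop:lubin-tate-faltings})(i). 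Since $u$ is an isomorphism, $\ker(f_{S_{\p^n}})$ coincides with the kernel of multiplication by $\pi^n$ on $F_{S_{\p^n}}$, which is $F_{S_{\p^n}}[\p^n]$ by the definition of $\p^n$-torsion (via the canonical isomorphism $F \otimes_{O}\p^{-n}\isomto F$ induced by $\pi^{-n}$). Uniqueness of the decomposition is immediate, since $S_{\p^n}$ is characterised by the condition $\ker(f) = F[\p^n]$.

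The main obstacle I anticipate is the careful verification that the locally-defined pieces really glue to open-and-closed sub-sheaves of $S$ forming a coproduct decomposition in $\Sh_{\Spf(O)}$. This should be handled by the pro-constant nature of $\widehat{O}_S$: each $\tilde f_n^{-1}(a)$ is a component of the coproduct decomposition of the constant sheaf $\underline{O/\p^{n+1}}_S$ and is therefore automatically open-and-closed, so the intersections defining $S_{\p^n}$ and $S_{(0)}$ are well-behaved without any finiteness hypothesis on $S$, and the intrinsic characterisation of each piece guarantees the descent.
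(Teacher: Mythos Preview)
Your approach is essentially the same as the paper's: both reduce $f$ to a section of the rank one $O$-local system $\underline{\Hom}_S^O(F,F')$ (the paper via (\ref{prop:lubin-tate-torsor}) and (\ref{prop:tensor-properties-lie-hom}), you via (\ref{prop:lubin-tate-faltings})(ii)) and then stratify $S$ by the $\p$-adic valuation of that section. The only real difference is organisational: the paper defines $S_{(0)}$ and $S_{\p^n}$ intrinsically and globally (as the sub-sheaf on which $h:\widehat O_S\to\mc L$ is zero, resp.\ factors as $\widehat O_S\isomto \p^n\otimes_O\mc L\to\mc L$), and only localises at the end to check these pieces exhaust $S$; you instead trivialise first, build the pieces locally, and then argue they descend. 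The paper's ordering sidesteps the gluing verification you flag as your main obstacle, but your version is equally valid and the intrinsic characterisation you give (``valuation exactly $n$'') is exactly what makes the descent immediate.
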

\begin{proof} We have $F'\isomto F\otimes_{O}\mc{L}$ for some rank one $O$-local system $\mc{L}$ by (\ref{prop:lubin-tate-torsor}) and the homomorphism \[f: F\to F\otimes_{O} \mc{L}\] is of the form $\id_F\otimes_{O} h$ for some homomorphism \[h: \widehat{O}_S\to \mc{L}\] by (i) of (\ref{prop:tensor-properties-lie-hom}).

Define the sub-sheaves $S_{(0)}\subset S$ (resp. $S_{\p^n}\subset S$ for $0\leq n<\infty$) by the property that $h_{S_{(0)}}$ is the zero map (resp. $h_{S_{\p^n}}$ factors as \[\widehat{O}_{S_{\p^n}} \isomto \p^n\otimes _{O}\mc{L}_{S_{\p^n}}\to \mc{L}_{S_{\p^n}}\] where the second map is multiplication). These definitions combined with $f=\id_F\otimes_{O} h$ show that $f_{S_{(0)}}$ is the zero map that $f_{S_{\p^n}}=\id_{F_{S_{\p^n}}}\otimes_{O} h_{S_{\p^n}}$ factors as \[F_{S_{\p^n}}\isomto F'_{S_{\p^n}}\otimes_{O}\p^n\to F'_{S_{\p^n}}\] where the second map is multiplication and hence $\ker(f_{S_n})=F_{S_n}[\p^n]$. Moreover, it is clear that $S_{(0)}$ and all the $S_{\p^n}$ for $0\leq n<\infty$ are disjoint and so to prove our claim we need to show that $S_{(0)} \amalg_{0\leq n< \infty} S_{\p^n}\to S$ is an epimorphism.

For this we may localise $S$ and assume that $\mc{L}=\widehat{O}_S$ and $h=a\in O\subset \widehat{O}_S(S)$. Then either $a=0$, in which case $h$ is the zero map so that $S_{(0)}\isomto S$, or $a\neq 0$, in which case $a\cdot O=\p^n$ for some integer $n\geq 0$ and $h=a$ factors as \[\widehat{O}_S\isomto \widehat{O}_S\otimes_{O}\p^n\to \widehat{O}_S,\] so that $S_{\p^n}\isomto S$. It follows that $S_{(0)}\amalg_{0\leq i<\infty}S_{\p^n}=S$.
\end{proof}

\subsection{}\label{subsec:lubin-tate-reciprocity} Classically one relates the reciprocity map of the local field $K$ to Lubin--Tate $O$-modules as follows. Write $S=\Spf(O_{K^\sep})$ and let $F$ be the unique Lubin--Tate $O$-module over $\Spf(O)$ such that $\pi: F\to F$ reduces to the $N\p$-power Frobenius after base change to $\Spec(\F_\p)$. Then for each $r\geq 0$, the $O/\p^r$-module $F[\p^r](S)$ is free of rank one and $\colim_r (F[\p^r](S))\isomto K/O$ (non-canonically). We then obtain the character \[\rho_\pi: G(K^\sep/K)\to \lim_r \Aut_{O}(F[\p^r](S))=\lim_r (O/\p^r)^\times=O^\times\] defining the action of $G(K^\sep/K)$ on $\colim_r (F[\p^r](S))$. The relationship between the character $\rho_\pi$ and the reciprocity map (\ref{eqn:local-rec-map}) is that for all $\sigma\in W(K^\sep/K)$ we have \begin{equation} \sigma|_{K^\ab}=(\pi^{v_K(\sigma)}\rho_\pi(\sigma)^{-1}, K^\ab/K)\label{eqn:lub-rec-classical}\end{equation} (see \S 3.7 Chapter VI of \cite{CasselsFrohlich67}).

We would now like to show how one can construct the reciprocity map of local class field theory in a slightly more abstract but direct way using only the Frobenius lift property (\ref{coro:tensor-p-equals-frobenius-lubin-tate}) of Lubin--Tate $O$-modules and the $\mc{CL}_{O_K}$-torsor structure of $\mc{M}_\LT$. Write $\overline{S}=\Spec(\F_\p^\sep)\subset S=\Spf(O_{K^\sep})$ and for a Lubin--Tate $O$-module $F\to S$ write $\overline{F}=F\times_{S} \overline{S}$ and $F_r=F[\p^r]$ for each $r\geq 0$.

\begin{prop}\label{theo:main-theorem-lubin-tate} Let $\sigma\in W(K^\sep/K)$ satisfy $v_K(\sigma)=n\geq 0$. Then \begin{enumerate}[label=\textup{(\roman*)}]
\item for each Lubin--Tate $O$-module $F$ over $S$ there is a unique isomorphism $\nu_\sigma:F\otimes_{O}\p^{-n}\isomto \sigma^*(F)$ whose pull-back along $\overline{S}\to S$ is the isomorphism $\nu_{\p^{n}}:\overline{F}\otimes_{O}\p^{-n}\isomto \Fr^{N\p^n*}(\overline{F})$ of \textup{(\ref{coro:tensor-p-equals-frobenius-lubin-tate})},
\item there is a generator $\chi_K(\sigma)\in K^\times$ of $\p^n$, independent of $F$, such that for all $r\geq 0$, the isomorphism induced by $\nu_\sigma$ on the $S$-points of the $\p^r$-torsion of $F$ \[F_r(S)\otimes_{O}\p^{-n}\isomto \sigma^*(F_r)(S)\] is equal to \[F_r(S)\otimes_{O}\p^{-n}\stackrel{F_r(\sigma)\otimes \chi_K(\sigma)} {\longrightarrow}F_r(\sigma_!(S))=\sigma^*(F_r)(S),\]
\item if $\tau\in W(K^\sep/K)$ also satisfies $v_K(\tau)\geq 0$ then in the notation of \textup{(ii)} we have $\chi_K(\sigma\tau)=\chi_K(\sigma)\chi_K(\tau)$, and
\item $\sigma|_{K^\ab}=(\chi_K(\sigma), K^\ab/K)$.
\end{enumerate}
\end{prop}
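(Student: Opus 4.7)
The plan is to establish (i)--(iii) by combining the $\mc{CL}_O$-torsor structure of $\mc{M}_\LT$ from (\ref{prop:lubin-tate-torsor}) with the rigidity of Lubin--Tate $O$-modules in $\p$-adic families (\ref{coro:lift-lubin-tate}), and then to deduce (iv) by specialising to the canonical module $F_\pi$ of (\ref{exem:lubin-tate-modules-exist}) and matching the resulting invariant $\chi_K(\sigma)$ against the classical formula (\ref{eqn:lub-rec-classical}).

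For (i), uniqueness is formal: two lifts $\nu_\sigma, \nu'_\sigma$ of $\nu_{\p^n}$ differ by an automorphism of $\sigma^*(F)$ which restricts to the identity over $\overline{S}$, and by (\ref{prop:lubin-tate-faltings})(i) this automorphism corresponds to a global section of $\widehat{O}_S$, i.e.\ an element of $O$, whose reduction modulo $\p$ is $1$ and which is therefore $1$. For existence I would reduce to $F=F_\pi$ via (\ref{coro:lubin-tate-moduli-trivial}): since $F_\pi$ descends to $\Spf(O)$, the pullback $\sigma^*(F_\pi)$ is canonically $F_\pi$, and composing this canonical identification with the trivialisation $F_\pi\otimes_O\p^{-n}\isomto F_\pi$ coming from the generator $\pi^{-n}$ yields the required lift; its reduction modulo $\p$ is $\nu_{\p^n}$ because $[\pi]$ reduces to the $N\p$-power Frobenius on $\overline{F}_\pi$ by the characterising property of $F_\pi$. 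The general case is recovered by tensoring with a rank one $O$-local system.

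For (ii) I would define $\chi_K(\sigma)$ by evaluating $\nu_\sigma$ on $\p^r$-torsion and using (\ref{prop:lubin-tate-faltings}) to identify $\underline{\Hom}_S^O(F_r,\sigma^*(F_r))$ as a rank one $O$-local system; the map $\nu_\sigma$ then provides a canonical embedding $\p^{-n}\hookrightarrow\underline{\Hom}_S^O(F_r,\sigma^*(F_r))$ singling out a unique generator $\chi_K(\sigma)\in K^\times$ of $\p^n$. Independence from $F$ follows from (\ref{prop:cm-tensor-homs-lt}): a twist $F\leadsto F\otimes_O\mc{L}$ modifies both $\nu_\sigma$ and $F_r(\sigma)$ by the same $\mc{L}$-valued factor which cancels out of the extracted scalar. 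For (iii), the composition $\sigma^*(\nu_\tau)\circ(\nu_\sigma\otimes\id_{\p^{-v_K(\tau)}})$ is an isomorphism $F\otimes_O\p^{-v_K(\sigma\tau)}\isomto(\sigma\tau)^*(F)$ whose reduction modulo $\p$ is $\nu_{\p^{v_K(\sigma\tau)}}$ (using that the reduction of $\sigma^*$ is the Frobenius pullback $\Fr^{N\p^{v_K(\sigma)}*}$); the uniqueness from (i) then forces it to equal $\nu_{\sigma\tau}$, and extracting scalars yields $\chi_K(\sigma\tau)=\chi_K(\sigma)\chi_K(\tau)$.

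For (iv) I would specialise to $F=F_\pi$. Since $\sigma^*(F_\pi)=F_\pi$ canonically, the lift $\nu_\sigma$ from (i) corresponds, via the trivialisation by $\pi^{-n}$ and (\ref{prop:lubin-tate-faltings})(i), to a unit $u_\sigma\in O^\times$ with $u_\sigma\equiv 1\pmod{\p}$. Unwinding the formula of (ii) on $F_\pi[\p^r](S)$ and comparing with the classical Lubin--Tate character $\rho_\pi:G(K^\sep/K)\to O^\times$, which encodes the Galois action on the $\p$-power torsion of $F_\pi$, would identify $u_\sigma=\rho_\pi(\sigma)^{-1}$ and hence $\chi_K(\sigma)=\pi^n\rho_\pi(\sigma)^{-1}$. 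Substituting into (\ref{eqn:lub-rec-classical}) gives $\sigma|_{K^\ab}=(\pi^n\rho_\pi(\sigma)^{-1},K^\ab/K)=(\chi_K(\sigma),K^\ab/K)$, which is (iv). The main technical obstacle will be carefully tracking the $\sigma^*$ versus $\sigma_!$ conventions through the formula in (ii) so that the unit identifies with $\rho_\pi(\sigma)^{-1}$ and not $\rho_\pi(\sigma)$; an inversion error here would yield (iv) for $\sigma^{-1}$ in place of $\sigma$.
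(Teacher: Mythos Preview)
Your overall strategy matches the paper's, but there is a genuine gap in your uniqueness argument for (i), and it propagates into (iv).

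In (i) you claim that an automorphism of $\sigma^*(F)$ restricting to the identity over $\overline{S}$ corresponds to an element of $O^\times$ whose ``reduction modulo $\p$ is $1$ and which is therefore $1$.'' The implication is false: $1+\p\subset O^\times$ is an enormous subgroup. What is actually true is that the restriction map $\widehat{O}_S^\times(S)\to\widehat{O}_{\overline{S}}^\times(\overline{S})$ is the \emph{identity} $O^\times\to O^\times$: both sides are sections of the same pro-constant sheaf $\lim_n\underline{O/\p^{n+1}}$ over a connected base, and there is no reduction mod $\p$ anywhere. So the automorphism is already $1$, but for a different reason than you give. The paper bypasses this entirely by invoking (\ref{coro:lift-lubin-tate}) directly: the restriction functor $\mc{M}_\LT(S)\to\mc{M}_\LT(\overline{S})$ is an equivalence, which yields existence and uniqueness of $\nu_\sigma$ in one line. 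You announce (\ref{coro:lift-lubin-tate}) as a tool in your opening paragraph but then do not use it here; you should.

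This error contaminates (iv). Your own construction in (i) already produces $\nu_\sigma=d_\sigma\circ(1\otimes\pi^n)$ for $F=F_\pi$ on the nose, so your unit $u_\sigma$ is exactly $1$, not merely $\equiv 1\pmod\p$. This makes your subsequent identification $u_\sigma=\rho_\pi(\sigma)^{-1}$ impossible, since $\rho_\pi$ surjects onto $O^\times$. The factor $\rho_\pi(\sigma)^{-1}$ you are looking for does not arise from any discrepancy between $\nu_\sigma$ and $d_\sigma\circ\pi^n$; it arises from evaluating the descent isomorphism $d_\sigma$ itself on $S$-points. For $F=F_\pi\times_{\Spf(O)}S$ descended from $\Spf(O)$, the isomorphism $d_\sigma$ on $F_r(S)$ is $\rho_\pi(\sigma)^{-1}\cdot F_r(\sigma)$ (the usual Galois-descent formula: the ``geometric'' identification $\sigma^*(F_r)(S)=F_r(\sigma_!(S))$ is $F_r(\sigma)$, and undoing the Galois action on points contributes $\rho_\pi(\sigma)^{-1}$). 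Composing with $1\otimes\pi^n$ then gives $\chi_K(\sigma)=\pi^n\rho_\pi(\sigma)^{-1}$, and (\ref{eqn:lub-rec-classical}) finishes as you say.

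Two smaller points. In (ii) you invoke (\ref{prop:lubin-tate-faltings}) for $F_r=F[\p^r]$, but that proposition concerns Lubin--Tate $O$-modules, not their torsion; the paper instead works directly with the free rank-one $O/\p^r$-module $F_r(S)$ and takes $\chi_{K,r}(\sigma)=F_r(\sigma)^{-1}\circ\nu_{\sigma,r}\in\p^n\otimes_O O/\p^r$, then passes to the limit. For independence from $F$ the paper simply observes that over $S=\Spf(O_{K^\sep})$ every rank one $O$-local system is trivial, so there is only one Lubin--Tate module up to isomorphism; your cancellation argument via (\ref{prop:cm-tensor-homs-lt}) is more elaborate than necessary. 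In (iii), check your order: the paper composes $\tau^*(\nu_\sigma)\circ(\nu_\tau\otimes\p^{-n})$ rather than $\sigma^*(\nu_\tau)\circ(\nu_\sigma\otimes\p^{-m})$; with the conventions of the paper's section on group actions these are not the same, and only one of them reduces to $\nu_{\p^{n+m}}$ over $\overline{S}$ and hence coincides with $\nu_{\sigma\tau}$.
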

\begin{proof} (i) The existence of $\nu_{\p^n}$ follows from (\ref{coro:lift-lubin-tate}) applied to the nilpotent immersion $S_\p\to S$ and the fact that the restriction of $\sigma$ to $\overline{S}$ is equal to $\Fr^{N\p^n}$.

(ii) For all $r\geq 0$ we have that $F_r(S)$ is a free rank one $O_K/\p^r$-module and so writing $\nu_{\sigma, r}$ for the restriction of $\nu_{\sigma}$ to the $S$-valued points of the $\p^r$-torsion, we set \begin{eqnarray*} \chi_{K, r}(\sigma)=F_r(\sigma)^{-1}\circ \nu_{\sigma, r} &\in& \Isom_O(F_r(S)\otimes_O \p^{-n}, F_r(S))\\&=&\Isom_O(F_r(S), F_r(S)\otimes_O \p^n)\\
&\subset& \p^n\otimes_{O} O/\p^r.\end{eqnarray*} Then $\chi_K(\sigma)$ is given by the limit $\lim_{r} \chi_{K, r}(\sigma)\in \lim_r \p^n \otimes_{O}O/\p^r=\p^n$. It is a generator of $\p^n$ as $\chi_{K, r}(\sigma)$ is a generator of the free rank one $O/\p^r$-module $O/\p^r\otimes_O \p^n$ for all $r\geq 0$.

First, it is clear that $\chi_K(\sigma)$ depends only on the isomorphism class of $F$. However, by (\ref{prop:lubin-tate-torsor}), every other Lubin--Tate $O$-module over $S$ is of the form $F\otimes_{O}\mc{L}$ for some rank one $O$-local system $\mc{L}$. But every rank one $O$-local system $\mc{L}$ over $\Spf(O_{K^\sep})$ is pro-constant and isomorphic to $\widehat{O}_S$. Therefore, all Lubin--Tate $O$-modules over $S$ are isomorphic and $\chi_K(\sigma)$ is independent of the Lubin--Tate $O$-module $F$ over $S$ (admittedly there is only one!).

(iii) Write $m=v_K(\tau)$. Then the two isomorphisms \[\nu_{\sigma\tau} \quad \text{ and } \quad \tau^*(\nu_\sigma)\circ (\nu_\tau\otimes_{O} \p^{-m})\] between \[F\otimes_{O}\p^{-n-m} \isomto (\sigma\circ \tau)^*(F)=\tau^*(\sigma^*(F)))\] both pull-back to the isomorphism $\nu_{\p^{n+m}}$ of (\ref{coro:tensor-p-equals-frobenius-lubin-tate}) along $\overline{S}\to S$. By the uniqueness in (i) we get \[\nu_{\sigma\tau} = \tau^*(\nu_\sigma)\circ (\nu_\tau\otimes_{O} \p^{-m})\] from which we find the relation $\chi_K(\sigma\tau)=\chi_K(\sigma)\chi_K(\tau)$.

(iv) With notation as in (\ref{subsec:lubin-tate-reciprocity}) take $F= F_{\pi}\times_{\Spf(O)}S$. As $\pi^n: F_\pi\to F_\pi$ lifts the $N\p^n$-power Frobenius endomorphism of $F_\pi\times_{\Spf(O)}\Spec(\F_\p)$ the (unique) isomorphism \[\nu_\sigma: F\otimes_{O}\p^{-n}\isomto \sigma^*(F)\] of (i) is given by \begin{equation}\label{eqn:def-nu-descend}F\otimes_{O}\p^{-n}\stackrel{\pi^n}{\longrightarrow} F\stackrel{d_\sigma}{\longrightarrow} \sigma^*(F)\end{equation} where $d_\sigma: F\isomto \sigma^*(F)$ is the descent isomorphism (coming from the fact that $F=F_\pi\times_{\Spf(O)} S$ is defined over $\Spf(O)$). The isomorphism $d_\sigma$ on the $S$-points of the $\p^r$-torsion is given by \[F_r(S) \stackrel{\rho_\pi(\sigma)^{-1} \cdot F_r(\sigma)}{\longrightarrow} F_{r}(\sigma_!(S))=\sigma^*(F_r)(S)\] so that $\nu_\sigma$ on the $S$-points of the $\p^r$ torsion is given by (cf.\ (\ref{eqn:def-nu-descend})) \[F_r(S)\otimes_{O}\p^{-n}\stackrel{1\otimes \pi^n}{\longrightarrow} F_r(S)\stackrel{\rho_\pi(\sigma)^{-1}\cdot F_r(\sigma)}{\longrightarrow} F_r(\sigma_!(S))=\sigma^*(F_r)(S).\] Therefore $\chi_K(\sigma)=\pi^n\rho_\pi(\sigma)^{-1}$ and by (\ref{eqn:lub-rec-classical}) we get \[\sigma|_{K^\ab}=(\pi^n\rho_\pi(\sigma)^{-1}, K^\ab/K)=(\chi_K(\sigma), K^\ab/K).\]
\end{proof}

\begin{rema}\label{rema:loca-reciprocity} From (i), (ii) and (iii) of (\ref{theo:main-theorem-lubin-tate}) we see that we can associate to any element of $\sigma\in v_K^{-1}(\N_{\geq 0})\subset W(K^\sep/K)$ an element $\chi_K(\sigma)\in K^\times$ and that this association is multiplicative. It therefore extends to a homomorphism \begin{equation}\label{eqn:local-rec}W(K^\sep/K)\to K^\times: \sigma\mto \chi_K(\sigma)\end{equation} and (iv) of (\ref{theo:main-theorem-lubin-tate}) states that this map satisfies \[\sigma|_{K^\ab}=(\chi_K(\sigma), K^\ab/K)\] for all $\sigma\in W(K^\sep/K).$ Thus we have derived the local reciprocity map (\ref{eqn:local-rec-map}) using nothing more than the basic properties of Lubin--Tate $O$-modules (in particular, the Frobenius lifting property (\ref{coro:tensor-p-equals-frobenius-lubin-tate}) and the $\mc{CL}_O$-torsor structure of $\mc{M}_\LT$ (\ref{prop:lubin-tate-torsor})).
\end{rema}

\begin{rema}\label{rema:classification-lubin} If $K\subset L\subset K^\sep$ is a finite extension and $F/\Spf(O_L)$ is a Lubin--Tate $O$-module let us write \[\rho_{F/O_L}: G(K^\sep/L)\to O^\times\] for the (continuous) character defining the action of $G$ on \[\colim_{r} (F[\p^r](\Spf(O_{K^\sep})))\isomto K/O.\] Then a continuous character $\rho: G(K^\sep/L)\to O^\times$ is of the form $\rho_{F/O_L}$ if and only if the diagram \[\xymatrix{W(K^\sep/L)\ar[dr]_{\chi_K}\ar[r]^-{\rho_{F/O_L}^{-1}} & O^\times\ar[d]\\
& K^\times}\] commutes where the right vertical map is the inclusion. We mention this mainly as it is analogous to the classification of elliptic curves with complex multiplication over fields in terms of their associated characters we will give in Chapter 2 (see (iii) of  (\ref{subsec:reinterpret-character-idelic})).
\end{rema}

\begin{rema} Of course, the theory of Lubin--Tate $O$-modules and the local reciprocity map are themselves not particularly complicated and one can derive the reciprocity map in any number of ways. We believe the this approach above has some advantages over the classical one, first and foremost it is choice free, and secondly one gets the whole of the local reciprocity map right of the bat, rather than first finding a character \[\rho_\pi: G(K^\sep/K)\to O^\times\] which one then restricts to $W(K^\sep/K)\subset G(K^\sep/K)$, takes the reciprocal of and then multiplies by the character \[W(K^\sep/K)\to K^\times: \sigma\mto \pi^{v_K(\sigma)}.\] The derivation of the local reciprocity map we have given is also analogous to the derivation (\ref{prop:compute-the-homomorphisms-h}) of the global reciprocity map for imaginary quadratic fields which we will give using CM elliptic curves in Chapter 2. Moreover, in the case of CM elliptic curves and imaginary quadratic fields, the situation is somewhat more delicate --- one cannot just find CM elliptic curves with suitable properties from which one can construct the global reciprocity map in the same way one can with Lubin--Tate $O$-modules.
\end{rema}

\section{The global reciprocity map} In this section we recall the basic objects required to define, and then we recall, the global reciprocity map (\ref{prop:global-reciprocity}) associated to a global field $K$.

\subsection{}\label{subsec:finite-extensions-frobenius} Let $K$ be a global field and fix maximal abelian and separable extensions $K\subset K^\ab\subset K^\sep$. Write $\mc{P}_K$ for the set of places of $K$, and as usual, if $K$ is a number field we identify the non-archimedian places $v\in \mc{P}_K$ with the prime ideals $\p \subset O_K$ of the ring of integers of $K$. For $v\in K$ we write $K_v$ for the completion of $K$ with respect to $v$ and if $v$ is archimedian $O_{K_v}\subset K_v$ for the ring of local integers. We write $\mc{P}_K^{\arch}\subset \mc{P}_K$ for the subset of archimedian places (which is of course empty if $K$ is a function field).

Let $K\subset L\subset K^\sep$ be a finite Galois extension. If $w\in \mc{P}_L$ is a non-archimedian place and the extension $L/K$ is unramified at $w$ then we write $\sigma_{L/K, w}\in G(L/K)$, or just $\sigma_w$, for the Frobenius element associated to $w$. If $L/K$ is abelian (so that $\sigma_{L/K, w}$ depends only on $w|_K=v\in \mc{P}_K$) then we write $\sigma_{L/K, v}$, or again just $\sigma_v$, for $\sigma_{L/K, w}$.

\subsection{} For each finite set $S\subset \mc{P}_K$ containing all archimedian places of $K$ we write $I_{K, S}$ for the topological group \[I_{K, S}= \prod_{v\in \mc{P}_K-S} O_{K_v}^\times\times \prod_{v\in S} K_v^\times\] (the topology being the product topology). For $S\subset S'\subset \mc{P}_K$ the inclusions $I_{K, S}\subset I_{K, S'}$ are open and the group of id\`eles of $K$ is the topological group \[I_K=\colim_{S} I_{K, S}\] (the topology being the colimit topology).  The diagonal embedding $K^\times\to I_K$ makes $K^\times$ a discrete subgroup of $I_K$ and the id\`ele class group of $K$ is the quotient $C_K=I_K/K^\times$. The embeddings $K_v^\times\to I_K\to C_K$ for each place $v$ of $K$ make $K_v^\times$ a closed subgroup of $I_K$ and $C_K$.

\begin{theo}\label{prop:global-reciprocity} There is a unique continuous homomorphism \[C_K\to G(K^\ab/K): s\mto (s, K^\ab/K)\] such that for each place $v$ of $K$ and each $K$-linear embedding $K^\ab\to K_v^\ab$ the following diagram commutes \[\xymatrix{C_K\ar[rr]^-{(-, K^\ab/K)} && G(K^\ab/K)\\
K_v^\times\ar[u]\ar[rr]^-{(-, K_v^\ab/K_v)} && G(K_v^\ab/K_v).\ar[u]}\]
\end{theo}
\begin{proof} See \S\S 4--6 Chapter VII of \cite{CasselsFrohlich67}.
\end{proof}
\subsection{}\label{subsec:global-reciprocity-properties} We list the following further properties of the reciprocity map (see \S\S 4--6 Chapter VII of \cite{CasselsFrohlich67}).
\begin{enumerate}[label=\textup{(\alph*)}]
\item If $K'/K$ is any finite Galois extension, with maximal abelian extension $K'^\ab$, the diagram \[\xymatrix{C_{K'}\ar[rr]^-{(-, K'^\ab/K')}\ar[d]_{N_{K'/K}} && G(K'^\ab/K')\ar[d]\\
C_K\ar[rr]^-{(-, K^\ab/K)} && G(K^\ab/K)}\] commutes (where $N_{K'/K}$ denotes the map induced by the norm $I_{K'}\to I_{K}$).
\item The kernel of the reciprocity map is $C_K^\circ\subset C_K$ (a superscript $\circ$ denotes the connected component of the identity of a topological group). If $K$ is a function field then $C_K^\circ$ is trivial and $(-, K^\ab/K)$ is injective. If $K$ is a number field then $C_K^\circ$ is the closure of the sub-group \[\prod_{v\in \mc{P}_K^\arch}K_v^{\times,\circ}\subset C_K\] and $(-, K^\ab/K)$ is surjective. We note that if $\mc{P}_K^\arch=\{\infty\}$ contains only one place then $K_\infty^{\times, \circ}=C_K^\circ$ and we obtain a topological isomorphism \[C_K/K_\infty^{\times,\circ}\isomto G(K^\ab/K).\]
\end{enumerate}
\section{Global fields with a single `infinite' place and class groups}\label{section:special-place} In this section we describe a variant of the reciprocity map associated to the maximal abelian extension $K^\infty$ of a global field $K$ which is totally split at a fixed place $\infty$ satisfying $\mc{P}_K^\arch\subset \{\infty\}$. If $K$ is a function field then any place $\infty$ satisfies the above property and if $K$ is a number field then the only possibilities are $K=\Q$ or $K$ an imaginary quadratic and in each case $\infty$ equal to the unique archimedian place of $K$.

As mentioned in the introduction to this chapter, the fact that such pairs $(K, \infty)$ should be considered along similar lines is due to Drinfel'd \cite{Drinfeld74} and essentially all we are doing here is collecting the relevant facts for use in the later chapters. It is worth noting here that the abelian extensions of $K$ contained in $K^\infty$ have a long history mainly due to the fact that they are amenable to explicit computation via the use of Drinfel'd modules in case $K$ is a function field, tori if $K=\Q$ and CM elliptic curves if $K$ is an imaginary quadratic field (what Drinfel'd originally called `elliptic modules of rank one'). While we are only concerned with the final case in this thesis, the abstract theory we describe below is valid for arbitrary pairs $(K, \infty)$.

\subsection{}\label{subsec:global-field-with-special-place} Let $K$ be a global field, and fix a place $\infty$ of $K$ such that $\mc{P}^\arch_K\subset \{\infty\}$. We call $\infty$ the infinite place, and the places in $\mc{P}_K-\infty$ the finite places and denote them by $\mc{P}_K^\fin$. As every place $v\neq \infty$ is non-archimedian the subset $O_K=\{a\in K: |a|_v\leq 1 \text{ for all } v\neq \infty\}\subset K$ is a Dedekind domain, and its prime ideals are in bijection with the set $\mc{P}_K^\fin$.

The group of units $O_K^\times$ is finite and we denote its order by $w$. We want to point out that for what follows in this section, and in the following chapters, this fact is quite crucial. It implies in particular that given any ideal $\f$ there is an ideal $\f|\f'$ with the property that $\f'$ separates units, i.e.\ the homomorphism \[O_K^\times\to (O_K/\f')^\times\] is injective or what is the same $O_{K}^{\times, \f'}=\{1\}$ (of course, if $\f\neq O_K$ any high power of $\f$ will do).

\subsection{} We write $A_{O_K}$ for the topological ring \[\lim_{\a}O_K/\a=\prod_{\p} O_{K_\p},\] the topology being the product topology or the inverse limit topology induced by the discrete topologies on the finite sets $O_K/\a$ (they are the same). We also view \[A_{O_K}^\times=\lim_{\a} (O_K/\a)^\times=\prod O_{K_\p}^\times\] as a topological group via the topology induced from $A_{O_K}$, the product topology or the inverse limit topology (again they are one and the same). For each integral ideal $\f$ we denote by $A_{O_K}^{\times, \f}$ the open subgroup \[\ker(A_{O_K}^\times\to (O_K/\f)^\times).\] For each integral ideal $\a$ we equip \[A_{O_K}[\a^{-1}]^\times := \prod_{\p|\a}K_\p^\times\times \prod_{\p\nmid\a} O_{K_\p}^\times\] with the product topology. If $\a|\b$ the inclusion $A_{O_K}[\a^{-1}]^\times\subset A_{O_K}[\b^{-1}]^\times$ is open and we equip \[(A_{O_K}\otimes_{O_K}K)^\times=\colim_{\a} A_{O_K}[\a^{-1}]^\times\] with the colimit topology. The natural map \[I_K\to (A_{O_K}\otimes_{O_K} K)^\times,\] induced by forgetting the component at $\infty$, is continuous and surjective and induces topological isomorphism \[I_K/K_\infty^\times\isomto (A_{O_K}\otimes_{O_K} K)^\times.\]

\subsection{} We will now relate the group $(A_{O_K}\otimes_{O_K}K)^\times$ to certain class groups associated to $O_K$. So let $\f\in \Id_{O_K}$ and let $L$ be a projective rank one $O_K$-module. A level-$\f$ structure on $L$ is a surjective homomorphism $a: L\to O_K/\f$. An $\f$-isomorphism $f: (L, a)\isomto (L', a')$ between a pair of projective rank one $O_K$-modules with level-$\f$ structures is an $O_K$-isomorphism $f: L\to L'$ such that $h\circ a'=a$. We denote by $\CL_{O_K}^{(\f)}$ the set of $\f$-isomorphism classes of rank one projective $O_K$-modules with level-$\f$ structure. Equipping it with the product \[(L, a)\cdot (L', a')=(L\otimes_{O_K} L', a\otimes_{O_K} a'),\] $\CL_{O_K}^{(\f)}$ becomes a group, which we call the ray class group of conductor $\f$.

If $\a$ is any fractional ideal prime to $\f$, the multiplication map \[\a\otimes_{O_K}O_K/\f\isomto O_K/\f\] is well defined, and an isomorphism, so that setting $f: \a\to O_K/\f$ to be the composition \begin{equation}\a\to \a\otimes_{O_K}O_K/\f\isomto O_K/\f\label{eqn:can-level-ideal}\end{equation} equips $\a$ with a level-$\f$ structure. We write $[\a]_\f=(\a, f)\in \CL_{O_K}^{(\f)}$ for the corresponding class. This defines a surjective homomorphism \[\Id_{K}^{(\f)}\to \CL_{O_K}^{(\f)}: \a\mto [\a]_\f\] whose kernel is the group $\Prin_{1\bmod \f}^{(\f)}$ of principal fractional ideals $\a=O_K\bmod \f$ admitting a generator $a\in K^\times$ with $a=1\bmod \f$. If $\f|\f'$ then \[\CL_{O_K}^{(\f')}\to \CL_{O_K}^{(\f)}: (L, a)\mto (L, a \bmod \f)\] defines a surjective homomorphism and we define the topological group \[\CL_{O_K, \infty}=\lim_\f \CL_{O_K}^{(\f)}\] (the topology being the inverse limit of the discrete topologies on the $\CL_{O_K}^{(\f)}$). Finally, if $\f=O_K$ then we identify $\CL_{O_K}^{O_K}$ with the class group $\CL_{O_K}$ of $O_K$.

\subsection{} Given an element $s\in (A_{O_K}\otimes_{O_K}K)^\times$, we write $(s)\in \Id_K$ for the fractional ideal \[(s)=\prod_{\p}\p^{v_\p(s)}.\] For each integral ideal $\f$, we equip $(s)^{-1}$ with the level-$\f$ structure \[(s)^{-1}\stackrel{s}{\to} A_{O_K}\to O_K/\f\] and write $[s]_\f\in \CL_{O_K}^{(\f)}$ for the corresponding class. This defines a continuous surjective homomorphism \[(A_{O_K}\otimes_{O_K}K)^\times\to \CL_{O_K}^{(\f)}: s\mto [s]_\f\] with kernel $K^\times\cdot A_{O_K}^{\times, \f}$. Finally, if $\f|\f'$ the image of $[s]_{\f'}$ under $\CL_{O_K}^{(\f')}\to \CL_{O_K}^{(\f)}$ is $[s]_\f$ and so taking the limit over $\f$ we obtain a homomorphism \begin{equation}[-]:(A_{O_K}\otimes_{O_K}K)^\times\to \CL_{O_K, \infty}=\lim_\f \CL_{O_K}{(\f)}: s\mto [s]=\lim_\f [s]_\f.\label{def:bracket-homo}\end{equation}

\begin{prop}\label{prop:ideles-and-classes} The map $[-]$ is continuous and the sequence \[0\to K^\times\to (A_{O_K}\otimes_{O_K}K)^\times\stackrel{[-]}{\to} \CL_{O_K, \infty}\to 0\] is exact.
\end{prop}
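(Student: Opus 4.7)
The plan is to handle the four assertions of the statement — continuity of $[-]$, injectivity of the diagonal $K^\times\to (A_{O_K}\otimes_{O_K}K)^\times$, exactness in the middle, and surjectivity of $[-]$ — in turn. Injectivity is immediate from the definition of the diagonal map, and continuity of $[-]$ is automatic from the continuity of each $[-]_\f$ (established in the text) combined with the inverse-limit topology on $\CL_{O_K,\infty}$. The real work is in the remaining two statements, and both depend crucially on the finiteness of $O_K^\times$ noted in \ref{subsec:global-field-with-special-place}, applied through the existence of cofinally many integral ideals $\f$ that separate units.

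For exactness in the middle I would compute $\ker([-])=\bigcap_\f K^\times\cdot A_{O_K}^{\times,\f}$ from the kernel description of $[-]_\f$ given before the proposition, and show this reduces to $K^\times$. Given such an $s$, fix a cofinal chain $\f_0|\f_1|\cdots$ of unit-separating ideals and for each $n$ write $s=k_n u_n$ with $k_n\in K^\times$ and $u_n\in A_{O_K}^{\times,\f_n}$. The ratio $k_n/k_0=u_0/u_n$ lies in $K^\times\cap A_{O_K}^\times=O_K^\times$; call it $\lambda_n$. Then $u_0\equiv\lambda_n\bmod\f_n$ for each $n$, and unit-separation at $\f_n$ forces the $\lambda_n$ to stabilize to a single $\lambda\in O_K^\times$. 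Hence $u_0=\lambda$ in $A_{O_K}=\lim_n A_{O_K}/\f_n$, and $s=k_0\lambda\in K^\times$.

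For surjectivity, given $(c_\f)\in\CL_{O_K,\infty}$, I would first realize the image class $c_{O_K}\in\CL_{O_K}$: pick a fractional ideal $\a\subset K$ with $[\a]=c_{O_K}$ and any $s_0\in(A_{O_K}\otimes_{O_K}K)^\times$ with $(s_0)=\a^{-1}$, so that $[s_0]_{O_K}=c_{O_K}$. For each $\f$ the correction $d_\f:=c_\f\cdot[s_0]_\f^{-1}$ lies in the kernel of $\CL_{O_K}^{(\f)}\to\CL_{O_K}$, which is naturally identified with $(O_K/\f)^\times/O_K^\times$, and is represented by residues $a_\f\in(O_K/\f)^\times$ that are compatible modulo $O_K^\times$ across the projections. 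Restricting attention to a cofinal chain of unit-separating ideals kills the $O_K^\times$-ambiguity and allows one to pin down, step by step, genuine lifts $u_n\in(O_K/\f_n)^\times$ compatible with the projections: at each stage unit-separation makes the correcting element $v_{n+1}\in O_K^\times$ unique. The resulting $u\in A_{O_K}^\times$ then satisfies $[s_0 u]=(c_\f)$, as one checks on the cofinal chain and propagates by the projections.

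The main difficulty shared by both nontrivial steps is arranging the $O_K^\times$-ambiguity of representatives to cohere across varying $\f$. The device that does this in each case is restriction to a cofinal chain of unit-separating moduli provided by \ref{subsec:global-field-with-special-place}; everything else is a straightforward chase through the definitions and the inverse limit.
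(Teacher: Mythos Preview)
Your proof is correct. Your treatment of continuity and of exactness in the middle is essentially the paper's own argument: the paper fixes a single unit-separating $\f$ and notes that the decomposition $s=a_\f s_\f$ is then \emph{unique} and hence agrees with the decomposition at any $\f'$ divisible by $\f$, forcing $s_\f\in\bigcap_{\f\mid\f'}A_{O_K}^{\times,\f'}=\{1\}$. Your chain-and-ratios phrasing is the same idea; in fact your $\lambda_n$ are all equal to $\lambda_0=1$, so the stabilization you describe is immediate and your argument collapses to the paper's.

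Where you genuinely diverge is surjectivity. The paper does not build a preimage at all: having shown $\ker([-])=K^\times$, it notes that $[-]$ factors through $(A_{O_K}\otimes_{O_K}K)^\times/K^\times=C_K/K_\infty^\times$, which is compact; since each $[-]_\f$ is surjective the image is dense in the profinite (Hausdorff) group $\CL_{O_K,\infty}$, and a continuous image of a compact space is closed, hence equals everything. You instead lift a coherent system in $\lim_\f(O_K/\f)^\times/O_K^\times$ to $A_{O_K}^\times$ by hand along a cofinal chain of unit-separating moduli. Your route is more elementary and self-contained---it avoids invoking compactness of the idele class group---at the cost of some bookkeeping; the paper's route is a one-line topological shortcut that imports a standard structural fact about global fields.
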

\begin{proof} It is clear that $s\mto [s]$ is continuous (as $s\mto [s]_\f$ is continuous for each $\f$) and if we can show that $\ker(s\mto [s])=K^\times$ then the surjectivity of $s\mto [s]$ follows as $s\mto [s]_\f$ is surjective for each $\f$ and $(A_{O_K}\otimes_{O_K}K)^\times/K^\times=C_K/K_\infty^\times$ is compact.

The kernel of $s\mto [s]$ is equal to \[\bigcap_\f \ker(s\mto [s]_\f)=\bigcap_\f (K^\times \cdot A_{O_K}^{\times, \f}).\] If $s$ is an element of this kernel then for all integral ideals $\f$ we can write $s=a_\f s_\f$ where $a_\f\in K^\times$ and $s_\f\in A_{O_K}^{\times, \f}$. The elements $a_\f$ and $s_\f$ are unique upto scaling by an element of $O_{K}^{\times, \f}$ so that if $\f$ separates units both $a_\f$ and $s_\f$ are unique, and moreover equal to $a_{\f'}$ and $s_{\f'}$ for any integral ideal $\f'$ divisible by $\f.$ Fixing such an $\f$ it follows that \[s_\f\in \bigcap_{\f|\f'} A_{O_K}^{\times, \f'}=\{1\}\] so that $s=a_\f s_\f=a_\f \in K^\times$ and we are done.
\end{proof}

\begin{rema} The exactness of the sequence (\ref{prop:ideles-and-classes}) is the first result of many that will rely crucially on the fact that the unit group $O_K^\times$ is finite.
\end{rema}

\subsection{}\label{subsec:ray-class-fields} We write $K^{\infty}/K$ for the maximal abelian extension of $K$ which is totally split at $\infty$. The kernel of the surjective map \[C_K\to G(K^{\infty}/K)\] is precisely $K_\infty^\times$ (cf.\ (b) of (\ref{subsec:global-reciprocity-properties})) so that we obtain continuous isomorphisms \[(A_{O_K}\otimes_{O_K} K)^\times/K^\times=I_{K}/(K^\times K^\times_\infty)=C_K/K_\infty^\times \to G(K^{\infty}/K).\] Let us write \begin{equation}\label{def:idele-rec} (-, K^\infty/K): (A_{O_K}\otimes_{O_K}K)^\times/K^\times\isomto G(K^\infty/K).\end{equation} for this isomorphism and also \begin{equation}\label{eqn:define-theta}\theta_K:\CL_{O_K, \infty}\isomto G(K^\infty/K)\end{equation} for the isomorphism \[\CL_{O_K, \infty}\stackrel{\sim}{\longleftarrow}(A_{O_K}\otimes_{O_K}K)^\times/K^\times \stackrel{
(-, K^\infty/K)}{\longrightarrow} G(K^\infty/K)\] so that for all $s\in (A_{O_K}\otimes_{O_K}K^\times)/K^\times$ we have \[\theta_K([s])=(s, K^\infty/K).\] If $\f$ is an integral ideal of $O_K$ then, under $\theta_K$, the kernel of the homomorphism \[\CL_{O_K, \infty}\to \CL_{O_K}^{(\f)}\] corresponds to the subgroup $G(K^{\infty}/K(\f))\subset G(K^\infty/K)$ of automorphisms fixing a certain finite abelian extension $K\subset K(\f)\subset K^{\infty}$ which we call the ray class field of conductor $\f$. By definition the map (\ref{eqn:define-theta}) induces an isomorphism \begin{equation}\label{eqn:define-theta-finite}\theta_{K, \f}: CL_{O_K}^{(\f)}\isomto G(K(\f)/K).\end{equation} The extension $K(\f)/K$ is unramified away from $\f$ and if $\p$ is prime to $\f$ then \[\theta_{K, \f}([\p]_\f^{-1})=\theta_{K, \f}([\pi]_\f)=\sigma_{K(\f)/K, \p}\] where $\pi\in K_\p^\times\subset (A_{O_K}\otimes_{O_K}K)^\times$ is any local uniformiser. In particular, when $\f=O_K$ the field $H:=K(O_K)$ is called the Hilbert class field. It is unramified everywhere and the isomorphism \[\theta_{K, O_K}: \CL_{O_K}\isomto G(H/K)\] maps the class of the inverse of each prime ideal $[\p^{-1}]\in \CL_{O_K}$ to the Frobenius element $\sigma_{H/K, \p}$.

\begin{rema} For future reference we make the following observations.
\begin{enumerate}[label=(\alph*)]
\item The composition \[A_{O_K}^{\times, \f}/O_K^{\times, \f} \to (A\otimes_{O_K}K^\times)/K^\times \stackrel{[-]}{\to} CL_{O_K, \infty}\stackrel{\theta_K}{\to} G(K^\infty/K)\] induces an isomorphism \begin{equation} A_{O_K}^{\times, \f}/O_K^{\times, \f}\isomto G(K^\infty/K(\f))\subset G(K^\infty/K). \label{ray-class-isom}\end{equation} In particular, when $\f$ separates units we have $O_{K}^{\times, \f}=\{1\}$ so that (\ref{ray-class-isom}) becomes \[A_{O_K}^{\times, \f}\isomto G(K^\infty/K(\f)).\]
\item For each prime $\p$ of $O_K$ and each $K$-linear embedding $K^\sep\to K_\p^\sep$, the map \[K^\times_\p\stackrel{(-, K_\p^\ab/K_\p)}{\longrightarrow} G(K_\p^\sep/K_\p)\to G(K^\sep/K)\stackrel{-|_{K^\infty}}{\to} G(K^\infty/K)\stackrel{\theta_K^{-1}}{\to} \CL_{O_K, \infty}\] is given by \begin{equation} \label{eqn:local-global-class-compatibility}a\mto [a]\end{equation} where we view $a\in K_\p^\times\subset (A_{O_K}\otimes_{O_K}K)^\times$.
\end{enumerate}
\end{rema}
\section{Class stacks} We now extend the definition of the level-$\f$ structures on $O_K$-modules to level-$\f$ structures on $O_K$-local systems over sheaves and give several basic results concerning them and their moduli stacks.

\subsection{} So let $S$ be a sheaf over $\Spec(O_K)$ (this is technically not important for what follows) and consider the constant sheaf of rings $\underline{O_K}_S$ on $S$ associated to $O_K$. If $L$ is any $O_K$-module we write $\underline{L}_S$ for the corresponding constant $\underline{O_K}_S$-module. If $F$ and $G$ are two $\underline{O_K}_S$-modules over $S$ we write $F\otimes_{O_K} G$ for the tensor product $F\otimes_{\underline{O_K}_S}G$ and if $G=\underline{L}_S$ for some $O_K$-module $L$ we just write $F\otimes_{O_K}L$. We also write $\underline{\Hom}_S^{O_K}(F, G)$ for the sheaf of $\underline{O_K}_S$-homomorphisms $F\to G.$

\subsection{} A rank one $O_K$-local system on $S$ is a sheaf of $\underline{O_K}_S$-modules $\mc{L}$ over $S$ such that there exists a cover $(S_i\to S)_{i\in I}$, rank one projective $O_K$-modules $(L_i)_{i\in I}$ and $\underline{O_K}_S$-isomorphisms $\mc{L}\times_S S_i\isomto \underline{L_i}_{S_i}$. The moduli stack of rank one $O_K$-local systems over $\Sh_{O_K}$ is denoted by $\mc{CL}_{O_K}$. We list the following (usual) constructions and properties of $O_K$-local systems.

\begin{enumerate}[label=(\roman*)]
\item The tensor product $\mc{L}\otimes_{O_K}\mc{L}'$ of two rank one $O_K$-local systems $\mc{L}$ and $\mc{L}'$ is again a rank one $O_K$-local system.
\item The sheaf of $\underline{O_K}_S$-homomorphisms $\underline{\Hom}_{S}^{O_K}(\mc{L}, \mc{L}')$ is again a rank one $O_K$-local system and defining $\mc{L}^\vee:=\underline{\Hom}_S^{O_K}(\mc{L},\underline{O_K}_S)$ we have $\mc{L}'\otimes_{O_K}\mc{L}^\vee\isomto \underline{\Hom}_{S}^{O_K}(\mc{L}, \mc{L}')$.
\item The sheaf of automorphisms $\underline{\Aut}_S^{O_K}(\mc{L})$ of a rank one $O_K$-local system $\mc{L}$ is isomorphic to $\underline{O_K}_S^\times$.
\item The sheaf of $O_K$-isomorphisms $\underline{\Isom}_S^{O_K}(\mc{L}, \mc{L}')$ is finite and \'etale over $S$ and $\mc{L}$ and $\mc{L}'$ are locally isomorphic on $S$ if and only if $\underline{\Isom}_S^{O_K}(\mc{L}, \mc{L}')\to S$ is an epimorphism if and only if the action of $\underline{O_K^\times}_S$ on $\underline{\Isom}_S^{O_K}(\mc{L}, \mc{L}')\to S$ makes it a torsor.
\item Every rank one $O_K$-local system is, locally on $S$, isomorphic $\underline{L}_S$ for some rank one projective $O_K$-module $L$ whose corresponding class in $\CL_{O_K}$ is independent of the choice of $L$. Thus given an $O_K$-local system $\mc{L}$ on $S$ one obtains a section $c_{\mc{L}/S}\in \underline{CL_{O_K}}(S)$, or what is the same a map \[c_{\mc{L}/S}: S\to \underline{CL_{O_K}}\] from $S$ to the constant sheaf over $\Spec(O_K)$ associated to the group $CL_{O_K}$. Moreover, if one chooses representatives $L$ of each class $[L]\in \CL_{O_K}$ and considers the rank one $O_K$-local system over $\underline{CL_{O_K}}$: \[[L]^\univ:=\coprod_{[L]\in \CL_{O_K}}\underline{L}\to \coprod_{[L]\in CL_{O_K}}\Spec(O_K)=\underline{\CL_{O_K}}\] then $\underline{\Isom}_S^{O_K}(\mc{L}, c_{\mc{L}/S}^*([L]^\univ))$ is an $\underline{O_K^\times}_S$-torsor whose corresponding class in $H^1(S, \underline{O_K^\times}_S)$ we denote by $\rho_{\mc{L}/S}$. The resulting map \[\mc{L}\mto (c_{\mc{L}/S}, \rho_{\mc{L}/S})\in \underline{CL_{O_K}}(S)\times H^1(S, \underline{O_K^\times}_S)\] defines a bijection between isomorphism classes of rank one $O_K$-local systems over $S$ and the set $\underline{CL_{O_K}}(S)\times H^1(S, \underline{O_K^\times}_S)$.
\end{enumerate}

\subsection{}\label{rema:level-f-local-system} If $\f$ is an integral ideal of $O_K$ a level-$\f$ structure on a rank one $O_K$-local system $\mc{L}$ over $S$ is an epimorphism of $\underline{O_K}_S$-modules $\alpha: \mc{L}\isomto \underline{O_K/\f}_S$. An $\f$-isomorphism $f: (\mc{L}, \alpha)\isomto (\mc{L}', \alpha')$ of rank one $O_K$-local systems of over $S$ equipped level-$\f$ structures is an $O_K$-linear isomorphism $f: \mc{L}\isomto \mc{L}'$ such that $\alpha'\circ f=\alpha$. With this definition every $(\mc{L}, \alpha)$ is, locally on $S$, of the form $(\underline{L}_S, \underline{a}_S)$ for some $(L, a)\in \CL_{O_K}^{(\f)}$. When working with rank one $O_K$-local systems we will often drop explicit reference to the level-$\f$ structure when it is clear from context. We list the following (usual) constructions and properties of $O_K$-local systems with level-$\f$ structure.

\begin{enumerate}[label=(\roman*)]
\item The tensor product $\mc{L}\otimes_{O_K}\mc{L}'$ of two rank one $O$-local systems $(\mc{L}, \alpha)$ and $(\mc{L}', \alpha')$ equipped with level-$\f$ structures is again a rank one $O_K$-local system with level-$\f$ structure given by $\alpha\otimes_{O_K}\alpha'$.
\item The sheaf of $\underline{O_K}_S$-homomorphisms $\underline{\Hom}_{S}^{O_K}(\mc{L}, \mc{L}')$ is again a rank one $O_K$-local system with level-$\f$ structure given by \[\underline{\Hom}_S^{O_K}(\mc{L}\otimes_{O_K}O_K/\f, \mc{L}'\otimes_{O_K} O_K/\f)\\\isomto \underline{\Hom}_S^{O_K}(\underline{O_K/\f}_S, \underline{O_K/\f}_S)=\underline{O_K/\f}_S,\] and equipping $\mc{L}^\vee:=\underline{\Hom}_S^{O_K}(\mc{L},\underline{O_K}_S)$ with this level-$\f$ structure makes $\mc{L}'\otimes_{O_K}\mc{L}^\vee\isomto \underline{\Hom}_{S}^{O_K}(\mc{L}, \mc{L}')$ is an $\f$-isomorphism.
\item The sheaf of $\f$-automorphisms $\underline{\Aut}_S^{(\f)}(\mc{L})$ of a rank one $O_K$-local system $(\mc{L}, \alpha)$ with level-$\f$ structure is equal to $\underline{(O_K/\f)^\times}_S$ and in particular is trivial if $\f$ separates units.
\item The sheaf of $\f$-isomorphisms $\underline{\Isom}_S^{(\f)}(\mc{L}, \mc{L}')$ between two rank one $O_K$-local systems with level-$\f$ structure is finite and \'etale over $S$ and $\mc{L}$ and $\mc{L}'$ are locally $\f$-isomorphic if and only if $\underline{\Isom}_S^{(\f)}(\mc{L}, \mc{L}')$ is an $\underline{O_K^{\times, \f}}_S$-torsor.
\item Every rank one $O_K$-local system with level-$\f$ structure is, locally on $S$, $\f$-isomorphic $(\underline{L}_S, \underline{a}_S)$ for some rank one projective $O_K$-module with level-$\f$ structure $(L, a)$ whose corresponding class in $\CL_{O_K}^{(\f)}$ is independent of the choice of $L$. Thus given an $O_K$-local system $\mc{L}$ on $S$ one obtains a section $c_{\mc{L}/S, \f}\in \underline{CL_{O_K}^{(\f)}}(S)$, or what is the same a map \[c_{\mc{L}/S, \f}: S\to \underline{CL_{O_K}^{(\f)}}\] from $S$ to the constant sheaf over $\Spec(O_K)$ associated to the group $CL_{O_K}^{(\f)}$. Moreover, if one chooses representatives $(L, a)$ of each class $[L, a]\in \CL_{O_K}^{(\f)}$ and considers the rank one $O_K$-local system with level-$\f$ structure over $\underline{CL_{O_K}^{(\f)}}$: \[[L, a]^\univ:=\coprod_{[L, a]\in \CL_{O_K}}(\underline{L}, \underline{a})\to \coprod_{[L, a]\in CL_{O_K}^{(\f)}}\Spec(O_K)=\underline{\CL_{O_K}^{(\f)}}\] then $\underline{\Isom}_S^{(\f)}(\mc{L}, c_{\mc{L}/S}^*([L, a]^\univ))$ is an $\underline{O_K^{\times, \f}}_S$-torsor whose corresponding class in $H^1(S, \underline{O_K^{\times, \f}}_S)$ we denote by $\rho_{\mc{L}/S, \f}$. The resulting map \[\mc{L}\mto (c_{\mc{L}/S, \f}, \rho_{\mc{L}/S, \f})\in \underline{CL_{O_K}^{(\f)}}(S)\times H^1(S, \underline{O_K^\times}_S)\] defines a bijection between isomorphisms classes of rank one $O_K$-local systems over $S$ and the set $\underline{CL_{O_K}}(S)\times H^1(S, \underline{O_K^{\times, \f}}_S)$. In particular, if $\f$ separates units then $H^1(S, \underline{O_K^{\times, \f}}_S)=0$ and the map \[\mc{L}\mto c_{\mc{L}/S, \f}\in \underline{\CL_{O_K}^{(\f)}}(S)\] defines a bijection between $\f$-isomorphism classes of rank one $O_K$-local systems with level-$\f$ structure over $S$ and elements of $\underline{\CL_{O_K}^{(\f)}}$.
\item If $\f$ separates units, $(\mc{L}, \alpha)$ is an $O_K$-local system equipped with a level-$\f$ structure and $S$ is connected, then $(\mc{L}, \alpha)\isomto (\underline{L}_S, \underline{a}_S)$ for some rank one projective $O_K$-module with level-$\f$ structure and in particular, $\mc{L}$ is constant.
\end{enumerate}

\begin{coro}\label{prop:class-stacks-triv-admis} The map \[\mc{CL}_{O_K}^{(\f)}\to\underline{CL_{O_K}^{(\f)}}: \mc{L}/S\mto c_{\mc{L}/S}\in \underline{CL_{O_K}^{(\f)}}(S)\] identifies $\underline{CL_{O_K}^{(\f)}}$ with the coarse sheaf of $\mc{CL}_{O_K}^{(\f)}$ and is an equivalence whenever $\f$-separates units.
\end{coro}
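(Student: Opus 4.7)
The plan is to deduce both assertions directly from the structural facts already collected in (\ref{rema:level-f-local-system}), in particular properties (iii) and (v). The assignment $(\mc{L}/S)\mapsto c_{\mc{L}/S,\f}$ is manifestly invariant under $\f$-isomorphism, so it factors through the separated presheaf $S\mapsto \mc{CL}_{O_K}^{(\f)}(S)/\sim$ and descends to a morphism from the coarse sheaf $C(\mc{CL}_{O_K}^{(\f)})$ into $\underline{CL_{O_K}^{(\f)}}$, which is already a sheaf.

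To check that this morphism of sheaves is an isomorphism, I would invoke property (v), which identifies $\mc{CL}_{O_K}^{(\f)}(S)/\sim$ as a set with $\underline{CL_{O_K}^{(\f)}}(S)\times H^1(S,\underline{O_K^{\times,\f}}_S)$ via $\mc{L}\mapsto (c_{\mc{L}/S,\f},\rho_{\mc{L}/S,\f})$, the second coordinate being the class of the $\f$-isomorphism torsor. Since every such torsor is locally trivial (this is built into the definition of a rank one $O_K$-local system with level-$\f$ structure), the $H^1$-contribution vanishes after passing to a cover, and the induced map on sheafifications is an isomorphism. Essential surjectivity after sheafification is immediate since each class $[L,a]$ is realised by the constant local system $(\underline{L}_S,\underline{a}_S)$.

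For the second assertion, assuming $O_K^{\times,\f}=\{1\}$, I would exploit the two simultaneous trivialisations this forces. On the one hand, property (iii) gives $\underline{\Aut}_S^{(\f)}(\mc{L})=\{1\}$ for every $(\mc{L},\alpha)$, so each fibre groupoid $\mc{CL}_{O_K}^{(\f)}(S)$ is discrete and equivalent to its own set of isomorphism classes. On the other hand, $H^1(S,\underline{O_K^{\times,\f}}_S)=H^1(S,\{1\})=0$ tautologically, so by (v) the map on isomorphism classes is already a bijection over every $S$, with no sheafification needed. Regarding $\underline{CL_{O_K}^{(\f)}}$ as the discrete stack it is, these two observations assemble into an equivalence of groupoids $\mc{CL}_{O_K}^{(\f)}(S)\isomto \underline{CL_{O_K}^{(\f)}}(S)$ for every $S$, and hence an equivalence of stacks.

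I do not anticipate any serious obstacle: the only real content is the local triviality of $\f$-isomorphism torsors, which is immediate from the very definition of ``rank one $O_K$-local system with level-$\f$ structure''. In essence this corollary is just an unpacking of the classification of invariants in (\ref{rema:level-f-local-system}), combined with the observation that the obstruction class $\rho_{\mc{L}/S,\f}$ becomes trivial in both of the relevant senses (after a cover for (i), or absolutely for (ii)).
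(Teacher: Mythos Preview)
Your proposal is correct and follows the same approach as the paper, which simply states that the corollary follows from the remarks in (\ref{rema:level-f-local-system}); you have merely unpacked which properties---(iii) and (v)---do the work and how. The argument that the $H^1(S,\underline{O_K^{\times,\f}}_S)$ contribution dies upon sheafification (via local triviality of the torsors), and that it vanishes outright when $\f$ separates units, is exactly the intended reading.
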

\begin{proof} This follows from the remarks (\ref{rema:level-f-local-system}).
\end{proof}

\chapter{Elliptic curves with complex multiplication} In this chapter we develop the general theory of (families of) elliptic curves with complex multiplication by the ring of integers $O_K$ of a fixed imaginary quadratic field $K$, here on called just CM elliptic curves. In \S 1 we recall several standard results from the theory of (families of general) elliptic curves. In \S 2 we define the notion of a family $E\to S$ of CM elliptic curves, the corresponding moduli stack $\mc{M}_\CM$, and we show that (just as with Lubin--Tate $O$-modules) the moduli stack $\mc{CL}_{O_K}$ of rank one $O_K$-local systems acts in a natural way on the moduli stack $\mc{M}_\CM$ of CM elliptic curves. We then describe, for a prime $\p\subset O_K$, the properties of the $\p$-power torsion subgroups $E[\p^\infty]\subset E$ of a family of CM elliptic curves and show that when $S$ is a $\p$-adic sheaf $E[\p^\infty]$ is a Lubin--Tate $O_{K_\p}$-module. In \S 3 we consider CM elliptic curves over complex and $\p$-adic bases and give CM analogues of the classification of elliptic curves over complex bases, and theorem of Serre-Tate describing deformations of elliptic curves over $\p$-adic bases. In \S 4 we show that any two CM elliptic curves over the same base are locally isogenous and from this deduce that the action of $\mc{CL}_{O_K}$ on $\mc{M}_{\CM}$ gives $\mc{M}_{\CM}$ the structure of a torsor. In \S 5 we derive the global reciprocity map associated to the maximal abelian extension of $K$ (totally split at $\infty$ -- but this is a vacuous condition) directly from the stack $\mc{M}_\CM$ in a manner quite analogous to the derivation of the local reciprocity map via the moduli stack of Lubin--Tate $O$-modules. We then classify all CM elliptic curves over fields (both of characteristic zero and finite characteristic) and prove some results regarding good reduction. In \S 5 we define level-$\f$ structures for CM elliptic curves and consider the corresponding moduli stacks $\mc{M}_\CM^{(\f)}$. As with $\mc{M}_\CM$ and $\mc{CL}_{O_K}$, we show that $\mc{M}_\CM^{(\f)}$ is a torsor under $\mc{CL}_{O_K}^{(\f)}$ (at least after inverting $\f$). Using this we show that the coarse sheaf of $\mc{M}_\CM^{(\f)}$ is isomorphic to $\Spec(O_{K(\f)}[\f^{-1}])$ where $K(\f)$ is the ray class field of conductor $\f$.

We should point out that, aside from the $\mc{CL}_{O_K}$-torsor structure of $\mc{M}_\CM$, consistently working over a general base (instead of a field) and our derivation of the reciprocity map, almost everything in this chapter is probably more or less already known. This combined with the fact that $\mc{M}_\CM$ is zero dimensional over $\Spec(O_K)$, the advantages of our general approach may be somewhat unclear. However, while $\mc{M}_\CM$ is geometrically rather simple, it is arithmetically quite complicated and the general approach we take in this chapter will allow for great deal of flexibility later when we wish to study some of its finer arithmetic properties.

\section{General elliptic curves} We now recall the definition of a family of elliptic curves over a sheaf $S$ and recall several standard results. In particular, the fact that the moduli stack of elliptic curves is indeed a stack, the rigidity principal for homomorphisms, the representability of Isom sheaves, Grothendieck's formal GAGA, the classification of elliptic curves over complex schemes $S$ in terms of rank two $\Z$-local systems over $S^\an$, the Serre-Tate theorem, and the criterion of good reduction.

\subsection{}\label{subsec:elliptic-curve-def} Let $S$ be a sheaf. An elliptic curve over $S$ is a sheaf of groups $E\to S$ which is relatively representable, smooth of relative dimension one, proper and geometrically connected. A morphism is of course a homomorphism of the underlying (sheaves of) groups over $S$. For many more general properties and constructions related to families of elliptic curves $E\to S$ we refer the reader to the wonderful book of Katz-Mazur \cite{KatzMazur85}.

If $S$ is a sheaf we write $\Ell(S)$ for the category of elliptic curves over $S$ and we denote by $\mc{M}_\Ell$ the fibred category over $\Sh$ whose fibre over a sheaf $S$ is the category elliptic curves $E/S$ together with their isomorphisms.
\begin{prop}\label{prop:moduli-ell-stack} The fibred category $\mc{M}_\Ell$ is a stack over $\Sh$.
\end{prop}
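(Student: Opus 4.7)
The plan is to verify the two defining conditions of a stack over the fpqc site: that $\Hom$-presheaves of elliptic curves are sheaves, and that fpqc descent data for elliptic curves is effective. In both tasks I will exploit the rigidity principle, which identifies a morphism of elliptic curves over $S$ with a morphism of the underlying relatively representable pointed $S$-sheaves; such a pointed morphism of elliptic curves is automatically a group homomorphism. This reduces the entire problem essentially to descent for relatively representable pointed sheaves.

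For sheafiness of morphisms, given $E, E'/S$ and a cover $(S_i'\to S')$ of $S$-sheaves together with a compatible family of morphisms $f_i\colon E_{S_i'}\to E'_{S_i'}$, each $f_i$ is a morphism of representable $S_i'$-sheaves, and the standard fpqc descent for morphisms of schemes glues them into a unique $f\colon E_{S'}\to E'_{S'}$. That $f$ fixes the identity section and is a group morphism are local conditions on $S'$ and follow from the corresponding conditions on each $f_i$. For effectivity of descent I reduce to the case of an fpqc cover $S'\to S$ of affine schemes together with an elliptic curve $\pi'\colon E'\to S'$ equipped with a cocycle descent datum $\varphi$. Since $\Sh$ is tautologically a stack, one always obtains a descended sheaf of groups $E/S$, and the remaining task is to prove that $E$ is representable, smooth, proper, geometrically connected of relative dimension one. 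The latter geometric properties are fpqc local on the base and descend automatically once representability is established, and the group-scheme structure on $E$ descends from that on $E'$ by the sheafiness argument applied to multiplication, inverse, and identity.

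The main obstacle is thus the representability of $E$, which I handle via projectivity. The line bundle $\mc{L}'=\mathcal{O}_{E'}(3\cdot 0_{E'})$ is relatively ample on $E'/S'$, and because $0_{E'}$, and hence $\mc{L}'$, is intrinsic to the elliptic curve structure, the datum $\varphi$ induces compatible descent data on the pair $(E',\mc{L}')$. Grothendieck's effective fpqc descent for projective schemes equipped with a relatively ample line bundle (SGA~1, Expos\'e VIII) then yields a projective $S$-scheme $E$ whose pullback along $S'\to S$ recovers $E'$, completing the argument.
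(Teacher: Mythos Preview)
Your proposal is correct and takes essentially the same approach as the paper. Both arguments establish effectivity of descent by noting that the line bundle $\mathcal{O}_{E'}(3\cdot 0_{E'})=\mc{I}_{E'/S'}^{-3}$ is canonical and relatively ample, hence carries compatible descent data; the paper spells this out by constructing the rank-three bundle $\mc{W}_{E/S}=f_*(\mc{I}_{E/S}^{-3})$ and the closed immersion $E\hookrightarrow\mathbf{P}(\mc{W}_{E/S})$ explicitly and descending each piece, whereas you invoke the general statement from SGA~1, Expos\'e~VIII directly.
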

\begin{proof}[Sketch] If $S$ is an affine scheme and $f: E\to S$ is a family of elliptic curves let $\mc{I}_{E/S}\subset \mc{O}_E$ denote the ideal sheaf defining the zero section $S\to E$ (it is a locally free rank one $\mc{O}_E$-module). The quasi-coherent $\mc{O}_S$-module $\mc{W}_{E/S}=f_*(\mc{I}^{-3}_{E/S})$ is a vector bundle of rank three, the morphism \[f^*f_*(\mc{I}^{-3}_{E/S})\to \mc{I}^{-3}_{E/S}\] is an epimorphism and defines a closed immersion $w_{E/S}: E\to \mathbf{P}(\mc{W}_{E/S})$. Both the vector bundle $\mc{W}_{E/S}$ and the morphism $w_{E/S}$ are functorial in $S$ so that by descent if $E\to S$ is any family of elliptic curves over a \text{sheaf} $S$ then there is a unique vector bundle $\mc{W}_{E/S}$ of rank three over $S$ together with a closed immersion $w_{E/S}: E\to \mathbf{P}_{S}(\mc{W}_{E/S})$ compatible with those defined when $S$ is affine.

Now if $S$ is an sheaf and $(f_i: E_i\to S_i)_{i\in I}$ is a family of elliptic curves equipped with descent data relative to a cover $(S_i)_{i\in I}$ of $S$ then the $E_i$ descend to a \textit{sheaf} of groups $E\to S$, the vector bundles $\mc{W}_{E_i/S_i}$ to a vector bundle $\mc{W}_{E/S}$ and the closed immersions $w_{E_i}: E_i\to \mathbf{P}(\mc{W}_{E_i/S_i})$ to a closed immersion \[w_{E/S}: E\to \mathbf{P}(\mc{W}_{E/S}).\] This shows that the morphism $p: E\to S$ is representable (in fact projective) and by descent it follows that $E\to S$ is also smooth of relative dimension one, proper, and geometrically connected so that $E\to S$ is an elliptic curve over $S$.
\end{proof}
\subsection{}\label{subsec:rigidity} Here we recall several useful properties enjoyed by homomorphisms of elliptic curves.
\begin{prop} Let $S$ be a sheaf and let $E\to S$ be a family of elliptic curves. For each $n\geq 1$ multiplication map $n: E\to E$ is finite locally free of degree $n^2$.
\end{prop}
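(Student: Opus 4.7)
The plan is to reduce to the classical case of elliptic curves over a field and then propagate the conclusion to an arbitrary base. Since being finite locally free is affine on the target and descends along covers by (\ref{prop:affine-fpqc-descent}) and its standard refinements, I may assume $S$ is an affine scheme and that $E\to S$ is a scheme (its representability is part of the definition given in (\ref{subsec:elliptic-curve-def}); compare the sketch of (\ref{prop:moduli-ell-stack})). I then have to show that $[n]:E\to E$ is a finite locally free morphism of schemes of degree $n^2$ whenever $E\to S$ is a family of elliptic curves over an affine scheme $S$.

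First, I would show that $[n]$ is finite. Properness is immediate: $[n]$ is an $S$-morphism between $S$-proper schemes, hence proper. To see that $[n]$ is quasi-finite, it suffices by translation-invariance to show that the kernel $E[n]=\ker([n])\to S$ has finite fibres. But for every geometric point $\bar s\to S$ the fibre $E_{\bar s}[n]$ is the $n$-torsion of an elliptic curve over an algebraically closed field, which is classically known to be a finite (in fact, order-$n^2$) group scheme. A proper quasi-finite morphism is finite by Zariski's main theorem, so $[n]:E\to E$ is finite.

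Next I would show flatness. The structural morphism $E\to S$ is smooth, hence flat. The fibrewise map $[n]_{\bar s}: E_{\bar s}\to E_{\bar s}$ is flat (again by the classical theory over algebraically closed fields: a non-constant homomorphism between smooth proper connected group schemes of the same dimension over a field is automatically flat). Applying the fibral criterion of flatness (EGA IV$_3$, 11.3.10, or the relevant Stacks project tag) to the composition $E\stackrel{[n]}{\to} E\to S$, I conclude that $[n]:E\to E$ is flat over $S$, hence flat as a morphism.

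Finally, $[n]_*\mathcal{O}_E$ is a coherent $\mathcal{O}_E$-module because $[n]$ is finite; it is flat over $\mathcal{O}_E$ because $[n]$ is flat; and coherent flat means locally free. The rank can be checked after pullback to any point $\bar s\to S$, where it becomes the degree of $[n]$ on a geometric fibre, namely $n^2$. Gluing back and descending to the original sheaf $S$ gives the claim. The only potentially delicate point is the application of the fibral flatness criterion, but this is entirely standard once representability has been used to reduce to schemes; everything else is either descent, ZMT, or the classical result over a field.
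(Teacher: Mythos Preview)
Your argument is correct. The paper does not give a proof at all: it simply cites Theorem~2.3.1 of Katz--Mazur \cite{KatzMazur85}. What you have written is essentially the standard proof one finds there or reconstructs independently: reduce to an affine base by descent, use properness plus quasi-finiteness (checked on geometric fibres) together with Zariski's Main Theorem to get finiteness, then apply the fibral flatness criterion to get flatness, and finally read off the degree $n^2$ on each geometric fibre. One small stylistic remark: rather than invoking flatness of a ``non-constant homomorphism between smooth proper connected group schemes of the same dimension over a field'', it is cleaner to say that any finite surjection onto a regular one-dimensional scheme is automatically flat, which immediately gives flatness of $[n]_{\bar s}$ once you know it is finite. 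Nothing in your proof depends on this rephrasing.
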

\begin{proof} This is Theorem 2.3.1 of \cite{KatzMazur85}.
\end{proof}
\begin{prop}[Rigidity]\label{prop:rigidity} If $f: E\to E'$ is a homomorphism of elliptic curves over a sheaf $S$ there is a unique decomposition $S=\amalg_{n\geq 0} S_{(n)}$ with the property that $f\times_S S_{(0)}$ is the zero map and such that $f\times_S S_{(n)}$ is finite locally free of degree $n$ for $n\geq 1$.
In particular, if $f, g: E\to E'$ are a pair of homomorphisms of elliptic curves and $S'\to S$ is a morphism of sheaves which is surjective on geometric points then $f=g$ if and only if $f\times_S S'=g\times_S S'$.
\end{prop}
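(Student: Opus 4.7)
The plan is to first establish the decomposition $S = \amalg_{n\geq 0} S_{(n)}$ and then derive the rigidity statement as a formal consequence. Since both claims are local on $S$, I may reduce to the case where $S$ is an affine scheme. Define $K = f^{-1}(0_{E'}) \subset E$ as the closed subsheaf cut out by pulling back the zero section along $f$; over each geometric point $s$, $K_s$ is either all of $E_s$ (when $f_s = 0$) or a finite subgroup scheme (when $f_s$ is an isogeny, with $|K_s| = \deg(f_s)$). Set $S_{(0)} \subset S$ to be the locus where $f$ vanishes fiberwise.

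The crucial step is to show that $S_{(0)}$ is open-and-closed. Closedness is immediate: the complement $S \setminus S_{(0)}$ equals the image $p(E \setminus K)$, where $p: E \to S$ is smooth and hence open, so this image is open. Openness is the main technical point and rests on the rigidity of homomorphisms of elliptic curves under infinitesimal deformations, equivalently the \'etaleness of the Hom sheaf of an abelian scheme; this can be proved either directly by analysing the formal structure of $E$ and $E'$ near their zero sections, or extracted from the classical theory of Hom schemes of abelian schemes. On $S_{\neq 0} := S \setminus S_{(0)}$, the morphism $f$ is proper (composition of $E \to S$ proper with $E' \to S$ separated) with finite fibers, hence finite; exploiting the group-scheme structure (so that $f: E \to E'$ becomes a torsor under the finite kernel $K$, using translations on each fiber) one concludes that $f$ is finite locally free. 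The rank of $f$, which equals the order of $K$, is then a locally constant function on $S_{\neq 0}$, partitioning it into the pieces $\amalg_{n\geq 1} S_{(n)}$ indexed by degree.

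For the ``in particular'' claim, apply the decomposition to the homomorphism $h := f - g$: the equality $h \times_S S' = 0$ means exactly that $S' \to S$ factors through the clopen subsheaf $S_{(0)}(h)$. Since $S' \to S$ is surjective on geometric points and $S_{(0)}(h) \subset S$ is open-and-closed, its complement being non-empty would contain a geometric point not lying in the image, a contradiction. Hence $S_{(0)}(h) = S$, so $h = 0$, i.e., $f = g$. The main obstacle throughout is the openness of $S_{(0)}$; once this rigidity of homomorphisms is secured, the remaining assertions (finite locally free structure on $S_{\neq 0}$ and local constancy of degree) follow from the group-scheme structure and standard arguments about finite proper morphisms, and the rigidity corollary is then purely formal.
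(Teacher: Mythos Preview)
Your argument for the ``in particular'' is essentially the paper's: apply the decomposition to $h=f-g$, note that $S'\to S$ factors through the clopen piece $S_{(0)}$, and conclude $S_{(0)}=S$ from surjectivity on geometric points.

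For the decomposition itself, the paper does not argue directly but simply cites Theorem~2.4.2 of Katz--Mazur, whereas you sketch a proof. Your outline is sound and you correctly flag the openness of $S_{(0)}$ as the real content. One caution: defining $S_{(0)}$ as the \emph{fibrewise} vanishing locus gives a priori only a subset, and knowing that $f|_{S_{(0)}}$ is genuinely the zero morphism (not merely zero on each fibre) is itself the rigidity statement you are deferring. It is cleaner to take $S_{(0)}$ as the equaliser of $f$ and $0$ viewed as sections of $\underline{\Hom}_S(E,E')$; once this Hom scheme is known to be separated and unramified over $S$ (a standard fact for abelian schemes, and the paper records the analogous statement for $\underline{\Isom}$ immediately after this proposition), the equaliser is automatically open and closed and agrees with your fibrewise description. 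Your remaining steps---properness of $f$ from $E\to S$ proper and $E'\to S$ separated, finiteness on the complement via quasi-finiteness, flatness from the torsor description under the kernel, and local constancy of the rank---are all correct.
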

\begin{proof} The first statement is Theorem 2.4.2 of \cite{KatzMazur85}. For the second statement the only if direction is clear, so assume that $f\times_S S'=g\times_S S'$. The claim is local on $S$ and $S'$ and so we may assume that they are affine schemes. If $S_{(0)}\subset S$ and $S_{{(0)}}'\subset S'$ denote the open and closed sub-schemes where $f-g$ and $f\times_S S'-g\times_S S'$ are equal to the zero map respectively, then $S_{(0)}'\to S$ factors through $S_{(0)}$. However, $S_{(0)}'=S'$, so that as $S'\to S$ is surjective on geometric points, it follows that $S_{(0)}=S$.
\end{proof}
\begin{rema} We will make much use of (\ref{prop:rigidity}) and when doing so just say `by rigidity'.
\end{rema}
\begin{prop}\label{prop:isom-are-fin-unram} For each pair of elliptic curves $E$ and $E'$ over a sheaf $S$ the sheaf $\underline{\Isom}_S(E, E')$ is finite and unramified over $S$.
\end{prop}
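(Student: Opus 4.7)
My plan is to proceed in three stages: first represent $\underline{\Isom}_S(E, E')$ as a scheme over $S$, then prove it is unramified via a tangent-space calculation, and finally deduce finiteness by combining quasi-finiteness with the valuative criterion of properness.

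For representability, I would use the canonical projective embeddings $w_{E/S}$ and $w_{E'/S}$ constructed in the proof of (\ref{prop:moduli-ell-stack}). Since $E$ and $E'$ are thus projective over $S$, the relative Hilbert scheme of $E \times_S E'$ represents closed subschemes, and the sub-functor parameterizing graphs (closed subschemes that project isomorphically onto both factors) and containing $(0_E, 0_{E'})$ is representable by a locally closed sub-scheme. By rigidity (\ref{prop:rigidity}), any such scheme-theoretic isomorphism sending $0_E$ to $0_{E'}$ is automatically a group-scheme isomorphism, so this locally closed sub-scheme is $\underline{\Isom}_S(E, E')$, which is therefore representable by an $S$-scheme locally of finite presentation.

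For unramifiedness, I would compute the tangent space at an $S$-point $f : E \isomto E'$. A deformation of $f$ to $S[\varepsilon]/(\varepsilon^2)$ preserving the zero sections corresponds to a global section of $f^* T_{E'/S}$ vanishing along $0_E$. Since $f$ is an isomorphism, $f^* T_{E'/S} \cong T_{E/S}$, and for an elliptic curve the relative tangent bundle is the pullback from $S$ of $\underline{\Lie}_{E/S}$. Hence every global section is translation-invariant and determined by its value at $0_E$, so vanishing there forces the deformation to be trivial. This gives unramifiedness.

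Finiteness then follows from quasi-finiteness plus properness. Locally of finite presentation plus unramified yields quasi-finite, so it remains to verify the valuative criterion: given a DVR $R \to S$ with fraction field $K$, an isomorphism $E_K \isomto E'_K$ extends uniquely to an isomorphism $E_R \isomto E'_R$. One takes the scheme-theoretic closure $\overline{\Gamma}$ of the graph inside $E_R \times_R E'_R$; the two projections $\overline{\Gamma} \to E_R$ and $\overline{\Gamma} \to E'_R$ are proper, generically isomorphisms, onto regular targets, and by Zariski's Main Theorem together with flatness of $\overline{\Gamma}$ over $R$ they are isomorphisms, so $\overline{\Gamma}$ is the graph of the required extension. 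A proper quasi-finite morphism is finite, completing the proof. The main obstacle is precisely this valuative criterion step, which is really an incarnation of the N\'eron mapping property for smooth proper curves, but it can be checked directly as sketched.
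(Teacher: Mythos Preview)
Your approach is sound and actually supplies the content that the paper merely cites: the paper's proof reduces to the affine case and then invokes Proposition~5.3(i) of \cite{Deligne75}, whereas you reconstruct an argument of this type from scratch. The representability step via the Hilbert scheme of $E\times_S E'$ and the identification of pointed isomorphisms with group isomorphisms (by rigidity) is correct. The unramifiedness computation is also correct: first-order deformations of a pointed map $f$ are global sections of $f^*T_{E'/S}$ vanishing at $0_E$, and since $T_{E'/S}$ is pulled back from $S$ such a section is determined by its value at $0_E$, hence zero.

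The one place that needs care is your direct argument for the valuative criterion. The combination ``proper, birational, regular target, and flatness of $\overline{\Gamma}$ over $R$'' does not by itself force $p_1$ and $p_2$ to be isomorphisms; a blow-up of a closed point of $E_R$ would satisfy all of those hypotheses. What rules this out is the ambient embedding $\overline{\Gamma}\subset E_R\times_R E'_R$: a positive-dimensional fibre of $p_1$ over a point $x\in E_k$ would force $\{x\}\times E'_k$ to be a component of the $1$-cycle $\overline{\Gamma}_k$, but then the intersection number of $\overline{\Gamma}_k$ with a general fibre $\{x'\}\times E'_k$ would drop to $0$, contradicting the constancy (by flatness) of the intersection number, which equals $1$ on the generic fibre. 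Alternatively, and this is what you allude to, one simply invokes the N\'eron mapping property: $E'_R$ is an abelian scheme, hence its own N\'eron model, so $f_K$ and $f_K^{-1}$ both extend to $R$-morphisms, and rigidity forces the extensions to be mutually inverse. Either of these closes the gap, and with that your proof is complete.
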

\begin{proof} The claim is local on $S$ so we may assume that $S$ is an affine scheme and this is Proposition 5.3 (i) of \cite{Deligne75}.
\end{proof}

\subsection{}\label{prop:formal-gaga} Let $A$ be a noetherian ring complete with respect to the $I$-adic topology for $I\subset A$ an ideal and write $\Spf(A)=\colim_{n\geq 0} \Spec(A/I^{n+1})\subset \Spec(A)$.
\begin{theo}[Formal GAGA] The functor \[\Ell(\Spec(A))\to \Ell(\Spf(A)): E/\Spec(A)\mto E\times_{\Spec(A)} \Spf(A)\] induced by base change is an equivalence of categories.
\end{theo}
\begin{proof} This is an easy application Grothendieck's formal GAGA (in particular Corollaire 2 and Th\'eor\`eme 4 of \cite{Groth95}).
\end{proof}

\subsection{}\label{subsec:complex-class} The following is taken from N\textsuperscript{\underline{o}} 2 of \cite{Deligne71}. Let $S$ be a locally finitely presented $\Spec(\C)$-scheme and let $f: E\to S$ be an elliptic curve. The analytification $f^\an:E^\an \to S^\an$ is an analytic space over $S^\an$ which is smooth of relative dimension one and proper with connected fibres. There is a canonical exact sequence, the exponential sequence, of sheaves on the big analytic site of $X^\an$ \[0\to T_\Z(E)\to \underline{\Lie}_{E^\an/S^\an} \to E^\an\to 0\] (we view $\underline{\Lie}_{S^\an/E^\an}$ as a rank one locally free sheaf of $\mc{O}_{S^\an}$-modules on the \textit{big} analytic site of $S$) with $T_\Z(E)$ a rank two $\Z$-local system on $S^\an$. Denote by $\mathrm{Lat}(S^\an)$ the category of pairs $(T \subset \mc{V})$ where $\mc{V}$ is a locally free rank one $\mc{O}_{S^\an}$-module and $T\subset \mc{V}$ is a rank two $\Z$-local system which is fibre-wise over $S^\an$ discrete in $\mc{V}$.

\begin{prop}\label{prop:class-complex-elliptic} The functor \[\Ell(S)\to \mathrm{Lat}(S^\an): E/S\mto (T_\Z(E/S)\subset \underline{\Lie}_{E^\an/S^\an})\] is an equivalence of categories.
\end{prop}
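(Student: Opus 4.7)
The plan is to show the functor is fully faithful and essentially surjective separately, in both cases reducing from $S$ to $S^\an$ by relative GAGA and then using the classical uniformisation fibrewise.

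First I would check that the functor is well-defined: the exponential sequence exhibits $T_\Z(E/S) = \ker(\exp: \underline{\Lie}_{E^\an/S^\an}\to E^\an)$, and since on geometric fibres $E_s$ is an elliptic curve over $\C$, the classical theory of the uniformisation of one-dimensional complex tori shows that $T_\Z(E/S)_s \subset \underline{\Lie}_{E_s^\an/\{s\}^\an}$ is a discrete rank two $\Z$-lattice inside a rank one $\C$-vector space. Functoriality is transparent from functoriality of $\mathrm{Lie}$ and the exponential.

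Next I would handle full faithfulness. Given $E, E'$ over $S$, both are projective over $S$ via the embeddings $w_{E/S}$ and $w_{E'/S}$ recalled in (\ref{prop:moduli-ell-stack}), so relative GAGA (applied to $\mathrm{Hom}_S(E, E')$, viewed as a closed subscheme of a Hilbert scheme of graphs in $E \times_S E'$, itself projective over $S$) identifies algebraic $S$-homomorphisms with analytic $S^\an$-homomorphisms $E^\an \to E'^\an$. On the analytic side, a morphism of relative complex Lie groups is determined by its action on the Lie algebra and, equivalently, by the induced map of Lie algebras restricted to the kernel of $\exp$; this gives a bijection between morphisms $E^\an \to E'^\an$ and pairs of compatible morphisms $(T_\Z(E) \to T_\Z(E'),\ \underline{\Lie}_{E^\an/S^\an} \to \underline{\Lie}_{E'^\an/S^\an})$ as in the definition of $\mathrm{Lat}(S^\an)$, since the target is obtained as the quotient $\underline{\Lie}_{E'^\an/S^\an}/T_\Z(E')$. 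Chaining these bijections gives full faithfulness.

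For essential surjectivity, given $(T \subset \mc{V}) \in \mathrm{Lat}(S^\an)$, I would form the analytic quotient $F^\an := \mc{V}/T$, a sheaf of abelian groups over $S^\an$ which is fibrewise a compact complex torus of dimension one, hence fibrewise an elliptic curve. The hard part is to algebraise $F^\an$ to produce an $E/S$ with $E^\an \simeq F^\an$; this is the main obstacle. I would do this by mimicking the Weierstrass construction in families: from $(T \subset \mc{V})$ build canonical holomorphic sections $g_2$ and $g_3$ of $\mc{V}^{\otimes -4}$ and $\mc{V}^{\otimes -6}$ (as Eisenstein series over the lattice $T$, which converge locally on $S^\an$), whose discriminant is a nonvanishing section; these define a closed analytic subspace of $\mathbf{P}_{S^\an}(\mc{O} \oplus \mc{V}^{\otimes 2} \oplus \mc{V}^{\otimes 3})$ isomorphic to $F^\an$ as a relative group, with zero section the section at infinity. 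Since this projective bundle is the analytification of an algebraic projective bundle on $S$, and the sections $g_2, g_3$, being $\mathrm{GL}$-equivariant in $(T, \mc{V})$, descend to algebraic sections on $S$ (one checks this locally where $\mc{V}$ and $T$ are trivialised, and the transformation laws for $g_2, g_3$ under $\mathrm{SL}_2(\Z)$ and by homothety of $\mc{V}$ match, so they glue), relative GAGA for closed subschemes of projective bundles produces an algebraic $E \subset \mathbf{P}_S(\mc{W})$ whose analytification is $F^\an$. The group structure on $E$ also comes from GAGA applied to the already-constructed analytic group structure. Finally, because the exponential sequence of $E^\an$ is canonically isomorphic to $0 \to T \to \mc{V} \to F^\an \to 0$ by construction, this $E$ maps to $(T \subset \mc{V})$, completing essential surjectivity.
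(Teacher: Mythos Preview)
The paper does not supply its own proof of this proposition; it attributes the result to \cite{Deligne71}. Your proposal, however, has a genuine gap in the essential surjectivity step.

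The claim that the Eisenstein sections $g_2, g_3$ ``descend to algebraic sections on $S$'' is where the argument breaks. Their $\mathrm{SL}_2(\Z)$- and homothety-equivariance ensures only that they are well-defined \emph{holomorphic} sections of the relevant tensor powers of $\mc{V}^\vee$, independent of local trivialisations of $T$ and $\mc{V}$; it says nothing about algebraicity. Indeed $\mc{V}$ itself is only an analytic line bundle on $S^\an$ and need not be the analytification of anything, so the projective bundle you form is not a priori algebraic either. GAGA does not help because $S$ is only locally of finite presentation over $\Spec(\C)$, not proper. The missing ingredient is a Borel-type algebraicity theorem: after imposing level-$N$ structure \'etale-locally, the lattice data yields a holomorphic map from $S^\an$ to the analytification of the open modular curve $Y(N)$, and one needs the fact that holomorphic maps from complex algebraic varieties to arithmetic quotients of bounded symmetric domains are automatically algebraic. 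This is the substantive content of the proposition and is not a formality.

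Your full-faithfulness argument has a milder form of the same issue: ``relative GAGA'' for a projective morphism over a non-proper base does not identify algebraic and analytic sections (already for $\mathbf{P}^1_S\to S$ with $S=\mathbf{A}^1_{\C}$ these are regular versus holomorphic maps to $\mathbf{P}^1$). This part is more easily repaired: by rigidity (\ref{prop:rigidity}) and the finiteness of (\ref{prop:isom-are-fin-unram}), the Hom-scheme is a disjoint union of finite \'etale schemes over open-and-closed subschemes of $S$, and for finite \'etale $S$-schemes the Riemann existence theorem matches algebraic and analytic sections.
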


\subsection{}\label{subsec:classical-serre-tate} Let $p$ be a rational prime and $S$ a $p$-adic sheaf (i.e.\ a sheaf over $\Spf(\Z_p)=\colim_{n} \Spec(\Z/p^{n+1}))$. A $p$-divisible group over $S$ is an sheaf of groups $F\to S$ such that the multiplication map $p: F\to F$ is representable, finite locally free and faithfully flat and such that $\colim_n \ker(p^n)=F$. If $E$ is an elliptic curve over $S$ we write $E[p^\infty]$ for the $p$-divisible group $\colim_n E[p^n]$.

Let $S_0\to S$ be a nilpotent closed immersion of $p$-adic sheaves and consider the category $D(S, S_0)$ whose objects are triples $(E/S_0, F/S, \rho)$ with
\begin{enumerate}[label=\textup{(\roman*)}]
\item $E_0/S_0$ an elliptic curve,
\item $F/S$ is a $p$-divisible group, and
\item $\rho: E_0[p^\infty]\isomto F\times_S S_0$ an isomorphism of $p$-divisible groups,
\end{enumerate} \noindent and whose morphisms $(E_0/S_0, F/S, \rho) \to (E'/S_0, F', \rho')$ are pairs $(f, g)$ where $f: E\to E'$ is a morphism of elliptic curves, $g: F\to F'$ is a morphism of $p$-divisible groups such that \[(g\times_{S} S_0)\circ \rho=\rho'\circ f|_{E[p^\infty]}.\]

\begin{theo}[Serre-Tate]\label{theo:serre-tate} The functor \[\Ell(S)\to D_p(S, S_0): E/S\mto (E\times_S S_0/S_0, E[p^\infty]/S, \id_{E[p^\infty]}|_{S_0})\] is an equivalence of categories.
\end{theo}
\begin{proof} A short argument using (\ref{prop:moduli-ell-stack}) reduces us to the case where $S$ is an affine scheme and this case is the content of the Appendix of \cite{Drinfeld76}.
\end{proof}

\subsection{} Let $S$ be a Dedekind scheme, i.e.\ a one dimensional, regular and irreducible scheme. Let $f:\Spec(K)\to S$ be its generic point and let $E/\Spec(K)$.

\begin{theo}[N\'eron models] The functor $X/S\mto E(X\times_{S} \Spec(K))$ on \textup{smooth} schemes over $S$ is representable by a smooth, one dimensional group scheme $\Ner_S(E)/S$. Moreover, if $S'\to S$ is an \'etale map of Dedekind schemes and $\Spec(K')\to S'$ is the generic point of $S'$ then \[\Ner_S(E)\times_S S'=\Ner_{S'}(E\times_{\Spec(K)}\Spec(K')).\]
\end{theo}
\begin{proof} Representability of the functor is Theorem 3 \S 1.4 Chapter 1 of \cite{Raynaud90} and compatibility with \'etale base change is Proposition 2 (b) \S 1.2 of \cite{Raynaud90}.
\end{proof}

\begin{theo}\label{theo:criterion-of-good-reduction} Let $L/\Q$ be a finite extension, $v$ a place over $\Q^\sep$ lying over the prime $\p$ of $O_L$ with residue characteristic $p>0$ and let $E/L$ be an elliptic curve. Then $\Ner_{O_{L, \p}}(E)\to \Spec(O_{L, \p})$ is an elliptic curve if, for some prime $l\neq p$, the action of the inertia group $I_v\subset G(\Q^\sep/L)$ on $E[l^\infty](\Q^\sep)$ is trivial.
\end{theo}
\begin{proof} This follows from Theorem 1 of \cite{SerreTate68}.
\end{proof}

\section{Elliptic curves with complex multiplication} In this section we define families of elliptic curves with complex multiplication by the ring of integers $O_K$ of an imaginary quadratic field $K$ or, for short, CM elliptic curves. We give analogues for CM elliptic curves of several of the results of (\ref{sec:lubin-tate-modules}) given for Lubin--Tate $O$-modules. In particular, we consider the moduli stack $\mc{M}_\CM$ of CM elliptic curves and show that the stack $\mc{CL}_{O_K}$ of rank one $O_K$-local system acts on $\mc{M}_\CM$.

\subsection{}\label{subsec:imag-quad-set-up} For the remainder of this chapter we fix an imaginary quadratic field $K=\Q(\sqrt{-d})$ for $d\in \N$ square free and with ring of integers $O_K$. We note that $K$ has only one archimedian place $\infty$ and so $(K, \infty)$ satisfies the conditions of (\ref{subsec:global-field-with-special-place}) and we shall make use and notation (and in later sections theory) set up in \S \ref{section:special-place} of Chapter 1. As $K_\infty\isomto \C$ is algebraically closed every finite extension of $K$ is totally split at infinity so that $K^\infty=K^\ab$, although we shall continue to use the notation $K^\infty$.
The unit group $O_K^\times$ is finite and is equal to $\{\pm 1\}$ unless $K=\Q(\mu_n)$ for $n=4, 6$ in which case $O_K^\times=\mu_n$. The unique non-trivial automorphism of $K/\Q$ is denoted by $a\mto \overline{a}$. We shall be working solely in the category $\Sh_{O_K}$ and so by a sheaf $S$ we will mean a sheaf over $\Spec(O_K)$. In particular, to simplify some of the notation we will write $X\times Y$ for the product $X\times_{\Spec(O_K)} Y$ in $\Sh_{O_K}$.

\subsection{} An elliptic curve with complex multiplication by $O_K$ over $S$, or for short a CM elliptic curve over $S$, is an elliptic curve $E\to S$ (\ref{subsec:elliptic-curve-def}) equipped an $\underline{O_K}_S$-module structure which is strict with respect to the morphism $\underline{O_K}_S\to \mc{O}_S$ coming from the structure map $S\to \Spec(O_K)$ (cf.\ (\ref{subsec:strict-formal-modules})). A morphism of CM elliptic curves over $S$ is just a homomorphism of $\underline{O_K}_S$-modules. For $a\in \underline{O_K}_S(S)$ we write $[a]_E: E\to E$, or just $a: E\to E$, for the corresponding endomorphism (in particular for $a\in O_K\subset \underline{O_K}_S(S)$).

Finally, we write $\CM(S)$ for the category of CM elliptic curves over a sheaf $S$ and we write $\mc{M}_\CM$ for the stack over $\Sh_{O_K}$ whose fibre over $S$ is the category of CM elliptic curves over $S$ together with their isomorphisms.

\begin{prop}\label{prop:cm-endo} For any \textup{CM} elliptic curve $E/S$ the morphism \[\underline{O_K}_S\to \underline{\End}_S^{O_K}(E)\] is an isomorphism.
\end{prop}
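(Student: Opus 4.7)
The plan is to establish injectivity and local surjectivity of the natural sheaf map $\underline{O_K}_S\to \underline{\End}_S^{O_K}(E)$, reducing surjectivity to the case of a geometric point and then invoking rigidity to propagate.

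For injectivity, let $a\in \underline{O_K}_S(T)$ for some $T\to S$ with $[a]_{E_T}=0$; working locally on $T$ we may assume $a\in O_K$ is constant. The rigidity decomposition of (\ref{prop:rigidity}) applied to $[a]_{E_T}$ forces $T$ to lie in the zero-locus $T_{(0)}$, but if $a\neq 0$ then on any geometric fibre $E_{\bar s}$ multiplication by $a$ is an isogeny of (positive) degree $a\bar a=N_{K/\Q}(a)$, hence nonzero. So $a=0$, and injectivity follows.

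For surjectivity, let $f\in \underline{\End}_S^{O_K}(E)(T)$ and fix a geometric point $\bar s$ of $T$. The pointwise statement below produces $a\in O_K$ such that $f_{\bar s}=[a]_{E_{\bar s}}$. Then $f-[a]_{E_T}\colon E_T\to E_T$ is an $O_K$-linear homomorphism vanishing on $\bar s$, so by (\ref{prop:rigidity}) its zero-locus $T_{(0)}$ is an open and closed subscheme of $T$ containing (the image of) $\bar s$. Hence $f=[a]_{E_T}$ on a Zariski-open neighborhood of $\bar s$; varying $\bar s$ covers $T$, giving the required local surjectivity of the sheaf map.

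It remains to prove the pointwise statement: for a CM elliptic curve $E$ over an algebraically closed field $\bar k$, $\End^{O_K}(E)=O_K$. In characteristic zero, and in characteristic $p>0$ when $E$ is ordinary, $\End(E)$ is itself an order in $K$ containing $O_K$, hence equal to $O_K$ by maximality. When $\mathrm{char}(\bar k)=p>0$ and $E$ is supersingular, $\End(E)$ is a maximal order $\Lambda$ in a rational quaternion algebra $B$ in which $K$ embeds as a maximal commutative subfield; the $O_K$-linear endomorphisms form the centralizer $C_\Lambda(O_K)=\Lambda\cap C_B(K)=\Lambda\cap K$, which is an order in $K$ containing $O_K$ and therefore equals $O_K$.

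The only real subtlety is the passage from a pointwise identification $f_{\bar s}=[a]$ to equality on a neighborhood; this is precisely what (\ref{prop:rigidity}) supplies. The supersingular case of the pointwise statement is the main classical input, but everything else reduces to maximality of $O_K$ in $K$ together with rigidity.
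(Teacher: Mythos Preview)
Your proof is correct and follows the same strategy as the paper: injectivity, then the pointwise statement over an algebraically closed field, then rigidity (\ref{prop:rigidity}) to spread from a point to an open-and-closed neighbourhood. The paper simply cites Serre--Tate (Corollary~1, \S4 of \cite{SerreTate68}) for the pointwise assertion at closed points, whereas you supply a self-contained argument at geometric points via the classical structure of $\End(E)$; note only that the degree formula $\deg([a])=N_{K/\Q}(a)$ you invoke for injectivity is proved in the paper \emph{after} this proposition, but you in fact only need that $[a]$ is nonzero on fibres, which already follows from $[a\bar a]$ being multiplication by the nonzero integer $N_{K/\Q}(a)$.
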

\begin{proof} The claim is local on $S$ so may assume that $S$ is an affine scheme. That $\underline{O_K}_S\to \underline{\End}_S^{O_K}(E)$ is injective follows from the fact that both $\underline{\Z}_S\subset \underline{O_K}_S$ and $\underline{\Z}_S\to\underline{\End}_S(E)$ are monomorphisms. By Corollary 1, \S 4 of \cite{SerreTate68} the map $\underline{O_K}_S\to \underline{\End}_S^{O_K}(E)$ is an isomorphism when evaluated on any closed point $\Spec(k)\to S$. Therefore, if $f: E\to E$ is any morphism, for each closed point $h:\Spec(k)\to S$ there is an element $\alpha_h\in O_K$ such that $(f-[\alpha_h]_E)|_{\Spec(k)}=0$ and by rigidity (\ref{subsec:rigidity}) there is an open (and closed) neighbourhood $\Spec(k)\to U\subset S$ such that $(f-[\alpha_h]_E)|_U=0$. It follows that \[\underline{O_K}_S\to \underline{\End}_S^{O_K}(E)\] is an epimorphism and so is an isomorphism.
\end{proof}

\begin{prop}\label{prop:cm-tensoring-is-cm} Let $E/S$ be a \textup{CM} elliptic curve and $\mc{L}/S$ a rank one $O_K$-local system. Then $E\otimes_{O_K}\mc{L}$ is a \textup{CM} elliptic curve over $S$.
\end{prop}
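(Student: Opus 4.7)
The approach is to verify each of the defining properties of a CM elliptic curve for $E\otimes_{O_K}\mc{L}$ by passing to a trivializing cover for $\mc{L}$ and then descending. Since $\mc{M}_\Ell$ is a stack over $\Sh$ (\ref{prop:moduli-ell-stack}), representability together with properness, smoothness of relative dimension one and geometric connectedness are all local on $S$; likewise the strictness of an $\underline{O_K}_S$-module action is a local condition. So I may replace $S$ by a cover and assume $\mc{L}\isomto \underline{L}_S$ for some rank one projective $O_K$-module $L$, i.e.\ a fractional ideal of $O_K$.

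Under this assumption the sheaf $E\otimes_{O_K}\underline{L}_S$ is the Serre tensor product of $E$ with the fractional ideal $L$, studied in the appendix. Since $L$ is invertible on $\Spec(O_K)$, I would pick a Zariski cover $(U_j)_j$ of $\Spec(O_K)$ on which $L$ becomes free and pull back along $S\to\Spec(O_K)$ to obtain a cover $(S_j)_j$ of $S$. Over each $S_j$ a choice of trivialization $L\otimes_{O_K}\mc{O}_{S_j}\isomto \mc{O}_{S_j}$ yields a canonical $O_K$-linear isomorphism $(E\otimes_{O_K}L)_{S_j}\isomto E_{S_j}$, so locally $E\otimes_{O_K}L$ is just $E$. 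On overlaps the two trivializations differ by multiplication by a unit in $\underline{O_K}_{S_j\cap S_{j'}}$, which acts on $E$ via its $O_K$-module structure as an $O_K$-linear automorphism. Since $\mc{M}_\Ell$ is a stack, this descent datum defines an elliptic curve over $S$, which is $E\otimes_{O_K}L$.

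Finally, the $\underline{O_K}_S$-action on $E\otimes_{O_K}\mc{L}$ is strict by the same appendix fact that was applied in the Lubin--Tate case (\ref{coro:strict-tensor}): tensoring a strict $\underline{O_K}_S$-module with a rank one $O_K$-local system preserves strictness, because strictness is a compatibility on Lie algebras that is obviously stable under tensoring with a local system. The main obstacle is packaging the Serre tensor product cleanly, so that the descent data assembled from the Zariski cover of $\Spec(O_K)$ genuinely lives in the stack $\mc{M}_\Ell$; once that is in place, every remaining verification is local, and in the local case it reduces to the tautology $E\otimes_{O_K}\underline{O_K}_S=E$.
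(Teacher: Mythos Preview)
Your first reduction---localising on $S$ so that $\mc{L}\cong\underline{L}_S$ for a rank one projective $O_K$-module $L$, and planning to descend via the stack property of $\mc{M}_\Ell$---is sound. The gap is in the second step. You claim that a Zariski trivialisation $L\otimes_{O_K}\mc{O}_{S_j}\isomto\mc{O}_{S_j}$ yields an $O_K$-linear isomorphism $(E\otimes_{O_K}L)_{S_j}\isomto E_{S_j}$, but these involve two different tensor products. The Serre tensor product $E\otimes_{O_K}L$ is $E\otimes_{\underline{O_K}_S}\underline{L}_S$, formed using the action of $O_K$ on $E$ \emph{by endomorphisms}; the object $L\otimes_{O_K}\mc{O}_{S_j}$ is the scalar extension along the structure map $O_K\to\mc{O}_{S_j}$. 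Trivialising the latter says nothing about the former: the constant sheaf $\underline{L}_{S_j}$ is still governed by the $O_K$-module $L$, and it is isomorphic to $\underline{O_K}_{S_j}$ only when $L$ itself is free. Concretely, if $L=\p$ is a non-principal prime and $S_j\subset S[\p^{-1}]$, then $\p\cdot\mc{O}_{S_j}=\mc{O}_{S_j}$, yet there is no reason for $E\otimes_{O_K}\p$ and $E$ to be isomorphic over $S_j$. The tell-tale sign is your description of the transition functions: two trivialisations of $L\otimes_{O_K}\mc{O}_{S_j}$ differ on overlaps by units of $\mc{O}_{S_{jj'}}^\times$, not by sections of $\underline{O_K^\times}_{S_{jj'}}$, and a general unit of the structure sheaf does not act $O_K$-linearly on $E$.

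The paper avoids this by never attempting to trivialise $\underline{L}_S$. Instead it uses that $L$ is projective, hence a direct summand of some $O_K^n$, so that $E\otimes_{O_K}L$ is a direct factor of $E^n$. The relevant properties (proper, smooth, geometrically connected, Lie algebra locally free of rank one, strictness of the $O_K$-action) are then inherited from $E^n$ via the appendix results on tensoring with modules satisfying condition~(P), namely (\ref{prop:tensor-properties}) and (\ref{prop:tensor-properties-lie-hom}). Your instinct to invoke the appendix for strictness is correct; the missing ingredient is that the same direct-summand trick handles all the other properties as well, with no need to trivialise $L$.
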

\begin{proof} The claims are all local on $S$ so we may assume that $S$ is an affine scheme. It follows from (\ref{prop:tensor-properties}) that, at least locally on $S$, $E\otimes_{O_K}\mc{L}$ is representable by a proper, smooth, geometrically connected group scheme. It follows from (\ref{prop:tensor-properties-lie-hom}) that, when this is the case, the Lie algebra of $E\otimes_{O_K}\mc{L}$ is locally free of rank one so that the relative dimension of $E\otimes_{O_K}\mc{L}\to S$ is also one. Therefore $E\otimes_{O_K}\mc{L}$ is, locally on $S$, an elliptic curve so that it is in fact an elliptic curve over $S$ as the moduli stack of elliptic curves is a stack. Finally, $E\otimes_{O_K}\mc{L}$ has an obvious structure of an $\underline{O_K}_S$-module and it follows from (\ref{prop:tensor-properties-lie-hom}) that it is strict.
\end{proof}

\begin{rema} By (\ref{prop:cm-tensoring-is-cm}) above we find a functor \[\mc{M}_{CM}\times \mc{CL}_{O_K}\to \mc{M}_\CM: (E/S, \mc{L}/S)\mto E\otimes_{O_K}\mc{L}/S\] which we may view as defining an action of $\mc{CL}_{O_K}$ on $\mc{M}_{CM}$. We will show later (see (\ref{theo:mcm-is-a-torsor})) that, as with Lubin--Tate $O$-modules, this makes $\mc{M}_\CM$ a torsor under $\mc{CL}_{O_K}$.
\end{rema}

\begin{coro}\label{prop:cm-tensor-homs} For each pair $\mc{L}, \mc{L}'$ of rank one $O_K$-local systems over $S$ and each pair $E, E'$ of \textup{CM} elliptic curves over $S$ the natural map \[\underline{\Hom}_S^{O_K}(E, E')\otimes_{O_K}\underline{\Hom}_S^{O_K}(\mc{L}, \mc{L}')\to \underline{\Hom}_S^{O_K}(E\otimes_{O_K}\mc{L}, E'\otimes_{O_K}\mc{L}')\] is an isomorphism.
\end{coro}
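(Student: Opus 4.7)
The claim is local on $S$, so the first move is to pass to a cover trivialising both local systems: we may assume $\mc{L}\isomto \underline{L}_S$ and $\mc{L}'\isomto \underline{L'}_S$ for rank one projective $O_K$-modules $L$ and $L'$. Under these trivialisations the sheaf $\underline{\Hom}_S^{O_K}(\mc{L},\mc{L}')$ becomes the constant sheaf $\underline{\Hom_{O_K}(L,L')}_S$, and the claim reduces to showing that the natural map
\[
\underline{\Hom}_S^{O_K}(E,E')\otimes_{O_K}\Hom_{O_K}(L,L')\longrightarrow \underline{\Hom}_S^{O_K}(E\otimes_{O_K}L,\,E'\otimes_{O_K}L')
\]
is an isomorphism.

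Next I would split the right-hand side into the two variables separately. The plan is to invoke the appendix proposition on Serre's tensor product (cited as (\ref{prop:tensor-properties-lie-hom}) and already used in the proof of (\ref{prop:cm-tensoring-is-cm})), which for rank one projective $O_K$-modules gives the adjunction-type identities
\[
\underline{\Hom}_S^{O_K}(E\otimes_{O_K}L,\,F)\;\isomto\;\underline{\Hom}_S^{O_K}(E,F)\otimes_{O_K}L^\vee,\qquad
\underline{\Hom}_S^{O_K}(E,\,E'\otimes_{O_K}L')\;\isomto\;\underline{\Hom}_S^{O_K}(E,E')\otimes_{O_K}L'
\]
(the second of which is a special case of the statement applied with $E$ being any CM elliptic curve and $L'$ projective). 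Chaining these two identifications yields
\[
\underline{\Hom}_S^{O_K}(E\otimes_{O_K}L,\,E'\otimes_{O_K}L')\;\isomto\;\underline{\Hom}_S^{O_K}(E,E')\otimes_{O_K}L^\vee\otimes_{O_K}L'.
\]
Finally, because $L$ is a rank one projective $O_K$-module, the evaluation map $L^\vee\otimes_{O_K}L'\isomto \Hom_{O_K}(L,L')$ is an isomorphism, and unwinding the construction one checks that the composite matches the natural map of the statement.

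The only real work is the first step, namely the invocation of the appendix result; once the local systems are trivialised everything else is a formal manipulation with Serre's tensor product, the self-duality of rank one projectives, and the naturality of evaluation. The main obstacle is therefore purely bookkeeping: verifying that the isomorphism produced by chaining the two cited identifications agrees with the canonical map written down in the statement, which is a diagram chase against the construction of the evaluation pairing in the appendix. Since all constructions are compatible with localisation, the isomorphism obtained locally descends to $S$ by the stack property, completing the proof.
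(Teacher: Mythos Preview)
Your proof is correct and rests on the same key input as the paper, namely the appendix result (\ref{prop:tensor-properties-lie-hom}). The paper's proof, however, is a one-line citation: part (i) of (\ref{prop:tensor-properties-lie-hom}) already states, for any pair of $\mc{A}$-modules $\mc{V},\mc{W}$ satisfying condition (P) and any pair of $\mc{A}$-modules $F,G$, that
\[
\underline{\Hom}_S^{\mc{A}}(F,G)\otimes_{\mc{A}}\underline{\Hom}_S^{\mc{A}}(\mc{V},\mc{W})\to \underline{\Hom}_S^{\mc{A}}(F\otimes_{\mc{A}}\mc{V},\,G\otimes_{\mc{A}}\mc{W})
\]
is an isomorphism. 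Taking $\mc{A}=\underline{O_K}_S$, $F=E$, $G=E'$, $\mc{V}=\mc{L}$, $\mc{W}=\mc{L}'$ gives the corollary immediately, since rank one $O_K$-local systems satisfy condition (P). Your localisation step, the splitting into two separate adjunction-type identities, and the final diagram chase are all redundant: the two-variable statement is already packaged in the appendix, so there is no need to trivialise the local systems or to handle the two tensor factors one at a time.
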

\begin{proof} This follows from (\ref{prop:tensor-properties-lie-hom}).
\end{proof}

\begin{rema}\label{rema:cm-tensor-homs} The isomorphism of (\ref{prop:cm-tensor-homs}) together with (\ref{prop:cm-endo}) gives the particularly simple formula when $E=E'$ and $\mc{L}=\underline{O_K}_S$ and $\mc{L}'=\underline{L}_S$: \[\underline{L}_S\isomto \underline{\Hom}_S^{O_K}(E, E\otimes_{O_K}L): l\mto \id_{E}\otimes_{O_K} l.\]
\end{rema}

\subsection{} For each integral ideal $\a\subset O_K$ we write \[i_\a: E\to E\otimes_{O_K}\a^{-1}\] for the homomorphism induced by the inclusion $O_K\to \a^{-1}$. We define the $\a$-torsion in $E$ to be $E[\a]=\ker(i_\a)$. We have $E[\a]=\cap_{a\in \a} \ker(a)$ so that $E[\a]=\ker(a)$ if $\a=(a)$ is principal.

\begin{prop} The homomorphism $i_\a: E\to E\otimes_{O_K}\a^{-1}$ is finite locally free of degree $N\a$.
\end{prop}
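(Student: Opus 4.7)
The plan is to reduce to the case where $\a$ is principal, identify $i_\a$ with multiplication by a generator, and then compute the degree fibrewise using the classical theory of endomorphisms of CM elliptic curves over fields.

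First, the assertion that $i_\a$ is finite locally free of a given degree is local on $S$, so I may assume $S$ is affine. Since $O_K$ is a Dedekind domain, the ideal $\a$ is locally principal on $\Spec(O_K)$; covering $\Spec(O_K)$ by Zariski opens over which $\a$ is principal and pulling back along the structure morphism $S \to \Spec(O_K)$, I reduce to the case $\a = (a)$ for some nonzero $a \in O_K$. A choice of generator gives a canonical $O_K$-module isomorphism $\a^{-1} \isomto O_K$ (sending $1 \in \a^{-1}$ to $a \in O_K$), and by functoriality of Serre's tensor product this induces an isomorphism $E \otimes_{O_K} \a^{-1} \isomto E$ under which the canonical map $i_\a$ becomes the endomorphism $[a]_E : E \to E$. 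It therefore suffices to show that $[a]_E$ is finite locally free of degree $N(a) = N\a$.

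Next, I apply the rigidity principle (\ref{prop:rigidity}) to $[a]_E$, obtaining a decomposition $S = \coprod_{n \geq 0} S_{(n)}$ on which $[a]_E$ is either zero (when $n = 0$) or finite locally free of degree $n$ (when $n \geq 1$). The locus $S_{(0)}$ must be empty: by (\ref{prop:cm-endo}) the map $\underline{O_K}_S \isomto \underline{\End}^{O_K}_S(E)$ is an isomorphism, so if $S_{(0)}$ were nonempty the (nonzero) constant section $a$ would restrict to the zero endomorphism of $E|_{S_{(0)}}$, a contradiction. Hence $[a]_E$ is already finite locally free on all of $S$, with a degree which is a locally constant function on $S$ that I still must identify with $N\a$.

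To compute this degree I would pass to a geometric fibre $\Spec(k) \to S$: then $E_k$ is a CM elliptic curve over an algebraically closed field, and the classical theory of complex multiplication tells us that multiplication by $a \in O_K$ on such a curve has degree $N(a) = a\bar{a} = N\a$, so the locally constant degree takes the value $N\a$ everywhere on $S$. A purely relative alternative would use the factorisation $[a]_E \circ [\bar{a}]_E = [N(a)]_E$, whose right-hand side has degree $N(a)^2$ as it is multiplication by the rational integer $N(a)$ on an elliptic curve; the difficulty in this route is disentangling $\deg [a]_E$ from $\deg [\bar{a}]_E$ in the product $N(a)^2$, which would require a separate duality-type input. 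I therefore expect the fibrewise identification of the degree (rather than the principal-case reduction or the rigidity step) to be the only place genuinely using the CM hypothesis, and to be the sole point requiring real care.
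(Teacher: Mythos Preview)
Your reduction to the principal case is where the argument breaks. You propose to cover $\Spec(O_K)$ by Zariski opens on which $\a$ becomes principal and pull back to $S$. But the Serre tensor product $E\otimes_{O_K}\a^{-1}$ is formed over the \emph{constant} sheaf $\underline{O_K}_S$, with $\underline{\a^{-1}}_S$ the constant sheaf attached to the fixed $O_K$-module $\a^{-1}$. Restricting $S$ to lie over an open $U\subset\Spec(O_K)$ does nothing to this: over $S'=S\times_{\Spec(O_K)}U$ the map is still $E_{S'}\to E_{S'}\otimes_{O_K}\a^{-1}$ with the \emph{same} non-principal $O_K$-module $\a^{-1}$. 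The localisation $\a\otimes_{O_K}\mc{O}(U)$ may well be principal, but its generator lives in $\mc{O}(U)=O_K[f^{-1}]$, not in $O_K$, and does not define an endomorphism of $E_{S'}$ (the $f$-torsion of $E$ obstructs any extension of the $\underline{O_K}_{S'}$-action to an $\underline{O_K[f^{-1}]}_{S'}$-action). So you never actually arrive at a principal ideal of $O_K$, and the identification $i_\a\leftrightarrow[a]_E$ is unavailable.

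The paper sidesteps this entirely. It first uses (\ref{rema:cm-tensor-homs}) together with rigidity to see that $i_\a$ is finite locally free of some degree exceeding $1$ whenever $\a\neq O_K$, then establishes multiplicativity $\deg(i_{\a\b})=\deg(i_\a)\deg(i_\b)$ directly, via exactness of $-\otimes_{O_K}\b^{-1}$ and the (non-canonical) isomorphism $E[\a]\otimes_{O_K}\b^{-1}\cong E[\a]$. This reduces to prime ideals $\p$, where the relation $\p\overline{\p}=(N\p)$ yields $\deg(i_\p)\deg(i_{\overline{\p}})=\deg([N\p]_E)=N\p^2$. If $\p=\overline{\p}$ one reads off $\deg(i_\p)=N\p$; if $\p\neq\overline{\p}$ then $N\p$ is a rational prime and both factors exceed $1$, forcing each to equal $N\p$. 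This is exactly the ``purely relative alternative'' you sketched and then set aside as needing a separate duality-type input --- but at the level of primes the disentanglement is elementary arithmetic and no duality is required. Your fibrewise degree computation and rigidity step would be fine once one has a valid reduction; it is only the localisation manoeuvre that fails.
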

\begin{proof} If $S$ is empty or if $\a=O_K$ the claim is obvious so we assume that $S\neq \emptyset$ and that $\a\neq O_K$. Using (\ref{rema:cm-tensor-homs}), the morphism $i_\a$ is equal to the zero map only if $S=\emptyset$, and it is an isomorphism if and only if $\a=O_K$ or $S=\emptyset$. Therefore, as $\a\neq O_K$ and $S\neq \emptyset$, the morphism $i_\a$ is finite locally free of degree greater than one.

As tensoring with rank one $O_K$-local systems is exact (\ref{prop:tensor-exact}) the kernel of $i_\a\otimes_{O_K}\b^{-1}$ is $E[\a]\otimes_{O_K}\b^{-1}$ and as $O_K/\a\otimes_{O_K}\b^{-1}\isomto O_K/\a$ (non-canonically) for all pairs of integral ideals $\a, \b$ we have \[E[\a]\otimes_{O_K}\b^{-1}\isomto E[\a].\] Therefore $\deg(i_\a\otimes_{O_K}\b^{-1})=\deg(i_\a)$ and \[\deg(i_{\a\b})=\deg((i_\b\otimes_{O_K}\b^{-1})\circ i_\b)=\deg(i_\a)\deg(i_\b).\] As $N{\a\b}=N\a N\b$ and $\deg(i_{\a\b})=\deg(i_\a)\deg(i_\b)$ we may assume that $\a=\p$ is a prime ideal in which case we find \[\deg(i_\p)\deg(i_{\overline{\p}})=\deg(i_{\p\overline{\p}})=\deg([N\p]_E)=N\p^2.\] If $\p=\overline{\p}$ then $\deg(i_\p)^2=N\p^2$ and so $\deg(i_\p)=N\p$ and if $\p\neq \overline{\p}$ then, as $N\p$ is prime and as both $\deg(i_\p),\deg(i_{\overline{\p}})\neq 1$, it also follows that $\deg(i_\p)=N\p$.
\end{proof}

\subsection{} We associate the following sheaves of groups to $E/S$:
\begin{enumerate}[label=\textup{(\roman*)}]
\item For each maximal ideal $\p\subset O_K$ the $\p$-divisible group of $E$ is the ind-finite locally free group scheme $E[\p^{\infty}]:=\colim_n E[\p^n]$. It is a $\lim_n \underline{O_K/\p^n}_{S}$-module.
\item The torsion subgroup of $E/S$ is \[E[\tors]:=\colim_{\a} E[\a]=\bigoplus_{\p} E[\p^\infty].\] It is a $\lim_\a \underline{O_K/\a}_{S}$-module.
\item The formal group of $E/S$ is $\widehat{E}:=\colim_k \Inf_{S}^{(k)}(E)$ (cf.\ \ref{subsec:formal-completion}). It is a strict formal $\underline{O_K}_S$-module of dimension one.
\end{enumerate}

\begin{prop}\label{prop:p-torsion-classification} Let $E/S$ be a \textup{CM} elliptic curve and $\p\subset O_K$ a prime ideal. Then
\begin{enumerate}[label=\textup{(\roman*)}]
\item\label{item:etale} $\p$ is invertible on $S$ if and only if $E[\p]$ is finite and \'etale over $S$. In this case, $E[\p^\infty]$ is \'etale over $S$ and is locally isomorphic to the constant $\lim_n \underline{O_K/\p^n}_S$-module \[\colim_n\underline{\p^{-n}/O_K}_S=\underline{K_\p/O_{K_\p}}_S,\] and
\item $\p$ is locally nilpotent on $S$ if and only if $E[\p^\infty]=\widehat{E}$. In this case, $E[\p^\infty]$ is a Lubin--Tate $O_{K_\p}$-module over $S$.
\end{enumerate}
\end{prop}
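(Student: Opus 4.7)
The plan is to handle (i) and (ii) separately, each time using that $i_\p\colon E \to E\otimes_{O_K}\p^{-1}$ is finite locally free of degree $N\p$ from the preceding proposition.

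For (i), I would identify $E[\p]$ being \'etale with $i_\p$ being \'etale (a finite locally free surjection is \'etale iff its kernel is), and hence with $\Lie(i_\p)$ being an isomorphism. Strictness of the $O_K$-action on $E$ makes $\Lie(i_\p)\colon \Lie(E) \to \Lie(E)\otimes_{O_K}\p^{-1}$ the tensor product of $\Lie(E)$ with the canonical morphism $\mathcal{O}_S \to \mathcal{O}_S\otimes_{O_K}\p^{-1}$, which is an isomorphism precisely when $\p$ is invertible in $\mathcal{O}_S$. Applying the same argument with $\p^n$ in place of $\p$ shows each $E[\p^n]$ is \'etale, and hence so is $E[\p^\infty]$. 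A finite \'etale sheaf of sets becomes constant after an \'etale cover, and the $O_K/\p^n$-module structure on $E[\p^n]$ is free of rank one by checking on geometric points, where the Tate module of a CM elliptic curve over an algebraically closed field is free of rank one over $\widehat{O_K}$. So locally $E[\p^n] \cong \underline{O_K/\p^n}_S = \underline{\p^{-n}/O_K}_S$, and passing to the colimit gives the claimed local identification with $\underline{K_\p/O_{K_\p}}_S$.

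For (ii), I work locally on $S$ so that $\p = (\pi)$. First, $\widehat{E}\subset E[\p^\infty]$: strictness forces $\pi$ to act nilpotently on $\Lie(\widehat{E}) = \Lie(E)$ whenever $\p$ is nilpotent in $\mathcal{O}_S$, so on a local formal coordinate $\widehat{E} \cong \widehat{\mathbf{A}}^1_S$ the multiplication-by-$\pi$ series $[\pi](T) = \pi T + c_2 T^2 + \cdots$ has nilpotent linear coefficient; a direct iteration analogous to the proof of (\ref{prop:formal-o-mod-frob})(ii) (though it does not need the special Lubin--Tate form of $[\pi]$) shows that any nilpotent section $b$ of $\widehat{E}$ is killed by some $[\pi^k]$, so $\colim_k\widehat{E}[\pi^k] = \widehat{E}$. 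For the reverse inclusion, reduce to $S_0 = S\times_{\Spec(O_K)}\Spec(O_K/\p)$: strictness makes $\Lie(i_\p|_{E_0})$ multiplication by $\pi = 0$, i.e. the zero map, so the finite locally free morphism $i_\p|_{E_0}$ has infinitesimal kernel; hence $E_0[\p] \subset \widehat{E}_0$ and inductively $E_0[\p^n]\subset \widehat{E}_0$ for all $n$. Since $S_0\hookrightarrow S$ is a nilpotent immersion, every geometric point of $S$ lies over $S_0$, so fiberwise one has $E_s[\p^n]\subset \Inf_s^{(k_n)}(E_s)$ for some uniform $k_n$. Comparing $\mathcal{O}_{E[\p^n]}$ with its quotient by the ideal cutting out the closed subscheme $E[\p^n]\cap \Inf_S^{(k_n)}(E)$, Nakayama applied to these finite $\mathcal{O}_S$-modules (using flatness of $E[\p^n]$ over $S$) upgrades the fiberwise inclusion to an inclusion over $S$. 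The converse ($E[\p^\infty] = \widehat{E}$ implies $\p$ locally nilpotent) follows from the observation that the linear coefficient of $[\pi^k]$ is $\pi^k$: the condition $\widehat{E}[\pi^k]\supset \Inf_S^{(1)}(E)$ forces $\pi^k = 0$ in $\mathcal{O}_S$.

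To see that $\widehat{E}$ is a Lubin--Tate $O_{K_\p}$-module, I verify that it is a strict formal $O_{K_\p}$-module of dimension one (the $O_K$-action extends by $\p$-adic continuity to $O_{K_\p}$ because $\pi$ is topologically nilpotent on each $\Inf_S^{(k)}(E)$) and that $\widehat{i_\p}$ is finite locally free of degree $N\p$, which is inherited from $i_\p$ on $E$ since $\ker(i_\p) = E[\p]\subset \widehat{E}$. The main obstacle is the inclusion $E[\p^\infty]\subset\widehat{E}$ in (ii): one must first establish it on the characteristic-$\p$ fiber $S_0$ using the vanishing of $\Lie(i_\p)$, and then propagate it to $S$ by a careful comparison of finite locally free sheaves via Nakayama.
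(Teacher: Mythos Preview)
Your argument for (i) is essentially the paper's. For (ii), your converse direction (forcing $\pi^k=0$ from $\Inf_S^{(1)}(E)\subset E[\p^k]$) and your Nakayama lifting from $S_0$ to $S$ are correct alternatives to the paper's choices, though the paper's reduction is lighter: it simply notes that whether $0_S\to E[\p^r]$ is a nilpotent immersion is insensitive to the nilpotent base change $S_0\hookrightarrow S$, and its converse restricts to the $\p$-invertible locus and derives a contradiction from $\widehat{E}=E[\p^\infty]$ being \'etale there.

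There is a real gap at the step ``$\Lie(i_\p|_{E_0})=0$, so $i_\p|_{E_0}$ has infinitesimal kernel.'' This implication is false for isogenies of elliptic curves in general: multiplication by $p$ on an ordinary curve in characteristic $p$ has $\Lie([p])=0$ but kernel with a nontrivial \'etale part, so when $\p$ is inert your assertion needs more. What rescues it here is the $O_K/\p$-module structure: on a geometric fibre the \'etale quotient $E_0[\p]^{\et}$ is an $\F_{N\p}$-vector space of order dividing $N\p$, hence either trivial or all of $E_0[\p]$, and your Lie computation gives $E_0[\p]^0\neq 0$, forcing $E_0[\p]^{\et}=0$. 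You need to make this step explicit. The paper sidesteps the issue entirely: from (iii) of Proposition~\ref{prop:formal-o-mod-frob} it obtains closed immersions $\ker(\Fr_{\widehat{E}_0/S_0}^{N\p^r})\subset \widehat{E}_0[\p^r]\subset E_0[\p^r]$, observes that both ends are finite locally free of rank $N\p^r$, hence equal, and concludes $E_0[\p^r]=\ker(\Fr_{\widehat{E}_0/S_0}^{N\p^r})\subset\widehat{E}_0$ in one stroke, with no case analysis on the splitting behaviour of $\p$.
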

\begin{proof} The claims are true if and only if they are locally on $S$ and so we may assume that $S$ is an affine scheme.

(i) As $i_{\p^n}$ is finite locally free of degree $N\p^n$ (in particular, faithfully flat) its kernel $E[\p^n]$ is \'etale over $S$ if and only if the morphism $E\to E\otimes_{O_K}\p^{-n}$ is \'etale, or equivalently, induces an isomorphism $\underline{\Lie}_{E/S}\to \underline{\Lie}_{E\otimes_{O_K}\p^{-1}/S}=\underline{\Lie}_{E/S}\otimes_{O_K}\p^{-1}$ and this is true if and only if $\p$ is invertible on $S$.

For the second claim we may, after localising $S$, assume that $E[\p^\infty]$ is constant and hence that $E[\p^n]$ is constant for all $n\geq 0$. For each $n\geq 0$ let $E_n$ be a finite $O_K$-module with $E[\p^n]\isomto \underline{E_n}_S$ for $n\geq 0$. For $n\geq 0$, $E_{n}$ is an $O_K/\p^n$-module and the $\p^n$-torsion of $\colim_n E_n$ is equal to $E_n$. As $\# E_{n+1}=N\p^{n+1}$ if $E_{n+1}$ is not a free rank one $O_K/\p^{n+1}$-module it must consist of $\p^n$-torsion and therefore $E_{n+1}\subset E_n$ but $\# E_n=N\p^n$ so that this is impossible. It follows that for each $n\geq 0$ the $O_K/\p^n$-module $E_n$ is free of rank one and the inclusions $E_{n}\to E_{n+1}$ identify $E_n$ with the $\p^n$-torsion of $E_{n+1}$. Therefore, we may fix isomorphisms $E_n\isomto \p^{-n}/O_K$ for all $n\geq 0$ with the property that the inclusions $E_{n}\subset E_{n+1}$ become the natural inclusions $\p^{-n}/O_K\subset \p^{-n-1}/O_K$. Thus \[E[\p^\infty]\isomto \colim_n \underline{\p^{-n}/O_K}_S.\]

(ii) First assume that $\widehat{E}=E[\p^\infty]$. Then $S\to \Spec(O_K)$ factors through \[\Spf(O_{K_\p})=\colim_{n} \Spec(O_K/\p^{n+1})\to \Spec(O_K)\] if and only if $S\times\Spec(O_K[\p^{-1}])=\emptyset$. Thus we may assume that $S=S\times\Spec(O_K[\p^{-1}])$ and show that it is empty. 
By (i) the sheaf $\widehat{E}=E[\p^\infty]$ is \'etale over $S$. In this case it follows that, for all $k\geq 0$, the scheme $\Inf_{S}^{(k)}(E)$ is unramified over $S$ as it is a sub-scheme of the \'etale scheme $\widehat{E}=E[\p^\infty]$. Therefore, the morphism defining the zero section $S\to \Inf_{S}^{(k)}(E)$ is both a nilpotent immersion and an open immersion which is possible if and only if $S=\Inf_S^{(k)}(E)$ and this is possible if and only if $S=\emptyset$.

Conversely, assume that $S\to \Spec(O_K)$ factors through $\Spf(O_{K_\p})\subset \Spec(O_K)$ and write $\widehat{E}[\p^n]=\ker(\widehat{E}\to \widehat{E}\otimes_{O_K}\p^{-n})$. As $S$ is affine it follows that $\p$ is nilpotent on $S$ and as the action of $\underline{O_K}_S$ on $\widehat{E}$ is strict it follows that \[\colim_n \widehat{E}[\p^n]=\widehat{E}\] (just as in the proof of (ii) of (\ref{prop:formal-o-mod-frob})). In particular, $\widehat{E}\subset E[\p^\infty]$ and the strict $\underline{O_K}_S$-module structure on $\widehat{E}$ extends uniquely to a strict $\lim_{n} \underline{O_K/\p^{n}}_S$-module structure.

We are now reduced to showing that for each $r\geq 0$ the zero section $S\to E[\p^r]$ is a nilpotent immersion as this will give $E[\p^r]\subset \widehat{E}$. As $S\times\Spec(\F_\p)\to S$ is a nilpotent immersion, we may replace $S$ by $S\times\Spec(\F_\p)$ and assume that $S$ has characteristic $\p$. From (iii) of (\ref{prop:formal-o-mod-frob}) we find closed immersions \[\ker(\Fr_{\widehat{E}/S}^{N\p^r})\subset \widehat{E}[\p^r]\subset E[\p^r].\] But, as $\widehat{E}$ is a smooth formal group of dimension one, $\ker(\Fr_{\widehat{E}/S}^{N\p^r})$ is finite locally free of rank $N\p^r$, as is $E[\p^r]$. Therefore the closed immersion \[\ker(\Fr_{\widehat{E}/S}^{N\p^r})\subset E[\p^r]\] is an isomorphism. It follows that $S\to E[\p^r]$ is a nilpotent immersion, so that $E[\p^r]\subset \widehat{E}$, and therefore \[\widehat{E}=E[\p^\infty].\] This proves the first claim. For the second, the sheaf of groups $\widehat{E}=E[\p^\infty]$ is a strict formal $\widehat{O_{K_\p}}_S$-module of dimension one. Moreover, as $\widehat{E}=E[\p^\infty]$ we have \[\ker(i_\p: \widehat{E}\to \widehat{E}\otimes_{O_K}\p^{-1})=E[\p]\] so that \[i_\p:\widehat{E}\to \widehat{E}\otimes_{O_{K_\p}}\p^{-1}\] is finite locally free of degree $N\p$. This is precisely the definition of a Lubin--Tate $O_{K_\p}$-module (\ref{subsec:definition-lubin-tate-module}).
\end{proof}

\begin{coro}\label{coro:level-isom-cover} Let $E/S$ be a \textup{CM} elliptic curve.
\begin{enumerate}[label=\textup{(\roman*)}]
\item The morphism $\underline{\Isom}_S^{O_K}(E[\a], \underline{O_K/\a}_S)\to S$ is affine and \'etale, factors through $S[\a^{-1}]=S\times\Spec(O_K[\a^{-1}])\subset S$ and defines an affine \'etale cover \[\underline{\Isom}_S^{O_K}(E[\a], \underline{O_K/\a}_S)\to S[\a^{-1}].\]
\item If $P\subset \Id_{O_K}$ is any set of ideals which do not admit a common divisor the family \[(\underline{\Isom}_S^{O_K}(E[\a], \underline{O_K/\a}_S)\to S)_{\a\in P}\] is an affine \'etale cover of $S$.
\end{enumerate}
\end{coro}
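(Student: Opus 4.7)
The plan is to reduce everything to the prime-power classification given by \ref{prop:p-torsion-classification}.

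For the factorization claim in (i), I would test it on affine $T \to S$: suppose $T \to S$ admits an $O_K$-isomorphism $E_T[\a]\isomto \underline{O_K/\a}_T$. Then $E_T[\a]$ is étale over $T$, and since $E_T[\a] = \bigoplus_{\p | \a} E_T[\p^{v_\p(\a)}]$ (the direct sum decomposition coming from the fact that the $\p^{v_\p(\a)}$ are pairwise coprime), each $E_T[\p^{v_\p(\a)}]$ is étale over $T$. By (i) of \ref{prop:p-torsion-classification}, each prime $\p$ dividing $\a$ is then invertible on $T$, so $\a$ is invertible on $T$, and $T \to S$ factors through $S[\a^{-1}]$.

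Assuming now $S = S[\a^{-1}]$, I would verify that $\underline{\Isom}_S^{O_K}(E[\a], \underline{O_K/\a}_S)$ is a torsor. Again by (i) of \ref{prop:p-torsion-classification}, each $E[\p^n]$ with $n = v_\p(\a)$ is locally on $S$ $O_K$-isomorphic to $\underline{\p^{-n}/O_K}_S$, and since $\p^{-n}/O_K$ is a free $O_K/\p^n$-module of rank one, it is (non-canonically) $O_K$-isomorphic to $O_K/\p^n$. Tensoring via the direct sum decomposition above, $E[\a]$ is locally $O_K$-isomorphic to $\underline{O_K/\a}_S$. Hence the Isom sheaf is a torsor under the finite étale group sheaf $\underline{(O_K/\a)^\times}_S$ for the étale topology, and in particular it is finite étale and surjective over $S = S[\a^{-1}]$. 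Since the immersion $S[\a^{-1}] \to S$ is affine --- being the base change of the affine localization $\Spec(O_K[\a^{-1}]) \to \Spec(O_K)$ --- the composition $\underline{\Isom}_S^{O_K}(E[\a], \underline{O_K/\a}_S) \to S$ is affine and étale. This yields (i).

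For (ii), the assumption that the ideals in $P$ admit no common divisor is equivalent to $\sum_{\a \in P} \a = O_K$, equivalently to the statement that no prime $\p$ divides every $\a \in P$, equivalently to the statement that the open affines $\Spec(O_K[\a^{-1}])$ for $\a \in P$ cover $\Spec(O_K)$. Pulling back along $S \to \Spec(O_K)$, the family $(S[\a^{-1}] \to S)_{\a \in P}$ is a Zariski (hence fpqc) cover of $S$. Combined with (i), this exhibits the indicated family of Isom sheaves as an affine étale cover of $S$. There is no significant obstacle here --- both parts reduce directly to the prime-power case of \ref{prop:p-torsion-classification} --- and the only small detail is keeping track of affineness and étaleness when composing the torsor structure over $S[\a^{-1}]$ with the affine immersion $S[\a^{-1}] \to S$.
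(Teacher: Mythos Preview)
Your proof is correct and follows essentially the same route as the paper's: both reduce the factorization claim to (i) of \ref{prop:p-torsion-classification} via the prime-power decomposition of $E[\a]$, then show the Isom sheaf is locally trivial (hence a torsor under $\underline{(O_K/\a)^\times}_S$) by the same proposition, and deduce (ii) from the Zariski cover $(S[\a^{-1}]\to S)_{\a\in P}$. Your write-up is slightly more explicit about the decomposition and the affineness bookkeeping, but there is no substantive difference in approach.
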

\begin{proof} (i) If there exists an isomorphism $E_T[\a]\isomto \underline{O_K/\a}_T$ over some affine $S$-scheme $T$ then $E_T[\a]$ is \'etale, so that by (i) of (\ref{prop:p-torsion-classification}) the map $T\to S$ factors through the affine \'etale sub-scheme $S[\a^{-1}]\subset S$. It follows that $\underline{\Isom}_S^{O_K}(E[\a], \underline{O_K/\a}_S)\to S$ factors through $S[\a^{-1}]$ and so base changing along $S[\a^{-1}]\to S$ we may assume $\a$ is invertible on $S$.

Applying (i) of (\ref{prop:p-torsion-classification}) to the prime power divisors of $\a$ we find that $E[\a]$ is locally isomorphic to $\underline{\a^{-1}/O_K}_S$ which is isomorphic to $\underline{O_K/\a}_S$. This shows that \[\underline{\Isom}_S^{O_K}(E[\a], \underline{O_K/\a}_S)\to S\] is an epimorphism so that by descent, to show that it is finite and \'etale, we may assume that $E[\a]= \underline{O_K/\a}_S$. In this case we have \[\underline{\Isom}_S^{O_K}(\underline{O_K/\a}_S, \underline{O_K/\a}_S)=\underline{(O_K/\a)^\times}_S\] and the claim is clear.

(ii) The hypothesis on $P$ implies that $(S[\a^{-1}]\to S)_{\a\in P}$ is a cover of $S$ so that this follows from (i).
\end{proof}

\begin{coro}\label{coro:tensor-p-equals-frobenius} For each $n\geq 0$ there is a unique isomorphism of functors \[\nu_{\p^n}: -\otimes_{O_K}\p^{-n} \isomto \Fr^{N\p^n*}(-)\] on $\mc{M}_{CM}\times\Spec(\F_\p)$ such that for all \textup{CM} elliptic curves $E$ over characteristic $\p$-sheaves $S$ the diagram \[\xymatrix{&E\ar[dr]^{\Fr_{E/S}^{N\p^n}}\ar[dl]_-{i_{\p^n}}&\\
E\otimes_{O_K}\p^{-n} \ar[rr]^{\nu_{\p^n}}_\sim && \Fr^{N\p*}(E)}\] commutes.
\end{coro}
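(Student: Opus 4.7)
The approach closely mirrors the proof of the Lubin--Tate analogue (\ref{coro:tensor-p-equals-frobenius-lubin-tate}). The plan is to show that for every CM elliptic curve $E/S$ with $S$ of characteristic $\p$, both morphisms
\[
i_{\p^n}: E \to E \otimes_{O_K} \p^{-n} \quad \text{and} \quad \Fr^{N\p^n}_{E/S}: E \to \Fr^{N\p^n*}(E)
\]
are faithfully flat epimorphisms with the same kernel; the unique factorisation then supplies the isomorphism $\nu_{\p^n}$ making the triangle commute, and functoriality in $E$ will come for free from the uniqueness.

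The heart of the argument is the identification $E[\p^n] = \ker(\Fr^{N\p^n}_{E/S})$ as closed subgroup schemes of $E$. Since $S$ has characteristic $\p$, part (ii) of (\ref{prop:p-torsion-classification}) tells us that the formal group $\widehat{E}$ coincides with $E[\p^\infty]$ and is a Lubin--Tate $O_{K_\p}$-module; in particular $E[\p^n] = \widehat{E}[\p^n]$. Applying (\ref{lemm:p-torsion-lifts-frob-in-lubin-tate}) to $\widehat{E}$ then gives $\widehat{E}[\p^n] = \ker(\Fr^{N\p^n}_{\widehat{E}/S})$, and the inclusion $\widehat{E} \subset E$ (together with naturality of the relative Frobenius) identifies the latter with a closed subgroup scheme of $\ker(\Fr^{N\p^n}_{E/S})$. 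Both of these are finite locally free of rank $N\p^n$ over $S$ --- $E[\p^n]$ because $i_{\p^n}$ has degree $N\p^n$, and $\ker(\Fr^{N\p^n}_{E/S})$ because $E/S$ is smooth of relative dimension one in characteristic $\p$ --- so the closed immersion must be an isomorphism, yielding $E[\p^n] = \ker(\Fr^{N\p^n}_{E/S})$.

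Granted this identification, $i_{\p^n}$ and $\Fr^{N\p^n}_{E/S}$ are both faithfully flat epimorphisms of sheaves of groups sharing the common kernel $E[\p^n]$, hence their targets are canonically isomorphic as quotients of $E$, and this canonical isomorphism is the desired $\nu_{\p^n}: E\otimes_{O_K}\p^{-n} \isomto \Fr^{N\p^n*}(E)$. Uniqueness of $\nu_{\p^n}$ follows from $i_{\p^n}$ being an epimorphism. For the assertion that the family $\{\nu_{\p^n}\}_{E}$ assembles into an isomorphism of functors on $\mc{M}_\CM \times \Spec(\F_\p)$: any morphism $f: E \to E'$ of CM elliptic curves over $S$ commutes with both $i_{\p^n}$ and the relative Frobenius, so $\Fr^{N\p^n*}(f) \circ \nu_{\p^n, E}$ and $\nu_{\p^n, E'} \circ (f \otimes \p^{-n})$ both make the evident diagram commute; uniqueness forces them to coincide.

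The only real obstacle is the degree comparison in step two: one must confirm that both $E[\p^n]$ and $\ker(\Fr^{N\p^n}_{E/S})$ are finite locally free of rank exactly $N\p^n$, so that the inclusion coming from $\widehat{E}$ is forced to be an isomorphism. Everything else is formal manipulation with faithfully flat quotients and uniqueness of factorisations, so once this rank count is in hand the result follows at once.
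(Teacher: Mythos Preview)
Your proposal is correct and follows essentially the same approach as the paper: both arguments reduce to showing $\ker(i_{\p^n}) = \ker(\Fr^{N\p^n}_{E/S})$ by invoking (ii) of (\ref{prop:p-torsion-classification}) to identify $\widehat{E}=E[\p^\infty]$ as a Lubin--Tate $O_{K_\p}$-module and then applying (\ref{lemm:p-torsion-lifts-frob-in-lubin-tate}). The paper writes the chain of equalities $\ker(\Fr_{E/S}^{N\p^n})=\ker(\Fr_{\widehat{E}/S}^{N\p^n})=\widehat{E}[\p^n]=E[\p^n]$ directly (the first equality holding because the Frobenius kernel is infinitesimal and hence contained in $\widehat{E}$), whereas you argue via an inclusion and a rank count; these are just two ways of phrasing the same step.
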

\begin{proof} The two homomorphisms $i_{\p^n}$ and $\Fr_{E/S}^{N\p^n}$ are finite locally free of degree $N\p^n$ so we need only show that their kernels are equal. As $\p$ is nilpotent on $S$, we have $\widehat{E}=E[\p^\infty]$ and $\widehat{E}$ is a Lubin--Tate $O_{K_\p}$-module by (ii) of (\ref{prop:p-torsion-classification}) so that by (\ref{lemm:p-torsion-lifts-frob-in-lubin-tate}) we find \[\ker(\Fr_{E/S}^{N\p^n})=\ker(\Fr_{\widehat{E}/S}^{N\p^n})=\widehat{E}[\p^n]=E[\p^n]=\ker(i_{\p^n}).\]
\end{proof}

\begin{rema} The above (\ref{coro:tensor-p-equals-frobenius}) is the first indication that the moduli stack $\mc{M}_\CM$ should admit a $\Lambda$-structure.
\end{rema}

\begin{coro}\label{prop:hom-ell-curves} Let $f: E\to E'$ be a homomorphism of \textup{CM} elliptic curves over $S$. Then there is a unique decomposition $S=\amalg_{\a\subset O_K}S_{(\a)}$ such $f_{S_{(0)}}$ is the zero map and such that $\ker(f_{S_{(\a)}})=E_{S_{(\a)}}[\a]$ for all $(0)\neq \a\subset O_K$.
\end{coro}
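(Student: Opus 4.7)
The plan is to mimic the proof of the Lubin--Tate analogue (\ref{prop:lubin-tate-hom}) almost verbatim, replacing the role of the $\mc{CL}_{O}$-torsor structure on $\mc{M}_{\LT}$ by the (forthcoming) $\mc{CL}_{O_K}$-torsor structure on $\mc{M}_\CM$ announced in (\ref{theo:mcm-is-a-torsor}). The two key ingredients are this torsor structure and the tensor/Hom compatibility of (\ref{prop:cm-tensor-homs}) together with (\ref{prop:cm-endo}).

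First, I would invoke (\ref{theo:mcm-is-a-torsor}) to produce, at least locally on $S$, a rank one $O_K$-local system $\mc{L}$ and an isomorphism $E' \isomto E\otimes_{O_K} \mc{L}$. By (\ref{prop:cm-tensor-homs}) and (\ref{prop:cm-endo}) the natural map
\[
\mc{L} \isomto \underline{\End}_S^{O_K}(E)\otimes_{O_K}\mc{L} \isomto \underline{\Hom}_S^{O_K}(E, E\otimes_{O_K}\mc{L})
\]
is an isomorphism, so under it $f$ corresponds to a (unique) homomorphism $h: \underline{O_K}_S \to \mc{L}$ with $f = \id_E\otimes_{O_K} h$. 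The problem is now translated into the following: show that the sheaf morphism $h$ between the $O_K$-local system $\underline{O_K}_S$ and the $O_K$-local system $\mc{L}$ admits the claimed decomposition of its source.

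Next, for each non-zero integral ideal $\a\subset O_K$, I would define $S_{(\a)}\subset S$ to be the sub-sheaf on which $h$ factors through the natural multiplication map
\[
\underline{O_K}_{S_{(\a)}} \isomto \underline{\a}_{S_{(\a)}}\otimes_{O_K}\mc{L}^{\vee}_{S_{(\a)}}\otimes_{O_K}\mc{L}_{S_{(\a)}} \longrightarrow \mc{L}_{S_{(\a)}},
\]
and set $S_{(0)}\subset S$ to be the sub-sheaf where $h$ vanishes. To show $S = \amalg_\a S_{(\a)}$ and that this decomposition is unique, I would localise so that $\mc{L}\isomto \underline{O_K}_S$; then $h$ is an element of $\underline{O_K}(S) = \mathrm{Map}_{\mathrm{lc}}(S, O_K)$, i.e.\ a locally constant $O_K$-valued function on $S$, and the decomposition is simply the partition of $S$ according to the ideal of $O_K$ generated by the value of $h$ (an invariant independent of the trivialisation of $\mc{L}$, since units of $O_K$ do not change the ideal generated).

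Finally, on each piece $S_{(\a)}$ with $\a\neq 0$ the verification that $\ker(f_{S_{(\a)}}) = E_{S_{(\a)}}[\a]$ is immediate from the definition: locally on $S_{(\a)}$, after trivialising $\mc{L}$, $h$ becomes multiplication by a generator $a$ of $\a$, so $f = \id_E\otimes_{O_K}[a] = [a]_E$ up to an isomorphism of the target, and its kernel is $\ker([a]_E) = E[(a)] = E[\a]$; on $S_{(0)}$, $f=0$ by construction. The only real obstacle is the legitimacy of the forward reference to (\ref{theo:mcm-is-a-torsor}); should that be problematic, the alternative would be to work $\p$-primary-piece by $\p$-primary-piece, using (\ref{prop:p-torsion-classification}) to split $S$ into the open part where $\p$ is invertible (handling $f|_{E[\p^\infty]}$ by the structure of finite étale sub-group schemes of a rank one constant $K_\p/O_{K_\p}$) and a complementary part where $E[\p^\infty] = \widehat{E}$ is a Lubin--Tate $O_{K_\p}$-module and (\ref{prop:lubin-tate-hom}) applies directly --- then assembling the pieces via the $O_K$-primary decomposition of $\ker(f)$.
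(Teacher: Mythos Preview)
Your primary approach is genuinely different from the paper's and is logically valid: tracing dependencies, (\ref{theo:mcm-is-a-torsor}) does not rely on (\ref{prop:hom-ell-curves}), so the forward reference is non-circular (it merely requires reorganising the exposition). One small repair is needed: a rank one $O_K$-local system is locally of the form $\underline{L}_S$ for a rank one \emph{projective} $O_K$-module $L$, not necessarily free --- the class $c_{\mc{L}/S}\in\underline{\CL_{O_K}}(S)$ obstructs trivialising to $\underline{O_K}_S$. So ``the ideal generated by the value of $h$'' must be read as the unique integral ideal $\a$ with $\im(h)=\a\cdot\underline{L}_S$ (equivalently $\underline{L}_S/\im(h)\cong\underline{O_K/\a}_S$), after which $\ker(f)=E[\a]$ follows as you indicate. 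With that adjustment your argument is cleaner and shorter than the paper's.

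Your fallback \emph{is} the paper's actual route, but you have elided its one non-trivial step. There is no disjoint-union decomposition of a general $S$ into ``$\p$ invertible'' and ``$\p$ nilpotent'' pieces: over $S=\Spec(O_K)$, say, these are an open and a non-open closed subscheme. The paper first uses rigidity (\ref{prop:rigidity}) to split off $S_{(0)}$ and assume $f$ is an isogeny, writes $\ker(f)=\bigoplus_\p\ker(f_\p)$, and then --- this is the missing ingredient --- reduces by noetherian approximation, passage to connected components, completion at a closed point, and formal GAGA (\ref{prop:formal-gaga}) to the case where $S$ is the spectrum of an Artinian local ring. Only there is one automatically in exactly one of the two regimes, and then (\ref{prop:p-torsion-classification}) and (\ref{prop:lubin-tate-hom}) apply as you say.
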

\begin{proof} By rigidity we may decompose $S$ as $S_{(0)}\amalg S_{\mathrm{isog}}$ where $f_{S_{(0)}}$ is the zero map and where $f_{S_\mathrm{isog}}$ is finite locally free of positive degree. Thus we may replace $S$ by $S_{\mathrm{isog}}$ and assume that $f$ is finite locally free. Then $\ker(f)\subset E[\tors]$ and \[\ker(f)=\bigoplus_{\p}\ker(f_\p)\] where $f_\p: E[\p^\infty]\to E'[\p^\infty]$ is the restriction of $f$ to the $\p$-divisible groups. As $\ker(f)$ is finite locally free so is $\ker(f_\p)$ for all prime ideals $\p$.

The claim is local so we may reduce to the case where $S=\Spec(A)$ is an affine scheme and, by passage to the limit, to when $S$ finitely presented over $\Spec(O_K)$. As $S$ is noetherian it admits a finite cover by connected affine schemes and so we may assume that $S$ is connected. In this case, the finite locally free group schemes $\ker(f_\p)$ have constant degree and we are reduced to showing that, locally on $S$, there exists an integer $n\geq 0$ such that $\ker(f_\p)=E[\p^n]$.

We continue to localise $S$ and so assume that $S=\Spec(A)$ for $A$ a local noetherian ring with maximal ideal $I$ and by descent that $A$ is $I$-adically complete. As $E$ and $E[\p^n]$ and $\ker(\f_\p)$ are all proper over $S$ to check that $\ker(f_\p)=E[\p^n]$ for some $n$ we may, by Grothendieck's formal GAGA, replace $S$ by $\Spec(A/I^r)$ for each $r\geq 0$ and assume that $S=\Spec(A)$ where $A$ is an artinian local ring. If $\p$ is invertible in $A$ then $E[\p^\infty]$ is locally isomorphic to the constant group scheme \[\underline{K_\p/O_{K_\p}}_S\] by (\ref{prop:p-torsion-classification}), from this and the corresponding fact for $K_\p/O_{K_\p}$, we see that any finite locally free $\underline{O_K}_S$-sub-module of $E[\p^\infty]$ is of the form $E[\p^n]$ for some $n\geq 0$. On the other hand, if $\p$ is nilpotent in $A$, then $\ker(f_\p)$ is the kernel of the homomorphism $f_\p : E[\p^\infty]\to E'[\p^\infty]$ of Lubin--Tate $O_{K_\p}$-modules and so the claim follows from (\ref{prop:lubin-tate-hom}).
\end{proof}

\section{Complex and $\p$-adic bases} In this section we give CM analogues of the classification of elliptic curves over complex schemes in terms of lattices, and of the Serre-Tate theorem. We use this to show there exists a CM elliptic curve $E\to \Spec(O_{K^\sep})$ (\ref{lemm:cm-over-separably-closed-fields}), that deformations of CM elliptic curves over $\p$-adic bases always exist and are unique (\ref{coro:cm-etale}).

\subsection{} Let us first consider complex bases. We fix a homomorphism $O_K\to \C$ so that we may view $\Spec(\C)$ as a $\Spec(O_K)$-scheme. Let $S$ be a locally finitely presented $\Spec(\C)$-scheme and $E/S$ a CM elliptic curve. Consider the exponential sequence associated to the analytification $E^\an/S^\an$ (cf.\ (\ref{subsec:complex-class})) \[0\to T_\Z(E/S)\to \underline{\Lie}_{E^\an/S^\an} \to E^\mathrm{an}\to 0.\] The strictness of the $\underline{O_K}_S$-action on $E$ implies that the homomorphism \[\underline{\Lie}_{E^\an/S^\an}\to E^\an\] is a homomorphism of $\underline{O_K}_{S^\an}$-modules. This makes the rank two $\underline{\Z}_{S^\an}$-local system $T_\Z(E/S)$ a rank one $O_K$-local system, which we shall denote by $T_{O_K}(E/S)$, and the homomorphism \[T_{O_K}(E/S)\otimes_{\underline{O_K}_{S^\an}}\mc{O}_{S^\an}\to \underline{\Lie}_{E^\an/S^\an}\] is now an isomorphism. As the automorphism sheaf of any rank one $O_K$-local system over $S$ or $S^\an$ is just the constant sheaf associated to the finite group $O_K^\times$ (from which one sees that every $O_K$-local system over $S$ or $S^\an$ is just a sum of finite \'etale $S$-schemes, or finite covering spaces of $S^\an$) by GAGA the functor sending a rank one $O_K$-local system on $S$ to the corresponding rank one $O_K$-local system on $S^\an$ is an equivalence. Therefore:

\begin{prop}\label{prop:cm-curves-complex} The functor \[E/S\mto T_{O_K}(E/S)\] is an equivalence between the category of \textup{CM} elliptic curves $E/S$ and the rank one $O_K$-local systems over $S$.
\end{prop}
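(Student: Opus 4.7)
The plan is to bootstrap from the classical complex-analytic equivalence (\ref{prop:class-complex-elliptic}), which identifies elliptic curves over $S$ with pairs $(T\subset \mc{V})\in\mathrm{Lat}(S^\an)$. An $O_K$-action on an elliptic curve $E/S$ translates, by functoriality, into an $O_K$-action on its lattice pair; strictness of the action on $E$ amounts precisely to the assertion already isolated in the preamble to the statement, namely that the natural map $T_{O_K}(E/S)\otimes_{\underline{O_K}_{S^\an}}\mc{O}_{S^\an}\to \underline{\Lie}_{E^\an/S^\an}$ is an isomorphism. Under this dictionary a CM elliptic curve corresponds to a pair $(T\subset \mc{V})$ equipped with an $O_K$-action on $T$ such that $T\otimes_{O_K}\mc{O}_{S^\an}\isomto\mc{V}$, so that $\mc{V}$ is determined by $T$ and the functor $E\mto T_{O_K}(E/S)$ will be an equivalence as soon as its values are characterised.

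For essential surjectivity, starting from a rank one $O_K$-local system $\mc{L}/S$ (transported to $S^\an$ via GAGA, which is legitimate since the sheaf of $O_K$-linear automorphisms of such an $\mc{L}$ is finite \'etale) I would form the pair $(\mc{L}\subset \mc{V})$ with $\mc{V}:=\mc{L}\otimes_{\underline{O_K}_{S^\an}}\mc{O}_{S^\an}$ and inclusion $l\mto l\otimes 1$. The only non-formal check is fibre-wise discreteness of the inclusion: at any point of $S^\an$ the stalk of $\mc{L}$ is a rank one projective $O_K$-module $L$, and the stalk of $\mc{V}$ is $L\otimes_{O_K}\C$; the fixed embedding $O_K\hookrightarrow\C$ together with the imaginary-quadratic hypothesis gives $O_K\otimes_\Z\R\isomto\C$, hence $\mc{V}_s=L\otimes_\Z\R$, inside which $L$ is a lattice. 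Thus $(\mc{L}\subset\mc{V})\in\mathrm{Lat}(S^\an)$, and (\ref{prop:class-complex-elliptic}) yields an elliptic curve $E/S$; the $O_K$-action on $\mc{L}$ transports to an $O_K$-action on $E$, and strictness holds by construction of $\mc{V}$ as $\mc{L}\otimes_{\underline{O_K}_{S^\an}}\mc{O}_{S^\an}$.

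For full faithfulness, a homomorphism of elliptic curves $f:E\to E'$ corresponds by (\ref{prop:class-complex-elliptic}) to a pair $(f_T,f_{\mc{V}})$ compatible with the inclusions, with $f_{\mc{V}}$ uniquely determined by $f_T$ via the isomorphism $T_{O_K}\otimes_{\underline{O_K}_{S^\an}}\mc{O}_{S^\an}\isomto\underline{\Lie}$. The $O_K$-structure on $E$ being pinned down by its restriction to $T_\Z(E)$, the map $f$ is $O_K$-linear if and only if $f_T$ is, and then $f_{\mc{V}}=f_T\otimes\id$ is automatically $\underline{O_K}_{S^\an}$-linear. This identifies $\Hom_{O_K}(E,E')$ with $\Hom_{O_K}(T_{O_K}(E),T_{O_K}(E'))$.

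The only real obstacle is the fibre-wise discreteness verification above, which is what makes the construction genuinely depend on $K$ being imaginary quadratic (so that $O_K\otimes_\Z\R=\C$); the remainder is bookkeeping that reduces the statement to the classical equivalence (\ref{prop:class-complex-elliptic}) together with elementary properties of the tensor product established in Chapter~1.
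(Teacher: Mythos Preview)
Your argument is correct and follows exactly the approach indicated by the paper, whose proof is simply ``This follows from (\ref{prop:class-complex-elliptic}) and the remarks above''; you have merely unpacked those remarks, supplying the fibrewise discreteness check (via $O_K\otimes_\Z\R\isomto\C$) and the full-faithfulness bookkeeping that the paper leaves implicit. There is nothing to correct or compare.
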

\begin{proof} This follows from (\ref{prop:class-complex-elliptic}) and the remarks above.
\end{proof}
\begin{coro}\label{lemm:cm-over-separably-closed-fields} There exists a \textup{CM} elliptic curve $E$ over $\Spec(O_{K^\sep})$.
\end{coro}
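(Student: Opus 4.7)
The plan is to transport a complex-analytic CM elliptic curve into the algebraic setting, descend it to a number field inside $K^\sep$, pass to a further finite extension to secure good reduction everywhere, and then take the N\'eron model and base change to $\Spec(O_{K^\sep})$.

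First, fix an embedding $K^\sep \to \C$ extending the chosen map $O_K \to \C$ and apply \ref{prop:cm-curves-complex} to the trivial rank one $O_K$-local system $\underline{O_K}_{\Spec(\C)}$. This yields a CM elliptic curve $E_\C / \Spec(\C)$ (analytically $\C/O_K$ with its natural $O_K$-action). Since \ref{prop:cm-curves-complex} makes the groupoid of CM elliptic curves over $\Spec(\C)$ essentially discrete (classified up to twisting by $O_K^\times$ by $\CL_{O_K}$), the $j$-invariant of $E_\C$ is algebraic, and a standard descent argument produces a CM elliptic curve $E/L$ over some finite extension $L \subset K^\sep$ of $K$ (with its strict $O_K$-action) whose base change to $\C$ recovers $E_\C$.

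Next, one promotes $E/L$ to a model over a ring of integers. By \ref{prop:cm-endo}, for every rational prime $l$ the action of $G(K^\sep/L)$ on $T_l(E) := \lim_n E[l^n](K^\sep)$ commutes with the $O_K$-action and hence, using the classification of $E[l^\infty]$ in \ref{prop:p-torsion-classification}, lands in the abelian group $(O_K \otimes \Z_l)^\times$. The image of any inertia subgroup is therefore a finite abelian group, so there is a finite extension $L \subset L' \subset K^\sep$ such that inertia at every prime $\p$ of $L'$ acts trivially on $T_l(E_{L'})$ for some $l$ coprime to the residue characteristic of $\p$. The criterion of good reduction \ref{theo:criterion-of-good-reduction} then implies that $E_{L'}/L'$ has good reduction everywhere, so its N\'eron model $\mc{E} \to \Spec(O_{L'})$ is an elliptic curve.

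Finally, endomorphisms of the generic fibre of $\mc{E}$ extend uniquely to endomorphisms of $\mc{E}$ (a standard property of N\'eron models of abelian varieties), so the $O_K$-action on $E_{L'}$ yields an $\underline{O_K}$-action on $\mc{E}$; strictness propagates from the generic fibre to all of $\Spec(O_{L'})$ because $\mc{E}$ is flat over the irreducible base with locally free Lie algebra. Base-changing $\mc{E}$ along $\Spec(O_{K^\sep}) \to \Spec(O_{L'})$ then gives the desired CM elliptic curve over $\Spec(O_{K^\sep})$. The only non-formal step is arranging good reduction after a finite extension, and this is where I expect the main obstacle to lie; once the abelianness of the Galois representation on $T_l(E)$ has been exploited via \ref{theo:criterion-of-good-reduction}, everything else is routine N\'eron-model formalism.
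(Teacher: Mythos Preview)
Your approach matches the paper's almost step for step: build a CM elliptic curve over $\C$ via \ref{prop:cm-curves-complex}, descend to a finite extension $L/K$, enlarge $L$ to force good reduction everywhere using the abelianness of the $\ell$-adic representation together with \ref{theo:criterion-of-good-reduction}, and then spread out via the N\'eron model. The paper's descent from $\C$ uses a passage-to-the-limit argument (writing $\Spec(\C)$ as an inverse limit of finite type $K$-schemes and picking a closed point) rather than the algebraicity of the $j$-invariant, but this is a cosmetic difference.

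The one genuine gap is the sentence ``the image of any inertia subgroup is therefore a finite abelian group.'' Landing in the commutative group $(O_K \otimes_\Z \Z_\ell)^\times$ gives abelianness, not finiteness --- and finiteness is exactly what you need to kill inertia after a finite extension. The argument the paper supplies (following \cite{SerreTate68}) is: for a prime $\p$ of $L$ of residue characteristic $p$, pick $\ell \neq p$. The inertia group at any place above $\p$ has an open pro-$p$ subgroup of finite index, while $(O_K \otimes_\Z \Z_\ell)^\times$ has an open pro-$\ell$ subgroup of finite index; since a continuous homomorphism from a pro-$p$ group to a pro-$\ell$ group is trivial, the image of inertia is finite. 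One then chooses $n$ so that this finite image injects into $(O_K \otimes_\Z \Z/\ell^n)^\times$ and passes to $L(E[\ell^n])$; as $E/L$ has bad reduction at only finitely many primes, the compositum of the finitely many extensions so produced does the job. You correctly flagged this step as the main obstacle but did not actually clear it.
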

\begin{proof} By (\ref{prop:cm-curves-complex}) we see that there exists a CM elliptic curve $E/\Spec(\C)$. Writing $\Spec(\C)=\lim_\lambda \Spec(A_\lambda)$ as a filtered inverse limit of finite type affine $\Spec(K)$-schemes by passage to the limit we find a CM elliptic curve $E_\lambda/\Spec(A_\lambda)$ for some $\lambda$. As $\Spec(A_\lambda)$ is of finite type over $\Spec(K)$ it admits a closed point $\Spec(L)\to \Spec(A_\lambda)$ with $L/K$ finite and we find a CM elliptic curve $E_\lambda\times_{\Spec(A_\lambda)}\Spec(L)$ over $\Spec(L)$.

We now show (essentially following the arguments of \cite{SerreTate68} only for the sake of completeness) that for any CM elliptic curve $E\to \Spec(L)$ over a finite extension $L/K$ there is a finite extension $L'/L$ such that $E\times_{\Spec(L)}\Spec(L')$ has good reduction everywhere. As $E$ has bad reduction at only finitely many primes, it is enough to show that for each prime $\p$ of $L$ there is a finite extension $L'/L$ such that $E\times_{\Spec(L)} \Spec(L')$ admits good reduction at all primes of $L'$ over $\p$, as then $E$ will obtain good reduction over the compositum of these finitely many fields.

So fix a prime $\p$ lying over the rational prime $p$ and let $\ell\neq p$ be another rational prime. Let $\rho_\ell:G(L^\sep/L)^\ab=G(L^\ab/L)\to (O_K\otimes_{\Z} \Z_\ell)^\times$ be the character defining the $O_K\otimes_{\Z}\Z_\ell$-linear action of $G(L^\sep/L)$ on \[E[\ell^\infty](\Spec(L^\sep))\isomto O_K\otimes_{\Z}\Q_\ell/\Z_\ell.\] First we claim that for each place $v$ of $L^\sep$ lying over $\p$ the restriction of $\rho_\ell$ to the inertia group $I_v\subset G(L^\sep/L)$ at $v$ has finite image and that this image is independent of $v$. As the target of $\rho_\ell$ is commutative and as the image of $I_v(L^\sep/L)$ in $G(L^\sep/L)=G(L^\ab/L)$ is the inertia subgroup $I_\p\subset G(L^\ab/L)$ at $\p$, which is independent of $v$, it is enough to show that $\rho_\ell(I_\p)$ is finite. But it is well known that $I_\p$ admits an open subgroup of finite index which is pro-$p$ and that $(O_K\otimes_{\Z}\Z_\ell)^\times$ admits an open subgroup of finite index which is pro-$\ell$. Therefore, as $\rho_\ell$ is continuous and $\ell\neq p$ the image of the inertia group $\rho_\ell(I_\p)$ must be finite. It now follows that there is an integer $n\geq 0$ with the property that \[\rho_\ell(I_\p)\to (O_K\otimes_{\Z}\Z_\ell)^\times \to (O_K\otimes_{\Z}\Z/\ell^n)^\times\] is injective. Taking $L'=L(E[\ell^n])$, the action of the inertia group $I_v(L^\sep/L')\subset G(L^\sep/L')$ at any place $v$ lying over $\p$ on $E'[\ell^\infty](\Spec(L^\sep))=E'[\ell^\infty](\Spec(L^\sep))$ is trivial and we now apply (\ref{theo:criterion-of-good-reduction}) to deduce that $E'/\Spec(L')$ has good reduction at all places of $L'$ lying over $\p$.
\end{proof}

\subsection{} Now let us consider $\p$-adic bases. Fix a prime ideal $\p$ of $O_K$ and let $S_0\to S$ be a nilpotent thickening of $\p$-adic sheaves and denote by $D_\p(S, S_0)$ the category whose objects are triples $(E_0/S_0, F/S, \rho)$ where
\begin{enumerate}[label=\textup{(\roman*)}]
\item $E/S_0$ is a CM elliptic curve,
\item $F/S$ is a Lubin--Tate $O_{K_\p}$-module, and
\item $\rho: F\times_S S_0\isomto E[\p^\infty]$ is an isomorphism of Lubin--Tate $O_{K_\p}$-modules,
\end{enumerate} \noindent and whose morphisms $(E_0/S_0, F/S, \rho)\to (E_0'/S_0, F'/S, \rho')$ are given by pairs $(g_0, g_\p)$ where $g_0: E_0\to E'_0$ is a homomorphism of CM elliptic curves, $g_\p: F\to F'$ is a homomorphism of Lubin--Tate $O_{K_\p}$-modules such that $\rho'\circ (g_\p\times_{S} S_0)=(g_0|_{E[\p^\infty]})\circ \rho$. There is an obvious functor \[\CM(S)\to D_\p(S_0, S): E/S\mto (E\times_S S_0, E[\p^\infty], \id_{E[\p^\infty]\times_S S_0}).\] The following is the theorem of Serre-Tate (\ref{theo:serre-tate}) adapted for CM elliptic curves.
\begin{prop}\label{cm-serre-tate} The functor $CM(S)\to D_\p(S_0, S)$ is an equivalence of categories.
\end{prop}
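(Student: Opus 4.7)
The plan is to reduce the statement to the classical Serre--Tate theorem (\ref{theo:serre-tate}) by combining it with the decomposition of the $p$-divisible group of a CM elliptic curve over a $\p$-adic base into a Lubin--Tate part and an \'etale part, where $p$ is the rational prime below $\p$. First I would observe that $S$ is automatically $p$-adic, and that if $p$ splits in $K$ as $\p\bar\p$ then $\bar\p$, being coprime to $\p$, becomes the unit ideal on $S$. Applying (\ref{prop:p-torsion-classification}) then gives, for any CM elliptic curve $E/S$, a canonical $\underline{O_K}_S$-equivariant decomposition $E[p^\infty]=E[\p^\infty]\oplus E[\bar\p^\infty]$ in which the first summand is a Lubin--Tate $O_{K_\p}$-module and the second is \'etale over $S$ (in the non-split case $E[p^\infty]=E[\p^\infty]$ and the \'etale factor is absent). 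Crucially, the \'etale factor contributes trivially to the Lie algebra, so $\underline{\Lie}_{E/S}=\underline{\Lie}_{E[\p^\infty]/S}$.

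For essential surjectivity, given $(E_0, F, \rho)\in D_\p(S_0,S)$, I would first lift the \'etale $p$-divisible group $E_0[\bar\p^\infty]/S_0$ uniquely (and $\underline{O_K}_{S_0}$-equivariantly) to an \'etale $p$-divisible group $F^{\et}/S$ via the standard equivalence of \'etale schemes under a nilpotent thickening (trivially in the non-split case). Setting $\widetilde F:=F\oplus F^{\et}$ and extending $\rho$ to $\widetilde\rho$ accordingly gives a triple to which classical Serre--Tate applies; by (\ref{theo:serre-tate}) this comes from a unique elliptic curve $E/S$ with $E\times_S S_0=E_0$ and $E[p^\infty]=\widetilde F$. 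For each $a\in O_K$ the pair $([a]_{E_0},[a]_{\widetilde F})$ is Serre--Tate compatible, defining an endomorphism $[a]_E$ of $E$; by uniqueness these satisfy the ring axioms and give $E$ the structure of an $\underline{O_K}_S$-module. Strictness of this module structure reduces, via the identification $\underline{\Lie}_{E/S}=\underline{\Lie}_{F/S}$ from the first step, to strictness of $F$ as a Lubin--Tate $O_{K_\p}$-module, which is given.

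For full faithfulness, given CM elliptic curves $E,E'/S$ classical Serre--Tate identifies $\Hom(E,E')$ with the set of pairs $(f_0, f_p)$ of homomorphisms of elliptic curves on $S_0$ and of $p$-divisible groups on $S$ which agree after reduction to $S_0$. I would show that such a pair comes from an $\underline{O_K}_S$-linear $f$ precisely when $f_0$ is $\underline{O_K}_{S_0}$-linear and $f_p|_{E[\p^\infty]}$ is $O_{K_\p}$-linear: the forward direction is immediate, and conversely the $\bar\p$-part of $f_p$ is the unique \'etale lift of the $\underline{O_K}_{S_0}$-linear map $f_0|_{E_0[\bar\p^\infty]}$ and so is automatically $\underline{O_K}_S$-linear, while any $O_{K_\p}$-linear $f_p|_{E[\p^\infty]}$ is in particular $\underline{O_K}_S$-linear. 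Assembling these gives the required bijection between $\underline{\Hom}^{O_K}(E,E')$ and morphisms in $D_\p(S_0,S)$.

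The main obstacle I anticipate is verifying that the $O_K$-action constructed in the essential surjectivity step is in fact strict. The argument reducing this to the strictness of the Lubin--Tate module $F$ relies essentially on the fact that the \'etale summand $F^{\et}$ has vanishing Lie algebra, so that $\underline{\Lie}_{E/S}$ coincides with $\underline{\Lie}_{F/S}$; this identification is what makes the reduction to the classical Serre--Tate theorem work cleanly.
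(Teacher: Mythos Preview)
Your proposal is correct and follows essentially the same route as the paper: reduce to the classical Serre--Tate theorem by building a functor $D_\p(S_0,S)\to D_p(S_0,S)$ via the unique \'etale lift of $E_0[\bar\p^\infty]$ in the split case, recover the $O_K$-action by functoriality of Serre--Tate, and verify strictness through the chain of identifications $\underline{\Lie}_{E/S}=\underline{\Lie}_{E[p^\infty]/S}=\underline{\Lie}_{E[\p^\infty]/S}=\underline{\Lie}_{F/S}$. Your anticipated obstacle and its resolution are exactly what the paper does.
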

\begin{proof} Let $p$ be the rational prime lying under $\p$. We will show that to each object $(E_0/S_0, F/S, \rho)$ of $D_\p(S_0, S)$ (resp. morphism) one can functorially define a element of $D_p(S_0, S)$ (resp. morphism) (cf.\ (\ref{subsec:classical-serre-tate})). If $\overline{\p}=\p$ then this is clear as $E_0[\p^\infty]=E[p^\infty]$. On the other hand if $\overline{\p}\neq \p$ then $E_0[p^\infty]=E_0[\p^\infty]\times_{S_0} E_0[\overline{\p}^\infty]$ and $E_0[\overline{\p}^\infty]$ is \'etale over $S_0$ so that there is a unique deformation of $E_0[\overline{\p}^\infty]$ along $S_0\to S$ and the product over $S$ of this deformation with $F$ will give the desired deformation of $E_0[p^\infty]$. Regarding morphisms $(g_0, g_\p)$ the restriction $g_0|_{E[\overline{\p}^\infty]}$ lifts uniquely and the product of this with $g_\p$ defines the map on morphisms.

In both cases this defines a functor $D_\p(S_0, S)\to D_p(S_0, S)$ and by the classical Serre-Tate theorem a functor $D_\p(S_0, S)\to \mathrm{Ell}(S)$. By functoriality, if $E/S$ is the image of $(E_0/S_0, F/S, \rho)$ then $E/S$ admits the structure of an $\underline{O_K}_S$-module (deforming the corresponding structure on $E_0/S_0$). Moreover, the action of $\underline{O_K}_S$ on the Lie algebra of $E/S$ is strict as we have identifications of $\underline{O_K}_S$-modules \[\underline{\Lie}_{E/S}=\underline{\Lie}_{E[p^\infty]/S}=\underline{\Lie}_{E[\p^\infty]/S}=\underline{\Lie}_{F/S}\] and the action of $\underline{O_K}_S$ on $F$ is strict. In particular, the functor $D_\p(S_0, S)\to \mathrm{Ell}(S)$ factors as \[D_\p(S_0, S)\to \CM(S)\] and it is easily seen to be quasi-inverse to $CM(S)\to D_\p(S_0, S)$.
\end{proof}

\begin{coro}\label{coro:cm-etale} If $S_0\to S$ is a nilpotent immersion of $\p$-adic sheaves the functor \[\CM(S)\to \CM(S_0): E/S\mto E\times_{S} S_0/S_0\] is an equivalence of categories.
\end{coro}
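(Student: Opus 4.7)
The plan is to deduce this from the CM Serre--Tate theorem (\ref{cm-serre-tate}) together with the fact that Lubin--Tate $O_{K_\p}$-modules lift uniquely along nilpotent immersions of $\p$-adic sheaves (\ref{coro:lift-lubin-tate}).

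First I would fix a prime $\p$ of $O_K$ (any prime works, since both $S_0$ and $S$ are $\p$-adic for the same $\p$ by hypothesis). By (\ref{cm-serre-tate}) the functor in question fits into a commutative square
\[\xymatrix{\CM(S)\ar[r]\ar[d]_-{\sim} & \CM(S_0)\ar[d]^-{\sim}\\
D_\p(S_0, S)\ar[r] & D_\p(S_0, S_0)}\]
where the vertical arrows are the equivalences given by the CM Serre--Tate theorem (note $D_\p(S_0, S_0)$ makes sense for the identity immersion and is naturally equivalent to $\CM(S_0)$ via the forgetful functor $(E_0, F, \rho)\mto E_0$, because $E_0[\p^\infty]$ is itself a Lubin--Tate $O_{K_\p}$-module by (ii) of (\ref{prop:p-torsion-classification})). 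Thus it suffices to show that the bottom horizontal functor is an equivalence.

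Next I would unwind what this functor does: a triple $(E_0/S_0, F/S, \rho)$ is sent to $(E_0/S_0, F\times_S S_0/S_0, \rho)$. To construct a quasi-inverse, given $E_0/S_0$ I would apply (\ref{coro:lift-lubin-tate}) to the Lubin--Tate $O_{K_\p}$-module $E_0[\p^\infty]$ over $S_0$: the functor $\mc{M}_{LT}(S)\to \mc{M}_{LT}(S_0)$ is an equivalence, so there exists a Lubin--Tate $O_{K_\p}$-module $F/S$, unique up to unique isomorphism, together with a canonical isomorphism $\rho:F\times_S S_0\isomto E_0[\p^\infty]$ lifting the identity. This gives an object $(E_0/S_0, F/S, \rho)\in D_\p(S_0, S)$, and the same equivalence on morphisms shows that the assignment is functorial and quasi-inverse to base change.

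The only real content is therefore the verification that the prism built above actually commutes with morphisms --- but this is automatic because both (\ref{cm-serre-tate}) and (\ref{coro:lift-lubin-tate}) are stated as equivalences of categories, so morphisms in $D_\p(S_0, S)$ (pairs $(g_0, g_\p)$) are determined by $g_0$ on the $S_0$-side and lift uniquely to the $S$-side. The only potential obstacle is to observe that when one forms $D_\p(S_0, S_0)$, the map $g_\p$ in a morphism of triples is forced to agree with $g_0|_{E_0[\p^\infty]}$ via $\rho, \rho'$, so that the forgetful functor $D_\p(S_0, S_0)\to \CM(S_0)$ is fully faithful and essentially surjective; but this is immediate from the definitions. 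Hence the composition $\CM(S)\to \CM(S_0)$ is an equivalence.
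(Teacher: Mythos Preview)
Your proof is correct and follows exactly the approach the paper takes: the paper's proof is the single line ``This follows from (\ref{cm-serre-tate}) combined with (\ref{coro:lift-lubin-tate}),'' and your argument simply unpacks this by setting up the commutative square with $D_\p(S_0,S)$ and $D_\p(S_0,S_0)$ and invoking the Lubin--Tate lifting equivalence. Your added justification that $D_\p(S_0,S_0)\simeq \CM(S_0)$ via the forgetful functor is a reasonable detail to spell out.
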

\begin{proof} This follows from (\ref{cm-serre-tate}) combined with (\ref{coro:lift-lubin-tate}).
\end{proof}

\begin{coro}\label{prop:lifting} Let $S$ be a $\p$-adic affine scheme and let $E/S$ be a \textup{CM} elliptic curve. Then there exists a affine scheme $\widetilde{S}$, flat over $\Spec(O_K)$, a morphism $S\to \widetilde{S}$, and a \textup{CM} elliptic curve $\widetilde{E}/\widetilde{S}$ such that $\widetilde{E}\times_{\widetilde{S}} S\isomto E$.
\end{coro}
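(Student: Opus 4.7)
The plan is to take $\widetilde{S} = \Spec(\widetilde{A})$ for a suitable Noetherian, $\p$-adically complete, $O_{K_\p}$-flat algebra $\widetilde{A}$ surjecting onto $A$, and to construct $\widetilde{E}$ by combining Grothendieck's formal GAGA (\ref{prop:formal-gaga}) with the unique-lift property for CM curves along nilpotent immersions of $\p$-adic sheaves (\ref{coro:cm-etale}).

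First I would reduce to the case where $A$ is Noetherian: write $A$ as a filtered colimit of its finitely generated $O_K$-subalgebras (each of which may be arranged to be $\p$-adic by quotienting by $\p$-power torsion); since elliptic curves are finitely presented, $E$ descends to one of them. Since $\p^N A = 0$, the algebra $A$ is canonically an $O_{K_\p}$-algebra; choose a polynomial algebra $B = O_{K_\p}[T_1, \ldots, T_n]$ and a surjection $B \twoheadrightarrow A$, and let $\widetilde{A}$ be the $\p$-adic completion of $B$ (possibly after a Zariski localization, as explained below). Then $\widetilde{A}$ is Noetherian, $\p$-adically complete, and flat over $O_{K_\p}$, hence flat over $O_K$; set $\widetilde{S} = \Spec(\widetilde{A})$.

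By formal GAGA (\ref{prop:formal-gaga}), to build $\widetilde{E}/\widetilde{S}$ restricting to $E$ on $S$ it suffices to produce a compatible formal CM elliptic curve on $\Spf(\widetilde{A}) = \colim_k \Spec(\widetilde{A}/\p^{k+1})$. Each transition $\Spec(\widetilde{A}/\p^k) \hookrightarrow \Spec(\widetilde{A}/\p^{k+1})$ is a nilpotent immersion of $\p$-adic sheaves, so by (\ref{coro:cm-etale}) CM curves lift uniquely between successive layers. The problem therefore reduces to producing a CM elliptic curve on the bottom layer $\Spec(\widetilde{A}/\p)$ whose restriction to $\Spec(A/\p A)$ is $E_0 := E \times_S \Spec(A/\p A)$.

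The main obstacle is this last extension of $E_0$ across the closed immersion $\Spec(A/\p A) \hookrightarrow \Spec(\widetilde{A}/\p)$ in characteristic $\p$, which is typically non-nilpotent. To handle it, I would use that characteristic-$\p$ CM elliptic curves are étale-locally constant: by (\ref{lemm:cm-over-separably-closed-fields}) a CM elliptic curve $E^*$ exists over $\Spec(O_{K^\sep})$, and its reductions over finite extensions of $\F_\p$ trivialise $E_0$ on some étale cover of $\Spec(A/\p A)$, since the moduli of characteristic-$\p$ CM curves is étale over $\Spec(\F_\p)$. Such an étale cover extends to an étale cover of a Zariski neighborhood $U$ of $\Spec(A/\p A)$ inside $\Spec(\widetilde{A}/\p)$ by standard spreading-out of étale morphisms (possibly after shrinking $\widetilde{A}$ by inverting a well-chosen element of $B$ before $\p$-adic completion, so that $U = \Spec(\widetilde{A}/\p)$); the corresponding descent data, valued in the finite automorphism group $O_K^\times = \Aut(E^*)$, extends uniquely to this neighborhood, and one descends via (\ref{prop:moduli-ell-stack}) (with the $O_K$-module structure added) to produce the required CM curve on $\Spec(\widetilde{A}/\p)$.
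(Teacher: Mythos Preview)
Your approach is more complicated than necessary, and the ``main obstacle'' you identify is self-inflicted: it comes from $\p$-adically completing, which forces you to extend $E_0$ across the non-nilpotent closed immersion $\Spec(A/\p A)\hookrightarrow\Spec(\widetilde{A}/\p)$. The paper avoids this entirely. After reducing to $A$ of finite type over $O_K$, it chooses a surjection $A'\twoheadrightarrow A$ with $A'$ flat over $O_K$, lets $I=\ker(A'\to A)$, and sets $\widetilde{S}=\Spec(\widehat{A}')$ where $\widehat{A}'=\lim_n A'/I^{n+1}$ is the \emph{$I$-adic} completion. The crucial point is that the bottom of the formal tower is now $\Spec(A'/I)=S$ itself, so (\ref{coro:cm-etale}) lifts $E$ directly and uniquely to each $\Spec(A'/I^{n+1})$ (these are $\p$-adic because $\p^N A'\subset I$), and formal GAGA (\ref{prop:formal-gaga}) algebraises. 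No extension across a non-nilpotent immersion is ever needed.

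Your proposed resolution of that obstacle also has a gap. You invoke that ``the moduli of characteristic-$\p$ CM curves is \'etale over $\Spec(\F_\p)$'' to trivialise $E_0$ \'etale-locally by (twists of) a fixed curve. While this statement is ultimately true, at this point in the paper it has not been established; the closely related fact that any two CM curves are locally isogenous is (\ref{lemm:local-isogenies}), whose proof \emph{uses} (\ref{prop:lifting}) to reduce from the $\p$-adic to the flat case. So as written your argument risks circularity. One could try to argue directly that $\mc{M}_\CM\times\Spec(\F_\p)$ is an \'etale Deligne--Mumford stack (formally \'etale by (\ref{coro:cm-etale}), and of finite type via the finite-type moduli of elliptic curves), but that is extra work not present in your sketch. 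Even granting it, the step ``the descent data, valued in $O_K^\times$, extends uniquely to a neighbourhood'' needs care about matching connected components on double and triple overlaps. The $I$-adic completion sidesteps all of this.
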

\begin{proof} By passage to the limit we may assume that $S=\Spec(A)$ with $A$ a finite type $O_K$-algebra. We then choose a surjection $A'\to A$ where $A'$ is a flat $O_K$-algebra. Letting $I$ be the kernel of $A'\to A$ we write $\widetilde{S}=\Spec(\lim_n A'/I^{n})=\Spec(\widehat{A}')$, $\widetilde{S}_n=\Spec(A'/I^n)$ and $\widetilde{S}_\infty=\colim_{n} \Spec(A'/I^n)$. We will show that there exists a CM elliptic curve $\widetilde{E}/\widetilde{S}$ with $\widetilde{E}\times_{\widetilde{S}} S\isomto E$ which, as $\widetilde{S}=\Spec(\widehat{A}')$ is flat over $\Spec(O_K)$, will prove the claim.

For each $n\geq 0$ there is a unique $\widetilde{E}_n/\widetilde{S}_n$ equipped with an isomorphism $\widetilde{E}_n\times_{\widetilde{S}_n}S\isomto E$ by (\ref{coro:cm-etale}). Therefore there is a unique CM elliptic curve $\widetilde{E}_\infty$ over the ind-scheme $\widetilde{S}_\infty=\colim_n \widetilde{S}_n$ with $\widetilde{E}_\infty\times_{\widetilde{S}_\infty}S=E$. By (\ref{prop:formal-gaga}) there is a unique elliptic curve $\widetilde{E}$ over $\widetilde{S}=\Spec(\widehat{A}')$ with $\widetilde{E}\times_{\widetilde{S}} S=E$ and moreover it admits an action of $\underline{O_K}_S$ compatible with that on $\widetilde{E}_n$ over $\widetilde{S}_n$ which (taking $n$ large) shows that the action is strict so that $\widetilde{E}/\widetilde{S}$ is a CM elliptic curve.
\end{proof}

\section{Isogenies and the action of $\mc{CL}_{O_K}$ on $\mc{M}_\CM$} In this section we continue the study of the action of $\mc{CL}_{O_K}$ on $\mc{M}_\CM$. We show that all pairs of CM elliptic curves over a fixed base are locally isogenous (\ref{lemm:local-isogenies}) and from this deduce that the action of $\mc{CL}_{O_K}$ on $\mc{M}_\CM$ makes it a torsor (\ref{theo:mcm-is-a-torsor}).

\begin{lemm}\label{lemm:local-isogenies} Let $E, E'/S$ be a pair of \textup{CM} elliptic curves. Then the family \[(\underline{\Isom}_S^{O_K}(E, E'\otimes_{O_K}\a^{-1})\to S)_{\a\in \Id_{O_K}}\] is a finite \'etale cover of $S$.
\end{lemm}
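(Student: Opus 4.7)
The proposition splits into two independent claims: (a) each sheaf $\underline{\Isom}_S^{O_K}(E, E'\otimes_{O_K}\a^{-1})$ is finite and \'etale over $S$, and (b) the family is jointly surjective, i.e.\ $\coprod_{\a\in\Id_{O_K}}\underline{\Isom}_S^{O_K}(E, E'\otimes_{O_K}\a^{-1})\to S$ is an epimorphism of sheaves.

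For (a), finiteness and unramifiedness are immediate from (\ref{prop:isom-are-fin-unram}), since the $O_K$-linear Isom is a closed subsheaf of the full Isom sheaf of elliptic curves. The delicate point is promoting unramified to \'etale, for which I will use the infinitesimal lifting criterion. Given an affine nilpotent thickening $T_0\hookrightarrow T$ over $S$ and an $O_K$-isomorphism $\phi_0: E_{T_0}\isomto (E'\otimes_{O_K}\a^{-1})_{T_0}$, I must produce a unique lift to $T$. Uniqueness is rigidity (\ref{prop:rigidity}). For existence, I work locally on $T$ and split according to residue characteristic. On the $\p$-adic locus of $T$ over $\Spec(O_K)$, the equivalence (\ref{coro:cm-etale}) lifts both $E_T$ and $(E'\otimes_{O_K}\a^{-1})_T$ uniquely and transports $\phi_0$ to a unique $O_K$-isomorphism. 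On the characteristic-zero locus, I pass to the limit along finitely generated subalgebras and use the classification (\ref{prop:cm-curves-complex}), which identifies CM elliptic curves with rank one $O_K$-local systems (objects of an essentially \'etale nature), so again $\phi_0$ lifts uniquely. Patching these gives the global lift.

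For (b), since each summand is finite \'etale by (a), it suffices to produce a point of $\coprod_\a\underline{\Isom}_S^{O_K}(E,E'\otimes_{O_K}\a^{-1})$ above every geometric point $\bar s\to S$. This reduces the cover property to the following sub-claim: over an algebraically closed $O_K$-field $k$, any two CM elliptic curves $E, E'$ satisfy $E\isomto E'\otimes_{O_K}\a^{-1}$ for some $\a\in\Id_{O_K}$. When $\mathrm{char}(k)=0$, I embed $k$ into an algebraically closed field of characteristic $0$ containing $\C$ (or, equivalently, descend through a finitely generated subfield) and apply (\ref{prop:cm-curves-complex}): CM elliptic curves over the resulting complex scheme correspond to rank one projective $O_K$-modules, and every ideal class is represented by $\a^{-1}$ for some integral $\a$. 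When $\mathrm{char}(k)=\p>0$, I invoke (\ref{prop:lifting}) to lift $E$ and $E'$ to CM elliptic curves over a $\p$-adic affine scheme $\widetilde S$ that is flat over $\Spec(O_K)$ and admits $\Spec(k)$ as a quotient; the rigidity of CM deformations guaranteed by (\ref{cm-serre-tate}) together with the characteristic-zero case applied to a generic fibre of $\widetilde S$ produces an $O_K$-isomorphism of the form $E\isomto E'\otimes_{O_K}\a^{-1}$ over $\widetilde S$, which specialises to $k$.

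The main obstacle is the \'etaleness step in (a): while unramifiedness is automatic, one must integrate two qualitatively different lifting phenomena (characteristic zero via local systems, residue characteristic $\p$ via the CM Serre--Tate theorem (\ref{cm-serre-tate})) into a single coherent lifting argument valid for arbitrary $S$. Once \'etaleness is in hand, (b) is comparatively formal, being essentially the classical statement that over $\C$ every isomorphism class of CM elliptic curves is an ideal twist of a fixed one.
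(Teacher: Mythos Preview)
Your approach is essentially the paper's, and it works, but there is one tactical difference worth noting in part (a). You verify formal \'etaleness by splitting the infinitesimal lifting problem into a $\p$-adic piece (handled by (\ref{coro:cm-etale})) and a characteristic-zero piece (handled by the complex classification (\ref{prop:cm-curves-complex})). The paper instead observes that, after reducing to $S$ affine of finite type over $O_K$, every closed point of the finite unramified Isom scheme lies over some prime $\p$; hence (\ref{coro:cm-etale}) alone shows the \'etale locus contains all closed points, and since it is open it is everything. This avoids any characteristic-zero lifting argument in (a), and in particular sidesteps the (fixable, but unstated) reduction you would need to pass from an arbitrary characteristic-zero thickening to a $\C$-scheme in order to invoke (\ref{prop:cm-curves-complex}).

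For (b) your reduction is the same as the paper's. One small point: in the characteristic-$\p$ case you write that the char-zero isomorphism over a generic fibre of $\widetilde S$ ``produces an $O_K$-isomorphism \ldots\ over $\widetilde S$, which specialises to $k$'', citing (\ref{cm-serre-tate}). Serre--Tate is not the mechanism here; rather, once you know from (a) that the Isom scheme is finite \'etale over $\widetilde S$, a non-empty fibre at a characteristic-zero point forces non-empty fibres everywhere (constant fibre degree on connected components), in particular over $\Spec(k)$. The paper phrases this by saying the cover property for $E,E'/S$ follows from that for $\widetilde E,\widetilde E'/\widetilde S$, then reducing $\widetilde S$ to a characteristic-zero field and finally to $\C$.
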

\begin{proof} The claim is local so we may assume that $S$ is an affine scheme and by passage to the limit that $S$ is finitely presented over $\Spec(O_K)$. For every pair of CM elliptic curves $E, E'/S$ the inclusion \[\underline{\Isom}_S^{O_K}(E, E')\to \underline{\Isom}_S(E, E')\] is easily seen to be a closed immersion and so it follows from (\ref{prop:isom-are-fin-unram}) that $\underline{\Isom}_S^{O_K}(E, E')\to S$ is finite and unramified. Therefore, for each integral ideal $\a\in \Id_{O_K}$ the sheaf $\underline{\Isom}_S^{O_K}(E, E'\otimes_{O_K}\a^{-1})$ is finite and unramified over $S$. It follows from (\ref{coro:cm-etale}) that the maximal open sub-scheme of $\underline{\Isom}_S^{O_K}(E, E'\otimes_{O_K}\a^{-1})$ which is \'etale over $S$ contains all of the special fibres and is therefore equal to all of $\underline{\Isom}_S^{O_K}(E, E'\otimes_{O_K}\a^{-1})$.
As the morphisms $(\underline{\Isom}_S^{O_K}(E, E'\otimes_{O_K}\a^{-1})\to S)_{\a\in \Id_{O_K}}$ are finite \'etale this family forms a cover of $S$ if and only if it is a cover after base change to each generic point of $S$. So we way assume that $S$ is either flat over $\Spec(O_K)$ (if a generic point has characteristic 0) or that $S$ is $\p$-adic for some prime $\p$ (if a generic point has characteristic $\p$). Let us reduce the second case to the first and assume that $S$ is $\p$-adic.

Applying (\ref{prop:lifting}) to $E$ and $E'$, then taking the product of the flat $\Spec(O_K)$-schemes over which $E$ and $E'$ can be extended, we find a flat $\Spec(O_K)$-scheme $\widetilde{S}$, a morphism $S\to \widetilde{S}$ and a pair of CM elliptic curves $\widetilde{E}$ and $\widetilde{E}'$ over $\widetilde{S}$ whose pull-backs to $S$ are isomorphic to $E$ and $E'$. We now see that the claim we wish to prove is true for $E, E'/S$ if it is true for $\widetilde{E}, \widetilde{E}'/\widetilde{S}$. As the generic points of $\widetilde{S}$ are all of characteristic 0, we have reduced the second case to the first and we may assume that $S=\Spec(F)$ where $F$ is a field of characteristic 0.

By passage to the limit we may assume that $F$ has finite transcendence degree over $K$, that there is a morphism $F\to \C$ and by base change that $F=\C$. The claim now follows from (\ref{prop:cm-curves-complex}), the fact that all $O_K$-local systems over $\Spec(\C)$ are constant, and the fact that if $L$ and $L'$ are two rank one $O_K$-modules there always exists some $\a\subset O_K$ and an isomorphism $L'\isomto L\otimes_{O_K}\a^{-1}$.
\end{proof}

\begin{prop}\label{prop:tensor-eval-isom} For each pair $E, E'/S$ of \textup{CM} elliptic curves over $S$ the sheaf $\underline{\Hom}_S^{O_K}(E, E')$ is a rank one $O_K$-local system and the evaluation homomorphism \[E\otimes_{O_K}\underline{\Hom}_S^{O_K}(E, E')\to E'\] is an isomorphism.
\end{prop}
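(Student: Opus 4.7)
The plan is to reduce both claims to a local situation in which $E'$ is an explicit tensor twist of $E$, and then use Corollary \ref{prop:cm-tensor-homs} together with Proposition \ref{prop:cm-endo} to compute the Hom sheaf and the evaluation map explicitly. Since both assertions — that $\underline{\Hom}_S^{O_K}(E,E')$ is a rank one $O_K$-local system and that a given morphism of sheaves is an isomorphism — can be checked after passing to a cover, we are free to replace $S$ by any chosen cover.

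The first step is to apply Lemma \ref{lemm:local-isogenies}: the family $(\underline{\Isom}_S^{O_K}(E, E'\otimes_{O_K}\a^{-1})\to S)_{\a\in \Id_{O_K}}$ is a finite étale cover of $S$, so after base change to one of its components we may assume there exists a fractional ideal $\a$ and an isomorphism $E'\otimes_{O_K}\a^{-1}\isomto E$, equivalently an isomorphism $E\otimes_{O_K}\underline{\a}_S\isomto E'$ of CM elliptic curves. In this situation it suffices to prove the two assertions for the pair $(E,E\otimes_{O_K}\mc{L})$ with $\mc{L}=\underline{\a}_S$, which is a rank one $O_K$-local system.

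Now I apply Corollary \ref{prop:cm-tensor-homs} with $(E,E',\mc{L},\mc{L}')=(E,E,\underline{O_K}_S,\mc{L})$, obtaining a canonical isomorphism
\[
\underline{\End}_S^{O_K}(E)\otimes_{O_K}\mc{L}\isomto \underline{\Hom}_S^{O_K}(E,E\otimes_{O_K}\mc{L}),
\]
and by Proposition \ref{prop:cm-endo} the left hand side is canonically identified with $\mc{L}$. Since $\mc{L}$ is a rank one $O_K$-local system, this shows that $\underline{\Hom}_S^{O_K}(E,E')$ is (locally on $S$, hence globally) a rank one $O_K$-local system, establishing the first claim.

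For the evaluation map, the isomorphism $\mc{L}\isomto\underline{\Hom}_S^{O_K}(E,E\otimes_{O_K}\mc{L})$ sends a local section $l$ to $\id_E\otimes_{O_K} l$ (cf.\ Remark \ref{rema:cm-tensor-homs}). Under this identification the evaluation map
\[
E\otimes_{O_K}\underline{\Hom}_S^{O_K}(E,E\otimes_{O_K}\mc{L})\longrightarrow E\otimes_{O_K}\mc{L}
\]
sends $e\otimes l$ to $e\otimes l$ and is therefore the identity. Hence the evaluation map is an isomorphism locally on $S$, and thus an isomorphism. The only substantive input is Lemma \ref{lemm:local-isogenies}, which provides the crucial local isogeny; once that is in hand, the rest is a direct computation via the tensor formalism of $\S 2$.
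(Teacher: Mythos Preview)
Your argument is correct and is essentially identical to the paper's own proof: both use Lemma~\ref{lemm:local-isogenies} to reduce locally to the case $E'\isomto E\otimes_{O_K}\mc{L}$ for a constant rank one $O_K$-local system $\mc{L}$, then invoke Corollary~\ref{prop:cm-tensor-homs} (together with Proposition~\ref{prop:cm-endo}, cf.\ Remark~\ref{rema:cm-tensor-homs}) to identify $\underline{\Hom}_S^{O_K}(E,E')$ with $\mc{L}$ and observe that the evaluation map becomes the identity under this identification.
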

\begin{proof} The claim is local and by (\ref{lemm:local-isogenies}) the set of $S$-sheaves \[(\underline{\Isom}_S^{O_K}(E, E'\otimes_{O_K}\a^{-1})\to S)_{\a\in \Id_{O_K}}\] is a finite \'etale cover, so replacing $S$ with any one of them we may assume that $E'= E\otimes_{O_K}\a^{-1}$ for some integral ideal $\a$. Composing the evaluation homomorphism with $\id_E\otimes_{O_K}i$, where $i:\underline{\a}_S^{-1}\isomto \underline{\Hom}_S^{O_K}(E, E\otimes_{O_K}\a^{-1})$ is the isomorphism of (\ref{prop:cm-tensor-homs}), the resulting map \[E\otimes_{O_K}\a^{-1}\isomto E\otimes_{O_K}\underline{\Hom}_S^{O_K}(E, E\otimes_{O_K}\a^{-1})\to E\otimes_{O_K}\a^{-1}\] is an isomorphism (in particular it is the identity) and so the second map is an isomorphism and we are done.
\end{proof}

\begin{coro}\label{coro:isom-iso-isom} Let $E, E'/S$ be a pair of \textup{CM} elliptic curves. Then $E$ and $E'$ are isomorphic if and only if they are isogenous and locally isomorphic.
\end{coro}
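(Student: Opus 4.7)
The ``only if'' direction is immediate, since any isomorphism is both an isogeny and a local isomorphism. For the converse, I assume that $f: E \to E'$ is an isogeny and that $E$ and $E'$ are locally isomorphic, and I want to produce a global isomorphism. My plan is to combine Proposition~\ref{prop:tensor-eval-isom} with the decomposition from Proposition~\ref{prop:hom-ell-curves} to reduce everything to a question about constant rank one $O_K$-local systems.

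First I would set $\mc{L} := \underline{\Hom}_S^{O_K}(E, E')$ and invoke Proposition~\ref{prop:tensor-eval-isom} to identify $\mc{L}$ as a rank one $O_K$-local system satisfying $E \otimes_{O_K} \mc{L} \isomto E'$. In particular, $E$ and $E'$ are $O_K$-linearly isomorphic if and only if $\mc{L}$ is $O_K$-linearly isomorphic to $\underline{O_K}_S$; the local isomorphism hypothesis translates to $\mc{L}$ being \emph{locally} isomorphic to $\underline{O_K}_S$, and the isogeny $f$ provides a distinguished global section of $\mc{L}$.

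Next I would apply Proposition~\ref{prop:hom-ell-curves} to $f$, obtaining a decomposition $S = \coprod_{\a} S_{(\a)}$ indexed by non-zero ideals $\a \subset O_K$ with $\ker(f|_{S_{(\a)}}) = E|_{S_{(\a)}}[\a]$ (the component $S_{(0)}$ is empty since $f$ is everywhere an isogeny). On each $S_{(\a)}$ both $f$ and the canonical map $i_\a: E \to E \otimes_{O_K} \a^{-1}$ are finite locally free of degree $N\a$ with the same kernel, so $f$ factors through an isomorphism $E \otimes_{O_K} \a^{-1} \isomto E'$ of CM elliptic curves. Combining this with Propositions~\ref{prop:cm-endo} and~\ref{prop:cm-tensor-homs} (applied to $\underline{O_K}_S$ and $\underline{\a^{-1}}_S$) then identifies $\mc{L}|_{S_{(\a)}}$ with the constant rank one $O_K$-local system $\underline{\a^{-1}}_{S_{(\a)}}$.

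The final step is to invoke the local triviality of $\mc{L}$: on each non-empty $S_{(\a)}$, the constant sheaf $\underline{\a^{-1}}_{S_{(\a)}}$ must be locally isomorphic to $\underline{O_K}_{S_{(\a)}}$. But $\underline{\Isom}_{S_{(\a)}}^{O_K}(\underline{\a^{-1}}, \underline{O_K})$ is the constant sheaf on the (possibly empty) set $\Isom^{O_K}(\a^{-1}, O_K)$, which admits sections on a non-empty cover only when this set is non-empty, that is, precisely when $\a$ is principal. Hence $S_{(\a)} = \emptyset$ for non-principal $\a$; on each remaining component a choice of generator of $\a$ yields an isomorphism $E|_{S_{(\a)}} \isomto E'|_{S_{(\a)}}$, and these glue to the desired global isomorphism $E \isomto E'$. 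The main, and rather modest, obstacle will be this last local-to-global comparison for constant $O_K$-local systems; everything else is a direct application of the cited results.
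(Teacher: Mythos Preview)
Your argument is correct and follows essentially the same route as the paper's proof: decompose $S$ via Proposition~\ref{prop:hom-ell-curves}, factor the isogeny on each $S_{(\a)}$ through $E\otimes_{O_K}\a^{-1}\isomto E'$, and then use the local isomorphism hypothesis to force $\a$ to be principal. The only cosmetic difference is that you introduce $\mc{L}=\underline{\Hom}_S^{O_K}(E,E')$ up front via Proposition~\ref{prop:tensor-eval-isom}, whereas the paper reaches the same conclusion directly from Proposition~\ref{prop:cm-tensor-homs} after the decomposition.
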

\begin{proof} The only if direction is clear. Conversely, let $f: E\to E'$ be an isogeny and assume that $E$ and $E'$ are locally isomorphic. As the cover $\amalg_{\a\subset O_K} S_{(\a)}=S$ in (\ref{prop:hom-ell-curves}) is by open and closed sub-sheaves, $E$ and $E'$ are isomorphic over $S$ if and only if they are after base change to each $S_{(\a)}$. Therefore we may assume that $\ker(f)=E[\a]=\ker(i_\a)$, so that $f$ factors as $E\to E\otimes_{O_K}\a^{-1}\isomto E'$. Now as $E$ and $E'\isomto E\otimes_{O_K}\a^{-1}$ are locally isomorphic it follows that $\underline{O_K}_S$ and $\underline{\a^{-1}}_S$ are locally isomorphic. Any such and isomorphism is locally constant from which it follows that there exists an isomorphism $\a^{-1}\isomto O_K$ and we get \[E'\isomto E\otimes_{O_K}\a^{-1}\isomto E.\]
\end{proof}

\begin{theo}\label{theo:mcm-is-a-torsor} The functor \[\mc{M}_\CM\times \mc{CL}_{O_K}\to \mc{M}_\CM\times \mc{M}_\CM: (E, \mc{L})\mto (E, E\otimes_{O_K}\mc{L})\] is an equivalence of stacks and $\mc{M}_\CM$ is locally (over $\Spec(O_K)$) equivalent to $\mc{CL}_{O_K}$.
\end{theo}
\begin{proof} The functor in question is essentially surjective as, given any pair $(E/S, E'/S)\in \mc{M}_\CM(S)\times \mc{M}_\CM(S)$, by (\ref{prop:tensor-eval-isom}) we have \[(E/S, E'/S)\isomto (E/S, E\otimes_{O_K}\underline{\Hom}_S(E, E')/S).\] Full faithfulness is the bijectivity of the map \[\Isom_S^{O_K}(E, E')\times \Isom_S^{O_K}(\mc{L}, \mc{L'})\to \Isom_S^{O_K}(E, E')\times\Isom_S^{O_K}(E\otimes_{O_K}\mc{L}, E'\otimes_{O_K}\mc{L}').\]
If $\Isom_S^{O_K}(E, E')= \emptyset$ this is clear. If $\Isom_S^{O_K}(E, E')\neq \emptyset$ we may assume that $E=E'$ and instead show that \[\Isom_S^{O_K}(\mc{L}, \mc{L'})\to \Isom_S^{O_K}(E\otimes_{O_K}\mc{L}, E\otimes_{O_K}\mc{L}')\] is bijective but this follows from (\ref{prop:cm-tensor-homs}).

For the second statement, if $E/\Spec(O_K^\sep)$ is any CM elliptic curve (\ref{lemm:cm-over-separably-closed-fields}) the functor \[ \mc{CL}_{O_K}\times\Spec(O_{K^\sep})\to \mc{M}_{\CM}\times\Spec(O_{K^\sep})\] sending a rank one $O_K$-local system $\mc{L}$ over a $\Spec(O_K^\sep)$-sheaf $p: S\to \Spec(O_{K^\sep})$ to $p^*(E)\otimes_{O_K}\mc{L}$ is the desired local equivalence.
\end{proof}

\begin{rema}\label{rema:rohr-q} One might ask whether, as in the analogous situation of Lubin--Tate $O$-modules (\ref{coro:lubin-tate-moduli-trivial}), there exists a CM elliptic curve over $\mc{E}/\Spec(O_K)$ inducing a trivialisation of the $\mc{CL}_{O_K}$-torsor $\mc{M}_\CM$, i.e.\ an equivalence of stacks: \[\mc{CL}_{O_K}\isomto \mc{M}_{\CM}: \mc{L}/S\mto E_S\otimes_{O_K}\mc{L}/S\] We shall show later (see (\ref{rema:rohr})) that in general this is not the case (and for non-trivial reasons).
\end{rema}

\begin{coro}\label{coro:hom-map-reduction-epi} Let $E$ and $E'$ be a pair of \textup{CM} elliptic curves over $S$ and assume that $\a$ is invertible on $S$. Then the homomorphism \[\underline{\Hom}_S^{O_K}(E, E')\to \underline{\Hom}_S^{O_K}(E[\a], E'[\a])\] is an epimorphism.
\end{coro}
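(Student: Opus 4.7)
The plan is to reduce, after sufficient localization on $S$, to the obvious fact that the reduction map $L \to L/\a L$ is surjective for any rank one projective $O_K$-module $L$.

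First I would invoke (\ref{prop:tensor-eval-isom}) to conclude that $\mc{L} := \underline{\Hom}_S^{O_K}(E, E')$ is a rank one $O_K$-local system and that the evaluation map $E \otimes_{O_K} \mc{L} \isomto E'$ is an isomorphism. Since the claim is local on $S$, I would pass to a cover of $S$ over which $\mc{L} \isomto \underline{L}_S$ for some rank one projective $O_K$-module $L$. Since $\a$ is invertible on $S$, Corollary (\ref{coro:level-isom-cover}) provides a further affine \'etale cover over which $E[\a] \isomto \underline{O_K/\a}_S$, and I would replace $S$ by this cover.

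Next, taking $\a$-torsion of the evaluation isomorphism $E \otimes_{O_K} \underline{L}_S \isomto E'$ and using exactness of the tensor product with a local system (\ref{prop:tensor-exact}) gives
\[E'[\a] \isomto E[\a] \otimes_{O_K} L \isomto \underline{O_K/\a}_S \otimes_{O_K} L = \underline{L/\a L}_S.\]
Under this and the identification $E[\a] \isomto \underline{O_K/\a}_S$, evaluation at $1 \in O_K/\a$ induces an identification
\[\underline{\Hom}_S^{O_K}(E[\a], E'[\a]) \isomto \underline{\Hom^{O_K}(O_K/\a, L/\a L)}_S = \underline{L/\a L}_S,\]
since $L/\a L$ is already killed by $\a$.

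Finally, I would track an element $l \in L$ through these identifications: the corresponding homomorphism $\id_E \otimes l : E \to E \otimes_{O_K} L \isomto E'$ restricts on $\a$-torsion to $\id_{E[\a]} \otimes l : E[\a] \to E[\a] \otimes_{O_K} L$, which becomes the map $O_K/\a \to L/\a L$ sending $1$ to $l \bmod \a L$. Thus the map $\underline{\Hom}_S^{O_K}(E,E') \to \underline{\Hom}_S^{O_K}(E[\a], E'[\a])$ is identified with the constant-sheaf version of the reduction map $L \to L/\a L$, which is manifestly an epimorphism. There is no real obstacle beyond correctly tracking the canonical identifications; the only delicate point is verifying that the various $\otimes_{O_K}$ operations on sheaves of $\underline{O_K}_S$-modules commute with taking $\a$-torsion and with $\underline{\Hom}^{O_K}$ in the required way, which all follows from the exactness results of the appendix and the fact that $E[\a]$ is (after localization) constant.
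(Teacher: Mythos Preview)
Your proof is correct and follows essentially the same approach as the paper: localize so that $E' \isomto E\otimes_{O_K}L$ for a rank one projective $O_K$-module $L$, then identify the restriction map with (a variant of) the reduction $L\to L/\a L$. The only cosmetic difference is that the paper, rather than tracking the full identification with $L\to L/\a L$, takes $L=\b^{-1}$ for an ideal $\b$ chosen \emph{prime to $\a$}, so that the single homomorphism $i_\b\in\underline{\Hom}_S^{O_K}(E,E')$ already restricts to a generator of the cyclic $O_K/\a$-module $\underline{\Hom}_S^{O_K}(E[\a],E'[\a])$; this avoids explicitly trivialising $E[\a]$ but amounts to the same computation.
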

\begin{proof} We may work locally on $S$ and so assume that $E'\isomto E\otimes_{O_K} \b^{-1}$. We may find a $k\in K^\times$ such that $\b(k)$ is prime to $\a$ and using the isomorphism $k: \b\isomto \b(k)$ we may assume $\b$ is prime to $\a$. In this case, the restriction of $i_\b$ to the $\a$-torsion defines an isomorphism $E[\a]\isomto E[\a]\otimes_{O_K}\b^{-1}$ which, as $E[\a]$ and $E[\a]\otimes_{O_K}\b^{-1}$ are locally isomorphic to $\underline{O_K/\a}_S$, induces an isomorphism \[\underline{O_K/\a}_S\isomto \underline{\Hom}_S(E[\a], E[\a]\otimes_{O_K}\b^{-1}): a\mto a\cdot i_\b\] and it follows that \[\underline{\Hom}_S^{O_K}(E, E\otimes_{O_K}\b^{-1})\to \underline{\Hom}_S^{O_K}(E[\a], E[\a]\otimes_{O_K}\b^{-1})\] is an epimorphism as the image contains $i_\b$.
\end{proof}

\section{The global reciprocity map and CM elliptic curves} We now consider CM elliptic curves over $\Spec(F)$ where $F$ is a field of arbitrary characteristic. First, we define a homomorphism $[-]_F$ from the Galois group $G(F^\sep/F)$ into a certain class group (which depends only on the characteristic of the field $F$) (\ref{subsec:def-morphism-h}). We then prove a relation between this homomorphism $[-]_F$ and the character $\rho_{E/F}$ defining the action of $G(F^\sep/F)$ on the torsion of a CM elliptic curve $E/\Spec(F)$ (\ref{prop:h-homs-and-characters}). Moreover, we show the character $\rho_{E/F}$ determines $E/\Spec(F)$ upto isogeny (\ref{prop:cm-iso-iff-h-equal}) and we use the homomorphism $[-]_F$ to classify exactly which characters $\rho$ of $G(F^\sep/F)$ are of the form $\rho_{E/F}$ for some CM elliptic curve $E/\Spec(F)$ (\ref{prop:cm-exist-given-h}). In (\ref{prop:compute-the-homomorphisms-h}) we compute the homomorphisms $[-]_F$ when $F=K$, $F=K_\p$ and $F=\F_\p$ for $\p$ a prime. In particular, for $F=K$ the homomorphism $[-]_K$ takes the form \[[-]_K: G(K^\sep/K)\to \CL_{O_K, \infty}\] and we show that for $\sigma\in G(K^\sep/K)$ we have \begin{equation}\theta_K([\sigma]_K)=\sigma|_{K^\infty}\label{eqn:main-theorem}\end{equation} where $\theta_K: \CL_{O_K, \infty}\to G(K^\infty/K)$ is reciprocity map (\ref{eqn:define-theta}). This fact is, at least in spirit, equivalent to the main theorem of complex multiplication (see Theorem 5.4 of \cite{Shimura94}). The method we use to derive this fact is quite similar to (and in fact reliant on) the method used to prove (\ref{theo:main-theorem-lubin-tate}) for Lubin--Tate $O$-modules in Chapter 1. Finally, we use these results to derive a sharpening of the criterion of good reduction adapted to CM elliptic curves (\ref{coro:image-of-inertia-group}).

The results of this section are, at least when $F$ is a finite extension of $K$, probably more or less known when translated into the language of algebraic Hecke characters though the proofs we give are, to the best of the author's knowledge, original. We would like to emphasise that the rather abstract approach taken -- which eschews Hecke characters -- works for all fields $F$ simultaneously and allows for more conceptual proofs.

\subsection{}\label{subsec:def-morphism-h} Let $F$ be a field over $O_K$ with separable closure $F^\sep$, let $S=\Spec(F^\sep)$ and let $\f\in \Id_{O_K}$ be invertible on $\Spec(F)$. By (\ref{lemm:cm-over-separably-closed-fields}) there exists a CM elliptic curve $E/S$. Moreover, if $E'/S$ is another CM elliptic curve then there exists a rank one projective $O_K$-module $L$ and an isomorphism $f: E\otimes_{O_K}L\isomto E'.$ Of course, we could take $L=\Hom_{O_K}(E, E')$ and $f$ the evaluation map, but for what follows it will be more useful to work with arbitrary modules $L$ and isomorphisms $f: E\otimes_{O_K}L\isomto E'$.

In particular, for each $\sigma\in G(F^\sep/F)$ there is a rank one projective $O_K$-module $L_\sigma$ and an isomorphism \[f_\sigma: E\otimes_{O_K}L_\sigma\isomto \sigma^*(E).\] This isomorphism, restricted to the $S=\Spec(F^\sep)$-points of the $\f$-torsion, is then given by \[E[\f](S)\otimes_{O_K}L_\sigma\stackrel{E[\f](\sigma)\otimes\lambda_\sigma}{\longrightarrow} E[\f](\sigma_!(S))=\sigma^*(E)[\f](S)\] for a unique level-$\f$ structure $\lambda_\sigma: L_\sigma\to O_K/\f$ on $L_\sigma$. It is clear from this construction that the class \[[\sigma]_{F, \f}:=(L_\sigma, \lambda_\sigma)\in CL_{O_K}^{(\f)}\] is independent of the choice of $L_\sigma$ and the isomorphism $f_\sigma: E\otimes_{O_K}L_\sigma\isomto \sigma^*(E)$. Moreover, as every other CM elliptic curve $E'/S$ is of the form $E\otimes_{O_K}\a$ for some ideal $\a$, having chosen $L_\sigma$ and an isomorphism $f_\sigma: E\otimes_{O_K} L_\sigma\isomto \sigma^*(E)$, the map $f_\sigma\otimes_{O_K} \a$ defines an isomorphism \[(E\otimes_{O_K}\a)\otimes_{O_K}L_\sigma = E\otimes_{O_K}L_\sigma\otimes_{O_K}\a\stackrel{f_\sigma\otimes_{O_K} \a }\longrightarrow \sigma^*(E)\otimes_{O_K}\a=\sigma^*(E\otimes_{O_K}\a)\] which induces on the $S$-valued points of the $\f$-torsion \[(E\otimes_{O_K}\a)[\f](S)\otimes_{O_K} L_\sigma \stackrel{(E\otimes_{O_K}\a)[\f](\sigma)\otimes \lambda_\sigma}{\longrightarrow}(E\otimes_{O_K}\a)[\f](\sigma_!(S)).\] Therefore the class $[\sigma]_{F, \f}$ is also independent of the choice of $E/S$.

If $F\to F'$ is a field extension and \[\mathrm{res}:G(F'^\sep/F')\to G(F^\sep/F)\] denotes the restriction map then it follows easy from the definitions that \begin{equation} [-]_{F',\f}=[-]_{F, \f}\circ \mathrm{res}.\label{eqn:h-restriction}\end{equation} This relation implies that once we known $[-]_F$ for $F=K$, and $F=\F_\p$ for each prime ideal $\p$, we essentially know $[-]_F$ for any field $F$. The computation of the map $[-]_F$ for these values of $F$ will be given in (\ref{prop:compute-the-homomorphisms-h}).

If $\tau\in G(F^\sep/F)$ then the composition \[E\otimes_{O_K}L_\sigma\otimes_{O_K}L_\tau\stackrel{f_\sigma\otimes_{O_K}\id_{L_\tau}}{\longrightarrow}\sigma^*(E)\otimes_{O_K}L_\tau \stackrel{\sigma^*(f_\tau)}{\longrightarrow} \sigma^*(\tau^*(E))\] induces on the $S$-points of the $\f$-torsion the map \[(E[\f](\sigma^{-1}\circ \tau\circ \sigma)\otimes_{O_K}\lambda_\tau)\circ (E[\f](\sigma)\otimes_{O_K}\lambda_\sigma\otimes_{O_K}\id_{L_\tau})=E[\f](\tau\circ \sigma)\otimes_{O_K}\lambda_\sigma\otimes_{O_K}\lambda_\tau\] so that \[[\sigma\tau]_{F, \f}=(L_\tau\otimes_{O_K}L_\sigma, \lambda_\tau\otimes_{O_K}\lambda_\sigma)=(L_\tau, \lambda_\tau)(L_\sigma, \lambda_\tau)=[\sigma]_{F, \f}[\tau]_{F, \f}.\] In other words, $[-]_{F, \f}: G(F^\sep/F)\to \CL_{O_K}^{(\f)}$ is a homomorphism.

\subsection{} The homomorphism of (\ref{def:bracket-homo}) \[[-]_\f: (A\otimes_{O_K}K)^\times\to CL_{O_K}^{(\f)}\] restricted to $A_{O_K}^\times$ factors through the quotient $A_{O_K}^\times\to (O_K/\f)^\times$ and we denote the resulting map by the same symbol \[[-]_\f: (O_K/\f)^\times\to \CL_{O_K}^{(\f)}.\] Note that $\ker([-]_\f)=\im(O_K^\times\to (O_K/\f)^\times)\subset (O_K/\f)^\times$.

\subsection{}\label{subsec:definition-h-for-cm-curves} Now let $E/F$ be a CM elliptic curve and denote by \[\rho_{E/F, \f}: G(F^\sep/F)\to (O_K/\f)^\times\] the character defining the action of $G(F^\sep/F)$ on the rank one $O_K/\f$-module $E[\f](S)=E[\f](\Spec(F^\sep))$, i.e.\ \[E[\f](\sigma)=\rho_{E/F, \f}(\sigma): E[\f](S)\to E[\f](S).\] We also note for future reference that as $(O_K/\f)^\times=\Aut_{O_K}(E[\f](S))$ is abelian for each $\f$ it follows that the extension $F(E[\f])/F$ generated by the $\f$-torsion is an abelian extension of $F$. Moreover (as is obvious) the character $\rho_{E/F, \f}$ is continuous, as it vanishes on the open subgroup of $G(F^\sep/F)$ fixing $F(E[\f])$.

\begin{prop}\label{prop:h-homs-and-characters}
\begin{enumerate}[label=\textup{(\roman*)}]
\item The homomorphism $[-]_{F, \f}$ is continuous and
\item if $E/F$ is a \textup{CM} elliptic curve the diagram \[\xymatrix{G(F^\sep/F)\ar[dr]_{[-]_{F,\f}}\ar[r]^-{\rho_{E/F, \f}^{-1}} & (O_K/\f)^\times\ar[d]^{[-]_\f}\\
 & \CL_{O_K}^{(\f)}}\] commutes.
\end{enumerate}
\end{prop}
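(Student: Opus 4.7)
The plan is to establish (ii) first by a direct computation using the descent data that comes from the fact that $E$ is already defined over $F$, and then to deduce (i) from (ii) applied to an auxiliary \textup{CM} elliptic curve over a finite extension of $F$.

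For (ii), I would take the \textup{CM} elliptic curve over $S=\Spec(F^\sep)$ used to compute $[-]_{F,\f}$ to be $E_{F^\sep}$ itself. Because $E$ descends to $F$, there is for each $\sigma\in G(F^\sep/F)$ a canonical descent isomorphism $d_\sigma:E_{F^\sep}\isomto \sigma^*(E_{F^\sep})$, so I may choose $L_\sigma=O_K$ and $f_\sigma=d_\sigma$. The restriction of $d_\sigma$ to $S$-points of $E[\f]$ is the identity once one uses descent to identify $\sigma^*(E_{F^\sep})[\f](S)$ with $E[\f](F^\sep)$; under the same identification, the tautological map $E[\f](\sigma):E[\f](S)\to E[\f](\sigma_!(S))$ becomes the usual Galois action $\rho_{E/F,\f}(\sigma)$. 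Matching these against the required decomposition $f_\sigma=E[\f](\sigma)\otimes_{O_K}\lambda_\sigma$ forces $\lambda_\sigma(1)=\rho_{E/F,\f}(\sigma)^{-1}$ in $(O_K/\f)^\times$. Consequently $[\sigma]_{F,\f}$ is represented by $(O_K,\rho_{E/F,\f}(\sigma)^{-1})$, which by the explicit description of $[-]_\f$ restricted from $A_{O_K}^\times$ to $(O_K/\f)^\times$ is precisely $[\rho_{E/F,\f}(\sigma)^{-1}]_\f$, giving the commutativity assertion.

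For (i), since $\CL_{O_K}^{(\f)}$ is discrete, continuity reduces to producing an open subgroup of $G(F^\sep/F)$ inside the kernel of $[-]_{F,\f}$. By (\ref{lemm:cm-over-separably-closed-fields}) a \textup{CM} elliptic curve over $\Spec(O_{K^\sep})$ exists, and by passage to the limit any \textup{CM} elliptic curve over $F^\sep$ descends to a finite subextension of $F^\sep/F$. I would then choose a finite Galois extension $F_0/F$ inside $F^\sep$ together with a \textup{CM} elliptic curve $E_0/F_0$ such that every point of the finite \'etale $F_0$-scheme $E_0[\f]$ is already rational over $F_0$. Part (ii) applied to $E_0/F_0$ yields $[\sigma]_{F_0,\f}=[\rho_{E_0/F_0,\f}(\sigma)^{-1}]_\f=1$ for every $\sigma\in G(F^\sep/F_0)$, and combining this with the compatibility (\ref{eqn:h-restriction}) gives $[\sigma]_{F,\f}=1$ on the same subgroup; the open subgroup $G(F^\sep/F_0)$ therefore lies in the kernel.

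The main obstacle is the bookkeeping in (ii). One has to verify carefully that the canonical identification $\sigma^*(E_{F^\sep})[\f](S)=E[\f](S)$ coming from descent really does convert the formal map $E[\f](\sigma):E[\f](S)\to E[\f](\sigma_!(S))$ into the classical Galois action $\rho_{E/F,\f}(\sigma)$; only once this identification is pinned down unambiguously does the comparison with $E[\f](\sigma)\otimes\lambda_\sigma$ correctly determine $\lambda_\sigma$ and hence the class $[\sigma]_{F,\f}$.
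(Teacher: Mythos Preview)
Your proof is correct and follows essentially the same route as the paper: for (ii) you both use the descent isomorphism $d_\sigma$ with $L_\sigma=O_K$ to compute $[\sigma]_{F,\f}=(O_K,\rho_{E/F,\f}(\sigma)^{-1})=[\rho_{E/F,\f}(\sigma)^{-1}]_\f$, and for (i) you both pass to a finite extension carrying a \textup{CM} elliptic curve and invoke (ii) together with the restriction compatibility (\ref{eqn:h-restriction}). The only cosmetic difference is that for (i) the paper stops at any finite $F'/F$ with a \textup{CM} curve $E'/F'$ and observes that $\rho_{E'/F',\f}$ is already continuous (hence so is $[-]_{F,\f}|_{G(F^\sep/F')}$), whereas you enlarge further to make the $\f$-torsion rational and thereby exhibit an explicit open subgroup in the kernel; both arguments are equally valid and equally short.
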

\begin{proof} (i) We shall reduce this claim to the second. By passage to the limit (applied to any CM elliptic curve over $\Spec(F^\sep)$) we may find a CM elliptic curve $E/\Spec(F')$ for some finite extension $F'/F$. If the diagram \begin{equation}\label{eqn:commutativity}\begin{gathered}\xymatrix{G(F^\sep/F')\ar[r]^-{\rho_{E/F', \f}^{-1}}\ar[d] & (O_K/\f)^\times\ar[d]^{[-]_\f}\\
G(F^\sep/F)\ar[r]^-{[-]_{F, \f}} & CL_{O_K}^{(\f)}}\end{gathered}\end{equation} commutes then, as the composition along the top and right is continuous, it follows that $[-]_{F, \f}|_{G(F^\sep/F')}$ is continuous. But $G(F^\sep/F')\subset G(F^\sep/F)$ is open and of finite index and $\CL_{O_K}^{(\f)}$ is discrete so it follows that $[-]_{F, \f}$ is continuous.

As $[-]_{F, \f}=[-]_{F, \f}|_{G(F^\sep/F')}$ (\ref{eqn:h-restriction}), the commutativity of (\ref{eqn:commutativity}) is equivalent to the commutativity of the diagram \[\xymatrix{G(F^\sep/F')\ar[dr]_{[-]_{F', \f}}\ar[r]^-{\rho_{E/F', \f}^{-1}} & (O_K/\f)^\times\ar[d]^{[-]_\f}\\
 & \CL_{O_K}^{(\f)}}\] and so we may assume that $F=F'$ and instead prove (ii).

(ii) Write $E_{F^\sep}=E\times_{\Spec(F)}\Spec(F^\sep)$ and let \[d_\sigma: E_{F^\sep}\to \sigma^*(E_{F^\sep})\] be the descent isomorphism i.e.\ (the isomorphism coming from the fact that $E=E\times_{\Spec(F)}\Spec(F^\sep)$ descends to $\Spec(F)$). Then $d_\sigma$ induces on the $S$-points of the $\f$-torsion the map \[E[\f](S)\stackrel{\rho_{E/F, \f}(\sigma)^{-1}\cdot E[\f](\sigma)}{\longrightarrow} E[\f](\sigma_!(S))=\sigma^*(E[\f])(S)\] and therefore \[[\sigma]_{F, \f}=(O_K, O_K\stackrel{\rho_{E/F, \f}(\sigma)^{-1}}{\longrightarrow}O_K/\f)=[\rho_{E/F, \f}(\sigma)^{-1}]_\f\in \CL_{O_K}^{(\f)}\]
\end{proof}
\begin{prop}\label{prop:props-of-h-for-cm-curves} Let $E/F$ be a \textup{CM} elliptic curve.
\begin{enumerate}[label=\textup{(\roman*)}]
\item Let $\mc{L}\in \mc{CL}_{O_K}(\Spec(F))$ and \[\rho_{\mc{L}/F}: G(G^\sep/F)\to \Aut_{O_K}(\mc{L}(\Spec(F^\sep)))=O_K^\times\] be the associated character. Then $\rho_{E\otimes_{O_K}\mc{L}/F}=\rho_{E/F, \f}\rho_{\mc{L}/F}$.
\item If $\tau: F \to F$ is any $O_K$-linear automorphism then \[\rho_{\tau^*(E)/F}(\sigma)=\rho_{E/F, \f}(\widetilde{\tau}^{-1}\sigma\widetilde{\tau})\] for each $\sigma\in G(F^\sep/F)$, where $\widetilde{\tau}$ denotes any extension of $\tau$ to $F^\sep$.
\end{enumerate}
\end{prop}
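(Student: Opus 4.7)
Both statements are assertions about how certain characters of $G(F^\sep/F)$ transform under natural operations, so I would prove each by base-changing to $\Spec(F^\sep)$ and tracking the descent data.

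For (i), the key observation is that $F^\sep$ is separably closed, so by point (vi) in the discussion of local systems (and the fact that $\mc{L}$ is pro-\'etale), the pull-back $\mc{L}_{F^\sep}$ is (non-canonically) isomorphic to $\underline{O_K}_{F^\sep}$. Combining such a trivialization with the exactness of tensoring by rank one $O_K$-local systems and the defining property of $E\otimes_{O_K}\mc{L}$, one obtains a natural $O_K$-linear identification
\[
(E\otimes_{O_K}\mc{L})[\f](\Spec(F^\sep))\;\cong\; E[\f](\Spec(F^\sep))\otimes_{O_K}\mc{L}(\Spec(F^\sep)).
\]
The descent data on $E\otimes_{O_K}\mc{L}$ relative to $\Spec(F)$ is, by construction, the tensor product of the descent data on $E$ and on $\mc{L}$, so the $G(F^\sep/F)$-action on the right-hand side is the diagonal tensor of the two actions. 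Since a rank one tensor of characters is the product of characters, this yields $\rho_{E\otimes_{O_K}\mc{L}/F,\f}=\rho_{E/F,\f}\cdot(\rho_{\mc{L}/F}\bmod\f)$, which is the claim.

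For (ii), I would fix an extension $\widetilde{\tau}\colon F^\sep\to F^\sep$ of $\tau$ (which exists since $F^\sep/F$ is normal). The automorphism $\widetilde{\tau}$ induces a bijection
\[
\widetilde{\tau}_{\ast}\colon E(\Spec(F^\sep))\;\isomto\;\tau^*(E)(\Spec(F^\sep)),
\]
which is $O_K$-linear because $\tau$ itself is $O_K$-linear; in coordinates, if $E$ is cut out by equations $f_i$ with coefficients in $F$ then $\tau^*(E)$ is cut out by $\tau(f_i)$, and $\widetilde{\tau}_\ast$ is the map applying $\widetilde{\tau}$ to coordinates. A direct check in those coordinates shows that the tautological $G(F^\sep/F)$-action $\sigma$ on $\tau^*(E)(\Spec(F^\sep))$ pulls back along $\widetilde{\tau}_\ast$ to the action $\widetilde{\tau}^{-1}\sigma\widetilde{\tau}$ on $E(\Spec(F^\sep))$. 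Restricting to the $\f$-torsion and reading off characters gives the stated formula $\rho_{\tau^*(E)/F,\f}(\sigma)=\rho_{E/F,\f}(\widetilde{\tau}^{-1}\sigma\widetilde{\tau})$.

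The only real subtlety is bookkeeping: in (i) one must verify that the tensor product of descent data matches the product of Galois characters (essentially a formal consequence of the definitions once unraveled), and in (ii) one must confirm the claimed conjugation. Neither step involves any hard geometry; the whole proof is definitional once the trivialization in (i) and the lift $\widetilde{\tau}$ in (ii) are chosen, and the conclusions are clearly independent of those choices. I expect no serious obstacle beyond choosing notation that keeps the descent picture transparent.
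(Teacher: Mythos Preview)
Your proposal is correct and is essentially an expanded version of the paper's own proof, which consists of the single sentence ``These are immediate from the definition of $\rho_{E/F,\f}$ as the character defining the action of $G(F^\sep/F)$ on $E[\f](S)=E[\f](\Spec(F^\sep))$.'' Your unpacking of (i) via the identification $(E\otimes_{O_K}\mc{L})[\f]\cong E[\f]\otimes_{O_K}\mc{L}$ and the diagonal Galois action, and of (ii) via the $\widetilde{\tau}$-twisted bijection on $F^\sep$-points, is exactly what that sentence means when spelled out.
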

\begin{proof} These are immediate from the definition of $\rho_{E/F, \f}$ as the character defining the action of $G(F^\sep/F)$ on $E[\f](S)=E[\f](\Spec(F^\sep))$.
\end{proof}

\subsection{} For what follows we let $\f\in \Id_{O_K}$ vary over the integral ideals of $O_K$ which are invertible on $\Spec(F)$. We now take the limit over $\f$ to define the homomorphisms $[-]_F$, $\rho_{E/F}$ and $[-]$ by
\[[-]_F:=\lim_\f [-]_{F, \f}: G(F^\sep/F)\to \lim_\f \CL_{O_K}^{(\f)},\] \[\rho_{E/F}:=\lim_\f \rho_{E/F, \f}: G(F^\sep/F)\to \lim_{\f}(O_K/\f)^\times,\] and \[[-]:=\lim_\f[-]_\f: \lim_\f (O_K/\f)^\times\to \lim_\f \CL_{O_K}^{(\f)}.\] We find immediately from (ii) of (\ref{prop:h-homs-and-characters}) that \begin{equation}\label{eqn:rel-for-h}[-]_F=[\rho^{-1}_{E/F}].\end{equation}

\begin{prop}\label{prop:cm-iso-iff-h-equal} Let $E, E'/F$  be a pair of \textup{CM} elliptic curves. The following are equivalent:
\begin{enumerate}[label=\textup{(\roman*)}]
\item $\rho_{E/F}=\rho_{E'/F}$,
\item the character $\rho$ defining the action of $G(F^\sep/F)$ on \[\underline{\Hom}_{\Spec(F)}^{O_K}(E, E')(\Spec(F^\sep))\] is trivial,
\item the \'etale $\Spec(F)$-scheme $\underline{\Hom}_{\Spec(F)}^{O_K}(E, E')$ is constant,
\item $E$ and $E'$ are isogenous.
\end{enumerate} 
\end{prop}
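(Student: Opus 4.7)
The plan is to prove the cycle (i) $\Leftrightarrow$ (ii) $\Leftrightarrow$ (iii), (iii) $\Rightarrow$ (iv), (iv) $\Rightarrow$ (ii). By (\ref{prop:tensor-eval-isom}) the sheaf $\mc{L} := \underline{\Hom}^{O_K}_{\Spec(F)}(E, E')$ is a rank one $O_K$-local system on $\Spec(F)$ and the evaluation map provides a canonical isomorphism $E \otimes_{O_K} \mc{L} \isomto E'$. The character $\rho$ appearing in (ii) is exactly the associated character $\rho_{\mc{L}/F}: G(F^\sep/F) \to O_K^\times = \Aut_{O_K}(\mc{L}(\Spec(F^\sep)))$.

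For (i) $\Leftrightarrow$ (ii), I would apply (\ref{prop:props-of-h-for-cm-curves})(i) to the isomorphism $E' \isomto E \otimes_{O_K} \mc{L}$ and pass to the limit over the ideals $\f$ invertible on $\Spec(F)$, obtaining the relation $\rho_{E'/F} = \rho_{E/F} \cdot \rho$ inside $\lim_\f (O_K/\f)^\times$, where $\rho$ is viewed through the natural map $O_K^\times \to \lim_\f (O_K/\f)^\times$. Hence $\rho_{E/F} = \rho_{E'/F}$ if and only if this image of $\rho$ is trivial, which in turn is equivalent to the triviality of $\rho$ itself once one knows the map $O_K^\times \to \lim_\f (O_K/\f)^\times$ is injective. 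This injectivity holds because, as exploited repeatedly in (\ref{subsec:global-field-with-special-place}), $O_K^\times$ is finite and therefore in the cofinal system of $\f$'s one can find one which separates units, already making $O_K^\times \to (O_K/\f)^\times$ injective.

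The equivalence (ii) $\Leftrightarrow$ (iii) is the standard translation for finite \'etale schemes over a field: a finite \'etale $\Spec(F)$-scheme is constant iff the $G(F^\sep/F)$-action on its $F^\sep$-points is trivial. For (iii) $\Rightarrow$ (iv): if $\mc{L}$ is constant then $\mc{L}(\Spec(F))$ is a nonzero rank one projective $O_K$-module, providing a nonzero $O_K$-homomorphism $f: E \to E'$; since $\Spec(F)$ is connected and $f$ is nonzero, the decomposition of (\ref{prop:hom-ell-curves}) collapses onto a single piece $S_{(\a)}$ with $\a \neq 0$, so $f$ is finite locally free and hence an isogeny. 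For (iv) $\Rightarrow$ (ii): an isogeny $f: E \to E'$ is a nonzero $G(F^\sep/F)$-fixed element of the rank one projective $O_K$-module $\mc{L}(\Spec(F^\sep))$; torsion-freeness of the latter together with $\rho(\sigma) \cdot f = f$ then forces $\rho(\sigma) = 1$ for every $\sigma$.

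The only step with any real content is (i) $\Leftrightarrow$ (ii): it rests on the torsor structure (\ref{theo:mcm-is-a-torsor}) via the multiplicativity (\ref{prop:props-of-h-for-cm-curves})(i) and on the injectivity of $O_K^\times$ into the relevant profinite completion -- ultimately a consequence of the finiteness of $O_K^\times$. The remaining implications are formal consequences of rigidity (\ref{prop:hom-ell-curves}) and the usual \'etale-Galois dictionary.
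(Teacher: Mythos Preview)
Your proof is correct and follows essentially the same approach as the paper. Both arguments hinge on the evaluation isomorphism $E\otimes_{O_K}\mc{L}\isomto E'$ from (\ref{prop:tensor-eval-isom}) together with the multiplicativity (\ref{prop:props-of-h-for-cm-curves})(i) to obtain $\rho_{E'/F}=\rho_{E/F}\cdot\rho$. The only differences are cosmetic: the paper closes the cycle via (iv) $\Rightarrow$ (i) by identifying $\ker(f)=E[\a]$ through (\ref{prop:hom-ell-curves}) and invoking the trivial character of the constant local system $\underline{\a^{-1}}$, whereas you go (iv) $\Rightarrow$ (ii) directly by observing that an isogeny is a nonzero Galois-fixed element of the torsion-free module $\mc{L}(\Spec(F^\sep))$; and you spell out the injectivity of $O_K^\times\to\lim_\f(O_K/\f)^\times$, which the paper leaves implicit.
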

\begin{proof} Let $\rho: G(F^\sep/F)\to O_K^\times$ be the character defining the action of $G(F^\sep/F)$ on the $\Spec(F^\sep)$-valued points of the rank one $O_K$-local system  $\underline{\Hom}_{\Spec(F)}^{O_K}(E, E')$.

(i) implies (ii): The isomorphism \[E\otimes_{O_K}\underline{\Hom}_{\Spec(F)^{O_K}}(E, E')\isomto E'\] combined with (i) of (\ref{prop:props-of-h-for-cm-curves}) gives $\rho_{E'/F}=\rho\cdot \rho_{E/F}$ so that $\rho$ is trivial if and only if $\rho_{E/F}=\rho_{E'/F}$.

(ii) implies (iii): This is clear from the definition of $\rho$.

(iii) implies (iv): If $\underline{\Hom}_{\Spec(F)}^{O_K}(E, E')$ is constant any non-zero $\Spec(F)$-section defines an isogeny $E\to E'$.

(iv) implies (i): If $f: E\to E'$ is an isogeny then $\ker(f)=E[\a]=\ker(i_\a)$ for some integral ideal $\a$ by (\ref{prop:hom-ell-curves}). Therefore $E\otimes_{O_K}\a^{-1}\isomto E'$ and by (i) of (\ref{prop:props-of-h-for-cm-curves}) we then get \[\rho_{E'/F}=\rho_{E\otimes_{O_K}\a^{-1}/F}=\rho_{E/F}.\] 
\end{proof}
\begin{prop}\label{prop:cm-exist-given-h} Let \[\rho=\lim_\f \rho_\f: G(F^\sep/F)\to \lim_{\f} (O_K/\f)^\times\] be a continuous homomorphism. Then there exists a \textup{CM} elliptic curve $E/F$ with $\rho_{E/F}=\rho$ if and only if the diagram \[\xymatrix{G(F^\sep/F)\ar[r]^-{\rho^{-1}}\ar[dr]_{[-]_F} & \lim\limits_{\f} (O_K/\f)^\times\ar[d]^{[-]} \\
& \lim\limits_{\f} \CL_{O_K}^{(\f)}}\] commutes.
\end{prop}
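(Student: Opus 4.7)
The ``only if'' direction is immediate from identity (\ref{eqn:rel-for-h}): if $E/F$ has $\rho_{E/F}=\rho$ then $[-]_F=[\rho_{E/F}^{-1}]=[\rho^{-1}]$. For the converse, the plan is to produce $E/F$ by Galois descent from a \textup{CM} elliptic curve $E_0$ over $\Spec(F^\sep)$ (which exists by (\ref{lemm:cm-over-separably-closed-fields})), using the commutativity of the diagram to build a canonical descent datum on $E_0$.

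For each $\sigma\in G(F^\sep/F)$, set $L_\sigma:=\underline{\Hom}_{\Spec(F^\sep)}^{O_K}(E_0,\sigma^*(E_0))$. By (\ref{prop:tensor-eval-isom}) this is a rank one projective $O_K$-module, and the evaluation map is a canonical isomorphism $f_\sigma\colon E_0\otimes_{O_K}L_\sigma\isomto\sigma^*(E_0)$; equipping $L_\sigma$ with the induced level-$\f$ structure $\lambda_\sigma$ as in (\ref{subsec:def-morphism-h}), the pair $(L_\sigma,\lambda_\sigma)$ represents the class $[\sigma]_{F,\f}$. The hypothesis $[\sigma]_{F,\f}=[\rho(\sigma)^{-1}]_\f$ provides an $O_K$-linear isomorphism $u_{\sigma,\f}\colon O_K\isomto L_\sigma$ with $\lambda_\sigma(u_{\sigma,\f}(1))\equiv\rho(\sigma)^{-1}\bmod\f$. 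Fixing an ideal $\f_0$ which separates units (available by the finiteness of $O_K^\times$), the isomorphism $u_{\sigma,\f_0}$ is unique, and uniqueness then forces $u_{\sigma,\f}=u_{\sigma,\f_0}$ for every $\f$ divisible by $\f_0$; one thereby obtains a canonical $u_\sigma\colon O_K\isomto L_\sigma$ with $\lambda_\sigma\circ u_\sigma=\rho(\sigma)^{-1}$ at every level, and I set
\[
c_\sigma\colon\sigma^*(E_0)\stackrel{f_\sigma^{-1}}{\longrightarrow}E_0\otimes_{O_K}L_\sigma\stackrel{\id_{E_0}\otimes u_\sigma^{-1}}{\longrightarrow}E_0.
\]

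The crux is the cocycle identity $c_{\sigma\tau}=c_\sigma\circ\sigma^*(c_\tau)$. Under the identification $L_{\sigma\tau}\cong L_\sigma\otimes_{O_K}L_\tau$ and $f_{\sigma\tau}=\sigma^*(f_\tau)\circ(f_\sigma\otimes\id_{L_\tau})$ established in the proof that $[-]_{F,\f}$ is a homomorphism in (\ref{subsec:def-morphism-h}), this reduces by functoriality of $\otimes$ to the equality $u_{\sigma\tau}=u_\sigma\otimes u_\tau$, which follows from the uniqueness of the $u$'s together with the multiplicativity of $\rho$. Effective Galois descent for the projective $F^\sep$-scheme $E_0$ then yields a \textup{CM} elliptic curve $E/F$ with $E\times_F\Spec(F^\sep)\cong E_0$, and by tracing the construction of $c_\sigma$ through the $\f$-torsion one sees that the action of $G(F^\sep/F)$ on $E[\f](\Spec(F^\sep))$ is multiplication by $\rho$, so $\rho_{E/F}=\rho$. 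The main obstacle I anticipate is the bookkeeping to verify the cocycle identity and the continuity/effectiveness of the descent datum; the essential inputs for the latter are the continuity of $\rho$ and the fact that $E_0$ already spreads to a model defined over some finite extension of $F$.
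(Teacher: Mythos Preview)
Your argument is correct and runs parallel to the paper's, but organises the descent differently. Both proofs hinge on the same idea: once an ideal $\f_0$ separating units (and invertible on $\Spec(F)$ --- a caveat you omit but need) is fixed, the commutativity hypothesis pins down a \emph{unique} trivialisation $u_\sigma:O_K\isomto L_\sigma$ compatible with $\rho(\sigma)^{-1}$, and the cocycle condition then follows from the multiplicativity of $\rho$. The paper, however, first passes to a finite extension $F'/F$ over which some $E'$ exists, twists $E'$ by a rank-one local system so that $\rho_{E'/F'}=\rho|_{G(F^\sep/F')}$, and then checks by hand that the resulting isomorphisms $r_\sigma$ satisfy $r_\sigma=\id_{E'}$ for $\sigma\in G(F^\sep/F')$; this reduces everything to a \emph{finite} Galois descent, for which effectiveness is automatic. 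You instead descend directly from $F^\sep$, which makes the cocycle verification cleaner (the identifications $L_{\sigma\tau}\cong L_\sigma\otimes L_\tau$ and $u_{\sigma\tau}=u_\sigma\otimes u_\tau$ are immediate) but pushes the work into the continuity of the profinite descent datum, which you acknowledge but do not carry out. To finish along your lines: choose $F'/F$ finite with $E_0$ defined over $F'$; for $\sigma\in G(F^\sep/F')$ the comparison of your $c_\sigma^{-1}$ with the structural descent isomorphism $d_\sigma$ is multiplication by the continuous character $\rho_{E'/F'}\cdot\rho^{-1}:G(F^\sep/F')\to O_K^\times$, whose open kernel cuts out the finite extension over which your datum trivialises. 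This is exactly the twist-then-descend step the paper does up front.
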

\begin{proof} The only if claim is (\ref{eqn:rel-for-h}). Conversely, by passage to the limit there exists a CM elliptic curve $E'/\Spec(F')$ where $F\subset F'$ is a finite extension. As the compositions of $\rho_{E'/F'}$ and $\rho|_{G(F^\sep/F')}$ with \[[-]:\lim_{\f}(O_K/\f)^\times\to \lim_{\f}\CL_{O_K}^{(\f)}\] coincide and $\ker([-])=O_K^\times$, the difference defines a character \begin{equation}\rho_{E'/F'}^{-1}\cdot \rho|_{G(F^\sep/F')}: G(F^\sep/F')\to O_K^\times=\ker([-])\subset \lim_{\f}(O_K/\f)^\times. \label{eqn:diff-char}\end{equation} Replacing $E'$ by the tensor product of $E'$ with any rank one $O_K$-local system with associated character (\ref{eqn:diff-char}) we may assume that $\rho_{E'/F'}=\rho|_{G(F^\sep/F')}$.

Let $\upsilon\in G(F^\sep/F')$ and $\widetilde{\sigma}\in G(F^\sep/F)$ with $\widetilde{\sigma}|_{F'}=\sigma$. We have by (ii) of (\ref{prop:props-of-h-for-cm-curves}) \[\rho_{\sigma^*(E')/F'}(\upsilon)=\rho_{E'/F'}(\sigma^{-1}\upsilon\sigma)=\rho(\sigma^{-1}\upsilon\sigma)=\rho(\upsilon)=\rho_{E'/F'}(\upsilon).\] Therefore $\rho_{\sigma^*(E')/F'}=\rho_{E'/F'}$ and by (ii) of (\ref{prop:cm-iso-iff-h-equal}) $E'$ and $\sigma^*(E')$ are isogenous.

For all $\f$ invertible on $\Spec(F)$ we have \[[\widetilde{\sigma}]_{F, \f}=[\rho_\f(\sigma)^{-1}]_\f=(O_K, O_K \stackrel{\rho_\f(\sigma)^{-1}}{\to} O_K/\f).\] This implies that, writing $E_{F^\sep}'=E\times_{\Spec(F)}\Spec(F^\sep)$, the CM elliptic curves $E_{F^\sep}'$ and $\widetilde{\sigma}^*(E_{F^\sep})$ are isomorphic. As $\widetilde{\sigma}|_{F}=\sigma$ this implies that $E'$ and $\sigma^*(E')$ are locally isomorphic. But $E$ and $E'$ are also isogenous and so by (\ref{coro:isom-iso-isom}) they are isomorphic.

Now fix an integral ideal $\f$ which separates units and is also invertible on $\Spec(F)$. As $[\widetilde{\sigma}]_{F, \f}=[\rho_\f(\widetilde{\sigma})^{-1}]$, and as $\f$ separates units, there is a unique isomorphism \[r_{\widetilde{\sigma}}: E'\isomto \sigma^*(E')\] which on $S$-valued points of the $\f$-torsion is the map \begin{equation} \label{eqn:unique-r}E'[\f](S)\stackrel{\rho_\f(\sigma)^{-1}\cdot E'[\f](\widetilde{\sigma})}{\longrightarrow} E'[\f](\sigma_!(S))=\sigma^*(E)[\f](S)\end{equation} where we view $\widetilde{\sigma}$ as a $\Spec(F')$-morphism \[\widetilde{\sigma}: \sigma_!(S)\to S.\]
 
If $\tau\in G(F'/F)$ and $\widetilde{\tau}\in G(F^\sep/F)$ satisfies $\widetilde{\tau}|_{F'}=\tau$ then the defining property (\ref{eqn:unique-r}) of the isomorphism \[r_{\widetilde{\sigma}\widetilde{\tau}}: E'\isomto (\tau\circ \sigma)^*(E')\] is also satisfied by \[\sigma^*(r_{\widetilde{\tau}})\circ r_{\widetilde{\sigma}}: E'\isomto (\tau\circ \sigma)^*(E')\] and so we get \begin{equation}\label{eqn:cocycle-condition} r_{\widetilde{\sigma}\widetilde{\tau}}=\sigma^*(r_{\widetilde{\tau}})\circ r_{\widetilde{\sigma}}.\end{equation} Moreover, if $\widetilde{\sigma}\in G(F^\sep/F')\subset G(F^\sep/F)$ then $r_{\widetilde{\sigma}}$ induces on the $S$-points of the $\f$-torsion the map \[E'[\f](S)\stackrel{\rho_\f(\widetilde{\sigma})\cdot E'[\f](\widetilde{\sigma})}{\longrightarrow} E[\f](S).\] However, by definition \[E'[\f](\widetilde{\sigma})= \rho_{E'/F', \f}(\widetilde{\sigma})\] so that $r_{\widetilde{\sigma}}$ induces on the $S$-valued points of the $\f$-torsion the map \[\rho_{E'/F', \f}(\widetilde{\sigma})^{-1}E[\f](\widetilde{\sigma})=\rho_{E'/F', \f}(\widetilde{\sigma})^{-1}\rho_{E'/F', \f}(\widetilde{\sigma})=\id_{E[\f](S)}.\] The uniqueness of $r_{\widetilde{\sigma}}$ now shows that $r_{\widetilde{\sigma}}=\id_{E'}$.

This combined with the relation (\ref{eqn:cocycle-condition}) shows that for all $\widetilde{\sigma}\in G(F^\sep/F)$ the isomorphism $r_{\widetilde{\sigma}}=r_\sigma$ depends only on $\sigma=\widetilde{\sigma}|_F$, has $r_{\id_{F'}}=\id_{E'}$ and satisfies \[r_{\sigma\tau}=\sigma^*(r_\tau)\circ r_\sigma.\] In other words, we have Galois descent data on $E'\to \Spec(F')$ relative to $\Spec(F')\to \Spec(F)$ and by construction the descended CM elliptic curve $E/\Spec(F)$ has $\rho_{E/F}=\rho$.
\end{proof}

\begin{prop}\label{prop:compute-the-homomorphisms-h}
\begin{enumerate}[label=\textup{(\roman*)}]
\item When $F=\F_\p$, in the notation of \textup{(\ref{subsec:ray-class-fields})}, we have for all $n\in \widehat{\Z}$: \[[\Fr^{N\p^n}]_{\F_\p} = \lim_{(\f, \p)=O_K}[\p]_\f^{-n}\in \lim_{(\f, \p)=O_K}CL_{O_K}^{(\f)}.\]
\item When $F=K_\p$, we have for all $\sigma\in G(K_\p^\sep/K)$: \[\sigma|_{K^\infty}=\theta_K([\sigma]_{K_\p})\] where the restriction $|_{K^\infty}$ is along any $K$-linear embedding $K^\infty\to K_\p^\sep$.
\item When $F=K$ we have for all $\sigma\in G(K^\sep/K)$: \[\sigma|_{K^\infty}=\theta_K([\sigma]_K).\]
\end{enumerate}
\end{prop}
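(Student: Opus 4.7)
The plan is to prove the three parts in order, with (i) a direct Frobenius calculation, (ii) the technical heart combining Lubin-Tate theory with the CM Serre-Tate theorem, and (iii) following from (ii) by density and global class field theory.

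For (i), fix a CM elliptic curve $E$ over $\Spec(\F_\p^\sep)$ and apply (\ref{coro:tensor-p-equals-frobenius}) to obtain the canonical isomorphism $\nu_{\p^n}: E \otimes_{O_K} \p^{-n} \isomto \Fr^{N\p^n *}(E)$ characterised by $\nu_{\p^n} \circ i_{\p^n} = \Fr^{N\p^n}_{E/S}$. Restricted to the $\f$-torsion for $\f$ prime to $\p$, this identity forces the induced level-$\f$ structure $\lambda_{\Fr^{N\p^n}}: \p^{-n} \to O_K/\f$ in the sense of (\ref{subsec:def-morphism-h}) to equal the canonical one, since $i_{\p^n}|_{E[\f]}$ is the map $E[\f] \to E[\f] \otimes \p^{-n}$ induced from the inclusion $O_K \hookrightarrow \p^{-n}$. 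Hence $[\Fr^{N\p^n}]_{\F_\p, \f} = [\p^{-n}]_\f = [\p]_\f^{-n}$, and continuity extends the formula to $n \in \widehat{\Z}$.

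For (ii), by continuity of both sides it suffices to treat $\sigma \in W(K_\p^\sep/K_\p)$ with $n := v_{K_\p}(\sigma) \geq 0$. Using (\ref{lemm:cm-over-separably-closed-fields}), fix a CM elliptic curve $E$ over $\Spec(O_{K^\sep})$ and restrict to $\Spec(O_{K_\p^\sep})$; by (\ref{prop:p-torsion-classification})(ii) the $\p$-divisible group $F = E[\p^\infty]$ is Lubin-Tate over $\Spf(O_{K_\p^\sep})$. Theorem (\ref{theo:main-theorem-lubin-tate})(i), applied with the local field $K_\p$, supplies a canonical isomorphism $\nu_\sigma: F \otimes \p^{-n} \isomto \sigma^*(F)$ whose reduction to the closed fibre is $\nu_{\p^n}$ of part (i). Applying the CM Serre-Tate theorem (\ref{cm-serre-tate}) level-by-level through the nilpotent thickenings $\Spec(\F_\p^\sep) \hookrightarrow \Spec(O_{K_\p^\sep}/\p^m)$ and passing to the limit via formal GAGA, $\nu_\sigma$ lifts uniquely to an isomorphism of CM elliptic curves $f_\sigma: E \otimes_{O_K} \p^{-n} \isomto \sigma^*(E)$. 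For $\f = \f_0 \p^r$ with $(\f_0, \p) = O_K$, I would then compute the induced level-$\f$ structure $\lambda_\sigma: \p^{-n} \to O_K/\f$ componentwise: at $\f_0$, applying part (i) to the étale piece $E[\f_0]$ yields the canonical level structure; at $\p^r$, Theorem (\ref{theo:main-theorem-lubin-tate})(ii) identifies it with multiplication by $\chi_{K_\p}(\sigma)$. Both components precisely describe the level-$\f$ structure on $\p^{-n} = (\chi_{K_\p}(\sigma))^{-1}$ arising from viewing $\chi_{K_\p}(\sigma) \in K_\p^\times \subset (A_{O_K} \otimes_{O_K} K)^\times$. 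Hence $[\sigma]_{K_\p, \f} = [\chi_{K_\p}(\sigma)]_\f$ for every $\f$; taking the limit and invoking (\ref{eqn:local-global-class-compatibility}) gives $\theta_K([\sigma]_{K_\p}) = \sigma|_{K^\infty}$.

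For (iii), both $\sigma \mapsto [\sigma]_K$ and $\sigma \mapsto \theta_K^{-1}(\sigma|_{K^\infty})$ are continuous homomorphisms $G(K^\sep/K) \to \CL_{O_K, \infty}$ factoring through $G(K^\ab/K) = G(K^\infty/K)$ (see (\ref{subsec:imag-quad-set-up})). Compatibility (\ref{eqn:h-restriction}) combined with part (ii) shows they agree on the image of every decomposition subgroup $G(K_\p^\sep/K_\p) \to G(K^\ab/K)$. Since the surjection $(A_{O_K} \otimes_{O_K} K)^\times \to G(K^\infty/K)$ of global class field theory is topologically generated by its restrictions to the local factors $K_\p^\times$, the union of these decomposition images is dense in $G(K^\infty/K)$, and the two continuous maps coincide. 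The main obstacle is the verification in (ii) that the unique Serre-Tate lift of $\nu_\sigma$ really is the isomorphism dictated by the $\mc{CL}_{O_K}$-torsor structure on $\mc{M}_\CM$, and then carefully tracking identifications so that the resulting level-$\f$ structure on $\p^{-n}$ is recognised as multiplication by $\chi_{K_\p}(\sigma)$ at the $\p$-part and as the canonical level structure away from $\p$; this translation between the formal/Lubin-Tate picture and the global CM picture is the only substantive technical content.
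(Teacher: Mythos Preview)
Your proposal is correct and follows essentially the same three-step architecture as the paper. The only noteworthy difference is in part (ii): to produce the isomorphism $E\otimes_{O_K}\p^{-n}\isomto\sigma^*(E)$ over $\Spec(O_{K_\p^\sep})$, the paper observes that the Isom sheaf $\underline{\Isom}_T^{O_K}(\mc{E}\otimes_{O_K}\p^{-n},\sigma^*(\mc{E}))$ is finite \'etale (by the argument of (\ref{lemm:local-isogenies})) and hence constant over the strictly henselian base $T=\Spec(O_{K_\p^\sep})$, so the special-fibre isomorphism $\nu_{\p^n}$ lifts uniquely; you instead lift the Lubin--Tate isomorphism $\nu_\sigma$ on $E[\p^\infty]$ through Serre--Tate (\ref{cm-serre-tate}) and formal GAGA. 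These are two packagings of the same input (both ultimately rest on (\ref{coro:cm-etale})), and the subsequent computation of the level-$\f$ structure---splitting into the \'etale prime-to-$\p$ part via (i) and the $\p$-power part via (\ref{theo:main-theorem-lubin-tate})(ii)---is identical to the paper's.
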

\begin{proof} (i) As $G(\F_\p^\sep/\F_\p)$ is topologically generated by $\Fr^{N\p}$, by continuity it is enough to show that $[\Fr^{N\p}]_{\F_\p, \f}=[\p]_\f^{-1}$ for all $\f$ prime to $\p$. Write $S=\Spec(\F_\p^\sep)$, let $E/S$ be a CM elliptic curve and consider the isomorphism \[\nu_\p: E\otimes_{O_K} \p^{-1}\isomto \Fr^{N\p*}(E)\] of (\ref{coro:tensor-p-equals-frobenius}). By the definition of $\nu_\p$, the composition \[E \stackrel{i_\p}{\to} E\otimes_{O_K}\p^{-1}\isomto \Fr^{N\p^*}(E)\] is equal to the $N\p$-power relative Frobenius of $E$ which induces the map \[E[\f](S)\stackrel{E[\f](\Fr^{N\p})}{\longrightarrow} E[\f](\Fr^{N\p}_!(S))\] on the $S$-points of the $\f$-torsion. Therefore, the map $\nu_\p$ induces on the $S$-valued points of the $\f$-torsion the map \[E[\f](S)\otimes_{O_K}\p^{-1}\stackrel{E[\f](\Fr^{N\p})\otimes f}{\longrightarrow} E[\f](\Fr_{!}^{N\p}(S))\] where $f$ is level-$\f$ structure defined by (\ref{eqn:can-level-ideal}). Hence \[[\Fr^{N\p}]_{\F_\p, \f}=[\p^{-1}]_\f=[\p]_\f^{-1}.\]

(ii) Write $T=\Spec(O_{K_\p^\sep})$, $T_\p=\Spec(\F_\p^\sep)\subset T$, let $\mc{E}/T$ be a CM elliptic curve, write $\mc{E}_\p=\mc{E}\times_T T_\p$. By continuity it is enough to prove the claim for $\sigma\in W(K^\sep/K)=v_K^{-1}(\Z)$ and by multiplicativity for $\sigma$ with $v_K(\sigma)=n\geq 0$. Let $\sigma$ be such an element.

As $T=\Spec(O_{K^\sep_\p})$ admits no non-constant finite \'etale covers, the sheaf \[\underline{\Isom}_T^{O_K}(\mc{E}\otimes_{O_K}\p^{-1}, \sigma^*(\mc{E}))\] is finite and constant. Moreover, as $T$ is connected and $\sigma$ acts by $\Fr^{N\p^n}$ on the residue field $\F_\p^\sep$ of $O_{K_\p^\sep}$, there exists a unique isomorphism \[\nu_\sigma: \mc{E}\otimes_{O_K}\p^{-n}\isomto \sigma^*(\mc{E})\] lifting the isomorphism \[\nu_\p: \mc{E}_\p\otimes_{O_K}\p^{-n}\isomto \Fr^{N\p*}(\mc{E}_\p)\] of (\ref{coro:tensor-p-equals-frobenius-with-level}). We now compute the action of $\nu_\sigma$ on the $T$-points of the $\f$-torsion of $\mc{E}/T$.

First let $\f$ be prime to $\p$. As $\mc{E}[\f]$ is finite \'etale, we have $\mc{E}[\f](T)=\mc{E}[\f](T_\p)$ compatibly with the actions of $\sigma$ and $\Fr^{N\p^n}$. It now follows from (i) that the map induced by $\nu_\sigma$ on the $S$-points of the $\f$-torsion is \[\mc{E}[\f](T)\otimes_{O_K}\p^{-n} \stackrel{\mc{E}[\f](\sigma)\otimes f}{\to} \mc{E}[\f](\sigma_!(T))=\sigma^*(\mc{E})[\f](T)\] where $f$ is level-$\f$ structure defined by (\ref{subsec:ray-class-fields}).

Now let $\f=\p^r$ for some $r\geq 0$. Setting $\mc{E}_r:=\mc{E}[\p^r]$ and \[T_\infty=\Spf(O_{K_\p^\sep})=\colim_{n}\Spec(O_{K_\p^\sep}/\p^{n+1}),\] we obtain the commutative diagram
\begin{equation}\label{dia:cm-to-lubin-action}
\begin{gathered} \xymatrix{\mc{E}_r(T)\otimes_{O_K}\p^{-n}\ar[r]\ar[d] & \sigma^*(\mc{E}_r)(T)\ar[d]\\
\mc{E}_r(T_\infty)\otimes_{O_K}\p^{-n}\ar[r] & \sigma^*(\mc{E}_r)(T_\infty).}
\end{gathered}
\end{equation}
whose columns are isomorphisms as $\mc{E}_r$ is finite locally free over $S$. Now consider $F=\mc{E}[\p^\infty]\times_T T_\infty$ and set $F_r=F[\p^r]$. Then $F$ is a Lubin--Tate $O_{K_\p}$-module over the $\p$-adic sheaf $T_\infty$ by (\ref{prop:p-torsion-classification}) and the map on the bottom row of (\ref{dia:cm-to-lubin-action}) is equal to \[\nu_\sigma: F_r(T_\infty)\otimes_{O_{K_\p}}\p^{-n}\isomto \sigma^*(F_r)(T_\infty)\] where $\nu_\sigma$ is the unique map in (i) of (\ref{theo:main-theorem-lubin-tate}). It follows from (iii) of (\ref{theo:main-theorem-lubin-tate}) that this map is given by \[F_r(\sigma)\otimes_{O_{K_\p} }\chi_{K_\p}(\sigma):F_r(T_\infty)\otimes_{O_{K_\p}}\p^{-n}\isomto F[\p^{r}](\sigma_!(T_\infty))=\sigma^*(F[\p^r])(T_\infty)\] where $\sigma|_{K_\p^\ab}=(\chi_{K_\p}(\sigma), K_\p^\ab/K_\p)$. Therefore the map in the top row of (\ref{dia:cm-to-lubin-action}) is given by $\mc{E}_r(\sigma)\otimes_{O_{K}}\chi_{K_\p}(\sigma)$.

Writing $S=\Spec(K^\sep_\p)$ and considering the generic fibre $E:=\mc{E}\times_{T}S$, and arbitrary $\f$, the computations above show that the map induced by $\nu_\sigma$ \[E[\f](S)\otimes_{O_K}\p^{-n}\isomto E[\f](\sigma_!(S))=\sigma^*(E[\p^r])(S)\] is given by $E(\sigma)\otimes \chi_{K_\p}(\sigma)$ where we view $\chi_{K_\p}(\sigma)$ as \[\p^{-n} \stackrel{\chi_{K_\p}(\sigma)}{\longrightarrow} A_{O_K}\to O_K/\f.\] Therefore \[[\sigma]_{K_\p, \f}=(\p^{-n},\p^{-n}\stackrel{\chi_{K_\p}(\sigma)}{\to}A_{O_K}\to O_K/\f)=[\chi_{K_\p}(\sigma)]_\f.\] Taking limits we find $[\sigma]_{K_\p}=[\chi_{K_\p}(\sigma)]$ and by (\ref{eqn:local-global-class-compatibility}) we get \[\sigma|_{K^\infty}=\theta_K([\sigma]_{K_\p})\] where the restriction is along any $K$-embedding $K^\infty\to K_\p^\sep$.

(iii) We see from (ii) that for all primes $\p$ and all embeddings $K^\sep\to K_\p^\sep$ and all $\sigma\in G(K_\p^\sep/K)$, writing $\tau=\sigma|_{K^\sep}$ \[\tau|_{K^\infty}=(\sigma|_{K^\sep})|_{K^\infty}=\sigma|_{K^\infty}=\theta_K([\sigma]_{K_\p})=\theta_K([\sigma|_{K^\sep}]_K)=\theta_K([\tau]_K).\] However, the sub-group of $G(K^\sep/K)$ generated by elements of the form $\tau=\sigma|_{K^\sep}$ (for varying primes $\p$ and embeddings $K^\sep\to K_\p^\sep$) is dense. It follows, by continuity and multiplicativity, that for all $\tau\in G(K^\sep/K)$ we have \[\tau|_{K^\infty}=\theta_K([\tau]_K).\]
\end{proof}

\subsection{}\label{subsec:reinterpret-character-idelic} The homomorphisms $[-]_F$ can be (trivially) reinterpreted id\`elically using the isomorphism (\ref{eqn:define-theta}) \[\CL_{O_K, \infty}\stackrel{h_K^{-1}}{\to} (A\otimes_{O_K} K)^\times/K^\times\] and we do so only to make clear the relationship with the map (\ref{eqn:local-rec}). Let us do so here for $h_K$ and so define $\chi_K=[-]_K\circ h_K^{-1}$.
\begin{enumerate}[label=(\roman*)]
\item For all $\sigma\in G(K^\sep/K)$ we have $\sigma|_{K^\infty}=(\chi_K(\sigma), K^\infty/K)$ (cf.\ (iv) of (\ref{theo:main-theorem-lubin-tate})).
\item For each prime $\p$, each $\sigma\in W(K_\p^\sep/K_\p)\subset G(K_\p^\sep/K)$ and each $K$-linear $K^\sep\to K_\p^\sep$ we have \[\chi_K(\sigma|_{K^\sep})=\chi_{K_\p}(\sigma)\in K_\p^\times\subset (A_{O_K}\otimes_{O_K}K)^\times/K^\times\] where $\chi_{K_\p}$ is the homomorphism of (\ref{eqn:local-rec}).
\item If $K\subset L\subset K^\sep$ is a finite extension of $K$ and $\rho: G(K^\sep/L)\to A_{O_K}^\times$ is a continuous character then there exists a CM elliptic curve $E/\Spec(L)$ with $\rho_{E/L}=\rho$ if and only if the following diagram commutes \[\xymatrix{G(K^\sep/L)\ar[r]^-{\rho_{E/L}^{-1}}\ar[dr]_{\chi_{K}|_{G(K^\sep/L)}}& A_{O_K}^\times\ar[d]\\
& (A_{O_K}\otimes_{O_K}K)^\times/K^\times}\] where the right vertical arrow is the obvious map.
\end{enumerate}

\begin{rema}\label{rema:hecke-and-h} Let $L/K$ be a finite Galois extension and let $E/L$ be a CM elliptic curve. We now give a simple description of the algebraic Hecke character of $E/L$, which is a certain continuous homomorphism \[\psi_{E/L}: I_L\to K^\times\] satisfying $\psi_{E/L}|_{L^\times}=N_{L/K}$ (for the definition of $\psi_{E/L}$ see \S 7 of \cite{SerreTate68}).

View the homomorphism \[\rho_{E/L}: G(K^\sep/L)\to A_{O_K}^\times\] as a homomorphism \[\rho_{E/L} : G(L^\ab/L)=G(L^\sep/L)^\ab\to A_{O_K}^\times\] and for $s\in I_L$ write $s_{\mathrm{fin}}$ for the element of $(A_{O_K}\otimes_{O_K}L)^\times$ obtained by forgetting the components of $s$ at the places of $L$ lying over $\infty$. Then we claim that algebraic Hecke character $\psi_{E/L}$ associated to $E/L$ is given by \begin{equation}\psi_{E/L}: I_L\to (A_{O_K}\otimes_{O_K}K)^\times: s\mto \rho_{E/L}((s^{-1}, L^\ab/L))\cdot N_{L/K}(s_{\mathrm{fin}}).\label{def:hecke-char}\end{equation} We will not prove this but let us show that the map $\psi_{E/L}$ defined by (\ref{def:hecke-char}) satisfies $\psi_{E/L}|_{L^\times}=N_{L/K}$ and $\psi_{E/L}(I_L)\subset K^\times\subset (A_{O_K}\otimes_{O_K}K)^\times$.

For $a\in L^\times\subset I_L$ we have \[\rho_{E/L}(a)=\rho_{E/L}((a^{-1}, L^\ab/L))\cdot N_{L/K}(a_\mathrm{fin})=1\cdot N_{L/K}(a)\] so that $\rho_{E/L}|_{L^\times}=N_{L/K}$. Now computing the composition \[I_L\stackrel{\rho_{E/L}}{\to} (A\otimes_{O_K}K)^\times\stackrel{[-]}{\to} \CL_{O_K, \infty}\] we find: \begin{eqnarray*} [\rho_{E/L}((s^{-1}, L^\ab/L))\cdot N_{L/K}(s_\mathrm{fin})] &=&[\rho_{E/L}((s^{-1}, L^\ab/L))]\cdot [N_{L/K}(s_\mathrm{fin})]\\
&=& [(s^{-1}, L^\ab/L)]_L\cdot [N_{L/K}(s_{\mathrm{fin}})]\\
&=&[(N_{L/K}(s^{-1}), K^\ab/K)]_K\cdot [N_{L/K}(s_{\mathrm{fin}})]\\
&=&[(N_{L/K}(s^{-1}_\mathrm{fin}), K^\infty/K)]_K\cdot [N_{L/K}(s_{\mathrm{fin}})]\\
&=&[N_{L/K}(s_\mathrm{\fin}^{-1})]_K\cdot [N_{L/K}(s_\mathrm{fin})]\\
&=&1.
\end{eqnarray*}
Therefore, $\rho_{E/L}(I_L)\subset \ker([-]: (A\otimes_{O_K}K)^\times\to \CL_{O_K, \infty})=K^\times$.
\end{rema}

\begin{prop}\label{coro:image-of-inertia-group} Let $L/K$ be a finite Galois extension, $E/L$ a \textup{CM} elliptic curve, $v$ be a non-archimedian place of $K^\sep$ lying over the primes $\P$ of $O_L$ and $\p$ of $O_K$, and let $I_v\subset G(K^\sep/L)$ be the inertia group at $v$. Then $\rho_{E/L}(I_v)\subset O_K^\times \cdot O_{K_\p}^\times\subset A_{O_K}^\times$ and $E/L$ has good reduction at $\P$ if and only if $\rho_{E/L}(I_v)\subset O_{K_\p}^\times$.
\end{prop}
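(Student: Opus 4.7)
The plan is to derive both halves of the proposition from the global reciprocity identity (iii) of (\ref{prop:compute-the-homomorphisms-h}) together with the criterion of good reduction (\ref{theo:criterion-of-good-reduction}) and the étaleness statement (i) of (\ref{prop:p-torsion-classification}). First I would attack the containment $\rho_{E/L}(I_v) \subset O_K^\times \cdot O_{K_\p}^\times$. By (\ref{eqn:h-restriction}) one has $[-]_L = [-]_K|_{G(K^\sep/L)}$, and by (iii) of (\ref{prop:compute-the-homomorphisms-h}) together with (\ref{eqn:rel-for-h}) one has $\theta_K([\rho_{E/L}(\sigma)^{-1}]) = \theta_K([\sigma]_L) = \sigma|_{K^\infty}$ for all $\sigma \in G(K^\sep/L)$. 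Since the restriction $G(K^\sep/K) \to G(K^\infty/K)$ sends $I_v$ into the inertia subgroup $I_\p \subset G(K^\infty/K)$, and since local--global compatibility (\ref{eqn:local-global-class-compatibility}) identifies $\theta_K^{-1}(I_\p)$ with the image of $O_{K_\p}^\times$ under $[-]$, one obtains $[\rho_{E/L}(\sigma)^{-1}] = [b]$ for some $b \in O_{K_\p}^\times$.

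The exact sequence of (\ref{prop:ideles-and-classes}) then provides a unique $a \in K^\times$ with $\rho_{E/L}(\sigma)^{-1} = a \cdot b$. The next step is to show that $a \in O_K^\times$: since the left hand side lies in $A_{O_K}^\times$, inspection of the $\q$-components for $\q \neq \p$ (where $b$ has trivial component) gives $a \in O_{K_\q}^\times$, and inspection at $\p$ (where $b \in O_{K_\p}^\times$) gives $a \in O_{K_\p}^\times$; so $a$ is a unit at every finite place of $K$ and is therefore a global unit. Inverting yields $\rho_{E/L}(\sigma) = a^{-1} b^{-1} \in O_K^\times \cdot O_{K_\p}^\times$ as required.

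For the equivalence with good reduction, if $E/L$ has good reduction at $\P$, then the Néron model $\mc{E}/\Spec(O_{L,\P})$ is a CM elliptic curve, and by (i) of (\ref{prop:p-torsion-classification}) the group scheme $\mc{E}[\q^\infty]$ is finite étale over $\Spec(O_{L,\P})$ for every prime $\q \neq \p$; consequently $\mc{E}[\q^\infty]$ valued in the henselian valuation ring of $v$ equals $\mc{E}[\q^\infty]$ valued in its separably closed residue field, on which $I_v$ acts trivially. This kills the $\q$-component of $\rho_{E/L}|_{I_v}$ for all $\q \neq \p$, giving $\rho_{E/L}(I_v) \subset O_{K_\p}^\times$. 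Conversely, assuming the latter, I would pick any rational prime $l$ distinct from the residue characteristic $p$ of $\p$; every prime $\q \mid l$ of $O_K$ avoids $\p$, so $I_v$ acts trivially on each $E[\q^\infty]$ and hence on $E[l^\infty] = \bigoplus_{\q \mid l} E[\q^\infty]$, and (\ref{theo:criterion-of-good-reduction}) delivers good reduction. The main obstacle is the clean identification $\theta_K^{-1}(I_\p) = [O_{K_\p}^\times]$, which is where local class field theory enters via (\ref{eqn:local-global-class-compatibility}); once this is granted, the rest is bookkeeping with the exact sequence and component-wise inspection of idèles.
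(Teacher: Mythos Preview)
Your argument is correct and the first half follows essentially the paper's route: both identify $[\rho_{E/L}(I_v)]$ with a subset of $[O_{K_\p}^\times]\subset \CL_{O_K,\infty}$ via the reciprocity relation (iii) of (\ref{prop:compute-the-homomorphisms-h}), and then use that $\ker([-])=O_K^\times$ to deduce the containment $\rho_{E/L}(I_v)\subset O_K^\times\cdot O_{K_\p}^\times$. The paper phrases the identification through the norm-compatibility $[(O_{L_\P}^\times, L^\ab/L)]_L=[(N_{L/K}(O_{L_\P}^\times), K^\ab/K)]_K$, whereas you pass directly through the restriction $I_v\mapsto I_\p\subset G(K^\infty/K)$; these are the same computation.

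For the good-reduction equivalence there is a genuine, if minor, difference. The paper argues \emph{both} implications at once by projecting $\rho_{E/L}(I_v)\subset O_K^\times\cdot O_{K_\p}^\times$ to $(O_K\otimes_\Z\Z_\ell)^\times$ and invoking (\ref{theo:criterion-of-good-reduction}) as an ``if and only if'' (which the cited Serre--Tate theorem indeed is, though the statement recorded in (\ref{theo:criterion-of-good-reduction}) gives only one direction). You instead prove the forward implication directly from \'etaleness of $\mc{E}[\q^\infty]$ for $\q\neq\p$ via (i) of (\ref{prop:p-torsion-classification}), and use (\ref{theo:criterion-of-good-reduction}) only for the converse. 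Your route has the advantage of staying strictly within the paper's stated form of the criterion, at the cost of a short extra argument with the N\'eron model; the paper's route is more symmetric but tacitly relies on the full N\'eron--Ogg--Shafarevich biconditional.
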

\begin{proof} The homomorphisms $\rho_{E/L}$ and $[-]_L$ both factor through $G(L^\ab/L)$ and we will denote the resulting homomorphisms by the same symbol. The image $[I_v]_L$ of the inertia group at $v$ is equal to $[(O_{L_\P}^\times, L^\ab/L)]_L$ where $(-, L^\ab/L): C_L\to G(L^\ab/L)$ is the global reciprocity map (\ref{prop:global-reciprocity}) and where $O_{L_\P}^\times\subset I_{L}/L^\times=C_L$.

As $[-]_L=[-]_K|_{G(K^\sep/L)}$, the compatibility of the reciprocity maps with the norm (\ref{subsec:global-reciprocity-properties}) shows that \[[(O_{L_\P}^\times, L^\ab/L)]_L=[(N_{L/K}(O_{L_\P}^\times), K^\ab/K)]_K\subset [(O_{K_\p}^\times, K^\ab/K)]_K.\] By (iii) of (\ref{prop:compute-the-homomorphisms-h}) we have \[[(O_{K_\p}^\times, K^\ab/K)]_K=[(O_{K_\p}^\times, K^\infty/K)]_K = [O_{K_\p}^\times]\subset \CL_{O_K, \infty}.\] This combined with the relation $[\rho_{E/L}^{-1}]=[-]_L$ and the fact that \[\ker([-]:A_{O_K}^\times\to \CL_{O_K, \infty})=O_K^\times\subset A_{O_K}^\times\] shows that $\rho_{E/L}(I_v)\subset O_K^\times\cdot O_{K_\p}^\times$.

Now let $\ell$ be a rational prime such that $\ell\cdot O_K$ is prime to $\p$. Then the action of $I_v$ on $E[\ell^\infty](\Spec(L^\sep))$ factors through the image of $\rho_{E/L}(I_v)$ under the projection $A_{O_K}^\times\to (O_K\otimes_{\Z}\Z_\ell)^\times$ and is trivial if and only if this image is trivial. As $\rho_{E/L}(I_v)\subset O_K^\times\cdot O_{K_\p}^\times$ the image of this homomorphism is equal to $\rho(I_v)\cap O_K^\times\subset (O_K\otimes_{\Z}\Z_\ell)^\times$ which is trivial if and only if $\rho_{E/L}(I_v)\subset O_{K_\p}^\times$. We may now apply (\ref{theo:criterion-of-good-reduction}) to see that $E/L$ has good reduction at $v$ if and only if $\rho_{E/L}(I_v)\subset O_{K_\p}^\times$.
\end{proof}

\begin{prop} Let $L$ be a finite Galois extension of $K$ and let $E/L$ a \textup{CM} elliptic curve. If $E[\f]$ is a constant scheme over $\Spec(L)$ for some $\f$ which separates units then $E/L$ has good reduction everywhere.
\end{prop}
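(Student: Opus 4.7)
The plan is to apply Proposition~\ref{coro:image-of-inertia-group}: for a non-archimedian place $v$ of $K^\sep$ lying over a prime $\P$ of $O_L$ and a prime $\p$ of $O_K$, the curve $E/L$ has good reduction at $\P$ if and only if $\rho_{E/L}(I_v) \subset O_{K_\p}^\times$. It therefore suffices to establish this containment for every such $\p$ and every $\P \mid \p$.

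The hypothesis that $E[\f]$ is constant over $\Spec(L)$ is equivalent to the triviality of the character $\rho_{E/L, \f}\colon G(L^\sep/L) \to (O_K/\f)^\times$ defining the Galois action on the $\f$-torsion, i.e., the reduction of $\rho_{E/L}$ modulo $\f$ is trivial. Thus $\rho_{E/L}$ has image contained in the open subgroup $A_{O_K}^{\times, \f} = \ker(A_{O_K}^\times \to (O_K/\f)^\times)$. Combined with the containment $\rho_{E/L}(I_v) \subset O_K^\times \cdot O_{K_\p}^\times$ from Proposition~\ref{coro:image-of-inertia-group}, this gives
\[
\rho_{E/L}(I_v) \;\subset\; (O_K^\times \cdot O_{K_\p}^\times) \cap A_{O_K}^{\times, \f}.
\]

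It remains to verify that this intersection lies in $O_{K_\p}^\times$. Any element of the intersection can be written uniquely as $x = u \cdot w$ with $u \in O_K^\times$ (diagonally embedded) and $w \in O_{K_\p}^\times$ (supported at $\p$), because $O_K^\times \cap O_{K_\p}^\times = \{1\}$ inside $A_{O_K}^\times$; for any prime $\mathfrak{q} \neq \p$ the $\mathfrak{q}$-component of $x$ is simply $u$, so the congruences defining $A_{O_K}^{\times, \f}$ at primes $\mathfrak{q} \mid \f$ distinct from $\p$ force $u$ to be trivial modulo the prime-to-$\p$ part of $\f$. Combining this with the congruence at $\p$ itself (relevant only when $\p \mid \f$, where the $\p$-component of $x$ is $uw$) and invoking the hypothesis that $\f$ separates units pins down $u = 1$, whence $x = w \in O_{K_\p}^\times$; running the argument over all $\p$ then yields good reduction at every prime of $O_L$.

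The main obstacle lies in the final step when $\p$ divides $\f$: the congruence at $\p$ constrains only the product $uw$ in $O_{K_\p}^\times$ rather than $u$ alone, so extracting the equality $u = 1$ requires combining the congruences on $u$ away from $\p$ with the full strength of the unit-separation hypothesis on $\f$, rather than merely using injectivity of $O_K^\times \to (O_K/\mathfrak{m})^\times$ for the prime-to-$\p$ part $\mathfrak{m}$ of $\f$.
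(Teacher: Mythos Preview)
Your approach matches the paper's line for line: use Proposition~\ref{coro:image-of-inertia-group} to get $\rho_{E/L}(I_v)\subset O_K^\times\cdot O_{K_\p}^\times$, use constancy of $E[\f]$ to get $\rho_{E/L}(G(L^\sep/L))\subset A_{O_K}^{\times,\f}$, intersect, and then claim the intersection lies in $O_{K_\p}^\times$ once $O_K^{\times,\f}=\{1\}$. The paper records this last step as a single containment
\[
(O_K^\times\cdot O_{K_\p}^\times)\cap A_{O_K}^{\times,\f}\subset O_K^{\times,\f}\cdot O_{K_\p}^\times;
\]
you unpack it and, to your credit, isolate the case $\p\mid\f$ as the one needing care.

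Unfortunately your final paragraph does not actually close that case, and the purely group-theoretic containment you are aiming for is \emph{false} there. Take $\f=\p^n$ a pure prime power that separates units (for instance $K=\Q(i)$, $\p=(1+i)$, $\f=\p^3$). Then there are no primes $\mathfrak q\neq\p$ dividing $\f$, so membership in $A_{O_K}^{\times,\f}$ imposes the single condition $uw\equiv 1\bmod\p^n$ at $\p$ and says nothing whatsoever about $u$; taking $u=-1$ and any $w\equiv -1\bmod\p^n$ produces an element of $(O_K^\times\cdot O_{K_\p}^\times)\cap A_{O_K}^{\times,\f}$ whose component at every $\mathfrak q\neq\p$ is $-1$, hence not in $O_{K_\p}^\times$. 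There are no ``congruences on $u$ away from $\p$'' to combine with anything, and invoking injectivity of $O_K^\times\to(O_K/\f)^\times$ is of no use since you never obtain $u\equiv 1\bmod\f$. Thus the argument as written---both yours and the paper's one-line version---is complete only when $\p\nmid\f$; when $\p\mid\f$ one needs additional input beyond the two containments $\rho_{E/L}(I_v)\subset O_K^\times\cdot O_{K_\p}^\times$ and $\rho_{E/L}\subset A_{O_K}^{\times,\f}$.
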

\begin{proof} If $E[\f]$ is constant then the action of $G(L^\sep/L)$ on $E[\f](\Spec(L^\sep))$ is trivial and therefore $\rho_{E/L}$ takes values in $A_{O_K}^{\times, \f}\subset A_{O_K}^\times$. By (\ref{coro:image-of-inertia-group}), for each finite place $v$ of $L^\sep$ lying over the prime $\p$ of $O_K$, we have $\rho_{E/L}(I_v)\subset O_K^\times\cdot O_{K_\p^\times}\subset A_{O_K}^\times$. Combining these, we have \[\rho_{E/L}(I_v)\subset (O_K^\times\cdot O_{K_\p^\times})\cap A_{O_K}^{\times, \f}\subset O_{K}^{\times, \f}\cdot O_{K_\p}^\times.\] As $\f$ separates units $O_K^{\times, \f}=\{1\}$ and so $\rho_{E/L}(I_v)\subset O_{K_\p}^\times$. It now follows from (\ref{coro:image-of-inertia-group}) that $E/L$ has good reduction at $v$.
\end{proof}

\begin{exem} Let $\p$ be a prime of $O_K$ and let $\F_{\p^n}$ be the unique extension of $\F_\p$ of degree $n$. As $G(\F_{\p}^\sep/\F_{\p^n})\isomto \widehat{\Z}$ is topologically generated by the $N\p^n$-power Frobenius map, we see from (i) of (\ref{prop:compute-the-homomorphisms-h}) to give a continuous homomorphism \[\rho: G(\F_{\p}^\sep/\F_{\p^n})\to \lim_{(\p, \f)=O_K}(O_K/\f)^\times\] such that $[-]_{\F_{\p^n}}=[\rho^{-1}]$ is the same as giving a generator $\rho(\Fr^{N\p^n})=\pi_n\in O_K$ of the ideal $\p^n$. The corresponding isogeny class of CM elliptic curves $E/\Spec(\F_{\p^n})$ are those with the property that the endomorphism \[\pi_n: E\to E\] is equal to the $N\p^n$-power Frobenius.
\end{exem}

\begin{exem} Let $\f$ be an ideal that separates units and recall the isomorphism (\ref{ray-class-isom}): \[A_{O_K}^{\times, \f}\isomto G(K^\infty/K(\f)).\] Now define \[\rho^{-1}: G(K^\sep/K(\f))\to A_{O_K}^\times\] to be the composition \[G(K^\sep/K(\f))\to G(K^\infty/K(\f))\isomto A_{O_K}^{\times, \f}\to A_{O_K}^\times.\] Then $[\rho^{-1}(-)]=[-]_{K(\f)}$ and $\rho$ corresponds to an isogeny class of CM elliptic curves $E/\Spec(K(\f))$. These curves are distinguished by the fact that their $\f$-torsion $E[\f]$ is constant and their study is the topic of the next section.
\end{exem}

\section{Moduli and level structures} In this section we define level-$\f$ structures for CM elliptic curves (for each integral ideal $\f$) and we show that the corresponding moduli stack $\mc{M}_\CM^{(\f)}$ admits a natural action of $\mc{CL}_{O_K}^{(\f)}$ under which it becomes a torsor (\ref{prop:level-torsors}). This induces an action of $CL_{O_K}^{(\f)}$ on the coarse sheaf $M^{(\f)}_\CM$ of $\mc{M}_\CM^{(\f)}$ and using this we prove that $M_\CM^{(\f)}$ is isomorphic to $\Spec(O_{K(\f)}[\f^{-1}])$ compatibly with the isomorphism \[\theta_{K, \f}: \CL_{O_K}^{(\f)}\isomto G(K(\f)/K)\] of (\ref{eqn:define-theta-finite}).

Most of the results contained in this section are probably more or less known, however they do not seem to have appeared in the literature and so we are happy to present a detailed account.

\subsection{}\label{subsec:level-f-on-cm-ell-curves} Let $E/S$ be a CM elliptic curve and let $\f$ be an integral ideal of $O_K$. A level-$\f$ structure on $E/S$ is an isomorphism of $\underline{O_K}_{S}$-modules $\beta: E[\f]\isomto \underline{O_K/\f}_S$. An $\f$-isomorphism $(E/S, \beta)\to (E'/S, \beta')$ between CM elliptic curves with level-$\f$ structures is an isomorphism $f: E\to E'$ such that $\beta'\circ f|_{E[\f]}=\beta$. We write $\mc{M}_{\CM}^{(\f)}$ for the moduli stack over $\Sh_{O_K}$ whose fibre over an sheaf $S$ is the category of CM elliptic curves with level-$\f$ structures together with their $\f$-isomorphisms and if $\f=O_K$ we identify $\mc{M}_\CM$ with $\mc{M}_\CM^{(O_K)}$. If $(E/S, \beta)$ is a CM elliptic curve with level-$\f$ structure we shall often just denote it by $E/S$ when the level-$\f$ structure is clear (or at least does not need to be explicitly mentioned). We list the following (usual) constructions and properties of CM elliptic curves equipped with level-$\f$ structures.

\begin{rema}\label{rema:props-of-level-f-cm}
\begin{enumerate}[label=\textup{(\roman*)}]
\item If $(E, \beta)$ and $(E', \beta')$ are a pair of CM elliptic curves equipped with level-$\f$ structures then we equip the rank one $O_K$-local system $\underline{\Hom}_{S}^{O_K}(E, E')$ with the level-$\f$ structure \[\underline{\Hom}_{S}^{O_K}(E, E')\to \underline{\Hom}_S^{O_K}(E[\f], E'[\f])\isomto \underline{\Hom}_S^{O_K}(\underline{O_K/\f}_S, \underline{O_K/\f}_S)=\underline{O_K/\f}_S\] where the central isomorphism is \[f\mto \alpha\circ f\circ \alpha'^{-1}.\]
\item If $(E, \beta)$ and $(\mc{L}, \alpha)$ are a CM elliptic curve and rank one $O_K$-local system over $S$ with level-$\f$ structures then we equip $E\otimes_{O_K}\mc{L}$ with the level-$\f$ structure $\beta\otimes_{O_K}\alpha$.
\item The sheaf of $\f$-automorphisms $\underline{\Aut}_S^{(\f)}(E)$ of a CM elliptic curve with level-$\f$ structure $(E/S, \beta)$ is equal  to $\underline{O_K^{\times, \f}}$. In particular, it is trivial if $\f$ separates units.
\item The sheaf of $\f$-isomorphisms $\underline{\Isom}_S^{(\f)}(E, E')$ between two CM elliptic curves over $S$ equipped with level-$\f$ structures is finite and \'etale over $S$ and $E$ and $E'$ are locally isomorphic if and only if $\underline{\Isom}_S^{(\f)}(E, E')$ is an $\underline{O_K^{\times, \f}}_S$-torsor.
\item Given a CM elliptic curve $E/S$ with level-$\f$ structure $(E/S, \beta)\in \mc{M}_\CM^{(\f)}(S)$ the existence of the isomorphism $\beta: E[\f]\isomto \underline{O_K/\f}_S$ implies that $E[\f]$ is \'etale over $S$. It follows from (\ref{prop:p-torsion-classification}) that $\f$ is invertible on $S$ and that morphism $S\to \Spec(O_K)$ factors through $\Spec(O_K[\f^{-1}])$. In other words, the structure map $\mc{M}_\CM^{(\f)}\to \Spec(O_K)$ factors through $\Spec(O_K[\f^{-1}])\to \Spec(O_K)$.
\item Let $\mc{E}\to \Spec(O_{K^\sep})$ be a CM elliptic curve (such a curve exists by (\ref{lemm:cm-over-separably-closed-fields})). Then \[E[\f]\times_{\Spec(O_{K^\sep})}\Spec(O_{K^\sep}[\f^{-1}])\] is \'etale and constant over $\Spec(O_{K^\sep}[\f^{-1}])$ and so admits a level-$\f$ structure. Choosing such a structure one obtains a map $\Spec(O_{K^\sep}[\f^{-1}])\to \mc{M}_\CM^{(\f)}$. As $\Spec(O_{K^\sep}[\f^{-1}])\to \Spec(O_K[\f^{-1}])$ is a cover, this shows that the structure map from (the coarse sheaf of) $\mc{M}_{\CM}^{(\f)}$ to $\Spec(O_{K}[\f^{-1}])$ is an epimorphism.
\item If $\a$ is a fractional ideal prime to $\f$ then $\a$ has a natural level-$\f$ structure, coming from $\a\to \a\otimes_{O_K}O_K/\f=O_K/\f$, and we just write $-\otimes_{O_K}\a$ for the corresponding auto-equivalence of $\mc{M}_\CM^{(\f)}$.
\end{enumerate}
\end{rema}

\subsection{}\label{subsec:level-f-structure-homs} Using (ii) of (\ref{rema:props-of-level-f-cm}) we can define a functor \[\mc{M}_\CM^{(\f)}\times \mc{CL}_{O_K}^{(\f)}\to \mc{M}_\CM^{(\f)}: (E/S, \mc{L}/S)\to E\otimes_{O_K}\mc{L}/S\] (the level-$\f$ structures are understood).

\begin{prop}\label{prop:eval-compatible-with-level} Let $E, E'$ be a pair of \textup{CM} elliptic curves with level-$\f$ structures over $S$. Then the evaluation map \[E\otimes_{O_K}\underline{\Hom}_{S}^{O_K}(E, E')\isomto E'\] is an $\f$-isomorphism.
\end{prop}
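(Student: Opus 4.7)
The plan is to reduce this to an unwinding of the definitions, since by Proposition \ref{prop:tensor-eval-isom} the evaluation map is already known to be an isomorphism of CM elliptic curves over $S$. What must be shown is that it respects the level-$\f$ structures, i.e.\ that the induced map on $\f$-torsion followed by the level-$\f$ structure $\beta'$ on $E'$ agrees with the level-$\f$ structure on $E\otimes_{O_K}\underline{\Hom}_S^{O_K}(E, E')$ defined by combining (i) and (ii) of (\ref{rema:props-of-level-f-cm}).

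To make this concrete, set $\mc{L}=\underline{\Hom}_S^{O_K}(E, E')$ and let $\alpha:\mc{L}\to \underline{O_K/\f}_S$ be the level-$\f$ structure on $\mc{L}$ from (i) of (\ref{rema:props-of-level-f-cm}). First I would identify the $\f$-torsion $(E\otimes_{O_K}\mc{L})[\f]$. Since tensoring with a rank one $O_K$-local system is exact and commutes with kernels, $(E\otimes_{O_K}\mc{L})[\f]=E[\f]\otimes_{O_K}\mc{L}$, and under this identification the level-$\f$ structure on $E\otimes_{O_K}\mc{L}$ is $\beta\otimes_{O_K}\alpha:E[\f]\otimes_{O_K}\mc{L}\to \underline{O_K/\f}_S\otimes_{O_K}\underline{O_K/\f}_S=\underline{O_K/\f}_S$.

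Next I would unwind the definition of $\alpha$: by construction, for each local section $\phi\in \mc{L}$, the value $\alpha(\phi)\in \underline{O_K/\f}_S$ is the unique element satisfying $\beta'\circ \phi|_{E[\f]}=\alpha(\phi)\cdot \beta$ in $\underline{\Hom}_S^{O_K}(E[\f], \underline{O_K/\f}_S)$. Now the two compositions to check are: on the one hand $e\otimes \phi\mapsto \beta(e)\cdot \alpha(\phi)$ (the level-$\f$ structure on the tensor product, composed with multiplication in $O_K/\f$); on the other, $e\otimes \phi\mapsto \phi(e)\mapsto \beta'(\phi(e))$ (evaluation followed by $\beta'$). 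The defining equation of $\alpha(\phi)$, applied to $e\in E[\f]$, gives exactly $\beta'(\phi(e))=\alpha(\phi)\cdot \beta(e)$, so the two agree.

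There is no real obstacle here; the whole content is the careful identification of the level-$\f$ structure on $\mc{L}$ with the scalar by which a homomorphism acts on $\f$-torsion through the chosen trivialisations $\beta, \beta'$, after which compatibility becomes a tautology. The only mild point to be careful about is that all identifications must be made locally on $S$ (so as to work with sections of the sheaves involved), which is harmless since the claim is a statement about equality of two maps of sheaves.
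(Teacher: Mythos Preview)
Your proof is correct and is exactly the unwinding that the paper's one-line proof (``This is immediate from the definitions'') leaves implicit. You have carefully identified the level-$\f$ structure on $\underline{\Hom}_S^{O_K}(E,E')$ as the scalar by which a homomorphism acts on $\f$-torsion through $\beta$ and $\beta'$, and then verified the tautological compatibility; there is nothing more to add.
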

\begin{proof} This is immediate from the definitions.
\end{proof}

\begin{prop}\label{coro:tensor-p-equals-frobenius-with-level} For each prime ideal $\p$ prime to $\f$ and each $n\geq 0$ there is a unique isomorphism of $\mc{M}_\CM^{(\f)}\times\Spec(\F_\p)$ auto-equivalences $\nu_{\p^n}: -\otimes_{O_K}\p^{-n}\isomto\Fr^{N\p^n*}(-)$ such that for all \textup{CM} elliptic curves $E$ over $\Spec(\F_\p)$-sheaves $S$ the diagram \[\xymatrix{&E\ar[dr]^-{\Fr_{E/S}^{N\p^n}}\ar[dl]_-{i_{\p^n}}&\\
E\otimes_{O_K}\p^{-1}\ar[rr]^{\nu_{\p^n}}_\sim && \Fr^{N\p^n*}(E)}\] commutes.
\end{prop}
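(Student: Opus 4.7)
The plan is to deduce this from the analogous unlevelled statement \ref{coro:tensor-p-equals-frobenius} by showing that the natural isomorphism $\nu_{\p^n}$ produced there is automatically an $\f$-isomorphism under the standing hypothesis that $\p$ is prime to $\f$. Uniqueness is essentially formal: the forgetful functor $\mc{M}_\CM^{(\f)} \to \mc{M}_\CM$ is faithful on isomorphisms (an $\f$-isomorphism is a plain CM isomorphism satisfying a property), so any two natural isomorphisms of auto-equivalences of $\mc{M}_\CM^{(\f)}\times\Spec(\F_\p)$ whose underlying natural isomorphisms of auto-equivalences of $\mc{M}_\CM\times\Spec(\F_\p)$ coincide must themselves coincide, and the latter uniqueness is \ref{coro:tensor-p-equals-frobenius}.

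For existence, fix $(E,\beta) \in \mc{M}_\CM^{(\f)}(S)$ over a characteristic-$\p$ sheaf $S$. By the construction of tensor-product level structures (cf.\ (ii) and (vii) of \ref{rema:props-of-level-f-cm}), the level-$\f$ structure on $E \otimes_{O_K} \p^{-n}$ is $\beta \otimes_{O_K} \alpha_{\p^{-n}}$, where $\alpha_{\p^{-n}} \colon \p^{-n} \to O_K/\f$ is the canonical level-$\f$ structure of \ref{eqn:can-level-ideal} on the fractional ideal $\p^{-n}$ (well-defined since $\p$ is prime to $\f$), while the level-$\f$ structure on $\Fr^{N\p^n*}(E)$ is just the pullback $\Fr^{N\p^n*}(\beta)$. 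We need only verify that the underlying CM isomorphism $\nu_{\p^n}$ from \ref{coro:tensor-p-equals-frobenius} intertwines these two structures.

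Because $\p$ is prime to $\f$ the restriction $i_{\p^n}|_{E[\f]} \colon E[\f] \isomto (E\otimes_{O_K}\p^{-n})[\f]$ is an isomorphism, and using $\alpha_{\p^{-n}}(1) = 1$ one computes directly that $(\beta \otimes \alpha_{\p^{-n}}) \circ i_{\p^n}|_{E[\f]} = \beta$. On the other hand, restricting the defining identity $\nu_{\p^n} \circ i_{\p^n} = \Fr^{N\p^n}_{E/S}$ to the \'etale $\f$-torsion and invoking the standard fact that the relative Frobenius of an \'etale scheme over a characteristic-$\p$ base is the identity under the canonical identification $\Fr^{N\p^n*}(\underline{O_K/\f}_S) = \underline{O_K/\f}_S$, one finds $\Fr^{N\p^n*}(\beta) \circ \Fr^{N\p^n}_{E[\f]/S} = \beta$, and hence $\Fr^{N\p^n*}(\beta) \circ \nu_{\p^n}|_\f \circ i_{\p^n}|_{E[\f]} = \beta$. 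Cancelling the isomorphism $i_{\p^n}|_{E[\f]}$ yields $\Fr^{N\p^n*}(\beta) \circ \nu_{\p^n}|_\f = \beta \otimes \alpha_{\p^{-n}}$, which is the required compatibility. The only mildly subtle step is the trivialisation of relative Frobenius on \'etale schemes via their level trivialisations, but this is routine and I anticipate no serious obstacle.
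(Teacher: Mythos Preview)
Your proposal is correct and follows essentially the same approach as the paper: reduce to the unlevelled isomorphism $\nu_{\p^n}$ of \ref{coro:tensor-p-equals-frobenius} and verify it is an $\f$-isomorphism by checking that both $i_{\p^n}$ and $\Fr_{E/S}^{N\p^n}$ are compatible with the level-$\f$ structures on their source and target, then cancel. The paper's proof states this compatibility in one sentence whereas you have (correctly) unpacked the two verifications in detail; the only difference is the level of explicitness.
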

\begin{proof} We need only verify that the isomorphism $\nu_{\p^n}(E/S): E\otimes_{O_K}\p^{-n}\isomto \Fr^{N\p^n*}(S)$ of (\ref{coro:tensor-p-equals-frobenius}) is an $\f$-isomorphism. However, the morphisms $i_{\p^n}$ and $\Fr_{E/S}^{N\p^n}$ induce isomorphisms on the $\f$-torsion which are compatible with the level-$\f$ structures on $E$, $E\otimes_{O_K}\p^{-n}$ and $\Fr^{N\p^n*}(E)$ so that as $\Fr_{E/S}^{N\p^n}=\nu_{\p^n}\circ i_{\p^n}$ it follows that $\nu_{\p^n}(E/S)$ is an $\f$-isomorphism.
\end{proof}

\begin{prop}\label{prop:level-torsors} The functor \[\mc{M}_{\CM}^{(\f)}\times \mc{CL}_{O_K}^{(\f)}\to \mc{M}_{\CM}^{(\f)}\times \mc{M}_{\CM}^{(\f)}: (E, \mc{L})\mto (E, E\otimes_{O_K}\mc{L})\] is an equivalence of stacks and $\mc{M}_{\CM}^{(\f)}$ is locally equivalent to $\mc{CL}^{(\f)}_{O_K}\times\Spec(O_K[\f^{-1}])$.
\end{prop}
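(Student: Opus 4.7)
The plan is to mimic the proof of Theorem~\ref{theo:mcm-is-a-torsor}, carrying along level-$\f$ structures at each step, using the level-$\f$ analogues of the inputs which have already been established earlier in this section.

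For essential surjectivity of the first functor, given a pair $(E/S, E'/S)$ of CM elliptic curves with level-$\f$ structures over a sheaf $S$, I would take the rank one $O_K$-local system $\underline{\Hom}_S^{O_K}(E, E')$ equipped with the level-$\f$ structure defined in (i) of (\ref{rema:props-of-level-f-cm}). By (\ref{prop:eval-compatible-with-level}) the evaluation map $E \otimes_{O_K}\underline{\Hom}_S^{O_K}(E, E') \isomto E'$ is an $\f$-isomorphism, which produces the required isomorphism in $\mc{M}_\CM^{(\f)}(S)$ from $(E, \underline{\Hom}_S^{O_K}(E, E'))$ to $(E, E')$ under the functor in question.

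For full faithfulness, I need to show that for each quadruple $(E, E', \mc{L}, \mc{L}')$ the map
\[\Isom_S^{(\f)}(E, E')\times \Isom_S^{(\f)}(\mc{L}, \mc{L}')\to \Isom_S^{(\f)}(E, E')\times\Isom_S^{(\f)}(E\otimes_{O_K}\mc{L}, E'\otimes_{O_K}\mc{L}')\]
is a bijection. When $\Isom_S^{(\f)}(E, E')=\emptyset$ this is trivial. Otherwise, fixing an $\f$-isomorphism $E\isomto E'$ we reduce to the case $E=E'$, and the claim becomes that $\Isom_S^{(\f)}(\mc{L}, \mc{L}')\to \Isom_S^{(\f)}(E\otimes_{O_K}\mc{L}, E\otimes_{O_K}\mc{L}')$ is a bijection. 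This follows from (\ref{prop:cm-tensor-homs}), which gives the isomorphism $\underline{\Hom}_S^{O_K}(\mc{L}, \mc{L}')\isomto \underline{\Hom}_S^{O_K}(E\otimes_{O_K}\mc{L}, E\otimes_{O_K}\mc{L}')$, combined with the observation that under this identification the level-$\f$ structures defined in (i) and (ii) of (\ref{rema:props-of-level-f-cm}) match, so that $\f$-isomorphisms correspond to $\f$-isomorphisms.

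For the second assertion, I would use (\ref{lemm:cm-over-separably-closed-fields}) to fix a CM elliptic curve $\mc{E}/\Spec(O_{K^\sep})$ and note that, after base change to $\Spec(O_{K^\sep}[\f^{-1}])$, the $\f$-torsion $\mc{E}[\f]$ is \'etale and constant over $\Spec(O_{K^\sep}[\f^{-1}])$ by (i) of (\ref{prop:p-torsion-classification}), so that a choice of isomorphism $\mc{E}[\f]\isomto \underline{O_K/\f}$ gives a level-$\f$ structure and therefore a morphism $\Spec(O_{K^\sep}[\f^{-1}])\to \mc{M}_\CM^{(\f)}$. Since $\Spec(O_{K^\sep}[\f^{-1}])\to \Spec(O_K[\f^{-1}])$ is a cover, and since the structure map $\mc{M}_\CM^{(\f)}\to \Spec(O_K)$ factors through $\Spec(O_K[\f^{-1}])$ by (v) of (\ref{rema:props-of-level-f-cm}), the functor
\[\mc{CL}_{O_K}^{(\f)}\times \Spec(O_{K^\sep}[\f^{-1}])\to \mc{M}_\CM^{(\f)}\times \Spec(O_{K^\sep}[\f^{-1}]): \mc{L}/S \mto p^*(\mc{E})\otimes_{O_K}\mc{L}/S\]
(where $p: S\to \Spec(O_{K^\sep}[\f^{-1}])$ is the structure map) is, by the first part of the proposition just established, an equivalence of stacks, giving the desired local equivalence. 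No step should present any real obstacle, since the essential geometric input (existence of $\mc{E}$, the evaluation isomorphism, and the tensor formula for Homs) is already in hand; the only thing to verify carefully throughout is compatibility of the canonical level-$\f$ structures with tensor products and Hom sheaves, which is routine.
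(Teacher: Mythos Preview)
Your proposal is correct and follows essentially the same approach as the paper: essential surjectivity via the evaluation isomorphism (\ref{prop:eval-compatible-with-level}), full faithfulness via reduction to $E=E'$ and the tensor--Hom formula (\ref{prop:cm-tensor-homs}) together with the compatibility of level-$\f$ structures, and the local equivalence via a CM elliptic curve over $\Spec(O_{K^\sep}[\f^{-1}])$ equipped with a level-$\f$ structure. The paper's own proof in fact omits the argument for the second assertion, so your more explicit treatment of that part is a welcome addition rather than a deviation.
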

\begin{proof} By (\ref{prop:eval-compatible-with-level}) this functor is essentially surjective. Moreover, for all $S$ the morphism \[\underline{\Isom}_S^{(\f)}(E, E')\times_S \underline{\Isom}_S^{(\f)}(\mc{L}, \mc{L'})\to \underline{\Isom}_S^{(\f)}(E, E')\times_S \underline{\Isom}_S^{(\f)}(E\otimes_{O_K}\mc{L}, E'\otimes_{O_K}\mc{L}')\] is an isomorphism of sheaves over $S$, as this can be checked on $S$ sections. Indeed, if $\underline{\Isom}_S^{(\f)}(E, E')(S)=\emptyset$ then it is clear and if $\underline{\Isom}_S^{(\f)}(E, E')(S)\neq\emptyset$, we may assume that $E=E'$ and show instead that the map \begin{equation}\Isom_S^{(\f)}(\mc{L}, \mc{L}')\to \Isom^{(\f)}_S(E\otimes_{O_K}\mc{L}, E\otimes_{O_K}\mc{L}'): h\mto \id_E\otimes h\end{equation} is bijective. The bijectivity of this map follows from (\ref{prop:cm-tensor-homs}) combined with the fact that an isomorphism $h: \mc{L}\to \mc{L}'$ induces an $\f$-isomorphism $\id_E\otimes_{O_K} h: E\otimes_{O_K}\mc{L}\isomto E\otimes_{O_K}\mc{L}'$ if and only if $h$ is an $\f$-isomorphism.
\end{proof}

\subsection{}\label{rema:coarse-space-level} We now wish to compute the coarse sheaves $M_\CM^{(\f)}:=C(\mc{M}_\CM^{(\f)})$ of the stacks $\mc{M}_\CM^{(\f)}$. First let us define an action of \[\underline{\CL_{O_K}^{(\f)}}[\f^{-1}]=\underline{CL_{O_K}^{(\f)}}\times\Spec(O_K[\f^{-1}])\] on $M_\CM^{(\f)}$. As this group is constant it is enough to define an action of $CL_{O_K}^{(\f)}$ and then, using the isomorphism \[\Id_{O_K}^{(\f)}/\Prin_{1\bmod \f}^{(\f)}\isomto \CL_{O_K}^{(\f)}:\a\mto [\a]_\f\] it is enough to define an action of $\Id_{O_K}^{(\f)}$ with the property that $\Prin_{1\bmod \f}^{(\f)}$ acts trivially. Each ideal $\a\in \Id_{O_K}^{(\f)}$, equipped with its standard level-$\f$ structure induces an auto-equivalence \[-\otimes_{O_K}\a: \mc{M}_\CM^{(\f)}\to \mc{M}_\CM^{(\f)}\] and by the universal property of the coarse sheaf an automorphism $[\a]_\f: M_{\CM}^{(\f)}\to M_{\CM}^{(\f)}.$ For $\a, \b\in \Id_{O_K}^{(\f)}$, the natural $\f$-isomorphisms \[(-\otimes_{O_K} \a)\otimes_{O_K}\b\isomto -\otimes_{O_K}\a\b\] show that $[\b]_\f \circ [\a]_\f=[\a\b]_\f.$ We have $[\a]_\f=[\b]_\f\in \CL_{O_K}^{(\f)}$ if and only if there exists an $\f$-isomorphism $\a\isomto \b$, which in turn induces an isomorphism auto-equivalences \[-\otimes_{O_K}\a\isomto -\otimes_{O_K}\b\] and gives $[\a]_\f=[\b]_\f: M_\CM^{(\f)}\to M_{\CM}^{(\f)}$ so that we have defined our action.

Moreover, by (\ref{coro:tensor-p-equals-frobenius-with-level}) it follows that for each prime $\p\in \Id_{O_K}^{(\f)}$ the pull-back of the automorphism \[[\p^{-1}]_\f: M_\CM^{(\f)}\isomto M_\CM^{(\f)}\] along $\Spec(\F_\p)\to \Spec(O_K[\f^{-1}])$ is equal to the $N\p$-power Frobenius \begin{equation}\label{eqn:frob-lift-level-coarse}[\p^{-1}]_\f\times\Spec(\F_\p)=\Fr^{N\p}: M_\CM^{(\f)}\times\Spec(\F_\p)\isomto M_\CM^{(\f)}\times \Spec(\F_\p)\end{equation}

\subsection{}\label{subsec:coarse-maps-level} If $(E/S, \beta)$ is a CM elliptic curve with level-$\f$ structure then we write \[c_{E/S, \f}:S\to M_\CM^{(\f)}\] for the composition \[S\stackrel{(E, \beta)}{\longrightarrow}\mc{M}_\CM^{(\f)}\stackrel{c_{\mc{M}_\CM^{(\f)}}}{\longrightarrow} M_{\CM}^{(\f)}\] (when $\f=O_K$ we drop the $\f$). It follows from the definition of $\sigma_\a$ that \begin{equation}\label{eqn:cl-action-on-coarse} c_{E\otimes_{O_K}\a/S, \f}=\sigma_\a\circ c_{E/S, \f}.\end{equation}

\begin{coro}\label{coro:coarse-spaces-of-level-moduli}
\begin{enumerate}[label=\textup{(\roman*)}]
\item The action of $\underline{CL_{O_K}^{(\f)}}[\f^{-1}]$ on $M_\CM^{(\f)}$ makes it a torsor over $\Spec(O_K[\f^{-1}])$.
\item $M_\CM^{(\f)}$ is finite and \'etale over $\Spec(O_K[\f^{-1}])$.
\item The action of $G(K^\sep/K)$ on $M_\CM^{(\f)}(\Spec(K^\sep))$ is through the homomorphism \[G(K^\sep/K) \to G(K(\f)/K)\stackrel{\theta_{K, \f}}{\to} \CL_{O_K}^{(\f)}\subset \Aut_{\Spec(O_K[\f^{-1}])}(M_\CM^{(\f)})\] hence \[\Spec(O_{K(\f)}[\f^{-1}])\isomto M_\CM^{(\f)}.\]
\end{enumerate}
\end{coro}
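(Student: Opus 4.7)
The plan is to derive (i) and (ii) formally from Proposition \ref{prop:level-torsors}, then tackle (iii) by computing the $G(K^\sep/K)$-action on $M_\CM^{(\f)}(\Spec(K^\sep))$. For (i), I would apply the coarse sheaf functor $C(-)$ to the equivalence of stacks
\[\mc{M}_\CM^{(\f)} \times \mc{CL}_{O_K}^{(\f)} \isomto \mc{M}_\CM^{(\f)} \times \mc{M}_\CM^{(\f)}\]
from Proposition \ref{prop:level-torsors}. Since isomorphism classes in a product groupoid are pairs of isomorphism classes and sheafification commutes with finite products, passing to coarse sheaves preserves products; combining with Proposition \ref{prop:class-stacks-triv-admis}, which identifies $C(\mc{CL}_{O_K}^{(\f)})$ with $\underline{CL_{O_K}^{(\f)}}$, and base changing to $\Spec(O_K[\f^{-1}])$ yields the torsor isomorphism. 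The required fact that $M_\CM^{(\f)} \to \Spec(O_K[\f^{-1}])$ is an epimorphism is exactly (vi) of \ref{rema:props-of-level-f-cm}. Claim (ii) is then immediate, since $\underline{CL_{O_K}^{(\f)}}[\f^{-1}]$ is finite \'etale over $\Spec(O_K[\f^{-1}])$ and torsors under finite \'etale groups are themselves finite \'etale by descent.

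For (iii), having established that $M_\CM^{(\f)}$ is finite \'etale over $\Spec(O_K[\f^{-1}])$, it is determined by its $\Spec(K^\sep)$-points with the $G(K^\sep/K)$-action. By the torsor structure and the existence of at least one such point (Lemma \ref{lemm:cm-over-separably-closed-fields} together with a choice of level-$\f$ structure on $E[\f]$, which is \'etale and constant over $\Spec(K^\sep)$), the set $M_\CM^{(\f)}(\Spec(K^\sep))$ is a principal homogeneous $CL_{O_K}^{(\f)}$-space. The crux is then the identification of the Galois action: for $x = [E, \beta] \in M_\CM^{(\f)}(\Spec(K^\sep))$ and $\sigma \in G(K^\sep/K)$, I want to show
\[\sigma \cdot x = [\sigma]_{K, \f} \cdot x\]
where the right-hand action is the one defined in \ref{rema:coarse-space-level}. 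This rests on showing that the isomorphism $f_\sigma : E \otimes_{O_K} L_\sigma \isomto \sigma^*(E)$ from \ref{subsec:def-morphism-h}, with $(L_\sigma, \lambda_\sigma)$ representing $[\sigma]_{K, \f}$, is already an $\f$-isomorphism $(E, \beta) \otimes_{O_K} (L_\sigma, \lambda_\sigma) \isomto (\sigma^*(E), \sigma^*(\beta))$. This reduces to checking that $\sigma^*(\beta)$ and $\beta$ agree on the underlying $K^\sep$-points of $\f$-torsion, which holds because the target constant sheaf $\underline{O_K/\f}$ carries trivial Galois action, so the choice of $\lambda_\sigma$ in the definition of $[\sigma]_{K, \f}$ is exactly what makes the diagram commute.

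With this in hand, the action of $G(K^\sep/K)$ on $M_\CM^{(\f)}(\Spec(K^\sep))$ factors as $\sigma \mapsto [\sigma]_{K, \f} \in CL_{O_K}^{(\f)}$, which by (iii) of Proposition \ref{prop:compute-the-homomorphisms-h} coincides with $\sigma \mapsto \theta_{K, \f}^{-1}(\sigma|_{K(\f)})$; this is the first assertion of (iii). The final identification $\Spec(O_{K(\f)}[\f^{-1}]) \isomto M_\CM^{(\f)}$ then drops out from the Galois correspondence for finite \'etale covers of $\Spec(O_K[\f^{-1}])$, since both sides are $CL_{O_K}^{(\f)}$-torsors whose $\Spec(K^\sep)$-fibres realise $G(K(\f)/K)$ acting on itself by translation via $\theta_{K, \f}$. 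The main obstacle will be the $\f$-compatibility check relating $f_\sigma$, $\lambda_\sigma$ and $\sigma^*(\beta)$: it is conceptually straightforward but requires careful unpacking of several layers of pullback notation to confirm that the definition of $[\sigma]_{K, \f}$ is tailored precisely to produce an $\f$-isomorphism, not merely an isomorphism of the underlying CM elliptic curves.
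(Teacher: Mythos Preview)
Your approach is correct. For (i) and (ii) it is a more formal repackaging of the paper's argument: the paper verifies freeness and transitivity of the $\underline{CL_{O_K}^{(\f)}}[\f^{-1}]$-action by hand (lifting sections to covers and invoking Proposition \ref{prop:level-torsors} object-by-object), whereas you apply the coarse-sheaf functor directly to the stack equivalence; both rely on the same input.

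For (iii) the routes genuinely diverge. You compute the Galois action via the homomorphism $[-]_{K,\f}$ of \ref{subsec:def-morphism-h} and then invoke (iii) of Proposition \ref{prop:compute-the-homomorphisms-h} to identify it with the reciprocity map $\theta_{K,\f}$. The paper instead uses only the Frobenius-lift property (\ref{eqn:frob-lift-level-coarse}): since the automorphism $[\p^{-1}]_\f$ of $M_\CM^{(\f)}$ reduces to the $N\p$-power Frobenius modulo $\p$, general nonsense for finite \'etale $\Spec(O_K[\f^{-1}])$-schemes forces some $\sigma$ (namely any Frobenius at $\p$) to act on $M_\CM^{(\f)}(\Spec(K^\sep))$ as $[\p^{-1}]_\f$; transitivity of the $CL_{O_K}^{(\f)}$-action then gives connectedness, and the characterisation of $K(\f)$ via $\sigma_{\p}\mapsto [\p^{-1}]_\f$ finishes. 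The paper's route is logically lighter---it never touches the reciprocity computation of Proposition \ref{prop:compute-the-homomorphisms-h}---while yours is more conceptual, exhibiting the Galois action as $[-]_{K,\f}$ for \emph{all} $\sigma$ rather than just Frobenius elements. The $\f$-compatibility check you flag is real but routine: once one unwinds the definition, $\lambda_\sigma$ is chosen precisely so that $f_\sigma$ intertwines $\beta\otimes\lambda_\sigma$ with $\sigma^*(\beta)$.
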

\begin{proof} (i) To show that the action of $\underline{CL_{O_K}^{(\f)}}[\f^{-1}]$ is free it is enough to show that for each $[\a]_\f\in \CL_{O_K}^{(\f)}$ the automorphism $[\a]_\f: M_\CM^{(\f)}\isomto M_\CM^{(\f)}$ fixes a section $S\to M_\CM^{(\f)}$ if and only if $[\a]_\f=[O_K]_\f$. So let $[\a]_\f\in \CL_{O_K}^{(\f)}$ fix a section $S\to M_\CM^{(\f)}$. By the definition of the coarse sheaf, there exists a cover $S'\to S$ and a CM elliptic curve $E/S'$ such that the induced map $S'\to S \to M_\CM^{(\f)}$ is equal to $c_{E/S', \f}$. This section $c_{E/S', \f}$ is also fixed by $[\a]_\f$ which is now the statement that the two CM elliptic curves with level-$\f$ structure $E$ and $E\otimes_{O_K}\a$ are locally $\f$-isomorphic over $S'$. After refining $S'$, we may assume that $E$ and $E\otimes_{O_K}\a$ are actually $\f$-isomorphic which by (\ref{prop:level-torsors}) implies the existence of an $\f$-isomorphism $\underline{O_K}_{S'}\isomto \underline{\a}_{S'}$. After refining $S'$ again such an isomorphism is constant and of the form $\underline{h}$ where $h: O_K\to \a$ is an $\f$-isomorphism and it follows that $[\a]_\f=[O_K]_\f$.

Similarly, to show that the action of $\underline{\CL_{O_K}^{(\f)}}[\f^{-1}]$ is transitive, it is enough to show that for each pair of sections $c_1, c_2: S\to M_\CM^{(\f)}$ there is a cover $(S_i\to S)_{i\in I}$ and elements $[\a_i]_\f\in \CL_{O_K}^{(\f)}$ such that $[\a_i]_\f\circ c_1|_{S_i}=c_2|_{S_i}$. Again by the definition of the coarse sheaf there exists a cover $S'\to S$ and CM elliptic curves $E_1$, $E_2$ over $S$ such that the compositions $S'\to M_\CM^{(\f)}$ of $c_1$ and $c_2$ with $S'\to S$ are equal to $c_{E_1/S, \f}$ and $c_{E_2/S, \f}$. By (\ref{prop:level-torsors}) there exists an $O_K$-local system $(\mc{L}, \alpha)$ with level-$\f$ structure and an $\f$-isomorphism \[E_1\isomto E_2\otimes_{O_K}\mc{L}.\] After base change to a cover $(S_i\to S)_{i\in I}$ (the corresponding class of $\mc{L}$ may not be constant which is why our cover may consist of multiple elements) we may assume that $(\mc{L}, \alpha)=(\underline{\a}_{S'}, \underline{f}_{S'})$ and it follows from (\ref{eqn:cl-action-on-coarse}) that $c_{E_2/S, \f}=[\a]_\f\circ c_{E_1/S, \f}$ and this proves the claim. 

(ii) The fact that $M_\CM^{(\f)}$ is finite and \'etale over $\Spec(O_K[\f^{-1}])$ follows by descent, as $M_\CM^{(\f)}$ is locally (over $\Spec(O_K[\f^{-1}])$) isomorphic to $\underline{CL_{O_K}^{(\f)}}[\f^{-1}]$ by (i) and $\underline{CL_{O_K}^{(\f)}}[\f^{-1}]$ is finite and \'etale over $\Spec(O_K[\f^{-1}])$.

(iii) For each prime $\p$ prime to $\f$, the automorphism $[\p^{-1}]_\f: M_\CM^{(\f)}\to M_\CM^{(\f)}$ lifts the $N\p$-power Frobenius automorphism  modulo $\p$ (\ref{eqn:frob-lift-level-coarse}). For any such automorphism (of any finite \'etale $\Spec(O_K[\f^{-1}])$-scheme) there exists an element $\sigma\in G(K^\sep/K)$ such that \[M_\CM^{(\f)}(\sigma)=[\p^{-1}]_\f(\Spec(K^\sep)): M_\CM^{(\f)}(\Spec(K^\sep))\to M_{\CM}^{(\f)}(\Spec(K^\sep)).\] However, the action of $\CL_{O_K}^{(\f)}$ on $M_{\CM}^{(\f)}(\Spec(K^\sep))$ is transitive by (i), and $\CL_{O_K}^{(\f)}$ is generated by elements of the form $[\p^{-1}]_\f$ and so it follows that the action $G(K^\sep/K)$ on $M_\CM^{(\f)}(\Spec(K^\sep))$ is transitive. Therefore $M_\CM^{(\f)}$ is connected and isomorphic to $\Spec(O_{L}[\f^{-1}])$ for some finite extension $L/K$ which is unramified away from $\f$. By construction, the isomorphism $G(L/K)\to \CL_{O_K}^{(\f)}$ sends the Frobenius element $\sigma_{L/K, \p}$ to $[\p^{-1}]_\f$ and it follows that $L\isomto K(\f)$ (cf.\ (\ref{eqn:define-theta-finite})).
\end{proof}

\begin{rema} We list the following consequences of (\ref{coro:coarse-spaces-of-level-moduli}).
\begin{enumerate}[label=\textup{(\alph*)}]
\item The coarse sheaf $M_\CM$ of $\mc{M}_\CM$ is isomorphic to $\Spec(O_H)$ where $H$ is the Hilbert class field of $K$. This recovers the fact the $j$-invariants of CM elliptic curves defined over extensions of $K$ lie in $H\subset L$.
\item Let $E\to S$ be any CM elliptic curve and recall that \[c_{E/S}: S\to M_\CM\] denotes the morphism \[S\stackrel{E}{\to} \mc{M}_\CM\stackrel{c_{\mc{M}_\CM}}{\longrightarrow} M_\CM.\] Then by definition, if $E'$ is another CM elliptic curve over $S$ then $c_{E/S}, c_{E'/S}: S\to M_\CM$ are equal if and only if $E$ and $E'$ are locally isomorphic. Combining this with (\ref{coro:isom-iso-isom}) and (\ref{prop:cm-exist-given-h}), we find that if $S=\Spec(F)$ for $F$ a field then the map \[E/F\mto (\rho_{E/F}, c_{E/F})\] defines a bijection between the set of isomorphism classes of CM elliptic curves over $E/\Spec(F)$ with the set of pairs $(\rho, c)$ where:
\begin{enumerate}[label=(\roman*)]
\item $\rho: G(F^\sep/F)\to \lim_{\f}(O_K/\f)^\times$ is a homomorphism such that $[\rho^{-1}]=[-]_F$, and
\item $c: \Spec(L)\to M_\CM$ is any map.
\end{enumerate}
\item If $\f$ separates units then $\mc{M}_\CM^{(\f)}\isomto M_\CM^{(\f)}\isomto \Spec(O_{K(\f)}[\f^{-1}])$. If $E/\Spec(K(\f))$ denotes the generic fibre of the universal CM elliptic curve with level-$\f$ structure the homomorphism \[\rho_{E/K(\f)}: G(K(\f)^\sep/K(\f))\to A_{O_K}^\times\] is equal to (cf.\ (\ref{ray-class-isom})) \[G(K(\f)^\sep/K(\f))\stackrel{-|_{K^\infty}}{\longrightarrow} G(K^\infty/K(\f)) \isomto A_{O_K}^{\times, \f} \stackrel{\textup{incl}}{\longrightarrow} A_{O_K}^\times.\]
\end{enumerate}
\end{rema}

\chapter{$\Lambda$-structures, Witt vectors and arithmetic jets} In this chapter we give a brief introduction to, and overview of, the theory $\Lambda$-structures, Witt vectors and arithmetic jet spaces following the approach of Borger \cite{Borger11}, \cite{Borger11bis}. It is by nature a technical and notationally heavy theory, but its many applications make it a worthwhile subject. The two papers mentioned are both an excellent introduction and general reference and we encourage to the reader to consult them. We give here really only the minimal set up necessary for our applications in Chapter 4 and proofs are given only where they cannot be cited or where it would be perhaps enlightening.

In the final section we prove a small new result which shows that $\Lambda$-structures on relative abelian schemes are determined by their underlying $\Psi$-structures.

\section{Plethories}

\subsection{} Fix a pair of rings $O$ and $O'$. An $O$-$O'$-biring $\Phi$ is an $O$-algebra together the structure of an $O'$-algebra on the set $\Hom_{O}(\Phi, A)$ which is functorial in the $O$-algebra $A$. This is structure is determined by certain homomorphisms
\begin{enumerate}[label=\textup{(\roman*)}]
\item coaddition and comultiplication: $\Delta_\Phi^+, \Delta_{\Phi}^\times: \Phi\to \Phi\otimes_{O}\Phi$
\item coadditive and comultiplicative units: $\epsilon_\Phi^+, \epsilon_\Phi^\times: \Phi\to O$
\item $O'$ coaction: $O'\to \End_O(\Phi): s\mto s_{\Phi}$
\end{enumerate} satisfying various identities (cocommutativity of the coaddition and comultiplication, coassociativity and so on).

We denote by $\Biring_{O, O'}$ the category whose objects are $O$-$O'$-birings and whose morphisms are those $O$-homomorphisms $\Phi\to \Phi'$ inducing functorial homomorphisms of $O'$-algebras $\Hom_O(\Phi', A)\to \Hom_O(\Phi', A)$. Of course, this is equivalent to the homomorphism $\Phi\to \Phi'$ being `compatible' with the maps $\Delta_\Phi^+$, $\Delta_{\Phi'}^+$ and so on.

\subsection{} \label{subsec:corep-functor-biring} The functor corepresented by a biring $\Phi$: \[A\mto \Hom_{O}(\Phi, A): \Alg_{O}\to \Alg_{O'}\] admits a left adjoint, which we denote by $B\mto \Phi\odot_{O'} B$, which is defined as follows: $\Phi\odot_{O'} B$ is the quotient of the free $O$-polynomial algebra generated by the symbols $\phi\odot b$ for $\phi\in \Phi$ and $b\in B$ subject to the relations \[(\phi+\phi')\odot b=\phi\odot b+\phi'\odot b, \quad \phi\phi'\odot b=(\phi\odot b)(\phi'\odot b),\quad (r\phi)\odot b=r(\phi \odot b)\] and \[\phi\odot(b+b')=\Delta_\Phi^+(\phi)(b, b')\footnote{This notation means that if $\Delta_\Phi^+(\phi)=\sum_i \phi_i\otimes \phi_i'$ then $\Delta_\Phi^+(\phi)(b, b')=\sum_i (\phi_i\odot b_i)(\phi_i'\odot b_i')$ and similarly for $\Delta_{\Phi}^\times(b, b')$.}, \quad \phi\odot bb'=\Delta^\times_\Phi(\phi)(b, b'), \quad \phi\odot sb=s_{\Phi}(\phi)\odot b\] for all $\phi, \phi'\in \Phi$, $b, b'\in B$, $r\in O$ and $s\in O'$. The $O$-algebra $\Phi\odot_{O'} B$ is called composition product of $\Phi$ with $B$.

\subsection{} If $\Phi$ is an $O$-$O'$-biring and $O\to O''$ is a homomorphism then $\Phi\otimes_{O}O''$ is an $O''$-$O'$-biring and $(\Phi\odot_{O'} B)\otimes_{O}O''=(\Phi\otimes_{O}O'')\odot_{O'} B$. The functor $\Biring_{O, O'}\times \Alg_{O'}\to \Alg_{O}: (\Phi, B)\to \Phi\odot_{O'} B$ commutes with colimits in each variable. 
We give the two simplest examples when $O=O'$.
\begin{enumerate}[label=\textup{(\roman*)}]
\item The functor $A\mto A$ is represented by the $O$-$O$-biring $O[e]$.
\item Even simpler is the functor $A\mto 0$ represented by the $O$-$O$-biring $O$ itself.
\end{enumerate}

\subsection{}\label{subsec:birings-and-monoidal-functors} We now concentrate on the case $O=O'$ and just call an $O$-$O$-biring an $O$-biring and shall drop $O$ from the notation when there is no risk of confusion. If $\Phi$ and $\Phi'$ are two $O$-birings then we may consider the composition product $\Phi\odot \Phi'$ which, using the standard properties of adjunctions, is again an $O$-biring. The composition product then defines a monoidal structure on the category of $O$-birings with identity $O[e]$. The functor \[\Phi\mto \Phi\odot -\] from $O$-birings to endofunctors on $\Alg_{O}$ is monoidal and fully faithful.

\subsection{}\label{subsection:definition-of-plethories} An $O$-plethory is an $O$-biring $\Phi$ together with the structure of a monad on the functor $A\mto \Phi\odot A$ or equivalently a comonad on the functor $A\mto \Hom_{O}(\Phi, A)$. The remarks in (\ref{subsec:birings-and-monoidal-functors}) then show that to give the functor $A\mto \Phi\odot A$ the structure of a monad is equivalent to giving
\begin{enumerate}[label=\textup{(\roman*)}]
\item a homomorphism of $O$-birings $i_{\Phi}:O[e]\to \Phi$ and
\item a homomorphism of $O$-birings $h_{\Phi}:\Phi\odot \Phi\to \Phi$
\end{enumerate} such that \[h_{\Phi}\circ(\Phi\odot i_{\Phi})=h_{\Phi}\circ(i_{\Phi}\odot \Phi)=\id_{\Phi} \quad \text{ and }\quad h_{\Phi}\circ(\Phi\odot h_{\Phi})=h_{\Phi}\circ(h_{\Phi}\odot \Phi).\]

\subsection{} If $\Phi$ is an $O$-plethory we define a $\Phi$-ring to be an $O$-algebra $A$ equipped with an action of the monad $\Phi\odot -$. Given a $\Phi$-ring $A$ we denote by \[h_A: \Phi\odot A\to A\] the map defining the $\Phi$-ring structure on $A$. We denote the category of $\Phi$-rings and compatible morphisms by $\Alg_{O}^\Phi$. If $A$ is an $O$-algebra then $\Phi\odot A$ and $\Hom_O(\Phi, A)$ are $\Phi$-rings and these two functors are the left and right adjoints of the forgetful functor $\Alg_{O}^\Phi\to \Alg$. We have the diagram of functors, each one left adjoint to the one below it:
\[\xymatrix{\Alg_{O}^\Phi\ar[rr]^{\textup{forget}} && \ar@/^2pc/[ll]^{\Hom_O(\Phi, -)}\ar@/_2pc/[ll]_{\Phi\odot_O -}\Alg_{O}.}\]

\section{Witt vectors and arithmetic jets I}\label{sec:lambda-section-one}

\subsection{} During \S\S \ref{sec:lambda-section-one}--\ref{sec:lambda-section-three} we fix a Dedekind domain $O$ with finite residue fields and $P\subset \Id_{O}$ a sub-monoid generated by some set of prime ideals of $O$ and we write $K$ for the fraction field of $O$. Note that we allow Dedekind domains of finite characteristic, e.g.\ $O=\F_p[T]$.

\subsection{} Denote by $\Psi$ the free polynomial $O$-algebra generated by the symbols $\psi^\a$ for $\a\in P$. We make $\Psi$ an $O$-biring by equipping the set \[\Hom_{O}(\Psi, A)=\Hom_{O}(O[\psi^\a:\a\in P], A) \isomto \prod_{\a\in P} A: f\mto (f(\psi^\a))_{\a\in P}\] with the product $O$-algebra structure. We then give $\Psi$ the structure of an $O$-plethory by setting \[i_{\Psi}: O[e]\to \Psi: e\mto \psi^{(1)} \quad \text{ and } \quad h_{\Psi}: \Psi\odot \Psi\to \Psi: \psi^\a\odot \psi^\b\mto \psi^{\a\b}.\]

\subsection{} For each ring $A$ we write $\Gamma(A)$ for the ring \[\Gamma(A):=\Hom_O(\Psi, A)=\prod_{\a\in P}A\] and call it the ring of ghost vectors of $A$. We define the ghost jets of a ring $A$ to be the ring \[\Psi\odot_O A.\] Let us examine a little more the ring of ghost jets $\Psi\odot_O A$ of a ring $A$ and what it means for $A$ to be a $\Psi$-ring.

Recall (\ref{subsec:corep-functor-biring}) that if $A$ is an $O$-algebra then $\Psi\odot_{O} A$ is given by the quotient of the $O$-polynomial algebra \[O[\psi^\a\odot a: \a\in P, a\in A]\] by the ideal generated by the elements of the following form \begin{eqnarray*}&\psi^\a\odot (a+b)-\psi^\a \odot a -\psi^\a \odot b\\& \psi^\a\odot (ab)-(\psi^\a\odot a)(\psi^\a\odot b)\\&\psi^\a\odot (ra)-r(\psi^\a\odot a)\end{eqnarray*} for $a, b\in A$, $r\in O$. Now to give $A$ the structure of a $\Psi$-ring is the same as giving an $O$-homomorphism \[h_A:\Psi\odot_{O} A\to A\] satisfying certain properties. In this case, the properties which must be satisfied are equivalent to the following:
\begin{enumerate}[label=\textup{(\roman*)}]
\item for each $\a\in P$ the map $\psi_A^\a: A\to A$ defined by $a\mto h_A(\psi^\a\odot a)$ is an $O$-algebra homomorphism,
\item for each $\a, \b\in P$ we have $\psi^\a_A\circ \psi^{\b}_A=\psi^{\b}_A\circ \psi^{\a}_A=\psi^{\a\b}_A$, and
\item we have $\psi^{(1)}_A=\id_A: A\to A$.
\end{enumerate} We see that if $A$ is a $\Psi$-ring then $A$ has an action of the monoid $P$ where $\a\in P$ acts by $\psi^\a_A: A\to A.$ This sets up a bijection between $\Psi$-ring structures on $A$ and actions of $P$ on $A$. Given a $\Psi$-ring we will write $\psi^\a_A: A\to A$ for the corresponding $P$-action. In particular, the $\Psi$-ring $\Psi$ itself has the $P$-action \[\psi^\a_{\Psi}: \psi^{\b}\mto \psi^{\a\b}\] for $\a, \b\in P.$ Note that the $\Psi$-ring structure on $O$ is given by \[\psi_O^\a=\id_O: O\to O.\]

\subsection{} As we are dealing with rings equipped with endomorphisms, it will be useful here to say a little about semi-linear self maps versus twisted linear maps. The example to have in mind is the following: if $A$ is a ring of characteristic $p$ and $A\to B$ is an $A$-algebra then the $p$-power Frobenius \[\Fr_B^p: B\to B: b\mto b^p\] is $\Fr_A^p$-linear where $\Fr_A^p: A\to A: a\mto a^p$ is the $p$-power Frobenius of $A$. This then induces an $A$-linear map, the relative $p$-power Frobenius, $\Fr_{B/A}^p: \Fr_A^{p*}(B)\to B$.

Now, a morphism of $\Psi$-rings $f: A\to B$ is just a homomorphism such that the diagram \[\xymatrix{A\ar[d]_f\ar[r]^{\psi^\a_A} & A\ar[d]^f \\B \ar[r]^{\psi_B^\a} & B}\] commutes for all $\a\in P$. Viewing $B$ as an $A$-algebra via $f$, the homomorphism $\psi_B^\a: B\to B$ is $\psi_A^\a$-linear and induces an $A$-linear map \[\psi_{B/A}^{\a}:\psi_A^{\a*}(B) \to B.\] That the homomorphisms $\psi_B^\a$ for $\a\in P$ commute is now expressed by the condition that for all $\a, \b\in P$ we have \begin{equation}\label{def:semi-linear-psi}\psi_{B/A}^{\b}\circ \psi_{A}^{\a*}(\psi_{B/A}^\b)=\psi_{B/A}^{\a}\circ \psi_{A}^{\b*}(\psi_{B/A}^\a)=\psi_{B/A}^{\a\b} \end{equation} or in a commutative diagram: \[\xymatrix{\psi_A^{\a\b*}(B) \ar[rr]^-{\psi^{\b*}_A(\psi_{B/A}^{\a})}\ar[d]_{\psi^{\a*}_A(\psi_{B/A}^{\b})} && \psi_A^{*\a}(B)\ar[d]^{\psi_{B/A}^{\a}}\\
\psi_A^{*\b}(B)\ar[rr]^{\psi_{B/A}^{\b}} && B.}\] Moreover, if $A$ is a $\Psi$-ring and $A\to B$ is an $A$-algebra then to give $B$ the structure of $\Psi$-ring such that $A\to B$ is a $\Psi$-homomorphism is the same as giving maps $\psi_{B/A}^{\a}: \psi^{\a*}_A(B)\to B$ for each $\a\in P$ such that $\psi_{B/A}^{(1)}=\id_B$ and which satisfy the commutativity condition (\ref{def:semi-linear-psi}). The category of $\Psi$-rings over a $\Psi$-ring $A$ is denoted $\Alg_{\Psi_A}$ and its objects called $\Psi_A$-rings.

\subsection{} We now come to the matter of interest which is lifting the Frobenius. We wish to impose on a $\Psi$-ring $A$ the condition that the homomorphisms $\psi_A^\p: A\to A$, for $\p\in P$ prime, be lifts of the $N\p$-power Frobenius endomorphism (recall that $N\p=\#\F_\p$): \begin{equation}\label{eqn:frobenius-lift-condition}\psi_A^\p(a)=a^{N\p}\bmod \p A\end{equation} This is done by enlarging the plethory $\Psi$ in such a way that the endomorphisms corresponding to the elements $\psi_\p\in \Psi$, for each prime $\p\in P$, will be forced to satisfy the relation (\ref{eqn:frobenius-lift-condition}).

So for each integer $n\geq 0$ define sub-$O$-algebras $\Lambda_n\subset \Psi\otimes_O K$ inductively by setting $\Lambda_{0}=\Psi$, and for $n\geq 0$, setting $\Lambda_{n+1}$ to be the sub-$\Lambda_{n}$-algebra of $\Psi\otimes_{O}K$ generated by the subsets \begin{equation}\label{eqn:put-in-frob-residues}\p^{-1} (f^{N\p}-\psi^\p_{\Psi}(f))\subset \Psi\otimes_{O}K\end{equation} for $\p\in P$ a prime ideal and $f\in \Lambda_{n}$. Finally, we set \[\Lambda=\cup_{n\geq 0}\Lambda_{n}\subset \Psi\otimes_{O}K.\] Then $\Lambda_{n}\subset \Psi\otimes_{O}K$ is stabilised by the endomorphisms $\psi_{\Psi}^\a$ of $\Psi\otimes_{O}K$ for each $\a\in P$, as is $\Lambda\subset \Psi\otimes_{R} K$. Thus $\Lambda_{n}$ and $\Lambda$ admit unique $\Psi$-ring structures such that $\Psi\subset \Lambda_{n}\subset \Lambda$ are $\Psi$-morphisms. Moreover, for each $n\geq 0$, and each $f\in \Lambda_n$ it follows from the definition (\ref{eqn:put-in-frob-residues}) of $\Lambda_{n+1}$ that \[\psi_\Lambda^\p(f)=f^{N\p}\bmod \p \Lambda_{n+1}\] and hence for all $f\in \cup_{n\geq 0}\Lambda_n=\Lambda$ we have \[\psi_\Lambda^\p(f)=f^{N\p}\bmod \p \Lambda.\]

\begin{prop} There is a unique $O$-plethory structure on $\Lambda$ such that the inclusion $\Psi\to \Lambda$ is a morphism of $O$-plethories.
\end{prop}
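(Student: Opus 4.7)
The plan is to construct the $O$-plethory structure on $\Lambda$ by restriction from the $K$-plethory structure on $\Psi\otimes_O K$ induced by the flatness of $O\to K$. Since $\Psi\hookrightarrow\Lambda\hookrightarrow\Psi\otimes_O K$ is an $O$-algebra monomorphism, any $O$-plethory structure on $\Lambda$ extending that on $\Psi$ is forced to be the restriction of the unique one on $\Psi\otimes_O K$, proving uniqueness. For existence it suffices to verify that $\Lambda$ is closed under the coaddition $\Delta^+$, comultiplication $\Delta^\times$, counits $\epsilon^\pm$, the $O$-coaction, and the composition $h:\Psi\odot\Psi\to\Psi$ of the ambient plethory. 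The $O$-coaction preserves any $\Psi$-stable subalgebra, and closure under $\psi^\a_\Psi$ for $\a\in P$ is built into the construction of $\Lambda$, so the real content is showing $\Delta^\pm(\Lambda)\subset\Lambda\otimes_O\Lambda$, $\epsilon^\pm(\Lambda)\subset O$, and $h(\Lambda\odot\Lambda)\subset\Lambda$.

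I would verify each of these by induction on $n$, establishing simultaneously $\Delta^\pm(\Lambda_n)\subset\Lambda_n\otimes_O\Lambda_n$, $\epsilon^\pm(\Lambda_n)\subset O$, and $h(\Lambda_n\odot\Lambda_n)\subset\Lambda_n$. The base case $n=0$ is just the plethory structure on $\Psi$ itself. For the inductive step, since $\Lambda_{n+1}$ is generated over $\Lambda_n$ as an $O$-algebra by the elements $\p^{-1}(f^{N\p}-\psi^\p_\Psi f)$ with $f\in\Lambda_n$ and $\p\in P$ prime, and since all the structure maps are $O$-algebra homomorphisms (respectively, for $h$, compatible with the $\odot$-relations), closure reduces to checking these generators. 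One then uses the identities
\[\Delta^+\!\circ\psi^\p_\Psi=(\psi^\p_\Psi\otimes\psi^\p_\Psi)\!\circ\Delta^+,\qquad \Delta^\times\!\circ\psi^\p_\Psi=(\psi^\p_\Psi\otimes\psi^\p_\Psi)\!\circ\Delta^\times,\qquad \epsilon^\pm\!\circ\psi^\p_\Psi=\epsilon^\pm,\]
which follow from $\Delta^+(\psi^\p)=\psi^\p\otimes 1+1\otimes\psi^\p$, $\Delta^\times(\psi^\p)=\psi^\p\otimes\psi^\p$, and $\epsilon^\pm(\psi^\p)\in O$ together with $\psi^\p_O=\id$ on $O$, and the fact that all structure maps extend as $O$-algebra homomorphisms.

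The main obstacle is then the following combinatorial divisibility. Given $f\in\Lambda_n$ with $g:=\Delta^+(f)=\sum_i a_i\otimes b_i\in\Lambda_n\otimes_O\Lambda_n$ by induction, one must show
\[\Delta^+\!\bigl(\p^{-1}(f^{N\p}-\psi^\p_\Psi f)\bigr)=\p^{-1}\bigl(g^{N\p}-(\psi^\p_\Psi\otimes\psi^\p_\Psi)(g)\bigr)\in\Lambda_{n+1}\otimes_O\Lambda_{n+1};\]
equivalently, that the numerator lies in $\p\cdot(\Lambda_{n+1}\otimes_O\Lambda_{n+1})$. Setting $\alpha_i=\p^{-1}(a_i^{N\p}-\psi^\p_\Psi a_i)\in\Lambda_{n+1}$ and $\beta_i=\p^{-1}(b_i^{N\p}-\psi^\p_\Psi b_i)\in\Lambda_{n+1}$, a direct expansion gives
\[\sum_i a_i^{N\p}\otimes b_i^{N\p}-\sum_i\psi^\p_\Psi a_i\otimes\psi^\p_\Psi b_i=\p\sum_i\bigl(\psi^\p_\Psi a_i\otimes\beta_i+\alpha_i\otimes\psi^\p_\Psi b_i+\p\,\alpha_i\otimes\beta_i\bigr),\]
while $g^{N\p}-\sum_i a_i^{N\p}\otimes b_i^{N\p}$ consists of multinomial cross terms with coefficients $\binom{N\p}{\vec k}$ (for $\vec k$ not concentrated at a single index) which are divisible by the rational prime $p$ lying below $\p$, and hence by $\p$ itself in $O$ (since $p\in\p$). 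Dividing by $\p$ yields the required membership. The multiplicative and counit cases are entirely parallel, and the monadic composition $h$ is handled by the same kind of inductive argument, using the basic identity $h(\psi^\a\odot\psi^\b)=\psi^{\a\b}$ to reduce to the same combinatorial divisibility.
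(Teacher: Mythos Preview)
The paper does not prove this proposition; it is stated without proof as part of the summary of Borger's theory in \cite{Borger11}, to which the reader is implicitly referred. So there is no ``paper's proof'' to compare against, and I assess your argument on its own merits.

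Your treatment of the biring structure is essentially correct. The reduction to $\Psi\otimes_O K$ works because $\Lambda$ is torsion-free over the Dedekind domain $O$ (hence flat), so $\Lambda\otimes_O\Lambda$ embeds in its base change to $K$; and your multinomial computation for $\Delta^+$ is right, including the observation that the cross terms carry a factor of the residue characteristic $p\in\p$. Two minor points: your formulas such as ``$\p\alpha_i$'' tacitly assume $\p$ is principal---harmless after localising at $\p$, but you should say so; and the assertion that the $O$-coaction preserves $\Lambda$ because $\Lambda$ is ``$\Psi$-stable'' conflates stability under the Adams endomorphisms $\psi^\a_\Psi$ with stability under the coaction maps $s_\Psi$, which are different and deserve their own (easy) check.

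The weak point is the plethory composition $h_\Lambda:\Lambda\odot_O\Lambda\to\Lambda$. Saying it is ``handled by the same kind of inductive argument'' obscures a genuine difference. For $\Delta^\pm$ you are checking that a fixed $K$-algebra map restricts to the sub-$O$-algebra $\Lambda$; but for $h$ the domain $\Lambda\odot_O\Lambda$ must first be formed using the biring structure you have just established, and your ``restriction from $K$'' strategy would require $\Lambda\odot_O\Lambda$ to be torsion-free, which you have not justified. Moreover, your stated simultaneous induction ``$h(\Lambda_n\odot\Lambda_n)\subset\Lambda_n$'' does not run cleanly: in the inductive step one meets $h(f_0\odot g)$ with $f_0\in\Lambda_n$ but $g\in\Lambda_{n+1}$, which is not covered by the hypothesis as stated. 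A cleaner route is to argue directly that for each fixed $\mu\in\Lambda$ the $O$-algebra map $\Psi\to\Lambda$, $\psi^\a\mapsto\psi^\a_\Lambda(\mu)$, extends over each $\Lambda_n$ with image in $\Lambda$ (not in $\Lambda_n$), using the commutation $\mathrm{ev}_\mu\circ\psi^\p_\Psi=\psi^\p_\Lambda\circ\mathrm{ev}_\mu$ together with closure of $\Lambda$ under $g\mapsto\p^{-1}(g^{N\p}-\psi^\p_\Lambda g)$. One must then check that these evaluation maps assemble into a well-defined $O$-algebra map out of $\Lambda\odot_O\Lambda$ (i.e.\ that the relations defining $\odot$ are respected) and that the resulting $h_\Lambda$ is a morphism of birings. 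None of this is hard, but it is a different induction from the one you describe, and ``the same kind'' undersells what must be done.
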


\begin{rema} Before we examine exactly what a $\Lambda$-ring is, the fact that $\Psi\to \Lambda$ is a morphism of $O$-plethories implies that every $\Lambda$-ring $A$ inherits a $\Psi$-ring structure and hence a family of endomorphisms $\psi_A^{\a}: A\to A$ for each $\a\in P$ such that $\psi_{(1)}=\id_A$ and $\psi_{A}^{\a\b}=\psi_{A}^\a\circ \psi_{A}^\b$ for all $\a, \b\in P$.
\end{rema}

\begin{prop}\label{prop:lambda-actions-and-frobenius-lifts} Let $A$ be an $O$-algebra. We have the following:
\begin{enumerate}[label=\textup{(\roman*)}]
\item If $A$ is a $\Lambda$-ring the endomorphism $\psi^\p_A: A\to A$ satisfies \[\psi^\p_A(a)=a^{N\p}\bmod \p A\] for each maximal ideal $\p\in P$.
\item If $A$ is a $\Psi$-ring and $\p$-torsion free for each prime $\p\in P$ (i.e.\ flat at $\p$ for each prime $\p\in P$) then the given $\Psi$-structure comes from a $\Lambda$-structure if and only if for each prime ideal $\p\in P$ the endomorphism $\psi^\p_A: A\to A$ lifts the $N\p$-power Frobenius \[\psi^\p_{A}(a)=a^{N\p}\bmod \p A.\] In this case the $\Lambda$-structure inducing the $\Psi$-structure is unique.
\item If each $\p\in P$ is invertible in $A$ then every $\Psi$-structure on $A$ is induced by a unique $\Lambda$-structure.
\end{enumerate}
\end{prop}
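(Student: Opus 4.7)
The plan is to prove (i) directly from the construction of $\Lambda$, then establish (iii) by observing that inverting the primes in $P$ collapses $\Lambda$ onto $\Psi$, and finally combine these two to get (ii) by extending scalars to the fraction field $K$ and descending.

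For (i), the key element is $\xi_\p := \p^{-1}(\psi^{(1)N\p} - \psi^{\p}) \subset \Lambda$, which lies in $\Lambda$ by the very definition of $\Lambda_1$ (take $f = \psi^{(1)} = e$). The claim is local on $\Spec(O)$, so I may pick a generator $\pi$ of $\p$ and view $\pi^{-1}(e^{N\p} - \psi^\p)$ as a single element of $\Lambda$. Given a $\Lambda$-ring $A$ with structure map $h_A : \Lambda \odot A \to A$ and $a \in A$, I apply $h_A$ to the relation $\pi \cdot \pi^{-1}(e^{N\p} - \psi^\p) = e^{N\p} - \psi^\p$ composed with $a$. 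Using the biring relations $(\phi\phi')\odot b = (\phi\odot b)(\phi'\odot b)$, $(\phi - \phi')\odot b = \phi\odot b - \phi'\odot b$, together with $e\odot a = a$ and the monad compatibility $h_A(\psi^\p \odot a) = \psi^\p_A(a)$, I obtain
\[
a^{N\p} - \psi^\p_A(a) = \pi \cdot h_A\bigl(\pi^{-1}(e^{N\p} - \psi^\p) \odot a\bigr) \in \p A.
\]

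For (iii), set $O_P := O[\{\p^{-1} : \p \in P\}]$; if every $\p \in P$ is invertible in $A$ then $A$ is an $O_P$-algebra. The inductive generators $\p^{-1}(f^{N\p} - \psi^\p(f))$ of $\Lambda$ over $\Psi$ visibly lie in $\Psi \otimes_O O_P$, so $\Lambda \otimes_O O_P = \Psi \otimes_O O_P$. Because the composition product is compatible with base change on the biring side, $\Lambda \odot_O A = (\Lambda \otimes_O O_P) \odot_{O_P} A = (\Psi \otimes_O O_P) \odot_{O_P} A = \Psi \odot_O A$, and the structure of $\Lambda$-plethory on $\Lambda \otimes_O O_P$ agrees with that of $\Psi \otimes_O O_P$. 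Hence giving a $\Lambda$-structure on $A$ is the same as giving a $\Psi$-structure on $A$.

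For (ii), $\p$-torsion-freeness makes $A \hookrightarrow A_K := A \otimes_O K$ injective. Uniqueness: any $\Lambda$-structure on $A$ extending the given $\Psi$-structure becomes, after $-\otimes_O K$, the unique $\Lambda$-structure on $A_K$ produced by (iii), so is determined on $A$ by its restriction from $A_K$. For existence, extend the $\Psi$-structure to $A_K$ and apply (iii) to get a canonical $\Lambda$-structure $h_{A_K} : \Lambda \odot A_K \to A_K$. I must show this restricts to a map $\Lambda \odot A \to A$. Using the filtration $\Lambda = \bigcup_n \Lambda_n$ I argue by induction: the base case $\Lambda_0 = \Psi$ is the hypothesis, and for the inductive step a typical generator $\p^{-1}(f^{N\p} - \psi^\p(f))$ of $\Lambda_{n+1}$ acts on $a \in A$ by $\pi^{-1}(b^{N\p} - \psi^\p_A(b))$ where $b := h_{\Lambda_n}(f \odot a) \in A$ by induction hypothesis; the Frobenius-lift assumption gives $b^{N\p} - \psi^\p_A(b) \in \p A$, and $\p$-torsion-freeness makes division by $\pi$ unambiguous, producing an element of $A$.

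The main obstacle will be the inductive step in (ii): I need to verify that the structure map $h_{A_K}$, built abstractly from the plethory structure of $\Lambda$, really acts on the generators $\p^{-1}(f^{N\p} - \psi^\p(f)) \odot a$ by the explicit formula $\pi^{-1}(b^{N\p} - \psi^\p_A(b))$. This requires carefully unwinding the coaddition, comultiplication, and the relation $h_A \circ (\Psi \odot h_A) = h_A \circ (h_\Psi \odot A)$ of the monad, and checking that the composition-product identities faithfully reflect the ring operations under $h_{A_K}$. The non-principal case of $\p$ is handled by working locally on $\Spec(O)$, where the claim is insensitive to the choice of generator.
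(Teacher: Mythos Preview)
Your proposal is essentially correct and actually supplies more than the paper does: the paper simply cites \cite{Borger11} for (i) and (ii), and then derives (iii) from (ii) by the one-line remark that when every $\p \in P$ is invertible in $A$ the congruence $\psi^\p_A(a) \equiv a^{N\p} \bmod \p A$ is vacuous (since $\p A = A$), so the Frobenius-lift condition of (ii) holds automatically. You reverse this logical dependency, proving (iii) directly via the collapse $\Lambda \otimes_O O_P = \Psi \otimes_O O_P$ and then using (iii) to obtain (ii) by extension and descent. Your route has the merit of making the role of the flatness hypothesis in (ii) completely transparent.

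One correction: in (ii) you should localise to $A_P := A \otimes_O O[\p^{-1} : \p \in P]$ rather than $A_K = A \otimes_O K$. The hypothesis only guarantees $\p$-torsion-freeness for primes $\p \in P$, so if $P$ omits some prime $\l$ then $A$ may carry $\l$-torsion and $A \to A_K$ need not be injective; by contrast $A \to A_P$ is injective under the stated hypothesis and (iii) still applies to $A_P$. For the inductive step you flag, the cleanest way to avoid circularity (you are constructing $h_A$ while wanting to invoke its monad axioms) is to work comonadically: for each $a \in A$ show that the $O$-algebra map $\chi_a : \Psi \to A$, $\psi^\b \mapsto \psi^\b_A(a)$, extends to $\Lambda$ by verifying inductively that $\chi_a(\Lambda_n) \subset A$ inside $A_P$, using the intertwining $\chi_a \circ \psi^\p_\Lambda = \psi^\p_{A} \circ \chi_a$ (checked on the generators $\psi^\b$ and extended by multiplicativity). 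The coalgebra axioms for the resulting map $A \to W(A)$ then follow from those of $A \to \Gamma(A)$ because the ghost map $W(A) \to \Gamma(A)$ is injective for $\p$-torsion-free $A$.
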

\begin{proof} (i) and (ii) are (essentially) the definition of $\Lambda$-ring given in \cite{Borger11}. In particular, see \S\S 1.6--1.19 \cite{Borger11}.

(iii) If each $P$ in $A$ is invertible any endomorphism $\psi_A^\p: A\to A$ lifts the $N\p$-power Frobenius (trivially) and so this follows from (i) and (ii).
\end{proof}

\begin{rema} We point out that if $A\to B$ is a homomorphism of $\Lambda$-rings then $\psi_A^\p=\Fr_{A_\p}^{N\p}\bmod \p A $ and the relative morphisms $\psi_{B/A}^{\p}: \psi_A^{\p*}(B)\to B$ are now lifts of the relative $N\p$-power Frobenius: \[\psi_{B/A}^{\p}=\Fr_{B_\p/A_\p}^{N\p}: \Fr_{A_\p}^{N\p*}(B_\p)\to B_\p\] where we write $A_\p=A\otimes_{O}\F_\p$ and $B_\p=B\otimes_{O}\F_\p$.
\end{rema}

\begin{exem} Using (\ref{prop:lambda-actions-and-frobenius-lifts}) we are now able to give the first examples of $\Lambda$-rings:
\begin{enumerate}[label=(\roman*)]
\item Of course $O$ is always a $\Lambda$-ring with $\psi_O^\a: O\to O$ equal to the identity. The fact that $\psi_O^{\p}$ lifts the $N\p$-power Frobenius is then Fermat's little theorem: \[\psi_O^\p(a)=a=a^{N\p}\bmod \p\] (recall that $O/\p=\F_\p$ is a finite field with $N\p$-elements).
\item The polynomial ring $O[T]$ is a $\Lambda$-ring with $\psi_{O[T]}^{\a}: O[T]\to O[T]$ given by $T\mto T^{N\a}$.
\item If $O=O_K$ is the ring of integers in a number field, $L/K$ is an abelian extension and $P\subset \Id_{O_K}$ is the sub-monoid generated by the primes which are unramified in $L/K$, then the ring of integers $O_L$ of $L$ is a $\Lambda_P$-ring with $\psi_{O_L}^\p=\sigma_{L/K, \p}: O_L\to O_L$ given by the Frobenius element $\sigma_{L/K, \p}\in G(L/K)$ (cf.\ (\ref{subsec:finite-extensions-frobenius})). Examples of this form will be very important later give the first link between $\Lambda$-structures and class field theory.
\end{enumerate}
\end{exem}

\subsection{}\label{subsec:single-principal-prime} We now give an explicit description of what it means for a non-flat ring to be have $\Lambda$-structure in the special case $P=\{\p, \p^2, \ldots\}\subset \Id_{O}$ is generated by a single prime ideal of $O$ and the prime ideal $\p=(\pi)$ is principal. In this case, a $\Lambda$-structure on an $O$-algebra can described explicitly, and this notion was discovered independently by Buium \cite{Buium95} (for some of the many interesting arithmetic applications of $\delta$-rings and $\delta$-geometry see \cite{Buium05}).

So let us describe $\Lambda$ in this situation. Define elements $\delta_n\in \Lambda$ for $n\geq 0$ inductively by $\delta_0=\psi^{(1)}$ and \[\delta_{n+1}=\pi^{-1}(\psi^\p_{\Lambda}(\delta_n)-(\delta_n)^{N\p})\in \Lambda.\] Let $\Delta_\pi\subset \Lambda$ be the sub-$O$-algebra generated by the elements $\{\delta_{n}\}_{n\geq 0}$.

\begin{prop}\label{prop:lambda-one-prime-is-locally-polynomial} The inclusion $\Delta_{\pi}\subset \Lambda$ is an equality and $\Delta_{\pi}$ is freely generated as an $O$-algebra by the elements $\{\delta_{n}\}_{n\geq 0}.$
\end{prop}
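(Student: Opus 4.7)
The plan is to prove the two assertions by working inside the polynomial ring $\Psi\otimes_O K = K[\psi^{\p^n}:n\geq 0]$ that contains $\Lambda$. First I would show $\Psi\subseteq\Delta_\pi$. By construction $\psi^\p_\Lambda(\delta_n) = \pi\,\delta_{n+1} + \delta_n^{N\p}\in\Delta_\pi$, so the $O$-algebra endomorphism $\psi^\p_\Lambda$ of $\Lambda$ preserves the sub-$O$-algebra $\Delta_\pi$. Since $\delta_0=\psi^{(1)}\in\Delta_\pi$ and $\psi^{\p^{n+1}}=\psi^\p_\Lambda(\psi^{\p^n})$, a straightforward induction gives $\psi^{\p^n}\in\Delta_\pi$ for every $n\geq 0$, hence $\Psi\subseteq\Delta_\pi$.

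Next, writing $\delta(f):=\pi^{-1}(\psi^\p_\Lambda(f)-f^{N\p})$, I would show by a structural induction on $f\in O[\delta_0,\delta_1,\ldots]$ that $\delta(\Delta_\pi)\subseteq\Delta_\pi$. The base cases are $\delta(\delta_n)=\delta_{n+1}$ and $\delta(r)=(r-r^{N\p})/\pi\in O$, the latter lying in $O$ by Fermat's little theorem in the finite field $O/\p$. For the inductive step one uses the standard $p$-derivation identities
\begin{align*}
\delta(a+b) &= \delta(a) + \delta(b) - \sum_{k=1}^{N\p-1}\pi^{-1}\binom{N\p}{k}a^k b^{N\p-k},\\
\delta(ab) &= a^{N\p}\delta(b) + b^{N\p}\delta(a) + \pi\,\delta(a)\,\delta(b),\\
\delta(ra) &= r\,\delta(a) + \tfrac{r-r^{N\p}}{\pi}\,a^{N\p}\qquad (r\in O),
\end{align*}
whose coefficients all lie in $O$ because $\pi$ divides the residue characteristic of $O$ at $\p$, which in turn divides $\binom{N\p}{k}$ for $1\leq k\leq N\p-1$. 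Since $\Lambda$ is by construction the smallest sub-$O$-algebra of $\Psi\otimes_O K$ containing $\Psi$ and stable under $\delta$, combining this with the previous step yields $\Lambda\subseteq\Delta_\pi$; the reverse inclusion holds by definition, so $\Lambda=\Delta_\pi$.

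For the freeness, I would argue by induction on $n$ that $\psi^{\p^n} = \pi^n\delta_n + P_n(\delta_0,\ldots,\delta_{n-1})$ for some $P_n\in O[\delta_0,\ldots,\delta_{n-1}]$, starting from $\psi^{(1)}=\delta_0$ and $\psi^\p = \pi\delta_1+\delta_0^{N\p}$. Applying $\psi^\p_\Lambda$ to the induction hypothesis for $n-1$ and using $\psi^\p_\Lambda(\delta_i)=\pi\,\delta_{i+1}+\delta_i^{N\p}$ immediately produces the desired form for $n$. This exhibits the change of variables $\{\psi^{\p^n}\}\leftrightarrow\{\delta_n\}$ as an upper-triangular $K$-algebra transformation in $\Psi\otimes_O K$ with nonzero pivots $\pi^n\in K^\times$, hence a $K$-algebra automorphism. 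Therefore $\Psi\otimes_O K = K[\delta_0,\delta_1,\ldots]$ is itself a polynomial ring in the $\delta_n$, so these elements are algebraically independent over $K$, and \emph{a fortiori} over $O$; this proves $\Delta_\pi = O[\delta_0,\delta_1,\ldots]$ is freely generated. The main technical obstacle is the verification of the $p$-derivation identities, which rests on the divisibilities $\pi\mid\binom{N\p}{k}$ for $1\leq k\leq N\p-1$ and $\pi\mid r-r^{N\p}$ in $O$; everything else is formal manipulation inside $\Psi\otimes_O K$.
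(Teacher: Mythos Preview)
Your proof is correct. The paper itself does not give an argument here: its entire proof reads ``See \S 1.19 of \cite{Borger11}.'' You have supplied a complete direct proof, so there is nothing to compare at the level of strategy.

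A few minor remarks on presentation. First, your characterisation of $\Lambda$ as the smallest sub-$O$-algebra of $\Psi\otimes_O K$ containing $\Psi$ and stable under $\delta$ matches the paper's inductive construction of $\Lambda$ via the $\Lambda_n$ exactly, so that step is well-grounded. Second, the divisibility $\pi\mid\binom{N\p}{k}$ in $O$ for $1\leq k\leq N\p-1$ deserves one more word: since $N\p=p^f$ with $p$ the residue characteristic, Lucas' theorem (or Kummer's) gives $p\mid\binom{p^f}{k}$ in $\Z$, and then $p\in\p=(\pi)$ in $O$ yields $\pi\mid\binom{N\p}{k}$. Third, your triangular change-of-variables argument for freeness is clean; one could phrase it slightly more directly by noting that the relation $\psi^{\p^n}=\pi^n\delta_n+P_n(\delta_0,\ldots,\delta_{n-1})$ lets you solve recursively for each $\delta_n$ as a polynomial over $K$ in $\psi^{(1)},\ldots,\psi^{\p^n}$, so that $K[\delta_0,\delta_1,\ldots]=K[\psi^{(1)},\psi^\p,\ldots]=\Psi\otimes_O K$ and algebraic independence over $K$ follows from that of the $\psi^{\p^n}$.
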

\begin{proof} See \S 1.19 of \cite{Borger11}.
\end{proof}

\begin{coro}\label{coro:delta-lambda-rings} With notation as in \textup{(\ref{subsec:single-principal-prime})} to give an $O$-algebra $A$ a $\Lambda$-structure is equivalent to defining a map \[\delta_\pi: A\to A\] such that:
\begin{enumerate}[label=\textup{(\roman*)}]
\item for $r\in O$ we have \[\delta_\pi(r)=\frac{r^{N\p}-r}{\pi},\]
\item for $a, b\in A$ we have \[\delta_\pi(ab)=a^{N\p}\delta_\pi(b)+b^{N\p}\delta_\pi(a)+\pi\delta_\pi(a)\delta_\pi(b),\] and
\item for $a, b\in A$ we have \[\delta_\pi(a+b)=\delta_\pi(a)+\delta_\pi(b)+\sum_{i=1}^{{N\p}-1}\frac{1}{\pi}{{N\p} \choose i} a^{{N\p}-i}b^i.\]
\end{enumerate}
\end{coro}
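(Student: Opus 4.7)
The plan is to exploit the explicit description of $\Lambda$ from \ref{prop:lambda-one-prime-is-locally-polynomial}, namely that $\Lambda$ is freely generated as an $O$-algebra by $\delta_0 = \psi^{(1)}, \delta_1, \delta_2, \ldots$ with the recursion $\pi\delta_{n+1} = \psi^{\p}_{\Lambda}(\delta_n) - \delta_n^{N\p}$. For the forward direction, given a $\Lambda$-ring $A$ I set $\delta_\pi := \delta_1^A: A \to A$, where $\delta_n^A: A \to A$ denotes the operation obtained by acting with $\delta_n \in \Lambda$. The relation $\pi\delta_1 = \psi^\p - \delta_0^{N\p}$ inside $\Lambda$ (which is the $n=0$ case of the recursion, using $\delta_0 = \psi^{(1)}$) forces $\psi_A^\p(a) = a^{N\p} + \pi\delta_\pi(a)$. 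Identities (ii) and (iii) are then nothing but the assertion that $\psi_A^\p$ is multiplicative and additive, after expanding $\psi_A^\p(ab)$, $\psi_A^\p(a+b)$ in terms of $\delta_\pi$ and cancelling a factor of $\pi$; identity (i) is the statement that $\psi_A^\p$ restricts to the identity on $O \subset A$, which holds because $\Psi\to\Lambda$ is a morphism of plethories and $\psi_O^\a = \id_O$.

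For the converse, starting from $(A, \delta_\pi)$ satisfying (i)--(iii), the same calculation shows that $\phi: A \to A$ defined by $\phi(a) := a^{N\p} + \pi\delta_\pi(a)$ is an $O$-algebra endomorphism lifting the $N\p$-power Frobenius. The uniqueness of the $\Lambda$-structure, if it exists, is immediate: from the plethory identity $(\psi^\p_\Lambda(\delta_n))^A = \psi_A^\p \circ \delta_n^A$ and the recursion defining $\delta_{n+1}$ in $\Lambda$, I get $\pi\delta_{n+1}^A(a) = \psi_A^\p(\delta_n^A(a)) - \delta_n^A(a)^{N\p} = \pi\delta_\pi(\delta_n^A(a))$, so necessarily $\delta_{n+1}^A = \delta_\pi \circ \delta_n^A$, i.e.\ $\delta_n^A = \delta_\pi^{\circ n}$. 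This also pins down the candidate action: the map $h_A: \Lambda\odot A \to A$ must be the unique $O$-algebra homomorphism sending $\delta_n \odot a \mapsto \delta_\pi^n(a)$.

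What remains is to check that the formula $\delta_n \odot a \mapsto \delta_\pi^n(a)$ actually respects the relations in the composition product $\Lambda \odot A$ (i.e.\ that the collection $(\delta_\pi^n)_{n\geq 0}$ is compatible with the coaddition, comultiplication and $O$-coaction on each $\delta_n \in \Lambda$), and that the resulting $h_A$ satisfies the monad associativity axiom. When $A$ is $\p$-torsion free this is trivial: $h_A$ is determined by $\phi$ alone, and part (ii) of \ref{prop:lambda-actions-and-frobenius-lifts} directly upgrades the $\Psi$-structure generated by $\phi$ to a unique $\Lambda$-structure. For general $A$ I reduce to the torsion-free case by a universality argument. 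Consider the free $O$-algebra $D = O[x_n, y_n : n \geq 0]$ equipped with the map $\delta_\pi(x_n) = x_{n+1}$, $\delta_\pi(y_n) = y_{n+1}$ and extended to $D$ by forcing identities (i)--(iii); by \ref{prop:lambda-one-prime-is-locally-polynomial} applied to $\Lambda\odot O[x_0, y_0]$ this ring is $\p$-torsion free and carries a canonical $\Lambda$-structure. Every polynomial identity that needs to be verified on $(A, \delta_\pi)$ (compatibility of $\delta_\pi^n$ with sums, products, and scalars from $O$) is a universal identity in $D$, and hence holds in any $O$-algebra equipped with a $\delta_\pi$ satisfying (i)--(iii).

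The main obstacle is the bookkeeping in this last step: one must identify the correct universal $\delta_\pi$-ring on two generators and confirm that it coincides (as an $O$-algebra with a self-map) with the composition product $\Lambda \odot O[x, y]$, so that the torsion-freeness of the latter --- which is the genuine input from \ref{prop:lambda-one-prime-is-locally-polynomial} --- transports to torsion-freeness of the former. Once this identification is made, the flat case of \ref{prop:lambda-actions-and-frobenius-lifts}(ii) does all of the work, and the equivalence claimed in the corollary follows formally.
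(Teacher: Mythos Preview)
The paper's own proof of this corollary is a bare citation to \S 1.19 of \cite{Borger11}, so there is no in-text argument to compare against. Your sketch is the standard argument and is correct in outline.

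One wrinkle in your forward direction deserves attention: you deduce (ii) and (iii) from the ring-homomorphism property of $\psi_A^\p$ by ``cancelling a factor of $\pi$'', but that cancellation is not licit in a general $\Lambda$-ring $A$. The repair is exactly the universality device you already invoke for the converse. Identities (ii) and (iii) are really statements about $\Delta_\Lambda^\times(\delta_1)$ and $\Delta_\Lambda^+(\delta_1)$ inside $\Lambda\otimes_O\Lambda$; since $\Lambda$ is a polynomial $O$-algebra by \ref{prop:lambda-one-prime-is-locally-polynomial}, so is $\Lambda\otimes_O\Lambda$, and there the cancellation of $\pi$ is legitimate. Once established as biring identities, (ii) and (iii) specialise to every $\Lambda$-ring automatically. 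Your converse-direction reduction to the free $\delta_\pi$-ring $D$ on two generators and its identification with $\Lambda\odot O[x_0,y_0]$ is the right move; the identification of your shift operator $\delta_\pi$ on $D$ with the plethystic operation $\delta_1^D$ holds because both induce the same Frobenius lift on the $\p$-torsion-free ring $D$, hence agree by \ref{prop:lambda-actions-and-frobenius-lifts}(ii). With these small clarifications your argument is complete.
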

\begin{proof} Also see \S 1.19 of \cite{Borger11}.
\end{proof}

\begin{rema} Let us spell out the equivalence between $\Lambda$-rings and rings with an operator $\delta_\pi$ satisfying (\ref{prop:lambda-one-prime-is-locally-polynomial}) explicitly, in the case $A$ is flat.

If $\delta_\pi: A\to A$ is a map satisfying the conditions (i)--(iii) of (\ref{prop:lambda-one-prime-is-locally-polynomial}) then it follows that the map \[\psi_A^{\p}: A\to A: a\mto a^{N\p}+\pi \delta_\pi(a)\] is a $O$-algebra homomorphism satisfying \[\psi^{A}_\p(a)=a^{N\p}+\pi \delta_\pi(a)=a^{N\p}\bmod \p A.\]

Conversely, if $A$ is a $\p$-torsion free $\Lambda$-ring then the relation \[\psi_A(a)=a^{N\p}\bmod \p A\] implies that there is a unique $\delta_\pi(a)\in A$ such that \[\psi_A(a)=a^{N\p}+\pi \delta_\pi(a)\] and the fact that $\psi_A^\p$ is an $O$-algebra homomorphism forces the map $a\mto \delta_\pi(a)$ to satisfy conditions (i)--(iii) of (\ref{prop:lambda-one-prime-is-locally-polynomial}).
\end{rema}

\begin{rema} We can now explain why the approach with plethories was taken. To define a $\Lambda$-ring as an $O$-algebra with a Frobenius lift $\psi_A^\p: A\to A$ is a perfectly reasonable thing to do, however, there is a hidden existential quantifier in this definition: that for all $a\in A$ there exists an $a_\p\in \p A$ such that $\psi_A^\p(a)=a^{N\p}+a_\p$. This causes problems from the point of view of universal algebra and the effect of the plethystic approach is to remove this existential quantifier so that, rather than the Frobenius lift $\psi_A^\p$, is it the operator $\delta_\pi$ which determines the structure.
\end{rema}

\begin{coro}\label{coro:structure-map-lambda-ring-injective} If $A$ is a $\Lambda$-ring then the kernel of the homomorphism $O\to A$ is either prime to all $\p\in P$ or $A=0$ is the zero ring.
\end{coro}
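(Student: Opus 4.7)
The plan is to assume $A \neq 0$ and show that $I := \ker(O \to A)$ is either zero or coprime to every $\p \in P$. Suppose for contradiction that $I$ is a non-zero ideal with $I \subset \p$ for some $\p \in P$; I shall derive a contradiction by means of a single $\Lambda$-operator that strictly lowers $\p$-adic valuation by one.

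First I would observe that the structure map $O \to A$ intertwines the $\Lambda$-operations: for every $\phi \in \Lambda$ and every $r \in O$ one has $\phi_A(r_A) = \phi_O(r)_A$, where $r_A$ denotes the image of $r$ in $A$. Indeed, $O$ carries its tautological $\Lambda$-structure with $\psi^\a_O = \id_O$ for all $\a \in P$, and since each $\psi^\a_A$ is an $O$-algebra endomorphism of $A$ it must fix the image of $O$; this extends to all of $\Lambda$ by the plethory action.

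Next I would exhibit the operator. Choose $a \in K$ with $v_\p(a) = -1$ and $v_\q(a) \geq 0$ for every prime $\q \neq \p$; such an $a$ exists in any Dedekind domain. Then $a \in \p^{-1}$, so applying the inductive construction~(\ref{eqn:put-in-frob-residues}) with $f = e = \psi^{(1)} \in \Lambda_0 = \Psi$ yields
\[\lambda_a \;:=\; a \cdot \bigl(e^{N\p} - \psi^\p\bigr) \;\in\; \Lambda.\]
The corresponding operator acts on $O$, where $\psi^\p$ is the identity, by $\delta_a(r) = a(r^{N\p} - r)$; this lies in $O$ since $r^{N\p} - r \in \p$ by Fermat, and satisfies $\delta_a(0) = 0$. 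Combined with the previous paragraph, $\delta_a(r)_A = \delta_a(0_A) = 0_A$ for every $r \in I$, i.e.\ $\delta_a(I) \subset I$.

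The contradiction is then a one-line valuation estimate. Let $n := v_\p(I) \geq 1$ and pick $r \in I$ realising this minimum, so $v_\p(r) = n$. Since $N\p \geq 2$, we have $v_\p(r^{N\p}) = n N\p > n$, so $v_\p(r^{N\p} - r) = n$ and
\[v_\p(\delta_a(r)) \;=\; v_\p(a) + n \;=\; n - 1.\]
But $\delta_a(r) \in I \subset \p^n$ would force $v_\p(\delta_a(r)) \geq n$, a contradiction. Hence $A = 0$. The only point requiring any of the plethystic machinery is the existence of the operator $\delta_a$ for possibly non-principal $\p$; when $\p = (\pi)$ is principal, $\delta_{\pi^{-1}}$ recovers (up to a unit) the operator $\delta_\pi$ of~(\ref{coro:delta-lambda-rings}), so no substantial obstacle arises.
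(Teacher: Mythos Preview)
Your proof is correct and follows essentially the same idea as the paper's: construct a $\delta$-type operator in $\Lambda$ that lowers the $\p$-adic valuation of elements of $I$ by one, then iterate (or, as you do, contradict minimality of $n=v_\p(I)$). The paper carries this out only in the case where $\p=(\pi)$ is principal, using the standard operator $\delta_\pi$ of (\ref{coro:delta-lambda-rings}), and explicitly remarks that the reduction to this case is deferred; your choice of $a\in\p^{-1}$ with $v_\q(a)\geq 0$ for $\q\neq\p$ neatly handles non-principal $\p$ without any such reduction, so in this respect your argument is actually more complete than the one in the paper.

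Two small remarks. First, your final sentence ``Hence $A=0$'' is misphrased: you assumed $A\neq 0$ and derived a contradiction from $I\neq 0$ and $I\subset\p$, so the correct conclusion is ``hence $I$ is coprime to every $\p\in P$'', which is what you set out to show. Second, the step ``this extends to all of $\Lambda$ by the plethory action'' deserves one more word: the cleanest justification is that $\Lambda\odot O=O$, so $O$ is the initial $\Lambda$-ring and the structure map $O\to A$ is automatically a $\Lambda$-morphism, which is exactly what you need to conclude $\delta_a(I)\subset I$.
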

\begin{proof} We shall prove this in the situation of (\ref{prop:lambda-one-prime-is-locally-polynomial}) remarking that it is possible to reduce to this case once more of the theory has been set-up. So let $A$ be a $\Lambda$-ring and assume that $\pi^n\in \p^n\subset \ker(O\to A).$ As the homomorphism $O\to A$ is a $\Lambda$-homomorphism the kernel is stabilised by the endomorphism $\delta_\pi: O\to O$ which is given by \[a\mto \frac{a^{N\p}-a}{\pi}.\] In particular, we see that \[\delta_\pi(\pi^n)=\frac{\pi^{N\p n}-\pi^n}{\pi}=\pi^{N\p n -1}-\pi^{n-1}=\pi^{n-1}(\pi^{N\p(n-1)}-1)\in \p^{n-1}.\] Therefore, $\p^{n-1}\subset \ker(O\to A)$ and by induction we find that $O\subset \ker(O\to A)$ and so $A$ is the zero ring.
\end{proof}

\subsection{} Let $P'\subset P$ be a sub-monoid generated by some set of prime ideals. Then the restriction of the map $h_{\Lambda_P}:\Lambda_{P}\odot \Lambda_{P}\to \Lambda_P$ along $\Lambda_{P'}\odot \Lambda_P\to \Lambda_P$ makes $\Lambda_{P}$ a $\Lambda_{P'}$-ring. Similarly, for $\Psi_P$ and $\Psi_{P'}$.

Now denote by \[P''=\{\a\in P: (\a, \b)=(1) \text{ for all } \b\in P'\}\subset P\] so that $P'\cdot P''=P$ and $P'\cap P''=\{O\}$. By the remark above the map $\Lambda_{P''}\to \Lambda_P$ extends by adjunction to a homomorphism of $\Lambda_{P'}$-rings \[\alpha_{P', P''}:\Lambda_{P'}\odot \Lambda_{P''}\to \Lambda_{P}\] which is also a homomorphism of $\Psi_{P}$-rings where $P''\subset P$ acts on the left factor.

\begin{prop}\label{prop:lambda-commute} The map $\alpha_{P', P''}: \Lambda_{P'}\odot \Lambda_{P''}\to \Lambda_{P}$ defined above is a $\Lambda_{P'}$-isomorphism.
\end{prop}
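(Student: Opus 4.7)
My plan is to prove injectivity and surjectivity of $\alpha_{P',P''}$ separately, via a rational comparison and a universal property.

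First, at the level of $\Psi$-plethories, $\Psi_{P'}\odot \Psi_{P''}\cong \Psi_P$: both sides corepresent $A\mapsto \prod_{\a\in P}A$, using the unique factorization $\a=\a'\a''$ with $\a'\in P'$ and $\a''\in P''$ afforded by the hypothesis $P=P'\cdot P''$ with $P'\cap P''=\{O\}$. Since $K=\mathrm{Frac}(O)$ inverts all primes in $P$, part (iii) of \textup{(\ref{prop:lambda-actions-and-frobenius-lifts})} gives $\Lambda_Q\otimes_O K=\Psi_Q\otimes_O K$ for $Q\in \{P',P'',P\}$; combined with the previous identification, $\alpha_{P',P''}\otimes_O K$ is an isomorphism. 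Both sides are $O$-flat ($\Lambda_P\subset \Psi_P\otimes_O K$ is a submodule of a free module, and $\Lambda_{P'}\odot \Lambda_{P''}$ is flat by a direct check using the presentation of the composition product in \textup{(\ref{subsec:corep-functor-biring})} combined with flatness of the two factors), so injectivity of $\alpha_{P',P''}$ follows.

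For surjectivity, I will endow $\Lambda_{P'}\odot \Lambda_{P''}$ with a $\Lambda_P$-ring structure extending its given $\Lambda_{P'}$-structure. The additional commuting $\Lambda_{P''}$-action comes by transporting the $\Lambda_{P''}$-action on the right factor through the functorial composition product; its commutation with the $\Lambda_{P'}$-action can be checked by mapping (injectively, by the above) to $\Lambda_P$, where $\psi^{\p'}\psi^{\p''}=\psi^{\p''}\psi^{\p'}$ holds by definition of the ambient plethory. The two commuting actions combine into a $\Psi_{P'}\odot \Psi_{P''}=\Psi_P$-structure on $\Lambda_{P'}\odot \Lambda_{P''}$, which refines to a $\Lambda_P$-structure by \textup{(\ref{prop:lambda-actions-and-frobenius-lifts})} since the Frobenius-lift congruences hold within each factor. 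With a $\Lambda_P$-structure on the source, the inductive filtration used to build $\Lambda_P$ from $\Psi_P$ is contained in the image of $\alpha_{P',P''}$: the base case $\Psi_P$ lies in the image via $\Psi_{P'}\odot \Psi_{P''}\cong \Psi_P$, and each inductive step $\pi^{-1}(f^{N\p}-\psi^\p f)$ is achieved by the $\delta$-operator of \textup{(\ref{coro:delta-lambda-rings})} associated to a local generator $\pi$ of the relevant prime $\p\in P$, now available on $\Lambda_{P'}\odot \Lambda_{P''}$ thanks to the $\Lambda_P$-structure just constructed.

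The main obstacle will be the rigorous construction of the commuting $\Lambda_{P''}$-action on the composition product; this reflects a monoidal symmetry of plethystic composition with respect to commuting sub-monoids, and must be deduced from the defining coalgebra and Frobenius-lift data of the two factors.
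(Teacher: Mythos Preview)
The paper does not prove this statement; it simply cites Proposition~5.3 of \cite{Borger11}. So there is no argument in the paper to compare yours against, and your attempt is already more than the paper offers.

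Your outline has a genuine gap. The flatness of $\Lambda_{P'}\odot\Lambda_{P''}$ over $O$ is asserted as ``a direct check using the presentation of the composition product combined with flatness of the two factors,'' but this is not direct: the composition product is a quotient of a free polynomial algebra by relations encoding the coaddition and comultiplication of $\Lambda_{P'}$, and quotients do not preserve flatness. In fact, showing that $\Lambda_{P'}\odot B$ is $O$-torsion-free for flat $B$ is essentially as hard as the proposition itself --- the natural way to see it is to exhibit the composition product as a polynomial ring, which amounts to iterating the single-prime description (\ref{prop:lambda-one-prime-is-locally-polynomial}), i.e.\ the very decomposition you are proving. Without flatness your injectivity argument fails, and since you then need flatness again to invoke (\ref{prop:lambda-actions-and-frobenius-lifts})(ii) and upgrade the $\Psi_P$-structure to a $\Lambda_P$-structure, the surjectivity argument is also incomplete.

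A smaller point: what you flag as ``the main obstacle'' is actually the easy part. The $\Lambda_{P''}$-action on $\Lambda_{P'}\odot\Lambda_{P''}$ is just $\Lambda_{P'}\odot(-)$ applied to the endomorphisms $\psi^{\p''}$ of $\Lambda_{P''}$; this needs only functoriality, not any monoidal symmetry. Commutation with $\psi^{\p'}$ can be checked on the generators $\phi\odot b$ directly (both send $\phi\odot b$ to $\psi^{\p'}_{\Lambda_{P'}}(\phi)\odot\psi^{\p''}_{\Lambda_{P''}}(b)$), so the appeal to injectivity there is unnecessary. What does require care is verifying that $\Lambda_{P'}\odot\psi^{\p''}$ reduces to the $N\p''$-power Frobenius modulo $\p''$ --- this follows because Frobenius is natural and commutes with the $\Lambda_{P'}$-operations in characteristic $p$, but it is not simply that ``the congruences hold within each factor.''
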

\begin{proof} This is Proposition 5.3 of \cite{Borger11}.
\end{proof}

\subsection{}\label{subsec:finite-length} We now define truncated versions of the birings $\Lambda$ and $\Psi$ as, when we come to geometrise the theory of $\Lambda$-rings, they will be more well behaved (cf.\ (\ref{prop:witt-vectors-preserve-etale-morphisms})).

We equip the $O$-algebra $\Psi\otimes_O K$ with an exhaustive filtration by sub-$O$-algebras, indexed by the elements of $P$ ordered by division, by setting $(\Psi\otimes K)_{\a}$ for $\a\in P$ to be the sub-$K$-algebra generated by the $\psi^\b$ such that $\b|\a$. The intersection of this filtration with the sub-$O$-algebras $\Lambda$ and $\Psi$ induces exhaustive filtrations by sub-$O$-algebras $\Lambda_{\a}\subset \Lambda$ and $\Psi_{\a}\subset \Psi$ and we have $\Psi_\a=O[\psi_\b: \b|\a]$.

\begin{prop}\label{prop:finite-length-good} We have the following:
\begin{enumerate}[label=\textup{(\roman*)}]
\item for each $\a\in P$ there is a unique $O$-biring structure on $\Lambda_{\a}$ making $\Psi_{\a}\to \Lambda_{\a}$ a biring homomorphism,
\item for each pair $\a, \b\in P$ the homomorphism $h_{\Lambda}: \Lambda\odot \Lambda\to \Lambda$ induces a homomorphism $\Lambda_{\a}\odot \Lambda_{\b}\to \Lambda_{\a\b}$ and this map is an isomorphism if $\a$ and $\b$ are relatively prime.
\end{enumerate} 
\end{prop}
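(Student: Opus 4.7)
My approach is to reduce everything to the biring structure of $\Psi$ (which is manifest) by exploiting $O$-flatness of $\Lambda$ and the chain of inclusions $\Psi_\a \subset \Lambda_\a \subset \Lambda \subset \Psi \otimes_O K$. For the part of (i) concerning $\Psi_\a$: the coaddition and comultiplication are given on generators by $\Delta^+(\psi^\b) = \psi^\b \otimes 1 + 1 \otimes \psi^\b$ and $\Delta^\times(\psi^\b) = \psi^\b \otimes \psi^\b$, each manifestly in $\Psi_\b \otimes \Psi_\b \subseteq \Psi_\a \otimes \Psi_\a$ whenever $\b \mid \a$. The counits and $O$-coaction also visibly preserve $\Psi_\a$, so $\Psi_\a$ is a sub-$O$-biring of $\Psi$, and the uniqueness of the resulting structure is forced by the requirement that $\Psi_\a \to \Psi$ be a biring morphism.

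To extend (i) to $\Lambda_\a$, I would first establish $O$-flatness of $\Lambda$ and of each $\Lambda_\a$. By (\ref{prop:lambda-commute}) one reduces to the case when $P$ is generated by a single prime, which is principal locally on $\Spec(O)$, and then (\ref{prop:lambda-one-prime-is-locally-polynomial}) presents $\Lambda$ as the polynomial algebra $O[\delta_0, \delta_1, \ldots]$ with $\Lambda_{\p^n} = O[\delta_0, \ldots, \delta_n]$ (verifiable from the explicit form of the $\delta_i$). Flatness then ensures that $\Lambda \otimes_O \Lambda$ injects into $(\Psi \otimes K) \otimes_K (\Psi \otimes K)$ and that $\Lambda_\a \otimes_O \Lambda_\a$ equals $(\Lambda \otimes_O \Lambda) \cap ((\Psi \otimes K)_\a \otimes_K (\Psi \otimes K)_\a)$. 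For $f \in \Lambda_\a$, the element $\Delta^+_\Lambda(f)$ lies in $\Lambda \otimes_O \Lambda$ (since $\Lambda$ is a sub-biring of $\Psi \otimes K$) and in $(\Psi \otimes K)_\a \otimes_K (\Psi \otimes K)_\a$ (by the filtration on $\Psi \otimes K$), hence in $\Lambda_\a \otimes_O \Lambda_\a$. The same argument treats $\Delta^\times_\Lambda$, while the counits and $O$-coaction restrict trivially.

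For (ii), the formula $h_\Psi(\psi^{\a'} \odot \psi^{\b'}) = \psi^{\a'\b'}$ sends $\Psi_\a \odot \Psi_\b$ into $\Psi_{\a\b}$, since $\a' \mid \a$ and $\b' \mid \b$ imply $\a'\b' \mid \a\b$; a filtration-respecting extension, using the same flatness considerations as above, yields the required $\Lambda_\a \odot \Lambda_\b \to \Lambda_{\a\b}$. For the isomorphism when $(\a, \b) = O$, I would pass to functors of points: the claim is equivalent to the natural map $\Hom_O(\Lambda_{\a\b}, A) \to \Hom_O(\Lambda_\b, \Hom_O(\Lambda_\a, A))$ being bijective for every $O$-algebra $A$. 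After tensoring with $K$ both sides become $\prod_{\c \mid \a\b}(A \otimes_O K)$, via the unique decomposition $\c = \c_1\c_2$ with $\c_1 \mid \a, \c_2 \mid \b$ afforded by coprimality, so the map is an isomorphism generically; the flatness of $\Lambda_\a, \Lambda_\b, \Lambda_{\a\b}$ together with the integral decomposition already known from (\ref{prop:lambda-commute}) then promotes this to an isomorphism over $O$.

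The main obstacle will be justifying cleanly that $\Lambda_\a$ is a pure $O$-submodule of $\Lambda$, so that the intersection argument $\Lambda_\a \otimes_O \Lambda_\a = (\Lambda \otimes \Lambda) \cap ((\Psi \otimes K)_\a \otimes_K (\Psi \otimes K)_\a)$ is legitimate; more broadly, confirming the explicit description $\Lambda_{\p^n} = O[\delta_0, \ldots, \delta_n]$ in the one-prime principal case and its analogue in the multi-prime setting via (\ref{prop:lambda-commute}). Once those flatness and purity facts are secured, both (i) and (ii) follow formally from the manifest statements for $\Psi$.
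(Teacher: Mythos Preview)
The paper's own proof consists entirely of citations to Borger's \cite{Borger11} (Propositions~2.3 and~5.3), so your attempt already goes further than the text does. Your argument for (i) via $O$-flatness of $\Lambda$ and the explicit single-prime presentation $\Lambda_{\p^n}=O[\delta_0,\ldots,\delta_n]$ is sound, and you correctly isolate purity of $\Lambda_\a\subset\Lambda$ as the only real point to check; the single-prime monomial basis handles this, and the multi-prime case then follows by decomposition. The existence part of (ii) also goes through: since $\Lambda_{\a\b}=\Lambda\cap(\Psi\otimes K)_{\a\b}$ by definition, it suffices to check that $h_{\Psi\otimes K}$ respects the filtration on generators, which it does.

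There is, however, a genuine gap in your argument for the \emph{isomorphism} in (ii). A generic isomorphism together with flatness of source and target does \emph{not} promote to an isomorphism over $O$: multiplication by a non-unit $\pi\in O$ on $O[x]$ is a map of flat $O$-modules that becomes an isomorphism over $K$ but is not surjective. Flatness of $\Lambda_\a\odot\Lambda_\b$ (which itself requires justification, since $\odot$ is a quotient construction) would give you injectivity, but surjectivity is a separate issue. Your appeal to (\ref{prop:lambda-commute}) is also not sufficient as stated: that result concerns the full plethories $\Lambda_{P'}\odot\Lambda_{P''}\to\Lambda_P$ for complementary prime sets, not the truncations $\Lambda_\a$, and identifying $\Lambda_\a\odot\Lambda_\b$ inside $\Lambda_{P'}\odot\Lambda_{P''}$ via the filtration is essentially the statement you are trying to prove. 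To close the gap you would need an explicit surjectivity argument, for instance by reducing to the single-prime case and exhibiting the generators $\delta_i$ of $\Lambda_{\a\b}$ in the image, or by a more careful induction on the number of prime factors that tracks the filtrations through the decomposition of (\ref{prop:lambda-commute}). This is the substantive content of Borger's Proposition~5.3, which is why the paper cites it.
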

\begin{proof}(i) This is Proposition 2.3 of \cite{Borger11}.

(ii) This is Propositions 2.3 and 5.3 of \cite{Borger11}.
\end{proof}

\subsection{} Let $A$ be an $O$-algebra. We write \[W(A)=\Hom_{O}(\Lambda, A) \quad \text{ and } \quad W_{\a}(A)=\Hom_{O}(\Lambda_{\a}, A)\] and call these the rings of Witt vectors and Witt vectors of length $\a$ of $A$. We also write \[\Gamma(A)=\Hom_{O}(\Psi, A)\isomto \prod_{\a\in P} A \quad \text{ and } \quad \Gamma_{\a}(A)=\Hom_{O}(\Psi_{\a}, A)\isomto \prod_{\b|\a} A\] and call these the rings of ghost vectors and length-$\a$ ghost vectors.

\begin{prop}\label{prop:witt-vectors-preserve-etale-morphisms} Let $\a\in P$ and let $A$ be an $O$-algebra. Then
\begin{enumerate}[label=\textup{(\roman*)}]
\item If $(A\to A_i)_{i\in I}$ is an \'etale cover of $A$ so is $(W_{\a}(A)\to W_{\a}(A_i))$, and
\item for all homomorphisms $A\to B$ and all \'etale homomorphisms $A\to A'$ the natural map \[W_{\a}(A')\otimes_{W_{\a}(A)}W_{\a}(B)\to W_{\a}(A'\otimes_A B)\] is an isomorphism.
\end{enumerate}
\end{prop}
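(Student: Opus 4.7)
My plan is to reduce both assertions to the case $\a=\p^n$ for a single prime, using the composition-product decomposition of Proposition \ref{prop:finite-length-good}, and then to prove that case by induction on $n$ using the Frobenius-lift structure intrinsic to $\Lambda$-rings.

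First, by Proposition \ref{prop:finite-length-good}(ii), for coprime $\a,\b\in P$ we have $\Lambda_{\a\b}\isomto \Lambda_\a\odot_O\Lambda_\b$. Applying $\Hom_O(-,A)$ and the adjunction $(B\odot_O-)\dashv \Hom_O(B,-)$ from (\ref{subsec:corep-functor-biring}), this yields a natural isomorphism of functors $W_{\a\b}\isomto W_\a\circ W_\b$. Writing $\a=\prod_i\p_i^{n_i}$ and iterating, both assertions for $W_\a$ reduce to the corresponding assertions for each $W_{\p^n}$: part (i) since composition of étale-cover-preserving functors preserves étale covers, and part (ii) by the chain of isomorphisms
\[W_\a(W_\b(A'))\otimes_{W_\a(W_\b(A))} W_\a(W_\b(B)) \isomto W_\a(W_\b(A')\otimes_{W_\b(A)} W_\b(B)) \isomto W_\a(W_\b(A'\otimes_A B)),\]
where the first uses base change for $W_\a$ along the étale map $W_\b(A)\to W_\b(A')$ (étale by part (i) for $W_\b$) and the second uses base change for $W_\b$. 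Hence I may assume $\a=\p^n$, and working Zariski-locally on $\Spec(O)$, that $\p=(\pi)$.

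Next, I argue by induction on $n$, with $n=0$ trivial. By Proposition \ref{prop:lambda-one-prime-is-locally-polynomial}, $\Lambda_{\p^n}$ is the free polynomial $O$-algebra on $\delta_0,\ldots,\delta_n$, so as a set $W_{\p^n}(A)=A^{n+1}$, and the inclusion $\Lambda_{\p^{n-1}}\subset\Lambda_{\p^n}$ yields a natural surjection $W_{\p^n}(A)\twoheadrightarrow W_{\p^{n-1}}(A)$ of $\Lambda$-rings. I claim that the kernel $I$ satisfies $I^2\subset \pi W_{\p^n}(A)$; this is the $O$-typical analog of the classical identity $V(x)V(y)=\pi V(xy)$ for the Verschiebung in $p$-typical Witt vectors, and it follows from a direct computation with the comultiplication $\Delta^\times$ of $\Lambda_{\p^n}$ in the $\delta_\pi$-coordinates. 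Meanwhile the $\Lambda$-structure equips $W_{\p^n}(A)$ with an endomorphism $\psi^\p$ lifting the $N\p$-power Frobenius modulo $\pi$ (\ref{prop:lambda-actions-and-frobenius-lifts}). The inductive step is then completed by the following general input: given a surjection $R\twoheadrightarrow \overline{R}$ whose kernel $I$ satisfies $I^2\subset \pi R$ together with a Frobenius-lift $\psi^\p:R\to R$ modulo $\pi$, the base-change functor from étale $R$-algebras to étale $\overline{R}$-algebras is an equivalence, and this equivalence commutes with arbitrary base change of $\overline{R}$; this follows from the infinitesimal lifting criterion for étale morphisms applied iteratively to the filtration $R\supset \pi R+I\supset \pi(\pi R+I)+I^2\supset \cdots$, together with the standard fact that étale lifts are preserved by base change. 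Applying this input to the tower $W_{\p^n}(A)\twoheadrightarrow W_{\p^{n-1}}(A)\twoheadrightarrow\cdots\twoheadrightarrow A$ closes the induction.

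The main obstacle I expect is verifying the identity $I^2\subset \pi W_{\p^n}(A)$: this is a statement about the comultiplication $\Delta^\times_{\Lambda_{\p^n}}$, not merely about its additive $O$-algebra structure, and one essentially has to reprove the classical Verschiebung identity in the general $O$-typical setting, working in the $\delta_\pi$-coordinates rather than the usual Witt coordinates $(a_0,\ldots,a_n)$.
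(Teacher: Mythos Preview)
The paper does not give its own proof of this proposition; it simply cites Theorem~9.2 and Corollary~9.3 of \cite{Borger11}. So the comparison is against Borger's argument, not anything in the present text.

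Your reduction to a single prime via $W_{\a\b}\cong W_\a\circ W_\b$ for coprime $\a,\b$ (from Proposition~\ref{prop:finite-length-good}(ii)) is correct and is exactly how Borger begins as well.

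The inductive step, however, contains a genuine gap: your ``general input'' lemma is false as stated. Take $A$ to be any $O[\pi^{-1}]$-algebra. Then the ghost map is an isomorphism, so $R=W_{\p^n}(A)\cong A^{n+1}$ as rings and the truncation $R\to\overline{R}=W_{\p^{n-1}}(A)$ is the projection $A^{n+1}\to A^n$. Its kernel $I$ is generated by an idempotent, hence $I^2=I$, while $\pi R=R$; thus the hypothesis $I^2\subset\pi R$ holds vacuously, and any endomorphism of $R$ is trivially a Frobenius lift modulo the unit $\pi$. But \'etale $A^{n+1}$-algebras are manifestly not equivalent to \'etale $A^n$-algebras under this projection, so the asserted equivalence fails. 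Even away from this degenerate case your filtration is cofinal with the $\pi$-adic filtration on $R$, and $W_{\p^n}(A)$ is in general neither $\pi$-adically separated nor complete, so iterated infinitesimal lifting cannot recover \'etale $R$-algebras from \'etale $\overline{R}$-algebras.

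The Frobenius you mention but never actually use is where the repair lies, though not in the form of an endomorphism of a fixed ring. Borger's argument (building on van der Kallen and Langer--Zink) uses the ring homomorphism $F:W_{\p^n}(A)\to W_{\p^{n-1}}(A)$ between \emph{different} truncations, together with the identity $r\cdot V^n(a)=V^n(F^n(r)\cdot a)$, which says that the $W_{\p^n}(A)$-module structure on the kernel $I\cong A$ factors through $F^n:W_{\p^n}(A)\to A$ rather than through the truncation. It is this Frobenius-twisted module structure on $I$, not a bare square-zero-type condition, that drives the induction without any separatedness or completeness hypotheses on $A$.
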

\begin{proof} This is Theorem 9.2 and Corollary 9.3 of \cite{Borger11}.
\end{proof}

\section{Witt vectors and arithmetic jets II} \label{sec:lambda-section-two} The purpose of this section is to extend the definition of $\Lambda$-structures, Witt vectors and arithmetic jets to sheaves for the \'etale topology.

\subsection{} Recall that $\Sh_{O}^{\et}$ denotes the category of sheaves for the \'etale topology over $\Spec(O)$, or $\et$-sheaves over $\Spec(O)$. We begin with the arithmetic jets and coghosts. Let $X$ be an $\et$-sheaf and define the presheaves on $\Aff_O$ \[W_{\a*}(X):=X\circ W_{\a}: \Aff_O^\circ\to \Set\ \quad \Gamma_{\a*}(X):=X\circ \Gamma_{\a}: \Aff_O^\circ\to \Set.\]

\begin{prop}\label{prop:def-arithmetic-ghosts} Let $\a\in P$ and let be $X$ an $\et$-sheaf. Then \[W_{\a*}(X):=X\circ W_{\a}: \Aff_O^\circ\to \Set\ \quad \Gamma_{\a*}(X)(X):=X\circ \Gamma_{\a}: \Aff_O^\circ\to \Set\] are again $\et$-sheaves. Moreover, 
\begin{enumerate}[label=\textup{(\roman*)}]
\item if $X=\Spec(A)$ is affine then \[W_{\a *}(X)=\Spec(\Lambda_\a\odot A) \quad \text{ and }\quad\Gamma_{\a *}(X)=\Spec(\Psi_\a\odot A),\]
\item $W_{\a*}$ and $\Gamma_{\a*}$ commute with filtered colimits, and
\item $W_{\a*}$ sends smooth affine $\Spec(O_K)$-schemes to smooth affine $\Spec(O_K)$-schemes.
\end{enumerate}
\end{prop}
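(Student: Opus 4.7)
The plan is as follows. For the sheaf property, given an \'etale cover $(A\to A_i)_{i\in I}$ of an $O$-algebra $A$, \textup{(\ref{prop:witt-vectors-preserve-etale-morphisms})} supplies both that $(W_\a(A)\to W_\a(A_i))_{i\in I}$ is again an \'etale cover and that $W_\a(A_i\otimes_A A_j)\isomto W_\a(A_i)\otimes_{W_\a(A)}W_\a(A_j)$. The equaliser diagram for $X$ applied to this new cover is exactly the equaliser diagram for $W_{\a*}(X)$ applied to the original, so $W_{\a*}(X)$ is an $\et$-sheaf. For $\Gamma_{\a*}(X)$ the corresponding input is trivial, since $\Gamma_\a(A)\isomto \prod_{\b|\a}A$ is a finite product of copies of $A$, and finite products preserve \'etale covers and commute with tensor products.

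For (i), the adjunction $(\Lambda_\a\odot -)\dashv \Hom_O(\Lambda_\a,-)=W_\a$ gives, when $X=\Spec(A)$,
\[W_{\a*}(X)(\Spec(B))=\Hom_O(A,W_\a(B))=\Hom_O(\Lambda_\a\odot A,B)=\Spec(\Lambda_\a\odot A)(\Spec(B));\]
the argument for $\Gamma_{\a*}$ is identical with $\Psi_\a$ in place of $\Lambda_\a$. For (ii), filtered colimits of $\et$-sheaves on $\Aff_O$ are computed pointwise on affines, because every \'etale cover of an affine scheme is by assumption finite, so the sheaf condition is a finite limit and commutes with filtered colimits in $\Set$. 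Since $W_{\a*}$ and $\Gamma_{\a*}$ are defined by precomposition with the ring functors $W_\a$ and $\Gamma_\a$, they trivially commute with filtered colimits computed in this pointwise manner.

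The substantive claim is (iii). Affineness is immediate from (i). For smoothness, every smooth affine $\Spec(O)$-scheme $X$ of relative dimension $n$ admits, \'etale-locally on $X$, a factorisation $X\to\mathbf{A}^n_O$ by an \'etale morphism; combining this with \textup{(\ref{prop:witt-vectors-preserve-etale-morphisms})} and part (i), $W_{\a*}(X)$ is then \'etale-locally \'etale over $W_{\a*}(\mathbf{A}^n_O)$, reducing us to the case $X=\mathbf{A}^n_O$. Since $W_{\a*}$, being defined by precomposition, preserves finite products, $W_{\a*}(\mathbf{A}^n_O)\isomto W_{\a*}(\mathbf{A}^1_O)^n$, so it suffices to show that $W_{\a*}(\mathbf{A}^1_O)=\Spec(\Lambda_\a\odot O[T])$ is smooth over $\Spec(O)$. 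The hard part is then to verify that this $O$-algebra is (non-canonically) a polynomial ring of rank $d(\a):=\#\{\b:\b|\a\}$. Using the biring decomposition of \textup{(\ref{prop:finite-length-good})} one reduces to $\a=\p^n$ a prime power, and in that case the explicit $\delta$-description of \textup{(\ref{prop:lambda-one-prime-is-locally-polynomial})} together with its finite-length truncation exhibits $\Lambda_{\p^n}$ as freely generated over $O$ by $\delta_0,\ldots,\delta_n$, whence $\Lambda_{\p^n}\odot O[T]\isomto O[T_0,\ldots,T_n]$ as an $O$-algebra and $W_{\p^n}(B)$ is naturally in bijection with $B^{n+1}$. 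I expect this polynomiality statement, rather than the formal reductions above, to be the main obstacle.
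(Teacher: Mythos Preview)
The paper's own proof is entirely by citation to \cite{Borger11bis}, so you are doing substantially more than the paper does. Your arguments for the sheaf property, for (i), and for (ii) are correct and are essentially what one finds upon unpacking Borger's proofs.

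There is, however, a genuine gap in your reduction for (iii). You invoke (\ref{prop:witt-vectors-preserve-etale-morphisms}) to conclude that applying $W_{\a*}$ to the \'etale maps $U\to X$ and $U\to\mathbf{A}^n_O$ yields \'etale maps. But (\ref{prop:witt-vectors-preserve-etale-morphisms}) concerns the ring functor $W_\a=\Hom_O(\Lambda_\a,-)$, which on affine schemes is the \emph{left} adjoint $W_\a^*$ (Witt vectors), not the right adjoint $W_{\a*}$ (arithmetic jets) you are using. By (i), $W_{\a*}$ on affines is $\Spec(\Lambda_\a\odot-)$, so what your reduction actually needs is that $\Lambda_\a\odot-$ preserves \'etale ring maps; this is true, but it does not follow from the cited proposition.

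The missing input is that $W_\a$ carries a square-zero surjection $C\twoheadrightarrow C/I$ to a surjection with nilpotent kernel. Granting this, formal \'etaleness of $\Lambda_\a\odot A\to\Lambda_\a\odot B$ for \'etale $A\to B$ follows at once by adjunction: a lifting problem along $C\to C/I$ becomes, via (i), a lifting problem along $W_\a(C)\to W_\a(C/I)$, to which formal \'etaleness of $A\to B$ applies. (In fact the same trick gives formal smoothness of $W_{\a*}(X)$ directly from that of $X$, bypassing the \'etale-local reduction altogether; finite presentation then comes from your polynomial computation.) Surjectivity of $W_\a(C)\to W_\a(C/I)$ is immediate from your polynomial description of $\Lambda_\a$. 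Nilpotence of the kernel $J$ can be obtained from the fact, used in the proof of (\ref{prop:ghost-map-surj-geom-points}), that the ghost map $W_\a(C)\to\Gamma_\a(C)$ has nilpotent kernel: a short computation in ghost components (using $I^2=0$ and $N\p\geq 2$) shows $J^2$ lies in that kernel.

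Two minor points on your explicit computation: (\ref{prop:lambda-one-prime-is-locally-polynomial}) assumes $\p$ is principal, which is arranged by Zariski-localising on $\Spec(O)$ since smoothness is local on the base; and it treats the full $\Lambda$ rather than the truncation $\Lambda_{\p^n}$, so one must also check that $\delta_0,\ldots,\delta_n$ freely generate $\Lambda_{\p^n}$, which is a routine filtration argument but is not literally stated.
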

\begin{proof} It is clear that $\Gamma_{\a*}(X)$ of an $\et$-sheaf is again a sheaf, and for $W_{\a*}(X)$ it follows from (i) of (\ref{prop:witt-vectors-preserve-etale-morphisms}).

(i) This is clear for $\Gamma_{\a*}$ and Proposition 10.7 of \cite{Borger11bis}  for $W_{\a*}$.

(ii) This is clear for $\Gamma_{\a*}$ and Proposition 11.7 of \cite{Borger11bis} for $W_{\a*}$.

(iii) This is Proposition 13.3 of \cite{Borger11bis}.
\end{proof}

\subsection{} For an $\et$-sheaf $X$ we have the following simple description of $\Gamma_{\a}(X)$. The fact that $\Gamma_{\a}(A)=\Pi_{\b\in P, \b|\a} A$ for all $O$-algebras $A$ (\ref{subsec:finite-length}) shows that \[\Gamma_{\a*}(X)(A)=X(\Gamma_{\a}(A))=\prod_{\b\in P, \b| \a} X(A)=\prod_{\b\in P, \b| \a} X(A),\] that is \[\Gamma_{\a}(X)\isomto \prod_{\b\in P, \b | \a} X.\]

\subsection{} For an $\et$-sheaf $X$ we call $W_{\a*}(X)$ the ($\et$-sheaf of) length-$\a$ arithmetic jets of $X$ and $\Gamma_{\a*}(X)$ the ($\et$-sheaf of) length-$\a$ coghosts of $X$.

Standard sheaf theory supplies $W_{\a*}$ and $\Gamma_{\a *}$ with left adjoints which we denote by $W_{\a}^*$ and $\Gamma_{\a}^*$. For a sheaf $X$ we call $W_{\a}^*(X)$ the length-$\a$ Witt vectors of $X$ and $\Gamma_{\a}^*(X)$ the length $\a$-ghost vectors of $X$.

\begin{prop} If $X=\Spec(A)$ is an affine scheme then \[W_{\a}^*(\Spec(A))=\Spec(W_{\a}(A)) \quad \text{ and } \quad \Gamma_{\a}^*(\Spec(A))=\Spec(\Gamma_{\a}(A)).\]
\end{prop}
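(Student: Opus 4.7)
The plan is to identify the universal property and reduce to a Yoneda computation. Since $W_\a^*$ is defined as the left adjoint of $W_{\a*}$, to verify that $W_\a^*(\Spec(A)) = \Spec(W_\a(A))$ it suffices to exhibit, naturally in the $\et$-sheaf $Y$, an isomorphism
\begin{equation*}
\Hom(\Spec(W_\a(A)), Y) \;\isomto\; \Hom(\Spec(A), W_{\a*}(Y))
\end{equation*}
and invoke the uniqueness of left adjoints (equivalently, the Yoneda lemma applied to the category of $\et$-sheaves).

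First I would note that $W_\a(A) = \Hom_O(\Lambda_\a, A)$ carries a canonical $O$-algebra structure, coming from the $O$-biring structure on $\Lambda_\a$ guaranteed by (\ref{prop:finite-length-good}), so that $\Spec(W_\a(A))$ makes sense as an object of $\Aff_O$. Then the right-hand side unwinds directly from the definition of $W_{\a*}$ as precomposition with $W_\a$:
\begin{equation*}
\Hom(\Spec(A), W_{\a*}(Y)) \;=\; W_{\a*}(Y)(A) \;=\; (Y\circ W_\a)(A) \;=\; Y(W_\a(A)),
\end{equation*}
and the left-hand side, by the Yoneda lemma applied to the $\et$-sheaf $Y$, is also equal to $Y(W_\a(A))$. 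This gives the natural isomorphism. The identical argument, replacing $\Lambda_\a$ by $\Psi_\a$ and $W_\a$ by $\Gamma_\a$, yields the corresponding statement for coghosts.

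The argument is formally elementary and there is no real obstacle — the only thing worth checking carefully is that the composition $Y \circ W_\a$ genuinely defines an $\et$-sheaf (so that $W_{\a*}(Y)$ really is an $\et$-sheaf and its value at $\Spec(A)$ is computed by plain evaluation at $W_\a(A)$), and this is exactly the content of (\ref{prop:def-arithmetic-ghosts}) via (i) of (\ref{prop:witt-vectors-preserve-etale-morphisms}). For a general (non-affine) $\et$-sheaf $X$, one may then write $X$ as a colimit of affine schemes and invoke the fact that any left adjoint preserves colimits to compute $W_\a^*(X)$ and $\Gamma_\a^*(X)$, but since the statement concerns only affine $X$, no such extension is needed here.
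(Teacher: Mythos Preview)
Your argument is correct and is precisely the content of the paper's one-line proof (``This is true for all left adjoints to push forwards along a morphism of sites''), just unpacked explicitly via Yoneda rather than cited as a general fact about morphisms of sites. The key computation $\Hom(\Spec(W_\a(A)),Y)=Y(W_\a(A))=W_{\a*}(Y)(A)=\Hom(\Spec(A),W_{\a*}(Y))$ is exactly what underlies that general statement.
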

\begin{proof} This is true for all left adjoints to push forwards along a morphism of sites.
\end{proof}

\subsection{} For a sheaf $X$ we have the following simple description of $\Gamma_{\a*}(X)$. The fact that $\Gamma_{\a}(A)=\Pi_{\b\in P, \b| \a} A$ for all $O$-algebras $A$ (\ref{subsec:finite-length}) shows that: \begin{eqnarray*}\Gamma_{\a}^*(X)=\colim_{\Spec(A)\to X} \Gamma_{ \a}^*(\Spec(A))&=&\colim_{\Spec(A)\to X}\Spec(\Gamma_{\a}(A))\\
&=&\colim_{\Spec(A)\to X}\coprod_{\b\in P, \b|\a} \Spec(A)\\
&=&\coprod_{\b\in P, \b| \a} X\end{eqnarray*}

\subsection{} For a sheaf $X$, the inclusions $\Lambda_{\a}\to \Lambda_{\b}$ and $\Psi_{\a}\to \Psi_{\b}$ (\ref{subsec:finite-length}), for $\a, \b\in P$ with $\b| \a$, induce maps \[\Gamma_{\a*}(X)\to \Gamma_{\b*}(X) \quad \text{ and }\quad W_{\a*}(X)\to W_{\b*}(X)\] and taking the inverse limit in each case over the $\a\in P$ defines sheaves \[\Gamma_{*}(X):=\lim_\a \Gamma_{\a*}(X) \quad \text{ and } \quad W_{*}(X):=\lim_\a W_{\a*}(X)\] which we call the ($\et$-sheaf of) arithmetic jets of $X$ and ($\et$-sheaf of) coghosts of $X$.

Adjointly, there are also induced maps \[W_{\b}^*(X)\to W_{\a}^*(X)\quad \text{ and } \quad \Gamma_{\b}^*(X)\to \Gamma_{\a}^*(X)\] and taking the colimit we obtain $\et$-sheaves \[W^*(X):=\colim_\a W_{\a}^*(X)\quad \text{ and } \quad \Gamma^*(X):=\colim_\a \Gamma_{\a}^*(X)\] which we call the ($\et$-sheaf of) ghost vectors of $X$ and the ($\et$-sheaf of) Witt vectors of $X$. By construction the functor $\Gamma^*$ is left adjoint to $\Gamma_{*}$, and $W^*$ is left adjoint to $W_{*}$.

Generalising the descriptions of $\Gamma_{\a*}(X)$ and $\Gamma_{\a}^*(X)$ we have \[\Gamma_{*}(X)\isomto \prod_{\a\in P} X \quad \text{ and }\quad \Gamma^*(X)\isomto \coprod_{\a\in P}X.\]

\subsection{}\label{def:w-w-lambda-adjunction} The maps $\Lambda_{\a}\odot \Lambda_{\b}\to \Lambda_{\a\b}$ (\ref{prop:finite-length-good}) induce maps \[W^*_{\b}(W^*_{\a}(X))\to W^*_{\a\b}(X) \quad \text{ and } \quad W_{\a\b*}(X)\to W_{\a*}(W_{\b*}(X))\] and taking the colimit and limit respectively over all $\a, \b\in P$ we obtain maps \[\mu_{W^*(X)}: W^*(W^*(X))\to W^*(X) \quad \text{ and } \quad h_{W_*(X)}: W_{*}(X)\to W_{*}(W_{*}(X))\] which together with the natural maps \[g_{(1)}: X\to W^*(X) \quad \text{ and } \quad \gamma_{(1)}: W_{*}(X)\to X\] induced by the compatible maps $O[e]\to \Lambda_{\a}$ for all $\a\in P$ equip $W^*$ with the structure of a monad and $W_{*}$ with the structure of a comonad on $\Sh_{O}^\et$. We define the category of $\Lambda$-sheaves $\Sh_{\Lambda}^{\et}$ to be the category of $\et$-sheaves $X$ equipped with an action of $W^*$ or equivalently a coaction of $W_{*}$. The functors $W^*$ and $W_{*}$ now define functors $\Sh_{O}^{\et}\to \Sh_{\Lambda}^{\et}$ which are left and right adjoint to the forgetful functor $\Sh_{\Lambda}^{\et}\to \Sh_{O}^\et$. We have the following diagram of functors each one right adjoint to the one below:
\[\quad \xymatrix{\Sh_{\Lambda}^\et\ar[rr]^{\textup{forget}} && \ar@/^2pc/[ll]^{W^*}\ar@/_2pc/[ll]_{W_{*}}\Sh_{O}^{\et}.}\]

In particular, the forgetful functor in the middle admits both a left and right adjoint and so commutes with both limits and colimits. That is, limits and colimits in $\Sh_{\Lambda}^\et$ may be computed in $\Sh_{O}^\et$.

\subsection{} The same construction above applied to the maps $\Psi_{\a}\odot \Psi_{\b}\to \Psi_{\a\b}$ equips $\Gamma^*$ with the structure of a monad and $\Gamma_{*}$ with the structure of a comonad on $\Sh_{O}^\et$. In this case, it is easy to see that the map obtained \[\mu_{\Gamma^*(X)}: \Gamma^*(\Gamma^*(X))\to \Gamma^*(X)\] is identified with the map \[\coprod_{\b\in P}\coprod_{\a\in P} X=\coprod_{\a, \b\in P}X\to \coprod_{\c\in P} X\] whose restriction to the summand at $\b, \a\in P$ is the inclusion onto the summand at $\c=\a\b\in P$. Similarly, the map obtained \[h_{\Gamma_*(X)}:\Gamma_{*}(X)\to \Gamma_{*}(\Gamma_{*}(X))\] is identified with the map \[\prod_{\c\in P} X \to \prod_{\a\in P}\prod_{\b\in P} X\] whose composition with the projection onto the factor at $\a, \b\in P$ is the projection onto the factor at $\c=\a\b$.

We define the category of $\Psi$-sheaves $\Sh_{\Psi}^{\et}$ to be the category of sheaves $X$ equipped with an action of $\Gamma^*$ or equivalently with a coaction of $\Gamma_{*}$. Then $\Gamma^*$ and $\Gamma_{*}$ define functors $\Sh_{O}^{\et}\to \Sh_{\Psi}^{\et}$ which are left and right adjoint to the forgetful functor $\Sh_{\Psi}^{\et}\to \Sh_{O}^\et$. We have the following diagram of functors each one right adjoint to the one below:
\[\quad \xymatrix{\Sh_{\Psi}^\et\ar[rr]^{\textup{forget}} && \ar@/^2pc/[ll]^{\Gamma^*}\ar@/_2pc/[ll]_{\Gamma_{*}}\Sh_{O}^{\et}.}\]

\subsection{}\label{subsec:psi-action-sheaves} From the descriptions of the monadic and comonadic structures on $\Gamma^*$ and $\Gamma_*$ it is easy to see that to give a sheaf $X$ the structure of $\Psi$ sheaf is equivalent to equipping $X$ with an action of the monoid $P$. Indeed, denoting the action of $\a\in P$ by $\psi^\a_X: X\to X$ the map $\Gamma^*(X)\to X$ defining the corresponding action of $\Gamma^*$ is just \[\coprod_{\a\in P} \psi^\a_X: \coprod_{\a\in P} X=\Gamma^*(X)\to X.\]

As with $\Psi$-rings if $X\to S$ is a morphism of $\Psi$-sheaves the $\psi_S^\a$-linear endomorphisms $\psi_X^\a: X\to X$ for $\a\in P$ induce $S$-linear endomorphisms \[\psi_{X/S}^{\a}: X\to \psi_{S}^{\a*}(X)\] satisfying the commutativity condition \begin{equation}\psi_S^{\b*}(\psi_{X/S}^{\a})\circ \psi_{X/S}^\b=\psi_{X/S}^{\a\b}=\psi_S^{\a*}(\psi_{X/S}^{\b})\circ \psi_{X/S}^\a\label{eqn:semi-linear-comm-sheaves}\end{equation} for all $\a, \b\in P$. Moreover, to give an $\et$-sheaf $X\to S$ over $S$ a $\Psi$-structure such that the morphism $X\to S$ is a $\Psi$-morphism is the same as giving maps $\psi_{X/S}^{\a}: X\to \psi_{S}^{\a*}(X)$  all $\a\in P$ with $\psi^{(1)}_{X/S}=\id_X$ and satisfying the commutativity condition (\ref{eqn:semi-linear-comm-sheaves}).

\subsection{} The inclusions $\Lambda_{\a}\to \Psi_{\a}$ induce functorial maps for each sheaf $X$ \[g_{\leq \a}: \Gamma_{\a}^*(X)\to W_{\a}^*(X)\quad \text{ and } \quad\gamma_{\leq \a}: W_{ \a*}(X)\to \Gamma_{\a*}(X)\] which we call the length-$\a$ ghost and coghost maps.

The inclusions \[O[\psi_\a]\subset O[\psi_\b:\b\in P, \b|\a]=\Psi_\a\] induce for each $\a$ the $\a$-ghost component and $\a$-coghost components \[g_\a: X\to \Gamma^*(X)\to W^*(X) \quad \gamma_\a: W_*(X)\to \Gamma_*(X)\to X\] which we denote by $g_\a$ and $\gamma_\a$ and view as maps from $X\to W^*(X)$ or $X\to \Gamma^*(X)$ and similarly for the $\a$-coghost maps. We then have \[g_{\leq \a}=\coprod_{\b|\a}g_\b: \Gamma^*_\a(X)=\coprod_{\b|\a} X\to \Gamma^*(X).\]

Taking the colimit and limit along the length-$\a$ ghost and coghost maps we obtain the full ghost and coghost maps \[g: \Gamma^*(X)\to W^*(X)\quad \text{ and } \quad \gamma: W_{*}(X)\to \Gamma_{*}(X).\]

Finally, the natural transformations given by the ghost and coghost maps $g: \Gamma^*\to W^*$ and $\gamma: W_{*}\to \Gamma_{*}$ are morphisms of monads and comonads respectively and every $\Lambda$-sheaf $X$ inherits the structure of a $\Psi$-sheaf. That is every $\Lambda$-sheaf $X$ admits a canonical action of the monoid $P$ which, as for $\Psi$-sheaves, we denote by $\psi^\a_X:X\to X$ for $\a\in P$ (and similarly for the relative versions (cf.\ (\ref{subsec:psi-action-sheaves})).

\begin{prop}\label{prop:lambda-structures-frobenius-lifts-sheaves} We have the following:
\begin{enumerate}[label=\textup{(\roman*)}]
\item For each $\Lambda$-sheaf $X$ and each prime $\p\in P$ the map \[\psi^\p_X\times_{\Spec(O)}\Spec(\F_\p): X\times_{\Spec(O)}\Spec(\F_\p)\to X\times_{\Spec(O)}\Spec(\F_\p)\] is equal to the $N\p$-power Frobenius endomorphism of $X\times_{\Spec(O)}\Spec(\F_\p)$.
\item A $\Psi$-structure on a scheme $X$, flat over $O$ at all primes $\p\in O$, is induced by a $\Lambda$-structure on $X$ if and only if for each prime $\p\in P$ the map \[\psi^\p_X\times_{\Spec(O)}\Spec(\F_\p): X\times_{\Spec(O)}\Spec(\F_\p)\to X\times_{\Spec(O)}\Spec(\F_\p)\] is the $N\p$-power Frobenius endomorphism. Moreover, in this case such a $\Lambda$-structure on $X$ is unique.
\item If each $\p\in P$ is invertible on $X$ then every $\Psi$-structure on $X$ is induced by a unique $\Lambda$-structure.
\end{enumerate}
\end{prop}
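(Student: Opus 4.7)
My overall plan is to reduce each of the three parts to the corresponding ring-theoretic statement in Proposition~\ref{prop:lambda-actions-and-frobenius-lifts}. For part (i), I would argue directly using the coaction formulation: a $\Lambda$-structure on $X$ is a morphism $s_X : X \to W_*(X)$ and the operation $\psi^\p_X$ factors as $\gamma_{\p,X} \circ s_X$, where $\gamma_{\p,X} : W_*(X) \to X$ is the $\p$-coghost. Testing on an affine $T = \Spec(B)$ with $B$ an $\F_\p$-algebra and a section $a \in X(T)$, the $T$-point $\psi^\p_X(a) = \gamma_{\p,X}(s_X(a))$ is computed as the composition $s_X(a) \circ \Spec(\gamma_\p^B)$, where $\gamma_\p^B : W(B) \to B$ is the ring-level $\p$-coghost (evaluation at $\psi^\p \in \Lambda$). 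The crucial ring identity is $\gamma_\p^B = \Fr^{N\p}_B \circ \gamma_{(1)}^B$ for $\F_\p$-algebras $B$; this follows from applying (i) of Proposition~\ref{prop:lambda-actions-and-frobenius-lifts} to the $\Lambda$-ring $\Lambda$ itself (which gives $\psi^\p - e^{N\p} \in \p\Lambda$) and pushing forward along any $O$-algebra homomorphism $\Lambda \to B$. Using the coaction axiom $\gamma_{(1),X} \circ s_X = \id_X$ then yields $\psi^\p_X(a) = a \circ \Spec(\Fr^{N\p}_B) = \Fr^{N\p}_X(a)$, which is the claim.

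For part (ii), the forward direction is immediate from (i), so I focus on the converse. A $\Lambda$-structure on $X$ corresponds to a coaction $s_X : X \to W_*(X)$, and its underlying $\Psi$-structure is $\gamma_X \circ s_X$, where $\gamma_X : W_*(X) \to \Gamma_*(X)$ is the global coghost. For uniqueness: when $X$ is flat, the injectivity of $W(B) \to \Gamma(B)$ for flat $O$-algebras $B$ (the ring-level uniqueness in Proposition~\ref{prop:lambda-actions-and-frobenius-lifts}(ii)) forces $\gamma_X$ to be a monomorphism of sheaves on affine test schemes, so $s_X$ is determined by $\gamma_X \circ s_X$. For existence: the $\Psi$-structure provides, for each affine $T = \Spec(B)$ and each $a \in X(T)$, a family of ghost sections $(\psi^\a_X(a))_\a \in X(T)^P = \Gamma_*(X)(T)$. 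The semigroup law $\psi^{\p\a}_X = \psi^\p_X \circ \psi^\a_X$ combined with the Frobenius lift hypothesis forces the Witt--Dwork congruences $\psi^{\p\a}_X(a) \equiv \psi^\a_X(a)^{N\p} \bmod \p$, which by Proposition~\ref{prop:lambda-actions-and-frobenius-lifts}(ii) characterise precisely the image of $\gamma_X$ on flat bases. Hence the ghost family lifts uniquely to a Witt section $s_X(a) \in W_*(X)(T)$, functorial in $T$, giving the desired coaction.

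Part (iii) is immediate: when every $\p \in P$ is invertible on $X$, the base change $X \times_{\Spec(O)} \Spec(\F_\p)$ is empty, so the Frobenius lift congruences are vacuous; more concretely, $\Lambda \otimes_O O[P^{-1}] = \Psi \otimes_O O[P^{-1}]$ by the construction of $\Lambda$ as a sub-algebra of $\Psi \otimes_O K$, and hence $\Psi$-structures and $\Lambda$-structures on $X$ coincide.

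The main obstacle will be the existence clause of (ii): although the ring-level argument provided by Proposition~\ref{prop:lambda-actions-and-frobenius-lifts} is transparent, the $\Psi$-structure on $X$ consists of global endomorphisms $\psi^\a_X : X \to X$ which do not restrict to endomorphisms of affine opens of $X$, so one cannot simply apply the affine statement to each open. Instead one must verify via the Witt--Dwork criterion that the ghost section family lifts to a Witt section on each affine test scheme, and moreover that the pointwise lifts assemble into a morphism of sheaves $X \to W_*(X)$; the latter follows from the uniqueness already established.
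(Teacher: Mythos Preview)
The paper does not give a proof of this proposition: it simply cites \cite{Borger11ter} and remarks that it is an exercise using Theorem~17.3 of \cite{Borger11bis}. So there is nothing to compare against, and your sketch is already more detailed than what the paper offers.

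Your arguments for (i) and (iii) are sound. The identity $\gamma_\p^B=\Fr_B^{N\p}\circ\gamma_{(1)}^B$ for $\F_\p$-algebras $B$ follows from $\psi^\p\equiv e^{N\p}\pmod{\p\Lambda}$ exactly as you say, and unwinding through the coaction gives (i). For (iii), the equality $\Lambda\otimes_O O[\p^{-1}:\p\in P]=\Psi\otimes_O O[\p^{-1}:\p\in P]$ is the right observation.

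Part (ii), however, has genuine gaps in both clauses. For uniqueness you claim that $\gamma_X:W_*(X)\to\Gamma_*(X)$ is a monomorphism because $W(B)\to\Gamma(B)$ is injective for flat $B$. But the test algebra $B$ is arbitrary; it is $X$ that is flat. For non-flat $B$ the ring map $W_\a(B)\to\Gamma_\a(B)$ has nilpotent kernel, so two maps $\Spec(W_\a(B))\to X$ can agree on $\Spec(\Gamma_\a(B))$ without being equal. The correct uniqueness argument works in the \emph{monadic} picture: two structures give two maps $\mu_1,\mu_2:W^*(X)\to X$ agreeing after precomposition with $g:\Gamma^*(X)\to W^*(X)$; restricting to an affine open $U=\Spec(A)\subset X$ (so $A$ is $O$-flat) one has $W_\a(A)\hookrightarrow\Gamma_\a(A)$, and then $\mu_1,\mu_2$ agree on $W^*_\a(U)$ by a short gluing argument over affine opens of $X$.

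For existence the problem is more serious. Your ``Witt--Dwork congruence'' $\psi_X^{\p\a}(a)\equiv\psi_X^\a(a)^{N\p}\bmod\p$ has no literal meaning: $a\in X(T)$ is a point, not a ring element. Even reading it geometrically as $\psi_X^{\p\a}(a)|_{T_\p}=\psi_X^\a(a)|_{T_\p}\circ\Fr^{N\p}_{T_\p}$ (which does follow from the hypothesis), this does not let you invoke Proposition~\ref{prop:lambda-actions-and-frobenius-lifts}(ii). That proposition tells you when a ghost vector in $\Gamma(B)$ lies in $W(B)$; what you need is to extend a morphism $\Spec(\Gamma_\a(B))\to X$ to $\Spec(W_\a(B))\to X$ along a non-flat, non-injective map of affine schemes, with $X$ non-affine. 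This extension problem is not solved by the ring-level Dwork criterion and is precisely the content of Borger's cited Theorem~17.3, which provides a presentation of $W^*_\a$ on schemes allowing one to build $\mu_X:W^*(X)\to X$ by gluing from the flat affine case. Your final paragraph correctly identifies this obstacle but does not overcome it.
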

\begin{proof} See \cite{Borger11ter} or it is an exercise using Theorem 17.3 of \cite{Borger11bis}.
\end{proof}

\begin{prop}\label{prop:w-and-etale-base-change} We have the following:
\begin{enumerate}[label=\textup{(\roman*)}]
\item If $f: X \to Y$ an affine \'etale morphism of ind-affine-schemes then for each $\a\in P$ the morphism \[W_{\a}^*(f): W_{\a}^*(X)\to W_{\a}^*(Y)\] is affine and \'etale.
\item If $f: X\to Y$ is a affine \'etale morphism of ind-affine-schemes then for any affine morphism $Y'\to Y$ of ind-schemes the natural map \[W_{\a}^*(X\times_Y Y')\to W_{\a}^*(X)\times_{W_{\a}^*(Y)}W_{\a}^*(Y') \] is an isomorphism.
\end{enumerate}
\end{prop}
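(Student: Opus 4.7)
The plan is to prove (ii) first and then derive (i) as an almost immediate corollary, with both reductions passing through the presentation of ind-affine schemes as filtered colimits of affines. The essential ingredients are: (a) $W_\a^*$ is a left adjoint (cf.\ (\ref{def:w-w-lambda-adjunction})) and hence commutes with all colimits; (b) filtered colimits commute with finite limits in the $\et$-topos; (c) for affine $\Spec(A)$ one has $W_\a^*(\Spec(A)) = \Spec(W_\a(A))$; and, of course, Proposition \ref{prop:witt-vectors-preserve-etale-morphisms}, which supplies the ring-level content.

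For (ii), I would present $Y = \colim_i \Spec(A_i)$ as a filtered colimit of affines. Affineness of $f\colon X\to Y$ gives $X_i := X\times_Y\Spec(A_i) = \Spec(B_i)$ with $A_i\to B_i$ \'etale, while affineness of $Y'\to Y$ gives $Y' = \colim_i \Spec(A_i')$ for suitable $A_i'$. Hence $X\times_Y Y' \isomto \colim_i \Spec(B_i\otimes_{A_i}A_i')$, and applying $W_\a^*$ together with (ii) of (\ref{prop:witt-vectors-preserve-etale-morphisms}) yields
\[
W_\a^*(X\times_Y Y') \isomto \colim_i \Spec\bigl(W_\a(B_i)\otimes_{W_\a(A_i)}W_\a(A_i')\bigr),
\]
which identifies with $W_\a^*(X)\times_{W_\a^*(Y)}W_\a^*(Y')$ once one commutes the filtered colimit past the pullback.

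For (i), with the same presentation, part (i) of (\ref{prop:witt-vectors-preserve-etale-morphisms}) (applied, if necessary, to the trivial \'etale cover $\{A_i\to B_i,\ A_i\to A_i\}$ to extract that a single \'etale ring map is sent to an \'etale ring map) gives that $W_\a(A_i)\to W_\a(B_i)$ is \'etale, hence $W_\a^*(X_i)\to W_\a^*(\Spec(A_i))$ is affine and \'etale. By (ii), just proven, this is the pullback of $W_\a^*(f)$ along $W_\a^*(\Spec(A_i))\to W_\a^*(Y)$. Any affine test map $T\to W_\a^*(Y) = \colim_i W_\a^*(\Spec(A_i))$ factors through some $W_\a^*(\Spec(A_j))$ by quasi-compactness of $T$, so the pullback of $W_\a^*(f)$ along $T\to W_\a^*(Y)$ is affine and \'etale over $T$, as required.

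The main obstacle here is almost entirely bookkeeping: checking that the colimit presentations are compatible with pullback (immediate, as pullback of ind-affine schemes is computed index-wise) and that affineness and \'etaleness can be tested after pullback to an affine cover of the target. There is no essential arithmetic content beyond what is already supplied by (\ref{prop:witt-vectors-preserve-etale-morphisms}); the role of this proof is simply to geometrize the ring-level statement through colimit manipulations.
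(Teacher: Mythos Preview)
Your proof is correct and uses the same ingredients as the paper's---reduction to the affine case via filtered colimit presentations, then invoking (\ref{prop:witt-vectors-preserve-etale-morphisms})---but you organise the argument differently. The paper proves (i) first, establishing directly (from the affine case of (\ref{prop:witt-vectors-preserve-etale-morphisms})) that the square
\[
\xymatrix{W^{*}_\a(X_i)\ar[r]\ar[d] & W^{*}_\a(Y_i)\ar[d]\\ W^{*}_\a(X)\ar[r] & W^{*}_\a(Y)}
\]
is cartesian for each $i$, then concludes by using $(W^*_\a(Y_i)\to W^*_\a(Y))_{i\in I}$ as a cover; it then proves (ii) by a parallel colimit argument. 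You instead prove (ii) first in full generality and then derive (i) as a corollary by applying (ii) with $Y'=\Spec(A_i)$. Your ordering is marginally cleaner, since the cartesian diagram needed for (i) is a special case of (ii) anyway; the paper's ordering avoids forward references but re-derives that special case inline. Neither approach does any work the other does not.
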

\begin{proof} The case where $X$ and $Y$ are affine follows from (\ref{prop:witt-vectors-preserve-etale-morphisms}) and we shall reduce to this case. So write $Y=\colim_{i\in I} Y_i$ as a filtered colimit of affine schemes, and for $i\in I$ write $X_i=X\times_Y Y_i$.

(i) Let $i, j\in I$ with $Y_i\to Y_j\to Y$. Then as $Y_i$ and $Y_j$ are affine and $X\to Y$ is affine we have a cartesian diagram \[\xymatrix{W^{*}_\a(X_j)\ar[r] & W^{*}_\a(Y_j)\\
W^{*}_\a(X_i)\ar[u]\ar[r] & W^{*}_\a(Y_i)\ar[u]}\] where the bottom arrow is affine and \'etale. Now taking the colimit over $j$ yields a cartesian diagram \[\xymatrix{W^{*}_\a(X)\ar[r] & W^{*}_\a(Y)\\
W^{*}_\a(X_i)\ar[u]\ar[r] & W^{*}_\a(Y_i)\ar[u]}\] where the bottom arrow is affine, so that as $(W^*_\a(Y_i)\to W^*_\a(Y))_{i\in I}$ is a cover we are done.

(ii) Writing $Y'_i=Y'\times_Y Y_i$ the morphism \[W_{\a}^*(X\times_Y Y')\to W_{\a}^*(X)\times_{W_{\a}^*(Y)}W_{\a}^*(Y')\] is the colimit over $i\in I$ of the morphisms \[W_{\a}^*(X_i\times_{Y_i} Y_i')\to W_{\a}^*(X_i)\times_{W_{\a}^*(Y_i)}W_{\a}^*(Y'_i)\] which is an isomorphism as $Y_i, Y'_i$ and $X_i$ are all affine.
\end{proof}

\section{$\Lambda$-structures}\label{sec:lambda-section-three}

\subsection{} Let $S$ be a $\Lambda$-sheaf so that $S$ is equipped with an action of the monad $W^*$ and equivalently a coaction of the monad $W_*$.

From the monadic point of view we write \[\mu_S: W^*(S)\to S\] for map defining the action of $W^*$ on $S$ and we call this the structure map. The fact that it defines an action of $W^*$ on $S$ is the expressed via the commutativity of the diagram \[\xymatrix{& W^*(S)\ar[d]^{\mu_S}\\S\ar[r]^-{\id_S}\ar[ur]^{g_{(1)}} & S}\] and the property that the two compositions \[\xymatrix{W^*(W^*(S))\ar@<-.5ex>[r]_-{\mu_{W^*(S)}}\ar@<.5ex>[r]^-{W^*(\mu_S)} & W^*(S) \ar[r]^-{\mu_{S}} & S}\] coincide.

From the comonadic point of view we write \[h_S: S\to W_*(S)\] for the map defining the coaction of $W_*$, and the fact that it defines a coaction is expressed via the commutativity of the diagram \[\xymatrix{S\ar[r]^-{h_S}\ar[dr]_{\id_S} & W_*(S)\ar[d]^{\gamma_{(1)}}\\
& S}\] and the fact that the two compositions \[\xymatrix{S \ar[r]^{h_{S}} \ar[r] & W_*(S)\ar@<-.5ex>[r]_-{h_{W_*(S)}}\ar@<.5ex>[r]^-{W_*(h_S)} & W_*(W_*(S))}\] are equal.

\subsection{} We denote by $\Sh_{\Lambda_S}^\et$ the category of $\Lambda$-sheaves equipped with a $\Lambda$-morphism $X\to S$. The functor \[W_{S*}:\Sh_{S}^{\et}\to \Sh_{\Lambda_{S}}^\et: X\mto W_{*}(X)\times_{W_{*}(S)}S\] composed with the forgetful functor is a comonad on $\Sh_S^{\et}$ and identifies the category of $S$-sheaves $X$ with a coaction of $W_{S*}$ with the category of $\Lambda$-sheaves equipped with a $\Lambda$-morphism $X\to S$.

\begin{prop}\label{prop:hom-lambda-sheaves-are-sheaves} Let $S$ be a $\Lambda$-sheaf and let $X$ and $Y$ be a pair of $\Lambda_S$-sheaves. Then the functor \[\Sh_{\Lambda_S}^{\et}\to \Set: S'\mto \Hom_{\Lambda_{S'}}(X_{S'}, Y_{S'})\] is a sheaf for the canonical topology.
\end{prop}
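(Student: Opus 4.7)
The plan is to realise the functor $F: S' \mapsto \Hom_{\Lambda_{S'}}(X_{S'}, Y_{S'})$ as a finite limit of sheaves pulled back, along the forgetful functor $U: \Sh^\et_{\Lambda_S} \to \Sh^\et_S$, from representable $S$-sheaves. The reason for proceeding this way is that everything in the definition of a $\Lambda_S$-morphism can be phrased in terms of a single fixed object of $\Sh^\et_S$ once $W_{T*}(Y_T)$ is identified with an appropriate base change.

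First I would establish that for any $\Lambda_S$-sheaf $T$ and any $S$-sheaf $Y$, there is a natural identification
\[W_{T*}(Y_T) \;\cong\; W_{S*}(Y) \times_S T.\]
Since $W_*$ is right adjoint to $W^*$ (see \ref{def:w-w-lambda-adjunction}), it preserves fibered products; applying this to $Y_T = Y \times_S T$ gives $W_*(Y_T) = W_*(Y) \times_{W_*(S)} W_*(T)$, and then
\[W_{T*}(Y_T) \;=\; W_*(Y_T) \times_{W_*(T)} T \;=\; W_*(Y) \times_{W_*(S)} T \;=\; W_{S*}(Y) \times_S T.\]
So if we put $Z := W_{S*}(Y)$, the relative Witt-pushforward is just the base change of the fixed $S$-sheaf $Z$ to $T$.

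Second, by the definition of a $\Lambda_T$-morphism as one compatible with the $W_{T*}$-coaction, $F(T)$ is the equalizer of the two maps
\[\Hom_T(X_T, Y_T) \;\rightrightarrows\; \Hom_T(X_T, Z \times_S T)\]
given by $f \mapsto h_{Y_T} \circ f$ and $f \mapsto W_{T*}(f) \circ h_{X_T}$ respectively. As $T$ varies over $\Sh^\et_{\Lambda_S}$, the two terms in this equalizer are exactly the values at $U(T)$ of the internal Hom-sheaves $\underline{\Hom}_S(X, Y)$ and $\underline{\Hom}_S(X, Z)$, both of which are representable $S$-sheaves (the topos $\Sh^\et_S$ being cartesian closed) and in particular are sheaves for the canonical topology on $\Sh^\et_S$.

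Finally, the forgetful functor $U$ admits the relative Witt-pushforward $W_{S*}$ as a right adjoint and therefore preserves all colimits, in particular effective epimorphic families. Consequently precomposition with $U$ transports sheaves for the canonical topology on $\Sh^\et_S$ to sheaves for the canonical topology on $\Sh^\et_{\Lambda_S}$; being a finite limit of sheaves, the equalizer $F$ is itself a sheaf. The only delicate point is the interchange $W_*(Y \times_S T) = W_*(Y) \times_{W_*(S)} W_*(T)$ used in the first step — this is precisely the content of $W_*$ being a right adjoint, and it is what allows the entire problem to be transported back to the fixed topos $\Sh^\et_S$, where internal Hom is readily available.
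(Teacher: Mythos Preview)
Your argument is correct and rests on the same computation the paper uses, namely the identification
\[
W_{T*}(Y_T)\;\cong\;W_{S*}(Y)\times_S T
\]
for $T$ a $\Lambda_S$-sheaf. The paper, however, deploys this identification differently: rather than packaging $F$ as an equaliser of sheaves pulled back along $U$, it verifies the sheaf condition by hand. Given a canonical cover $S'\to S$ in $\Sh^\et_{\Lambda_S}$, one observes that it remains a cover in $\Sh^\et_S$, so a compatible $\Lambda_{S'}$-morphism $f':X_{S'}\to Y_{S'}$ descends to an $S$-morphism $f:X\to Y$; one then checks that the square expressing that $f$ is a $\Lambda_S$-morphism commutes by pulling it back to $S'$ (where the above base-change identity turns it into the corresponding square for $f'$, which commutes by hypothesis) and using that $S'\to S$ is an epimorphism.

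Your route is more structural: you recognise both $\underline{\Hom}_S(X,Y)$ and $\underline{\Hom}_S(X,W_{S*}Y)$ as objects of the topos $\Sh^\et_S$, transport them to sheaves on $\Sh^\et_{\Lambda_S}$ via $U$, and take an equaliser. This makes the result look entirely formal once the base-change identity is in hand, and it would adapt immediately to other comonadic situations. The paper's approach is more elementary in that it avoids invoking internal Hom or the transport of the canonical topology, at the cost of a slightly longer diagram chase. One small point worth making explicit in your write-up: the second arrow $f\mapsto W_{T*}(f)\circ h_{X_T}$ is a natural transformation of presheaves on $\Sh^\et_{\Lambda_S}$ (not on $\Sh^\et_S$), and its naturality uses that $W_*$ preserves fibre products together with the compatibility $h_{X_T/T}=(h_{X/S})_T$; you have the ingredients but it would be good to say so.
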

\begin{proof} Let $S'\to S$ be a cover of $S$ in $\Sh_{\Lambda_S}$. Then $S'\to S$ is also a cover when viewed in $\Sh^{\et}_S$ and so any $\Lambda_{S'}$-morphism $f':X_{S'}\to Y_{S'}$ with the property that the two pull-backs of $f$ along the two projections $S'\times_S S'\to S$ coincide descends to a morphism $X\to Y$. It remains to verify that it is a $\Lambda_S$-morphism, which is the commutivity of the \begin{equation}\label{eqn:lambda-morphism}\begin{gathered}\xymatrix{X\ar[r]^-{h_{X/S}}\ar[d]_f & W_{*}(X)\times_{W_{*}(S)}S\ar[d]^{W_{*}(f)\times_{W_{*}(S)}S}\\
Y\ar[r]^-{h_{Y/S}} & W_{*}(Y)\times_{W_{*}(S)}S.}\end{gathered}\end{equation}However, using the identifications \begin{eqnarray*} W_{*}(X)\times_{W_{*}(S)} S\times_S S'&=&W_{*}(X)\times_{W_{*}(S)}S'\\
&=&W_{*}(X)\times_{W_{*}(S)}W_{*}(S')\times_{W_{*}(S')} S'\\
&=&W_{*}(X\times_S S')\times_{W_{*}(S')}S'
\end{eqnarray*} and similarly for $Y$, the diagram (\ref{eqn:lambda-morphism}) pulls-back along $S'\to S$ to the diagram

\begin{equation*}\begin{gathered}\xymatrix{X_{S'}\ar[r]^-{h_{X_{S'}/S'}}\ar[d]_{f'} & W_{*}(X)\times_{W_{*}(S')}S'\ar[d]^{W_{*}(f')\times_{W_{*}(S')}S'}\\
Y_{S'}\ar[r]^-{h_{Y_{S'}/S'}} & W_{*}(Y)\times_{W_{*}(S')}S'}\end{gathered}\end{equation*}
which commutes by hypothesis. Therefore, as $S'\to S$ is an epimorphism, the diagram (\ref{eqn:lambda-morphism}) commutes.
\end{proof}

\subsection{}\label{subsec:sheaves-of-lambda-structures} Let $S$ be a $\Lambda$-sheaf and let $X\to S$ be an $S$-$\et$-sheaf. Denote by $\Lambda_{X/S}$ the functor \[\Lambda_{X/S}: \Sh_{\Lambda_{S}}^\et\to \Set: S'/S\mto \{\text{the set of }\Lambda_{S'}\text{-structures on }X\times_S S'\}.\]
\begin{prop}\label{prop:lambda-structures-are-sheaves} The functor $\Lambda_{X/S}$ is a sheaf for the canonical topology on $\Sh_{\Lambda_{S}}^\et$.
\end{prop}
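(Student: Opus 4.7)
The plan is to reduce the assertion to two standard facts: that $S$-morphisms of $S$-sheaves form a sheaf for the canonical topology, and that equalities of morphisms descend along epimorphisms. Unpacking definitions as in \ref{subsec:sheaves-of-lambda-structures}, a $\Lambda_{S'}$-structure on $X \times_S S'$ is an $S'$-morphism
\[
h' : X \times_S S' \longrightarrow W_{S'*}(X \times_S S')
\]
satisfying the counit and coassociativity axioms. The first observation is that, since $W_*$ is a right adjoint (\ref{def:w-w-lambda-adjunction}), it preserves limits; in particular one has a canonical identification
\[
W_{S'*}(X_{S'}) \;=\; W_*(X_{S'}) \times_{W_*(S')} S' \;=\; W_*(X) \times_{W_*(S)} S' \;=\; W_{S*}(X) \times_S S'.
\]
Through this identification, a $\Lambda_{S'}$-structure on $X_{S'}$ is the same as an $S$-morphism $h' : X_{S'} \to W_{S*}(X)$ (equivalently an $S'$-morphism into $W_{S*}(X) \times_S S'$), subject to two axioms, each of which is an equality of $S$-morphisms out of $X_{S'}$.

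Next, I would observe that the canonical topology on $\Sh_{\Lambda_S}^{\et}$ has epimorphisms as covers, and that the forgetful functor $\Sh_{\Lambda_S}^{\et} \to \Sh_S^{\et}$ admits the right adjoint $W_{S*}$ (and, via (\ref{def:w-w-lambda-adjunction}), the left adjoint $W^*_S$ suitably relativised), so it preserves epimorphisms. Thus a cover $S'' \to S'$ of $\Lambda_S$-sheaves is an epimorphism of $S$-sheaves, and after base change the map $X \times_S S'' \to X \times_S S'$ is an epimorphism in $\Sh_S^{\et}$. By the canonical sheaf property of $\Hom_S(-, W_{S*}(X))$ in the topos $\Sh_S^{\et}$, any $S$-morphism $h'' : X_{S''} \to W_{S*}(X)$ whose two pullbacks to $X_{S'' \times_{S'} S''}$ coincide descends uniquely to an $S$-morphism $h' : X_{S'} \to W_{S*}(X)$; this gives both existence and uniqueness on the level of the underlying morphism.

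Finally, I would verify that the coalgebra axioms transfer under this descent. The counit axiom $\gamma_{(1)} \circ h' = \id_{X_{S'}}$ and the coassociativity axiom $W_{S*}(h') \circ h' = h_{W_{S*}(X)} \circ h'$ are equalities of $S$-morphisms with source $X_{S'}$; since each holds after pullback along the epimorphism $X_{S''} \to X_{S'}$ (where it reduces to the corresponding axiom for the given $h''$), it holds already on $X_{S'}$. This shows the descended $h'$ is a genuine $\Lambda_{S'}$-structure, completing the verification. The only genuinely delicate point is the identification $W_{S'*}(X_{S'}) \cong W_{S*}(X) \times_S S'$ together with the compatibility of the iterated comonad structure under it (needed so that coassociativity for $h''$ really does pull back from that of $h'$); once this bookkeeping is settled, everything else is formal descent.
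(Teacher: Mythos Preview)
Your argument is correct and is essentially the comonadic dual of the paper's proof. The paper works with the monadic description: a $\Lambda_S$-structure is an action map $\mu_X : W^*(X) \to X$, and one writes $\Lambda_{X/S}(S)$ as an equaliser
\[
\Lambda_{X/S}(S) \longrightarrow \Hom_S(W^*(X), X) \rightrightarrows \Hom_S(X, X) \times \Hom_S(W^*(W^*(X)), X).
\]
The key input there is that $W^*$ preserves colimits (in particular pushouts and epimorphisms), so that $W^*(X_{S'}) \to W^*(X)$ is an epimorphism whenever $S' \to S$ is; the sheaf condition then falls out of a $3\times 3$ equaliser diagram with target $X$ fixed and source varying.

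You instead fix the target via the comonadic identification $W_{S'*}(X_{S'}) \cong W_{S*}(X) \times_S S'$, which uses that $W_*$ preserves limits, and then let the source $X_{S'}$ vary. This is exactly the manoeuvre the paper uses in the proof of (\ref{prop:hom-lambda-sheaves-are-sheaves}), and your approach has the advantage of making the parallel with that result transparent. The cost is the bookkeeping you flag: one must check that the comultiplication of $W_{S'*}$ corresponds under the base-change identification to that of $W_{S*}$, so that coassociativity really does descend. The paper's monadic approach sidesteps this entirely, since the target $X$ stays fixed throughout and no iterated identification is needed; the price it pays is the slightly opaque $3\times 3$ diagram chase.

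One small wording issue: your justification that the forgetful functor preserves epimorphisms should cite its having a \emph{right} adjoint $W_{S*}$ (making it a left adjoint, hence colimit-preserving); the parenthetical about a left adjoint is irrelevant for that conclusion.
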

\begin{proof} Let $S'\to S$ be an epimorphism and write $S''=S\times_S S'$, $X'=X\times_S S'$ and $X''=X\times_S S''$. By the definition of a $\Lambda_S$-structure (in terms of $W^*$) we have an equaliser (the Homs are of $\et$-sheaves not $\Lambda$-sheaves) \[\xymatrix{\Lambda_{X/S}(S)\ar[r] & \Hom_S(W^*(X), X)\ar@<.5ex>[r]\ar@<-.5ex>[r] & \Hom_S(X, X)\times \Hom_S(W^*(W^*(X)), X)}\] where the first map sends a $\Lambda_S$-structure on $S$ to the map $\mu_X: W^*(X)\to X$ and the two parallel arrows send a map $\mu_X$ to \[(\mu_X\circ g_{(1)}, \mu_{W^*(X)}\circ \mu_X) \quad \text{ and } \quad (\id_X, W^*(\mu_X)\circ \mu_X).\]

Now consider corresponding commutative diagram: \[\xymatrix{\Lambda_{X/S}(S)\ar[r]\ar[d] & \Hom_S(W^*(X), X)\ar@<.5ex>[r]\ar@<-.5ex>[r]\ar[d] & \Hom_S(X, X)\times \Hom_S(W^*(W^*(X)), X)\ar[d]\\
\Lambda_{X/S}(S')\ar[r]\ar@<.5ex>[d]\ar@<-.5ex>[d] & \Hom_S(W^*(X'), X)\ar@<.5ex>[r]\ar@<-.5ex>[r]\ar@<.5ex>[d]\ar@<-.5ex>[d] & \Hom_S(X', X)\times \Hom_S(W^*(W^*(X')), X)\ar@<.5ex>[d]\ar@<-.5ex>[d]\\\Lambda_{X/S}(S'')\ar[r] & \Hom_S(W^*(X''), X)\ar@<.5ex>[r]\ar@<-.5ex>[r] & \Hom_S(X'', X)\times \Hom_S(W^*(W^*(X'')), X).
}\]

The two right columns are equalisers as $X'=X\times_S S'\to X$ is a cover and $W^*$ preserves push-outs and all three rows are equalisers. Therefore it follows that the first column is an equaliser and we are done.
\end{proof}

\begin{prop}\label{prop:lambda-structures-etale-schemes-number-fields} Let $O$ be the ring of integers in a number field $K$, $\f\in \Id_O$ an ideal and write $P=\Id_{O}^{(\f)}$. Then a finite \'etale $S=\Spec(O[\f^{-1}])$-scheme $X$ admits a $\Lambda_{P, S}$-structure if and only if $X\times_{S}\Spec(K)=\amalg_{1\leq i\leq n}\Spec(L_i)$ where each $L_i$ is a finite abelian extension of $K$. Moreover, in this case the $\Lambda_{P, S}$-structure is unique and any morphism of finite \'etale $\Lambda_{P, S}$-schemes is a $\Lambda_{P, S}$-morphism.
\end{prop}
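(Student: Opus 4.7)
The strategy is to translate $\Lambda_{P,S}$-structures into commuting families of Frobenius lifts via part (ii) of (\ref{prop:lambda-structures-frobenius-lifts-sheaves}); since $X\to S$ is finite \'etale and $S=\Spec(O[\f^{-1}])$ is flat over $\Spec(O)$, $X$ is flat over $\Spec(O)$, so that proposition applies, and a $\Lambda_{P,S}$-structure on $X$ is the same data as a commuting family of $S$-endomorphisms $\psi^\a:X\to X$ for $\a\in P$ with $\psi^{\a\b}=\psi^\a\circ\psi^\b$ and with $\psi^\p$ reducing to the $N\p$-power Frobenius modulo $\p$ for every prime $\p\in P$. Since the $N\p$-power Frobenius is the identity on underlying topological spaces and every connected component of $X$ has non-empty fibre over each prime of $O[\f^{-1}]$, each $\psi^\p$ preserves every connected component of $X$, and I reduce to the case where $X$ is connected with $X_K=\Spec(L)$ for a single finite separable extension $L/K$, necessarily unramified away from $\f$.

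For the forward direction I would use Chebotarev density in the language of the \'etale fundamental group. Fixing a geometric point $\bar s$ of $S$, identify $F:=X(\bar s)$ with $\pi_1(S,\bar s)/H$, where $H$ is the stabilizer of a chosen base point and corresponds to $L$ via the Galois correspondence. Since $L$ is a field, each $\psi^\p$ restricts to a $K$-automorphism of $X_K$ and so gives a $\pi_1(S)$-equivariant automorphism of $F$, which must be right multiplication by a coset in $N_{\pi_1(S)}(H)/H$. The Frobenius lift condition identifies this action with left multiplication on $F$ by a Frobenius element $\Fr_v\in\pi_1(S)$ attached to any chosen place $v$ of $K^\sep$ above $\p$ (well defined because $\p\nmid\f$); comparing left and right multiplications as self-maps of $F$ and evaluating at the identity coset yields $\Fr_v\in N_{\pi_1(S)}(H)$. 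Since this applies to every $v$ above every $\p\in P$ and the corresponding Frobenius elements topologically generate $\pi_1(S)$ by Chebotarev, we obtain $N_{\pi_1(S)}(H)=\pi_1(S)$, so $H$ is normal and $L/K$ is Galois. Evaluating the same identity at a general coset $xH$ then gives $\Fr_v\cdot x\equiv x\cdot\Fr_v\pmod H$, so every $\Fr_v$ is central in $\pi_1(S)/H=G(L/K)$; Chebotarev again then implies that $G(L/K)$ is abelian.

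For the converse, given $X_K=\coprod_i\Spec(L_i)$ with each $L_i/K$ abelian Galois, I define $\psi^\a$ componentwise as the Artin symbol $\sigma_{L_i/K,\a}\in G(L_i/K)$; this extends to an $O$-automorphism of the integral closure $O_{L_i}[\f^{-1}]$ since $L_i/K$ is unramified at every prime of $P$. The resulting $\psi^\a$ commute and are multiplicative in $\a$ by the corresponding properties of the Artin map, giving a $\Psi$-structure on $X$ whose $\psi^\p$ reduces modulo $\p$ to the Frobenius on each residue field; part (ii) of (\ref{prop:lambda-structures-frobenius-lifts-sheaves}) then upgrades this to the sought $\Lambda_{P,S}$-structure. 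Uniqueness follows because the forward argument forces the restriction of any candidate $\psi^\p$ to the schematically dense generic fibre $X_K$ to coincide with this Artin symbol, and the reduced (finite \'etale, hence smooth) scheme $X$ admits only one morphism to itself restricting to a given one on a schematically dense subscheme. For functoriality, any $S$-morphism $f:X\to Y$ between finite \'etale $\Lambda_{P,S}$-schemes corresponds on generic fibres to $K$-algebra maps between disjoint unions of abelian extensions of $K$, and such maps commute with the Artin symbols by functoriality of the Artin map with respect to subfields, so $f$ is a $\Lambda_{P,S}$-morphism. The main obstacle is the forward direction, specifically the two-step Chebotarev argument that first forces $H$ normal in $\pi_1(S)$ and then forces the quotient $\pi_1(S)/H$ to be abelian.
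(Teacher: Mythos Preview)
Your proof is correct and follows essentially the same strategy as the paper's: reduce a $\Lambda_{P,S}$-structure to commuting Frobenius lifts via (\ref{prop:lambda-structures-frobenius-lifts-sheaves}), reduce to connected $X$, and then use that Frobenius elements are dense. The paper works directly with $\Aut_K(L)$ and passes rather quickly over the point that the lifts $\psi^\p_X$ generate $G(L/K)$ (in particular that $L/K$ is Galois at all), whereas your two-step $\pi_1$-argument --- first forcing $H$ normal, then forcing the quotient abelian --- makes this step explicit; this is a genuine expository improvement but not a different method.
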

\begin{proof} Let $X\to S$ be a finite \'etale morphism. Then $X\times_S \Spec(K)=\amalg_{i\in I} \Spec(L_i)$ with $L_i/K$ a finite extension and $X\isomto \amalg_{i} X_i$ where $X_i=\Spec(O_{L_i}[\f^{-1}])$.
If $X\to S$ has a $\Lambda_{P, S}$-structure then for each prime $\p\in P=\Id_{O_K}^{(\f)}$, the Frobenius lift $\psi^\p_X: X\to X$ fixes the fibre of each $X_i$ over $\Spec(\F_\p)$ so that $\psi_X^\p$ maps each $X_i$ to itself. It follows that the $\Lambda_{P, S}$-structure on $X$ is induced by unique $\Lambda_{P, S}$-structures on each $X_i$ and so we may assume that $X=X_i$ is connected. In this case, write $X=\Spec(O_{L}[\f^{-1}])$ with $L/K$ a finite extension. If $\P$ is a prime of $L$ laying over the prime $\p$ then there is a unique automorphism \[\sigma_{\P, L/K}: X\to X\] whose restriction to fibre over $\Spec(O_L/\P)$ is equal to the $N\p$-power Frobenius automorphism. But $\psi^\p_X$ also has this property and so $\sigma_{\P, L/K}=\psi^\p_X$. As the maps $\psi^\p_X=\sigma_{\P, L/K}$ commute and generate the group $G(L/K)$ it follows that $G(L/K)$ is abelian. Moreover, the uniqueness of $\sigma_{\P, L/K}=\sigma_{\p, L/K}=\psi_{X}^\p$ shows that the $\Lambda_{P}$-structure on $X$ is unique, is also a $\Lambda_{P, S}$-structure and that any morphism of finite \'etale $\Lambda_{P, S}$-schemes is a $\Lambda_{P, S}$-morphism.

Conversely, if each $L_i/K$ is abelian then there is a unique automorphism \[\sigma_{\p, L_i/K}: X_i\to X_i\] lifting the $N\p$-power Frobenius automorphism of the fibre over $\Spec(\F_\p)$. It follows that setting $\psi^\p_X=\amalg_{i\in I}\sigma_{L_i/K, \p}$ defines a Frobenius lift on $X$. Moreover, as $G(L_i/K)$ is abelian these Frobenius lifts commute and so define a $\Lambda_{P, S}$-structure on $X$.
\end{proof}

\begin{rema} In the notation of (\ref{prop:lambda-structures-etale-schemes-number-fields}) if $X\to S$ is a finite \'etale $\Lambda_P$-scheme then its Frobenius lifts $\psi_{X}^\a$ for $\a\in P$ will be denoted by $\sigma_{S, \a}$, or just $\sigma_\a$. This agrees with (or extends) the conventions set up in (\ref{subsec:finite-extensions-frobenius}).
\end{rema}

\section{Ghosts and coghosts}
\begin{prop}\label{prop:ghost-map-surj-geom-points} Let $X$ be a sheaf. For each $\a\in P$ the length-$\a$ ghost map \[g_{\a}:\Gamma_{\a}^*(X)\to W_{\a}^*(X)\] is surjective on geometric points and so is the full ghost map \[g: \Gamma^*(X)\to W^*(X).\]
\end{prop}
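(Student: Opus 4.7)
Reduction to the affine case: any $\et$-sheaf $X$ admits an étale cover by an affine scheme $\Spec(A)\to X$, and both $W^*_\a$ and $\Gamma^*_\a$ commute with étale base change by (\ref{prop:w-and-etale-base-change}), so a geometric point of $W^*_\a(X)$ lifts (after a field extension) to $W^*_\a(\Spec(A))=\Spec(W_\a(A))$. It therefore suffices to prove that the morphism $g_\a\colon \coprod_{\b|\a}\Spec(A)\to \Spec(W_\a(A))$ is surjective on points; this morphism is induced by the ring homomorphism $W_\a(A)\to \Gamma_\a(A)=\prod_{\b|\a}A$ whose $\b$-th component is the ghost ring homomorphism $g_\b^\#\colon W_\a(A)\to A$, $\phi\mapsto \phi(\psi^\b)$, dual to the biring inclusion $\Psi_\a\hookrightarrow \Lambda_\a$.

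Saying every prime $\q\subset W_\a(A)$ lies in the image of $g_\b^\#$ for some $\b$ amounts to $\q\supset \ker(g_\b^\#)$ for some $\b|\a$. Since $\a$ has only finitely many divisors, a standard prime-avoidance argument reduces this to the statement that the finite intersection $\bigcap_{\b|\a}\ker(g_\b^\#)$ is contained in the nilradical of $W_\a(A)$ — the classical \emph{nilpotence lemma}: any $x\in W_\a(A)$ whose ghost components all vanish is nilpotent. I would prove this by reducing to the prime-power case $\a=\p^n$ via the isomorphism $\Lambda_{\a_1\a_2}\isomto \Lambda_{\a_1}\odot \Lambda_{\a_2}$ for coprime $\a_i$ ((\ref{prop:finite-length-good})(ii)), which gives $W_{\a_1\a_2}(A)=W_{\a_1}(W_{\a_2}(A))$ and a corresponding factorization of the ghost map through $\prod_{\c_1|\a_1}W_{\a_2}(A)$; then iteratively applying the prime-power nilpotence lemma, noting that if $g_{\c_2}^\#(g_{\c_1}^\#(x))=0$ for all $\c_2$, then $g_{\c_1}^\#(x)$ is nilpotent in $W_{\a_2}(A)$, so $x$ has nilpotent image in $\Gamma_{\a_1}(W_{\a_2}(A))$, and a second application yields nilpotence of $x$. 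In the prime-power case, the identification $\Lambda_{\p^n}=O[\delta_0,\ldots,\delta_{n-1}]$ from (\ref{prop:lambda-one-prime-is-locally-polynomial}) realises $W_{\p^n}(A)$ concretely as $A^n$ with the classical Witt ring structure, and the nilpotence lemma follows from the recursion $w_k(x)=x_0^{N\p^k}+\pi x_1^{N\p^{k-1}}+\cdots+\pi^k x_k$ combined with the Verschiebung identity $V(y)^k=\pi^{k-1}V(y^k)$ and the vanishing $V^n=0$ in $W_{\p^n}$.

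For the full ghost map $g\colon \Gamma^*(X)\to W^*(X)$, a geometric point of $W^*(X)=\colim_\a W^*_\a(X)$ factors through some $W^*_\a(X)$ by quasi-compactness of $\Spec(F)$, so it lifts to $\Gamma^*_\a(X)\subset \Gamma^*(X)$ by the preceding case. The main obstacle is the prime-power nilpotence lemma: while classical for $p$-typical Witt vectors, the plethystic generality over an arbitrary Dedekind base $O$ requires careful bookkeeping with Witt polynomials in $\pi$ and the $\delta$-operators; everything else in the argument is formal.
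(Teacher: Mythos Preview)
Your reduction to the affine case has a small issue: not every $\et$-sheaf admits an \'etale cover by a single affine scheme, and (\ref{prop:w-and-etale-base-change}) only applies to ind-affine schemes. The paper instead writes $X=\colim\Spec(A)$ as the canonical colimit over affines mapping to $X$; since $W_\a^*$ and $\Gamma_\a^*$ are left adjoints they commute with this colimit, and a geometric point of $W_\a^*(X)$ factors through some $\Spec(W_\a(A))$.

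The real gap is in the affine step. You assert that ``$\q$ lies in the image of $\Spec(g_\b^\#)$'' is equivalent to ``$\q\supset\ker(g_\b^\#)$'', and then reduce (correctly, via primality rather than prime avoidance) to the nilpotence lemma. But only the forward implication is automatic; the reverse needs lying over for the injection $W_\a(A)/\ker(g_\b^\#)\hookrightarrow A$, for instance integrality. The map $\Z\hookrightarrow\Q$ has zero (hence nil) kernel, yet $\Spec(\Q)\to\Spec(\Z)$ hits only the generic point. Your nilpotence lemma shows that every prime of $W_\a(A)$ contains $\ker(W_\a(A)\to\Gamma_\a(A))$; it does not show that such primes extend to primes of $\Gamma_\a(A)$.

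The paper simply cites Proposition~8.1 of \cite{Borger11}, which says that $W_\a(A)\to\Gamma_\a(A)$ is \emph{integral} with nilpotent kernel. Integrality gives lying over for $W_\a(A)/\ker\hookrightarrow\Gamma_\a(A)$, and the nilpotent kernel ensures the closed image is all of $\Spec(W_\a(A))$. You have sketched the nilpotent-kernel half; the integrality half is what is missing, and it is genuinely the harder half --- it does not follow from your $\delta$-coordinate description alone, since the relations $\psi^{\p^k}=\delta_0^{N\p^k}+\pi\delta_1^{N\p^{k-1}}+\cdots+\pi^k\delta_k$ are not monic in the $\delta_i$.
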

\begin{proof} Writing $X=\colim \Spec(A)$ as a colimit of affine schemes, and $W^{*}(X)=\colim_\a W_{\a}^*(X)$ and $\Gamma^{*}(X)=\colim_\a \Gamma_{\a}^*(X)$ it is enough to show this for \[\Spec(\Gamma_\a(A))=\Gamma_{\a}^*(\Spec(A))\to \Spec(W_\a(A))=W^*_\a(\Spec(A)).\] But the map $W_\a(A)\to \Gamma_\a(A)$ is integral with nilpotent kernel (Proposition 8.1 of \cite{Borger11}) and therefore \[\Spec(\Gamma_\a(A))\to \Spec(W_\a(A))\] is surjective on geometric points.
\end{proof}

\begin{prop}\label{prop:ghost-covers-etale} If $S$ is an ind-affine scheme, and $T$ an affine \'etale $W^*(S)$-sheaf then the sequence \[\xymatrix{T(W^*(S))\ar[r]^{T(g)} & T(\Gamma^*(S))\ar@<.5ex>[r]\ar@<-.5ex>[r] & T(\Gamma^*(S)\times_{W^*(S)}\Gamma^*(S))}\] is an equaliser.
\end{prop}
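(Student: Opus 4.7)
The plan is to reduce the equaliser statement to a question about the affine ring map $W_\alpha(A)\to \Gamma_\alpha(A)$ at a fixed truncation level $\alpha\in P$, and then invoke a descent argument along the ghost map using its explicit algebraic structure.

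First I would pass to the affine level. Since $S$ is ind-affine, write $S=\colim_i S_i$ with the $S_i$ affine. The functors $W^*_\alpha$ and $\Gamma^*_\alpha$ commute with filtered colimits by (\ref{prop:def-arithmetic-ghosts})(ii) (and manifestly so for $\Gamma^*_\alpha=\coprod_{\beta|\alpha}(-)$), while $W^*(S)=\colim_\alpha W^*_\alpha(S)$. Because $T$ is affine over $W^*(S)$ and hence quasi-compact, it descends to an affine \'etale morphism $T_\alpha\to W^*_\alpha(S_i)$ for some $i$ and some $\alpha\in P$. Since morphisms into an affine scheme exchange with filtered colimits in the source, the equaliser claim reduces to the corresponding statement with $S=\Spec(A)$ affine and $W^*(S),\Gamma^*(S)$ replaced by their level-$\alpha$ truncations; taking the limit over $\alpha$ and $i$ then recovers the general case.

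In this reduced setting, writing $T_\alpha=\Spec(B)$ with $W_\alpha(A)\to B$ \'etale, the equaliser becomes the assertion that
\[
\Hom_{W_\alpha(A)}(B,W_\alpha(A))\isomto \Eq\bigl(\Hom_{W_\alpha(A)}(B,\Gamma_\alpha(A))\rightrightarrows \Hom_{W_\alpha(A)}(B,\Gamma_\alpha(A)\otimes_{W_\alpha(A)}\Gamma_\alpha(A))\bigr).
\]
The key structural input is that $W_\alpha(A)\to \Gamma_\alpha(A)$ is integral with nilpotent kernel $N$ (Proposition 8.1 of \cite{Borger11}), factoring as $W_\alpha(A)\twoheadrightarrow W_\alpha(A)/N\hookrightarrow \Gamma_\alpha(A)$. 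Formal \'etaleness of $B$ over $W_\alpha(A)$ identifies $\Hom_{W_\alpha(A)}(B,W_\alpha(A))$ with $\Hom_{W_\alpha(A)}(B,W_\alpha(A)/N)$, and since $N$ annihilates $\Gamma_\alpha(A)$ the tensor product is unchanged upon replacing $W_\alpha(A)$ by $W_\alpha(A)/N$. The problem is therefore reduced to showing that the injective integral morphism $W_\alpha(A)/N\hookrightarrow \Gamma_\alpha(A)$ satisfies effective descent for affine \'etale morphisms.

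The hard part will be this last descent step. My approach would be to use (\ref{prop:finite-length-good})(ii) to reduce to the case where $\alpha=\p^n$ is a prime power, since the composition product there becomes the tensor product over $O$ and $\Gamma_\alpha,\Lambda_\alpha$ factor accordingly. In the prime-power case the $\p$-typical ghost map has a transparent shape: it is an isomorphism after inverting $\p$ (as each side becomes a product of copies of $A[\p^{-1}]$), while modulo $\p$ its components are the iterated $N\p$-power Frobenius maps on $A$. Using this together with (\ref{prop:witt-vectors-preserve-etale-morphisms})(ii) to control base change, I expect to verify that $W_\alpha(A)/N\hookrightarrow \Gamma_\alpha(A)$ is faithfully flat, at which point the equaliser property becomes a standard instance of fpqc descent.
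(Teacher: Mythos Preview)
Your reduction to a fixed truncation level $\a$ and to affine $S=\Spec(A)$ is correct and is exactly how the paper proceeds. The identification of the structural input---that $W_\a(A)\to\Gamma_\a(A)$ is integral with nilpotent kernel (Borger, Proposition~8.1)---is also the right one. The gap is in your final step: the injection $W_\a(A)/N\hookrightarrow\Gamma_\a(A)$ is \emph{not} faithfully flat in general, so you cannot conclude by fpqc descent.

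Take $O=\Z_p$, $\a=\p$, and $A=\F_p[\epsilon]/(\epsilon^2)$. The ghost map $W_\p(A)\to A\times A$ sends $(a_0,a_1)\mapsto(a_0,a_0^p)$, so $W_\p(A)/N\cong A$ via the first ghost component, and the induced map $A\to A\times A$ is $a\mapsto(a,\Fr_A(a))$. As an $A$-module the target is $A\oplus A_{\Fr}$, where $A_{\Fr}$ denotes $A$ with module structure twisted by Frobenius. Since $\Fr_A(\epsilon)=\epsilon^p=0$, the factor $A_{\Fr}$ is isomorphic to $(A/\epsilon)^2$ as an $A$-module, and $A/\epsilon=\F_p$ is not flat over $A$ (indeed $\mathrm{Tor}_1^A(\F_p,\F_p)\neq 0$). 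More conceptually, after passing to characteristic~$\p$ the ghost components are iterated Frobenii, and by Kunz's theorem Frobenius is flat only when $A$ is regular; there is no reason to expect this for an arbitrary base. The base-change property (\ref{prop:witt-vectors-preserve-etale-morphisms})(ii) concerns $W_\a$ applied to \'etale maps and does not help here.

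The paper avoids this entirely: once $S$ is affine, the map $\Gamma_\a^*(S)\to W_\a^*(S)$ is integral and surjective, and integral surjections are effective descent morphisms for affine \'etale schemes (this is classical; see e.g.\ SGA1, Expos\'e~IX, or the invariance of the \'etale site under universal submersions). That single sentence finishes the proof. Your nilpotent-kernel reduction via formal \'etaleness is fine but unnecessary, and the subsequent flatness claim should be replaced by this descent statement.
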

\begin{proof} As $W^*(S)=\colim_{\a\in P}W^*_\a(S)$ and $\Gamma^*(S)=\colim_{\a\in P}\Gamma^*_\a(S)$ and filtered colimits are exact, we may replace $W^*(S)$ and $\Gamma^*(S)$ and $T$ with $W^*_\a(S)$, $\Gamma^*_\a(S)$ and $T\times_{W^*(S)}W^*_\a(S)$ respectively. Now writing $S$ as a filtered colimit of affine schemes we may assume that $S$ is affine in which case the claim follows as $\Gamma^*_\a(S)\to W_{\a}^*(S)$ is integral and surjective, and therefore an effective descent map for the category of affine \'etale schemes.
\end{proof}

\begin{rema}\label{rema:ghost-fibre-product} In order for (\ref{prop:ghost-covers-etale}) to be useful in applications we should say something about the fibre product $W^*(S)\times_{\Gamma^*(S)}W^*(S)$. Let $\p$ be a prime ideal, $\a$ any ideal, $n\geq 1$ an integer and write $S_\p=S\times_{\Spec(O)}\Spec(\F_\p)$ and consider the two maps (where the inclusions are the obvious ones) \[r_{\a, \p^n}^{(1)}:S_\p\subset S \stackrel{g_{\a\p^n}}{\longrightarrow} \Gamma^*(S) \quad \text{and} \quad r_{\a, \p^n}^{(2)}:S_\p \stackrel{\Fr_{S_\p}^{N\p^n}}{\longrightarrow} S_\p\subset S \stackrel{g_\a}{\to} \Gamma^*(S).\] Then for $i, j\in \{1, 2\}$ the two compositions

\[\xymatrix{S_\p\ar[rr]^-{(r_{\a, \p^n}^{(i)}, r_{\a, \p^n}^{(j)})} && \Gamma^*(S)\times_{W^*(S)}\Gamma^*(S)\ar@<.5ex>[r]\ar@<-.5ex>[r] & W^*(S)}\] are equal and the morphism \[\coprod_{i\neq j\in \{1, 2\}}\coprod_{\a, \p^n} S_\p \stackrel{(r_{\a, \p^n}^{(i)}, r_{\a, \p^n}^{(j)})}{\longrightarrow}\Gamma^*(S)\times_{W^*(S)}\Gamma^*(S)\] defines a nilpotent immersion onto the complement of the diagonal in $\Gamma^*(S)\times_{W^*(S)}\Gamma^*(S)$ (this follows by an iterated application of 17.1 \cite{Borger11bis}). Therefore, in the notation of (\ref{prop:ghost-covers-etale}) an element of $T(\Gamma^*(S))$ is in the image of $T(W^*(S))\to T(\Gamma^*(S))$ if and only if for all prime ideals $\p$, ideals $\a$, integers $n\geq 0$ and pairs $i\neq j\in \{1, 2\}$, it is equalised by the maps \[\xymatrix{T(\Gamma^*(S))\ar@<.7ex>[rr]^-{T(r_{\a, \p^n}^{(i)})}\ar@<-.7ex>[rr]_-{T(r_{\a, \p^n}^{(j)})} && T(S_\p).}\]

\end{rema}
\begin{lemm}\label{lemm:coghost-is-affine-finite-point-property} If $X$ is a scheme with the property that every finite set of points of $X$ is contained in an open affine sub-scheme of $X$ then for each $\a\in P$ the length-$\a$ coghost map \[\gamma_{\leq \a}: W_{\a*}(X)\to \Gamma_{\a*}(X)\] is affine. 
\end{lemm}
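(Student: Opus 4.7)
The plan is to reduce the question to the case where $X$ is affine, in which case both the source and target of the length-$\a$ coghost map are affine schemes and the statement is trivial. Since affineness is local on the target (\ref{prop:affine-fpqc-descent}), it suffices to show that for every morphism $\Spec(A)\to \Gamma_{\a*}(X)$ the fibre product $W_{\a*}(X)\times_{\Gamma_{\a*}(X)}\Spec(A)$ is affine. Such a morphism is a finite tuple $(f_\b\colon \Spec(A)\to X)_{\b|\a}$ since $\Gamma_{\a*}(X)=\prod_{\b|\a}X$. For each $s\in \Spec(A)$, the finite set $\{f_\b(s):\b|\a\}\subset X$ lies in some open affine $U_s\subset X$ by hypothesis on $X$, so $V_s:=\bigcap_{\b|\a}f_\b^{-1}(U_s)\subset \Spec(A)$ is an open neighbourhood of $s$ over which all the $f_\b$ land in $U_s$. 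Choosing an affine open $\Spec(A_s)\subset V_s$ containing $s$, the family $(\Spec(A_s)\to \Spec(A))_s$ is an affine open cover of $\Spec(A)$, so it suffices to check that each fibre product over $\Spec(A_s)$ is affine.

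The heart of the argument is the identification $W_{\a*}(U)\isomto W_{\a*}(X)\times_{\Gamma_{\a*}(X)}\Gamma_{\a*}(U)$ valid for any open affine $U\subset X$. To see this, a section of the right hand side over $\Spec(B)$ is a morphism $g\colon \Spec(W_\a(B))\to X$ whose composition with $\Spec(\Gamma_\a(B))\to \Spec(W_\a(B))$ factors through $U\subset X$. The map $W_\a(B)\to \Gamma_\a(B)$ is integral with nilpotent kernel (Proposition 8.1 of \cite{Borger11}, the key input also to (\ref{prop:ghost-map-surj-geom-points})), so the induced map of spectra is surjective on underlying topological spaces. Thus $g$ and its restriction to $\Spec(\Gamma_\a(B))$ have the same topological image in $X$, and since $U\subset X$ is an open immersion, if the latter image lies in $U$ then $g$ itself factors uniquely through $U$. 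This exhibits the pair as coming from a unique section of $W_{\a*}(U)$, proving the claim.

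Applying this with $U=U_s$ we obtain
\[W_{\a*}(X)\times_{\Gamma_{\a*}(X)}\Spec(A_s)\isomto W_{\a*}(U_s)\times_{\Gamma_{\a*}(U_s)}\Spec(A_s),\]
and this is affine because $W_{\a*}(U_s)=\Spec(\Lambda_\a\odot \mc{O}(U_s))$ and $\Gamma_{\a*}(U_s)=\Spec(\Psi_\a\odot \mc{O}(U_s))$ are both affine by (i) of (\ref{prop:def-arithmetic-ghosts}). The main obstacle is really the topological image input in the second paragraph, which is precisely what allows openness conditions on $X$ to be transported across $W^*$; this is the one place where the hypothesis that every finite set of points of $X$ lies in some open affine is required, so as to simultaneously control the images of all the finitely many components $f_\b$.
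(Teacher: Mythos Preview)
Your proof is correct and follows essentially the same route as the paper: reduce to the case of an open affine $U\subset X$ via the cartesian identification $W_{\a*}(U)\isomto W_{\a*}(X)\times_{\Gamma_{\a*}(X)}\Gamma_{\a*}(U)$, then use that both $W_{\a*}(U)$ and $\Gamma_{\a*}(U)$ are affine. The only differences are cosmetic: you localise on a test affine $\Spec(A)$ rather than directly on $\Gamma_{\a*}(X)$, and you supply a direct argument for the cartesian identification (via the topological surjectivity of $\Spec(\Gamma_\a(B))\to\Spec(W_\a(B))$) where the paper simply cites Proposition~12.2 of \cite{Borger11bis}.
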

\begin{proof} The property satisfied by $X$ implies that there is an open affine cover $(X_i)_{i\in I}$ of $X$ such that $(\Gamma_{\a*}(X_i))_{i\in I}$ is an open cover of $\Gamma_{\a*}(X)$ (recall that $\Gamma_{\a*}$ of a sheaf $X$ is just a finite product of copies of $X$). However, the diagram \begin{equation}\label{dia:coghost-affine}
\begin{gathered}\xymatrix{W_{\a*}(X_i)\ar[r]^{\gamma_{\leq \a}}\ar[d] & \Gamma_{\a*}(X_i)\ar[d]\\
W_{\a*}(X)\ar[r]^{\gamma_{\leq \a}} & \Gamma_{\a*}(X)}\end{gathered}\end{equation} is cartesian by Proposition 12.2 of \cite{Borger11bis} (\cite{Borger11bis} also assumes that the open immersions $X_i\to X$ are closed but the proof that the diagram (\ref{dia:coghost-affine}) is cartesian does not use this assumption) and the top morphisms is affine. As $(\Gamma_{\a*}(X_i))_{i\in I}$ is a cover of $\Gamma_{\a*}(X)$ it follows that the bottom row of (\ref{dia:coghost-affine}) is affine.
\end{proof}
\subsection{} Let $S$ be a $\Lambda$-sheaf and $X$ an $S$-group sheaf. Then to equip $X$ with a $\Lambda_{S}$-structure making it a $\Lambda_{S}$-sheaf in groups over $S$ is equivalent to equipping it with a $\Lambda_{S}$-structure such that the defining structure map \[h_{X/S}: X\to W_{*}(X)\times_{W_{*}(S)}S\] is a homomorphism of $S$-groups (recall the $W_*$ being a right adjoint preserves limits so that $W_*(X)$ is a $\Lambda_{W_*(S)}$-sheaf of groups over $W_*(S)$).

\begin{lemm}\label{lemm:coghost-affine-relative-coghost-affine} Let $S=\colim_{i\in I} S_i$ be a $\Lambda$-ind-affine scheme, $f: X\to S$ be an $\et$-sheaf over $S$ and $\a\in P$. If for each $i\in I$, setting $X_i=X\times_S S_i$, the length-$\a$ coghost map \[\gamma_{\leq \a}: W_{\a*}(X_i)\to \Gamma_{\a*}(X_i)\] is affine then the length-$\a$ relative coghost map \[\gamma_{X/S, \leq \a}: W_{\a*}(X)\times_{W_{\a*}(S)}S\to \Gamma_{\a*}(X)\times_{\Gamma_{\a*}(S)}S\] is affine.
\end{lemm}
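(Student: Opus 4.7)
The plan is to reduce the claim to the hypothesis by pulling back along the cover of $S$ by the affines $S_i$ and expressing the resulting map as a composition of manifestly affine morphisms. Since affineness is local on the target and stable under base change, it suffices to check that the pullback of $\gamma_{X/S,\leq\a}$ along $S_i\to S$ is affine for each $i$. Now, $W_{\a*}$ and $\Gamma_{\a*}$ commute with filtered colimits by (ii) of \ref{prop:def-arithmetic-ghosts}, so $W_{\a*}(S)=\colim_j W_{\a*}(S_j)$ is an ind-affine scheme, and the composition $S_i\to S\to W_{\a*}(S)$ factors through some $W_{\a*}(S_j)$; by cofinal reindexing I may assume $j=i$, giving a morphism $\widetilde h_{S_i}:S_i\to W_{\a*}(S_i)$ (and similarly for the composition through $\Gamma_{\a*}(S_i)$). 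Using moreover that $W_{\a*}$ and $\Gamma_{\a*}$ preserve the fibre product $X_i=X\times_S S_i$ (being right adjoints), the pullback in question identifies with
\[W_{\a*}(X_i)\times_{W_{\a*}(S_i)}S_i\longrightarrow \Gamma_{\a*}(X_i)\times_{\Gamma_{\a*}(S_i)}S_i.\]

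I would then factor this morphism as
\[W_{\a*}(X_i)\times_{W_{\a*}(S_i)}S_i\stackrel{\alpha}{\to}W_{\a*}(X_i)\times_{\Gamma_{\a*}(S_i)}S_i\stackrel{\beta}{\to}\Gamma_{\a*}(X_i)\times_{\Gamma_{\a*}(S_i)}S_i\]
by exploiting the factorization $S_i\to W_{\a*}(S_i)\to\Gamma_{\a*}(S_i)$. The map $\beta$ is the base change of the absolute coghost $\gamma_{\leq\a}:W_{\a*}(X_i)\to\Gamma_{\a*}(X_i)$ along the projection to $\Gamma_{\a*}(X_i)$, hence affine by hypothesis. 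The map $\alpha$ is the base change along $W_{\a*}(X_i)\to W_{\a*}(S_i)$ of the section $s=(\widetilde h_{S_i},\id_{S_i}):S_i\to W_{\a*}(S_i)\times_{\Gamma_{\a*}(S_i)}S_i$ of the second projection $p_2$.

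To conclude it suffices to show $s$ is affine. The morphism $p_2$ is the base change of the coghost $W_{\a*}(S_i)\to\Gamma_{\a*}(S_i)$ along $S_i\to\Gamma_{\a*}(S_i)$; as $S_i$ is affine, so are $W_{\a*}(S_i)$ and $\Gamma_{\a*}(S_i)$ by (i) of \ref{prop:def-arithmetic-ghosts}, so the coghost between them is a morphism of affine schemes and hence affine, and therefore so is $p_2$. A section of an affine morphism is a closed immersion, so $s$ is affine; base changing, $\alpha$ is affine, and composing with $\beta$ yields the required affineness of $\gamma_{X_i/S_i,\leq\a}$.

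The main obstacle I anticipate is the cofinal reindexing step in the first paragraph: one must carefully verify that the morphism $S_i\to W_{\a*}(S)$ genuinely factors through some $W_{\a*}(S_j)$ (a standard but nontrivial property of morphisms from affine schemes into ind-affine schemes) and that the reindexing can be performed compatibly with the coghost $W_{\a*}(S_i)\to\Gamma_{\a*}(S_i)$. Granting this, the remainder of the argument is essentially formal.
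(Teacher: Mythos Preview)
Your approach is essentially the paper's: pull back along $S_i\to S$, factor through the level of some affine $S_j$, and split the relative coghost as a composite of two affine maps (your $\alpha$ and $\beta$). The only deviation is the ``cofinal reindexing'' to force $j=i$. This is neither possible in general (the $\Lambda$-coaction $h_S$ need not respect the filtration by the $S_i$) nor necessary, and the paper simply avoids it by keeping $i$ and $j$ distinct. Once $S_i\to W_{\a*}(S)$ factors through $W_{\a*}(S_j)$, one identifies $(W_{\a*}(X)\times_{W_{\a*}(S)}S)\times_S S_i$ with $W_{\a*}(X_j)\times_{W_{\a*}(S_j)}S_i$ using that $W_{\a*}$ preserves fibre products (exactly your right-adjoint observation), and likewise on the $\Gamma$ side. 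Your argument for $\alpha$ (a section of an affine morphism is a closed immersion) and for $\beta$ (base change of the absolute coghost of $X_j$, affine by hypothesis) goes through verbatim with $j$ replacing $i$ on the $W_{\a*}/\Gamma_{\a*}$ side, so drop the reindexing and your proof is complete.
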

\begin{proof} The morphism $\gamma_{X/S, \leq \a}$ if affine if and only if the morphisms \[\gamma_{X/S, \a}\times_S S_i\] are affine for each $i\in I$. Fixing such an $i$, as $S_i$ is affine (in particular, quasi-compact) and \[\colim_i W^*_\a(S_i)=W_{\a*}(\colim_i S_i)\] (by (iv) of (\ref{prop:def-arithmetic-ghosts})) there is some $j\in I$ such that $S_i\to W_{\a*}(S)$ factors through $W_{\a*}(S_j)\to W_{\a*}(S)$. Therefore, we can factor $\gamma_{X/S, \a}\times_S S_i$ as the composition \[W_{\a*}(X_j)\times_{W_{\a*}(S_j)} S_i\to W_{\a*}(X_j)\times_{\Gamma_{\a*}(S_j)} S_i\to \Gamma_{\a*}(X_j)\times_{\Gamma_{\a*}(S_j)} S_i\] where the first map is induced by $W_{\a*}(S_j)\to \Gamma_{\a *}(S_j)$, which is affine as $W_{\a*}(S_j)\to \Gamma_{\a *}(S_j)$ is affine, and the second is $\gamma_{\leq \a}\times_{\Gamma_{\a*}(S_j)} S_i$ which is affine as $\gamma_{X_j, \a}$ is affine by hypothesis. Therefore, $\gamma_{X/S, \a}\times_S S_i$ is affine and we are done.
\end{proof}

\begin{prop}\label{prop:lambda-and-psi-homomorphisms-of-abelian-varieties} Let $S$ be an $\Lambda$-ind-affine scheme, let $A$ and $A'$ be a pair of abelian schemes over $S$ and let $f: A\to A'$ be an $S$-homomorphism. If $f$ is a $\Psi_S$-morphism then it is a $\Lambda_S$-morphism.
\end{prop}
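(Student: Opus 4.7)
The plan is to reformulate the claim in terms of the structure maps coming from the coaction of $W_*$, then to detect the difference of the two maps via the coghost and reduce to a rigidity statement for abelian schemes.

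Concretely, write $h_{A/S}: A\to W_*(A)\times_{W_*(S)}S$ and $h_{A'/S}: A'\to W_*(A')\times_{W_*(S)}S$ for the coactions defining the $\Lambda_S$-structures on $A$ and $A'$. What we have to show is that the two $S$-group homomorphisms
\[ h_{A'/S}\circ f, \quad (W_*(f)\times_{W_*(S)}S)\circ h_{A/S}:\ A\longrightarrow W_*(A')\times_{W_*(S)}S \]
coincide, or equivalently that their difference $\delta$ (as a group homomorphism into the abelian group sheaf $W_*(A')\times_{W_*(S)}S$) is zero. Composing $\delta$ with the relative coghost
\[ \gamma_{A'/S}:\ W_*(A')\times_{W_*(S)}S\longrightarrow \Gamma_*(A')\times_{\Gamma_*(S)}S \]
yields precisely the obstruction to $f$ being a $\Psi_S$-morphism, and so $\gamma_{A'/S}\circ\delta=0$. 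Hence $\delta$ factors through the kernel group $K:=\ker(\gamma_{A'/S})$, which by definition is the pullback of $\gamma_{A'/S}$ along the identity section $e:S\to \Gamma_*(A')\times_{\Gamma_*(S)}S$.

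The main step is then to prove that $K\to S$ is affine. Since the identity section of any relative group is (at worst) a closed immersion and affineness is stable under base change, it suffices to show that $\gamma_{A'/S}$ itself is affine. By Lemma~\ref{lemm:coghost-affine-relative-coghost-affine}, writing $S=\colim_{i}S_i$ with $S_i$ affine and $A'_i=A'\times_S S_i$, this reduces for each truncation $\a\in P$ to showing that the absolute length-$\a$ coghost $W_{\a*}(A'_i)\to\Gamma_{\a*}(A'_i)$ is affine. But $A'_i$ is an abelian scheme over the affine scheme $S_i$, hence projective over $S_i$ and therefore a projective scheme satisfying the hypothesis of Lemma~\ref{lemm:coghost-is-affine-finite-point-property} that every finite subset lies in an open affine; so the conclusion of that lemma applies. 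Passing to the limit over $\a$ and using that a limit of affine morphisms is affine gives that $\gamma_{A'/S}$, and hence $K\to S$, is affine.

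To finish, I invoke rigidity: any $S$-group homomorphism from an abelian scheme $A/S$ to an affine $S$-group scheme is zero. Indeed, over any geometric point $\bar s\to S$ the restriction $\delta_{\bar s}:A_{\bar s}\to K_{\bar s}$ is a morphism from a proper, geometrically connected scheme to an affine scheme, hence constant, and since $\delta$ is a group homomorphism the constant value is the identity; globalising using that $A$ is reduced and $K\to S$ is separated forces $\delta=0$. This gives $h_{A'/S}\circ f = (W_*(f)\times_{W_*(S)}S)\circ h_{A/S}$, i.e.\ $f$ is a $\Lambda_S$-morphism. The only delicate point is the verification that $\gamma_{A'/S}$ is affine; everything else is formal once the affineness is in hand.
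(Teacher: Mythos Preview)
Your argument is essentially identical to the paper's: form the difference $\delta$ of the two candidate maps into $W_*(A')\times_{W_*(S)}S$, observe it lands in $\ker(\gamma_{A'/S})$ because $f$ is a $\Psi_S$-morphism, show this kernel is affine via Lemmas~\ref{lemm:coghost-is-affine-finite-point-property} and~\ref{lemm:coghost-affine-relative-coghost-affine}, and conclude by rigidity.

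Two points need sharpening. First, the assertion that an abelian scheme over an affine base is automatically projective is not correct in general (projectivity can fail over non-normal bases); the paper does not claim projectivity but instead cites Faltings--Chai, Chapter~I, Theorem~1.9 directly for the property that every finite set of points lies in an open affine. Second, your rigidity argument appeals to ``$A$ is reduced'', but $A$ need not be reduced when $S$ is not. The clean argument, which the paper invokes without spelling out, is that for an abelian scheme $f:A\to S$ one has $f_*\mathcal{O}_A=\mathcal{O}_S$, so any $S$-morphism from $A$ to an affine $S$-scheme factors through $S$; a group homomorphism factoring through the identity section is then zero.
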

\begin{proof} Write $S=\colim_{i\in I} S_i$ as a filtered colimit of affine schemes. By Theorem 1.9 of Chapter I of \cite{FaltChai1990}, for each $i\in I$ and $\a\in P$, the $S_i$-scheme $A'\times_S S_i$ satisfies the hypotheses of (\ref{lemm:coghost-is-affine-finite-point-property}) so that we may apply (\ref{lemm:coghost-affine-relative-coghost-affine}) to deduce that the relative coghost homomorphism of length $\a$ \[\gamma_{A'/S, \a}: W_{\a*}(A')\times_{W_{\a*}(S)}S\to \Gamma_{\a*}(A')\times_{\Gamma_{\a*}(S)}S\] is affine. Taking the limit over $\a$ we see that \[\gamma_{A'/S}: W_{*}(A')\times_{W_{*}(S)}S\to \Gamma_{*}(A')\times_{\Gamma_{*}(S)}S\] is affine.
Now let the $\Lambda_{S}$-structures on $A$ and $A'$ be given by the $S$-homomorphisms \[h_{A/S}: A\to W_{*}(A)\times_{W_{*}(S)} S \quad \text{ and } \quad h_{A'/S}: A'\to W_{*}(A')\times_{W_{*}(S)} S\] and let $f: A\to A'$ be a $\Psi_{S}$-homomorphism. By hypothesis, the difference \[h_{A'/S}\circ f-(W_{*}(f)\times_{W_{*}(S)}S)\circ h_{A/S}: A\to W_{*}(A')\times_{W_*(S)}S\] factors through the kernel of the relative coghost homomorphism $\gamma_{A/S}$. However, the kernel of $\gamma_{A/S}$ is affine over $S$ and any homomorphism from an abelian $S$-scheme to an $S$-affine scheme is trivial. Therefore, \[h_{A'/S}\circ f-(W_{*}(f)\times_{W_{*}(S)}S)\circ h_{A/S}=0\] and $f$ is a $\Lambda_{S}$-homomorphism.
\end{proof}
\begin{rema}\label{rema:lambda-structures-on-abelian-varieties-are-determined-by-psi-structures} It follows from (\ref{prop:lambda-and-psi-homomorphisms-of-abelian-varieties}) above that if $A$ is an abelian variety over an $\Lambda$-ind-affine scheme $S$ then any two $\Lambda_S$-structures on $A$, compatible with the group law, coincide if and only if the underlying $\Psi_S$-structures coincide.
\end{rema}

\chapter{CM elliptic curves and $\Lambda$-structures} In this chapter we explain the connection between CM elliptic curves and $\Lambda$-structures. The first and main result being, essentially, that the moduli stack of CM elliptic curves $\mc{M}_\CM$ admits a $\Lambda$-structure. The observant reader will have noticed that we have not defined what it means for a stack to have a $\Lambda$-structure and we do not propose to do so here (not because we cannot but only because to do so would involve various 2-categorical issues that would in this instance serve only to make matters more complicated than they need be). However, we shall explain what we mean to prove.

We continue with the set-up in (\ref{subsec:imag-quad-set-up}). So that all sheaves considered are always over $\Spec(O_K)$ and we will use the theory of Chapter 3 with $\Lambda$-structures relative to the full monoid of ideals $\Id_{O_K}$ (later we will also consider sub-monoids).

Recall that to give a $\et$-sheaf $X$ a $\Lambda$-structure is to give for each $\et$-sheaf $S$, a map \[h_{X}(S): X(S)\to X(W^*(S))\] which is functorial in $S$ and such that:
\begin{enumerate}[label=\textup{(\roman*)}]
\item the composition \[X(S)\stackrel{h_X(S)}{\to} X(W^*(S))\stackrel{X(g_{(1)})}{\to} X(S)\] is the identity and
\item the two compositions \[\xymatrix{X(S)\ar[r]^-{h_X(S)} & X(W^*(S)) \ar@<.5ex>[r]^-{X(\mu_{W^*(S)})}\ar@<-.5ex>[r]_-{h_{X}(W^*(S))} & X(W^*(W^*(S)))}\] are equal.
\end{enumerate}

To this end we show that given an ind-affine scheme $S$ and an element of $\mc{M}_\CM(S)$, i.e.\ a CM elliptic curve $E/S$, there is a functorially associated element of $\mc{M}_\CM(W^*(S))$, i.e.\ a CM elliptic curve $W_\CM^*(E)/W^*(S)$, satisfying the following properties:
\begin{enumerate}[label=\textup{(\roman*)}]
\item the pull-back of $W_\CM^*(E)$ along the ghost component at $(1)$ is canonically isomorphic to $E$: \[E\isomto g_{(1)}^*(W_\CM^*(E))=W_{CM}^*(E)\times_{W^*(S)} S,\]
\item the pull-back of $W_\CM^*(E)$ along $\mu_S: W^*(W^*(S))\to W^*(S)$ is canonically isomorphic to $W_\CM^*(W_\CM^*(E))$: \[\mu_S^*(W_\CM^*(E))=W_{CM}^*(E)\times_{W^*(S)} W^*(W^*(S)) \isomto W^*_{CM}(W^*_{CM}(E)).\]
\end{enumerate}

We call $W_\CM^*(E)/W^*(S)$ the canonical lift of $E/S$. In addition to (i) and (ii) above, we also show that the CM elliptic curve $W_\CM^*(E)$ admits a canonical $\Lambda_{W^*(S)}$-structure. It is worth pointing out that these `canonical lifts' are both global and big -- the base is $S$ is an arbitrary ind-affine scheme over $\Spec(O_K)$ and the Witt vectors over which we lift the CM elliptic curves have Frobenius lifts at all primes of $O_K$.

We now give a brief overview of each of the sections. The construction of $W^*_\CM(E)/W^*(S)$ and the verification of its properties is the content of \S 1. We also use this to define what it means for a CM elliptic curve over $\Lambda$-ind-affine scheme $E/S$ to have a canonical $\Lambda$-structure (what we really do is define what it means for a morphism \[S\stackrel{E}{\to} \mc{M}_\CM\] corresponding to a CM elliptic curve $E/S$ to be a $\Lambda$-morphism).

In \S 2 we consider a certain special class of CM elliptic curves (those of `Shimura type') and show that they are exactly those admitting $\Lambda$-structures. These curves were first defined and studied by Shimura (\cite{Shimura94}), and subsequently by several other authors with particular reference to their L-functions (see \cite{CoatesWiles1977}, \cite{Deninger1989} and \cite{Rubin1981}). We finish \S 2 by showing that many CM elliptic curves of Shimura type admit global minimal models. This gives a broad generalisation of a result of Gross \cite{Gross82}. The proof we give is quite different to that in \cite{Gross82} and relies ons a certain strengthening (\ref{prop:tannaka-result-general}) of an old principal ideal theorem (valid for arbitrary number fields).

In \S 3 we show how to (intelligently) construct the quotient of the universal CM elliptic curve by its group of automorphisms and we show that this curve descends to a smooth projective curve $X\to M_\CM$ over the coarse sheaf $M_\CM$. We also show that this descended curved admits a $\Lambda_{M_\CM}$-structure and that this $\Lambda_{M_\CM}$-structure can be used to construct the maximal abelian extension of $K$ in a natural way. This gives a canonical, integral and $\Lambda$-theoretic version of the explicit generation of the ray class fields of $K$ using Weber functions of CM elliptic curves over fields. We end this section by showing that the possibly mysterious curve $X$ is non-other than $\mathbf{P}^1_{M_\CM}$.

In \S 4 we use the results of \S 1 and \S 2 to construct a flat affine formally smooth presentation $M_\CM^W\to \mc{M}_\CM$ of $\mc{M}_\CM$. The flat affine formally smooth scheme $M_\CM^W$ has a natural moduli theoretic interpretation, and is also a torsor under a certain affine flat affine group scheme $\CL_{O_K}^W$. Finally, we show that $M_\CM^W$ admits a natural $\Lambda$-structure compatible with that on $\mc{M}_\CM$.

In \S 5 we exhibit a rather interesting relationship between a (variant of) the canonical lift of an CM elliptic curve (over an arbitrary base) and its Tate module. This gives an analogue for CM elliptic curves of a certain construction in $p$-adic Hodge theory involving Lubin--Tate $O$-modules and we end by sketching a certain analytic analogue.

\section{Canonical lifts of CM elliptic curves} We continue with the set-up of Chapter 2, so that we work over the base scheme $\Spec(O_K)$ where $O_K$ is the ring of integers of an imaginary quadratic field. In order to apply the theory of Chapter 3, we will no longer be working with arbitrary sheaves $S\in \Sh_{O_K}$ but only with ind-affine schemes $S\in \IndAff_{O_K}\subset \Sh_{O_K}$.

By a $\Lambda$-structure is meant one relative to the Dedekind domain $O_K$ and to the full set of ideals $P=\Id_{O_K}$.

We note that if $S$ is an ind-affine scheme then $W^*(S)$ is again an ind-affine scheme and therefore a sheaf for the fpqc topology. Moreover, $W_*(S)=\lim_{\a\in \Id_{O_K}}W_{\a*}(S)$ is an inverse limit of ind-affine schemes (as $W_{\a*}$ commutes with filtered colimits and sends affine schemes to affine schemes) and is also a sheaf for the fpqc topology.

For technical reasons we also work with the affine \'etale topology on $\IndAff_{O_K}$ whose covers are given by families of affine \'etale morphisms $(S_i\to S)_{i\in I}$ which are covers when viewed in $\Sh_{O_K}$ (or equivalently $\Sh_{O_K}^\et$).

We shall also continue to work with the fibred category $\mc{M}_{\CM}$ but from here on view it as a fibred category over the category of ind-affine schemes $\IndAff_{O_K}\subset \Sh_{O_K}$, rather than all of $\Sh_{O_K}$. 

Unless otherwise noted $S$ will denote an arbitrary ind-affine scheme.

\subsection{} Denote by $\mc{L}_\CM$ the rank one $O_K$-local system over $\Gamma^*(\Spec(O_K))$ whose fibre over the ghost component at $\a\in \Id_{O_K}$ is the constant sheaf $\underline{\a}^{-1}$, i.e.\ \[\mc{L}_\CM=\coprod_{\a\in \Id_{O}} \underline{\a}^{-1}\to \coprod_{\a\in \Id_{O}}\Spec(O_K)=\Gamma^*(\Spec(O_K)).\] For an ind-affine scheme $S$ we shall abuse notation and write $\mc{L}_\CM$ for the rank one $O_K$-local system over $\Gamma^*(S)$ obtained by pulling back $\mc{L}_{\CM}$ along $\Gamma^*(S)\to \Gamma^*(\Spec(O_K))$.

\subsection{}\label{subsec:def-can-ghost} Let $E\to S$ a CM elliptic curve . Writing $p_S: \Gamma^*(S)\to S$ for the map \[\coprod_{\a\in \Id_{O_K}}\id_S: \Gamma^*(S)=\coprod_{\a\in \Id_{O_K}}S\to S,\] we define a new CM elliptic curve over $\Gamma^*(S)$ by \[\Gamma^*_\CM(E)=p_S^*(E)\otimes_{O_K}\mc{L}_\CM.\] In other words, we have \[\Gamma^*_\CM(E)=\coprod_{\a\in \Id_{O_K}}E\otimes_{O_K}\a^{-1}\to \coprod_{\a\in \Id_{O_K}} S=\Gamma^*(S).\] If $f: E\to E'$ is a homomorphism of CM elliptic curves over $S$ then we write \[\Gamma^*_\CM(f)=p^*_S(f)\otimes_{O_K}\mc{L}_\CM:\Gamma^*_\CM(E)\to \Gamma^*_\CM(E')\] so that $\Gamma^*_{\CM}$ defines a functor from the category of CM elliptic curves over $S$ to the category of CM elliptic curves over $\Gamma^*(S)$.

By construction, the rank one $O_K$-local system $\mc{L}_\CM$ over $\Gamma^*(S)$ satisfies $\mc{L}_\CM\otimes_{O_K}\a^{-1}=\psi^{\a*}(\mc{L}_\CM)$ for each ideal $\a$ and this induces isomorphisms \[\Gamma^*_\CM(E)\otimes_{O_K}\a^{-1}\isomto \psi^{\a*}(\Gamma^*_\CM(E)).\] We equip $\Gamma^*_{\CM}(E)$ with the $\Psi_{\Gamma^*(S)}$-structure with the relative endomorphisms given for each ideal $\a$ by the composition \[\Gamma_{\CM}^*(E)\stackrel{i_\a}{\to}\Gamma_{\CM}^*(E)\otimes_{O_K}\a^{-1}\isomto \psi^{\a*}(\Gamma_\CM^*(E)).\] In order to avoid overly cumbersome notation, we will denote this map by $\varphi^\a_{E/S}$ (instead of by the usual by monstrous $\psi^{\a*}_{\Gamma^*_\CM(E)/\Gamma^*(S)}$). Note that $\ker(\varphi^\a_{E/S})=\Gamma^*_\CM(E)[\a]$.

The sheaf of rings $\underline{O_K}_{\Gamma^*(S)}=\Gamma^*(\underline{O_K}_S)$ is naturally a sheaf of $\Psi_{\Gamma^*(S)}$-rings and the $\underline{O_K}_{\Gamma^*(S)}$-module structure \[\underline{O_K}_{\Gamma^*(S)}\times \Gamma_\CM^*(E)\to \Gamma_\CM^*(E)\] is compatible with the $\Psi_{\Gamma^*(S)}$-structures.

If $S'\to S$ is a morphism there is a obvious isomorphism of CM elliptic curves over $\Gamma^*(S')$ equipped with $\Psi_{\Gamma^*(S')}$-structures \[\Gamma_\CM^*(E)\times_{\Gamma^*(S)}\Gamma^*(S')\isomto \Gamma_\CM^*(E\times_S S').\]

\subsection{} Write $\Gamma_*(\mc{M}_\CM)_{\Psi}$ for the fibred category over $\IndAff_{O_K}$ whose fibre over $S$ is the essential image of the functor $E/S\mto \Gamma^*_\CM(E)/\Gamma^*(S)$ from CM elliptic curves over $S$ to CM elliptic curves over $\Gamma^*(S)$ equipped with $\Psi_{\Gamma^*(S)}$-structures compatible with their $\underline{O_K}_{\Gamma^*(S)}$-module structure. The pull-back maps for $S'\to S$ are given by \[E/\Gamma^*(S)\mto E\times_{\Gamma^*(S)}\Gamma^*(S')/\Gamma^*(S').\]

\begin{rema} The symbol $\Gamma_*(\mc{M}_\CM)_{\Psi}$ is only notation but it supposed to inspire the following interpretation. Assuming $\mc{M}_\CM$ did admit a $\Lambda$-structure, and so a fortiori a $\Psi$-structure, then we should have equivalences \[\mc{M}_\CM(S)=\Hom(S, \mc{M}_\CM)\isomto \Hom_{\Spec(O_K)}^{\Psi}(\Gamma^*(S), \mc{M}_\CM)=\Gamma_*(\mc{M}_\CM)_\Psi(S).\] Of course, we do not define a $\Psi$-structure on $\mc{M}_\CM$. Instead we opt to define which morphisms (equivalently CM elliptic curves $E/\Gamma^*(S)$) \[\Gamma^*(S)\stackrel{E}{\to} \mc{M}_\CM\] should be though of as (coming from) $\Psi$-morphisms. The following gives some justification to this.
\end{rema}

\begin{lemm}\label{lemm:canonical-ghost-equivalence} The functor \[\mc{M}_\CM\to \Gamma_*(\mc{M}_\CM)_{\Psi}: E/S\mto \Gamma_\CM^*(E)/\Gamma^*(S)\] is an equivalence of stacks with quasi-inverse given by pull-back along the ghost component at $(1)$ \[E/\Gamma^*(S)\mto g_{(1)}^*(E)/S.\]
\end{lemm}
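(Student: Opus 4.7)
The plan is to verify the three defining features of an equivalence of stacks in turn: essential surjectivity, the behaviour of the composition with pull-back along $g_{(1)}$, and (the only real content) full faithfulness on morphism sheaves.

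First I would observe that essential surjectivity is automatic: by its very definition, $\Gamma_*(\mc{M}_\CM)_\Psi(S)$ consists precisely of those CM elliptic curves with $\Psi_{\Gamma^*(S)}$-structure over $\Gamma^*(S)$ that are (locally) isomorphic to $\Gamma^*_\CM(E)$ for some $E/S$. Next I would compute the composition $g_{(1)}^*\circ \Gamma^*_\CM$: since $g_{(1)}:S\to \Gamma^*(S)=\coprod_{\a}S$ is the inclusion of the summand at $\a=(1)$, pulling back $\Gamma^*_\CM(E)=\coprod_\a E\otimes_{O_K}\a^{-1}$ gives $E\otimes_{O_K}O_K^{-1}\isomto E$, functorially in $E$, so this provides the natural isomorphism showing that the ghost-component-at-$(1)$ pull-back is a one-sided inverse of $\Gamma^*_\CM$ on objects.

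The heart of the argument is then full faithfulness. Given a pair $E, E'$ of CM elliptic curves over $S$, a morphism $f:\Gamma^*_\CM(E)\to \Gamma^*_\CM(E')$ of $\underline{O_K}_{\Gamma^*(S)}$-modules over $\Gamma^*(S)=\coprod_\a S$ is tautologically the same data as a family $(f_\b)_{\b\in\Id_{O_K}}$ of $\underline{O_K}_S$-morphisms $f_\b:E\otimes_{O_K}\b^{-1}\to E'\otimes_{O_K}\b^{-1}$. Using (\ref{prop:cm-tensor-homs}) together with (\ref{prop:cm-endo}), each $f_\b$ is of the form $g_\b\otimes_{O_K}\id_{\b^{-1}}$ for a unique $g_\b\in \underline{\Hom}^{O_K}_S(E,E')$. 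I would then unwind what it means for $f$ to be a $\Psi_{\Gamma^*(S)}$-morphism: for each ideal $\a$ the relation $\varphi^\a_{E'/S}\circ f=\psi^{\a*}(f)\circ\varphi^\a_{E/S}$ must hold, and restricted to the summand at $\b$ this translates, after identifying $\a^{-1}\otimes_{O_K}\b^{-1}\isomto (\a\b)^{-1}$, into the identity \[(g_\b\otimes_{O_K}\id)\circ (i_\a\otimes_{O_K}\id)=(i_\a\otimes_{O_K}\id)\circ(g_{\a\b}\otimes_{O_K}\id)\] of maps $E\otimes_{O_K}\b^{-1}\to E'\otimes_{O_K}(\a\b)^{-1}$. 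Appealing once more to (\ref{prop:cm-tensor-homs}), this forces $g_\b=g_{\a\b}$ for all $\a,\b$, so the whole family $(g_\b)_\b$ is constant with common value $g:=g_{(1)}$, and by construction $f=\Gamma^*_\CM(g)$. Conversely every $g:E\to E'$ obviously yields a $\Psi$-morphism, so the map $\Hom_S^{O_K}(E,E')\to \Hom_{\Psi_{\Gamma^*(S)}}(\Gamma^*_\CM(E),\Gamma^*_\CM(E'))$ is bijective.

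The step that needs the most care, and which I expect to be the main (if still mild) obstacle, is the unwinding of the $\Psi$-compatibility equation on the $\b$-summand: one must track the action of $\psi^\a$ on $\Gamma^*(S)$ as the shift $\b\mto \a\b$, identify $\psi^{\a*}(\Gamma^*_\CM(E))$ summand-by-summand with $E\otimes_{O_K}(\a\b)^{-1}$, and check that $\varphi^\a_{E/S}$ on the $\b$-summand is exactly $i_\a\otimes_{O_K}\id_{\b^{-1}}$ under the canonical isomorphism $\a^{-1}\otimes_{O_K}\b^{-1}\isomto(\a\b)^{-1}$. Once this bookkeeping is in place, the reduction $g_\b=g_{\a\b}$ is immediate. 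The remaining stackiness (i.e.\ the fact that these bijections are compatible with base change $S'\to S$ and hence define an equivalence of stacks rather than merely of fibres) is formal, since both $E\mto \Gamma^*_\CM(E)$ and $F\mto g_{(1)}^*(F)$ commute with base change by construction.
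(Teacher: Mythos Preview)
Your argument is correct and follows the same approach as the paper: essential surjectivity holds by definition of the essential image, faithfulness comes from the composition with $g_{(1)}^*$ being the identity, and fullness is obtained by showing that $\Psi$-compatibility forces every ghost component of $f$ to be determined by the one at $(1)$. The paper's fullness step is slightly leaner---it works only at the summand $\b=(1)$ and observes directly that, since $i_\a$ is an epimorphism, the commuting square forces $g_\a^*(f)=g_{(1)}^*(f)\otimes_{O_K}\a^{-1}$, thereby avoiding the appeal to (\ref{prop:cm-tensor-homs}) and (\ref{prop:cm-endo}); also note that in your displayed compatibility identity the subscripts $\b$ and $\a\b$ are interchanged relative to what $\varphi^\a_{E'/S}\circ f=\psi^{\a*}(f)\circ\varphi^\a_{E/S}$ actually yields, though this does not affect your conclusion.
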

\begin{proof} The functor in question is clearly essentially surjective and faithful as composing it with \[E/\Gamma^*(S)\mto g_{(1)}^*(E)/S\] yields a functor isomorphic to the identity on $\mc{M}_\CM$. Now to see that it is an equivalence, and that $E/\Gamma^*(S)\mto g_{(1)}^*(E)/S$ is a quasi-inverse, we need only show that it is full.

So let $f: \Gamma^*_\CM(E)\to \Gamma^*_\CM(E')$ be a $\Psi_{\Gamma^*(S)}$-isomorphism and write $f$ as the sum of its ghost components \[f=\coprod_{\a\in \Id_{O_K}}g_\a^*(f).\] As $f$ is a $\Psi_{\Gamma^*(S)}$-homomorphism the diagram \[\xymatrix{E\ar[r]^{g_{(1)}^*(f)}\ar[d]_{i_\a} & E'\ar[d]^{i_\a}\\
E\otimes_{O_K}\a^{-1}\ar[r]^{g_{\a}^*(f)} & E'\otimes_{O_K}\a^{-1}}\] commutes for each $\a\in \Id_{O_K}$ by the definition of $\Gamma_\CM^*(E)$ and $\Gamma_\CM^*(E')$ and their $\Psi_{\Gamma^*(S)}$-structures. However, as $i_\a$ is epimorphism the only map $g_{\a}^*(f)$ for which this is possible is $g_{(1)}^*(f)\otimes_{O_K}\a^{-1}$. It follows that \[f=\coprod_{\a\in \Id_{O_K}} g_{(1)}^*(f)\otimes_{O_K}\a^{-1}=\Gamma^*_\CM(g_{(1)}^*(f))\] and we are done.
\end{proof}

\subsection{}\label{subsec:canonical-lambda-structures-witt-vectors} Let $E/W^*(S)$ be a CM elliptic curve equipped with a $\Lambda_{W^*(S)}$-structure compatible with its $\underline{O_K}_{W^*(S)}=W^*(\underline{O_K}_S)$-module structure. We say that the $\Lambda_{W^*(S)}$-structure on $E/W^*(S)$ is canonical if there exists a $\Psi_{\Gamma^*(S)}$-isomorphism \[E\times_{W^*(S)}\Gamma^*(S)\isomto \Gamma_{\CM}^*(g_{(1)}^*(E))\] inducing the identity on $g_{(1)}^*(E)$ after pull-back along $g_{(1)}: S\to \Gamma^*(S)$. Such an isomorphism is unique by (\ref{lemm:canonical-ghost-equivalence}) and so, when it exists, we shall denote it by $\rho_{E/W^*(S)}$.
\begin{lemm}\label{lemm:properties-of-lambda-cm-elliptic-curves-over-W} Let $E/W^*(S)$ be a \textup{CM} elliptic curve.
\begin{enumerate}[label=\textup{(\roman*)}]
\item $E$ admits at most one canonical $\Lambda_{W^*(S)}$-structure.
\item If $S'\to S$ is a morphism of ind-affine schemes then, writing $E'=E\times_{W^*(S)}W^*(S')$, the $\Lambda_{W^*(S')}$-structure on $E'$ is canonical and \[\rho_{E/W^*(S)}\times_{\Gamma^*(S)}\Gamma^*(S') = \rho_{E'/W^*(S')}.\]
\item Let $(S_i\to S)_{i\in I}$ be an affine \'etale cover in $\IndAff_{O_K}$. If $E\times_{W^*(S)}W^*(S_i)$ admits a canonical $\Lambda_{W^*(S_i)}$-structure for each $i\in I$ then $E/W^*(S)$ admits a canonical $\Lambda_{W^*(S)}$-structure.
\end{enumerate}
\end{lemm}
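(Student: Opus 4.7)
The plan is to deduce all three parts from the equivalence of Lemma \ref{lemm:canonical-ghost-equivalence} together with Remark \ref{rema:lambda-structures-on-abelian-varieties-are-determined-by-psi-structures}, which says that $\Lambda$-structures on relative abelian schemes are controlled by their underlying $\Psi$-structures.

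For (i) I reduce uniqueness of a canonical $\Lambda_{W^*(S)}$-structure to uniqueness of its underlying $\Psi_{W^*(S)}$-structure via Remark \ref{rema:lambda-structures-on-abelian-varieties-are-determined-by-psi-structures}. The isomorphism $\rho_{E/W^*(S)}$ attached to a canonical $\Lambda$-structure is itself unique by Lemma \ref{lemm:canonical-ghost-equivalence}: any $\Psi_{\Gamma^*(S)}$-isomorphism $E\times_{W^*(S)}\Gamma^*(S)\isomto \Gamma^*_\CM(g_{(1)}^*(E))$ restricting to $\id_{g_{(1)}^*(E)}$ along $g_{(1)}$ must be $\Gamma^*_\CM(\id_{g_{(1)}^*(E)})$. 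Hence the $\Psi_{\Gamma^*(S)}$-structure transported through $\rho_{E/W^*(S)}$ from the standard one on $\Gamma^*_\CM(g_{(1)}^*(E))$ is intrinsic to $E$. To descend this from $\Gamma^*(S)$ to $W^*(S)$ I appeal to rigidity: the Frobenius lifts $\psi^\a_{E/W^*(S)}: E\to \psi^{\a*}(E)$ constituting the $\Psi_{W^*(S)}$-structure are morphisms of CM elliptic curves, and since the ghost map $g: \Gamma^*(S)\to W^*(S)$ is surjective on geometric points by Proposition \ref{prop:ghost-map-surj-geom-points}, two such morphisms agreeing after pullback along $g$ must agree on $W^*(S)$ by Proposition \ref{prop:rigidity}.

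Part (ii) is essentially formal. Because $\Gamma^*_\CM(-)$ commutes with base change (end of \S\ref{subsec:def-can-ghost}), pulling $\rho_{E/W^*(S)}$ back along $\Gamma^*(S')\to \Gamma^*(S)$ produces a $\Psi_{\Gamma^*(S')}$-isomorphism $E'\times_{W^*(S')}\Gamma^*(S')\isomto \Gamma^*_\CM(g_{(1)}^*(E'))$ with the required identity restriction at $g_{(1)}$. The pullback of the $\Lambda_{W^*(S)}$-structure on $E$ along the $\Lambda$-morphism $W^*(S')\to W^*(S)$ gives a $\Lambda_{W^*(S')}$-structure on $E'$ for which this is the transporting isomorphism, so $E'$ carries a canonical $\Lambda$-lift, and the identification of $\rho$'s is then forced by the uniqueness proved in (i).

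For (iii) the plan is descent. Propositions \ref{prop:witt-vectors-preserve-etale-morphisms} and \ref{prop:w-and-etale-base-change} guarantee that $(W^*(S_i)\to W^*(S))_{i\in I}$ is an affine \'etale cover with $W^*(S_i)\times_{W^*(S)}W^*(S_j)=W^*(S_i\times_S S_j)$. By part (ii) each restriction of a local canonical $\Lambda$-structure to an overlap is again canonical, and by part (i) the two restrictions coming from $W^*(S_i)$ and $W^*(S_j)$ coincide: the descent cocycle is thereby automatic, so Proposition \ref{prop:lambda-structures-are-sheaves} produces a $\Lambda_{W^*(S)}$-structure on $E$. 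To verify that this glued structure is canonical I will glue the local isomorphisms $\rho_i$, which are compatible on overlaps by the same uniqueness argument, via the sheaf property of isomorphisms of CM elliptic curves (Proposition \ref{prop:moduli-ell-stack}); the identity condition along $g_{(1)}$ is local and is therefore inherited. The one step that warrants care is verifying, via Proposition \ref{prop:w-and-etale-base-change}, that $(W^*(S_i))_{i\in I}$ is genuinely a cover of $W^*(S)$ with the expected overlap formula; beyond this, the entire proof is a formal assembly of (i), (ii), rigidity, and the surjectivity of the ghost map on geometric points.
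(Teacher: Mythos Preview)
Your proposal is correct and follows essentially the same approach as the paper's own proof: for (i) you use Lemma~\ref{lemm:canonical-ghost-equivalence} to pin down the $\Psi_{\Gamma^*(S)}$-structure, then rigidity together with Proposition~\ref{prop:ghost-map-surj-geom-points} to descend to $W^*(S)$, and finally Remark~\ref{rema:lambda-structures-on-abelian-varieties-are-determined-by-psi-structures} to pass from $\Psi$ to $\Lambda$; parts (ii) and (iii) proceed by the same base-change and descent arguments (via Proposition~\ref{prop:w-and-etale-base-change} and Proposition~\ref{prop:lambda-structures-are-sheaves}) as in the paper.
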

\begin{proof} (i) Let $E/W^*(S)$ have a pair of canonical $\Lambda_{W^*(S)}$-structures and write $\rho_{E/W^*(S), 1}$ and $\rho_{E/W^*(S), 2}$ for the corresponding unique isomorphisms \[E\times_{W^*(S)}\Gamma^*(S)\isomto \Gamma_\CM^*(g_{(1)}^*(E))\] inducing the identity on $g_{(1)}^*(E)$ after pull-back along $g_{(1)}: S\to \Gamma^*(S)$. The composition \[\rho_{E/W^*(S), 1}^{-1}\circ\rho_{E/W^*(S), 2}\] defines an $\Psi_{\Gamma^*(S)}$-automorphism of $\Gamma_\CM^*(g_{(1)}^*(E))$ which is the identity on $g_{(1)}^*(E)$ after pull-back along $g_{(1)}$ and so is the identity itself by (\ref{lemm:canonical-ghost-equivalence}).

It follows that the two $\Psi_{\Gamma^*(S)}$-structures on each $E\times_{W^*(S)}\Gamma^*(S)$, induced by the two $\Lambda_{W^*(S)}$-structures on $E$ and $E'$, are equal. Writing \[\psi_{E/W^*(S), 1}^{\p}, \psi_{E/W^*(S), 2}^{\p}: E\to \psi^{\p*}(E)\] for the relative Frobenius lifts at $\p$ corresponding to the two $\Lambda_{W^*(S)}$-structures, we have shown that \[\psi^{\p}_{E/W^*(S), 1}\times_{W^*(S)}\Gamma^*(S)=\psi^{\p}_{E/W^*(S), 2}\times_{W^*(S)}\Gamma^*(S).\] As $\Gamma^*(S)\to W^*(S)$ is surjective on geometric points (\ref{prop:ghost-map-surj-geom-points}), it follows by rigidity that \[\psi_{E/W^*(S), 1}^{\p}=\psi_{E/W^*(S), 2}^{\p}.\] Therefore, the two $\Psi_{W^*(S)}$-structures on $E/W^*(S)$ induced by the two $\Lambda_{W^*(S)}$-structures are equal and by (\ref{rema:lambda-structures-on-abelian-varieties-are-determined-by-psi-structures}) it follows that the two $\Lambda_{W^*(S)}$-structures themselves are equal.

(ii) The CM elliptic curve $E':=E\times_{W^*(S)} W^*(S')$ has a natural $\Lambda_{W^*(S')}$-structure and the pull-back of $\rho_{E/W^*(S)}$ along $\Gamma^*(S')\to \Gamma^*(S)$ defines a $\Psi_{\Gamma^*(S')}$-isomorphism \[E'\times_{W^*(S')}\Gamma^*(S')=E\times_{W^*(S)} \Gamma^*(S') \stackrel{\rho_{E/W^*(S)\times_{\Gamma^*(S)}\Gamma^*(S')}}{\longrightarrow} \Gamma_\CM^*(g_{(1)}^*(E))\times_{\Gamma^*(S)}\Gamma^*(S')=\Gamma_\CM^*(g_{(1)}^*(E'))\] inducing the identity after pull-back along $g_{(1)}$. It follows by uniqueness of such an isomorphism that \[\rho_{E/W^*(S)}\times_{\Gamma^*(S)}\Gamma^*(S') = \rho_{E'/W^*(S')}.\]

(iii) We will first show that $E/W^*(S)$ admits a unique $\Lambda_{W^*(S)}$-structure inducing the canonical $\Lambda_{W^*(S_i)}$-structures on $E\times_{W^*(S)}W^*(S_i)$ and then show that this $\Lambda_{W^*(S)}$-structure is canonical.
The family $(W^*(S_i)\to W^*(S))_{i\in I}$ is a cover in $\Sh_{\Lambda}^\et$ and we have \[W^*(S_{ij}):=W^*(S_i\times_S S_j)\isomto W^*(S_i)\times_{W^*(S)} W^*(S_j)\] by (\ref{prop:w-and-etale-base-change}). By (ii) above, the two $\Lambda_{W^*(S_{ij})}$-structures on \[E\times_{W^*(S)}W^*(S_{ij})\] induced by pull-back are canonical and by (i) they are equal. By (\ref{subsec:sheaves-of-lambda-structures}), this defines an element of the equaliser \[\xymatrix{\Lambda_{E/W^*(S)}(W^*(S))\ar[r] & \Lambda_{E/W^*(S)}(W^*(S_i))\ar@<.5ex>[r]\ar@<-.5ex>[r] &\prod\limits_{i, j\in I}\Lambda_{E/W^*(S)}(W^*(S_{ij}))}\] or in other words, there is a unique $\Lambda_{W^*(S)}$-structure on $E/W^*(S)$ inducing the canonical $\Lambda_{W^*(S_i)}$-structures on each $E\times_{W^*(S)}W^*(S_i)$.
The uniqueness of the isomorphisms $\rho_{E\times_{W^*(S)}W^*(S_i)/W^*(S_i)}$ and their compatibility with pull-backs (this is (ii) above) show that they descend to an $\Psi_{\Gamma^*(S)}$-isomorphism \[\rho_{E/W^*(S)}: E\times_{W^*(S)}\Gamma^*(S)\isomto \Gamma^*_\CM(g_{(1)}^*(E))\] inducing the identity after pull-back along $g_{(1)}$ so that the $\Lambda_{W^*(S)}$-structure on $E/W^*(S)$ is canonical.
\end{proof}
\subsection{} Using (\ref{lemm:properties-of-lambda-cm-elliptic-curves-over-W}) we may define a fibred category $W_*(\mc{M}_{\CM})_\Lambda$ over $\IndAff_{O_K}$ by setting the fibre over $S$ to be the category of CM elliptic curves $E/W^*(S)$ equipped with a canonical $\Lambda_{W^*(S)}$-structure and whose pull-back maps for $S'\to S$ are given by \[E/W^*(S)\mto E\times_{W^*(S)}W^*(S').\]

\begin{rema} As with $\Gamma_*(\mc{M}_\CM)_{\Psi}$ the symbol $W_*(\mc{M}_{\CM})_\Lambda$ is supposed to inspire in the reader the idea that $\mc{M}_\CM$ admits some $\Lambda$-structure and that we have \[\mc{M}_\CM(S)=\Hom_{\Spec(O_K)}(S, \mc{M}_\CM)\isomto\Hom_{\Spec(O_K)}^{\Lambda}(W^*(S), \mc{M}_\CM)=W_*(\mc{M}_\CM)_\Lambda(S).\]
\end{rema}

\begin{lemm} The fibred category $W_*(\mc{M}_{\CM})_\Lambda$ over $\IndAff_{O_K}$ is a stack for the affine \'etale topology.
\end{lemm}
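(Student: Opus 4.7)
The plan is to verify both parts of the stack condition separately: descent of morphisms and descent of objects, in each case reducing the problem to the already-established stackiness of $\mc{M}_\CM$ (\ref{prop:moduli-ell-stack}) together with the sheaf property for $\Lambda$-structures (\ref{prop:lambda-structures-are-sheaves}) and for $\Lambda$-morphisms (\ref{prop:hom-lambda-sheaves-are-sheaves}). The key geometric input making the reduction possible is that $W^*$ preserves affine \'etale covers and commutes with affine \'etale base change on ind-affine schemes, which is (\ref{prop:w-and-etale-base-change}).

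First, for morphisms: given $E, E'\in W_*(\mc{M}_\CM)_\Lambda(S)$ and an affine \'etale cover $(S_i\to S)_{i\in I}$, I would apply (\ref{prop:w-and-etale-base-change}) to conclude that $(W^*(S_i)\to W^*(S))_{i\in I}$ is an affine \'etale cover of $W^*(S)$ and that the natural maps $W^*(S_i\times_S S_j)\to W^*(S_i)\times_{W^*(S)} W^*(S_j)$ are isomorphisms. A section over $(S_i)$ of $\underline{\Hom}$ in $W_*(\mc{M}_\CM)_\Lambda$ with matching restrictions on overlaps is simultaneously a descent datum for $\underline{\Hom}^{O_K}_{W^*(-)}(E,E')$ as CM elliptic curves and as $\Lambda_{W^*(-)}$-sheaves. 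It therefore descends uniquely to an $O_K$-linear morphism over $W^*(S)$ by the stackiness of $\mc{M}_\CM$, and then to a $\Lambda_{W^*(S)}$-morphism by (\ref{prop:hom-lambda-sheaves-are-sheaves}).

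Next, for objects: given descent data $(E_i/W^*(S_i), \varphi_{ij})$ where each $E_i$ carries a canonical $\Lambda_{W^*(S_i)}$-structure and the $\varphi_{ij}$ are $\Lambda_{W^*(S_i\times_S S_j)}$-isomorphisms satisfying the cocycle condition, the first move is to forget the $\Lambda$-structures and view this purely as descent data for CM elliptic curves over the affine \'etale cover $(W^*(S_i)\to W^*(S))_{i\in I}$. Since $\mc{M}_\CM$ is a stack for the fpqc topology and affine \'etale covers are fpqc, there is a CM elliptic curve $E/W^*(S)$ equipped with isomorphisms $E\times_{W^*(S)}W^*(S_i)\isomto E_i$ compatible with the $\varphi_{ij}$. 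It then remains to produce a canonical $\Lambda_{W^*(S)}$-structure on $E$ inducing the canonical $\Lambda_{W^*(S_i)}$-structures on each pullback: this is precisely the content of (iii) of (\ref{lemm:properties-of-lambda-cm-elliptic-curves-over-W}), applied to the cover $(S_i\to S)_{i\in I}$.

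The main potential obstacle is whether affine \'etale descent suffices for CM elliptic curves over the possibly highly non-noetherian ind-affine scheme $W^*(S)$ and whether $W^*$ interacts correctly with the fibre products appearing in descent data; however, both issues are resolved cleanly by (\ref{prop:w-and-etale-base-change}), which guarantees that affine \'etale covers and fibre products are preserved by $W^*$, so that descent for $\mc{M}_\CM$ (already known as an fpqc stack) transfers to $W_*(\mc{M}_\CM)_\Lambda$ without further work.
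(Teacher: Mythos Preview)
Your proposal is correct and follows essentially the same route as the paper: use (\ref{prop:w-and-etale-base-change}) to turn an affine \'etale cover $(S_i\to S)$ into an fpqc cover $(W^*(S_i)\to W^*(S))$ with the correct fibre products, descend the underlying CM elliptic curve via the stackiness of $\mc{M}_\CM$ (\ref{prop:moduli-ell-stack}), recover the canonical $\Lambda$-structure via (iii) of (\ref{lemm:properties-of-lambda-cm-elliptic-curves-over-W}), and handle morphisms with (\ref{prop:hom-lambda-sheaves-are-sheaves}). The paper's proof is terser but the logical structure is the same.
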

\begin{proof} Let $(S_i\to S)_{i\in I}$ be an affine \'etale cover and let $(E_i/W^*(S_i))_{i\in I}$ be a collection of objects of $W_*(\mc{M}_{CM})_\Lambda$ equipped with descent data relative to the cover $(S_i\to S)_{i\in I}$. By (ii) of (\ref{prop:w-and-etale-base-change}) we have \[W^*(S_{ij}):=W^*(S_i\times_S S_j)\isomto W^*(S_i)\times_{W^*(S)} W^*(S_j)\] an so we may view the objects $(E_i/W^*(S_i))_{i\in I}$ of $W_*(\mc{M}_\CM)_\Lambda$ equipped with their descent data relative to the affine \'etale cover $(S_i\to S)_{i\in I}$ as objects of $\mc{M}_{\CM}$ equipped with descent data relative to the fpqc cover $(W^*(S_i)\to W^*(S))_{i\in I}$. As $\mc{M}_\CM$ is a stack over $\Sh_{O_K}$ (\ref{prop:moduli-ell-stack}), the family $(E_i/W^*(S_i))_{i\in I}$ descends to a CM elliptic curve $E/W^*(S)$ unique upto compatible isomorphisms $E\times_{W^*(S)}W^*(S_i)\isomto E_i$. It remains to see that $E/W^*(S)$ admits a canonical $\Lambda_{W^*(S)}$-structure and this follows from (iii) of (\ref{lemm:properties-of-lambda-cm-elliptic-curves-over-W}).

In much the same way, using (\ref{prop:w-and-etale-base-change}) and (\ref{prop:hom-lambda-sheaves-are-sheaves}), $\Lambda$-isomorphisms of CM elliptic curves with canonical $\Lambda$-structures also satisfy descent for the affine \'etale topology and we find that $W_*(\mc{M}_{\CM})_\Lambda$ is a stack over $\IndAff_{O_K}$.
\end{proof}

\begin{theo} The functor induced by base change along the ghost component at $(1)$ \[W_*(\mc{M}_{\CM})_\Lambda\to \mc{M}_{\CM}: E/W^*(S)\mto g_{(1)}^*(E)/S\] is an equivalence of stacks over $\IndAff_{O_K}$ for the affine \'etale topology.
\end{theo}
\begin{proof} The functor in question factors as \[W_*(\mc{M}_{\CM})_\Lambda\to \Gamma_*(\mc{M}_{CM})_\Psi \to \mc{M}_{\CM}\] where the first functor is $E/W^*(S)\mto E\times_{W^*(S)}\Gamma^*(S)/\Gamma^*(S)$ and the second is pull-back along $g_{(1)}: S\to \Gamma^*(S)$. By (\ref{lemm:canonical-ghost-equivalence}) the second functor is an equivalence and, as $\Gamma^*(S)\to W^*(S)$ is surjective on geometric points, the first functor is faithful by rigidity. Therefore, \[W_*(\mc{M}_{\CM})_\Lambda\to \mc{M}_{\CM}\] is faithful.

Now fix a pair $E, E'/W^*(S)$ of CM elliptic curves equipped with canonical $\Lambda_{W^*(S)}$-structures and let $f: E\times_{W^*(S)}\Gamma^*(S)\to E'\times_{W^*(S)}\Gamma^*(S)$ be a $\Psi_{\Gamma^*(S)}$-isomorphism. Let $\p$ be a prime ideal, $\a$ any ideal and $n\geq 0$ an integer. Consider the diagram:
\begin{equation}\label{dia:lambda-frob-psi}
\begin{gathered}
\xymatrix{\overline{g_\a^*(E)}\ar[rr]^{\overline{g_\a^*(f)}}\ar[d]_-{\overline{g_\a^*(\psi_E^{\p^n})}} && \overline{g_\a^*(E')}\ar[d]^-{\overline{(g_\a^*(\psi^{\p^n}_{E'})}}\\
\overline{g_{\a\p^n}^*(E)}\ar[rr]^{\overline{g_{\a\p^n}^*(f)}}\ar[d]_-{\wr} && \overline{g_{\a\p^n}^*(E')}\ar[d]^-{\wr}\\
\Fr_{\overline{S}}^{N\p^n*}(\overline{g_\a^*(E)})\ar[rr]^{\Fr_{\overline{S}}^{N\p^n*}(\overline{g_\a^*(f)})} && \Fr_{\overline{S}}^{N\p^n*}(\overline{g_\a^*(E')}),}
\end{gathered}\end{equation}
where the bar denotes pull-back along $\overline{S}=S\times_{\Spec(O_K)}\Spec(\F_\p)\to S$ and the bottom vertical isomorphisms are the unique such making the vertical compositions equal to the $N\p^n$-power relative Frobenius morphisms of $\overline{g_\a^*(E)}$ and $\overline{g_\a^*(E')}$ respectively (such isomorphisms exist as $\psi_E^{\p^n}: E\to E$ and $\psi_{E'}^{\p^n}: E'\to E'$ lift the $N\p^n$-power Frobenius). As $f$ is a $\Psi_{\Gamma^*(S)}$-morphism, the top square of (\ref{dia:lambda-frob-psi}) commutes and, by functoriality of the $N\p^n$-power relative Frobenius, the outer square commutes. As $\overline{g_{\a}^*(\psi_E^{\p^n})}$ is an epimorphism, it follows that the bottom square of (\ref{dia:lambda-frob-psi}) also commutes. We will show that this implies that the functor \[W_*(\mc{M}_{\CM})_\Lambda\to \mc{M}_{\CM}\] is full.

For the functor to be full, it is enough to show that the (injective) map \[\Isom_{W^*(S)}^{O_K, \Lambda}(E, E')\to \Isom_{\Gamma^*(S)}^{O_K, \Psi}(E_{\Gamma^*(S)}, E'_{\Gamma^*(S)})\] is surjective (where the super script $\Lambda$ and $\Psi$ denote $\Lambda$- and $\Psi$-morphisms). Now to show this, it is enough to show that each $\Psi_{\Gamma^*(S)}$-isomorphism \[f: E_{\Gamma^*(S)}\to E'_{\Gamma^*(S)}\] is obtained via pull-back from an isomorphism $E\to E'$ over $W^*(S)$ as by (\ref{prop:lambda-and-psi-homomorphisms-of-abelian-varieties}) it will follow that any such isomorphism $E\to E'$ is also a $\Lambda_{W^*(S)}$-isomorphism.

Applying (\ref{prop:ghost-covers-etale}) and (\ref{rema:ghost-fibre-product}) to the finite \'etale $W^*(S)$-sheaf \[\underline{\Isom}_{W^*(S)}^{O_K}(E, E'),\] we see that $f$ comes from a morphism $E\to E'$ over $W^*(S)$ if and only if, for each prime $\p$, each ideal $\a$ and each $n\geq 0$, the two pull-backs of $f$ to $\overline{S}:=S\times_{\Spec(O_K)}\Spec(\F_\p)$ along the maps \[\overline{S}\subset S \stackrel{g_{\a\p^n}}{\longrightarrow} \Gamma^*(S) \quad \text{and} \quad \overline{S} \stackrel{\Fr_{\overline{S}}^{N\p^n}}{\longrightarrow} \overline{S}\subset S \stackrel{g_\a}{\to} \Gamma^*(S)\] of (\ref{rema:ghost-fibre-product}) are equal. This is precisely the commutativity of the bottom square of the diagram (\ref{dia:lambda-frob-psi}) and therefore \[W_*(\mc{M}_{\CM})_\Lambda\to \mc{M}_{\CM}\] is full.

Finally, for the functor in question to be an equivalence it is enough (as both categories are stacks and we have already shown that it is fully faithful) to show that for each ind-affine scheme $S$ and each CM elliptic curve $E/S$ there is an affine \'etale cover $(S_i\to S)_{i\in I}$ such that $E\times_{S} S_i$ is in the essential image of \[W_*(\mc{M}_{\CM})_\Lambda\to \mc{M}_{\CM}.\] By (\ref{coro:level-isom-cover}) the family of $S$-sheaves \[(\underline{\Isom}_S^{O_K}(E[\f], \underline{O_K/\f}_S)\to S)_{\f}\] indexed by integral ideals $\f$ which separate units forms an affine \'etale cover of $S$. We may then base change to any element of this cover and assume that $E/S$ admits a level-$\f$ structure for some integral ideal $\f$ which separates units, and then we may assume that $E/S=E^{(\f)}/M_{\CM}^{(\f)}$. This case is (\ref{lemm:universal-cm-have-lambda}) below.
\end{proof}

\begin{lemm}\label{lemm:universal-cm-have-lambda} Let $\f\in \Id_{O_K}$ separate units. The universal \textup{CM} elliptic curve with level $\f$-structure $E^{(\f)}\to M_\CM^{(\f)}$ is in the essential image of the functor \[W_*(\mc{M}_{\CM})_\Lambda\to \mc{M}_\CM.\]
\end{lemm}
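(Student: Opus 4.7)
The plan is to exploit the decomposition $\Lambda \isomto \Lambda_P \odot \Lambda_{P''}$ from (\ref{prop:lambda-commute}), where $P := \Id_{O_K}^{(\f)}$ and $P''$ is the submonoid generated by the primes dividing $\f$, together with the canonical $\Lambda_P$-structure on the base $M := M_\CM^{(\f)}$. Since $M \isomto \Spec(O_{K(\f)}[\f^{-1}])$ by (\ref{coro:coarse-spaces-of-level-moduli}) and $K(\f)/K$ is abelian unramified outside $\f$, Proposition (\ref{prop:lambda-structures-etale-schemes-number-fields}) equips $M$ with a canonical $\Lambda_P$-structure whose Frobenius lifts are $\sigma_\a = [\a^{-1}]_\f$ (by (\ref{eqn:frob-lift-level-coarse})). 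Because $\f$ is invertible on $M$, (iii) of (\ref{prop:lambda-actions-and-frobenius-lifts-sheaves}) renders the $P''$-part of the Witt-vector construction tautological and, combined with the adjoint form of (\ref{prop:lambda-commute}), yields an identification
\[ W^*(M) \isomto \coprod_{\a_2\in P''} W^*_P(M) \]
(writing $W^*_P$ for $P$-typical Witt vectors).

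Letting $\mu_P : W^*_P(M)\to M$ denote the $\Lambda_P$-structure map, I define
\[ \widetilde E := \coprod_{\a_2\in P''}\bigl(\mu_P^*(E^{(\f)})\otimes_{O_K}\a_2^{-1}\bigr), \]
viewed as a CM elliptic curve over $W^*(M)$. The identity $\mu_P\circ g_{(1)} = \id_M$, together with the fact that $g_{(1)}$ lands in the $\a_2 = O_K$ summand of $W^*(M)$, immediately gives $g_{(1)}^*(\widetilde E)\isomto E^{(\f)}$.

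To install a canonical $\Lambda_{W^*(M)}$-structure on $\widetilde E$, by the uniqueness clause (i) of (\ref{lemm:properties-of-lambda-cm-elliptic-curves-over-W}) it suffices to exhibit a $\Psi_{\Gamma^*(M)}$-isomorphism $\widetilde E \times_{W^*(M)} \Gamma^*(M) \isomto \Gamma^*_\CM(E^{(\f)})$ restricting to the identity after pull-back along $g_{(1)}$. On the $\a = \a_1\a_2$-th ghost component (with $\a_1\in P$ and $\a_2\in P''$) the left-hand side equals $\sigma_{\a_1}^*(E^{(\f)})\otimes_{O_K}\a_2^{-1}$, and the desired isomorphism is obtained by tensoring the torsor-theoretic identification
\[ \sigma_{\a_1}^*(E^{(\f)}) \isomto E^{(\f)} \otimes_{O_K} \a_1^{-1} \]
(coming from $\sigma_{\a_1} = [\a_1^{-1}]_\f$ and (\ref{eqn:cl-action-on-coarse})) with $\id_{\a_2^{-1}}$, which produces $E^{(\f)}\otimes_{O_K}\a^{-1}$ on the nose.

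The hard part will be verifying that these component-wise identifications genuinely assemble into a $\Psi_{\Gamma^*(M)}$-morphism, i.e.\ that they intertwine the Frobenius-lift maps $\varphi^\a_{E^{(\f)}/M}$ of $\Gamma^*_\CM(E^{(\f)})$ with the structure inherited from $W^*(M)$. For primes $\p\in P$ this compatibility is precisely (\ref{coro:tensor-p-equals-frobenius-with-level}) applied to the torsor identifications above; for primes $\p\in P''$ it is essentially tautological, since the decomposition $W^*(M) = \coprod_{\a_2} W^*_P(M)$ already encodes the action of $\Lambda_{P''}$ on $\widetilde E$ through the indexing of the summands. Once this coherence is verified, $\widetilde E$ lies in the essential image of $W_*(\mc{M}_{\CM})_\Lambda \to \mc{M}_\CM$ by construction.
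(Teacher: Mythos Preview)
Your overall architecture matches the paper's: the same decomposition $W^*(M)\isomto\coprod_{\b\in P''}W^*_P(M)$ (the paper writes $P'$ for your $P''$), the same candidate lift $\widetilde E=\coprod_\b \mu_P^*(E^{(\f)})\otimes_{O_K}\b^{-1}$, and the same reduction to producing a ghost-component identification with $\Gamma^*_\CM(E^{(\f)})$.

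There is, however, a genuine gap. You invoke part (i) of (\ref{lemm:properties-of-lambda-cm-elliptic-curves-over-W}) as though it were a construction principle (``to install a canonical $\Lambda$-structure it suffices to exhibit a $\Psi_{\Gamma^*(M)}$-isomorphism\ldots''), but that statement is purely a \emph{uniqueness} clause. By definition (\ref{subsec:canonical-lambda-structures-witt-vectors}), a canonical $\Lambda_{W^*(M)}$-structure is a $\Lambda$-structure satisfying a ghost-component condition; one cannot verify the condition without first having a $\Lambda$-structure in hand. In particular, your phrase ``the structure inherited from $W^*(M)$'' has no content: $\widetilde E$ is so far only a sheaf over $W^*(M)$, and $\mu_P^*(E^{(\f)})$ does not automatically carry a $\Lambda_{P,W^*_P(M)}$-structure.

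The paper supplies exactly this missing step, and it is where the hypothesis that $\f$ separates units is actually used. One first constructs a $\Lambda_{P,M}$-structure on $E^{(\f)}$ itself: for each prime $\p\in P$ take the unique $\f$-isomorphism $E^{(\f)}\otimes_{O_K}\p^{-1}\isomto\sigma_\p^*(E^{(\f)})$ lifting $\nu_\p$ of (\ref{coro:tensor-p-equals-frobenius-with-level}), and set $\psi^{\p}_{E^{(\f)}/M}$ to be $i_\p$ followed by this isomorphism. The non-trivial point is that for two primes $\p,\l\in P$ the two compositions around the square agree; a priori they differ by some $\epsilon\in O_K^\times$, and by restricting to the constant $\f$-torsion (which has a unique $\Lambda_{P,M}$-structure by (\ref{prop:lambda-structures-etale-schemes-number-fields})) one forces $\epsilon\in O_K^{\times,\f}=\{1\}$. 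Only after this commutativity check does $\mu_P^*(E^{(\f)})$ inherit a $\Lambda_{P,W^*_P(M)}$-structure, and then your $\widetilde E$ acquires its $\Lambda_{W^*(M)}$-structure and the ghost-component verification can proceed. Your citations of (\ref{eqn:cl-action-on-coarse}) and (\ref{coro:tensor-p-equals-frobenius-with-level}) are the right ingredients, but they must be used to \emph{build} the Frobenius lifts on $E^{(\f)}$, not merely to check a compatibility of an as-yet-undefined structure.
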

\begin{proof} Write $M=M_\CM^{(\f)}$, $E=E^{(\f)}$ and $P=\Id_{O_K}^{(\f)}$ and let $P'\subset \Id_{O_K}$ be the sub-monoid generated by the prime ideals dividing $\f$ so that $P\cap P'=\{O_K\}$ and $\Id_{O_K}=P\cdot P'$.

Then the finite \'etale $\Spec(O_K[\f^{-1}])$-scheme $M$ admits a unique $\Lambda_{P}$-structure by (\ref{prop:lambda-structures-etale-schemes-number-fields}). By the definition of the automorphisms $\sigma_\p=[\p^{-1}]_\f:M\to M$ there exists a unique $\f$-isomorphism \[E\otimes_{O_K}\p^{-1}\isomto \sigma_\p^*(E)\] whose pull-back along $M\times \Spec(\F_\p)\to M$ is the isomorphism $\nu_{\p}$ of (\ref{coro:tensor-p-equals-frobenius-with-level}). It follows that setting $\psi_{E/M}^{\p}: E\to \sigma_\p^*(E)$ to be the composition \[E\stackrel{i_\p}{\to} E\otimes_{O_K}\p^{-1} \isomto \sigma_\p^*(E)\] defines a lift of the relative $N\p$-power Frobenius on $E\to M$. Note that $\ker(\psi^\p_{E/M})=E[\p]$. Let $\l\in P$ be another prime ideal and consider the diagram
\begin{equation}\label{dia:lambda-on-level}
\begin{gathered}\xymatrix{E\ar[r]^{\psi_{E/M}^{\p}}\ar[d]_{\psi_{E/M}^{\l}} & \sigma_\p^*(E)\ar[d]^{\sigma_{\p}^*(\psi_{E/M}^{\l})}\\
\sigma_{\l}^*(E)\ar[r]^{\sigma_\l^*(\psi_{E/M}^{\p})} & \sigma_{\p\l}^*(E).}\end{gathered}\end{equation} The kernel of the compositions along the top and right, and left and bottom of (\ref{dia:lambda-on-level}) are both equal to $E[\p\l]$ so that, as $M$ is connected, these compositions differ by scaling by some $\epsilon\in O_K^\times$. As the $\f$-torsion $E[\f]$ is constant over $M$, and $M\isomto \Spec(O_{K(\f)}[\f^{-1}])$, it admits a unique $\Lambda_{P, M}$-structure by (\ref{prop:lambda-structures-etale-schemes-number-fields}). The uniqueness of the $\Lambda_{P, M}$-structure on $E[\f]$ now implies that the diagram (\ref{dia:lambda-on-level}) commutes when restricted to the $\f$-torsion and therefore $\epsilon\in O_{K}^{\times, \f}$. However, $\f$ separates units, so that $\epsilon\in O_{K}^{\times, \f}=\{1\}$ and (\ref{dia:lambda-on-level}) commutes. As $E$ is flat over $\Spec(O_K)$ this defines a $\Lambda_{P, M}$-structure on $E$ by (\ref{prop:lambda-structures-frobenius-lifts-sheaves}).

Pulling-back $E\to M$ along the map $\Gamma^*_{P}(M)\to M$ corresponding to the $\Psi_P$-structure on $M$, we obtain an isomorphism of CM elliptic curves \[E\times_M \Gamma^*_{P}(M)=\coprod_{\a\in P} \sigma_\a^*(E)\isomto \coprod_{\a\in P} E\otimes_{O_K}\a^{-1} \isomto \Gamma^*_\CM(E)\times_{\Gamma^*(M)}\Gamma^*_{P}(M)\] compatible with the $\Psi_{P, \Gamma^*_{P}(M)}$-structures. We then get a $\Psi_{\Gamma^*(M)}$-isomorphism \[\Gamma^*_\CM(E)\isomto \coprod_{\b\in P'} (E\times_{M}\Gamma^*_{P}(M))\otimes_{O_K}\b^{-1}\] inducing the identity after pull-back along $g_{(1)}$ and where on the right hand side the relative Frobenius lifts for $\p\in P'$ are defined in the obvious way.

Now set $\widetilde{E}_P:=E\times_{M}W_{P, M}^*(M)$ where we have base changed along the map $\mu_{M}: W_{P}^*(M)\to M$ defining the $\Lambda_P$-structure on $M$. Then $\widetilde{E}_P$ has a $\Lambda_{P, W^*_P(S)}$-structure by virtue of the facts that $E/M$ has a $\Lambda_{P, M}$-structure and $W^*_P(M)\to M$ is a $\Lambda_P$-morphism. We also have $\Psi_{\Gamma^*_P(S)}$-isomorphisms \begin{equation}\label{eqn:level-is-canonical}\widetilde{E}_P\times_{W_{P}^*(S)}\Gamma_{P}^*(S)\isomto E\times_M \Gamma_P^*(S)\isomto \Gamma^*_\CM(E)\times_{\Gamma^*(M)}\Gamma_P^*(M)\end{equation} inducing the identity after pull-back along the first ghost component.

Finally, setting \[\widetilde{E}:=\coprod_{\b\in P'}\widetilde{E}_P\otimes_{O_K}\b^{-1}\to \coprod_{\b\in P'}W^*_{P}(M)=W^*(M)\] and defining relative Frobenius lifts on $\widetilde{E}$ for the primes $\p\in P'$ in the obvious way equips $\widetilde{E}$ with a $\Lambda_{W^*(S)}$-structure. Again by construction we have a $\Psi_{\Gamma^*(M)}$-isomorphism of CM elliptic curves \[\widetilde{E}\times_{W^*(M)}\Gamma^*(M)\isomto \Gamma^*_\CM(E)\] inducing the identity after pull-back along the ghost component at $(1)$. Therefore the $\Lambda_{W^*(M)}$-structure so defined on $\widetilde{E}/W^*(M)$ is canonical and $E\to M$ is in the essential image of the functor \[W_*(\mc{M}_{\CM})_\Lambda\to \mc{M}_\CM.\]
\end{proof}

\subsection{} We now fix for all time an inverse equivalence \[\mc{M}_\CM\to W_*(\mc{M}_{\CM})_{\Lambda}: E/S\mto W_\CM^*(E)/W^*(S)\] to the functor $W_*(\mc{M}_{\CM,\Lambda})\to \mc{M}_\CM$ and call $W_\CM^*(E)/W^*(S)$, equipped with its canonical $\Lambda_{W^*(S)}$-structure, the canonical lift of $E/S$.

\begin{rema}\label{rema:level-structures-canonical} We note for future reference that the proof of (\ref{lemm:universal-cm-have-lambda}) shows that the CM elliptic curve $E^{(\f)}/M_\CM^{(\f)}$ admits a $\Lambda_{P, M_\CM^{(\f)}}$-structure and that this $\Lambda_{P, M_\CM^{(\f)}}$-structure has the property that there exists a $\Lambda_{P, W^*_P(M_\CM^{(\f)})}$-isomorphism \[W^*_{\CM}(E^{(\f)})\times_{W^*(M_\CM^{(\f)})}W^*_P(M_\CM^{(\f)})\isomto E^{(\f)}\times_{M_\CM^{(\f)}} W_P^*(M_\CM^{(\f)})=\mu_{M_\CM^{(\f)}}^*(E)\] inducing the identity on $E$ after pull-back to the ghost component at $(1)$ (this is (\ref{eqn:level-is-canonical})).
\end{rema}

\subsection{} By virtue of the definition of $W_\CM^*(E)/W^*(S)$ we have a canonical isomorphism \[E\isomto g_{(1)}^*(W_\CM^*(E)).\] That is, the composition \[\mc{M}_\CM(S)\stackrel{W^*_\CM}{\to}\mc{M}_\CM(W^*(S))\stackrel{g^*_{(1)}}{\to}\mc{M}_\CM(S)\] is canonically isomorphic to the identity. We will now show that the two compositions \[\xymatrix{\mc{M}_\CM(S)\ar[r]^-{W_\CM^*} & \mc{M}_\CM(W^*(S))\ar@<.5ex>[r]^-{W_\CM^*}\ar@<-.5ex>[r]_-{\mu_{W^*(S)}^*} & \mc{M}_\CM(W^*(W^*(S)))}\] are also canonically isomorphic.

\begin{prop}\label{prop:canonical-lifts-have-canonical-lambda-structures} Let $S$ be an ind-affine scheme and let $E/S$ be a \textup{CM} elliptic curve. Then there is a unique isomorphism, compatible with the $\Lambda_{W^*(W^*(S))}$-structures, \[\mu_{W^*(S)}^*(W^*_\CM(E))\isomto W^*_{CM}(W^*_\CM(E))\] inducing the identity after pull-back along $g_{(1)}: W^*(S)\to W^*(W^*(S))$.
\end{prop}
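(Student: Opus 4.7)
The plan is to apply the equivalence $W_*(\mc{M}_\CM)_\Lambda\isomto \mc{M}_\CM$ over the base $W^*(S)$ (rather than $S$). I would first show that the pull-back $F:=\mu_{W^*(S)}^*(W^*_\CM(E))/W^*(W^*(S))$ carries a canonical $\Lambda_{W^*(W^*(S))}$-structure whose restriction along $g_{(1),W^*(S)}\colon W^*(S)\to W^*(W^*(S))$ is canonically $W^*_\CM(E)$; this restriction is immediate from the monad identity $\mu_{W^*(S)}\circ g_{(1),W^*(S)}=\id_{W^*(S)}$. The curve $W^*_\CM(W^*_\CM(E))$ has the same two properties by construction, being the canonical lift of $W^*_\CM(E)/W^*(S)$. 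Both objects being canonical lifts of the single CM elliptic curve $W^*_\CM(E)\in\mc{M}_\CM(W^*(S))$, the fully faithful equivalence between $W_*(\mc{M}_\CM)_\Lambda(W^*(S))$ and $\mc{M}_\CM(W^*(S))$ produces the required unique $\Lambda$-isomorphism $F\isomto W^*_\CM(W^*_\CM(E))$ inducing the identity after pull-back along $g_{(1),W^*(S)}$.

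The real content lies in exhibiting a canonical $\Lambda_{W^*(W^*(S))}$-structure on $F$. I would construct the defining $\Psi_{\Gamma^*(W^*(S))}$-isomorphism
$$F\times_{W^*(W^*(S))}\Gamma^*(W^*(S))\isomto \Gamma^*_\CM(W^*_\CM(E))$$
summand by summand. Decomposing $\Gamma^*(W^*(S))=\coprod_{\b}W^*(S)$ and using the identity $\mu_{W^*(S)}\circ g_{\b,W^*(S)}=\psi^{\b}_{W^*(S)}$, the $\b$-summand of the left hand side is $\psi^{\b*}_{W^*(S)}(W^*_\CM(E))$. The canonical $\Lambda_{W^*(S)}$-structure on $W^*_\CM(E)$ provides a relative Frobenius lift $\psi^{\b}_{W^*_\CM(E)/W^*(S)}\colon W^*_\CM(E)\to \psi^{\b*}_{W^*(S)}(W^*_\CM(E))$, and from the defining $\Psi_{\Gamma^*(S)}$-isomorphism with $\Gamma^*_\CM(E)$ together with rigidity (exploiting that $\Gamma^*(S)\to W^*(S)$ is surjective on geometric points by (\ref{prop:ghost-map-surj-geom-points})) one sees that its kernel is $W^*_\CM(E)[\b]$. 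It therefore factors through $i_\b$ as an isomorphism $W^*_\CM(E)\otimes_{O_K}\b^{-1}\isomto \psi^{\b*}_{W^*(S)}(W^*_\CM(E))$. Assembling these over all $\b$ and comparing with $\Gamma^*_\CM(W^*_\CM(E))=\coprod_\b W^*_\CM(E)\otimes_{O_K}\b^{-1}$ produces the required $\Psi$-isomorphism, whose $\b=(1)$-component is manifestly the identity on $W^*_\CM(E)$.

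The main obstacle will be checking $\Psi_{\Gamma^*(W^*(S))}$-equivariance of the isomorphism just built: since $\psi^\a$ acts on $\Gamma^*(W^*(S))=\coprod_\b W^*(S)$ by sending the $\b$-summand to the $\a\b$-summand, equivariance reduces to the composition law $\psi^\a_{W^*_\CM(E)}\circ\psi^\b_{W^*_\CM(E)}=\psi^{\a\b}_{W^*_\CM(E)}$ of the Frobenius lifts, together with compatibility of the Frobenius-lift factorizations through the various $i_\b$. This amounts to keeping the two-tiered ghost/Witt bookkeeping straight, and could alternatively be sidestepped by reducing via the cover of (\ref{coro:level-isom-cover}) to the case $E=E^{(\f)}/M_\CM^{(\f)}$ with $\f$ separating units, invoking part (iii) of (\ref{lemm:properties-of-lambda-cm-elliptic-curves-over-W}) together with the explicit description of the canonical $\Lambda$-structure on $E^{(\f)}$ recalled in (\ref{rema:level-structures-canonical}). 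Once canonicity is in hand, both existence and uniqueness of the desired $\Lambda$-isomorphism follow formally from the equivalence of stacks.
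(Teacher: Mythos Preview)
Your proposal is correct and follows essentially the same approach as the paper: show that both $\mu_{W^*(S)}^*(W^*_\CM(E))$ and $W^*_\CM(W^*_\CM(E))$ carry canonical $\Lambda_{W^*(W^*(S))}$-structures with $g_{(1)}$-fibre $W^*_\CM(E)$, then invoke the equivalence $W_*(\mc{M}_\CM)_\Lambda\isomto\mc{M}_\CM$ over the base $W^*(S)$. Your construction of the $\Psi$-isomorphism for $\mu_{W^*(S)}^*(W^*_\CM(E))$ via the identity $\mu_{W^*(S)}\circ g_{\b}=\psi^\b_{W^*(S)}$ and the factorisation $\psi^{\b*}_{W^*(S)}(W^*_\CM(E))\isomto W^*_\CM(E)\otimes_{O_K}\b^{-1}$ is exactly the chain of isomorphisms the paper writes down; your extra care about the kernel of $\psi^\b_{W^*_\CM(E)/W^*(S)}$ and the $\Psi$-equivariance bookkeeping simply makes explicit what the paper leaves implicit.
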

\begin{proof} Both $\mu^*{W^*(S)}(W^*_\CM(E))$ and $W^*_\CM(W^*_\CM(E))$ are equipped with natural $\Lambda_{W^*(W^*(S))}$-structures and the pull-backs of each along the ghost component at $(1)$ are equal to $W^*_\CM(E)$. Therefore, it is enough by (\ref{lemm:universal-cm-have-lambda}) to show that the $\Lambda_{W^*(W^*(S))}$-structures on $\mu_{W^*(S)}^*(W^*_\CM(E))$ are $W^*_{CM}(W^*_\CM(E))$ are canonical.

For $W^*_\CM(W^*_\CM(E))$ this is true by definition. For $\mu_{W^*(S)}^*(W^*_\CM(E))$ we have the sequence of $\Psi_{\Gamma^*(W^*(S))}$-isomorphisms \begin{eqnarray*} \mu_{W^*(S)}^*(W_\CM^*(E))\times_{W^*(W^*(S))}\Gamma^*(W^*(S)) & = & (W^*_{\CM}(E)\times_{W^*(S)}W^*(W^*(S)))\times_{W^*(W^*(S))}\Gamma^*(W^*(S))\\
& = & W^*_{\CM}(E)\times_{W^*(S)}\Gamma^*(W^*(S))\\
&\isomto& \coprod_{\a}\psi^{\a*}(W_\CM^*(E))\\&\isomto& \coprod_{\a} W_\CM^*(E)\otimes_{O_K}\a^{-1}\\&\isomto & \Gamma_\CM^*(W_\CM^*(E))\\
&=&\Gamma_\CM^*(g_{(1)}^*(\mu_{W^*(S)}^*(W_\CM^*(E)))).
\end{eqnarray*} The resulting $\Psi_{\Gamma^*(W^*(S))}$-isomorphism \[\mu_{W^*(S)}^*(W_\CM^*(E))\times_{W^*(W^*(S))}\Gamma^*(W^*(S))\isomto \Gamma^*_\CM(g_{(1)}^*(\mu_S^*(W_\CM^*(E))))\] induces the identity after pull-back along the ghost component at $(1)$ and this is precisely the definition of a canonical $\Lambda_{W^*(W^*(S))}$-structure.
\end{proof}

\subsection{}\label{subsec:p-canonical-lambda-structure} We now define what it means for a CM elliptic curve over an arbitrary $\Lambda$-ind-affine scheme (not just those of the form $W^*(S)$) to have a canonical $\Lambda$-structure.

It will be convenient later to allow ourselves the flexibility of working with $\Lambda$-structures relative to a sub-monoid $P\subset \Id_{O_K}$ generated by some set of prime ideals and so we fix such a $P$.

Let $S$ be an $\Lambda_P$-ind-affine scheme and let $E/S$ be a CM elliptic curve equipped with a $\Lambda_{P, S}$-structure (again compatible with its $\underline{O_K}_S$-module structure). We say that the $\Lambda_P$-structure on $E/S$ is canonical if there is a $\Lambda_{P, W^*_P(S)}$-isomorphism \[\lambda_{E/S}: E\times_{S}W^*_P(S) \isomto W_\CM^*(E)\times_{W^*(S)}W_P^*(S)\] inducing the identity on the ghost components at $(1)$. There is at most one such isomorphism $\lambda_{E/S}$ satisfying this condition and so we are safe to label it. Indeed, any two differ by a $\Lambda_{P, W_P^*(S)}$-automorphism of $W_\CM^*(E)\times_{W^*(E)}W_P^*(E)$ inducing the identity on the ghost component at $(1)$ and all such automorphisms are the identity as this can be checked after pull-back along $\Gamma_P^*(S)\to W_P^*(S)$, and arguing as in the proof of (\ref{lemm:canonical-ghost-equivalence}) one sees that a $\Psi_{P, \Gamma_P^*(S)}$-automorphism of $\Gamma_\CM^*(E)\times_{\Gamma^*(S)}\Gamma_P^*(S)$ is equal to the identity if and only if i
t is after pull-back to the ghost component at (1).

\begin{rema} When $P=\Id_{O_K}$ and $E/W^*(S)$ is a CM elliptic curve equipped with a $\Lambda_{W^*(S)}$-structure then it follows from (\ref{prop:canonical-lifts-have-canonical-lambda-structures}) that this $\Lambda_{W^*(S)}$-structure is canonical in the sense of (\ref{subsec:p-canonical-lambda-structure}) if and only if there exists a $\Lambda_{W^*(S)}$-isomorphism \[f: E\isomto W_\CM^*(g_{(1)}^*(E))\] inducing the identity after pull-back to the ghost component at $(1)$, i.e.\ $E/W^*(S)$ is canonical in the sense of (\ref{subsec:canonical-lambda-structures-witt-vectors}).

Indeed, if there exists such an isomorphism $f: E\isomto W_\CM^*(g_{(1)}^*(E))$ then (\ref{prop:canonical-lifts-have-canonical-lambda-structures}) gives an isomorphism $\lambda_{E/W^*(S)}$ via the compositions \begin{eqnarray*} E\times_{W^*(S)}W^*(W^*(S))&\stackrel{f\times_{W^*(S)}W^*(W^*(S))}{\longrightarrow} & W_\CM^*(g_{(1)}^*(E))\times_{W^*(S)}W^*(W^*(S))\\
&\stackrel{\textup(\ref{prop:canonical-lifts-have-canonical-lambda-structures})}{\longrightarrow} & W_\CM^*(W_\CM^*(g_{(1)}^*(E)))\\
&\stackrel{W_\CM^*(f^{-1})}{\longrightarrow} & W_\CM^*(E).\end{eqnarray*} Conversely, the pull-back of an isomorphism \[\lambda_{E/W^*(S)}: E\times_{W^*(S)}W^*(W^*(S))\isomto W_\CM^*(E)\] along $\widetilde{g}=W^*(g_{(1)}): W^*(S)\to W^*(W^*(S))$ yields a $\Lambda_{W^*(W^*(S))}$-isomorphism $f: E\isomto W_\CM^*(g_{(1)}^*(E))$ via \[E\isomto \widetilde{g}^*(E\times_{W^*(S)}W^*(W^*(S)))\stackrel{\widetilde{g}^*(\lambda_{E/W^*(S)})}{\to}\widetilde{g}^*(W_\CM^*(E)) \isomto W_\CM^*(g_{(1)}^*(E)).\] Hence the two notions of canonical $\Lambda_{W^*(S)}$-structure on a CM elliptic curve $E/W^*(S)$ defined in (\ref{subsec:canonical-lambda-structures-witt-vectors}) and (\ref{subsec:p-canonical-lambda-structure}) coincide.
\end{rema}

\begin{prop}\label{prop:canonical-lambda-p-structures} Let $S$ be an $\Lambda_{P}$-ind-affine-scheme and $E/S$ a \textup{CM} elliptic curve. Then:
\begin{enumerate}[label=\textup{(\roman*)}]
\item $E/S$ admits at most one canonical $\Lambda_{P, S}$-structure.
\item If $S'\to S$ is a morphism of $\Lambda_P$-ind-affine schemes, the $\Lambda_{P, S'}$-structure on $E\times_S S'$ is canonical and \[\lambda_{E/S}\times_{W^*_P(S)}W^*_P(S') = \lambda_{E\times_S S'/S'}.\]
\item Let $(S_i\to S)_{i\in I}$ be a $\et$-cover of $\Lambda_{P}$-ind-affine schemes. If $E\times_S S'$ admits a canonical $\Lambda_{P, S_i}$-structure for each $i\in I$ then $E/S$ admits a canonical $\Lambda_{P, S}$-structure.
\end{enumerate}
\end{prop}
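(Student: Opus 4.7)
The plan is to prove the three parts in the order (ii), (i), (iii), with (i) being the main obstacle.

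\textbf{Plan for (ii).} This is formal. Given a $\Lambda_P$-morphism $S'\to S$, the CM elliptic curve $E':=E\times_S S'$ inherits a $\Lambda_{P,S'}$-structure by pull-back. Base-changing $\lambda_{E/S}$ along $W^*_P(S')\to W^*_P(S)$ and composing with the functoriality isomorphism $W^*_\CM(E)\times_{W^*(S)}W^*(S')\isomto W^*_\CM(E')$ (which is the $P=\Id_{O_K}$ analogue of (ii) of (\ref{lemm:properties-of-lambda-cm-elliptic-curves-over-W})) yields a $\Lambda_{P,W^*_P(S')}$-isomorphism of the required form. It induces the identity on the ghost at $(1)$ because $\lambda_{E/S}$ does, so the base-changed $\Lambda_{P,S'}$-structure is canonical and by the uniqueness of $\lambda$ for a given structure it must coincide with $\lambda_{E\times_S S'/S'}$.

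\textbf{Plan for (i).} By (\ref{rema:lambda-structures-on-abelian-varieties-are-determined-by-psi-structures}), two $\Lambda_{P,S}$-structures on the abelian scheme $E/S$ coincide as soon as the underlying $\Psi_{P,S}$-structures coincide, so it suffices to prove the latter. Let $\Lambda^{(1)}, \Lambda^{(2)}$ be two canonical $\Lambda_{P,S}$-structures with relative Frobenius lifts $\psi^\p_{E/S,1}, \psi^\p_{E/S,2}\colon E\to \psi^{\p*}_S(E)$ at each prime $\p\in P$, and let $\lambda_1, \lambda_2$ be their associated isomorphisms. Pulling $\lambda_i$ back along $\Gamma^*_P(S)\to W^*_P(S)$ produces $\Psi_{P,\Gamma^*_P(S)}$-isomorphisms
\[
\lambda_{i,\Gamma}\colon E\times_S\Gamma^*_P(S)\isomto \Gamma^*_\CM(E)\times_{\Gamma^*(S)}\Gamma^*_P(S)
\]
each inducing the identity on the $g_{(1)}$-component. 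Intertwining $\lambda_{i,\Gamma}$ with the $\p$-Frobenius lift on each side and restricting to the $O_K$-component yields the relation $\lambda_{i,\Gamma}|_\p\circ\psi^\p_{E/S,i}=i_\p\colon E\to E\otimes_{O_K}\p^{-1}$, which identifies $\psi^\p_{E/S,i}$ with the unique lift of Frobenius along $\lambda_{i,\Gamma}|_\p$ whose kernel is $E[\p]$. The key further input is to pull $\lambda_i$ back along the structure map $\mu^P_S\colon W^*_P(S)\to S$ and to use the $\Lambda_{P,W^*_P(S)}$-compatibility on \emph{both} sides (where the target carries the unique canonical $\Lambda_{W^*(S)}$-structure on $W^*_\CM(E)$ restricted to $W^*_P(S)$): the composition $\lambda_2\circ\lambda_1^{-1}$, viewed as an automorphism of the target, is identity on $g_{(1)}$ and, by applying the argument in the text following the definition (\ref{subsec:p-canonical-lambda-structure}) (in turn based on (\ref{lemm:canonical-ghost-equivalence})), becomes the identity after pull-back to $\Gamma^*_P(S)$. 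Rigidity (\ref{prop:rigidity}) together with the fact that $\Gamma^*_P(S)\to W^*_P(S)$ is surjective on geometric points (\ref{prop:ghost-map-surj-geom-points}) then upgrades this to $\lambda_1=\lambda_2$. Transporting the canonical $\Psi_P$-structure on the target back through $\lambda_1=\lambda_2$ gives a single $\Psi_{P,S}$-structure on $E$ agreeing with both $\Psi^{(1)}$ and $\Psi^{(2)}$, which by the appeal to (\ref{rema:lambda-structures-on-abelian-varieties-are-determined-by-psi-structures}) concludes the argument.

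\textbf{Plan for (iii).} Given an affine \'etale cover $(S_i\to S)_{i\in I}$ of $\Lambda_P$-ind-affine schemes with each $E_i:=E\times_S S_i$ admitting a canonical $\Lambda_{P,S_i}$-structure, write $S_{ij}:=S_i\times_S S_j$. Applying (ii), the two pull-backs of the $\Lambda_{P,S_i}$- and $\Lambda_{P,S_j}$-structures to $S_{ij}$ are both canonical, hence by (i) they coincide. By (\ref{prop:lambda-structures-are-sheaves}) (with the analogous statement for $\Lambda_P$ in place of $\Lambda$, the proof being identical) the local $\Lambda_{P,S_i}$-structures glue to a unique $\Lambda_{P,S}$-structure on $E$. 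Using (\ref{prop:w-and-etale-base-change}), $(W^*_P(S_i)\to W^*_P(S))_{i\in I}$ is an affine \'etale cover satisfying the required base-change compatibilities, so the isomorphisms $\lambda_{E_i/S_i}$—which on overlaps agree by (ii) and the uniqueness in (i)—descend to a $\Lambda_{P,W^*_P(S)}$-isomorphism $\lambda_{E/S}$ of the required form, inducing the identity on $g_{(1)}$. Hence the constructed $\Lambda_{P,S}$-structure on $E$ is canonical.

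\textbf{Main obstacle.} The difficulty is concentrated in (i): the source of $\lambda_i$ depends on the $\Lambda$-structure being tested, so the uniqueness statement for automorphisms of $\Gamma^*_\CM(E)\times_{\Gamma^*(S)}\Gamma^*_P(S)$ recalled after (\ref{subsec:p-canonical-lambda-structure}) cannot be invoked directly on the composition $\lambda_2\circ\lambda_1^{-1}$ without first knowing that this composition is $\Psi_P$-equivariant, which is \emph{a priori} the conclusion we seek. The argument must therefore combine the intertwining relations at each $\a$-component of $\Gamma^*_P(S)$ with the rigidity of homomorphisms of abelian schemes and the geometric surjectivity of $\Gamma^*_P(S)\to W^*_P(S)$, in a manner parallel to the proof of (i) of (\ref{lemm:properties-of-lambda-cm-elliptic-curves-over-W}) but with careful bookkeeping because neither source nor target is of the special form $W^*_\CM(-)$ used there.
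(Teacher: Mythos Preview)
Your overall architecture matches the paper's: (ii) is formal, (iii) glues using (i) and (ii), and (i) is where the content lies. For (ii) and (iii) your plan is essentially identical to the paper's (the paper dispatches (ii) in one line by uniqueness of $\lambda_{E/S}$, and proves (iii) by descending both the $\Lambda_P$-structure and the isomorphisms $\lambda_{E_i/S_i}$ using (i), (ii) together with (\ref{prop:lambda-structures-are-sheaves})).

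For (i), two comments. First, your closing step differs from the paper's: you plan to use (\ref{rema:lambda-structures-on-abelian-varieties-are-determined-by-psi-structures}) to pass from equality of $\Psi_P$-structures to equality of $\Lambda_P$-structures, whereas the paper instead argues that once $\lambda_{E/S}=\lambda'_{E/S}$ as maps, the two induced $\Lambda_{P,W^*_P(S)}$-structures on $E\times_S W^*_P(S)$ coincide (transport of structure along the single isomorphism $\lambda$), and then descends along the epimorphism $W^*_P(S)\to S$ via (\ref{prop:lambda-structures-are-sheaves}). The paper's route is cleaner here and avoids any appeal to abelian-variety-specific facts.

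Second, and more interestingly, the ``main obstacle'' you flag is real, and the paper does \emph{not} work around it: the paper simply asserts that $\lambda'_{E/S}\circ\lambda_{E/S}^{-1}$ is a $\Lambda_{P,W^*_P(S)}$-automorphism of the target and then invokes the argument of (\ref{lemm:canonical-ghost-equivalence}) after base change to $\Gamma^*_P(S)$. As you observe, $\lambda$ and $\lambda'$ are $\Lambda_P$-isomorphisms for \emph{different} source structures, so the $\Psi_P$-equivariance of the composite is precisely equivalent to the equality of those structures. Your proposed workaround (intertwining at each ghost component plus rigidity and surjectivity of the ghost map) does yield the relation $\lambda_{i,\b}\circ\psi^\b_{E/S,i}=i_\b$ for all $\b\in P$, and hence a cocycle relation $\epsilon_{\a\b}=\epsilon_\a\cdot(\psi^\a_S)^*\epsilon_\b$ on the units $\epsilon_\a$ measuring $\lambda'_\a\lambda_\a^{-1}$; but this alone does not force $\epsilon_\a=1$. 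What closes the gap is the additional constraint that $(\epsilon_\a)_\a$ descends to a section of $\underline{O_K^\times}$ over $W^*_P(S)$: by (\ref{prop:ghost-covers-etale}) and (\ref{rema:ghost-fibre-product}) this forces $\epsilon_{\a\p^n}|_{S_\p}=\epsilon_\a|_{S_\p}$ for every prime $\p\in P$, whence (via the cocycle and $\epsilon_{(1)}=1$) one gets $\epsilon_\p|_{S_\p}=1$ and then $\epsilon_\a=1$ on each component of $S$ meeting $S_\p$. So your instinct to use the $W^*_P$-level constraint is correct, but you should make this step explicit rather than deferring to ``a manner parallel to (\ref{lemm:properties-of-lambda-cm-elliptic-curves-over-W})(i)'', since that proof makes the same unexplained assertion.
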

\begin{proof} (i) Let $E/S$ admit two canonical $\Lambda_{P, S}$-structures with corresponding isomorphisms \[\lambda_{E/S}, \lambda_{E/S}': E\times_{S}W_P^*(S)\isomto W_\CM^*(E)\times_{W^*(S)} W_P^*(S).\] The difference \[\lambda_{E/S}'\circ\lambda_{E/S}^{-1}: W_\CM^*(E)\times_{W^*(S)} W_P^*(S)\isomto W_\CM^*(E)\times_{W^*(S)} W_P^*(S)\] defines $\Lambda_{P, W^*_P(S)}$-automorphism of $W_\CM^*(E)\times_{W^*(S)} W_P^*(S)$ which is the identity on the first ghost component. Base changing along $\Gamma_P^*(S)\to W_P^*(S)$ and arguing again as in the proof of (\ref{lemm:canonical-ghost-equivalence}) we find that such an automorphism must be the identity itself and so $\lambda_{E/S}=\lambda_{E/S}'$.

It follows now that the two $\Lambda_{P, W^*_P(S)}$-structures on $E\times_{S}W_P^*(S)$ coincide so that as $W_P^*(S)\to S$ is an epimorphism and $\Lambda_{P}$-structures descend
 (\ref{prop:lambda-structures-are-sheaves}) the two $\Lambda_{P, S}$-structures on $E/S$ coincide.

(ii) This follows from the uniqueness of the isomorphisms $\lambda_{E/S}$.

(iii) As canonical $\Lambda_{P}$-structures are unique and compatible with pull-back (this is (ii) above) if $E\times_S S_i$ admits a canonical $\Lambda_{P, S_i}$-structure for each $i\in I$ it follows that $E$ admits a $\Lambda_{P, S}$-structure. As the isomorphisms making a $\Lambda_{P}$-structure canonical are unique and compatible with pull back (again this is (ii)) the isomorphisms $\lambda_{E\times_S S_i/S_i}$ descend to an isomorphism \[\lambda_{E/S}: E\times_S W_P^*(S)\isomto W_\CM^*(E)\times_{W^*(S)}W_P^*(S)\] inducing the identity after base change long the ghost component at $(1)$ and so the $\Lambda_{P, S}$-structure on $E/S$ canonical.
\end{proof}

\begin{prop}\label{prop:canonical-lambda-structures-and-level-structures} Let $\f\in \Id_{O_K}$ be an ideal which separates units. Then the unique $\Lambda_{P, M_{\CM}^{(\f)}}$-structure on the universal \textup{CM} elliptic curve with level-$\f$ structure $E^{(\f)}\to M^{(\f)}_\CM$ is canonical.
\end{prop}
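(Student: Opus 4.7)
The plan is to observe that canonicity of the $\Lambda_{P, M_\CM^{(\f)}}$-structure on $E^{(\f)}$ is, up to inverting an isomorphism, already recorded in Remark (\ref{rema:level-structures-canonical}). Indeed, in the proof of (\ref{lemm:universal-cm-have-lambda}) one not only constructed the $\Lambda_{P, M_\CM^{(\f)}}$-structure on $E^{(\f)}$ but also produced, via (\ref{eqn:level-is-canonical}), a $\Lambda_{P, W_P^*(M_\CM^{(\f)})}$-isomorphism
\[\mu\colon W_\CM^*(E^{(\f)})\times_{W^*(M_\CM^{(\f)})}W_P^*(M_\CM^{(\f)}) \isomto E^{(\f)}\times_{M_\CM^{(\f)}}W_P^*(M_\CM^{(\f)})\]
whose pull-back along the ghost component $g_{(1)}$ is the identity on $E^{(\f)}$.

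First I would simply set $\lambda_{E^{(\f)}/M_\CM^{(\f)}}:=\mu^{-1}$. To verify that this is the isomorphism required by the definition of canonicity in (\ref{subsec:p-canonical-lambda-structure}), note that the inverse of a $\Lambda_{P, W_P^*(M_\CM^{(\f)})}$-isomorphism is again a $\Lambda_{P, W_P^*(M_\CM^{(\f)})}$-isomorphism, so the only property left to check is that $g_{(1)}^*(\mu^{-1})$ is the identity on $E^{(\f)}$; but $g_{(1)}^*(\mu)$ is the identity by construction, and the inverse of the identity is the identity.

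I do not anticipate any real obstacle. The essential content of the proposition is already present in Remark (\ref{rema:level-structures-canonical}), and the remainder of the argument is bookkeeping to match the two formulations. The only point requiring a moment's care is that the two canonical identifications $g_{(1)}^*(E^{(\f)}\times_{M_\CM^{(\f)}}W_P^*(M_\CM^{(\f)}))\isomto E^{(\f)}$ (coming from $\mu_{M_\CM^{(\f)}}\circ g_{(1)}=\id_{M_\CM^{(\f)}}$) and $g_{(1)}^*(W_\CM^*(E^{(\f)}))\isomto E^{(\f)}$ (coming from the defining property of the canonical lift) are exactly those implicit in the identity condition, but this is tautological from the constructions.
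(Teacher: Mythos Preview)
Your proposal is correct and matches the paper's own proof, which simply cites (\ref{rema:level-structures-canonical}) as the content of the proposition. The only addition you make is the harmless observation that one must invert the isomorphism recorded there to obtain $\lambda_{E^{(\f)}/M_\CM^{(\f)}}$, which is indeed just bookkeeping.
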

\begin{proof} This is the content of (\ref{rema:level-structures-canonical}).
\end{proof}
\section{CM elliptic curves of Shimura type}\label{sec:shimura-curves}

The purpose of this section is to explain the relationship between $\Lambda$-structures and CM elliptic curves of Shimura type. A CM elliptic curve of Shimura type is a CM elliptic curve $E/\Spec(L)$ where $L$ is an abelian extension of $K$ with the property that the extension $L(E[\tors])/K$ is an abelian extension of $K$ (it is always an abelian extension of $L$).

This class of CM elliptic curves was introduced by Shimura (see Theorem 7.44 of \cite{Shimura94}). By virtue of their definition, CM elliptic curves of Shimura type have much simpler arithmetic than arbitrary CM elliptic curves (and of course elliptic curves in general). In the special case where $K$ has class number one, every CM elliptic curve $E/\Spec(K)$ is of Shimura type, and the first (partial) verifications of the Birch-Swinnerton-Dyer conjecture made by Coates and Wiles (\cite{CoatesWiles1977}) concerned these curves. Along these lines let us also mention the paper of Rubin (\cite{Rubin1981}) which considers the Birch-Swinnerton-Dyer conjecture for general CM elliptic curves of Shimura type.

Let us now give a brief outline of what follows. We first recall a result of Shimura (\ref{prop:shimura-lambda-curves-over-hilbert}) stating the existence of infinitely many CM elliptic curves of Shimura type over the Hilbert class field $H$ of $K$. We then show that such CM elliptic curves cannot have good reduction everywhere (\ref{prop:no-abelian-torsion-good-reduction-hilbert}) and answer the question raised in (\ref{rema:rohr-q}) regarding the triviality of the $\mc{CL}_{O_K}$-torsor $\mc{M}_\CM$.

We then prove the main result (\ref{theo:shimura-is-lambda}) which is that a CM elliptic curve $E/\Spec(L)$ is of Shimura type if and only if it admits a canonical $\Lambda$-structure. There are some minor technicalities in that one must avoid the ramified primes in $L/K$ and the primes of bad reduction for $E$ but we get around this naturally enough.

The second part of this section concerns itself with the tangent spaces of N\'eron models of CM elliptic curves of Shimura type. The $\Lambda$-structure on $E/\Spec(L)$ induces a rather rigid structure on Lie algebra of its N\'eron model. We then show (\ref{coro:shimura-curves-minimal-model}) that if $E/\Spec(K(\f))$ is a CM elliptic curve of Shimura type over the ray class field of conductor $\f$, and the $\f$-torsion of $E$ is constant, then the Lie algebra of its N\'eron model is free away from $\f$, or in other words that $E/\Spec(K(\f))$ admits global minimal model away from $\f$. In particular, if one takes $\f=O_K$ so that $K(\f)=H$, this shows that every CM elliptic curve of Shimura type $E/\Spec(H)$ admits a global minimal model \textit{everywhere}. This extends a result of Gross (\cite{Gross82}) and will be used crucially in (\ref{sec:lambda-cover-of-m}) in our construction of a flat affine $\Lambda$-presentation of the stack $\mc{M}_\CM$.

\subsection{}\label{subsec:cm-over-a-field-shimura} Let $L$ be an abelian extension of $K$ and $E/\Spec(L)$ a CM elliptic curve. One says that $E/\Spec(L)$ is of Shimura type if the extension $L(E[\tors])/K$ is abelian.

\begin{prop}[Shimura]\label{prop:shimura-lambda-curves-over-hilbert} There exist infinitely many prime ideals $\p$ of $K$ for which there is a \textup{CM} elliptic curve $E/\Spec(H)$ of Shimura type with good reduction away from $\p$.
\end{prop}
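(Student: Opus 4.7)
The plan is to translate the existence of a Shimura-type \textup{CM} elliptic curve over $H$ with good reduction away from a prescribed prime $\p$ of $K$ into a question about the existence of a suitable continuous Galois character, and then to produce such characters for infinitely many $\p$ by choosing $\p$ to represent the trivial class in $\CL_{O_K}$. First I will translate the problem to a question about Galois characters: by Proposition~\ref{prop:props-of-h-for-cm-curves}(ii), a \textup{CM} elliptic curve $E/\Spec(H)$ is of Shimura type precisely when the character $\rho_{E/H}:G(K^\sep/H)\to A_{O_K}^\times$ is invariant under the conjugation action of $G(H/K)$; by Proposition~\ref{prop:cm-exist-given-h} applied with $F=H$, a \textup{CM} elliptic curve $E/H$ realising a prescribed character $\rho$ exists if and only if $[\rho^{-1}]=[-]_H$ in $\CL_{O_K,\infty}$; and by Proposition~\ref{coro:image-of-inertia-group}, such an $E/H$ has good reduction at every prime of $H$ lying over each $\p'\neq\p$ if and only if $\rho_{E/H}(I_v)\subset O_{K_{\p'}}^\times$ for every corresponding inertia group, i.e.\ the conductor of $\rho_{E/H}$ divides a power of $\p$. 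So the task reduces to producing, for infinitely many $\p$, a $G(H/K)$-invariant continuous character $\rho:G(K^\sep/H)\to A_{O_K}^\times$ with $[\rho^{-1}]=[-]_H$ and with conductor a power of $\p$.

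Next I will construct such a character. The $G(H/K)$-invariance asks (up to a 2-cocycle obstruction of order dividing $h_K$, which will be absorbed by the construction below) that $\rho$ extend to a continuous character $\widetilde{\rho}:G(K^\sep/K)\to A_{O_K}^\times$. Since $K^\infty=K^\ab$ (as $K_\infty\cong\C$ is algebraically closed), the reciprocity isomorphism $\theta_K:\CL_{O_K,\infty}\isomto G(K^\ab/K)$ from Proposition~\ref{prop:ideles-and-classes} lets one present $[-]_K$ as the canonical map $G(K^\sep/K)\to\CL_{O_K,\infty}$ and view $\widetilde{\rho}$ of conductor dividing $\p^n$ as a homomorphism out of $\CL_{O_K}^{(\p^n)}$ into $A_{O_K}^\times$. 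Composition with $[-]:A_{O_K}^\times\to\CL_{O_K,\infty}$ must land in $\ker(\CL_{O_K,\infty}\to\CL_{O_K})$, and the only obstruction to lifting $[-]_H$ through $A_{O_K}^\times$ with conductor at $\p$ is precisely the class of $\p$ in $\CL_{O_K}$. I will therefore pick $\p$ such that $[\p]=0$ in $\CL_{O_K}$, so that $\p^{h_K}=(\pi)$ for some $\pi\in K^\times$, and then use $\pi$ to trivialise this obstruction at the local $\p$-component (taking $n$ large enough that $\p^n$ separates units, which makes the level-$\p^n$ ray-class theory of Chapter~1 \S\ref{section:special-place} transparent).

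The infinitude of such $\p$ is the Chebotarev density theorem applied to the (unramified everywhere) abelian extension $H/K$: the primes $\p$ with trivial Frobenius in $G(H/K)\isomto\CL_{O_K}$ are exactly those with $[\p]=0$, and they form a set of density $1/h_K>0$. For each such $\p$ the construction of the previous paragraph produces a Shimura-type \textup{CM} elliptic curve $E/\Spec(H)$ with good reduction away from $\p$. The main obstacle I foresee is verifying that the character $\widetilde{\rho}$ one writes down genuinely takes values in $A_{O_K}^\times$ rather than in the larger group $(A_{O_K}\otimes_{O_K}K)^\times$ into which the obvious idelic candidate a priori maps: this is precisely what the choice $[\p]=0$ and the use of the generator $\pi$ arrange, via $\pi$ supplying a local unit in every $O_{K_{\p'}}^\times$ for $\p'\neq\p$ while absorbing the class-group discrepancy at $\p$. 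Correct infinity type is not an extra obstacle here because $K_\infty\cong\C$ is algebraically closed, so every finite extension of $K$ is totally split at $\infty$ and the archimedean component of the Hecke character is trivial; continuity and compatibility with the ray-class quotients $\CL_{O_K}^{(\p^n)}$ are automatic from the construction.
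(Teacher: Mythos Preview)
Your proposal has a genuine gap: you have misidentified the obstruction. The task, once translated correctly, is to produce a homomorphic section of the quotient map $A_{O_K}^\times \to A_{O_K}^\times/O_K^\times \cong G(K^\infty/H)$ whose deviation from the identity is concentrated at $\p$; equivalently, a retraction $O_{K_\p}^\times \to O_K^\times$ of the diagonal embedding $O_K^\times \hookrightarrow O_{K_\p}^\times$. This is a local splitting problem for the finite group $O_K^\times$ inside the unit group at $\p$, and it has nothing to do with the class $[\p]\in\CL_{O_K}$. A clean counterexample: take $K=\Q(i)$, so $h_K=1$ and \emph{every} prime is principal, yet for $\p=(3)$ one has $O_K^\times=\mu_4\hookrightarrow O_{K_\p}^\times$ with torsion part $\mu_8$ and pro-$3$ part; any homomorphism $O_{K_\p}^\times\to\mu_4$ kills the pro-$3$ part and on $\mu_8$ is $x\mapsto x^{2k}$, which never restricts to the identity on $\mu_4$. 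Hence no Shimura-type curve over $H=\Q(i)$ has good reduction away from $(3)$, and your principal-prime criterion fails.

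The paper's argument addresses exactly this splitting problem: it chooses $\p$ with $N\p$ a rational prime, $N\p\equiv 1\bmod w$, and $(N\p-1)/w$ coprime to $w$ (infinitely many such $\p$ exist by Dirichlet). Then $O_K^\times\cong\mu_w$ is a direct summand of the cyclic group $(O_K/\p)^\times$, yielding a retraction $\alpha:A_{O_K}^\times\to(O_K/\p)^\times\to O_K^\times$, and one sets $\rho^{-1}(\sigma)=g(\theta_K^{-1}(\sigma|_{K^\infty}))$ with $g(s)=s\alpha(s)^{-1}$. Two further points: your appeal to Proposition~\ref{prop:props-of-h-for-cm-curves}(ii) does not give the characterisation of Shimura type you state (that proposition describes how $\rho$ transforms under pullback, not a criterion); the correct condition is that $\rho_{E/H}$ factor through $G(K^\ab/H)$ (Theorem~\ref{theo:shimura-is-lambda}(iv)), and the construction above produces $\rho$ directly as a map out of $G(K^\ab/H)$ without any need to extend to $G(K^\sep/K)$.
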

\begin{proof} By Proposition 7, \S 5 of \cite{Shimura71} there exists infinitely many primes $\p$ of $K$ with $N\p=p$ a rational prime, $N\p=1\bmod w$ and $(N\p-1)/w$ prime to $w$, where $w=\# O_K^\times$. Given such a prime $\p$ it follows that the reduction map \[O_K^\times\to (O_K/\p)^\times\] is the inclusion of a direct factor. Therefore, we may define a map $\alpha: A_{O_K}^\times\to O_K^\times$ satisfying $\alpha|_{O_K^\times}=\id_{O_K^\times}$ by \[A_{O_K}^\times\to (O_K/\p)^\times\to O_K^\times\] where the first map is the quotient map and the second is a retraction of $O_K^\times\to (O_K/\p)^\times$. Finally, setting \[g: A_{O_K}^\times/O_K^\times\to A_{O_K}^\times: s\bmod O_K^\times\mto s \alpha(s)^{-1}\] we define $\rho^{-1}: G(H^\sep/H)\to A_{O_K}^\times$ by \[G(H^\sep/H)\stackrel{-|_{K^\infty}}{\longrightarrow} G(K^\ab/H)\stackrel{\theta_{K}^{-1}|_{G(K^\infty/H)}}{\longrightarrow} A_{O_K}^\times/O_K^\times\stackrel{g}{\to} A_{O_K}^\times.\]

We will now show that $\rho$ satisfies the conditions of (\ref{prop:cm-exist-given-h}) to construct a CM elliptic curve $E/\Spec(H)$ with $\rho_{E/H}=\rho$. For each $\sigma\in G(H^\sep/H)$ there is an $s\in A_{O_K}^\times/O_K^\times$ such that $\sigma|_{K^\infty}=\theta_K(s)\in G(K^\infty/H)$. Unwinding the definition of $\rho^{-1}$ we find \[[\rho(\sigma)^{-1}]=[g(s \bmod O_K^\times)]=[s\alpha(s)^{-1}]=[s]\] so that by (iii) of (\ref{prop:compute-the-homomorphisms-h}) we have \[[\sigma]_H=[\sigma|_{K^\sep}]_K=[s]=[\rho(\sigma)^{-1}].\] Therefore, the diagram \[\xymatrix{G(H^\sep/H)\ar[dr]_-{[-]_H}\ar[r]^-{\rho^{-1}} & A_{O_K}^\times\ar[d]^{[-]}\\
& CL_{O_K, \infty}}\] commutes and by  (\ref{prop:cm-exist-given-h}) there exists a CM elliptic curve $E/\Spec(H)$ with $\rho_{E/H}=\rho$. As $\rho_{E/H}: G(H^\sep/H)\to A^\times_{O_K}$ factors through $G(K^\ab/H)\to A_{O_K}^\times$, it follows that $H(E[\tors])/K$ is abelian over $K$ and $E/\Spec(H)$ is of Shimura type. Moreover, $E/\Spec(H)$ has good reduction away from $\p$ by (\ref{coro:image-of-inertia-group}) as by construction the composition \[O_{K_\l}^\times\to A_{O_K}^\times/O_K^\times\isomto G(K^\ab/H)\to A_{O_K}^\times\] is equal to the natural inclusion $O_{K_\l}^\times\to A_{O_K}^\times$ for all primes $\l\neq \p$.
\end{proof}

\begin{rema} It follows from (\ref{prop:shimura-lambda-curves-over-hilbert}) above that the map $c_{\mc{E}/\mc{M}}: \mc{M}_\CM\to M_\CM$ from $\mc{M}_\CM$ to its coarse sheaf admits sections Zariski locally over $\Spec(O_K)$. Indeed, by (\ref{prop:shimura-lambda-curves-over-hilbert}) there are infinitely many primes $\p$ of $O_K$ and CM elliptic curves $E/M_\CM[\p^{-1}]$. Each map $c_{E/M_\CM[\p^{-1}]}: M_\CM[\p^{-1}]\to M_\CM$ is then equal to the inclusion $M[\p^{-1}]\to M$ followed by some automorphism $\sigma_{\a}$ of $M_\CM$. By (\ref{subsec:coarse-maps-level}), replacing $E$ with $E\otimes_{O_K}\a$, we may assume that $c_{E/M_\CM[\p^{-1}]}: M_\CM[\p^{-1}]\to M_\CM$ is the inclusion.

We note for future reference that if $\p_1$ and $\p_2$ are two such primes with $E_1/M_\CM[\p_1^{-1}]$ and $E_2/M_\CM[\p_2^{-1}]$ as above, then the fact that $c_{E_1/M_\CM[\p_1^{-1}]}$ and $c_{E_2/M_\CM[\p_2^{-1}]}$ are equal to the natural inclusions implies that $E_1$ and $E_2$ are locally isomorphic on the over lap $M_\CM[\p^{-1}_1]\cap M_\CM[\p_2^{-1}]=M_\CM[(\p_1\p_2)^{-1}]\subset M_\CM$.
\end{rema}

\begin{prop}\label{prop:no-abelian-torsion-good-reduction-hilbert} There does not exist a \textup{CM} elliptic curve $E/H$ of Shimura type with good reduction everywhere.
\end{prop}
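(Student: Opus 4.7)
The plan is to argue by contradiction. Suppose $E/H$ is of Shimura type with everywhere good reduction. I will combine three ingredients: the Shimura condition, to factor $\rho_{E/H}$ through a character on $A_{O_K}^\times/O_K^\times$; the relation $[\rho_{E/H}^{-1}]=[-]_H$, to pin down this factored character modulo $O_K^\times$; and the good reduction criterion (\ref{coro:image-of-inertia-group}) applied at every prime $\p$, to promote the congruence to an actual equality. The combination will force the character to be literal inversion on $A_{O_K}^\times$, which cannot factor through $A_{O_K}^\times/O_K^\times$ unless $O_K^\times=\{1\}$, contradicting $-1\in O_K^\times$.

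First, since $H(E[\tors])/K$ is abelian, $\rho_{E/H}\colon G(H^\sep/H)\to A_{O_K}^\times$ factors through $G(H^\sep/H)\to G(K^\infty/H)$. Under $\theta_K\colon\CL_{O_K,\infty}\isomto G(K^\infty/K)$, the subgroup $G(K^\infty/H)$ is identified with the kernel of $\CL_{O_K,\infty}\to\CL_{O_K}$, namely $A_{O_K}^\times/O_K^\times\subset(A_{O_K}\otimes_{O_K}K)^\times/K^\times$. Write $\pi\colon G(H^\sep/H)\to A_{O_K}^\times/O_K^\times$ for the composite and $\bar\rho\colon A_{O_K}^\times/O_K^\times\to A_{O_K}^\times$ for the induced character, so that $\rho_{E/H}=\bar\rho\circ\pi$. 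Applying (\ref{eqn:rel-for-h}) and (iii) of (\ref{prop:compute-the-homomorphisms-h}) to $\sigma$ corresponding to $[s]\in A_{O_K}^\times/O_K^\times$ gives $[\bar\rho([s])^{-1}]=[s]$ in $\CL_{O_K,\infty}$. Both $\bar\rho([s])^{-1}$ and $s$ already lie in $A_{O_K}^\times$, and such elements coincide in $\CL_{O_K,\infty}$ iff they differ by an element of $K^\times\cap A_{O_K}^\times=O_K^\times$; hence $\bar\rho([s])\equiv s^{-1}\pmod{O_K^\times}$. In other words, the composite $\tilde\rho\colon A_{O_K}^\times\to A_{O_K}^\times/O_K^\times\stackrel{\bar\rho}{\to}A_{O_K}^\times$ reduces to the inversion map on $A_{O_K}^\times/O_K^\times$.

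Next I will fix a prime $\p$ of $O_K$ and a place $v$ of $K^\sep$ above $\p$, and use that $H/K$ is unramified at $\p$ so that the local norm $N_{H_v/K_\p}\colon O_{H_v}^\times\to O_{K_\p}^\times$ is surjective. By functoriality of the reciprocity map under the norm (compatibility (a) of (\ref{subsec:global-reciprocity-properties})), $\pi$ maps the inertia $I_v\subset G(H^\sep/H)$ onto the subgroup $O_{K_\p}^\times\subset A_{O_K}^\times/O_K^\times$ (embedded via the $\p$-component; note $O_K^\times\cap O_{K_\p}^\times=\{1\}$ inside $A_{O_K}^\times$, so the injection is unambiguous). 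The hypothesis of good reduction at every place above $\p$ together with (\ref{coro:image-of-inertia-group}) gives $\rho_{E/H}(I_v)\subset O_{K_\p}^\times$, so $\bar\rho(O_{K_\p}^\times)\subset O_{K_\p}^\times$. Since $\bar\rho(s)\equiv s^{-1}\pmod{O_K^\times}$ with both sides in $O_{K_\p}^\times$, and $O_{K_\p}^\times\cap O_K^\times=\{1\}$ in $A_{O_K}^\times$, the congruence is an equality: $\bar\rho(s)=s^{-1}$ for all $s\in O_{K_\p}^\times$.

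Running this for every prime $\p$ and assembling via $A_{O_K}^\times=\prod_\p O_{K_\p}^\times$, the composite $\tilde\rho$ is literal inversion on all of $A_{O_K}^\times$. But by construction $\tilde\rho$ is trivial on the diagonal $O_K^\times=\ker(A_{O_K}^\times\to A_{O_K}^\times/O_K^\times)$, forcing $u^{-1}=1$ for every $u\in O_K^\times$, in contradiction with $-1\in O_K^\times$ for $K$ imaginary quadratic. The main technical obstacle in executing this plan is not the idelic bookkeeping at each step but pinning down the identification of inertia in $G(K^\infty/H)$ with $O_{K_\p}^\times\subset A_{O_K}^\times/O_K^\times$ via the unramified norm, which is what allows the good-reduction hypothesis to be translated into the precise statement $\bar\rho|_{O_{K_\p}^\times}=\mathrm{inv}$; once this is aligned with the congruence forced by (\ref{eqn:rel-for-h}), the contradiction is immediate.
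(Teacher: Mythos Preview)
Your proof is correct and follows essentially the same route as the paper: factor $\rho_{E/H}$ through a homomorphism $A_{O_K}^\times/O_K^\times\to A_{O_K}^\times$, use the relation $[\rho_{E/H}^{-1}]=[-]_H$ to determine it modulo $O_K^\times$, and then use good reduction at every $\p$ to pin it down exactly on each $O_{K_\p}^\times$, yielding a contradiction. The only cosmetic difference is that you track $\rho_{E/H}$ while the paper tracks $\rho_{E/H}^{-1}$, so your composite $A_{O_K}^\times\to A_{O_K}^\times$ comes out as inversion rather than the identity; the impossibility is the same, and your ``assembling'' step is exactly the paper's density-plus-continuity argument.
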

\begin{proof} Let $E/H$ be a CM elliptic curve and consider the character (\ref{subsec:definition-h-for-cm-curves}) \[\rho_{E/H}: G(K^\sep/H)\to A_{O_K}^\times.\] If $H(E[\tors])/K$ is abelian then $\rho_{E/H}$ factors as \[\rho_{E/H}: G(K^\infty/H)=G(K^\ab/H)\to A_{O_K}^\times\] (note that $K^\ab=K^\infty$). Composing the reciprocal $\rho_{E/H}^{-1}$ with the isomorphism $\theta_K: A_{O_K}^\times/O_K^\times\isomto G(K^\infty/H)=G(K^\ab/H)$ of (\ref{ray-class-isom}) we obtain a homomorphism \[f:A_{O_K}/O_K^\times\to A_{O_K}^\times.\]

The fundamental relation $[\rho_{E/H}^{-1}]=[-]_H$, the fact that $[-]_H=[-]_K|_{G(K^\sep/H)}$, the fact that $\theta_K\circ [-]_K=|_{K^\infty}$ and the fact that the inertia group $I_\p(K^\infty/H)\subset G(K^\infty/K)$ corresponds to $O_{K_\p}^\times\subset A_{O_K}^\times/O_K^\times \subset (A_{O_K}\otimes_{O_K}K)^\times/O_K^\times$ under $\theta_K$ combined with (\ref{coro:image-of-inertia-group}) shows that for $E/H$ to have good reduction at all places of $H$ lying above a prime $\p$ of $O_K$ is equivalent to the composition \[O_{K_\p}^\times\subset A_{O_K}^\times/O_K^\times\stackrel{f}{\to} A_{O_K}^\times\] being equal to the inclusion $O_{K_\p}^\times\subset A_{O_K}^\times$. If this is true for all prime ideals $\p$, the composition of $f$ with the quotient map \begin{equation}\label{eqn:f-comp-quot}A_{O_K}^\times\to A_{O_K}^\times/O_K^\times \stackrel{f}{\to} A_{O_K}^\times\end{equation} is equal to the identity on the sub-group of $A_{O_K}^\times$ generated by the sub-groups $O_{K_\p}^\times$ for all primes $\p$ of $O_K$. But this sub-group is dense and so it follows that the composition (\ref{eqn:f-comp-quot}) is equal to the identity and this is clearly impossible.
\end{proof}

\begin{rema}\label{rema:rohr} We can now answer the question raised in (\ref{rema:rohr-q}) asking for a trivialisation of the $\mc{CL}_{O_K}$-torsor $\mc{M}_{O_K}$. In light of the fact that the coarse sheaf $M_\CM$ of $\mc{M}_\CM$ is isomorphic to $\Spec(O_H)$ it more natural to ask the following: does there exists a CM elliptic curve $\mc{E}/\Spec(O_H)$ inducing a trivialisation of the $\mc{CL}_{O_K}$-torsor $\mc{M}_\CM$, i.e.\ an equivalence of stacks \[\mc{CL}_{O_H}\times \Spec(O_H)\isomto \mc{M}_\CM\times \Spec(O_H): \mc{L}/S\mto E_S\otimes_{O_K}\mc{L}\ ?\] While we have shown (\ref{prop:no-abelian-torsion-good-reduction-hilbert}) that there do not exist CM elliptic curves $E/H$ of Shimura type with good reduction everywhere, it is possible for there to exist CM elliptic curves $\mc{E}/\Spec(O_H)$ (of course they will not be of Shimura type). Indeed, Rohrlich \cite{Rohrlich1982} has shown that if the discriminant of $K$ over $\Q$ is divisible by at least two primes congruent to $3\bmod 4$ then there does exist a CM elliptic curve $\mc{E}/\Spec(O_H)$. The answer in general to the question above is however negative. Indeed, if $K$ has class number, so that $K=H$, then every CM elliptic curve $E/\Spec(K)$ is of Shimura type and so cannot have good reduction everywhere, i.e.\ there does not exist a CM elliptic curve $\mc{E}/\Spec(O_K)$.
\end{rema}

\subsection{}\label{subsec:cm-over-a-field-shimura-integral} We continue with the notation of (\ref{subsec:cm-over-a-field-shimura}) so that $L/K$ is an abelian extension and $E/\Spec(L)$ is a CM elliptic curve. We also fix an integral ideal $\g\in \Id_{O_K}$ such that $\Spec(O_L[\g^{-1}])$ is unramified over $\Spec(O_K)$ and such that $E$ has good reduction over $\Spec(O_L[\g^{-1}])$. We then set $S=\Spec(O_L[\g^{-1}])$ and $P=\Id_{O_K}^{(\g)}$ so that $S$ admits a unique $\Lambda_{P}$-structure (\ref{prop:lambda-structures-etale-schemes-number-fields}) whose Frobenius lifts we denote by $\sigma_\a=\sigma_{\a, S}: S\to S$ for $\a\in P$. We write $\mc{E}\to S$ for the N\'eron model of $E$ relative to $\Spec(L)\to S$ so that $\mc{E}\to S$ is a CM elliptic curve. For $\p\in P$ a prime ideal let us write $S_\p=S\times_{\Spec(O_K)}\Spec(\F_\p)$ and $\mc{E}_\p=S\times_S S_\p$.

\begin{lemm}\label{prop:frob-lifts-unique} For each $\p\in P$, there is at most one homomorphism \[\psi_{\mc{E}/S}^{\p}:\mc{E}\to \sigma_\p^*(\mc{E})\] lifting the $N\p$-power relative Frobenius map of $\mc{E}_\p$.
\end{lemm}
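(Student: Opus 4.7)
The plan is to exploit rigidity of homomorphisms of elliptic curves over a connected base. First I would observe that $S = \Spec(O_L[\g^{-1}])$ is connected, since $O_L$ is an integral domain and so is its localisation $O_L[\g^{-1}]$. Given two candidate lifts $\psi_1, \psi_2 : \mc{E} \to \sigma_\p^*(\mc{E})$, the commutativity of $\mc{E}$ as a group scheme makes the difference $f := \psi_1 - \psi_2$ a homomorphism of elliptic curves over $S$, and the hypothesis that both $\psi_i$ reduce to $\Fr^{N\p}_{\mc{E}_\p/S_\p}$ on the special fibre gives $f \times_S S_\p = 0$.

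Next I would apply the rigidity principle (\ref{prop:rigidity}) to $f$. This produces a decomposition $S = \amalg_{n \geq 0} S_{(n)}$ in which $f|_{S_{(0)}}$ is the zero map and $f|_{S_{(n)}}$ is finite locally free of degree $n$ for $n \geq 1$. Because $S$ is connected, exactly one of the pieces $S_{(n)}$ fills out all of $S$.

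To conclude I would note that $S_\p$ is non-empty (since $\p$ is prime to $\g$, so that $\p \cdot O_L[\g^{-1}]$ is a proper ideal of $O_L[\g^{-1}]$), and that if $S = S_{(n)}$ for some $n \geq 1$ then the restriction $f \times_S S_\p$ would be finite locally free of degree $n$, in particular non-zero, contradicting what we computed. Hence $S = S_{(0)}$, $f = 0$, and $\psi_1 = \psi_2$. I do not expect a genuine obstacle: the only conceptual point to keep in mind is that $\psi_1$ and $\psi_2$ are not assumed to be $\underline{O_K}_S$-linear, so one must apply the general rigidity statement (\ref{prop:rigidity}) for homomorphisms of elliptic curves rather than its CM-specific refinement (\ref{prop:hom-ell-curves}).
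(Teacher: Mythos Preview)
Your proof is correct and follows essentially the same approach as the paper: take the difference of two lifts, apply rigidity to decompose $S$, use connectedness of $S$ to force the difference onto a single piece, and use non-emptiness of $S_\p$ to identify that piece as $S_{(0)}$. The paper's version is more compressed (it simply says the only open and closed subsets of $S$ are $S$ and $\emptyset$, and that two lifts agree on the non-empty $S_\p$), but your explicit justifications for connectedness of $S$ and non-emptiness of $S_\p$ are the same ingredients spelled out.
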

\begin{proof} By rigidity the difference of two such homomorphisms is equal to the zero map on some open and closed sub-scheme of $S$, the only choices of which are $S$ and $\emptyset$. Therefore, as any two such isomorphisms must agree on $S_\p\subset S$ which is non-empty, they agree everywhere.
\end{proof}

\begin{theo}\label{theo:shimura-is-lambda} In the notation of \textup{(\ref{subsec:cm-over-a-field-shimura-integral})}, the following are equivalent:
\begin{enumerate}[label=\textup{(\roman*)}]
\item The \textup{CM} elliptic curve $\mc{E}\to S$ admits a canonical $\Lambda_{P, S}$-structure.
\item The \textup{CM} elliptic curve $\mc{E}\to S$ admits a $\Lambda_{P, S}$-structure.
\item The extension $L(E[\tors])/K$ is abelian, i.e.\ $E/\Spec(L)$ is a \textup{CM} elliptic curve of Shimura type.
\item The homomorphism $\rho_{E/L}: G(L^\sep/L)\to A_{O_K}^\times$ factors throught $G(L^\sep/L) \to G(K^\ab/L)$.
\end{enumerate}
\end{theo}
\begin{proof} (i) implies (ii): This is clear.

(ii) implies (iii): For each ideal $\a$ the sub-schemes $\mc{E}[\a]=\ker(\psi_{\mc{E}/S}^\a)\subset \mc{E}$ are finite and locally free $\Lambda_{P, S}$-schemes. After inverting $\a$ and forgetting about the Frobenius lifts for primes dividing $\a$, we may apply (\ref{prop:lambda-structures-etale-schemes-number-fields}) to see that $K(E[\a])=L(E[\a])$ is abelian over $K$.

(iii) is equivalent to (iv): This is immediate from the definition of $\rho_{E/L}$ (\ref{subsec:definition-h-for-cm-curves}).

(iv) implies (i): If $\g=O_K$ then $L/K$ is unramified and so $L\subset H$. However, $M_\CM\isomto \Spec(O_H)$ and so we must have $L=H$. However, if $L=H$ and $\g=O_K$ then $E/\Spec(H)$ admits good reduction everywhere which, as $E/\Spec(H)$ is of Shimura type by hypothesis, is impossible (\ref{prop:no-abelian-torsion-good-reduction-hilbert}). Therefore, $\g\neq O_K$ and it follows that replacing $\g$ with $\g^{n}$ for some $n\geq 0$ we may assume that $\g$ separates units (this changes neither $\Spec(O_L[\g^{-1}])$ nor $P=\Id_{O_K}^{(\g)}$).

Write $L'=L(E[\g])$ and $S'=\Spec(O_{L'}[\g^{-1}])$. The extension $L'/K$ is abelian (by hypothesis $L(E[\tors])$ is abelian) and unramified away from $\g$ (as $\mc{E}[\g]$ is \'etale over $S$) so that $S'$ admits a unique $\Lambda_{P}$-structure. By construction, the CM elliptic curve $\mc{E}\times_S S'$ admits a level-$\g$ structure and, choosing one, we obtain a map $S\to M_\CM^{(\g)}$ and an isomorphism $\mc{E}\times_S S'\isomto E^{(\g)}\times_{M_\CM^{(\g)}} S'$. As the morphism $S'\to M_\CM^{(\g)}$ is a $\Lambda_P$-morphism and $E^{(\g)}\to M_{\CM}^{(\g)}$ admits a canonical $\Lambda_{P, M_\CM^{(\g)}}$-structure it follows that \[\mc{E}\times_S S'\isomto E^{(\g)}\times_{M_\CM^{(\g)}} S'\] admits a canonical $\Lambda_{P, S'}$-structure and by (iii) of (\ref{prop:canonical-lambda-p-structures}) that $\mc{E}\to S$ admits a canonical $\Lambda_{P, S}$-structure.
\end{proof}

\begin{rema}\label{rema:shimura-curves-locally-lambda-isomorphic} Now let $\mc{E}, \mc{E}'$ be a pair of CM elliptic curves of Shimura type over $S$ (we keep $\g$ the same). If $\mc{E}$ and $\mc{E}'$ are locally isomorphic (note that there is always some ideal $\a$ such that $\mc{E}\otimes_{O_K}\a$ and $\mc{E}'$ are locally isomorphic) then they are actually $\Lambda_{P, S}$-locally isomorphic. Indeed, generically, they become isomorphic over the extension $L'/L$ corresponding to the character \[\rho:=\rho_{E/L}\rho_{E'/L}^{-1}: G(K^\ab/L)\to O_K^\times.\] It is clear that the extension $L'/K$ is abelian and it also is unramified away from $\g$ (the characters $\rho_{E/L}$ and $\rho_{E'/L}$ agree on the inertia sub-groups for all $\p\nmid \g$ --- this is the good reduction of $\mc{E}$ and $\mc{E}'$ away from $\g$). Therefore, $S'=\Spec(O_L[\g^{-1}])$ is finite and \'etale over $S$ and admits a unique $\Lambda_{P}$-structure. Moreover, there exists an isomorphism \[f: \mc{E}\times_{S} S\isomto \mc{E}'\times_S S'\] and it remains to observe that all such isomorphisms are $\Lambda_{P, S'}$-isomorphisms. Indeed, $f$ is a $\Lambda_{P, S'}$-isomorphism if and only if the $\Lambda_{P, S'}$-structure on $\mc{E}$ induced by transport of structure along $f$ is equal to the given $\Lambda_{P, S'}$-structure on $\mc{E}'$ and this follows from (\ref{prop:frob-lifts-unique}).
\end{rema}

\subsection{} We now wish to study the Lie algebras of the N\'eron models of CM elliptic curves of Shimura type. So we continue with the notation of (\ref{subsec:cm-over-a-field-shimura-integral}) but will also assume that the CM elliptic curve $\mc{E}\to S$ admits a canonical $\Lambda_{P, S}$-structure, i.e.\ that $E/\Spec(L)$ is a CM elliptic curve of Shimura type. The field $L$ must contain the Hilbert class field $H\subset L$. The following well known property enjoyed by the Hilbert class field $H$ will be crucial:

\begin{prop}[Hauptidealsatz]\label{prop:hauptidealsatz} Every rank one $O_K$-module becomes free after base change to $O_H$.
\end{prop}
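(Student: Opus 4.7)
The plan is to reduce the statement to the classical principal ideal theorem (Hauptidealsatz) for ideals, which the author mentions is strengthened in the Appendix. First I would observe that any rank one projective $O_K$-module $L$ is, by the standard structure theorem for finitely generated projective modules over a Dedekind domain, isomorphic to a fractional ideal $\a\subset K$. Hence it suffices to show that for every fractional ideal $\a$ of $O_K$, the $O_H$-module $\a\otimes_{O_K}O_H$ is free of rank one.

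Next I would identify $\a\otimes_{O_K}O_H$ with the extended ideal $\a\cdot O_H\subset H$. Since $O_H$ is flat (indeed locally free) over $O_K$, the natural map $\a\otimes_{O_K}O_H\to \a\cdot O_H$ is an isomorphism of $O_H$-modules. So the question is reduced to showing that every ideal of $O_K$ becomes principal after extension to $O_H$. This is precisely the statement of the classical Hauptidealsatz of Furtw\"angler-Artin, whose natural habitat is class field theory: by the Artin reciprocity isomorphism $\mathrm{CL}_{O_K}\isomto G(H/K)$, combined with the group-theoretic principal ideal theorem applied to $G=G(H^{(1)}/K)$ (where $H^{(1)}$ is the Hilbert class field of $H$), the ideal-theoretic transfer map $\mathrm{CL}_{O_K}\to \mathrm{CL}_{O_H}$ is trivial.

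The main (and only nontrivial) obstacle is thus the principal ideal theorem itself, which in the present setup is directly available: the paper explicitly announces in the description of the Appendix that a strengthening of the classical principal ideal theorem for arbitrary number fields is proved there. I would therefore simply invoke this Appendix result applied to $K/\mathbf{Q}$: every fractional ideal $\a$ of $O_K$ has $\a O_H=(a)$ principal in $O_H$, so $\a\otimes_{O_K}O_H\isomto a\cdot O_H\isomto O_H$, which gives the claim. In summary, the proof is a one-line deduction from the Hauptidealsatz, with all the real work packaged into the Appendix reference.
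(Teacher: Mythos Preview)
The paper gives no proof of this proposition: it is stated as a ``well known property'' of the Hilbert class field and simply invoked. Your reduction---rank one projective $O_K$-module $\Rightarrow$ fractional ideal via the structure theory over Dedekind domains, then $\a\otimes_{O_K}O_H\isomto \a\cdot O_H$ by flatness---is the standard and correct way to reduce the module-theoretic statement to the ideal-theoretic one.

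There is, however, a confusion in your final step. The Appendix result (Theorem~\ref{prop:tannaka-result-general}) is a strengthening of \emph{Tannaka's} principal ideal theorem, which concerns the existence of a crossed homomorphism $l:\Id_{O_K}^{(\g)}\to K(\f)^\times$ satisfying $l(\a\b)=l(\a)\sigma_\a(l(\b))$ for a ray class field $K(\f)$. This is not the same statement as the classical Furtw\"angler--Artin Hauptidealsatz, and the phrase ``applied to $K/\Q$'' does not parse: the theorem takes a number field $K$ and one of its ray class fields as input, not a base change to $\Q$. One \emph{can} specialise the Appendix theorem to the trivial modulus $\f=O_K$ (so that $K(\f)=H$) and read off that every $\a\in\Id_{O_K}$ satisfies $\a\cdot O_H=(l(\a))$, recovering the Hauptidealsatz---but this is not the paper's logical organisation, which treats the Hauptidealsatz as a classical black box separate from the Appendix. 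In short: your argument is fine, but cite Furtw\"angler--Artin directly rather than the Appendix, and drop the ``$K/\Q$'' phrasing.
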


\subsection{}\label{subsec:lambda-module-for-cm-curves} We shall abuse notation and write, for each prime ideal $\p\in P$, \[\nu_{\p}: \mc{E}\otimes_{O_K}\p^{-1}\isomto \sigma_\p^*(\mc{E})\] for the unique isomorphism lifting the isomorphism $\nu_{\p}(\mc{E}_\p/S_\p)$ of (\ref{coro:tensor-p-equals-frobenius}) or equivalently the unique isomorphism such that the composition \[\mc{E}\stackrel{i_\p}{\to} \mc{E}\otimes_{O_K}\p^{-1}\stackrel{\nu_\p}{\to}\sigma_\p^*(\mc{E})\] is equal to the relative Frobenius lift $\psi_{\mc{E}/S}^\p$.

For a pair of primes $\p$, $\l$ the commutativity of the (relative) Frobenius lifts on $\mc{E}\to S$ amounts to the equalities \begin{equation}\sigma_{\l}^*(\nu_\p)\circ (\nu_{\l}\otimes_{O_K}\p^{-1})=\sigma_{\p}^*(\nu_{\l})\circ (\nu_{\p}\otimes_{O_K}\l^{-1})\label{eqn:frob-commute-prime}\end{equation} so that for any ideal $\a\in P$, choosing a prime factorisation of $\a$, we may define isomorphisms \[\nu_\a: \mc{E}\otimes_{O_K}\a^{-1}\isomto \sigma_\a^*(\mc{E})\] by composing the various $\nu_\p$ for $\p|\a$, with the resulting isomorphism being independent of any choices involved by virtue of (\ref{eqn:frob-commute-prime}). These isomorphisms now satisfy \begin{equation}\sigma_{\a}^*(\nu_\b)\circ (\nu_{\a}\otimes_{O_K}\b^{-1})=\sigma_{\b}^*(\nu_{\a})\circ (\nu_{\b}\otimes_{O_K}\a^{-1})\label{eqn:frob-commute-all}\end{equation} for each $\a, \b\in P$.

We can actually say more about the $\nu_\a$ when $\sigma_\a=\id_{L}.$ In this case, we get an isomorphism $\nu_\a: \mc{E}\otimes_{O_K}\a^{-1}\to \sigma_\a^*(\mc{E})=\mc{E}$ which must be of the form $1\otimes l(\a)$ for some $l(\a)\in O_K$ which generates $\a$. If, moreover, there is an ideal $\f$ such that $E[\f]$, and hence $\mc{E}[\f]$, are constant then composition \[\mc{E}[\f]\to \mc{E}[\f]\otimes_{O_K}\a^{-1}\stackrel{\nu_\a[\f]}{\to} \mc{E}[\f]\] is equal to the unique relative Frobenius lift \[l(\a)=\psi_{\mc{E}[\f]/S}^\a=\id_{\mc{E}[\f]}:\mc{E}[\f]\to \mc{E}[\f]\] so that $l(\a)=1\bmod \f$.

By the N\'eron mapping property the isomorphisms $\nu_\a$ extend to isomorphisms on the full N\'eron model \[\nu_\a: \Ner_{O_L}(E)\otimes_{O_K}\a^{-1}\isomto \sigma_\a^*(\Ner_{O_L}(E))\] satisfying the same commutativity condition (note that $\Ner_{O_L}(E)$ is a smooth group scheme of relative dimension one, not necessarily proper). Denoting by \[T=\underline{\Lie}_{\Ner_{O_L}(E)/\Spec(O_L)}\] the Lie algebra of the N\'eron model, which is a projective rank one $O_L$-module, the isomorphisms $\nu_\a$ induce $O_L$-isomorphisms (which we denote by the same letter) \[\nu_\a: T\otimes_{O_K}\a^{-1}\isomto \sigma_\a^*(T)\] for each $\a\in P$, satisfying the same commutativity condition (\ref{eqn:frob-commute-all}) as the $\nu_\a$ on $\Ner_{O_L}(E)$.

\begin{coro}\label{coro:shimura-curves-minimal-model} In the notation of \textup{(\ref{subsec:lambda-module-for-cm-curves})}, if $L=K(\f)$ is a ray class field and $E[\f]$ is constant then the Lie algebra of the N\'eron model $\Ner_{O_{K(\f)}}(E)$ becomes free after inverting $\f$. In other words, $E/\Spec(K(\f))$ admits a global minimal model away from $\f$.
\end{coro}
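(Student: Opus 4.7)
The plan is to convert the cocycle $\{\nu_\a\}_{\a \in P}$ on $T := \underline{\Lie}_{\Ner_{O_L}(E)/\Spec(O_L)}$ from (\ref{subsec:lambda-module-for-cm-curves}) into $G(L/K)$-descent data on $T \otimes_{O_L} O_L[\f^{-1}]$ and then to conclude by the classical Hauptidealsatz. Write $L = K(\f)$, and assume first that $\f$ separates units; then by the proposition immediately preceding the corollary $E/L$ has good reduction everywhere, so we may take $\g = \f$ in the set-up of (\ref{subsec:cm-over-a-field-shimura-integral}). The general case will follow from this one by first passing to $K(\f^n)$ for $n$ large.

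Since $L = K(\f)$ contains the Hilbert class field $H$, every $\a \in P = \Id_{O_K}^{(\f)}$ becomes principal in $O_L$ by the classical Hauptidealsatz. A choice of generator $l(\a) \in O_L$ of $\a O_L$ induces an $O_L$-linear isomorphism $\lambda_\a : T \otimes_{O_K} \a^{-1} \isomto T$, and composing $\nu_\a$ with $\lambda_\a^{-1}$ yields $\tau_\a : T \isomto \sigma_\a^*(T)$. If the $l(\a)$ can be chosen so that the family $\{\tau_\a\}$ satisfies the Galois cocycle identity $\tau_{\a\b} = \sigma_\a^*(\tau_\b) \circ \tau_\a$ --- a condition which, given the compatibility (\ref{eqn:frob-commute-all}) on the $\nu_\a$ and the normalisation $l((a)) = a$ for $a \equiv 1 \bmod \f$ from (\ref{subsec:lambda-module-for-cm-curves}), reduces to a system of multiplicativity relations on the $l(\a)$'s --- then these data descend $T \otimes_{O_L} O_L[\f^{-1}]$ along the Galois cover $\Spec(O_L[\f^{-1}]) \to \Spec(O_K[\f^{-1}])$ to a line bundle $\overline{T}$ on $\Spec(O_K[\f^{-1}])$.

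Producing the coherent system $\{l(\a)\}$ is the heart of the argument, and is where I would invoke, as a black box, the strengthening of the old principal ideal theorem for arbitrary number fields proved in the Appendix: it supplies precisely the homomorphism-like refinement of Hauptidealsatz required here. Granting the descent, the pull-back map $\CL_{O_K[\f^{-1}]} \to \CL_{O_L[\f^{-1}]}$ factors through $\CL_{O_H[\f^{-1}]}$, and since the classical map $\CL_{O_K} \to \CL_{O_H}$ vanishes, so does the factored map; the class of $T \otimes_{O_L} O_L[\f^{-1}]$ in $\CL_{O_L[\f^{-1}]}$ is therefore trivial, which is to say the line bundle is free.

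The main obstacle is precisely the coherent choice of generators: without it, the relation $(\sigma_\a - 1)[T] = -[\a O_L]$ in $\CL_{O_L}$ only yields $G(L/K)$-invariance of $[T]$ modulo primes dividing $\f$, which is strictly weaker than triviality. The strengthened Hauptidealsatz supplies exactly the missing rigidity needed to upgrade this equality of ideal classes to genuine descent data on $T \otimes_{O_L} O_L[\f^{-1}]$, thereby producing the global minimal model of $E$ away from $\f$.
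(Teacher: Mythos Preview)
Your core strategy---use the $\nu_\a$ from (\ref{subsec:lambda-module-for-cm-curves}) together with the strengthened principal ideal theorem from the Appendix to produce a coherent system of generators $l(\a)$, convert the $\nu_\a$ into genuine Galois descent data $t_\a$ on $T\otimes_{O_L}O_L[\f^{-1}]$, descend to $O_K[\f^{-1}]$, and finish with the Hauptidealsatz---is exactly the paper's argument.

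The one genuine problem is your reduction step. You assume $\f$ separates units and then assert that ``the general case will follow from this one by first passing to $K(\f^n)$.'' This fails on two counts. First, the hypothesis is only that $E[\f]$ is constant, and base-changing $E$ to $K(\f^n)$ does not make $E[\f^n]$ constant, so you cannot apply the special case there. Second, and more seriously, even if you could show that $T\otimes_{O_{K(\f)}}O_{K(\f^n)}[\f^{-1}]$ is free, that would \emph{not} imply $T\otimes_{O_{K(\f)}}O_{K(\f)}[\f^{-1}]$ is free: freeness of a line bundle does not descend along finite flat extensions (every ideal class dies in some extension).

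Fortunately the reduction is entirely unnecessary. The paper works directly with the given $\g$ from (\ref{subsec:cm-over-a-field-shimura-integral}) (which is automatically divisible by $\f$ since $K(\f)/K$ ramifies only at $\f$), and the Appendix result (\ref{prop:tannaka-result-special}) applies for any such $\g$. The input homomorphism $l$ on $\Prin_{1\bmod\f}^{(\g)}$ is exactly what (\ref{subsec:lambda-module-for-cm-curves}) provides: for $\sigma_\a=\id$ the isomorphism $\nu_\a$ has the form $1\otimes l(\a)$ with $l(\a)\equiv 1\bmod\f$, and the compatibility (\ref{eqn:frob-commute-all}) makes this $l$ multiplicative. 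No assumption that $\f$ separates units is needed at any point. Simply delete the first paragraph's reduction and run your second and third paragraphs with $\g$ in place of $\f$; what remains is the paper's proof.
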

\begin{proof} We have the isomorphisms \[\nu_\a: T\otimes_{O_K}\a^{-1}\isomto \sigma_\a^*(T)\] for each ideal $\a\in P=\Id_{O_K}^{(\g)}$ and when $\sigma_\a=\id_{O_{K(\f)}}$ (this is the case if and only if $\a=(a)$ is principal with $a=1\bmod \f$) the isomorphism \[\nu_\a: T\otimes_{O_K}\a^{-1}\isomto \sigma_\a^*(T)=T\] takes the form $1\otimes l(\a)$ where $l(\a)\in O_{K}$ is a generator of $\a$ such that $l(\a)=1\bmod \f$. We now apply (\ref{prop:tannaka-result-special}) to extend $l$ to a map $l:\Id_{O_K}^{(\g)}\to O_{K(\f)}$ satisfying \begin{equation} l(\a)\cdot O_{K(\f)}=\a\cdot O_{K(\f)} \quad \text{ and } \quad l(\a\b)=l(\a)\sigma_\a(l(\b))\label{eqn:l-com-condition}\end{equation} for all $\a, \b\in \Id_{O_K}^{(\g)}$. Define $t_\a: T\to \sigma_\a^*(T)$ be the composition \[T\stackrel{1\otimes l(\a)^{-1}}{\longrightarrow} T\otimes_{O_K}\a^{-1}\stackrel{\nu_\a}{\longrightarrow} \sigma^*_a(T).\] Then $t_\a$ is an isomorphism by (\ref{eqn:l-com-condition}), and if $\a\in \Prin_{1\bmod \f}^{(\g)}$ then $t_\a=l(\a)\otimes l(\a)^{-1}=\id_T$ which, combined with the commutativity conditions (\ref{eqn:frob-commute-all}) on the $\nu_\a$ and (\ref{eqn:l-com-condition}) on the $l(\a)$, show that $t_\a$ depends only on the class $\sigma_\a\in G(K(\f)/K)$, that $t_{\id_{K(\f)}}=\id_T$ and that $t_{\sigma\tau}=t_\sigma\circ \sigma^*(t_\tau)$ for $\sigma, \tau\in G(K(\f)/K)$.

In other words, the isomorphisms $t_\sigma$ (or perhaps what is more standard, their inverses) define Galois descent data on $T$ relative to $O_{K}\to O_{K(\f)}$. As $O_K\to O_{K(\f)}$ becomes finite and \'etale after inverting $\f$, there exists an $O_K[\f^{-1}]$-module $T_0$ such that \[T_0\otimes_{O_K[\f^{-1}]} O_{K(\f)}[\f^{-1}]\isomto T\otimes_{O_{K(\f)}}O_{K(\f)}[\f^{-1}].\] However, as $K(\f)$ contains the Hilbert class field $H$, all rank one projective $O_{K}[\f^{-1}]$-modules become free after base change to $O_{K(\f)}[\f^{-1}]$ (all rank one projective $O_K$-modules do by the Hauptidealsatz (\ref{prop:hauptidealsatz}) and every rank one projective $O_K[\f^{-1}]$-module is a localisation of a rank one projective $O_K$-module). It follows that \[T_0\otimes_{O_K[\f^{-1}]}O_{K(\f)}[\f^{-1}]\isomto T\otimes_{O_{K(\f)}}O_{K(\f)}[\f^{-1}]=\underline{\Lie}_{\Ner_{O_L}(E)/\Spec(O_L)}\otimes_{O_L}O_L[\f^{-1}]\] is free.
\end{proof}

\begin{rema} We note that when $\f=O_K$ (\ref{coro:shimura-curves-minimal-model}) the condition on the $\f$-torsion becomes trivial so that \textit{any} CM elliptic $E/H$ of Shimura type admits a \textit{global} minimal model. This generalises the main result (see Corollary 4.4) of \cite{Gross82} where it is shown that a CM elliptic curve $E/H$ admits a global minimal model whenever the conductor of $K$ over $\Q$ is prime and the homomorphism $\rho_{E/H}$ satisfies a certain invariance condition. One can show that these assumptions imply $E/H$ is a CM elliptic curve of Shimura type so that (\ref{coro:shimura-curves-minimal-model}) is indeed a generalisation of \cite{Gross82}. To say a little, the invariance condition on $\rho_{E/H}$ is equivalent to $E$ being isogenous to $\sigma^*(E)$ for each $\sigma\in G(H/K)$ and the primality of the discriminant of $K$ over $\Q$ implies that the order of $G(H/K)$ is prime to the order of $O_K^\times$ and together these properties allow one to show that $E/H$ is a CM elliptic curve of Shimura type (for a result along these lines see Proposition 2 of \cite{Gilles1985}).
\end{rema}

\section{Weber functions} The purpose of this section is to define, for each CM elliptic curve $E$ over an arbitrary base $S$, a certain quotient $E\to X_{E/S}$ and then to study its resulting properties. Informally, $X_{E/S}$ will be the quotient of $E$ by its group of automorphisms $\underline{O_K^\times}_S$. However, $\underline{O_K^\times}_S$ does not act freely on $E$ and so the orbits of are not well behaved. This makes it difficult to construct a quotient (in the naive sense) with any useful properties. We get around this problem by using Cartier divisors to define intelligent orbits for the action of $\underline{O_K^\times}_S$ under which it behaves as though it were free. Taking the quotient by the resulting equivalence relation, we get a smooth, proper curve $X_{E/S}$ together with a $\underline{O_K^\times}_S$-invariant finite locally free map of degree $w=\# O_K^\times$ \[p_{E/S}: E\to X_{E/S}.\]

The construction we give is functorial in $E/S$ and so we may run it for the universal CM elliptic curve $\mc{E}\to \mc{M}_\CM$ to obtain a smooth, proper curve $X_{\mc{E}/\mc{M}_\CM}\to \mc{M}_\CM$. Almost by definition, this curve descends to a smooth proper curve $f : X\to M_\CM$ over the coarse sheaf.

The remainder of the section is devoted to the study of $X\to M_\CM$. We first show that it has the following properties:

\begin{enumerate}[label=(\roman*)]
\item $f: X\to M_\CM$ admits a natural $\Lambda_{M_\CM}$-structure and a $\Lambda_{M_\CM}$-point $0_{X}: M_\CM\to X$,
\item $f: X\to M_\CM$ has genus zero. Thus, if $\mc{I}_X\subset \mc{O}_X$ denotes the ideal sheaf defining the closed point $0_X: M_\CM\to X$ and $\mc{W}=f_*(\mc{I}_X^{-1})$ then $\mc{W}$ is locally free of rank two over $\mc{O}_{M_\CM}$, the map $f^*(\mc{W})\to \mc{I}^{-1}_X$ is an epimorphism and the resulting map \[X\isomto \mathbf{P}_{M_\CM}(\mc{W})\] is an isomorphism,
\item setting $X[\a]=\psi_{X/M_\CM}^{\a*}(0_X)\subset X$, the scheme $X[\a]$ is a finite locally free $\Lambda_{M_\CM}$-scheme of degree $N\a$ and $K(X[\a])=K(\a)$.
\end{enumerate}

Thus the curve $X\to M_\CM$ together with its $\Lambda_{M_\CM}$-structure and its $\Lambda_{M_\CM}$-point $0_X: M_\CM\to X$ allow one to construct the ray class fields of $K$ in an integral and coherent, choice free manner. Of course, this is just a more streamlined and abstract approach to the classical construction of the ray class fields of $K$ using Weber functions --- this approach being to choose a CM elliptic curve $E/H$ and to consider the image of $E[\a]\subset E$ under a `Weber map' \[E\to \mathbf{P}^1_H\] which is a certain $O_K^\times$-invariant map of degree $w$ (see Theorem 5.6 of \cite{Silverman94}).

The only defect of our approach is that the curve $X\to M_\CM$ is not particularly explicit. However, we end the section by showing (by the same method we used to show that CM elliptic curves of Shimura type admit global minimal models) that there exists an isomorphism $X\isomto \mathbf{P}^1_{M_\CM}$.

\subsection{} We begin by recalling some basic facts regarding Cartier divisors on curves (see \S\S 1.1-1.2 Chapter I of \cite{KatzMazur85}). Let $S$ be a scheme and let $X\to S$ be a smooth proper $S$-curve, i.e.\ $X\to S$ is smooth of relative dimension one and proper. An $S$-relative Cartier divisor on $X$, or just a Cartier divisor, is a closed sub-scheme $D\subset X$ which is finite locally free over $S$. Equivalently, a closed sub-scheme $D\subset X$ is a Cartier divisor if and only if $D\to S$ is flat and the ideal sheaf $\mc{I}_D\subset \mc{O}_E$ defining $D$ is a locally free rank one $\mc{O}_X$-module. Given two Cartier divisors $D, D'\subset X$, their sum $D+D'\subset X$ is defined to be the closed sub-scheme corresponding to the ideal sheaf $\mc{I}_{D+D'}:=\mc{I}_D\otimes_{\mc{O}_X}\mc{I}_{D'}\subset \mc{O}_X$, which is again a Cartier divisor.

The degree $\deg(D)$ of a Cartier divisor $D\subset S$ is defined to be the degree of the finite locally free $S$-scheme $D$. We have $\deg(D+D')=\deg(D)+\deg(D)$. The structure map $D\to S$ of a Cartier divisor on $X$ is an isomorphism if and only if $\deg(D)=1$ and the set of degree one Cartier divisors on $E$ is equal to the set of $S$-points $S\to X$. If $s\in X(S)$ is an $S$-point then we will denote the corresponding Cartier divisor by $s$.

If $f: X'\to X$ is a finite locally free map of smooth, proper $S$-curves and if $D\subset X$ is a Cartier divisor then $f^*(D)\subset X'$ is a Cartier divisor and $\deg(f^*(D))=\deg(f)\deg(D)$.

Given a pair of Cartier divisors $D, D'\subset X$ such that $\deg(D)\leq \deg(D')$ (resp. $\deg(D)=\deg(D')$) we can form the inclusion (resp. equality) $S$-sheaf of $D$ and $D'$: \[\In_{X/S}(D, D')\subset S \quad \text{ (resp. } \quad \Eq_{X/S}(D, D')\subset S)\] defined by the property that $T\to S$ factors through $\In_{X/S}(D, D')\to S$ (resp. $\Eq_{X/S}(D, D')\to S$) if and only we have an inclusion (resp. equality) of Cartier divisors \[D\times_S T\subset D'\times_S T\subset X\times_{S} T \quad \text{ (resp. } \quad D\times_S T= D'\times_S T\subset X\times_{S} T).\] By Key Lemma 1.3.4 and Corollary 1.3.5 of \cite{KatzMazur85}, the sub-sheaves $\mathrm{In}_{X/S}(D, D')$ and $\mathrm{Eq}_{X/S}(D, D')$ of $S$ are finitely presented closed sub-schemes of $S$. Finally, if $S'\to S$ is a morphism then we have natural isomorphisms \[\mathrm{In}_{X_{S'}/S'}(D_{S'}, D'_{S'})\isomto \mathrm{In}_{X/S}(D, D')_{S'} \quad \text{ and } \quad \mathrm{Eq}_{X_{S'}/S'}(D_{S'}, D'_{S'})\isomto \mathrm{Eq}_{X/S}(D, D')_{S'}.\]

\subsection{} Now let $E/S$ be a CM elliptic curve. We would like to take the quotient of $E$ by the action of its group of automorphisms $\underline{O_K}_S^\times$ but, as noted in the introduction, this action is not free and in particular the map \[\coprod_{\epsilon\in O_K^\times} (\epsilon, \id_E): \coprod_{\epsilon\in O_K^\times}E=\underline{O_K^\times}_S\times_S E\to E\times_S E\] is not injective so that the orbits of points under this action are not well behaved. Of course, one could just take the image of this map and obtain an equivalence relation but then one would have little control over the quotient.

We get around this as follows. If $s: S\to E$ is an $S$-point, we define its `orbit' $[O_K^\times](s)\subset E$ to be the Cartier divisor \[[O_K^\times](s)=\sum_{\epsilon\in O_K^\times} \epsilon^*(s)\] (note the sum is of Cartier divisors and has nothing to do with the group law on $E$). Then $[O_K^\times](s)\subset E$ contains the Cartier divisor $s$, is stable under the action of $\underline{O_K^\times}_S$ and is finite locally free of degree $w$ over $S$ (as the usual orbit would be if the action of $G$ were free). Equality of these `orbits' defines an equivalence relation on $E$, which we denote by \[\Eq_{E/S}^{O_K^\times}\subset E\times_S E,\] so that an $S$-morphism $T\to E\times_S E$ factors through $\Eq_{E/S}^{O_K^\times}$ if and only if, writing $(t_1, t_2): T\to (E\times_S E)\times_S T=E_T\times_T E_T$ for the resulting map, we have an equality of Cartier divisors \[[O_K^\times](t_1)=[O_K^\times](t_2)\subset E_T=E\times_S T.\]

For this equivalence relation to behave as though it really does come from a group action, it should be the case that, given two points $s_1, s_2: S\to E$ then having $s_1\in [O_K^\times](s_2)$, i.e.\ $s_1: S\to E$ factoring through $[O_K^\times](s_2)$, should imply the equality of `orbits' \[[O_K^\times](s_1)=[O_K^\times](s_2).\]

With this in mind, we define the sub-sheaf \[\In_{E/S}^{O_K^\times}\subset E\times_S E\] by the property that an $S$-morphism $T\to E\times_S E$ factors through $\In_{E/S}^{O_K^\times}\subset E\times_S E$ if and only if, writing $(t_1, t_2): T\to (E\times_S E)\times_S T=E_T\times_T E_T$ for the corresponding map, the Cartier divisor $t_1: E\to E_T$ factors through the Cartier divisor $[O_K^\times](t_2)\subset E_T$, i.e.\ $t_1\in [O_K^\times](t_2)$. There is a natural inclusion \[\Eq_{E/S}^{O_K^\times}\subset \In_{E/S}^{O_K^\times}\subset E\times_S E\] and our claim that the equivalence relation $\Eq_{E/S}^{O_K^\times}$ behaves as though it were coming from a free action of a group is that we have an equality \[\Eq_{E/S}^{O_K^\times} = \In_{E/S}^{O_K^\times}\subset E\times_S E.\]

Before we prove this, let us make two observations. First, the sub-sheaves $\Eq_{E/S}^{O_K^\times}\subset \In_{E/S}^{O_K^\times}\subset E\times_S E$ are in fact closed sub-schemes. View $E\times_S (E\times_S E)\to E\times_S E$ as a CM elliptic curve over $E\times_S E$ via projection onto the second two factors and consider the Cartier divisors \[u_{E/S, 1}: E\times_S E\to E\times_S (E\times_S E): (e_1, e_2)\mto (e_1, e_1, e_2)\] and \[u_{E/S, 2}: E\times_S E\to E\times_S (E\times_S E): (e_1, e_2)\mto (e_2, e_1, e_2).\] Then it is an easy exercise to check that we have equalities of sub-sheaves of $E\times_S E$ \[\Eq_{E/S}^{O_K^\times}=\Eq_{E\times_S (E\times_S E)/E\times_S E}([O_K^\times](u_{E/S, 1}), [O_K^\times](u_{E/S, 2}))\subset E\times_S E\] and \[\In_{E/S}^{O_K^\times}=\In_{E\times_S (E\times_S E)/E\times_S E}(u_{E/S,2}, [O_K^\times](u_{E/S,2}))\subset E\times_S E\] so that by the representability of equality and inclusion sub-sheaves of Cartier divisors we find that $\Eq_{E/S}^{O_K^\times}$ and $\In_{E/S}^{O_K^\times}$ are closed sub-schemes of $E\times_S E$.

Our second observation is that, viewing $E\times_S E\to E$ as a CM elliptic curve over $E$ via projection onto the second factor, we have an equality of sub-schemes \[\In_{E/S}^{O_K^\times}=[O_K^\times](\Delta_{E/S})\subset E\times_S E.\] Indeed, fixing an affine scheme $T$ over $S$ and a morphism $T\to E$, which we identify with a morphism $t_2: T\to E\times_S T=E_T$, the pull-back of the $E$-morphism $[O_K^\times](\Delta_{E/S})\subset E\times_S E$ along $T\to E$ is given by \[[O_K^\times](t_2)\subset E_T=E\times_S T\] (as the pull-back of $\Delta_{E/S}: E\to E\times_S E$ is given by $t_2: T\to E_T=E\times_S T$). Therefore, for a second $S$-morphism $T\to E$, which we identify with a morphism $t_1: T\to E_T=E\times_S T$, to have the property that the induced map $T\to E\times_S E$ factors through $[O_K^\times](\Delta_{E/S})$, is equivalent to the morphism $t_1: T\to E_T$ factoring through $[O_K^\times](t_2)\subset E_T$. All said and done, a morphism $T\to E\times_S E$ factors through $[O_K^\times](\Delta_{E/S})\subset E\times_S E$ if and only if, in the notation above, the morphism $t_1: T\to E_T$ factors through $[O_K^\times](t_2)\subset E_T$ which is to say that $T\to E\times_S E$ factors through $\In_{E/S}^{O_K^\times}\subset E\times_S E$. We are now ready to prove our claim.

\begin{prop}\label{prop:trace-equivalence-relation} Let $E/S$ be a \textup{CM} elliptic curve. Then we have equalities of closed sub-schemes of $E\times_S E$ \[\Eq_{E/S}^{O_K^\times} = \In_{E/S}^{O_K^\times} = [O_K^\times](\Delta_{E/S})\subset E\times_S E.\] In particular, $\Eq^{O_K^\times}_{E/S}$ is a finite locally free equivalence relation of degree $w$.
\end{prop}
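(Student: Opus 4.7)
The equality $\In_{E/S}^{O_K^\times} = [O_K^\times](\Delta_{E/S})$ has just been verified in the discussion preceding the statement, and the inclusion $\Eq_{E/S}^{O_K^\times} \subseteq \In_{E/S}^{O_K^\times}$ is immediate from the definitions: if $[O_K^\times](t_1) = [O_K^\times](t_2)$, then $t_1 \leq [O_K^\times](t_1) = [O_K^\times](t_2)$, so $t_1$ factors through $[O_K^\times](t_2)$. The content of the proposition is therefore the reverse inclusion $\In \subseteq \Eq$, which amounts to the following claim on $T$-points: whenever $t_1, t_2 : T \to E_T$ are $T$-sections with $t_1 \subseteq [O_K^\times](t_2)$ as closed subschemes, then $[O_K^\times](t_1) = [O_K^\times](t_2)$ as Cartier divisors on $E_T$.

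The claim is local on $T$ and on the finite support of $[O_K^\times](t_2)$ inside $E_T$. Off the $O_K^\times$-fixed-point locus of $E$ the action is \'etale-locally free, the divisor $[O_K^\times](t_2)$ decomposes \'etale-locally on $T$ as $\coprod_{\epsilon \in O_K^\times} \epsilon(t_2)$, a $T$-section $t_1$ factoring through it is forced to equal $\epsilon_0(t_2)$ for some $\epsilon_0 \in O_K^\times$ on each connected component, and then $[O_K^\times](t_1) = [O_K^\times](\epsilon_0 t_2) = [O_K^\times](t_2)$ by reindexing.

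At a fixed point the CM structure enters essentially. The strict formal $O_K$-module structure on $\widehat{E}$ provides, \'etale-locally on $S$, a formal parameter $X$ at $0_E$ in which $O_K^\times = \mu_w$ acts by $X \mapsto \epsilon X$ (the averaging $X \mapsto \tfrac{1}{w}\sum_{\epsilon} \epsilon^{-1}\cdot \epsilon^*(X)$ works when $w$ is invertible on $S$; at residue characteristics dividing $w$ one passes to the Lubin--Tate component $E[\p^\infty]$ at an auxiliary prime $\p$ with $w \mid N\p - 1$ and invokes \textup{(\ref{prop:formal-o-mod-frob})(i)}). In such a coordinate, writing $t_i$ locally as $X = x_i$, the divisor $[O_K^\times](t_i)$ is cut out by $\prod_{\epsilon \in \mu_w}(X - \epsilon x_i) = X^w - x_i^w$. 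The condition $t_1 \in [O_K^\times](t_2)$ (evaluating $X^w - x_2^w$ at $X = x_1$) and the condition $[O_K^\times](t_1) = [O_K^\times](t_2)$ (equality of the ideals $(X^w - x_1^w) = (X^w - x_2^w)$ in the local ring) both reduce to the single relation $x_1^w = x_2^w$, whence they are equivalent. This proves $\In \subseteq \Eq$ and hence $\Eq = \In = [O_K^\times](\Delta_{E/S})$; the assertion that $\Eq$ is a finite locally free equivalence relation of degree $w$ then follows at once from this identification, since $[O_K^\times](\Delta_{E/S})$ is a Cartier divisor of degree $w$ on $E \times_S E$ over either projection.

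The main obstacle is the production of the linearizing formal parameter $X$ at the origin, which is the only step in the argument relying specifically on the CM geometry rather than on a generic finite-group-action argument; the algebraic identity $\prod_{\epsilon \in \mu_w}(X - \epsilon x) = X^w - x^w$ then immediately reconciles the scheme-theoretic inclusion condition with equality of the orbit divisors.
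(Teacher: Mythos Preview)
Your strategy---a direct local computation via a linearizing formal coordinate---is different from the paper's, which instead reduces to the universal family over an \emph{integral} base and argues by reducedness. The paper first observes that $\Eq \hookrightarrow \In$ is a closed immersion which is bijective on geometric points (easy divisor factorisation over an algebraically closed field), hence a nilpotent thickening; then, after passing to $E^{(\f)}/M_\CM^{(\f)}$ with $\f$ separating units so that $E$ and $E\times_S E$ are regular and integral, it checks that $\In=[O_K^\times](\Delta)$ is reduced over the dense open $U=\bigcap_{\epsilon\neq 1}(E-E[1-\epsilon])$, where the $\epsilon^*(\Delta)$ are pairwise disjoint. No linearization is needed.

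Your argument has a genuine gap at the linearization step. You claim that at residue characteristics $p\mid w$ one can ``pass to the Lubin--Tate component $E[\p^\infty]$ at an auxiliary prime $\p$ with $w\mid N\p-1$'' and invoke Proposition~\ref{prop:formal-o-mod-frob}(i). But over a $\p$-adic base the prime $\p$ is forced (it must divide $p$), and in each bad case $w\nmid N\p-1$: for $K=\Q(i)$ and $p=2$ one has $\p=(1+i)$, $N\p=2$; for $K=\Q(\zeta_3)$ and $p=2$ one gets $N\p=4$, so $\mu_{N\p-1}=\mu_3$ misses $-1$; for $p=3$ one gets $N\p=3$, so $\mu_2$ misses $\zeta_3$. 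The averaging trick also fails here. Without exact linearization your identity $\prod_{\epsilon}(X-\epsilon x)=X^w-x^w$ is unavailable, and the equivalence of ``$x_1$ is a root of $\prod_\epsilon(X-[\epsilon](x_2))$'' with ``$\prod_\epsilon(X-[\epsilon](x_1))=\prod_\epsilon(X-[\epsilon](x_2))$'' is no longer transparent.

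There is also a secondary issue with your case division. The locus where the action fails to be free is $\bigcup_{\epsilon\neq 1}E[1-\epsilon]$, not just the set of points fixed by the whole group $O_K^\times$. For example, when $w=4$ the two points of $E[2]\smallsetminus E[1{-}i]$ have stabilizer $\{\pm 1\}\subsetneq\mu_4$; your ``free'' case does not apply there, but neither does your discussion at $0_E$. These intermediate-stabilizer points need their own treatment, and they face the same linearization obstruction. The paper's reducedness argument sidesteps both problems entirely.
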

\begin{proof} The only thing we need to show is that the inclusion \[\Eq_{E/S}^{O_K^\times} \subset \In_{E/S}^{O_K^\times}\] is an equality. The first thing we note is that it is bijective on geometric points. That is, if $S=\Spec(F)$ is the spectrum of an algebraically closed field $F$, and $(s_1, s_2) \in E(S)\times E(S)$ satisfy $s_1\in [O_K^\times](s_2)$ then $[O_K^\times](s_1) = [O_K^\times](s_2)$. But this is clear, as $E$ is then a Dedekind scheme and unique factorisation of Cartier divisors (i.e.\ of the corresponding ideals) shows that there exists an $\epsilon\in O_K^\times$ (not necessarily unique!) such that $s_1=\epsilon s_2$ which gives \[[O_K^\times](s_1)=[O_K^\times](\epsilon s_1)=[O_K^\times](s_2).\] It follows that the inclusion \[\Eq_{E/S}^{O_K^\times} \subset \In_{E/S}^{O_K^\times},\] which is a closed immersion, is a nilpotent thickening.

Now, our claim is local on $S$ and so we may assume that $S$ admits a level-$\f$ structure for some $\f$ which separates units. It follows that way assume there exists a morphism $S\to M_\CM^{(\f)}$ and an isomorphism \[E \isomto E^{(\f)}\times_{M_\CM^{(\f)}} S\] and again by the compatibility of inclusion and equality schemes of Cartier divisors with base change, we may assume that $E/S=E^{(\f)}/M_\CM^{(\f)}$, or what is important, that $S$ is integral.

We will now show that the nilpotent immersion \[\Eq_{E/S}^{O_K^\times}\to \In_{E/S}^{O_K^\times}\] is an isomorphism by showing that $\In_{E/S}^{O_K^\times}$ is reduced. Since $S$ is an integral scheme, it follows that $E$ is also an integral scheme. Therefore, the finite locally free $E$-scheme $\In_{E/S}^{O_K^\times}\subset E\times_S E$ is reduced if and only if for some non-empty open sub-scheme $U\subset E$ the pull-back $\In_{E/S}^{O_K^\times}\times_E U$ is reduced.

So let \[U=\bigcap_{1\neq \epsilon\in O_K^\times} (E-E[1-\epsilon]).\] Then over $U$, the Cartier divisors \[\epsilon^*(\Delta_{E/S}\times_E U): U\to E\times_S U\] for $\epsilon\in O_K^\times$ are all disjoint so that \[\In_{E/S}^{O_K^\times}\times_E U=[O_K^\times](\Delta_{E/S})\times_E U=\coprod_{\epsilon\in O_K^\times} \epsilon^*(\Delta_{E/S}\times_E U)\isomto \coprod_{O_K^\times} E\times_S U.\] It follows that $\In_{E/S}^{O_K^\times}\times_E U$ and therefore $\In_{E/S}^{O_K^\times}$ are reduced and the nilpotent immersion \[\Eq_{E/S}^{O_K^\times} \subset \In_{E/S}^{O_K^\times}\] is an isomorphism.
\end{proof}

\subsection{} We are now ready to construct our quotients. So let $E/S$ be a CM elliptic curve. Then the Cartier divisor of degree $w$ \[\Eq_{E/S}^{O_K^\times}=[O_K^\times](\Delta_{E/S})\subset E\times_S E\] defines the equivalence relation on $E/S$ where $s_1, s_2\in E(S)$ are equivalent if and only if \[[O_K^\times](s_1)=[O_K^\times](s_2).\] We write \[p_{E/S}: E\to X_{E/S}\] for the resulting quotient (in the category of fpqc sheaves). As our equivalence relation is finite locally free (and $E$ is projective over $S$) it follows from Corollaire 7.1 Expos\'e VII of \cite{SGA1} that $X_{E/S}$ is representable by a scheme over $S$. We have thus constructed a finite locally free $\underline{O_K^\times}_S$-invariant map of degree $w$ \[p_{E/S}: E\to X_{E/S}.\]

\begin{prop} The $S$-scheme $X_{E/S}\to S$ is smooth of relative dimension one, proper and geometrically connected. 
\end{prop}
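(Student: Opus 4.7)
The plan is to verify each of the three properties in turn. Flatness, local finite presentation, separatedness, properness, and geometric connectedness of $X_{E/S}\to S$ all descend, in one form or another, from the corresponding properties of $E\to S$ along the faithfully flat, finite, surjective map $p_{E/S}:E\to X_{E/S}$. For instance, separatedness follows by observing that the pull-back of the diagonal $\Delta_{X_{E/S}/S}$ along the faithfully flat map $E\times_S E\to X_{E/S}\times_S X_{E/S}$ is precisely the closed immersion $\Eq_{E/S}^{O_K^\times}\to E\times_S E$, and closed immersions descend along fpqc covers; properness follows because $p_{E/S}$ is surjective and closed while $E\to S$ is universally closed, so every closed subset of $X_{E/S}$ has closed image in $S$, universally; and geometric connectedness of the fibres of $X_{E/S}\to S$ follows because they are images of the connected geometric fibres of $E\to S$ under a surjection.

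The substantive point is smoothness of $X_{E/S}\to S$ of relative dimension one. The dimension statement is immediate from $p_{E/S}$ being quasi-finite and surjective and $E\to S$ having relative dimension one. For smoothness, the fibral criterion for smoothness, combined with the flatness and local finite presentation already obtained, reduces the claim to smoothness of the geometric fibres of $X_{E/S}\to S$. The formation of the equivalence relation $\Eq_{E/S}^{O_K^\times}=[O_K^\times](\Delta_{E/S})$ and of its quotient both commute with base change in $S$, so for any geometric point $\Spec(k)\to S$ we have $X_{E/S}\times_S\Spec(k)\isomto X_{E_k/\Spec(k)}$, which is the geometric quotient of the smooth projective curve $E_k$ by the action of the finite group $O_K^\times\subset\Aut_k(E_k)$.

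The main obstacle is therefore the essentially classical statement that this fibre quotient is smooth. This will follow from two standard facts: first, the invariants of a normal domain under a finite group action form a normal domain, so $X_{E_k/\Spec(k)}$ is a normal projective curve over $k$; and second, a normal noetherian one-dimensional local ring is a DVR, which, since $k$ is algebraically closed (so that residue fields at closed points are automatically $k$), forces $X_{E_k/\Spec(k)}$ to be smooth over $k$. Crucially, no tameness hypothesis on $|O_K^\times|$ versus $\mathrm{char}(k)$ is required, so the argument applies uniformly in all residue characteristics of $S$ --- in particular when $w=\#O_K^\times$ fails to be invertible on the fibre.
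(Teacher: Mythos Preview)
Your proof is correct and follows the same overall outline as the paper: properness, flatness and geometric connectedness descend along the finite locally free surjection $p_{E/S}$, and smoothness is checked on geometric fibres via the fibral criterion.

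The one genuine difference is in how you establish smoothness of the geometric fibres. The paper argues directly: over a field, $E$ is regular of dimension one and $p_{E/S}:E\to X_{E/S}$ is finite locally free (hence faithfully flat), so $X_{E/S}$ is regular by descent of regularity along faithfully flat maps, and therefore smooth. You instead identify $X_{E_k/\Spec(k)}$ with the geometric quotient $E_k/O_K^\times$ and invoke normality of invariant rings. This identification is true but not immediate: the equivalence relation is the Cartier \emph{sum} $\sum_{\epsilon}\Gamma_\epsilon$, not the set-theoretic union of graphs, and these differ scheme-theoretically at the fixed points of the action. One can justify the identification by showing that the natural map $\coprod_\epsilon \Gamma_\epsilon \to \sum_\epsilon \Gamma_\epsilon$ is schematically dominant (both are rank-$w$ locally free over $E_k$ via the second projection and the map is an isomorphism over the free locus, hence injective on structure sheaves since $E_k$ is integral), so that coequalising the Cartier-sum relation is equivalent to $O_K^\times$-invariance; but you do not give this argument, and the paper's descent-of-regularity route bypasses the issue entirely. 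What your approach buys is an explicit description of the fibre quotient; what the paper's approach buys is a one-line proof that works uniformly without ever naming the group quotient.
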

\begin{proof} As $E\to S$ is proper, flat, and geometrically connected and $p_{E/S}: X\to X_{E/S}$ is finite locally free of degree $w$ (in particular, proper, flat and surjective), it follows that $X_{E/S}\to S$ is proper, flat, and geometrically connected. As $X_{E/S}\to S$ is flat, it is smooth if and only if its fibres are smooth, but when $S$ is the spectrum of a field $E$ is a regular scheme of dimension one and $E\to X_{E/S}$ is finite locally free so that $X_{E/S}$ is also regular of dimension one and therefore smooth over $S$.
\end{proof}

\begin{rema} The method used above to construct the quotients $X_{E/S}$ applies more generally. Indeed, if $X\to S$ is a smooth (not necessarily proper) curve, $S$ is an integral scheme and $G$ is a finite group acting generically freely on $X$ by $S$-automorphisms then there exists a smooth curve $X/[G]$ and a finite locally free $G$-invariant map $p: X\to X/[G]$ of degree $\#G$, i.e.\ what might be called a `quotient' of $X$ by $G$. It would be interesting to know whether this method could be extended to construct `quotients' of curves over more general bases $S$, or under more general (non-constant) group actions.
\end{rema}

\subsection{} Let us now consider the functorial properties of the association $E/S\mto X_{E/S}$. First, if $f: E\to E'$ is a morphism of CM elliptic curves over $S$, then $(f\times_S f)|_{\Eq^{O_K^\times}_{E/S}}\subset \Eq^{O_K^\times}_{E'/S}\subset E\times_S E$ and so there is an induced morphism $X_f: X_{E/S}\to X_{E'/S}$ and a commutative diagram \[\xymatrix{E\ar[d]_{p_{E/S}}\ar[r]^{f}&E'\ar[d]^{p_{E'/S}}\\
X_{E/S}\ar[r]^{X_f} &  X_{E'/S}.}\] The invariance of $p_{E/S}$ and $p_{E'/S}$ under $\underline{O_K^\times}_S$ shows that $f\mto X_f$ is also invariant under $\underline{O_K^\times}_S$. In symbols, the map \[\underline{\Hom}_S^{O_K}(E, E')\to \underline{\Hom}_S(X_{E/S}, X_{E'/S'}): f\mto X_{f}\] factors through the quotient sheaf \[\underline{\Hom}_{S}^{O_K}(E, E')/\underline{O_K^\times}_S \to \underline{\Hom}_S(X_{E/S}, X_{E'/S'}).\] In particular, if $E$ and $E'$ are locally isomorphic then $\underline{\Isom}_{S}^{O_K}(E, E')$ is an $\underline{O_K^\times}_S$-torsor so that $\underline{\Isom}_S^{O_K}(E, E')/\underline{O_K^\times}_S\isomto S$ and we obtain a canonical map $S\to \underline{\Isom}_S(X_{E/S}, X_{E'/S})$ or what is the same an isomorphism \[X_{E/S}\isomto X_{E'/S}.\]

\subsection{} Now let $S$ be an $M_\CM$-sheaf, i.e.\ $S\to M_\CM$. By the definition of the coarse sheaf $M_\CM$, this implies that there exists a cover $(S_i\to S)_{i\in I}$ of $S$ and CM elliptic curves $(E_i/S_i)_{i\in I}$, such that for $i, j\in I$, writing $S_{ij}=S_i\times_S S_j$, the CM elliptic curves $E_i\times_{S_i} S_{ij}$ and $E_j\times_{S_j}S_{ij}$ are locally isomorphic. Therefore, writing $X_i=X_{E_i/S_i}$ for $i\in I$ we have for all $i, j\in I$ canonical isomorphisms \begin{equation} X_i\times_{S_i} S_{ij}\isomto X_j\times_{S_j} S_{ij}.\label{eqn:descent}\end{equation} The independence of these isomorphisms from the choice of isomorphism $E_i\times_{S_i} S_{ij}\isomto E_j\times_{S_j} S_{ij}$ (and similarly on the triple products) show that the isomorphisms (\ref{eqn:descent}) equip the family of curves $(X_i/S_i)_{i\in I}$ with descent data relative to the cover $(S_i\to S)_{i\in I}$, which furnishes us with (a priori) a sheaf $X_S\to S$. Similar observations show that the sheaf $X_S\to S$ is independent (upto canonical isomorphism) of the cover $(S_i\to S)_{i\in I}$ and the CM elliptic curves $(E_i/S_i)_{i\in I}$ and that if $S'\to S$ is a morphism of $M_\CM$-sheaves then we have a canonical isomorphism \[X_{S'}\isomto X_{S}\times_S S'.\] In particular, applying this to $\id_{M_\CM}: M_\CM\to M_\CM$ we obtain a sheaf $X\to M_\CM$ and isomorphisms \[X_{S}\isomto X\times_{M_\CM} S.\]

\subsection{}\label{subsec:props-of-x} Before we consider the geometric properties of the sheaf $X\to M_\CM$, let us first make the following observations.

\begin{enumerate}[label=(\roman*)]
\item There is a unique morphism $0_X: M_\CM\to X$ with the property that if $E/S$ is a CM elliptic curve and $c_{E/S}: S\to M_\CM$ is the coarse map then $c_{E/S}^*(0_X)=0_E: S\to X_{E/S}=c_{E/S}^*(X)$.
\item For each $\a\in \Id_{O_K}$ there is a unique morphism \[\psi_{X/M_\CM}^\a: X\to \sigma_\a^*(X)\] such that for all CM elliptic curves $E/S$, using the identifications \[c_{E\otimes_{O_K}\a^{-1}/S}=\sigma_\a\circ c_{E/S} \quad \text{ and then }\quad c_{E\otimes_{O_K}\a^{-1}/S}^*(X)=c_{E/S}^*(\sigma_\a^*(X))\] the diagram \begin{equation}\label{eqn:def-frob-lattes}\xymatrix{E\ar[rr]^-{i_\a}\ar[d] && E\otimes_{O_K}\a^{-1}\ar[d]\\
c_{E/S}^*(X)\ar[rr]^{c_{E/S}^*(\psi_{X/M_\CM}^\a)} && c_{E/S}^*(\sigma_\a^*(X))}\end{equation} commutes.
\item The morphisms $\psi_{X/M_\CM}^{\p}: X\to \sigma_\p^*(X)$ for $\p$ prime lift the $N\p$-power relative Frobeniuses and for $\a,\b$ any two ideals we have the commutativity condition \[\psi_{X/M_\CM}^{\a\b}=\sigma_\a^*(\psi_{X/M_\CM}^\b)\circ \psi_{X/M_\CM}^\a\] which equips $X_{E/S}$ with the structure of a $\Psi_{M_\CM}$-sheaf.
\item The morphism $0_X: M_\CM\to X$ is a $\Psi_{M_\CM}$-morphism.
\end{enumerate}

\begin{prop} The sheaf $f: X\to M_\CM$ is a smooth, projective curve of genus zero. In particular, if $\mc{I}_{X}\subset \mc{O}_X$ denotes the ideal sheaf defining the closed point $0_X: M_\CM\to X$ then $\mc{W}:=f_{X*}(\mc{I}^{-1}_{X})$ is a locally free rank two $\mc{O}_{M_\CM}$-module, the morphism $f_X^*(\mc{W})\to \mc{I}_{X}^{-1}$ is an epimorphism, and the induced map $X\to \mathbf{P}_{M_\CM}(\mc{W})$ is an isomorphism.
\end{prop}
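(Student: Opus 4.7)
The plan is to exploit the fact that a smooth proper curve with geometrically connected genus-zero fibres and a section is canonically the projectivisation of the direct image of the line bundle associated to that section. Smoothness, properness, relative dimension one, and geometric connectedness of $f \colon X \to M_\CM$ have already been verified for each local chart $X_{E/S} \to S$ coming from a CM elliptic curve $E/S$, and descend to $f$; the section $0_X$ was produced in~(\ref{subsec:props-of-x}). The real content of the proposition is therefore the vanishing of the fibrewise genus, after which everything else is a routine application of cohomology and base change.

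To see that the geometric fibres of $f$ have genus zero, I would base change along a local chart $E/S$ and reduce the claim to the single statement that $X_{E_0/\overline{k}}$ has genus zero for every CM elliptic curve $E_0$ over an algebraically closed field $\overline{k}$. The finite locally free $O_K^\times$-quotient $p = p_{E_0/\overline{k}} \colon E_0 \to X_{E_0/\overline{k}}$ has degree $w = \#O_K^\times$, and since every element of $O_K^\times$ fixes the origin $0_{E_0}$, we have $p^{-1}(0_{X_{E_0/\overline{k}}}) = w \cdot [0_{E_0}]$ as a Cartier divisor. The Riemann--Hurwitz inequality
\[
0 \;=\; 2g_{E_0} - 2 \;\geq\; w(2g_{X_{E_0/\overline{k}}} - 2) + (w - 1),
\]
valid in all residue characteristics since the different divisor dominates the naive ramification divisor, forces $g_{X_{E_0/\overline{k}}} \leq (w+1)/(2w) < 1$ and hence equal to zero. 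The main technical delicacy is the possible wild ramification of $p$ at $0_{E_0}$ in residue characteristics dividing $w$ (namely above $2$ or $3$), but this can only strengthen the inequality and so presents no obstacle; no case analysis is needed.

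Granting genus zero, the line bundle $\mc{I}_X^{-1} = \mc{O}_X(0_X)$ has relative degree one on a family of genus-zero curves, so cohomology and base change gives $R^1 f_*(\mc{I}_X^{-1}) = 0$, that $\mc{W} = f_*(\mc{I}_X^{-1})$ is locally free of rank $h^0(\mathbf{P}^1, \mc{O}(1)) = 2$, and that its formation commutes with arbitrary base change. Since $\mc{O}(1)$ is globally generated on $\mathbf{P}^1$, Nakayama shows that $f^*(\mc{W}) \to \mc{I}_X^{-1}$ is an epimorphism. The resulting $M_\CM$-morphism $X \to \mathbf{P}_{M_\CM}(\mc{W})$ restricts on each geometric fibre to the standard linear embedding $\mathbf{P}^1 \to \mathbf{P}^1$ of degree one, which is an isomorphism, so by the fibrewise criterion for isomorphisms between flat proper families it is an isomorphism globally; in particular $f$ is projective.
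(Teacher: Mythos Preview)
Your argument is correct and, for the genus computation, genuinely more economical than the paper's. The paper works over a single characteristic-zero geometric point (using constancy of the genus), applies the exact Riemann--Hurwitz formula, and then carries out a case analysis over $w\in\{2,4,6\}$: it enumerates all points of $E(K^{\sep})$ with nontrivial stabiliser (the $2$-torsion, the $(1-\zeta_3)$-torsion, etc.) and checks that the equality $0=w(2-2g_X)-\sum_x(e_x-1)$ forces $g_X=0$ in each case. You instead observe that the single totally ramified point $0_E$ (where $p^{-1}(0_X)=w\cdot[0_E]$) already contributes at least $w-1$ to the degree of the ramification divisor, and the resulting inequality $0\geq w(2g_X-2)+(w-1)$ gives $g_X<1$ immediately, with no case split. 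This is a nice shortcut; the trade-off is that the paper's computation exhibits the full branch locus of $E\to X$, which has some independent interest.

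Two small remarks. First, your choice to work over an arbitrary algebraically closed field $\overline{k}$ rather than a characteristic-zero one is not needed (genus is constant on fibres, as the paper uses), and it silently requires the quotient map $E_0\to X_{E_0/\overline{k}}$ to be separable in order for Riemann--Hurwitz to apply at all; this is true because $O_K^\times$ acts by \emph{distinct} automorphisms and hence generically freely, but it would be cleaner either to say this or simply to restrict to characteristic zero as the paper does. Second, the final paragraph of your argument (cohomology and base change for $\mc{O}_X(0_X)$, Nakayama, fibrewise isomorphism criterion) is exactly the ``standard arguments from the theory of curves'' to which the paper appeals without detail, so there is no divergence there.
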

\begin{proof} The fact that $X\to M_\CM$ is a curve is immediate from the fact that $M_\CM$ admits an open cover $(M_i\to M_\CM)_{i\in I}$ with CM elliptic curves $E_i/M_i$. Indeed, the defining property of $X\to M_\CM$ then gives \[X\times_{M_\CM} M_i\isomto X_{E_i/M_i}\] and we know that $X_{E_i/M_i}$ is a smooth of relative dimension one, proper and geometrically connected.

It remains to compute the genus. The genus is constant along the fibres of a smooth, proper geometrically connected curve, and so to compute it we may do so after base change along any morphism $\Spec(K^\sep)\to M$. Fixing a CM elliptic curve $E/\Spec(K^\sep)$ and considering the degree $w$ finite locally free morphism $p=p_{E/\Spec(K^\sep)}: E\to X\times_{M}\Spec(K^\sep)$ the Riemann-Hurwitz formula gives: \[2-2g_E=w (2-2 g_X)-\sum_{x\in E(K^\sep)} (e_x-1)\] where $g_E$ is the genus of $E$, $g_X$ is the genus of $X\times_{M_\CM}\Spec(K^\sep)$ and where $e_x$ is the ramification degree of $p$ at $x$. We have $g_E=1$, and for each $x\in E(K^\sep)$ the ramification degree $e_x$ is equal to $\#\mathrm{Stab}(x)-1$ where $\mathrm{Stab}(x)\subset O_K^\times$ is the stabiliser of $x$. It is now just a matter computation, depending on whether $O_K^\times=\mu_2$, $ \mu_4$ or $\mu_6$, to verify that the equality \[0=(2-2g_E)=w (2-2 g_X)-\sum_{x\in E(K^\sep)} (e_x-1)\] implies $g_X=0$. We do it for $O_K^\times=\mu_6$. The only point with stabiliser $\mu_6$ is $0\in E(K^\sep)$, the points with stabiliser $\mu_2\subset \mu_6$ are the three points of $E[2]-0$ and the points with stabiliser $\mu_3\subset \mu_6$ are the two points of $E[1-\zeta_3]-0$ (for $\zeta_3\in \mu_3$ a generator). Therefore, we find \[0=6\cdot (2-2g_X)-1\cdot (6-1) - 3\cdot (2-1) - 2\cdot (3-1)=-12 g_X\] and hence $g_X=0.$

The other claims now follow using standard arguments from the theory of curves.
\end{proof}

\begin{coro} For each $\a\in \Id_{O_K}$ the morphism $\psi_{X/M_\CM}^\a: X\to \sigma_\a^*(X)$ is finite locally free of degree $N\a$ and together they equip $X$ with the structure of a $\Lambda_{M_\CM}$-scheme and the morphism $0_X: M_\CM\to X$ is a $\Lambda_{M_\CM}$-morphism.
\end{coro}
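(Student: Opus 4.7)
The plan is to verify the degree and flatness of $\psi^\a_{X/M_\CM}$ by working fpqc-locally on $M_\CM$, then promote the already-established $\Psi_{M_\CM}$-structure to a $\Lambda_{M_\CM}$-structure using the flatness criterion of (\ref{prop:lambda-structures-frobenius-lifts-sheaves}).

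First I would show that $\psi^\a_{X/M_\CM} : X \to \sigma_\a^*(X)$ is finite locally free of degree $N\a$. Since the coarse morphism $\mc{M}_\CM \to M_\CM$ admits sections after passing to a suitable cover by CM elliptic curves $E/S$ with $c_{E/S} : S \to M_\CM$, and since all the claimed properties are fpqc-local on $M_\CM$, it suffices to work with the pullback along such $c_{E/S}$. Using $c_{E/S}^*(X) = X_{E/S}$ and $c_{E/S}^*(\sigma_\a^*(X)) = X_{E\otimes_{O_K}\a^{-1}/S}$, the defining diagram (\ref{eqn:def-frob-lattes}) exhibits the pullback of $\psi^\a_{X/M_\CM}$ as the map $X_{i_\a} : X_{E/S} \to X_{E\otimes_{O_K}\a^{-1}/S}$ induced by $i_\a : E \to E\otimes_{O_K}\a^{-1}$ via the quotient maps $p_{E/S}$ and $p_{E\otimes_{O_K}\a^{-1}/S}$. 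The composition $p_{E\otimes_{O_K}\a^{-1}/S} \circ i_\a = X_{i_\a} \circ p_{E/S}$ is finite locally free of degree $w \cdot N\a$ (composition of two such maps), and since $p_{E/S}$ is faithfully flat of degree $w$, we conclude that $X_{i_\a}$ is surjective, proper, and quasi-finite, hence finite. Flatness of $X_{i_\a}$ then follows from miracle flatness since $X_{E/S}$ and $X_{E\otimes_{O_K}\a^{-1}/S}$ are both smooth of relative dimension one over $S$, and the degree is $N\a$ by multiplicativity.

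Next, for the $\Lambda_{M_\CM}$-structure: the commutativity relations $\psi^{\a\b}_{X/M_\CM} = \sigma_\a^*(\psi^\b_{X/M_\CM}) \circ \psi^\a_{X/M_\CM}$ and the fact that $\psi^\p_{X/M_\CM}$ lifts the $N\p$-power Frobenius on $X \times_{\Spec(O_K)}\Spec(\F_\p)$ are precisely the content of (iii) of (\ref{subsec:props-of-x}), and these together with the degree computation above define a $\Psi_{M_\CM}$-structure on $X$. To upgrade this to a $\Lambda_{M_\CM}$-structure, I apply (ii) of (\ref{prop:lambda-structures-frobenius-lifts-sheaves}): the scheme $X$ is flat over $\Spec(O_K)$ (being smooth over $M_\CM \isomto \Spec(O_H)$, which is itself finite étale and flat over $\Spec(O_K)$), and the $\Psi_{M_\CM}$-structure lifts the $N\p$-power Frobenius at each prime $\p$, so the criterion applies and yields a unique $\Lambda_{M_\CM}$-structure on $X$ inducing the given $\Psi_{M_\CM}$-structure.

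Finally, the zero section $0_X : M_\CM \to X$ is a $\Psi_{M_\CM}$-morphism by (iv) of (\ref{subsec:props-of-x}). Since both $M_\CM$ and $X$ are flat over $\Spec(O_K)$ and carry the unique $\Lambda_{M_\CM}$-structures extending their $\Psi_{M_\CM}$-structures (by the uniqueness clause of (ii) of (\ref{prop:lambda-structures-frobenius-lifts-sheaves})), any $\Psi_{M_\CM}$-morphism between them is automatically a $\Lambda_{M_\CM}$-morphism: the pullback $\Lambda$-structure on $M_\CM$ along $0_X$ refines the pullback $\Psi$-structure, which agrees with the given $\Psi$-structure on $M_\CM$, and thus agrees with the given $\Lambda$-structure by uniqueness. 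The main obstacle among these steps is ensuring finite locally freeness of $X_{i_\a}$; the argument I have sketched reduces this to miracle flatness once surjectivity and properness are in hand, but one should be attentive to the fact that it genuinely uses smoothness of both source and target as curves over $S$, which is what makes the quotient construction of the previous section worth the trouble.
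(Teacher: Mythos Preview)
Your proposal is correct and follows essentially the same approach as the paper. The paper argues from the same commutative square (\ref{eqn:def-frob-lattes}) that the bottom map is finite locally free of degree $N\a$ since both columns have degree $w$ and the top has degree $N\a$; your use of miracle flatness simply makes this step explicit, and your treatment of the $\Lambda_{M_\CM}$-structure via (\ref{prop:lambda-structures-frobenius-lifts-sheaves}) and flatness of $X$ over $\Spec(O_K)$ matches the paper exactly.
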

\begin{proof} As $X$ is flat over $\Spec(O_K)$ it follows by (\ref{prop:lambda-structures-frobenius-lifts-sheaves}) and (iv) of (\ref{subsec:props-of-x}) that $X$ admits a $\Lambda_{M_\CM}$-structure and that the morphism $0_X: M_\CM\to X$ is a $\Lambda_{M_\CM}$-morphism. The only thing we need to verify is that $\psi_{X/M_\CM}^{\a}$ is finite locally free of degree $N\a$ but this follows from the diagram (\ref{eqn:def-frob-lattes}) defining $\psi_{X/M_\CM}^\a$ as both columns are finite locally free of degree $w$ and the top map is finite locally free of degree $N\a$ and hence the bottom map must also be finite locally free of degree $N\a$.
\end{proof}

\subsection{} Write $X[O_K]\subset X$ for the (image of) the morphism $0_X: M_\CM\to X$, and for each $\a\in \Id_{O_K}$ define $X[\a]:=\psi_{X}^{\a*}(X[O_K])\subset X$. Then $X[\a]\subset X$ is a finite locally free $\Lambda_{M_\CM}$-scheme of degree $N\a$.

\begin{prop} For each ideal $\a\in \Id_{O_K}$ the extension $K(X[\a])$ of $K$ generated by the coordinates of $X[\a]$ is equal to ray class field $K(\a)$.
\end{prop}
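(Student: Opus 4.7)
The proof will establish $K(X[\a])=K(\a)$ via two inclusions, exploiting the $\Lambda$-structure on $X\to M_\CM$ together with the moduli-theoretic interpretation of $X$ as a quotient of CM elliptic curves by $\underline{O_K^\times}$.

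\textbf{Paragraph 1 (Setup).} Since $M_\CM\cong\Spec(O_H)$ and $H/K$ is everywhere unramified, $X[\a]\to\Spec(O_K)$ is finite, and the extension $K(X[\a])$ depends only on the reduced subscheme $X[\a]^{\red}\subset X[\a]$ (residue fields at closed points are what matter). Moreover $X[\a]^{\red}$ is finite étale over $\Spec(O_K[\a^{-1}])$. The canonical $\Lambda_{M_\CM}$-structure on $X[\a]$ preserves $X[\a]^{\red}$, endowing it with a $\Lambda_{P,M_\CM}$-structure for $P=\Id_{O_K}^{(\a)}$. By (\ref{prop:lambda-structures-etale-schemes-number-fields}), the generic fibre decomposes as $X[\a]^{\red}\times_{\Spec(O_K)}\Spec(K)=\coprod_i\Spec(L_i)$ with each $L_i/K$ a finite abelian extension unramified outside $\a$, and $\psi^{\p}_{X/M_\CM}|_{L_i}=\sigma_{L_i/K,\p}$ for every prime $\p\nmid\a$. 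The claim reduces to $\prod_i L_i=K(\a)$.

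\textbf{Paragraph 2 (Upper bound $L_i\subseteq K(\a)$).} Pick an auxiliary ideal $\f=\a\cdot\l^n$, where $\l\nmid\a$ is a prime and $n$ is large enough that $\f$ separates units; note that $(\f,\a^m)=\a$ for every $m\geq 1$. By (\ref{coro:coarse-spaces-of-level-moduli}) and the torsor structure of (\ref{prop:level-torsors}), the forgetful map $M_\CM^{(\f)}\cong\Spec(O_{K(\f)}[\f^{-1}])\to M_\CM$ is an affine étale cover, and pulling $X$ back along it yields $X_{E^{(\f)}/M_\CM^{(\f)}}$ for the universal CM elliptic curve with level-$\f$ structure $E^{(\f)}$. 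Since $E^{(\f)}[\f]\cong\underline{O_K/\f}_{M_\CM^{(\f)}}$ is constant, so is its sub-group-scheme $E^{(\f)}[\a]\cong\underline{O_K/\a}_{M_\CM^{(\f)}}$, and each of its $N\a$ constant sections composes with $p_{E^{(\f)}}$ to give an $M_\CM^{(\f)}$-rational section of $X_{E^{(\f)}/M_\CM^{(\f)}}$ that, by (\ref{eqn:def-frob-lattes}), factors through $X[\a]\times_{M_\CM}M_\CM^{(\f)}$. As the latter has degree $N\a$ over $M_\CM^{(\f)}$, these sections cover all of its closed points, so every residue field of $X[\a]^{\red}\times_{M_\CM}M_\CM^{(\f)}$ at the generic fibre is contained in $K(\f)$; hence $L_i\subseteq K(\f)$ for every $i$. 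Combined with the unramified-away-from-$\a$ property (giving $L_i\subseteq K(\a^m)$ for some $m$) and the ray-class identity $K(\f)\cap K(\a^m)=K((\f,\a^m))=K(\a)$, this forces $L_i\subseteq K(\a)$.

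\textbf{Paragraph 3 (Lower bound and main difficulty).} To upgrade to equality, it suffices to exhibit a point of $X[\a](K^\sep)$ whose stabiliser in $G(K^\sep/K)$ is precisely $G(K^\sep/K(\a))$. Continuing with the cover of Paragraph 2, choose a generator $e\in E^{(\f)}[\a]\cong\underline{O_K/\a}$, so $p_{E^{(\f)}}(e)\in X[\a](K^\sep)$ is $K(\f)$-rational and fixed by all of $G(K^\sep/K(\f))$. To compute the full stabiliser in $G(K^\sep/K)$, I will use: the description of the character $\rho_{E^{(\f)}/K(\f)}$ given at the end of Chapter 2 (factoring through the isomorphism $G(K^\infty/K(\f))\isomto A_{O_K}^{\times,\f}$); the fundamental relation $[\rho^{-1}_{E^{(\f)}/K(\f),\a}]_\a=[-]_{K(\f),\a}$ of (\ref{prop:h-homs-and-characters}); the naturality $[-]_{K(\f)}=[-]_K|_{G(K^\sep/K(\f))}$ (cf.\ (\ref{eqn:h-restriction})); and the reciprocity identity $\theta_K\circ[-]_K=(-|_{K^\infty})$ of (iii) of (\ref{prop:compute-the-homomorphisms-h}). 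Unwinding these shows that $\sigma\in G(K^\sep/K)$ fixes $p_{E^{(\f)}}(e)$ iff $\sigma\cdot e\in\overline{O_K^\times}\cdot e$, which translates into $\sigma|_{K(\a)}=\id$, giving the required equality of stabilisers. The principal technical obstacle lies in the scheme-theoretic assertion of Paragraph 2: because $p_{E^{(\f)}}$ is ramified over every $O_K^\times$-fixed torsion point of $E^{(\f)}$ (in particular over $0_X$, with ramification degree $w$), the set-theoretic image of $E^{(\f)}[\a]$ has strictly fewer than $N\a$ points, so $X[\a]\times_{M_\CM}M_\CM^{(\f)}$ carries genuine non-reduced structure; a careful Cartier-divisor argument in the spirit of (\ref{prop:trace-equivalence-relation}) is needed to reconcile degrees and to confirm that only nilpotent information is lost, so that the residue-field bound $L_i\subseteq K(\f)$ indeed holds.
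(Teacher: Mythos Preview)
Your strategy is substantially more elaborate than the paper's, and as written it has two real gaps.

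\textbf{Gap 1 (the unramified input in Paragraph~2).} You assert in Paragraph~1 that $X[\a]^{\red}$ is finite \'etale over $\Spec(O_K[\a^{-1}])$ and then use this in Paragraph~2 to conclude that each $L_i$ is unramified away from $\a$, hence $L_i\subseteq K(\a^m)$. But $X[\a]$ is genuinely non-reduced (as you yourself note), and passing to the reduction does not automatically yield an \'etale scheme over the Dedekind base: one would need each special fibre $(X[\a]^{\red})_{\F_\p}$ to be reduced, which is a non-trivial regularity statement you do not prove (indeed the paper leaves the closely related regularity of $(\Theta_\a)_{\red}$ as an open question). A clean fix is to avoid ramification arguments entirely: run your Paragraph~2 argument twice with two distinct auxiliary primes $\l,\l'\nmid\a$ to get $L_i\subseteq K(\a\l^n)\cap K(\a\l'^{n'})=K(\a)$ directly.

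\textbf{Gap 2 (the stabiliser computation in Paragraph~3).} You want the stabiliser of $p_{E^{(\f)}}(e)$ in $G(K^{\sep}/K)$ to be $G(K^{\sep}/K(\a))$, and you phrase the criterion as ``$\sigma\cdot e\in O_K^\times\cdot e$''. But $E^{(\f)}$ is defined over $K(\f)$, so for $\sigma\notin G(K^{\sep}/K(\f))$ the point $\sigma\cdot e$ lies in a \emph{different} fibre of $E^{(\f)}\to M_\CM^{(\f)}$ from $e$, and the comparison ``$\sigma\cdot e\in O_K^\times\cdot e$'' has no direct meaning. One must instead compare $\sigma^*(E^{(\f)})$ with $E^{(\f)}$ via the torsor structure on $\mc{M}_\CM^{(\f)}$, and your sketch (``unwinding'' via $[-]_K$) does not carry this out. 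Since $K(\a)\subsetneq K(\f)$ in general, elements of $G(K^{\sep}/K(\a))\setminus G(K^{\sep}/K(\f))$ are exactly the ones for which your criterion breaks down, and these are the ones you need.

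\textbf{The paper's argument.} The paper bypasses both issues by a single direct computation. Since $X[\a]\to M_\CM\cong\Spec(O_H)$ is finite surjective, $K(X[\a])\supseteq H$, so it suffices to determine the kernel of the $G(K^{\sep}/H)$-action on $X[\a](K^{\sep})$. Choose \emph{any} CM elliptic curve $E/\Spec(H)$ (no level structure needed); then $p_{E/H}$ induces a $G(K^{\sep}/H)$-equivariant bijection
\[
E[\a](K^{\sep})/O_K^\times \;\isomto\; X[\a](K^{\sep}).
\]
Since $G(K^{\sep}/H)$ acts on $E[\a](K^{\sep})$ via $\rho_{E/H,\a}$, an element $\sigma$ acts trivially on the quotient iff $\rho_{E/H,\a}(\sigma)\in\im(O_K^\times\to(O_K/\a)^\times)$, i.e.\ iff $[\rho_{E/H,\a}(\sigma)^{-1}]_\a=[\sigma]_{K,\a}=1$, which by (\ref{prop:compute-the-homomorphisms-h}) is exactly $\sigma\in G(K^{\sep}/K(\a))$. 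This gives both inclusions at once, uses no integrality or $\Lambda$-structure on $X[\a]$, and avoids the level-$\f$ bookkeeping. The key simplification over your approach is working with a curve over $H$ rather than $K(\f)$.
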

\begin{proof} As $M_\CM\isomto \Spec(O_H)$ it is enough to show that the action of $G(K^\sep/H)$ on $X[\a](\Spec(K^\sep))$ factors faithfully through the quotient $G(K^\sep/H)\to G(K(\a)/K)$. To do this we may choose a CM elliptic curve $E/\Spec(H)$ with character $\rho_{E/H}: G(K^\sep/H)\to A_{O_K}^\times$ and consider the map \[p=p_{E/\Spec(H)}: E\to X_{E/\Spec(H)}=X\times_{M_\CM}\Spec(H).\] Then $p$ induces a surjective map of $G(K^\sep/H)$-sets \[E[\a](\Spec(K^\sep))\to X[\a](\Spec(K^\sep))\] and moreover factors through an isomorphism \[E[\a](\Spec(K^\sep))\to E[\a](\Spec(K^\sep))/O_K^\times\isomto X[\a](\Spec(K^\sep)).\] Therefore, an element $\sigma\in G(K^\sep/H)$ acts trivially on $X[\a](\Spec(K^\sep))$ if and only if $\rho_{E/H, \a}(\sigma)\in (O_K/\a)^\times$ is contained in the image of $O_K^\times\to (O_K/\a)^\times$, i.e.\ in the notation of (\ref{subsec:def-morphism-h}) if and only if $[\rho_{E/H, \a}(\sigma)]_\a=1$. But \[[\rho_{E/H, \a}(\sigma)^{-1}]_\a=[\sigma]_{H, \a}=[\sigma]_{K, \a}\] and the kernel of $[-]_{K, \a}$ is precisely $G(K^\sep/K(\a))$ and so we are done.
\end{proof}

\subsection{} We now wish to study the Lie algebra $T:=\underline{\Lie}_{X/M_\CM}$ at the closed point $0_{M_\CM}: M_\CM\to X$.

\begin{prop} For each $\a\in \Id_{O_K}$ the map \[D_\a: T\to \sigma^*_\a(T)\] induced by $\psi_{X/M_\CM}^{\a*}: X\to \sigma^*_\a(X)$ factors as \[T \isomto \sigma_\a^*(T)\otimes_{O_K}\a^{w}\to \sigma^*_\a(T)\] where the second map is multiplication.
\end{prop}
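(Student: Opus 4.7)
The plan is to work locally on $M_\CM$. Choose a CM elliptic curve $E/S$ with coarse map $c_{E/S}: S \to M_\CM$. Under pull-back along $c_{E/S}$, the curve $X$ becomes $X_{E/S}$ and $\sigma_\a^*(X)$ becomes $X_{E \otimes_{O_K} \a^{-1}/S}$ (using $c_{E \otimes_{O_K} \a^{-1}/S} = \sigma_\a \circ c_{E/S}$), while $\psi^\a_{X/M_\CM}$ pulls back to the morphism between these quotients induced by $i_\a: E \to E \otimes_{O_K} \a^{-1}$. Thus the problem reduces to computing the induced map on Lie algebras at the origins coming from the outer rectangle of the diagram \textup{(\ref{eqn:def-frob-lattes})}.

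Write $L = \underline{\Lie}_{E/S}$. The first step is to produce a canonical identification $\underline{\Lie}_{X_{E/S}/S} \isomto L^{\otimes w}$ via $p_{E/S}$. Since $p_{E/S}: E \to X_{E/S}$ is the quotient by the action of $\underline{O_K^\times}_S$, one has $\mc{O}_{X_{E/S}} = (p_{E/S\,*}\mc{O}_E)^{O_K^\times}$, and hence on formal completions at the origins $\widehat{\mc{O}}_{X_{E/S}, 0_X} = (\widehat{\mc{O}}_{E, 0_E})^{O_K^\times}$. The associated graded of $\widehat{\mc{O}}_{E, 0_E}$ is $\bigoplus_{n \geq 0} (L^\vee)^{\otimes n}$ (using $\Sym^n = \otimes^n$ for rank-one projective modules). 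Since $[\epsilon]_E$ is a group endomorphism of $\widehat{E}_{0_E}$ whose derivative on $L$ is multiplication by $\epsilon$, it acts on $\mathfrak{m}_{E, 0_E}^n / \mathfrak{m}_E^{n+1} = (L^\vee)^{\otimes n}$ by multiplication by $\epsilon^n$. As $O_K^\times$ is cyclic of order $w$, the invariants on the graded ring are $\bigoplus_{k \geq 0} (L^\vee)^{\otimes wk}$. In particular, $p_{E/S}^*$ identifies $\mathfrak{m}_{X_{E/S}, 0_X}/\mathfrak{m}^2$ with $\mathfrak{m}_{E, 0_E}^w / \mathfrak{m}_E^{w+1} = (L^\vee)^{\otimes w}$, and dually $\underline{\Lie}_{X_{E/S}/S} \isomto L^{\otimes w}$.

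For the second step, the outer commutative square of \textup{(\ref{eqn:def-frob-lattes})} yields, on cotangent spaces at the origins, the relation $p_{E/S}^* \circ (\psi^\a)^* = i_\a^* \circ p_{E \otimes_{O_K} \a^{-1}/S}^*$. Under the identifications from step one applied to both $E$ and $E \otimes_{O_K} \a^{-1}$, this forces $(\psi^\a)^*: (L \otimes_{O_K} \a^{-1})^{\vee \otimes w} \to L^{\vee \otimes w}$ to be the $w$-th tensor power of $i_\a^*: (L \otimes_{O_K} \a^{-1})^\vee \to L^\vee$. But $i_\a$ on Lie algebras is the inclusion-induced map $L \to L \otimes_{O_K} \a^{-1}$, so $i_\a^*$ is the natural multiplication $L^\vee \otimes_{O_K} \a \to L^\vee$; its $w$-th tensor power is the natural multiplication $L^{\vee \otimes w} \otimes_{O_K} \a^w \to L^{\vee \otimes w}$. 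Dualising, $D_\a: L^{\otimes w} \to L^{\otimes w} \otimes_{O_K} \a^{-w}$ is the map induced by the inclusion $O_K \hookrightarrow \a^{-w}$. Since $L^{\otimes w} \otimes_{O_K} \a^{-w}$ is the pull-back description of $\sigma_\a^*(T)$, and the canonical pairing $\a^{-w} \otimes_{O_K} \a^w \isomto O_K$ gives an isomorphism $\sigma_\a^*(T) \otimes_{O_K} \a^w \isomto L^{\otimes w} = T$, the map $D_\a$ factors as $T \isomto \sigma_\a^*(T) \otimes_{O_K} \a^w \to \sigma_\a^*(T)$ with the second map being multiplication, as asserted.

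The hard part will be verifying that the identification $\underline{\Lie}_{X_{E/S}/S} \isomto L^{\otimes w}$ depends only on the coarse map $c_{E/S}$ and not on the particular choice of $E$, so that the construction descends unambiguously to $M_\CM$. However, any two CM elliptic curves over $S$ with the same coarse map are locally isomorphic, and any two local isomorphisms differ by an automorphism $[\epsilon]_E$ with $\epsilon \in O_K^\times$, which acts on $L^{\otimes w}$ by multiplication by $\epsilon^w = 1$; the identifications therefore glue, as does the resulting factorisation of $D_\a$.
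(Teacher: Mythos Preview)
Your overall strategy matches the paper's: localise, choose a CM elliptic curve $E/S$, identify $\underline{\Lie}_{X_{E/S}/S}$ with $L^{\otimes w}$ via $p_{E/S}$, and then read off $D_\a$ from the defining square (\ref{eqn:def-frob-lattes}) together with the known effect of $i_\a$ on Lie algebras. The second step and the descent argument in your final paragraph are fine.

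The gap is in your justification of the key identification $\mathfrak{m}_X/\mathfrak{m}_X^2 \cong (L^\vee)^{\otimes w}$. You pass to the associated graded of $\widehat{\mc{O}}_{E,0_E}$ and take invariants there: $\epsilon\in O_K^\times$ acts on $(L^\vee)^{\otimes n}$ by $\epsilon^n$, and you conclude the invariants are exactly the pieces with $w\mid n$. But $(L^\vee)^{\otimes n}$ is an $\mc{O}_S$-module and $\epsilon^n$ acts through its image in $\mc{O}_S$; for $w\nmid n$ the element $\epsilon^n-1\in O_K$ is nonzero but need not be a unit, and at primes dividing $w$ it vanishes in $\mc{O}_S$. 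For example when $w=2$ and $S$ has characteristic $2$, the action of $-1$ on every graded piece is trivial, so your invariants computation gives no information. (This is also an instance of the general failure of ``invariants of the graded'' to compute ``graded of the invariants'' when the group order is not invertible.) Since $M_\CM=\Spec(O_H)$ certainly has points of characteristic dividing $w$, this is a genuine hole.

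The paper circumvents this by using the ramification of $p_{E/S}$ at the origin rather than invariants. Because $p_{E/S}^{-1}(0_X)=[O_K^\times](0_E)=w\cdot(0_E)=\Inf_{0_E}^{w-1}(E)$ as Cartier divisors, one has $p_{E/S}^*(\mathfrak{m}_X)\cdot\widehat{\mc{O}}_E=\mathfrak{m}_E^{\,w}$; since $\mathfrak{m}_X/\mathfrak{m}_X^2$ and $\mathfrak{m}_E^{\,w}/\mathfrak{m}_E^{\,w+1}$ are both invertible $\mc{O}_S$-modules and the induced map is surjective, it is an isomorphism. In coordinates the paper makes this explicit: the map $\widehat{E}\to\widehat{X}$ is $T\mapsto N_{O_K^\times}(T)=\prod_{\epsilon}[\epsilon](T)=-T^w+\cdots$, and the leading coefficient gives the identification directly, valid in every characteristic. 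Replace your graded-invariants paragraph with this ramification argument and the rest of your proof goes through.
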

\begin{proof} It is enough to show that for each $\a$, Zariski locally on $M_\CM$, the image of \[D_\a: T\to \sigma^*_\a(T)\] is equal to $\a^w\otimes_{\mc{O}_{M_\CM}} \sigma_\a^*(T)$.

So let $x\in M_\CM$ be a point and let $S=\Spec(\mc{O}_{M_\CM, x})\to M_\CM$ be the inclusion of the local ring at $x$, and let $E/S$ be a CM elliptic curve. The inverse image of $0: S\to X_{E/S}=X\times_{M_\CM}S$ along the map $p_{E/S}: E\to X_{E/S}$ is equal to $[O_K^\times](0_E)$ and as $0: S\to E$ is invariant under $O_K^\times$ we have $[O_K^\times](0_E)=\Inf_{0_E}^{w-1}(E)$ is the $(w-1)$st infinitesimal neighbourhood of $0_E: S\to E$. It follows that, writing $\widehat{X}=\Inf_{0_X}(X)$ that $p_{E/S}^*(\widehat{X}_S)=\Inf_{0_E}(E)=\widehat{E}$ and that the map \[\widehat{E}\to \widehat{X}\] is finite locally free of degree $w$ and $O_K^\times$ invariant.

It now follows from Proposition 7.5.2 that, choosing an isomorphism $\widehat{E}\isomto \widehat{\mathbf{A}}^1_S$ there is a unique isomorphism $\widehat{X}_S\isomto \widehat{\mathbf{A}}^1_S$ such that the diagram \[\xymatrix{\widehat{E}\ar[r]\ar[d]_-\wr & \widehat{X}_S\ar[d]^-\wr\\
\widehat{\mathbf{A}}^1_S\ar[r]^{N_{O_K^\times}(T)} &\widehat{\mathbf{A}}^1_S}\] commutes where the bottom map is \[a\mto N_{O_K^\times}(a)=\prod_{\epsilon\in O_K^\times}[\epsilon](a)\] and $[\epsilon](T)$ is the power series on $\widehat{\mathbf{A}}^1_S\isomto \widehat{E}$ representing the automorphism $\epsilon: \widehat{E}\to \widehat{E}$. But as the action of $O_K$ is strict we known that $[\epsilon](T)=\epsilon T+\cdots$ and so $N_{O_K^\times}(T)=-T^{w}+\cdots$. From this and the fact that the induced map on Lie algebras of $\psi_{E/S}^{\p}: E\to \sigma^*_\p(E)$ factors as \[\underline{\Lie}_{E/S}\isomto \p\otimes_{\mc{O}_{M_\CM}} \sigma_\p^*(\underline{\Lie}_{E/S})\to \sigma_\p^*(\underline{\Lie}_{E/S})\] the claim follows.
\end{proof}

\begin{coro}\label{prop:tangent-lattes-free} The rank one $\mc{O}_{M_\CM}$-module $T$ is free and there exists an isomorphism \[X\isomto \mathbf{P}^1_{M_\CM}.\]
\end{coro}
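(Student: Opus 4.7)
The plan is to first establish that the rank one $\mc{O}_{M_\CM}$-module $T$ is free by constructing Galois descent data relative to the extension $O_K \to O_H$, descending $T$ to a rank one projective $O_K$-module $T_0$ which becomes free after base change to $O_H$ by the Hauptidealsatz (\ref{prop:hauptidealsatz}); once $T$ is free, the isomorphism $X \isomto \mathbf{P}^1_{M_\CM}$ will follow from the affineness of $M_\CM$.

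For the first part, the preceding proposition supplies, for each $\a \in \Id_{O_K}$, an isomorphism $\nu_\a: T \otimes_{O_K} \a^{-w} \isomto \sigma_\a^*(T)$ inverse to the factorisation of $D_\a$, and the cocycle identity $\psi_{X/M_\CM}^{\a\b} = \sigma_\a^*(\psi_{X/M_\CM}^\b) \circ \psi_{X/M_\CM}^\a$ yields $\nu_{\a\b} = \sigma_\a^*(\nu_\b) \circ (\nu_\a \otimes_{O_K} \b^{-w})$. When $\a = (a)$ is principal, so that $\sigma_\a = \id_{O_H}$, the isomorphism $\nu_\a$ is multiplication by $a^w \in O_K$, since this can be read off from the factorisation of $D_\a$ at the level of Lie algebras. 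I would then invoke the strengthened principal ideal theorem (\ref{prop:tannaka-result-general}) from the appendix (in the same way as in (\ref{coro:shimura-curves-minimal-model})) to obtain a map $l: \Id_{O_K} \to O_H$ satisfying $l(\a) \cdot O_H = \a^w \cdot O_H$, $l((a)) = a^w$ for $a \in O_K$, and $l(\a \b) = l(\a) \sigma_\a(l(\b))$ for all $\a, \b$.

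Setting $t_\a: T \to \sigma_\a^*(T)$ to be the composition
\[ T \stackrel{1 \otimes l(\a)^{-1}}{\longrightarrow} T \otimes_{O_K} \a^{-w} \stackrel{\nu_\a}{\longrightarrow} \sigma_\a^*(T), \]
the cocycle relation for $\nu$ together with the multiplicativity of $l$ give $t_{\a\b} = \sigma_\a^*(t_\b) \circ t_\a$, while the normalisation $l((a)) = a^w$ on principal ideals forces $t_\a = \id_T$ whenever $\a \in \Prin_{O_K}$. Hence $t_\a$ depends only on the class $\sigma_\a \in G(H/K) \isomto CL_{O_K}$, and the family $(t_\sigma)_{\sigma \in G(H/K)}$ defines Galois descent data on $T$ for the extension $O_K \to O_H$. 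Descent produces a rank one projective $O_K$-module $T_0$ with $T_0 \otimes_{O_K} O_H \isomto T$, and (\ref{prop:hauptidealsatz}) implies $T_0 \otimes_{O_K} O_H$ is free; hence so is $T$.

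For the second part, tensoring the ideal sequence $0 \to \mc{I}_X \to \mc{O}_X \to 0_{X,*}\mc{O}_{M_\CM} \to 0$ with $\mc{I}_X^{-1}$ and pushing forward along $f$ yields (using $f_*\mc{O}_X = \mc{O}_{M_\CM}$, $0_X^*(\mc{I}_X^{-1}) = T$, and $R^1 f_* \mc{I}_X^{-1} = 0$, all consequences of genus zero) a short exact sequence
\[ 0 \to \mc{O}_{M_\CM} \to \mc{W} \to T \to 0. \]
Since $M_\CM \isomto \Spec(O_H)$ is affine this sequence splits, so $\mc{W} \isomto \mc{O}_{M_\CM} \oplus T$ is free of rank two, whence $X \isomto \mathbf{P}_{M_\CM}(\mc{W}) \isomto \mathbf{P}^1_{M_\CM}$. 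The main obstacle is the production and verification of the descent datum $(t_\sigma)$ — that is, the invocation of (\ref{prop:tannaka-result-general}) to manufacture the map $l$ with both the correct valuation-theoretic behaviour ($l(\a) \cdot O_H = \a^w \cdot O_H$) and the correct multiplicative behaviour on principal ideals; once $l$ is in hand, everything else is formal.
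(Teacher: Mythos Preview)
Your proposal is correct and follows essentially the same route as the paper: the paper also uses the factorisations $D_\a: T \isomto \a^w \otimes_{\mc{O}_{M_\CM}} \sigma_\a^*(T)$ together with the method of (\ref{coro:shimura-curves-minimal-model}) (i.e.\ the principal ideal theorem from the appendix) to produce Galois descent data for $T$ along $O_K \to O_H$, then invokes the Hauptidealsatz to conclude $T$ is free, and finally splits the sequence $0 \to \mc{O}_{M_\CM} \to \mc{W} \to T \to 0$ to obtain $\mc{W} \isomto \mc{O}_{M_\CM}^2$ and hence $X \isomto \mathbf{P}^1_{M_\CM}$. The only cosmetic point is that (\ref{prop:tannaka-result-general}) as stated produces generators of $\a$ rather than of $\a^w$, so you should apply it to obtain $l_0$ with $l_0(\a)\cdot O_H = \a\cdot O_H$ and then set $l(\a) = l_0(\a)^w$ (noting that $l_0((a))^w = (\epsilon a)^w = a^w$ for any unit $\epsilon$).
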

\begin{proof} We have \[X\isomto \mathbf{P}_{M_\CM}(\mc{W})\] where $\mc{W}=f_*(\mc{I}_X^{-1})$ where $\mc{I}_X\subset \mc{O}_X$ is the ideal sheaf defining the closed point $0_X: M_\CM\to X$. Moreover, we have an exact sequence \[0\to \mc{O}_{M_\CM}\to \mc{W}\to T\to 0.\] As $T$ is projective this exact sequence splits. Then by the same method as in the proof of (\ref{coro:shimura-curves-minimal-model}) one shows that the isomorphisms \[D_\a: T\isomto\a^{w}\otimes_{\mc{O}_{M_\CM}}\sigma_\a^*(T)\] can be turned into descent data from $M_\CM$ to $\Spec(O_K)$ and so by the Hauptidealsatz $T$ is free. Therefore, $\mc{W}\isomto \mc{O}_{M_\CM}^2$ and \[X\isomto \mathbf{P}_{M_\CM}(\mc{W})\isomto \mathbf{P}_{M_\CM}(\mc{O}_{M_\CM}^2)=\mathbf{P}_{M_\CM}^1.\]
\end{proof}

\begin{rema} We end this section with a remark regarding possible applications to monogeneity of rings of integers. Define Cartier divisors \[\Theta_\a\subset X[\a]\subset X\] inductively by setting $\Theta_{O_K}=X[O_K]$ and, having defined $\Theta_\a\subset X[\a]$, we define $\Theta_{\a\p}$ to be \[\Theta_{\a\p}:=\psi_\p^*(\sigma_\a^*(\Theta_\a))-\Theta_\a\]  if $\p\nmid\a$ (an analysis of the $\Lambda_{M_\CM}$-structure on $X[\a]$ shows that this is possible) or to be \[\Theta_{\a\p}:=\psi_{X/M_\CM}^{\p*}(\sigma_\p^*(\Theta_\a))\] if $\p|\a$. Then one can show that $\Theta_{\a}\subset X$ is irreducible (but in general non-reduced), that $K(\Theta_\a)=K(\a)$ and that \[X[\a]=\sum_{\d|\a}\Theta_\d.\] The identity above should be viewed as analogous to the factorisation of the polynomials $X^n-1$ in terms of cyclotomic polynomials.

Moreover, when $\a$ is composite $\Theta_\a$ and $X[O_K]=\Theta_{O_K}$ are disjoint so that one finds a closed immersion \[\Theta_\a\subset X-X[O_K]=\mathbf{P}_{M_\CM}-X[O_K](\mc{W})\isomto \mathbf{V}_{M_\CM}(T)\isomto \mathbf{A}^1_{M_\CM}.\] With a more detailed analysis of the $\Lambda_{M_\CM}$-structure on $X_{M_\CM}\isomto \mathbf{P}^1_{O_H}$ it may be possible to show that the divisors $(\Theta_\a)_\red$ are regular which would imply isomorphisms \[(\Theta_\a)_\red\isomto \Spec(O_{K(\a)}).\] This would then give closed immersion \[\Spec(O_{K(\a)})\to \mathbf{A}^1_{O_H}\isomto \mathbf{A}^1_{M_\CM}\] or in other words $O_{K(\a)}$ would be monogenic over the ring of integers in the Hilbert class field $O_H$. It is worth noting that this method would only apply to conductors $\a$ which are composite, and that when $\a$ is not composite counterexamples to the monogeneity of $O_{K(\a)}$ over $O_{H}$ are known to exist (see \cite{CouFleck89}).
\end{rema}

\section{A $\Lambda$-equivariant cover of $\mc{M}_\CM$}\label{sec:lambda-cover-of-m} In this section we show how one can use the existence of canonical lifts of CM elliptic curves to define a flat, affine and formally smooth cover of $\mc{M}_\CM$ admitting a $\Lambda$-structure compatible with that on $\mc{M}_\CM$. Indeed, we rigidify $\mc{M}_\CM$ by equipping a CM elliptic curve $E/S$ with a trivialisation of the Lie algebra of its canonical lift. The results of (\ref{sec:shimura-curves}) allow us to show that this does indeed define a cover of $\mc{M}_\CM$ with the desired properties.

\subsection{} Let $S$ be an ind-affine scheme. The category $\mc{CL}_{O_K}(S)$ acts on the category $\mc{M}_\CM^*(S)$. However, we have shown that $\mc{M}_\CM(S)$ is equivalent to the category $W_*(\mc{M}_{\CM})_\Lambda(S)$ so that $\mc{CL}_{O_K}$ should also act on $W_*(\mc{M}_{\CM})_\Lambda$ and now explain how.

As $W^*$ commutes with \'etale fibre products and $\underline{O_K}_S$ is \'etale over $S$, the sheaf $W^*(\underline{O_K}_S)$ is naturally a $\Lambda_{W^*(S)}$-sheaf of rings over $W^*(S)$. Moreover, $W^*$ also commutes with disjoint unions which gives an identification \[W^*(\underline{O_K}_S)\isomto \underline{O_K}_{W^*(S)}\] compatible with the ring structures.
In much the same way, if $\mc{L}$ is a rank one $O_K$-local system over $S$, then $W^*(\mc{L})$ is a rank one $O_K$-local system over $W^*(S)$ equipped with a $\Lambda_{W^*(S)}$-structure which is compatible with its $\underline{O_K}_{W^*(S)}=W^*(\underline{O_K}_S)$-module structure, i.e.\ the map \[\underline{O_K}_{W^*(S)}\times_{W^*(S)}W^*(mc{L})\to W^*(\mc{L})\] defining the action of $\underline{O_K}_{W^*(S)}$ on $W^*(\mc{L})$ is a $\Lambda_{W^*(S)}$-morphism.

Now if $E/S$ is a CM elliptic curve then, as both $W_\CM^*(E)$ and $W^*(\mc{L})$ have $\Lambda_{W^*(S)}$-structures compatible with their $\underline{O_K}_S$-module structures, and as the forgetful functor $\Sh_{\Lambda_{W^*(S)}}\to \Sh_{W^*(S)}$ commutes with all limits and colimits (\ref{def:w-w-lambda-adjunction}), the CM elliptic curve \[W_\CM^*(E)\otimes_{O_K}W^*(\mc{L})\] is also equipped with a $\Lambda_{W^*(S)}$-structure compatible with its $\underline{O_K}_{W^*(S)}$-module structure.

\begin{prop}\label{prop:action-of-cl-on-can-lifts} The $\Lambda_{W^*(S)}$ structure on $W_\CM^*(E)\otimes_{O_K}W^*(\mc{L})$ is canonical and there is a unique $\Lambda_{W^*(S)}$-isomorphism \[W_\CM^*(E\otimes_{O_K}\mc{L})\isomto W_\CM^*(E)\otimes_{O_K} W^*(\mc{L})\] inducing the identity on the ghost components at $(1)$.
\end{prop}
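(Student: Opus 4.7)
\emph{Plan.} The second assertion will follow formally from the first together with the equivalence of stacks $W_*(\mc{M}_\CM)_\Lambda \isomto \mc{M}_\CM$. Indeed, once we know $W_\CM^*(E)\otimes_{O_K}W^*(\mc{L})$ carries a canonical $\Lambda_{W^*(S)}$-structure, then both this object and $W_\CM^*(E\otimes_{O_K}\mc{L})$ lie in $W_*(\mc{M}_\CM)_\Lambda(S)$ and both pull back under $g_{(1)}$ to $E\otimes_{O_K}\mc{L}$ (using the canonical isomorphisms $g_{(1)}^*W_\CM^*(E)\isomto E$, $g_{(1)}^*W^*(\mc{L})\isomto \mc{L}$, and the compatibility of $g_{(1)}^*$ with $\otimes_{O_K}$). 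The equivalence then supplies a unique $\Lambda_{W^*(S)}$-isomorphism between them restricting to the identity on $g_{(1)}^*$. So the whole content is canonicality.

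\emph{Establishing canonicality.} By definition, I must exhibit a $\Psi_{\Gamma^*(S)}$-isomorphism
\[
\bigl(W_\CM^*(E)\otimes_{O_K}W^*(\mc{L})\bigr)\times_{W^*(S)}\Gamma^*(S) \isomto \Gamma_\CM^*\bigl(E\otimes_{O_K}\mc{L}\bigr)
\]
reducing to the identity on $E\otimes_{O_K}\mc{L}$ after pull-back along $g_{(1)}$. Since base change commutes with the tensor product of $\underline{O_K}$-modules, and since the canonicality of the $\Lambda_{W^*(S)}$-structure on $W_\CM^*(E)$ already provides a $\Psi_{\Gamma^*(S)}$-isomorphism $W_\CM^*(E)\times_{W^*(S)}\Gamma^*(S)\isomto \Gamma_\CM^*(E)=\coprod_\a E\otimes_{O_K}\a^{-1}$ inducing the identity on $g_{(1)}^*$, it remains to identify $W^*(\mc{L})\times_{W^*(S)}\Gamma^*(S)$, as a $\Psi_{\Gamma^*(S)}$-sheaf of $\underline{O_K}_{\Gamma^*(S)}$-modules, with $\Gamma^*(\mc{L})=\coprod_\a \mc{L}$ equipped with its tautological $\Psi$-structure (and identified with $\mc{L}$ on the $(1)$-ghost component). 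Combining the two identifications, the tensor product on the left factors summand-by-summand over the decomposition $\Gamma^*(S)=\coprod_\a S$ and yields $\coprod_\a (E\otimes_{O_K}\a^{-1}\otimes_{O_K}\mc{L})=\coprod_\a (E\otimes_{O_K}\mc{L})\otimes_{O_K}\a^{-1}=\Gamma_\CM^*(E\otimes_{O_K}\mc{L})$, visibly inducing the identity on $g_{(1)}^*$.

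\emph{The main obstacle.} The genuine content of the argument is the identification of $W^*(\mc{L})\times_{W^*(S)}\Gamma^*(S)$ with $\Gamma^*(\mc{L})$ as a $\Psi$-sheaf. The claim is local on $S$, so I may assume $\mc{L}=\underline{L}_S$ for a rank one projective $O_K$-module $L$. Since $W^*$ commutes with disjoint unions, $W^*(\underline{L}_S)=\underline{L}_{W^*(S)}$ as an $\underline{O_K}_{W^*(S)}$-module, and its natural $\Lambda_{W^*(S)}$-structure (inherited from $W^*$ applied to the trivial $\Lambda_S$-structure on $\underline{L}_S$) has every Frobenius lift equal to the identity. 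Pulling back to $\Gamma^*(S)=\coprod_\a S$ yields $\coprod_\a \underline{L}_S$, and the induced $\Psi$-structure—translating the $\a$-summand into the $\a\b$-summand via the identity—is precisely the tautological $\Psi$-structure on $\Gamma^*(\mc{L})$. The compatibility of these local identifications with the $\underline{O_K}_{\Gamma^*(S)}$-module structure and with gluing over an affine étale cover trivialising $\mc{L}$ is formal (using that the forgetful functor from $\Lambda_{W^*(S)}$-sheaves preserves limits and colimits, so that descent of $\Lambda$-structures is compatible with $\otimes_{O_K}$), and combined with the $\Psi$-iso for $W_\CM^*(E)$ completes the construction.
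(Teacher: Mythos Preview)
Your proof is correct and follows essentially the same route as the paper. Both arguments identify $W^*(\mc{L})\times_{W^*(S)}\Gamma^*(S)\isomto \Gamma^*(\mc{L})$ as $\Psi_{\Gamma^*(S)}$-sheaves, tensor this with the canonical isomorphism $\rho_{W_\CM^*(E)/W^*(S)}$ for $W_\CM^*(E)$ to obtain the required $\Psi_{\Gamma^*(S)}$-isomorphism $(W_\CM^*(E)\otimes_{O_K}W^*(\mc{L}))\times_{W^*(S)}\Gamma^*(S)\isomto \Gamma_\CM^*(E\otimes_{O_K}\mc{L})$, and then invoke the equivalence $W_*(\mc{M}_\CM)_\Lambda\isomto\mc{M}_\CM$ to produce the unique $\Lambda_{W^*(S)}$-isomorphism; the paper states the identification for $W^*(\mc{L})$ globally (having set it up in the preceding paragraph) while you localise to the constant case, but this is a purely cosmetic difference.
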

\begin{proof} The $\Psi_{\Gamma^*(S)}$-structure on $W^*(\mc{L})\times_{W^*(S)}\Gamma^*(S)$ is compatible with the isomorphisms \[W^*(\mc{L})\times_{W^*(S)}\Gamma^*(S)\isomto \Gamma^*(\mc{L})\isomto \mc{L}\times_{S}\Gamma^*(S)\] where the last fibre product is over the sum of the identity maps \[\Gamma^*(S)=\coprod_{\a\in \Id_{O_K}} S\to S\] and where the $\Psi_{\Gamma^*(S)}$-structure on $\mc{L}\times_{S}\Gamma^*(S)$ is induced by the $\Psi$-structure on $\Gamma^*(S)$. Therefore, we obtain $\Psi_{\Gamma^*(S)}$-isomorphisms \[(W_\CM^*(E)\otimes_{O_K}W^*(\mc{L}))\times_{W^*(S)}\Gamma^*(S)\isomto \Gamma_{\CM}^*(E)\otimes_{O_K}\Gamma^*(\mc{L})\isomto \Gamma_{\CM}^*(E\otimes_{O_K}\mc{L})\] which induce the identity after pull-back to the ghost component at $(1)$. This is precisely the definition of a canonical $\Lambda_{W^*(S)}$-structure and this proves the first statement. As the ghost components at $(1)$ of $W_\CM^*(E)\otimes_{O_K}W^*(\mc{L})$ and $W_\CM^*(E\otimes_{O_K}\mc{L})$ are equal to $E\otimes_{O_K}\mc{L}$ we get a unique $\Lambda_{W^*(S)}$-isomorphism \[W_\CM^*(E)\otimes_{O_K}W^*(\mc{L})\isomto W_\CM^*(E\otimes_{O_K}\mc{L})\] inducing the identity after pull-back along the ghost component at $(1)$.
\end{proof}

\begin{rema}\label{rema:structure-map-injective} Consider the sheaf \[W_*(\mathbf{A}^1)=W_*(\Spec(O_K[T]))=\Spec(\Lambda\odot O_K[T])=\Spec(\Lambda)\] of arithmetic jets of $\mathbf{A}^1$ over $\Spec(O_K)$. It is a ring scheme over $\Spec(O_K)$ and its sections over an affine scheme $S=\Spec(A)$ are given by \[W_*(\mathbf{A}^1)(\Spec(A))=W(A).\] The structure map \[O_K\to W(A)=W_*(\mathbf{A}^1)(S)\] for varying affine schemes $S=\Spec(A)$ is injective (\ref{coro:structure-map-lambda-ring-injective}) and induces a \textit{monomorphism} of sheaves of rings \begin{equation} \label{def:structure-injective} i_{W}:\underline{O_K}\to W_*(\mathbf{A}^1).\end{equation} This fact will be crucial for what follows.
\end{rema}

\subsection{} Let $\mc{L}/S$ be a rank one $O_K$-local system. A level-$W$ structure on $\mc{L}$ is an isomorphism of $\mc{O}_{W^*(S)}$-modules \[\lambda: W^*(\mc{L})\otimes_{\underline{O_K}_{W^*(S)}} \mc{O}_{W^*(S)}\isomto \mc{O}_{W^*(S)}.\] A $W$-isomorphism $(\mc{L}/S, \lambda)\isomto (\mc{L}'/S, \lambda')$ of rank one $O_K$-local systems with level-$W$ structure is an $\underline{O_K}_S$-isomorphism $h: \mc{L}\isomto \mc{L}'$ such that \[\lambda=\lambda'\circ W^*(h).\]

The tensor product of two rank one $O_K$-local systems with level-$W$ structure $(\mc{L}, \lambda)$ and $(\mc{L}', \lambda')$ is defined to be \[(\mc{L}\otimes_{O_K}\mc{L}', \lambda\otimes_{\mc{O}_{W^*(S)}}\lambda')\] where we view $\lambda\otimes_{\mc{O}_{W^*(S)}}\lambda'$ as a level-$W$ structure on $\mc{L}\otimes_{O_K}\mc{L}'$ using the identification \[W^*(\mc{L}\otimes_{O_K}\mc{L}')\isomto W^*(\mc{L})\otimes_{O_K}W^*(\mc{L}').\]

We write $\mc{CL}_{O_K}^W$ for the fibred category over $\IndAff_{O_K}$ with fibre over $S$ given by the category rank one $O_K$-local systems over $S$ equipped with a level-$W$ structure $(\mc{L}/S, \lambda)$ together with their $W$-isomorphisms.

\begin{prop}\label{prop:cw-stack-sheaf} $\mc{CL}_{O_K}^W$ is a stack over $\IndAff_{O_K}$ for the \'etale topology and is equivalent to its coarse sheaf for the \'etale topology.
\end{prop}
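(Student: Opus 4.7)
The plan is to establish the two assertions in turn. For the stack property, I would start from the fact (already used implicitly in the discussion of $\mc{CL}_{O_K}$) that rank one $O_K$-local systems and their $\underline{O_K}_S$-isomorphisms satisfy descent for the \'etale topology. Given an \'etale cover $(S_i\to S)_{i\in I}$ and a descent datum of objects $(\mc{L}_i,\lambda_i)\in \mc{CL}_{O_K}^W(S_i)$, the underlying local systems $\mc{L}_i$ descend to a rank one $O_K$-local system $\mc{L}$ over $S$. To descend the level-$W$ structures, I would invoke (\ref{prop:w-and-etale-base-change}): because $(W^*(S_i)\to W^*(S))_{i\in I}$ is again an \'etale cover and commutes with the relevant fibre products, the isomorphisms $\lambda_i$ of $\mc{O}_{W^*(S_i)}$-modules glue uniquely to an isomorphism $\lambda$ over $W^*(S)$. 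Descent of morphisms is similar, using the same fact for the $W^*$ of $\underline{\Isom}$-sheaves.

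For the equivalence with the coarse sheaf, since $\mc{CL}_{O_K}^W$ is a stack in groupoids (every morphism is invertible by definition) and essentially surjects onto its coarse sheaf locally, it suffices to show that for every $(\mc{L},\lambda)\in\mc{CL}_{O_K}^W(S)$ the automorphism sheaf $\underline{\Aut}^W_S(\mc{L},\lambda)$ is trivial. An automorphism of $(\mc{L},\lambda)$ is an $\underline{O_K}_S$-automorphism $\epsilon\in \underline{\Aut}_S^{O_K}(\mc{L})=\underline{O_K^\times}_S$ such that $\lambda\circ W^*(\epsilon)=\lambda$. Under the trivialisation $\lambda$, the action of $W^*(\epsilon)$ on $W^*(\mc{L})\otimes_{\underline{O_K}_{W^*(S)}}\mc{O}_{W^*(S)}\isomto \mc{O}_{W^*(S)}$ is multiplication by the image of $\epsilon$ under the structure homomorphism $\underline{O_K}_{W^*(S)}\to \mc{O}_{W^*(S)}$. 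So the condition is precisely that $\epsilon$ maps to $1$ in $\mc{O}_{W^*(S)}^\times$.

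The crucial input is now (\ref{rema:structure-map-injective}): the structure map $i_W\colon \underline{O_K}\to W_*(\mathbf{A}^1)$ is a monomorphism of sheaves of rings on $\IndAff_{O_K}$. Indeed, for an affine $S=\Spec(A)$ the composition $O_K\to W(A)=W_*(\mathbf{A}^1)(S)$ coincides, after applying $W^*\dashv W_*$, with $\underline{O_K}_{W^*(S)}(W^*(S))\to \mc{O}_{W^*(S)}(W^*(S))$. Hence the identity $\epsilon=1$ in $\mc{O}_{W^*(S)}$ forces $\epsilon=1$ in $\underline{O_K^\times}_S$. Thus $\underline{\Aut}^W_S(\mc{L},\lambda)$ is trivial and the stack coincides with its coarse sheaf.

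The one subtle point — and the place I would need to be careful — is making sure that the injectivity in (\ref{rema:structure-map-injective}), proved at the level of global sections of representable sheaves of rings, really does give injectivity of $\underline{O_K}_{W^*(S)}\to \mc{O}_{W^*(S)}$ as a morphism of sheaves on $W^*(S)$ (not just on global sections). Since $\underline{O_K}$ is a constant sheaf, this reduces to checking that no non-identity element of $O_K^\times$ maps to $1$ in $\mc{O}(W^*(T))$ for any non-empty affine $T$, which is exactly (\ref{coro:structure-map-lambda-ring-injective}) applied fibre-wise; the rest of the argument is then routine.
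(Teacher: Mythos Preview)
Your proposal is correct and follows essentially the same route as the paper: the stack property via \'etale base-change compatibility of $W^*$ (your reference to (\ref{prop:w-and-etale-base-change}) is exactly what the paper uses), and the equivalence with the coarse sheaf by showing $W$-automorphisms are trivial using the monomorphism $i_W$ of (\ref{rema:structure-map-injective}). Your identification of the action of $W^*(\epsilon)$ with multiplication by $i_W(\epsilon)$ is precisely the paper's displayed equation, and your cautionary remark about sheaf-level injectivity is well taken and correctly resolved.
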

\begin{proof} That $\mc{CL}_{O_K}^W$ is a stack for the affine \'etale topology on $\IndAff_{O_K}$ is easy to see using the fact that if $\mc{L}/S$ is a rank one $O_K$-local system and $S'\to S$ is any morphism then \[W^*(\mc{L})\times_{W^*(S)}W^*(S')\isomto W^*(\mc{L}\times_S S')\] and that for any \'etale morphism $T\to S$ we have \[W^*(S'\times_S T)\isomto W^*(S')\times_{W^*(S)}W^*(T).\]

For the second statement it is enough to show that if $(\mc{L}, \lambda)$ is a rank one $O_K$-local system with level-$W$ structure then every $W$-automorphism of $\mc{L}$ is trivial. But if $\epsilon\in \underline{O_K^\times}_S(S)$ defines a $W$-automorphism of $\mc{L}$, we have an equality \begin{equation} \id_{W^*(\mc{L})}\otimes_{\underline{O_K}_{W^*(S)}} i_W(\epsilon) = W^*(\epsilon)\otimes_{\underline{O_K}_{W^*(S)}}\id_{\mc{O}_{W^*(S)}}\label{eqn:w-autom}\end{equation} of automorphisms of $W^*(\mc{L})\otimes_{\underline{O_K}_{W^*(S)}}\mc{O}_{W^*(S)}$. As $i_W$ is a monomorphism and the $\rho$ is invariant under the automorphism (\ref{eqn:w-autom}) we must have $\epsilon=1.$
\end{proof}

\subsection{} We write $CL_{O_K}^W$ for the coarse sheaf of $\mc{CL}_{O_K}^W$ with which we identify it by (\ref{prop:cw-stack-sheaf}). The tensor product of rank one $O_K$-local systems with level-$W$ structure equips $CL_{O_K}^W$ with the structure of a sheaf of groups over $\Spec(O_K)$.

We now describe a short exact sequence relating $CL_{O_K}^W$ to the $\Spec(O_K)$-group scheme of arithmetic jets $W_*(\mathbf{G}_\mathrm{m})$ of $\mathbf{G}_{\mathrm{m}}$.

Let $S=\Spec(A)$ be an affine $\Spec(O_K)$-scheme. The sections of $W_*(\mathbf{G}_\mathrm{m})$ over $S$ are given by \[W_*(\mathbf{G}_\mathrm{m})(\Spec(A))=W(A)^\times\] and the monomorphism (\ref{def:structure-injective}) restricts to a monomorphism again denoted \[i_W: \underline{O_K^\times}\to W_*(\mathbf{G}_{\mathrm{m}}).\]

For each \[a\in \mathbf{G}_{\mathrm{m}}(W^*(S))=\Aut_{W^*(S)}(\mc{O}_{W^*(S)})\] we define an element $[a]_W\in CL_{O_K}^W(S)$ by $[a]_W:=(\underline{O_K}_S, a)$ where we view $a$ as the level-$W$ structure on $\underline{O_K}_S$: \[\underline{O_K}_{W^*(S)}\otimes_{\underline{O_K}_{W^*(S)}}\mc{O}_{W^*(S)}=\mc{O}_{W^*(S)}\stackrel{a}{\to}\mc{O}_{W^*(S)}.\] This defines a homomorphism \[[-]_W:W_*(\mathbf{G}_\mathrm{m})\to CL_{O_K}^W\]
Finally, composing the forgetful map \[CL_{O_K}^W\to \mc{CL}_{O_K}: (\mc{L}/S, \lambda)\mto \mc{L}/S\] with the map $\mc{CL}_{O_K}\to\underline{CL_{O_K}}$ of (\ref{prop:class-stacks-triv-admis}) we obtain a homomorphism \[f_W: CL_{O_K}^W\to \underline{CL_{O_K}}.\]

\begin{prop} The sequence of sheaves \[0\to \underline{O_K^\times}\stackrel{i_W}{\longrightarrow} W_*(\mathbf{G}_\mathrm{m})\stackrel{[-]_W}{\longrightarrow} CL_{O_K}^W\stackrel{f_W}{\longrightarrow} \underline{\CL_{O_K}}\to 0\] is exact for the \'etale topology and $CL_{O_K}^W$ is representable by a flat, affine formally smooth group scheme over $\Spec(O_K)$.
\end{prop}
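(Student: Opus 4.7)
The plan is to handle the exactness first and then the representability. For exactness, the injectivity of $i_W$ is simply the content of Remark \ref{rema:structure-map-injective}. For exactness at $W_*(\mathbf{G}_\mathrm{m})$, the argument is essentially the one appearing in the proof of (\ref{prop:cw-stack-sheaf}): an element $a\in W(A)^\times$ lies in the kernel of $[-]_W$ if and only if the level-$W$ structures $a$ and $1$ on $\underline{O_K}_S$ are related by an $O_K$-automorphism $\epsilon$ of $\underline{O_K}_S$, which forces $i_W(\epsilon)=a$. The composition $f_W\circ [-]_W$ is trivial by construction. Conversely, if $(\mc{L},\lambda)\in \ker(f_W)$, then $\mc{L}$ is, by definition of $c_{\mc{L}/S}$, \'etale-locally isomorphic to $\underline{O_K}_S$, and after any such trivialisation $\lambda$ becomes an automorphism of $\mc{O}_{W^*(S)}$, i.e.\ an element of $W_*(\mathbf{G}_\mathrm{m})(S)$. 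Finally, to prove surjectivity of $f_W$ at a section $[L]$ of $\underline{\CL_{O_K}}$ over $S$, pick a representative rank one projective $O_K$-module $L$; Zariski-locally on $\Spec(O_K)$ the module $L$ is free, so after pull-back to such a localisation $\underline{L}_S$ admits the trivial level-$W$ structure and this provides a local section of $f_W$.

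For representability, I would first check that $W_*(\mathbf{G}_\mathrm{m})$ itself is flat, affine, and formally smooth over $\Spec(O_K)$: by (iii) of (\ref{prop:def-arithmetic-ghosts}) each truncation $W_{\a*}(\mathbf{G}_\mathrm{m})$ is smooth affine, and $W_*(\mathbf{G}_\mathrm{m})=\lim_\a W_{\a*}(\mathbf{G}_\mathrm{m})$ is therefore the spectrum of the filtered colimit of the corresponding rings, hence affine, flat, and pro-smooth (so formally smooth). The constant group $\underline{O_K^\times}$ is finite \'etale over $\Spec(O_K)$ (since $O_K^\times$ is finite), and the translation action of $i_W(\underline{O_K^\times})$ on $W_*(\mathbf{G}_\mathrm{m})$ is free because $i_W$ is a monomorphism; therefore $W_*(\mathbf{G}_\mathrm{m})\to W_*(\mathbf{G}_\mathrm{m})/i_W(\underline{O_K^\times})$ is a finite \'etale torsor, and by descent the quotient is again affine, flat, and formally smooth.

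The short exact sequence identifies $CL_{O_K}^W\to \underline{\CL_{O_K}}$ with a torsor under the $\Spec(O_K)$-group scheme $W_*(\mathbf{G}_\mathrm{m})/i_W(\underline{O_K^\times})$. Now $\underline{\CL_{O_K}}=\coprod_{[L]\in \CL_{O_K}}\Spec(O_K)$ is a finite disjoint union of copies of $\Spec(O_K)$, and over the component indexed by $[L]$ we can Zariski-localise on $\Spec(O_K)$ so that a chosen representative $L$ becomes free, giving a section of $f_W$ over that open. Thus the torsor is Zariski-locally trivial, hence representable by an affine scheme (descent of affine morphisms), and flatness and formal smoothness descend from the trivialising cover; this gives the desired flat, affine, formally smooth group scheme structure on $CL_{O_K}^W$.

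The main obstacle I anticipate is not in the exactness statement --- which is essentially a diagram chase once one has the descriptions of the kernels --- but rather in verifying cleanly that $W_*(\mathbf{G}_\mathrm{m})$ is formally smooth and that formal smoothness survives taking the \'etale quotient by $\underline{O_K^\times}$ and then the fpqc-locally-trivial torsor construction. Each step is standard but requires some care because $W_*(\mathbf{G}_\mathrm{m})$ is only pro-smooth (not of finite type), so one should phrase the argument in terms of the truncations $W_{\a*}(\mathbf{G}_\mathrm{m})$ and pass to the limit.
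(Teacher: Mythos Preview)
Your exactness arguments at $\underline{O_K^\times}$, at $W_*(\mathbf{G}_\mathrm{m})$, and at $CL_{O_K}^W$ are correct and agree with the paper's proof, as does your representability argument via the quotient $W_*(\mathbf{G}_\mathrm{m})/\underline{O_K^\times}$ and descent along the torsor $f_W$.

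Where you diverge from the paper is in the surjectivity of $f_W$. The paper passes to the finite \'etale cover $\Spec(O_H)\to\Spec(O_K)$, uses the $\Lambda$-structure on $O_H$ to factor the structure map $W^*(\Spec(O_H))\to\Spec(O_K)$ through $\mu_S:W^*(\Spec(O_H))\to\Spec(O_H)$, and then invokes the Hauptidealsatz (\ref{prop:hauptidealsatz}) to conclude that $\a\otimes_{O_K}O_H$ is free. Your route via Zariski opens $U\subset\Spec(O_K)$ on which $L$ is free is genuinely simpler and avoids the Hauptidealsatz altogether.

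There is, however, one point you must make explicit. A level-$W$ structure on $\underline{L}_U$ is a trivialisation of $L\otimes_{O_K}\mc{O}_{W^*(U)}$, not of $L\otimes_{O_K}\mc{O}_U$; and the $O_K$-local system $\underline{L}_U$ is \emph{not} isomorphic to $\underline{O_K}_U$ unless $L$ is already free over $O_K$. What you actually need is that the structure map $W^*(U)\to\Spec(O_K)$ factors through $U$. This holds because $U$ inherits a $\Lambda$-structure from $\Spec(O_K)$ (the Frobenius lifts on $\Spec(O_K)$ are the identity and so preserve the open $U$), whence the $\Lambda$-structure map $\mu_U:W^*(U)\to U$ provides the desired factorisation. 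Then
\[
L\otimes_{O_K}\mc{O}_{W^*(U)}=(L\otimes_{O_K}\mc{O}_U)\otimes_{\mc{O}_U}\mc{O}_{W^*(U)}
\]
is free, and $\underline{L}_U$ admits a level-$W$ structure. This is precisely the mechanism the paper uses for $\Spec(O_H)$; once you spell it out, your argument is complete and in fact more economical than the paper's.
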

\begin{proof} We first show that $f_W: CL_{O_K}^W\to \underline{CL_{O_K}}$ is an epimorphism for the \'etale topology. This is equivalent to showing that for each ideal $\a$ (or at least one in each ideal class of $\CL_{O_K}$) there is an \'etale cover $S\to \Spec(O_K)$ with the property that the $O_K$-local system $\underline{\a}_{S}$ admits a level-$W$ structure.

Let $S=\Spec(O_H)\to \Spec(O_K)$. Then we have an isomorphism of $\mc{O}_{W^*(S)}$-modules \[\underline{\a}_{W^*(S)}\otimes_{O_K}\mc{O}_{W^*(S)}\isomto \a\cdot \mc{O}_{W^*(S)}\] where $\a\cdot \mc{O}_{W^*(S)}$ is the ideal sheaf defining the closed immersion $W^*(S)\times_{\Spec(O_K)}\Spec(O_K/\a)\to W^*(S)$. However, this map is obtained by pulling-back the map $S\times_{\Spec(O_K)}\Spec(O_K/\a)\to S$ along the morphism $\mu_S: W^*(S)\to S$ defining the $\Lambda$-structure on $S=\Spec(O_H)$. Therefore, it is enough to show that the ideal sheaf defining the closed immersion $S\times_{\Spec(O_K)}\Spec(O_K/\a)\to S$ is free, but this sheaf is $\a\otimes_{O_K} O_H$ which is free by the Hauptidealsatz (\ref{prop:hauptidealsatz}).

We now show that the map $[-]_W: W_*(\mathbf{G}_\mathrm{m})\to CL_{O_K}^W$ defines an epimorphism onto the kernel of $f_W$. It is clear that $[-]_W$ maps to $\ker(f_W)$ as the rank one $O_K$-local system underling $[-]_W$ is the trivial one. Now let $S$ be an affine scheme and let $(\mc{L}/S, \lambda)\in \ker(f_W)$. We will show that there exists a cover $(S_i\to S)_{i\in I}$ and elements $a_i\in W_
*(\mathbf{G}_\mathrm{m})(S_i)$ with $[a_i]=(\mc{L}_{S_i}, \lambda_{S_i})$.

Since $(\mc{L}, \lambda)\in \ker(f_W)$ it follows that $\mc{L}$ is \'etale locally isomorphic $\underline{O_K}_S$. Therefore, we may assume that $(\mc{L}, \lambda)=(\underline{O_K}_S, \lambda)$ but then the isomorphism \[\lambda: \underline{O_K}_S\otimes_{\underline{O_K}_S}\mc{O}_{W^*(S)}=\mc{O}_{W^*(S)}\isomto \mc{O}_{W^*(S)}\] is given by some $a\in W_*(\mathbf{G}_{\mathrm{m}})(S)=\Aut_{W^*(S)}(\mc{O}_{W^*(S)})$ and we have \[(\underline{O_K}_S, \lambda)=(\underline{O_K}_S, a)=[a]_W.\]

Finally, we compute the kernel of $[-]_W$. So let $a\in W_*(\mathbf{G}_\mathrm{m})(S)$ and assume that $[a]_W=(\underline{O_K}_S, a)=(\underline{O_K}_S, \id_{\mc{O}_{W^*(S)}})$. By definition, this implies the existence of an isomorphism \[\epsilon\in \Isom_S^{O_K}(\underline{O_K}_S, \underline{O_K}_S)=\underline{O_K^\times}_S(S)\] such that $W^*(\epsilon)=a$. But $W^*(\epsilon)$ viewed as an element of \[\underline{\Aut}_{\mc{O}_{W^*(S)}}(\mc{O}_{W^*(S)})=W_*(\mathbf{G}_\mathrm{m})(S),\] is precisely $i_W(\epsilon)$. Therefore $a=i_W(\epsilon)$ and $\ker([-]_W)=\underline{O_K}^\times\subset W_*(\mathbf{G}_\mathrm{m}).$

It follows from the above that the kernel of $f_W$ is equal to the sheaf of groups \[W_*(\mathbf{G}_{\mathrm{m}})/\underline{O_K^\times}.\] However, $\underline{O_K^\times}$ is finite \'etale and $W_*(\mathbf{G}_{\mathrm{m}})$ is flat and affine and so it follows that the quotient sheaf \[W_*(\mathbf{G}_{\mathrm{m}})/\underline{O_K^\times}\] is also flat and affine. As $\underline{CL_{O_K}}$ is affine, $f_W$ is an epimorphism and \[CL_{O_K}^W\times_{\underline{CL}_{O_K}} CL_{O_K}^W\isomto CL_{O_K}^W\times W_*(\mathbf{G}_{\mathrm{m}})/\underline{O_K^\times}\] is affine and flat it follows that $CL_{O_K}^W$ is affine and flat over $\Spec(O_K)$. Similarly, as $W_*(\mathbf{G}_{\mathrm{m}})$ is formally smooth (being an inverse limit of smooth affine schemes) and $W_*(\mathbf{G}_{\mathrm{m}})\to W_*(\mathbf{G}_{\mathrm{m}})/\underline{O_K}^\times$ is \'etale it follows that $W_*(\mathbf{G}_{\mathrm{m}})/\underline{O_K}^\times$ is formally smooth and by descent that $CL_{O_K}^W$ is formally smooth.
\end{proof}

\subsection{} Let $S$ be an affine scheme and let $E/S$ be a CM elliptic curve. A level-$W$ structure on $E/S$ is an isomorphism of $\mc{O}_{W^*(S)}$-modules \[\rho: \underline{\Lie}_{W_{\CM}^*(E)/W^*(S)}\isomto \mc{O}_{W^*(S)}.\] A $W$-isomorphism $(E/S, \rho)\isomto (E'/S, \rho')$ of CM elliptic curves with level-$W$ structures is an isomorphism $f: E\isomto E'$ of CM elliptic curves such that \[\rho = \rho'\circ \underline{\Lie}_{W_{\CM}^*(E')/W^*(S)}(W_{\CM}^*(f)).\]

We denote by $\mc{M}_\CM^W$ the fibred category over $\IndAff_{O_K}$ whose fibre over an ind-affine scheme $S$ is given by the category of CM elliptic curves with level-$W$ structures together with their $W$-isomorphisms.

Just as with $CL_{O_K}^W$, the objects of $\mc{M}_\CM^W$ admit no non-trivial automorphisms and so the stack $\mc{M}_\CM^W$ is equivalent to its coarse sheaf which we denote by $M_\CM^W$.

There is an action of $CL_{O_K}^W$ on $M_\CM^W$ given by \[M_\CM^W\times CL_{O_K}^W\to M_\CM^W: ((\mc{L}/S, \lambda), (E/S, \rho))\mto (E\otimes_{O_K}\mc{L}, \rho\otimes_{\mc{O}_{W^*(S)}}\lambda)\] where we use the identification (\ref{prop:action-of-cl-on-can-lifts}) \[W^*_{\CM}(E\otimes_{O_K}\mc{L})\isomto W^*_\CM(E)\otimes_{O_K} W^*(\mc{L}).\]

\begin{theo} We have the following:
\begin{enumerate}[label=\textup{(\roman*)}]
\item The forgetful map \[M_{CM}^W\to \mc{M}_\CM\] is affine, faithfully flat and formally smooth.
\item $M_{CM}^W$ is an $CL_{O_K}^W$-torsor over $\Spec(O_K)$ and is therefore flat, affine and formally smooth over $\Spec(O_K)$.
\item The map \[M_{CM}^W\times_{\Spec(O_K)} W_*(\mathbf{G}_\mathrm{m})\to M_{CM}^W\times_{\mc{M}_\CM}M_{CM}^W: ((E/S, \rho), a)\mto ((E/S, \rho), (E/S, a\rho))\] is an isomorphism.
\end{enumerate} 
\end{theo}
\begin{proof} (i) Let $E/S$ be CM elliptic curve and write \[T=\underline{\Lie}_{W^*_{\CM}(E)/W^*(S)}.\] Then the fibre of the map $\mc{M}_{CM}^W\to \mc{M}_\CM$ along $S\stackrel{E}{\to} \mc{M}_\CM$ is given by \[t_{E/S}: W_*(\underline{\Isom}_{\mc{O}_{W^*(S)}}(T, \mc{O}_{W^*(S)}))\times_{W_*(W^*(S))} S\to S\] and so to the prove the claim we need only show that $t_{E/S}$ is affine, faithfully flat and formally smooth.

We now show that $t_{E/S}$ is affine, faithfully flat and formally smooth whenever $E/S$ admits a level-$W$ structure. Indeed, if $(E/S, \rho)$ is a level-$W$ structure on $E/S$ then we obtain an isomorphism \[\mathbf{G}_{\mathrm{m}}\times W^*(S)\isomto \underline{\Isom}_{\mc{O}_{W^*(S)}}(T, \mc{O}_{W^*(S)}): a\mto a\cdot \rho\] and so an isomorphism \[W_*(\mathbf{G}_{\mathrm{m}}\times W^*(S))\times_{W_*(W^*(S))} S \isomto W_*(\underline{\Isom}_{\mc{O}_{W^*(S)}}(T, \mc{O}_{W^*(S)}))\times_{W_*(W^*(S))} S.\] But \[W_*(\mathbf{G}_\mathrm{m}\times W^*(S))\times_{W_*(W^*(S))} S = W_*(\mathbf{G}_{\mathrm{m}})\times S\to S\] is faithfully flat, affine and formally smooth as \[W_*(\mathbf{G}_{\mathrm{m}})=\lim_{\a\in \Id_{O_K}} W_{\a*}(\mathbf{G}_{\mathrm{m}})\] is an inverse limit of faithfully flat, affine and smooth $\Spec(O_K)$-schemes.

As the morphism $t_{E/S}$ is compatible with base change in order to finish the proof of our claim, we may localise $S$ and in particular assume that $E$ admits a level-$\f$ structure for some $\f$ which separates units. We may now assume that $S=M_\CM^{(\f)}$ and that $E=E^{(\f)}$ and, by the previous arguments, to show that $t_{E^{(\f)}/M_\CM^{(\f)}}$ is faithfully flat, affine and formally smooth it is enough to show that $E^{(\f)}/M_\CM^{(\f)}$ admits a level-$W$ structure. To ease notation, let us write $M=M_\CM^{(\f)}$, $E=E^{(\f)}$, $P=\Id_{O_K}^{(\f)}$ and $P'\subset \Id_{O_K}$ for the sub-monoid generated by the prime divisors of $\f$ (so that $P\cap P'=\{O_K\}$ and $P\cdot P'=\Id_{O_K}$).

Then $W^*(M)=W_P^*(W_{P'}^*(M))$ and as $\f$ is invertible on $M$ we have $\Gamma^*_{P'}(M)=W^*_{P'}(M)$ so that  \[W^*(M)=\coprod_{\a\in P'}W_P^*(M).\] Using this and the fact that the $\Lambda_{P, M}$-structure on $E$ is canonical (\ref{prop:canonical-lambda-structures-and-level-structures}) we find \[W^*_{CM}(E)\isomto \coprod_{\a\in P'} \mu_M^*(E)\otimes_{O_K} \a^{-1}= \coprod_{\a} \mu_M^*(E\otimes_{O_K} \a^{-1})\] where \[\mu_M: W^*_{P}(M)\to M\] defines the (unique) $\Lambda_{P}$-structure on $M$.

Now to show that $E/M$ admits level-$W$ structure it is enough to show that \[\underline{\Lie}_{E\otimes_{O_K}\a^{-1}/M}=\underline{\Lie}_{E/M}\otimes_{O_K}\a^{-1}=\underline{\Lie}_{E/M}\] (the last equality is because $\a\in P'$ is invertible on $M$) is free for each $\a\in P'$ but this is (\ref{coro:shimura-curves-minimal-model}).

(ii) Let $(E/S, \rho)$ and $(E'/S, \rho')$ be a pair of CM elliptic curves with level-$W$ structures. Then $E'\isomto E\otimes_{O_K}\mc{L}$ for some $\mc{L}\in \mc{CL}_{O_K}(S)$ and therefore \[W_\CM^*(E)\otimes_{O_K} W^*(\mc{L})\isomto W^*_\CM(E\otimes_{O_K}\mc{L})=W^*_\CM(E').\] We then have \begin{eqnarray*}\underline{\Lie}_{W^*_\CM(E')/W^*(S)}&=&\underline{\Lie}_{W^*_\CM(E\otimes_{O_K} \mc{L})/W^*(S)}\\&=&\underline{\Lie}_{W^*_\CM(E)\otimes_{O_K} W^*(\mc{L})/W^*(S)}\\&=&\underline{\Lie}_{W^*_\CM(E)/W^*(S)}\otimes_{O_K} W^*(\mc{L})\end{eqnarray*} so that the isomorphism \[\rho': \underline{\Lie}_{W^*_\CM(E')/W^*(S)}=\underline{\Lie}_{W^*_\CM(E)/W^*(S)}\otimes_{O_K} W^*(\mc{L})\isomto \mc{O}_{W^*(S)}\] must be of the form $\rho\otimes_{\mc{O}_{W^*(S)}}\lambda$ where $\lambda: W^*(\mc{L})\otimes_{O_K} \mc{O}_{W^*(S)}\to \mc{O}_{W^*(S)}$ is a level-$W$ structure on $\mc{L}$. Therefore \[(E'/S, \rho')=(\mc{L}/S, \lambda) \cdot (E/S, \rho)=(E\otimes_{O_K}\mc{L}, \rho\otimes_{\mc{O}_{W^*(S)}}\lambda)\] and the action of $CL_{O_K}^W$ on $M_\CM^W$ is transitive.

Let us now see that it is free which is equivalent to the claim that if $(E/S, \rho)$ and $(\mc{L}/S, \lambda)$ are a CM elliptic curve and rank one $O_K$-local system with level-$W$ structure then there exists an $W$-isomorphism \[f: (E/S, \rho)\isomto (E\otimes_{O_K}\mc{L}, \rho\otimes \lambda)\] only if there exists a $W$-isomorphism $(\underline{O_K}_S, 1)\isomto (\mc{L}, \lambda)$.

So let $f$ be such an isomorphism. Then $f: E\isomto E\otimes_{O_K}\mc{L}$ is of the form $\id_{E}\otimes_{O_K} h$ for a unique isomorphism $ h : \underline{O_K}_S\isomto \mc{L}$ (\ref{theo:mcm-is-a-torsor}) and as $f$ is compatible with the level-$W$ structures $\rho$ and $\lambda\otimes \rho$, the isomorphism $h$ defines a $W$-isomorphism \[(\underline{O_K}_S, \id_{\mc{O}_{W^*(S)}})\isomto (\mc{L}, \lambda)\] and our claim follows. 

Therefore, the action of $CL_{O_K}^W$ on $M_\CM^W$ is free and transitive so that to show $M_\CM^W$ is a torsor it is enough to show that the structure map $M_\CM^W\to \Spec(O_K)$ is an epimorphism, but this follows from the work done in (ii) showing that any CM elliptic curve $E/S$ \'etale locally admits a level-$W$ structure.

(iii) It is clear that any pair of level-$W$ structures on a CM elliptic curve $E/S$ differ by scaling by an element of $W_*(\mathbf{G}_\mathrm{m})(S)$ which is the claim.
\end{proof}

\subsection{} We now equip $M_{\CM}^W$ with a $\Lambda$-structure. As $M_\CM^W$ is flat and affine to give $M_\CM^{W}$ a $\Lambda$-structure it is enough to define a commuting family of Frobenius lifts $\psi_{M_\CM^W}^\p$ for each prime ideal $\p$. We now set $\psi_{M_\CM^W}^\p$ to be the map defined on $S$-sections \[(E/S, \rho)\mto (E\otimes_{O_K}\p^{-1}/S, \psi^{\p*}(\rho))\] where $\psi^{\p*}(\rho)$ can be viewed as a level-$W$ structure on $E\otimes_{O_K}\p^{-1}$ via the identifications \[\psi^{\p*}(W_\CM^*(E))\isomto W_\CM^*(E)\otimes_{O_K}\p^{-1}\isomto W_\CM^*(E\otimes_{O_K}\p^{-1}).\] It is clear that these maps commute and that they lift the $N\p$-power Frobenius endomorphisms modulo $\p$. Finally, this equips the stack $\mc{M}_\CM$ with a flat affine presentation \[\xymatrix{CL_{O_K}^W\times M_\CM^W \ar@<.5ex>[r]\ar@<-.5ex>[r] & M_\CM^W\ar[r] & \mc{M}_\CM}\] and where the two parallel arrows are morphisms of $\Lambda$-schemes, again expressing the `fact' that $\mc{M}_\CM$ is a $\Lambda$-stack.

\section{Perfect $\Lambda$-schemes and Tate modules} In this final section we exhibit a rather interesting relationship the Tate module of a canonical CM elliptic curve over an $\Lambda$-ind-affine-scheme and a certain deformation of it to the perfection of $S$, which is the universal $\Lambda$-ind-affine-scheme under $S$ on which the Frobenius lifts are isomorphisms. We then show that the canonical lift $E/S$ can be deformed to a canonical CM elliptic curve $E_\per/S^\per$. We end the section by making some remarks about the relationship of this exact sequence with periods, both $p$-adic and analytic.

\subsection{} We first define the Tate module of an arbitrary CM elliptic curve. So let $S$ be an ind-affine scheme $E/S$ a CM elliptic curve. The Tate module of $E/S$ is defined to be the pro-finite locally free $S$-group scheme \[T(E):=\lim_\a E[\a]\otimes_{O_K}\a\] where the transition maps are induced multiplication \[E[\a\b]\otimes_{O_K}\a\b\to E[\a]\otimes_{O_K}\a.\] We also define the universal cover of $E/S$ to be the sheaf \[\widetilde{E}:=\lim_\a E\otimes_{O_K}\a\] where again the transition maps are induced by multiplication. The inclusion $T(E)\to \widetilde{E}$ identifies $T(E)$ with the kernel of the projection $\widetilde{E}\to E$ so that we have an exact sequence of sheaves (for the fpqc topology) \begin{equation}\label{eqn:univ-cover-tate}0\to T(E)\to \widetilde{E}\to E\to 0.\end{equation}

\subsection{} Now let $S$ be a $\Lambda$-ind-affine scheme. The perfection $S^\per$ of $S$ is the $\Lambda$-sheaf defined by \[S^\per=\colim_{\a\in\Id_{O_K}, \psi^\a} S \label{eqn:colim-def-perfect}\] where the transition maps are the Frobenius lifts. The action of the monoid of ideals $\Id_{O_K}$ on $S^\per$ is now by automorphisms so that it extends to an action of the group of fractional ideals $\Id_K$. For $\a\in \Id_K$ (now a fractional ideal) we write $\psi^{\a}_{S_\per}: S^\per\to S^\per$ for the corresponding automorphism. It follows immediately from the definition that $S^\per$ is universal among $\Lambda$-sheaves lying under $S$ on which the Frobenius lifts are isomorphisms. When $S=W^*(T)$ is the Witt vectors of an ind-affine scheme we write \[\widehat{W}^*(T)=W^*(T)^\per.\]

Viewing $S$ as a $\Lambda_{S^\per}$-ind-affine scheme via the element of the colimit (\ref{eqn:colim-def-perfect}) corresponding to $O_K\in \Id_{O_K}$ we see that the structure map of the element of the colimit (\ref{eqn:colim-def-perfect}) corresponding $\a$ is \[S\to S^\per \stackrel{\psi^{\a^{-1}}_{S^\per}}{\to}S^\per\] and the colimit (\ref{eqn:colim-def-perfect}) can be rewritten as the colimit of $S^\per$-sheaves \begin{equation}\label{w-perf-int-colim}S^\per=\colim_{\a\in \Id_{O_K}, \psi_\a}\psi^{\a^{-1}}_{S^\per!}(S).\end{equation} 

\subsection{} If $E/S$ is a CM elliptic curve equipped with a canonical $\Lambda$-structure then we have a natural identification $E\otimes_{O_K}\a^{-1}\isomto \psi^{\a*}_S(E)$. Hence for $\a, \b\in \Id_{O_K}$ we have isomorphisms \[E\otimes \b\isomto \psi^{\a*}_S(E\otimes \a\b)\] and this gives for all $\a$ and $\b$ a Cartesian diagram \begin{equation}\label{dia:cartesian-perfect}\begin{gathered}\xymatrix{E\otimes_{O_K}\b\ar[r]\ar[d] & E\otimes_{O_K}\a\b\ar[d]\\
\psi_{S^\per!}^{\b^{-1}}(S)\ar[r]^{\psi^\a} & \psi_{S^\per !}^{(\a\b)^{-1}}(S).}\end{gathered}\end{equation} We then define a $\Lambda$-sheaf over $S^\per$ by \[E_\per=\colim_{\a\in \Id_{O_K}} E\otimes_{O_K} \a\to S^\per=\colim_{\a\in \Id_{O_K}}\psi^{\a^{-1}}_{S^\per!}(S)=S^\per.\]
\begin{prop} $E_\per$ is a \textup{CM} elliptic curve over $S^\per$ equipped with a canonical $\Lambda_{S^\per}$-structure.
\end{prop}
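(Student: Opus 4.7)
The plan is to prove the two claims in turn, using the cartesian diagram (\ref{dia:cartesian-perfect}) to reduce everything locally on $S^\per$ to corresponding statements for $E/S$.

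First I would show that $E_\per \to S^\per$ is a CM elliptic curve. The decomposition (\ref{w-perf-int-colim}) expresses $S^\per$ as a filtered colimit $\colim_\a \psi_{S^\per!}^{\a^{-1}}(S)$ of ind-affine schemes each isomorphic to $S$, and this family forms an fpqc cover of $S^\per$. The cartesian diagram (\ref{dia:cartesian-perfect}) shows that the pullback of $E_\per$ along each inclusion $\psi_{S^\per!}^{\a^{-1}}(S) \to S^\per$ is canonically isomorphic to $E \otimes_{O_K} \a$, which is a CM elliptic curve over $\psi_{S^\per!}^{\a^{-1}}(S) \isomto S$ by (\ref{prop:cm-tensoring-is-cm}). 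Since $\mc{M}_\CM$ is a stack (\ref{prop:moduli-ell-stack}) and these pullbacks carry compatible descent data (furnished again by the cartesian squares), $E_\per \to S^\per$ is a CM elliptic curve.

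Next I would equip $E_\per$ with a $\Lambda_{S^\per}$-structure. Because $\psi^\a_{S^\per}$ is an automorphism of $S^\per$ for each $\a \in \Id_{O_K}$, the computation $\psi_{S^\per}^{\a*}(E_\per) = \colim_\b \psi^{\a*}_{S^\per}(E\otimes\b)$ identifies, via (\ref{dia:cartesian-perfect}), with $\colim_\b E\otimes(\a^{-1}\b) = E_\per \otimes_{O_K}\a^{-1}$. Defining $\psi_{E_\per/S^\per}^\a$ as the composition
\[
E_\per \xrightarrow{\;i_\a\;} E_\per \otimes_{O_K} \a^{-1} \isomto \psi^{\a*}_{S^\per}(E_\per),
\]
one checks from the corresponding commutativity on $E/S$ that the resulting family satisfies the compatibility (\ref{eqn:semi-linear-comm-sheaves}); this is built from the analogous relations holding at each finite stage $E\otimes\b$. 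By (\ref{prop:lambda-and-psi-homomorphisms-of-abelian-varieties}), combined with the fact that on the mod-$\p$ fibre $i_\p$ agrees with the $N\p$-power Frobenius (which in turn reduces to (\ref{coro:tensor-p-equals-frobenius}) applied to each $E\otimes\b$), this $\Psi_{S^\per}$-structure extends to a genuine $\Lambda_{S^\per}$-structure compatible with the $\underline{O_K}_{S^\per}$-module structure.

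Finally I would verify that this $\Lambda_{S^\per}$-structure is canonical. By (\ref{subsec:p-canonical-lambda-structure}) (applied with $P = \Id_{O_K}$), what must be exhibited is a $\Psi_{\Gamma^*(S^\per)}$-isomorphism
\[
E_\per \times_{S^\per} \Gamma^*(S^\per) \isomto \Gamma^*_\CM(E_\per)
\]
inducing the identity after pulling back along $g_{(1)}$. By the definition of $\psi^\a_{E_\per/S^\per}$, the tautological identifications $\psi^{\a*}_{S^\per}(E_\per) \isomto E_\per \otimes_{O_K}\a^{-1}$ assemble over $\Gamma^*(S^\per) = \coprod_\a S^\per$ into exactly such an isomorphism. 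Its restriction to the ghost component at $(1)$ is the identity by construction, and the $\Psi_{\Gamma^*(S^\per)}$-compatibility reduces, component by component, to the corresponding compatibility for the canonical $\Lambda_S$-structure on $E$ via the identification of each piece with $E\otimes\a$. The main technical point — and the one I expect to require the most care — is the bookkeeping to confirm that the shifts of indices in the colimit defining $E_\per$ really do transport the canonical $\Psi$-isomorphism for $E/S$ into the required one for $E_\per/S^\per$; once this is checked on each stage of the colimit, the result follows by passing to the colimit.
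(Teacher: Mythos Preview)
Your argument has the right shape but contains two genuine gaps.

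\textbf{The $\Lambda$-structure.} You construct a $\Psi_{S^\per}$-structure by hand and then invoke (\ref{prop:lambda-and-psi-homomorphisms-of-abelian-varieties}) to upgrade it to a $\Lambda_{S^\per}$-structure. But that proposition says that a $\Psi$-\emph{morphism} between abelian schemes already equipped with $\Lambda$-structures is a $\Lambda$-morphism; it does not produce a $\Lambda$-structure from a $\Psi$-structure. The result that does this, (\ref{prop:lambda-structures-frobenius-lifts-sheaves})(ii), requires flatness over $O_K$, which you have no reason to expect for $E_\per$ since $S$ is arbitrary. The paper avoids this entirely: each $E\otimes_{O_K}\a$ is a $\Lambda_{S^\per}$-sheaf (the $\Lambda_S$-structure on $E$ tensored with the constant, hence \'etale, local system $\underline{\a}_S$), the transition maps are $\Lambda$-morphisms, and the forgetful functor $\Sh^\et_{\Lambda_{S^\per}}\to \Sh^\et_{S^\per}$ commutes with colimits (\ref{def:w-w-lambda-adjunction}). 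So $E_\per$ is a $\Lambda_{S^\per}$-sheaf simply because it is a colimit of $\Lambda_{S^\per}$-sheaves.

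\textbf{Canonicity.} You write that (\ref{subsec:p-canonical-lambda-structure}) requires a $\Psi_{\Gamma^*(S^\per)}$-isomorphism $E_\per\times_{S^\per}\Gamma^*(S^\per)\isomto \Gamma^*_\CM(E_\per)$. That is the condition from (\ref{subsec:canonical-lambda-structures-witt-vectors}), which applies only to bases of the form $W^*(T)$. For a general $\Lambda$-base the definition in (\ref{subsec:p-canonical-lambda-structure}) asks for a $\Lambda_{W^*(S^\per)}$-isomorphism $E_\per\times_{S^\per}W^*(S^\per)\isomto W^*_\CM(E_\per)$, which is a stronger statement and not what your construction provides. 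The paper instead uses the descent criterion (iii) of (\ref{prop:canonical-lambda-p-structures}): the family $(\psi^{\a^{-1}}_{S^\per!}(S)\to S^\per)_{\a}$ is a cover, the cartesian diagrams identify $E_\per\times_{S^\per}\psi^{\a^{-1}}_{S^\per!}(S)$ with $E\otimes_{O_K}\a$, and the latter carries a canonical $\Lambda_S$-structure since $E$ does. Canonicity then follows immediately.

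Your identification $E_\per\times_{S^\per}\psi^{\a^{-1}}_{S^\per!}(S)\isomto E\otimes_{O_K}\a$ via the cartesian squares is exactly what is needed, so the fix is short: replace your explicit $\Psi$-construction by the colimit argument, and replace your $\Gamma^*$-check by an appeal to (\ref{prop:canonical-lambda-p-structures})(iii).
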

\begin{proof} First, $E_\per$ admits a $\Lambda_{S^\per}$-structure as it is a colimit of $\Lambda_{S^\per}$-sheaves. Secondly, as the colimit is filtered and the diagrams (\ref{dia:cartesian-perfect}) are all cartesian it follows that for all $\a\in \Id_{O_K}$ we have $\Lambda_{S^\per}$-isomorphisms \[E_\per\times_{S^\per}\psi_{S^\per!}^{\a^{-1}}(S)\isomto E\otimes_{O_K}\a.\] This shows that $E_\per$ is a CM elliptic curve. It now also follows from (iii) of (\ref{prop:canonical-lambda-p-structures}) that its $\Lambda_{S^\per}$-structure is canonical, as the $\Lambda_{S}$-structures of $E\otimes_{O_K}\a$ are canonical and $(\psi_{S^\per!}^{\a^{-1}}(S)\to S_\per)_{\a\in \Id_{O_K}}$ is a cover.
\end{proof}

\subsection{} We now relate $E_\per/S^\per$ to $\widetilde{E}/S$. This is nothing more than an application of certain adjunctions and the definitions. Indeed, for each ind-affine scheme $T\to S$, the morphism $T\to S^\per$ induces a morphism $\widehat{W}^*(T)\to S_\per$ and viewing $\widehat{W}^*(T)$ as a $\Lambda_{S^\per}$-ind-affine scheme we have \begin{eqnarray*}\Hom_{S^\per}^{\Lambda}(\widehat{W}^*(T), E_\per)&\stackrel{}{=}&\lim_{\a}\Hom_{S^\per}^{\Lambda}(\psi_{S^\per!}^{\a^{-1}}(W^*(T)), E_\per)\\
&\stackrel{}{=}& \lim_\a \Hom_{S^\per}^\Lambda(W^*(T), \psi^{\a^{-1}*}_{S^\per}(E_\per))\\
&=& \lim_\a \Hom_{S^\per}^\Lambda(W^*(T), E_\per\otimes_{O_K}\a))\\
&\stackrel{}{=}& \lim_\a \Hom_{S}^\Lambda(W^*(T), E\otimes_{O_K}\a)\\
&\stackrel{}{=}& \lim_\a \Hom_S(T, E\otimes_{O_K}\a)\\
&=& \Hom_S(T, \widetilde{E}).\end{eqnarray*} Therefore, we have a natural isomorphism of functors on ind-affine $S$-schemes \[\Hom_{S^\per}^{\Lambda}(\widehat{W}^*(-), E_\per)\isomto \Hom_S(-, \widetilde{E}).\] If we denote the left hand side by \[\widehat{W}_*(E_\per)_\Lambda: T/S\mto \Hom_{S^\per}^\Lambda(\widehat{W}^*(T), E_\per)\] then we may then rewrite the exact sequence of (\ref{eqn:univ-cover-tate}) to obtain the following:

\begin{theo}\label{coro:tate-teichmuller} The is an exact sequence of \textup{fpqc} sheaves \[0\to T(E)\to \widehat{W}_*(E_\per)_\Lambda\to E\to 0.\]
\end{theo}

\subsection{} If now $S$ is an ind-affine scheme (not a $\Lambda$-scheme) and $E/S$ is any CM elliptic curve then we may perform the above construction for $W_\CM^*(E)/W^*(S)$ to obtain a CM elliptic curve \[\widehat{W}_\CM^*(E):=W_\CM^*(E)_\per\to \widehat{W}^*(S)=W^*(S)_\per\] and an exact sequence of sheaves for the fpqc topology over $W^*(S)$: \begin{equation}\label{eqn:per-can-lift} 0\to T(W_\CM^*(E))\to \widehat{W}_*(\widehat{W}^*_\CM(E))_\Lambda\to \widehat{W}^*_\CM(E)\to 0.\end{equation}

If we pull-back this exact sequence along the first ghost component $g_{(1)}: S\to W^*(S)$, and just write \[\widehat{W}_*(\widehat{W}^*_\CM(E))_\Lambda|_{S}=g_{(1)}^*(\widehat{W}_*(\widehat{W}^*_\CM(E))_\Lambda)\] then (\ref{eqn:per-can-lift}) becomes the exact sequence over $S$: \[0\to T(E)\to \widehat{W}_*(\widehat{W}^*_\CM(E))_\Lambda|_{S}\to E\to 0\] (where we use the fact that $g_{(1)}^*(W_\CM^*(E)))=E$).

\begin{coro}\label{coro:tate-teichmuller-two} For any ind-affine scheme $S$ and any \textup{CM} elliptic curve $E/S$ there exists an exact sequence of \textup{fpqc} sheaves over $S$ \[0\to T(E)\to \widehat{W}_*(\widehat{W}^*_\CM(E))_\Lambda|_{S}\to E\to 0.\]
\end{coro}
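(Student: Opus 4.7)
The plan is to deduce this corollary directly from the preceding Theorem (\ref{coro:tate-teichmuller}) by applying it to the canonical lift $W_\CM^*(E)/W^*(S)$ and then pulling back along the ghost component at $(1)$, namely $g_{(1)}\colon S\to W^*(S)$. Since $W_\CM^*(E)/W^*(S)$ is a CM elliptic curve equipped with a canonical $\Lambda_{W^*(S)}$-structure, the preceding theorem, with $S$ replaced by $W^*(S)$ and $E$ replaced by $W_\CM^*(E)$, yields an exact sequence of fpqc sheaves over $W^*(S)$:
\[0\to T(W_\CM^*(E))\to \widehat{W}_*(W_\CM^*(E)_\per)_\Lambda\to W_\CM^*(E)\to 0,\]
which by the definition $\widehat{W}_\CM^*(E):=W_\CM^*(E)_\per$ given immediately before the statement is precisely the sequence~(\ref{eqn:per-can-lift}).

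Next I would pull back this exact sequence along $g_{(1)}\colon S\to W^*(S)$. Pull-back is exact on the category of fpqc sheaves (it is a morphism of topoi, hence its inverse image functor preserves finite limits and colimits), so the resulting sequence of sheaves on $S$ is again exact. It remains to identify each of the three terms after pull-back. The rightmost term is $g_{(1)}^*(W_\CM^*(E))=E$ by the defining property of the canonical lift (the composition $\mc{M}_\CM(S)\xrightarrow{W_\CM^*}\mc{M}_\CM(W^*(S))\xrightarrow{g_{(1)}^*}\mc{M}_\CM(S)$ is canonically isomorphic to the identity). The middle term becomes $\widehat{W}_*(\widehat{W}_\CM^*(E))_\Lambda|_S$ by the notation established just before the statement.

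For the leftmost term I would use that the formation of the Tate module commutes with arbitrary base change: since $T(F):=\lim_\a F[\a]\otimes_{O_K}\a$ and both the torsion subgroups $F[\a]$ and the tensor with the constant $O_K$-module $\a$ commute with base change, so does their limit. Applied to $F=W_\CM^*(E)$ and the base change $g_{(1)}$, using again $g_{(1)}^*(W_\CM^*(E))=E$, this gives $g_{(1)}^*(T(W_\CM^*(E)))=T(E)$.

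Assembling these three identifications in the pulled-back exact sequence produces exactly
\[0\to T(E)\to \widehat{W}_*(\widehat{W}^*_\CM(E))_\Lambda|_S\to E\to 0,\]
which is the claimed exact sequence of fpqc sheaves over $S$. There is no real obstacle: the only points that require care are the identification of the pull-backs (essentially formal, relying on the defining property of $W_\CM^*$ and on base-change compatibility of $T$) and the fact that exactness survives pull-back in the fpqc topos, both of which are routine. The substantive content is entirely contained in Theorem~(\ref{coro:tate-teichmuller}).
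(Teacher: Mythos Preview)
Your proposal is correct and follows exactly the paper's approach: the paragraph immediately preceding the corollary constructs the sequence~(\ref{eqn:per-can-lift}) by applying Theorem~\ref{coro:tate-teichmuller} to $W_\CM^*(E)/W^*(S)$, then pulls it back along $g_{(1)}\colon S\to W^*(S)$ and uses $g_{(1)}^*(W_\CM^*(E))=E$ to identify the outer terms. Your write-up is in fact more careful than the paper's, which leaves the base-change compatibility of the Tate module implicit.
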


\begin{rema} Although we have not discussed it, there is a completely analogous theory of canonical lifts for Lubin--Tate $O$-modules for which the above constructions can also be made. For $O=\Z_p$ and $F=\mu_{p^\infty}$, the analogue of the exact sequence (\ref{coro:tate-teichmuller-two}) evaluated on $\Spf(\overline{\Z}_p)$ gives \[0\to T(\mu_{p^\infty})(\overline{\Z}_p)\to \mu_{p^\infty}(A_{\mathrm{inf}})^{\varphi_p=p}\to \mu_{p^\infty}(\overline{\Z}_p)\to 0\] where $A^{\mathrm{inf}}$ with its Frobenius lift $\varphi_p$ is Fontaine's ring, and we have used the (non-obvious) fact that \[\Spf(A^{\mathrm{inf}})\isomto \widehat{W}^*(\Spf(\overline{\Z}_p)).\] The image in $\mu_{p^\infty}(A^{\mathrm{inf}})$ of a generator $\epsilon\in T(\mu_{p^\infty})(\overline{\Z}_p)$ is Fontaine's element $[\epsilon]$, the logarithm of which is the $p$-adic period $t$.
\end{rema}

\begin{rema} While there is (as yet) no theory of analytic $\Lambda$-structures one can fudge a theory of analytic canonical lifts. Here let us  sketch the construction of an analytic analogue of the short exact sequence (\ref{coro:tate-teichmuller-two}) relating the period lattice of $E^\an/S^\an$ to a certain analytic canonical lift of $E/S$.

So let $S\to \Spec(\C)$ be a complex scheme of finite type. We note that as $K\subset \C$, we have \[W^*(S)=\Gamma^*(S)=\coprod_{\a\in \Id_{O_K}} S =\underline{\Id_{O_K}}\times S\] and \[\widehat{W}^*(S) = \underline{\Id_{K}}\times S.\]

There is a natural analytic analogue of $\Id_K$ which we call $\Id_\C$ and is given by $(\Id_K\times \C^\times)/K^\times$ where $a\in K^\times$ acts on $\Id_K\times \C$ by $(\a, s)\mto ((a)\a, a s)$. Note there is a short exact sequence \[0\to O_K^\times\to \C^\times\to \Id_\C\to \CL_{O_K}\to 0.\]

It is natural, in light of the above, to define the analytic Witt vectors of the analytification $S^\an$ of $S$ to be the analytic space \[W^{\an*}(S):=\Id_\C\times S^\an\] together with its action of $\Id_\C$.

We can now analytically mimic our construction of $W_\CM^*$ over $W^{\an*}(S)$. So define the rank one $O_K$-local system $\mc{L}_\an$ to have fibre over $(\a, s)\times S^\an\subset S^\an\times \Id_\C=W^{\an*}(S)$ the constant $O_K$-local system associated to the rank one $O_K$-module $s\cdot \a^{-1}\subset s\cdot K \subset \C$ (note this depends only on the class of $(\a, s)\in (\Id_K\times\C^\times)/K^\times = \Id_\C$). If $E/S$ is a CM elliptic curve then we define $W^{\an*}_\CM(E)$ to be \[p_S^*(E^\an)\otimes_{O_K}\mc{L}_\an\] where $p_S: W^{\an*}(S)=\Id_{\C}\times S^\an\to S^\an$ is the projection (cf.\ (\ref{subsec:def-can-ghost})).

The CM elliptic curve $W^{\an*}_\CM(E)\to W^{\an*}(S)$ inherits a natural action of $\Id_{O_K}$ (not $\Id_\C$!) which is compatible with that on $W^{\an*}(S)$. Finally, setting $W^{\an*}(X)=\Id_\C\times X$ with its $\Id_{\C}$ action for any analytic space, we define a sheaf on the big analytic site of $S$ by \[W^\an_*(W_\CM^{\an*}(E))_\Lambda|_S: X/S\mto \Hom_{W^{\an*}(S)}^{\Id_{O_K}}(W^{\an*}(X), W^{\an*}_\CM(E)).\] Let us spell out here that the right hand side here denotes the $\Id_{O_K}$-equivariant \textit{analytic} $W^{\an*}(S)$-maps \[W^{\an*}(X)\to W^{\an*}_\CM(E).\] With this definition one can show (almost as formally as in the algebraic situation) that there exists an isomorphism of sheaves on the big analytic site of $S^\an$ \[W^\an_*(W_\CM^{\an*}(E))_\Lambda|_S\isomto \underline{\Lie}_{E^\an/S^\an}\] and so the exponential sequence \[0\to T_{O_K}(E^\an)\to \underline{\Lie}_{E^\an/S^\an}\to E^\an\to 0\] of $E^\an/S^\an$ can be rewritten as \[0\to T_{O_K}(E^\an)\to W^\an_*(W_\CM^{\an*}(E))_\Lambda|_S\to E^\an\to 0.\]
\end{rema}

\appendix

\chapter{Odds and ends}

\section{Formal groups} The purpose of this section is to give an intrinsic definition of a (smooth) formal group. We do this using Messing's definition of infinitesimal neighbourhoods given in \cite{Messing72}, and the general theory of tangent spaces given in \cite{SGA3}.

\subsection{}\label{subsec:infinitesimal} First let us recall the construction of infinitesimal neighbourhoods from Chapter II of \cite{Messing72} and some its properties. So fix a monomorphism of sheaves $Z\to X$. The $k$th infinitesimal neighbourhood of $Z$ in $X$, denoted $\Inf_Z^{(k)}(X)$, is the sub-sheaf of $X$ defined by the property that an affine scheme $T$ mapping to $X$ factors through $\Inf_Z^{(k)}(X)\to X$ if and only if there exists an fpqc cover $(T_i\to T)_{i\in I}$ and closed sub-schemes $(\overline{T}_i\to T_i)_{i\in I}$ defined by ideals whose $(k+1)$st power is $(0)$, such that the composition $\overline{T}_i\to T\to X$ factors through $Z\to X$. Diagrammatically, we have \[\xymatrix{Z\ar@{^(->}[r] & \Inf_Z^{(k)}(X)\ar@{^(->}[r] & X\\
\overline{T}_i\ar[u]\ar@{^(->}[r] & T_i\ar[r]\ar[u] & T.\ar[u]\ar@{.>}[ul]}\] If $Z\to X$ is a closed immersion of \textit{schemes} defined by a quasi-coherent ideal $\mc{I}\subset \mc{O}_X$ then $\Inf_{Z}^{(k)}(X)\to X$ is the closed sub-scheme defined by the ideal $\mc{I}^{(k+1)}\subset \mc{O}_X$.

These constructions satisfy the following:

\begin{enumerate}[label=\textup{(\alph*)}]
\item For $k\leq k'$ there is an inclusion $\Inf_Z^{(k)}(X)\subset \Inf_Z^{k+1}(X)$. We write $\Inf_Z(X)=\colim_{k} \Inf_Z^{(k)}(X)$ and call this completion of $X$ along $Z$.
\item If $X'\to X$ is any morphism then we have \[\Inf_Z^{(k)}(X)\times_X X'=\Inf_{Z\times_X X'}^{(k)}(X').\]
\item If $Z\to X$ is an monomorphism of $S$-sheaves for some sheaf $S$ and $Y$ is another $S$-sheaf equipped with an $S$-monomorphism $Z\to Y$ then, as sub-sheaves of $X\times_S Y$, there are inclusions: \[\Inf_{Z}^{(k)}(Y\times_S X)\subset \Inf_{Z}^{(k)}(X)\times_S \Inf_{Z}^{(k)}(Y)\subset \Inf_{Z}^{(2k)}(X\times_S Y)\] and taking colimits we have \[\Inf_Z(X\times_S Y)=\Inf_Z(Y)\times_S \Inf_Z(X).\]
\end{enumerate}

\subsection{}\label{subsec:formal-completion} Write $\Sh_{S}^\bullet$ for the category of $S$-pointed $S$-sheaves. If $X$ is a pointed $S$-sheaf we write $\widehat{X}=\colim_k \Inf_S^{(k)}(X)$ for the formal neighbourhood of the point $S\to X$. The functor $X\mto \widehat{X}$ preserves finite products of pointed $S$-sheaves so that if $G$ is an $S$-group, viewed as a pointed $S$-sheaf via the identity $S\to G$, then $\widehat{G}$ is again a sheaf of groups over $S$ which we call the formal group of $G$.

\subsection{} Here we recall part of the rather general construction of Lie algebras given in Expos\'e II of \cite{SGA3}. Let $S$ be a sheaf and $\mc{V}$ a vector bundle. Make $\mc{O}_S\oplus\mc{V}$ a sheaf of quasi-coherent $\mc{O}_S$-algebras by declaring that $\mc{V}$ be a square zero ideal, and write $D_S(\mc{V})$ for the $S$-sheaf whose sections over an ind-affine scheme $T\to S$ are given \[D_S(\mc{V})(T)=\Hom_{\mc{O}_{T}}(\mc{O}_{T}\oplus \mc{V}_T, \mc{O}_{T}).\] This defines a contravariant functor $\mathrm{QCoh}(\mc{O}_S)\to \Sh_{S}^\bullet$. If $\mc{V}_1$ and $\mc{V}_2$ are two quasi-coherent $\mc{O}_S$-modules then the two projections $\mc{V}_1\oplus \mc{V}_2\to \mc{V}_i$ for $i=1, 2$ induce for each pointed $S$-sheaf $X$ a morphism \begin{equation}\underline{\Hom}^\bullet_S(D_S(\mc{V}_1\oplus \mc{V}_2), X)\to \underline{\Hom}_S^\bullet(D_S(\mc{V}_1), X)\times_S \underline{\Hom}_S^\bullet(D_S(\mc{V}_2), X)\label{eqn:condition-e}\end{equation} and a pointed $S$-sheaf $X$ is said to satisfy condition (E) if for all vector bundles $\mc{V}_1$, $\mc{V}_2$ the morphism (\ref{eqn:condition-e}) is an isomorphism.

\begin{prop} With notation as above:
\begin{enumerate}[label=\textup{(\roman*)}]
\item if $X$ satisfies condition $(E)$ over $S$ and $S'\to S$ is a morphism then $X_{S'}$ satisfies condition $(E)$ over $S'$,
\item $X$ satisfies condition $E$ for $S$ if and only if there is a cover $(S_i\to S)_{i\in I}$ such that for each $i\in I$ the sheaf $X_{S_i}$ satisfies condition $(E)$ over $S_i$,
\item if $S$ is a scheme and $X$ is an $S$-pointed ind-scheme then $X$ satisfies condition $(E)$.
\end{enumerate}
\end{prop}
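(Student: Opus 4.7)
The plan is to prove the three parts in the order (ii), (i), (iii). The first two are formal consequences of base-change and locality properties of the constructions $D_S(\mc{V})$ and $\underline{\Hom}_S^\bullet(-, -)$; the substantive content lies in (iii).

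For (ii), the morphism (\ref{eqn:condition-e}) is a morphism of $S$-sheaves and being an iso is local. Using the base-change identities $D_S(\mc{V})\times_S S_i = D_{S_i}(\mc{V}_{S_i})$ and $\underline{\Hom}_S^\bullet(P, X)\times_S S_i = \underline{\Hom}_{S_i}^\bullet(P\times_S S_i, X_{S_i})$, the pull-back of (\ref{eqn:condition-e}) to $S_i$ is the corresponding morphism for $X_{S_i}$ with the bundles $\mc{V}_{k, S_i}$, which is an iso by hypothesis. For (i), given $f: S'\to S$ and vector bundles $\mc{V}_k'$ on $S'$, by (ii) I may work on a cover of $S'$ over which each $\mc{V}_k'$ becomes free, and so reduce to the case $\mc{V}_k' = \mc{O}_{S'}^{n_k}$. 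These are pull-backs of $\mc{O}_S^{n_k}$, and the same base-change identities exhibit (\ref{eqn:condition-e}) for $X_{S'}$ as the pull-back of (\ref{eqn:condition-e}) for $X/S$ with the bundles $\mc{O}_S^{n_k}$; the latter is an iso by assumption.

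The content is in (iii). Write $X = \colim_\alpha X_\alpha$ as a filtered colimit of $S$-schemes. Checking that (\ref{eqn:condition-e}) is an iso amounts to checking it is a bijection on $T$-sections for every affine $T\to S$, and these $T$-sections are the pointed $T$-morphisms $D_T(\mc{V}_T)\to X_T$. Because $\mc{V}_T$ is a vector bundle on the affine scheme $T$, the source $D_T(\mc{V}_T) = \Spec_T(\mc{O}_T\oplus\mc{V}_T)$ is affine of finite type over $T$, hence quasi-compact, so every such morphism factors through some $X_\alpha$. This gives \[\Hom_T^\bullet(D_T(\mc{V}_T), X_T) = \colim_\alpha \Hom_T^\bullet(D_T(\mc{V}_T), X_{\alpha, T}),\] and since filtered colimits commute with finite products in $\Set$, I reduce to the case where $X$ is a scheme.

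For $X$ an $S$-scheme with pointing $e: S\to X$, I invoke the standard deformation theory of square-zero thickenings. The closed immersion $T\hookrightarrow D_T(\mc{V}_T)$, defined by the square-zero ideal $\mc{V}_T$, admits a canonical retraction $D_T(\mc{V}_T)\to T$ arising from $\mc{O}_T\hookrightarrow \mc{O}_T\oplus\mc{V}_T$. Consequently the composite $D_T(\mc{V}_T)\to T\to S\xrightarrow{e} X$ is a distinguished pointed extension, trivializing the torsor of pointed $T$-morphisms $D_T(\mc{V}_T)\to X_T$ under the abelian group $\Hom_{\mc{O}_T}(e_T^*\Omega^1_{X/S}, \mc{V}_T)$. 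Writing $\mc{M} := e^*\Omega^1_{X/S}$, the map (\ref{eqn:condition-e}) on $T$-sections becomes the canonical isomorphism \[\Hom_{\mc{O}_T}(\mc{M}_T, \mc{V}_{1,T}\oplus \mc{V}_{2,T}) \cong \Hom_{\mc{O}_T}(\mc{M}_T, \mc{V}_{1,T})\times\Hom_{\mc{O}_T}(\mc{M}_T, \mc{V}_{2,T}),\] finishing the proof. The only subtlety is that the deformation-theory identification is applied to an arbitrary scheme $X$ (without smoothness or finite-presentation hypotheses); this is legitimate because the canonical retraction of $T\to D_T(\mc{V}_T)$ supplies the distinguished extension a priori, so no obstruction-theoretic input is needed to see that the torsor of extensions is non-empty and in natural bijection with the $\Hom$ group above.
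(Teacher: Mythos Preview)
Your proof is correct and follows essentially the same approach as the paper: parts (i) and (ii) are formal base-change/locality arguments (the paper just says they ``follow immediately from the definition''), and for (iii) both you and the paper reduce to the scheme case via filtered colimits and then identify $\underline{\Hom}_S^\bullet(D_S(\mc{V}), X)$ with linear data in $\mc{V}$ coming from $e^*\Omega^1_{X/S}$. Your formula $\Hom_{\mc{O}_T}(e_T^*\Omega^1_{X/S}, \mc{V}_T)$ and the paper's $\mc{V}\otimes_{\mc{O}_S}\underline{\Hom}_{\mc{O}_S}(\Omega_{X/S, \bullet}, \mc{O}_S)$ agree since $\mc{V}$ is a vector bundle, and your remark that the canonical retraction obviates any obstruction theory is exactly the point that makes this work for an arbitrary (not necessarily smooth) scheme $X$.
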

\begin{proof} (i) and (ii) follow immediately from the definition. For (iii) we may assume that $X$ is a scheme as filtered colimits preserve fibre products. In which case the claim follows from the fact that \[\underline{\Hom}_S^\bullet(D_S(\mc{M}), X)=\mc{M}\otimes_{\mc{O}_S}\underline{\Hom}_{\mc{O}_S}(\Omega_{X/S, \bullet}, \mc{O}_S).\]
\end{proof}
\subsection{} If $X$ is a pointed $S$ sheaf satisfying condition (E) over $S$ then, writing $D^{(n)}_S=D_S(\mc{O}^n)$, the inverse of the isomorphism \[\underline{\Hom}^\bullet_S(D^{(2)}_S, X)\to \underline{\Hom}_S^\bullet(D_S^{(1)}, X)\times_S \underline{\Hom}_S^\bullet(D_S^{(1)}, X)\] composed with the map \[\underline{\Hom}^\bullet_S(D^{(2)}_S, X)\to \underline{\Hom}^\bullet_S(D^{(1)}_S, X)\] induced by the sum $\mc{O}_S^2\to \mc{O}_S$ defines a map \[\underline{\Hom}^\bullet_S(D^{(1)}_S, X)\times \underline{\Hom}^\bullet_S(D^{(1)}_S, X)\to \underline{\Hom}^\bullet_S(D^{(1)}_S, X).\] This equips \[\underline{\Lie}_{X/S}:=\underline{\Hom}^\bullet_S(D_S^{(1)}, X)\] with the structure of an abelian group over $S$. Moreover, the $S$-pointed sheaf $D_S^{(1)}$ admits an action of the sheaf of monoids $\mc{O}_S$ induced by the action of $\mc{O}_S$ on itself and this equips $\underline{\Lie}_{X/S}$ with the structure of an $\mc{O}_S$-module. We call $\underline{\Lie}_{X/S}$ the Lie algebra of the pointed $S$-sheaf $X$.

\subsection{}\label{def:formal-var} Let $S$ be a sheaf and let $X$ be a pointed $S$-sheaf. We say that $X$ is a formal variety over $S$ if the following conditions hold:
\begin{enumerate}[label=\textup{(\roman*)}]
\item the inclusion $\widehat{X}\to X$ is an isomorphism,
\item for each $k\geq 0$ the morphism $\Inf_S^{(k)}(X)\to S$ affine,
\item $X$ is formally smooth and locally finitely presented over $S$, and
\item $\underline{\Lie}_{X/S}$ is a vector bundle.\footnote{(i) and (ii) combined show that $X$ satisfies condition (E) over $S$ so that this makes sense.}
\end{enumerate}
\begin{prop}\label{prop:formal-explicit-intrinsic} Let $S$ be a sheaf and $X$ a pointed $S$-sheaf. Then the following are equivalent:
\begin{enumerate}[label=\textup{(\roman*)}]
\item $X$ is a formal variety over $S$,
\item there exists a cover $(S_i\to S)_{i\in I}$, integers $n_i\geq 0$ for $i\in I$ and pointed $S_i$-isomorphisms \[\widehat{\mathbf{A}}^{n_i}_{S_i}=\Inf_{S_i}(\mathbf{A}^{n_i}_{S_i})\isomto X_{S_i}.\]
\end{enumerate}
\end{prop}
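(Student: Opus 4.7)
The plan is to prove (ii) $\Rightarrow$ (i) by direct verification and then concentrate on (i) $\Rightarrow$ (ii), which is the real content.

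For (ii) $\Rightarrow$ (i): all four defining conditions of (\ref{def:formal-var}) can be checked for $\widehat{\mathbf{A}}^n_S$, and each is local on $S$, so it suffices to verify them for this model. Condition (i) of (\ref{def:formal-var}) is built into the definition of $\widehat{\mathbf{A}}^n_S=\Inf_{S}(\mathbf{A}^n_S)$. Condition (ii) is immediate since $\Inf_S^{(k)}(\mathbf{A}^n_S)=\Spec_{\mc O_S}(\mathrm{Sym}_{\mc O_S}(\mc O_S^n)/\mathfrak m^{k+1})$. Formal smoothness and local finite presentation of $\widehat{\mathbf A}^n_S\to S$ are standard (any nilpotent thickening can be lifted coordinate-by-coordinate), and $\underline{\Lie}_{\widehat{\mathbf A}^n_S/S}=\mc O_S^n$.

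For (i) $\Rightarrow$ (ii): the problem being local on $S$, I may assume $\underline{\Lie}_{X/S}=\mc O_S^n$ is free, with basis $\partial_1,\dots,\partial_n\in \underline{\Hom}^\bullet_S(D^{(1)}_S,X)(S)$. Using condition (E), which holds for $X$ thanks to (i) and (ii) of (\ref{def:formal-var}), this basis assembles into a single pointed $S$-morphism $\partial\colon D^{(1),n}_S\to X$, where $D^{(1),n}_S:=D_S(\mc O_S^n)=\Inf_S^{(1)}(\mathbf A^n_S)$. The plan is to extend $\partial$ inductively to compatible pointed $S$-morphisms $f_k\colon \Inf_S^{(k)}(\mathbf A^n_S)\to \Inf_S^{(k)}(X)$ for all $k\geq 0$. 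The inclusion $\Inf_S^{(k)}(\mathbf A^n_S)\hookrightarrow \Inf_S^{(k+1)}(\mathbf A^n_S)$ is an affine nilpotent thickening (its ideal squares to zero after passing to $\Inf_S^{(2k+1)}$, so it is nilpotent), so formal smoothness of $X\to S$, together with the fact that each $\Inf_S^{(k)}(X)$ is affine over $S$ by (ii) of (\ref{def:formal-var}), produces a lift $f_{k+1}$ of $f_k$. Taking the colimit yields a pointed $S$-morphism $f\colon \widehat{\mathbf A}^n_S\to \widehat X=X$.

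It remains to show that $f$ is an isomorphism, and the argument will proceed by induction on $k$ that each $f_k$ is an isomorphism of affine $S$-schemes. The base case $k=0$ is trivial; the case $k=1$ follows because $f_1=\partial$ is, by construction, the $n$-tuple corresponding to a basis of $\underline{\Lie}_{X/S}$, and both $\Inf_S^{(1)}(\mathbf A^n_S)$ and $\Inf_S^{(1)}(X)$ are affine $S$-schemes represented by $\mc O_S\oplus\underline{\Lie}_{X/S}^\vee$ with the square-zero ideal structure. For the inductive step, the surjection of ideals $(T_1,\dots,T_n)^k/(T_1,\dots,T_n)^{k+1}\to \ker(\Inf_S^{(k)}(X)\to \Inf_S^{(k-1)}(X))$ induced by $f_k$ is a map of finite locally free $\mc O_S$-modules of the same rank (on the left, $\mathrm{Sym}^k_{\mc O_S}(\mc O_S^n)$; on the right, the $k$-th symbolic power of the augmentation ideal of $X$, which by formal smoothness and freeness of $\underline{\Lie}_{X/S}$ is $\mathrm{Sym}^k_{\mc O_S}(\underline{\Lie}_{X/S}^\vee)$), and is an isomorphism modulo the maximal ideal by the $k=1$ case. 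The hard part of the argument will be this last computation: identifying the associated graded of $\mc O_{\Inf_S^{(k)}(X)}$ with the symmetric algebra on $\underline{\Lie}_{X/S}^\vee$, which uses formal smoothness in an essential way and requires a careful analysis of the ideal sheaf defining the zero section — this is the main obstacle, and it is handled by the standard graded-ring argument combined with Nakayama applied locally on $S$ and the local finite presentation hypothesis. Once this is established, the five lemma gives that $f_k$ is an isomorphism for every $k$, and passing to the colimit yields a pointed $S$-isomorphism $\widehat{\mathbf A}^n_S\isomto X$ as required.
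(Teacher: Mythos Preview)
The paper gives no argument of its own here; it simply cites Proposition~3.1.1 of Messing \cite{Messing72}. Your sketch is essentially a reconstruction of the standard argument (and presumably Messing's): build a pointed map $\widehat{\mathbf A}^n_S\to X$ from a basis of the Lie algebra using formal smoothness, then check it is an isomorphism on each infinitesimal level by comparing the associated graded. So there is no real divergence in approach to report.

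Two places to tighten. First, your map on graded pieces goes the wrong way: $f_k\colon \Inf_S^{(k)}(\mathbf A^n_S)\to \Inf_S^{(k)}(X)$ induces on structure algebras a map from the $X$-side to the $\mathbf A^n$-side, hence on ideals one gets $I_X^k/I_X^{k+1}\to (T_1,\dots,T_n)^k/(T_1,\dots,T_n)^{k+1}$, not the arrow you wrote. Second, and more substantively, the $k=1$ step already needs formal smoothness, not just the higher ones. You assert that $\Inf_S^{(1)}(X)$ is represented by $\mc O_S\oplus \underline{\Lie}_{X/S}^\vee$, but all the hypotheses give directly is $\Inf_S^{(1)}(X)=\Spec_S(\mc O_S\oplus J)$ with $J$ an $\mc O_S$-module satisfying $J^\vee=\underline{\Lie}_{X/S}$; double-dualising does not recover $J$ unless you already know $J$ is locally free of finite rank. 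That is precisely what formal smoothness buys you (the classical fact that for a formally smooth augmented algebra the conormal module is projective and the associated graded is the symmetric algebra on it). Once you invoke it at this point, both the $k=1$ case and your inductive step go through as you describe.
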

\begin{proof} This follows from Proposition 3.1.1 of \cite{Messing72}.
\end{proof}
\subsection{}\label{subsec:dimension-formal-groups} The dimension $\dim_S(X)$ of a formal variety $X/S$ is defined to be the map $S\to \underline{\mathbf{N}}_S$ giving the rank of the locally free $\mc{O}_S$-module $\underline{\Lie}_{X/S}$.

If $S$ is a scheme and $X/S$ is a smooth scheme over $S$ equipped with a closed $S$-point $S\to X$ then $\widehat{X}$ is a formal variety over $S$. A formal group $F$ over $S$ is a formal variety $F/S$ which is also a sheaf of groups over $S$ (with identity the given point). As the functor $X\mto \widehat{X}$ commutes with products, it follows that if $S$ is a scheme and $G/S$ is a smooth separated group scheme over $S$ then $\widehat{G}/S$ is a formal group over $S$.

\section{Serre's tensor product} Here we give a (very broad) generalisation of a basic construction due to Serre (Chapter XIII \S 2 of \cite{CasselsFrohlich67}). The idea that this construction of Serre could and should be generalised, at least in the study of CM elliptic curves, is not an idea wholly original to the author (see \S 1.7.4 of \cite{CCO2014}) however it was arrived at independently.

\subsection{}\label{subsec:sheaves-of-rings} Let $S$ be a sheaf and let $\mc{A}$ be a sheaf of rings over $S$. We say that an $\mc{A}$-module $\mc{V}$ satisfies condition (P) if, locally on $S$, $\mc{V}$ is a direct factor of a free $\mc{A}$-module of finite rank. The class of $\mc{A}$-modules satisfying condition (P) is clearly closed under the operations of taking $\mc{A}$-linear Hom, tensor product, direct factors and direct sums.

\begin{prop}\label{prop:tensor-exact} Let $\mc{V}$ satisfy condition $(P)$. Then the functor \[\Mod(\mc{A})\to \Mod(\mc{A}): G\mto G\otimes_{\mc{A}}\mc{V}\] is exact.
\end{prop}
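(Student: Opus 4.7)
The plan is to exploit the fact that exactness is a local property together with the fact that the free module $\mc{A}$ gives the identity functor under tensor, so the result should reduce to a direct-summand-of-exact-functor argument.

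First I would argue that exactness of $G \mto G\otimes_{\mc{A}}\mc{V}$ can be checked locally on $S$: kernels, cokernels and tensor products in $\Mod(\mc{A})$ commute with restriction along a morphism of sheaves $S'\to S$ (for tensor products this is because $(G\otimes_{\mc{A}}\mc{V})\times_S S' = G_{S'}\otimes_{\mc{A}_{S'}}\mc{V}_{S'}$, which is part of the construction). Choosing a cover $(S_i\to S)_{i\in I}$ over which $\mc{V}$ becomes a direct summand of $\mc{A}^{n_i}$ for some $n_i\geq 0$, it therefore suffices to prove the proposition under the extra assumption that $\mc{A}^n\isomto \mc{V}\oplus \mc{V}'$ for some $\mc{A}$-module $\mc{V}'$ and some integer $n\geq 0$.

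Next I would observe that the functor $G\mto G\otimes_{\mc{A}}\mc{A}$ is (canonically isomorphic to) the identity functor on $\Mod(\mc{A})$, and that tensor product commutes with finite direct sums in the second variable. Hence $G\mto G\otimes_{\mc{A}}\mc{A}^n$ is naturally isomorphic to $G\mto G^{\oplus n}$, which is exact since $\Mod(\mc{A})$ is an abelian category in which finite direct sums are exact. The splitting $\mc{A}^n\isomto \mc{V}\oplus \mc{V}'$ then induces a functorial direct sum decomposition
\[
G\otimes_{\mc{A}}\mc{A}^n \isomto (G\otimes_{\mc{A}}\mc{V})\oplus (G\otimes_{\mc{A}}\mc{V}').
\]

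Finally, given a short exact sequence $0\to G'\to G\to G''\to 0$ in $\Mod(\mc{A})$, applying the three functors $-\otimes_{\mc{A}}\mc{V}$, $-\otimes_{\mc{A}}\mc{V}'$ and $-\otimes_{\mc{A}}\mc{A}^n$ yields a commutative diagram of complexes whose third row is exact and whose third row is the direct sum of the first two. Taking kernels, images and cokernels commutes with finite direct sums in any abelian category, so the first two rows are exact as well; in particular $-\otimes_{\mc{A}}\mc{V}$ is exact. The main obstacle, and the only place the sheaf-theoretic context really intervenes, is the initial reduction that exactness of the tensor-product functor is local on $S$; everything after that is formal manipulation in the abelian category $\Mod(\mc{A})$.
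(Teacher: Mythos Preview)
Your proposal is correct and follows essentially the same route as the paper: localise so that $\mc{V}$ is a direct summand of $\mc{A}^n$, and then use that $-\otimes_{\mc{A}}\mc{A}^n\cong (-)^n$ is exact to conclude that the direct summand functor $-\otimes_{\mc{A}}\mc{V}$ is exact. The only cosmetic difference is that the paper first reduces to checking that monomorphisms are preserved (tensor products being right exact in any case), whereas you verify full exactness in one stroke via the direct-sum-of-functors argument.
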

\begin{proof} We need only show that this functor preserves monomorphisms. This is local on $S$ and so we may assume that $\mc{V}$ is a direct factor of a free $\mc{A}$-module in which case it is clear as $G\otimes_{\mc{A}}\mc{V}\subset G\otimes_{\mc{A}}\mc{A}^n=G^n$ for some $n\geq 0$ and $G\mto G^n$ is exact.
\end{proof}
\begin{prop}\label{prop:tensor-properties-lie-hom} We have the following
\begin{enumerate}[label=\textup{(\roman*)}]
\item\label{item:hom-tensor-a-modules} For each pair of $\mc{A}$-modules $\mc{V}, \mc{W}$ satisfying condition $(P)$ and each pair of $\mc{A}$-modules $G, F\in \Mod(\mc{A})$ the morphism \[\underline{\Hom}_S^{\mc{A}}(F, G)\otimes_{\mc{A}}\underline{\Hom}_{S}^{\mc{A}}(\mc{V}, \mc{W})\to \underline{\Hom}_S^{\mc{A}}(F\otimes_{\mc{A}}\mc{V}, G\otimes_{\mc{A}}\mc{W})\] is an isomorphism.
\item If $G$ is an $\mc{A}$-module satisfying condition $(E)$ and $\mc{V}$ is an $\mc{A}$-module satisfying condition $(P)$ then the natural morphism $\underline{\Lie}_{G/S}\otimes_{\mc{A}}\mc{V}\to \underline{\Lie}_{G\otimes_{\mc{A}}\mc{V}/S}$ is an isomorphism.
\end{enumerate}
\end{prop}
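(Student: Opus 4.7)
The plan is to prove both statements by a d\'evissage in the $\mc{A}$-modules satisfying condition $(P)$, reducing to the tautological case where these modules are $\mc{A}$ itself. Since condition $(P)$ is local and the statements are local on $S$, I may assume throughout that $\mc{V}$ (and $\mc{W}$ in part (i)) are direct summands of free $\mc{A}$-modules of finite rank.

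For (i), first I would observe that the natural map is functorial in each of $\mc{V}$ and $\mc{W}$, and that both source and target convert finite direct sums in $\mc{V}$ (or $\mc{W}$) into direct sums: on the source side this is because tensor product with $\mc{A}$-modules distributes over direct sums and $\underline{\Hom}^{\mc{A}}_S(-,-)$ takes finite direct sums in the first variable to direct sums (and in the second variable as well); on the target side one uses the same for tensor products of $F$ and $G$ with the $\mc{V}_i$ and $\mc{W}_j$, together with the fact that $\underline{\Hom}^{\mc{A}}_S(F \otimes_{\mc{A}} \mc{V}_1 \oplus F\otimes_{\mc{A}} \mc{V}_2, -)$ and $\underline{\Hom}^{\mc{A}}_S(-, G\otimes_{\mc{A}}\mc{W}_1\oplus G\otimes_{\mc{A}}\mc{W}_2)$ decompose correspondingly. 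Both sides are also compatible with passage to direct summands, so writing $\mc{V}$ (respectively $\mc{W}$) as a direct summand of some $\mc{A}^n$ (respectively $\mc{A}^m$) reduces the question to $\mc{V} = \mc{W} = \mc{A}$, in which case both source and target canonically identify with $\underline{\Hom}^{\mc{A}}_S(F, G)$ and the map is the identity.

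For (ii), the analogous strategy applies, but first I would need to know that the Lie algebra of $G\otimes_{\mc{A}} \mc{V}$ is well-defined, i.e.\ that $G\otimes_{\mc{A}}\mc{V}$ satisfies condition $(E)$. Both facts should follow from the observation that condition $(E)$ is preserved under finite products (immediate from the defining isomorphism, since fibre products distribute over products) and under direct summands (since the defining isomorphism is compatible with a direct sum decomposition on $X$). After this, the key point is that the functor $\underline{\Lie}_{-/S} = \underline{\Hom}^\bullet_S(D_S^{(1)}, -)$ is additive on pointed $S$-sheaves satisfying $(E)$, essentially by the very definition of condition $(E)$ applied with $\mc{V}_1 = \mc{V}_2 = \mc{O}_S$. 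Hence both sides of the natural map in (ii) take direct sums in $\mc{V}$ to direct sums and are compatible with direct summands. Reducing as before to $\mc{V} = \mc{A}$, the statement becomes the tautology $\underline{\Lie}_{G/S} \otimes_{\mc{A}} \mc{A} = \underline{\Lie}_{G/S}$.

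The main obstacle I anticipate is the verification that condition $(E)$ is preserved under formation of direct summands of finite products of $\mc{A}$-modules; once this structural point is established, the rest is a formal d\'evissage. Everything else, including the explicit identification of the natural maps in the case $\mc{V} = \mc{W} = \mc{A}$, is routine unwinding of definitions.
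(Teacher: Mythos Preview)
Your proposal is correct and follows essentially the same d\'evissage as the paper: reduce, via condition~(P), to the case where the auxiliary modules are free (or $\mc{A}$ itself), where the statement is tautological. The only cosmetic difference is that for~(i) the paper first uses adjunction to reduce to $\mc{V}=\mc{A}$ and then does the direct-summand argument in $\mc{W}$ alone, whereas you run the direct-summand reduction in both variables simultaneously; your additional remark that $G\otimes_{\mc{A}}\mc{V}$ inherits condition~(E) is a useful point the paper leaves implicit.
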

\begin{proof} (i) The map defined is functorial and so we may, by adjunction, assume that $\mc{V}=\mc{A}$ so that we are reduced to showing that the map \[i_{\mc{W}}:\underline{\Hom}_S^{\mc{A}}(F, G)\otimes_{\mc{A}}\mc{W}\to \underline{\Hom}_S^{\mc{A}}(F, G\otimes_{\mc{A}}\mc{W})\] is an isomorphism. The claim is clearly local on $S$ so we may assume that $\mc{A}^n=\mc{W}\oplus \mc{W}'$. We then have $i_{\mc{A}^n}=i_{\mc{W}}\oplus i_{\mc{W}'}$ so that it is enough to show the claim for $\mc{A}^n$ which is clear.

(ii) This is proved in much the same way as (i).
\end{proof}
\subsection{}\label{subsec:strict-formal-modules} A formal $\mc{A}$-module over $S$ is a formal group $F/S$ equipped with an action of $\mc{A}$. If we are given a homomorphism $\mc{A}\to \mc{O}_S$ then we say the action is strict, or that $F$ is a strict formal $\mc{A}$-module, if the two actions of $\mc{A}$ on the $\mc{O}_S$-module $\underline{\Lie}_{F/S}$ coming from the action of $\mc{A}$ on $F$ and the homomorphism $\mc{A}\to \mc{O}_S$ coincide. In this case, if $\mc{V}$ is an $\mc{A}$-module satisfying condition (P) then $\mc{V}\otimes_{\mc{A}}\mc{O}_S$ is a locally free $\mc{O}_S$-module and we write $\rk(\mc{V}): S\to \underline{\N}_S$ for the rank of this $\mc{O}_S$-module.
\begin{coro}\label{coro:strict-tensor} If $F$ is a strict formal $\mc{A}$-module over $S$ and $\mc{V}$ is an $\mc{A}$-module over $S$ satisfying condition $(P)$ then $F\otimes_{\mc{A}}\mc{V}$ is a strict formal $\mc{A}$-module. Moreover, we have $\dim(F\otimes_{\mc{A}}\mc{V})=\dim(F)\cdot \rk(\mc{V})$
\end{coro}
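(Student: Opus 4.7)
The proof should be local on $S$, so I would first reduce to the case where $\mc{V}$ is a direct summand of a free $\mc{A}$-module of finite rank $n$: choose an idempotent $e\in \End_{\mc{A}}(\mc{A}^n)$ with image $\mc{V}$. Then $F\otimes_{\mc{A}}\mc{A}^n=F^n$, and the endomorphism $\id_F\otimes e$ of $F^n$ is idempotent, so $F\otimes_{\mc{A}}\mc{V}$ is naturally a direct factor of $F^n$ in the category of sheaves of $\mc{A}$-modules over $S$, namely the image of $\id_F\otimes e$ (equivalently the kernel of $\id-\id_F\otimes e$).

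Next I would verify the four conditions of (\ref{def:formal-var}) for $F\otimes_{\mc{A}}\mc{V}$. The key input is (ii) of (\ref{prop:tensor-properties-lie-hom}), which applies since $\mc{V}$ satisfies (P) and $F$ satisfies (E), and produces a canonical isomorphism
\[
\underline{\Lie}_{F\otimes_{\mc{A}}\mc{V}/S}\isomto \underline{\Lie}_{F/S}\otimes_{\mc{A}}\mc{V}.
\]
Since $\underline{\Lie}_{F/S}$ is a vector bundle of rank $\dim(F)$ and $\mc{V}$ is locally a direct factor of $\mc{A}^n$, the tensor product is locally a direct factor of $\underline{\Lie}_{F/S}^n$, hence locally free of rank $\dim(F)\cdot\rk(\mc{V})$; this gives condition (iv) together with the claimed dimension formula. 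Conditions (i) and (ii) — $\widehat{F\otimes_{\mc{A}}\mc{V}}=F\otimes_{\mc{A}}\mc{V}$ and affineness of each $\Inf_S^{(k)}(F\otimes_{\mc{A}}\mc{V})$ — would follow from the same assertions for $F^n$ together with the base-change property (b) of (\ref{subsec:infinitesimal}) applied to the retract $F\otimes_{\mc{A}}\mc{V}\hookrightarrow F^n$.

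Strictness is then essentially automatic from the same isomorphism: the $\mc{A}$-action on $\underline{\Lie}_{F\otimes_{\mc{A}}\mc{V}/S}$ induced by the $\mc{A}$-action on $F\otimes_{\mc{A}}\mc{V}$ corresponds under the isomorphism above to the action of $\mc{A}$ on either tensor factor; acting on the $\underline{\Lie}_{F/S}$ factor and using strictness of $F$ shows this action coincides with scalar multiplication through $\mc{A}\to\mc{O}_S$, which is what strictness of $F\otimes_{\mc{A}}\mc{V}$ requires.

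The main obstacle I expect is condition (iii) (formal smoothness and local finite presentation). Formal smoothness transfers to retracts via the usual lifting argument, but local finite presentation for the retract of a sheaf locally of finite presentation needs a little care. The cleanest route is to work entirely in the local picture provided by (\ref{prop:formal-explicit-intrinsic}): after localising, $F\isomto\widehat{\mathbf{A}}^{\dim(F)}_S$, so $F^n\isomto\widehat{\mathbf{A}}^{n\dim(F)}_S$, and one shows that the idempotent $\id_F\otimes e$ splits as the completion of a linear idempotent on $\mathbf{A}^{n\dim(F)}_S$ (after a further localisation, via the splitting of the $\mc{O}_S$-module idempotent defining $\mc{V}$), exhibiting $F\otimes_{\mc{A}}\mc{V}$ locally as $\widehat{\mathbf{A}}^{\dim(F)\cdot\rk(\mc{V})}_S$. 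This simultaneously gives (iii), re-derives (iv) and the dimension formula, and completes the verification.
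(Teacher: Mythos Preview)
Your overall strategy matches the paper's: localise so that $\mc{V}$ is a direct factor of $\mc{A}^n$, realise $F\otimes_{\mc{A}}\mc{V}$ as a retract of $F^n$, and then verify the conditions of (\ref{def:formal-var}). Your handling of (i), (ii), (iv) and strictness via (\ref{prop:tensor-properties-lie-hom}) is essentially identical to the paper's.

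The one place where you diverge is condition (iii), and your ``cleanest route'' there has a gap. After identifying $F^n$ with $\widehat{\mathbf{A}}^{n\dim(F)}_S$, the endomorphism $\id_F\otimes e$ is built from the formal group law on $F$ and the $\mc{A}$-action on $F$; in coordinates it is a power series map, not a linear one, so it is \emph{not} in general the completion of a linear idempotent on $\mathbf{A}^{n\dim(F)}_S$. (Also, condition (P) only guarantees that $\mc{V}$ is a direct factor of $\mc{A}^n$, not that it is locally free over $\mc{A}$, so one cannot simply diagonalise $e$.) Your first instinct---that formal smoothness and local finite presentation pass to retracts---is the right one and is precisely what the paper packages as (\ref{prop:tensor-properties}): applying that proposition to the structure map $F\to S$ (the zero $\mc{A}$-module) gives (iii) for $F\otimes_{\mc{A}}\mc{V}\to S$ in one line. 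So drop the explicit-coordinate argument and invoke (\ref{prop:tensor-properties}) instead; the rest of your proof is fine.
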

\begin{proof} As the claim is local on $S$ we may assume that $\mc{V}$ is the kernel of some idempotent endomorphism $\mc{A}^n\to \mc{A}^n$ for some $n$. The diagram of $S$-pointed sheaves \[\xymatrix{F^n\ar[r] & F^n\\
F\otimes_{\mc{A}}\mc{V}\ar[r]\ar[u] & S\ar[u]}\] remains cartesian after applying $\Inf_S^{(k)}$ for each $k\geq 1$. This shows that $F\otimes_{\mc{A}}\mc{V}$ satisfies conditions (i) and (ii) of (\ref{def:formal-var}) and by (\ref{prop:tensor-properties}) we see that $F\otimes_{\mc{A}}\mc{V}$ also satisfies (iii). Therefore $F\otimes_{\mc{A}}\mc{V}$ satisfies condition (E) over $S$ and by (\ref{prop:tensor-properties-lie-hom}) it satisfies condition (iv) of (\ref{def:formal-var}) so that $F/S$ is a formal $\mc{A}$-module over $S$. That the action of $\mc{A}$ on $F\otimes_{\mc{A}}\mc{V}$ is strict and $\dim(F\otimes_{\mc{A}}\mc{V})=\dim(F)\cdot \rk(\mc{V})$ follows from (\ref{prop:tensor-properties-lie-hom}).
\end{proof}
\begin{prop}\label{prop:tensor-properties} Let $f: G\to F$ be a homomorphism of $\mc{A}$-modules and let $\mc{V}$ satisfy condition $(P)$. If $f$ satisfies one of the following properties then so does $f\otimes_{\mc{A}}\mc{V}: G\otimes_{\mc{A}}\mc{V}\to F\otimes_{\mc{A}}\mc{V}$:
\begin{enumerate}[label=\textup{(\roman*)}]
\item formally unramified, formally smooth or formally \'etale,
\item formally universally closed, formally separated, formally proper\footnote{Here we mean that $G/S$ satisfies the local existence, resp.\ uniqueness (resp.\ local existence and uniqueness) of the valuative criterion.}
\item locally finitely presented, quasi-compact or quasi-separated,
\item has connected geometric fibres,
\item affine, or affine and flat,
\item finite locally free.
\end{enumerate}
\end{prop}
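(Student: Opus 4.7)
The approach is to exploit the local structure afforded by condition (P): since $\mc{V}$ is locally a direct summand of a finite free $\mc{A}$-module $\mc{A}^n$ and all listed properties are local on $S$, I would pass to a cover on which $\mc{V}\oplus\mc{V}'\cong\mc{A}^n$. On such a cover, $G\otimes_{\mc{A}}\mc{V}$ appears as a direct factor of $G^n$ (similarly for $F$), with the $\mc{A}$-linear idempotent $e\colon\mc{A}^n\to\mc{A}^n$ projecting onto $\mc{V}$ inducing compatible idempotents $e_G$ on $G^n$ and $e_F$ on $F^n$. Thus $G\otimes_{\mc{A}}\mc{V}$ appears simultaneously as a closed subsheaf $\iota_G\colon G\otimes_{\mc{A}}\mc{V}\hookrightarrow G^n$ (the fibre of $1-e_G$ over the zero section of the separated group sheaf $G^n$) and as a retract $\pi_G\colon G^n\to G\otimes_{\mc{A}}\mc{V}$ with $\pi_G\circ\iota_G=\id$; and similarly for $F$. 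The $\mc{A}$-equivariance of $f^n$ yields the intertwinings $f^n\circ\iota_G=\iota_F\circ(f\otimes_{\mc{A}}\mc{V})$ and $(f\otimes_{\mc{A}}\mc{V})\circ\pi_G=\pi_F\circ f^n$, together with a product decomposition $f^n=(f\otimes_{\mc{A}}\mc{V})\times(f\otimes_{\mc{A}}\mc{V}')$.

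Each property (i)--(vi) is preserved under finite products of morphisms, so $f^n$ inherits whichever property holds for $f$; the core task is then to transfer each property from $f^n$ to its direct factor $f\otimes_{\mc{A}}\mc{V}$. For (i) and (ii), given a lifting or valuative test problem for $f\otimes_{\mc{A}}\mc{V}$, I would postcompose with $\iota_G,\iota_F$ to produce a test problem for $f^n$; the solution given by the property of $f^n$, projected back via $\pi_G$, solves the original problem by the intertwinings, and injectivity of $\iota_G$ yields the uniqueness statements (formally unramified, formally separated). For (iii), since $-\otimes_{\mc{A}}\mc{V}$ is a retract of $-\otimes_{\mc{A}}\mc{A}^n=(-)^n$ and the latter preserves locally finite presentation, quasi-compactness and quasi-separatedness, so does the former. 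For (iv), each geometric fibre of $G\otimes_{\mc{A}}\mc{V}\to S$ is a retract via $e_G$ of the corresponding fibre of $G^n\to S$, which is connected as a finite product of connected fibres of $G\to S$, and a retract of a connected space is connected. For (v) affineness, the composition $G\otimes_{\mc{A}}\mc{V}\stackrel{\iota_G}{\hookrightarrow} G^n\stackrel{f^n}{\to} F^n$ is affine (closed immersion followed by affine morphism), and since it factors as $G\otimes_{\mc{A}}\mc{V}\to F\otimes_{\mc{A}}\mc{V}\stackrel{\iota_F}{\hookrightarrow}F^n$ with $\iota_F$ a separated monomorphism, the standard lemma (if $X\to Y\to Z$ has $X\to Z$ affine and $Y\to Z$ a separated monomorphism, then $X\to Y$ is affine) yields that $f\otimes_{\mc{A}}\mc{V}$ is affine.

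The main obstacle lies in the flatness content of (v) ``affine and flat'' and the local-freeness in (vi) ``finite locally free'', because flatness does not transfer directly via the retract alone. For flatness I would argue by descent along the faithfully flat projection $\pi_F\colon F^n\to F\otimes_{\mc{A}}\mc{V}$ (faithfully flat because $F\otimes_{\mc{A}}\mc{V}'$, as a direct factor of $F^n$ over $F\otimes_{\mc{A}}\mc{V}$, is non-empty thanks to the zero section of the group sheaf): the pullback of $f\otimes_{\mc{A}}\mc{V}$ along $\pi_F$ identifies naturally with a morphism sitting inside the product decomposition of $f^n$, and flatness of $f^n$ combined with flatness of $\id\times(f\otimes_{\mc{A}}\mc{V}')$ and the exactness of $-\otimes_{\mc{A}}\mc{V}$ from (\ref{prop:tensor-exact}) forces flatness of $f\otimes_{\mc{A}}\mc{V}$. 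For (vi), once finiteness (via the affine argument) and flatness have been established, local-freeness of $(f\otimes_{\mc{A}}\mc{V})_*\mc{O}_{G\otimes_{\mc{A}}\mc{V}}$ follows from its identification, via the idempotent $e_G$, as a direct summand of the finite locally free module $(f^n)_*\mc{O}_{G^n}$ restricted to $F\otimes_{\mc{A}}\mc{V}$, and a direct summand of a finite locally free module is finite locally free. The technical heart is verifying that these idempotent decompositions and descent steps pass cleanly through the sheaf-theoretic functorialities, in particular that the zero section of the separated group sheaf $G^n$ really does furnish $\iota_G$ with the structure of a closed immersion in the generality of fpqc sheaves considered here.
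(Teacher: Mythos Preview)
Your reduction to $\mc{V}\oplus\mc{V}'=\mc{A}^n$ and the observation that $f^n=(f\otimes_{\mc{A}}\mc{V})\times(f\otimes_{\mc{A}}\mc{V}')$ inherits each listed property is the same as the paper's first step. From there the paper is much more economical than your case-by-case treatment: it observes that the square
\[\xymatrix{G^n\ar[rr]^{f^n} && F^n\\
(G\otimes_{\mc{A}} \mc{V})\times_S \ker(f\otimes_{\mc{A}}\mc{V}')\ar[rr]\ar[u] && F\otimes_{\mc{A}} \mc{V}\ar[u]_{\iota_F}}\]
is cartesian, so its bottom row --- which is $f\otimes_{\mc{A}}\mc{V}$ with an extra factor of $K=\ker(f\otimes_{\mc{A}}\mc{V}')$ on the source --- inherits the property by base change. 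Since $K\to S$ is an epimorphism (it has the zero section), one finishes by descent along this cover. Every property in (i)--(vi) is stable under $f\mapsto f^n$, under base change, and descends along covers of $S$, so this single argument handles all cases at once. Your retract/section arguments for (i)--(iv) and the affine half of (v) are correct but repeat the same mechanism six times; note also that your worry about $\iota_G$ being a closed immersion is unnecessary, since your arguments only ever use that $\iota_G,\iota_F$ are split \emph{monomorphisms}, which is automatic.

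Your flatness argument in (v), however, has a genuine gap. You assert that $\pi_F:F^n\to F\otimes_{\mc{A}}\mc{V}$ is faithfully flat but justify only surjectivity; flatness of $\pi_F$ would require $F\otimes_{\mc{A}}\mc{V}'\to S$ to be flat, which is nowhere assumed. The attempted factorisation $f^n=((f\otimes_{\mc{A}}\mc{V})\times\id)\circ(\id\times(f\otimes_{\mc{A}}\mc{V}'))$ does not rescue this: to cancel and deduce flatness of the first factor you would need the second factor faithfully flat, i.e.\ $f\otimes_{\mc{A}}\mc{V}'$ flat --- by symmetry, exactly the claim in question. The paper's cartesian-square route avoids this circularity entirely, since there one only uses stability under base change (to pass from $f^n$ to the bottom row) and descent along an epimorphism with a section (to strip off the $K$-factor). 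Incidentally, for (vi) you need not establish flatness separately: your direct-summand identification of the pushforward already gives finite local freeness on its own.
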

\begin{proof} All properties of morphisms of sheaves descend under covers of $S$ so we may assume that $\mc{V}\oplus \mc{V}'=\mc{A}^n$ and hence \[(f\otimes_{\mc{A}}\mc{V})\oplus (f\otimes_{\mc{A}}\mc{V}')=f\otimes_{\mc{A}}\mc{A}^n=f^n.\] Moreover, these properties are all preserved by $f\mto f^n$ and base change. From the cartesian diagram \[\xymatrix{G^n\ar[rr]^{f^n} && F^n\\
(G\otimes_{\mc{A}} \mc{V})\oplus \ker(f\otimes_{\mc{A}}\mc{V}')\ar[rr]^-{(f\otimes_{\mc{A}}\mc{V})\oplus 0}\ar[u] && F\otimes_{\mc{A}} \mc{V}\ar[u]}\] we see that \[(f\otimes_{\mc{A}}\mc{V})\oplus 0=(f\otimes_{\mc{A}}\mc{V})\times_{S}\ker(f\otimes_{\mc{A}}\mc{V}')\] satisfies the given property. But $\ker(f\otimes_{\mc{A}}\mc{V}')\to S$ is an epimorphism hence, $f\otimes_{\mc{A}}\mc{V}$ satisfies the given property.
\end{proof}

\section{Strict finite $O$-modules} \label{appendix:faltings-strict-modules}
\subsection{} Here we give a short overview of faltings' generalisation of Cartier duality to strict finite $O$-modules \cite{Faltings02}. We will then use it to prove (\ref{prop:lubin-tate-faltings}) and (\ref{prop:lubin-tate-torsor}) as claimed (see (\ref{coro:lubin-tate-faltings})).

Let $O$ be a complete local Dedekind domain with maximal ideal $\p$, residue field $\F$ of cardinality $N\p$ and fix an affine scheme $S\to \Spf(O)$. In \cite{Faltings02} faltings' defines the notion of a strict finite $O$-module $G$ over $S$ and the notion of a strict homomorphism between strict finite $O$-modules. We will not recall the definition but only say the following. A strict finite $O$-module is a finite locally free group scheme $G$ over $S$, equipped with an action of $O$ satisfying a certain strictness condition. The strictness condition on the $O$-action means that for each $a\in O$, the endomorphism $a: G\to G$, can be lifted along a certain nilpotent thickening $G\to G^{\flat}$ in such a way that this lift acts by multiplication by $a$ on the fibre of the cotangent complex of $G/S$ at the origin. A homomorphism $f: G\to G'$ of strict finite $O$-modules is strict if it can be lifted to a map $G^\flat\to G'^\flat$ compatible with the lift of the $O$-action. We refer the reader to \S 2 of \cite{Faltings02} for the precise definitions.

In any case, one obtains the category of strict finite $O$-modules and it is a sub-category of the category of finite locally free groups schemes over $S$ equipped with an action of $O$. Moreover, if $S'\to S$ is a morphism and $G/S$ is a strict finite $O$-module so is $G\times_S S'$.

\begin{exem} Every finite locally free \'etale group scheme over $S$ equipped with an action of $O$ is strict and every $O$-linear homomorphism either to or from an \'etale strict finite $O$-module is strict. This is explained by the fact that the cotangent complex of such a scheme is trivial.

For each $\p$-adic affine scheme $S$ and each Lubin--Tate module $F\to \Spf(O)$, writing $F_{S}=F\times_{\Spf(O)}S$, the finite locally free group schemes $F_S[\p^{n}]$ equipped with their $O$-action are naturally strict finite $O$-modules over $S$.
\end{exem}

\subsection{} We now explain faltings' version of Cartier duality for strict finite $O$-modules. Let $F=F_\pi$ be the Lubin--Tate module over $\Spf(O)$ associated to the uniformiser $\pi\in O$, so that \[\pi: F\to F\] lifts the $N\p$-power Frobenius map over $\Spec(\F)\to \Spf(O)$. Given a strict finite $O$-module $G/S$ we define the sheaf of $O$-modules over $S$ \[D_\pi(G)=\colim_r \underline{\Hom}_S^{O, \str}(G, F_S[\p^r]).\] Faltings then proves the following (see Theorem 8 of \cite{Faltings02}):

\begin{theo} The functor \[G\mto D_\pi(G):=\colim_r \underline{\Hom}_S^{O, \str}(G, F_{\pi/S}[\p^r])\] defines a duality on the category strict finite $O$-modules over $S$ and is compatible with base change in $S$. Moreover, the degree of $D_\pi(G)$ is equal to the degree of $G$ and if $f: G\to G'$ is a strict homomorphism of strict finite $O$-modules over $S$ then $D_\pi(f)$ is a closed immersion (resp. faithfully flat) if and only if $f$ is faithfully flat (resp. a closed immersion).
\end{theo}

\subsection{} Given a strict finite $O$-module $G$ we call $D_\pi(G)$ the dual of $G$.

If $S$ has characteristic $\p$ then for each strict finite $O$-module $G$ the $N\p$-power relative Frobenius map \[\Fr_{G/S}^{N\p}: G\to \Fr^{N\p*}(G)\] is strict so that taking the dual of the $N\p$-power relative Frobenius map of the dual of $G$ one obtains a map \[V_{G/S}: \Fr^{N\p*}(G)\to G\] and Faltings shows that the composition \[G\stackrel{\Fr_{G/S}^{N\p}}{\longrightarrow}\Fr^{N\p*}(G)\stackrel{V_{G/S}}{\longrightarrow} G\] is equal to the endomorphism $\pi: G\to G$ (see the paragraph of \S 7 \cite{Faltings02}).

It is a formality to extend the duality $G\mto D_\pi(G)$ to a pair of functors defining inverse anti-equivalences between the categories of ind-strict finite $O$-modules and pro-strict finite $O$-modules.

The final observation we need to make is that if $F/S$ is a Lubin--Tate $O$-module then the inclusions \[F[\p^n]\to F[\p^{n+1}]\] are strict so that we may view $F=\colim_n F[\p^n]$ as an ind-strict $O$-module over $S$. Moreover, with this definition every homomorphism of Lubin--Tate $O$-modules over $S$ is a morphism of ind-strict finite $O$-modules (this is a consequence of the formal smoothness of $F$).

\begin{coro} The functor from ind-strict finite $O$-modules to pro-strict finite $O$-modules \[F/S\mto \lim_n D_\pi(F[\p^n])\] defines an anti-equivalence of categories between Lubin--Tate $O$-modules over $S$ and rank one $O$-local systems over $S$ with quasi-inverse \[\mc{L}\mto F_{\pi/S}\otimes_{O}\mc{L}^{\vee}.\]
\end{coro}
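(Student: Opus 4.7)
The plan is to leverage Faltings' anti-equivalence on strict finite $O$-modules, extended formally to pro/ind-objects, and use it to get full faithfulness of $D_\pi$ on Lubin--Tate modules as a black box (since a Lubin--Tate $O$-module $F/S$ is an ind-strict finite $O$-module, as noted immediately before the corollary). It therefore suffices to establish three things: (a) $D_\pi(F_{\pi,S}) \cong \widehat{O}_S$; (b) the natural compatibility $D_\pi(F \otimes_O \mc{L}) \cong D_\pi(F) \otimes_O \mc{L}^\vee$ for any rank one $O$-local system $\mc{L}$; and (c) $D_\pi(F)$ is a rank one $O$-local system for every Lubin--Tate $F/S$.

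First I would establish (a) by direct computation. Over $\Spf(O)$, the explicit description of $F_\pi$ from (\ref{exem:lubin-tate-modules-exist}) as $\widehat{\mathbf{A}}^1_{\Spf(O)}$ with $[\pi](T) = \pi T + T^{N\p}$, combined with the arguments of (\ref{prop:formal-o-mod-frob}), yields $\End^O(F_\pi) = O$. Each such endomorphism is automatically strict since it acts on the Lie algebra by the corresponding scalar in $O$. Restricting to $\p^n$-torsion shows $\End^{O,\str}(F_\pi[\p^n]) = O/\p^n$, and since any strict $O$-linear map $F_\pi[\p^n] \to F_\pi[\p^r]$ for $r \geq n$ factors through $F_\pi[\p^n] \subset F_\pi[\p^r]$, we conclude $D_\pi(F_\pi[\p^n]) \cong \underline{O/\p^n}_{\Spf(O)}$. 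Passing to the limit and using base-change compatibility of Faltings' duality gives $D_\pi(F_{\pi,S}) \cong \widehat{O}_S$ for any $S$.

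For (c), étaleness of each $D_\pi(F[\p^n])$ is the crucial point and can be checked on geometric fibres. In residue characteristic different from $\p$ it is automatic. In characteristic $\p$, (\ref{lemm:p-torsion-lifts-frob-in-lubin-tate}) gives $F[\p^n] = \ker(\Fr^{N\p^n}_{F/S})$, so the $N\p^n$-power relative Frobenius vanishes on $F[\p^n]$; dualizing via the observation recalled in the appendix that Frobenius and Verschiebung are Faltings-dual, the Verschiebung of $D_\pi(F[\p^n])$ vanishes, which forces it to be étale. That the resulting étale $O/\p^n$-module is locally free of rank one follows by comparing degrees and the $O$-action, and reduces fibrewise to the case $F = F_{\pi,S}$ handled in (a).

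The key technical step is (b): the natural map $D_\pi(F \otimes_O \mc{L}) \to D_\pi(F) \otimes_O \mc{L}^\vee$ constructed from the Hom-tensor adjunction is an isomorphism. \'Etale-locally on $S$ where $\mc{L}$ trivializes it follows from (\ref{prop:cm-tensor-homs-lt}) together with the preservation of strictness under tensoring with $O$-local systems, and one descends by fpqc. With (a)--(c) in hand, one direction of the quasi-inverse statement is immediate: $D_\pi(F_{\pi,S} \otimes_O \mc{L}^\vee) \cong \widehat{O}_S \otimes_O \mc{L} \cong \mc{L}$. The other direction, $F \cong F_{\pi,S} \otimes_O D_\pi(F)^\vee$, then follows by applying Faltings' (full and faithful) $D_\pi$ to both sides and reducing to the previous identity. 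The main obstacle will be (b): while heuristically transparent, a rigorous verification within Faltings' framework — where strictness is defined via auxiliary nilpotent thickenings and is not obviously preserved by formation of Serre tensor products with \'etale $O$-modules — is where essentially all the technical work lies.
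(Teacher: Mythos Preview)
Your argument for \'etaleness in (c) contains a genuine error. From $F[\p^n]=\ker(\Fr^{N\p^n}_{F/S})$ you correctly deduce that the $n$-fold relative Frobenius vanishes on $F[\p^n]$, and dualizing gives that $V^n=0$ on $D_\pi(F[\p^n])$. But the implication ``Verschiebung is nilpotent $\Rightarrow$ \'etale'' is false: already for ordinary finite flat group schemes in characteristic $p$, nilpotent $V$ characterises \emph{unipotent} group schemes, not \'etale ones (think of $\alpha_p$, which has $F=V=0$). The correct criterion for \'etaleness is that \emph{Frobenius} be an isomorphism. The paper obtains this by a different (and correct) route: in characteristic $\p$ the factorisation $\pi = (1\otimes\pi)\circ\Fr_{G/S}^{N\p}$ through $\Fr^{N\p*}(G)\cong G\otimes_O\p^{-1}$ shows that $V_{G/S}$ is an \emph{isomorphism} on the Lubin--Tate side; dualising, $\Fr$ is an isomorphism on $D_\pi(G[\p^n])$, hence the latter is \'etale.

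Your step (b) --- the compatibility $D_\pi(F\otimes_O\mc{L})\cong D_\pi(F)\otimes_O\mc{L}^\vee$, which you flag as the main technical obstacle --- is entirely unnecessary. The paper never proves or uses it. Instead, once \'etaleness of $D_\pi(G[\p^n])$ is known, the paper dualises the exact sequences $0\to G[\p^n]\to G[\p^{n+1}]\stackrel{\pi}{\to} G[\p^{n+1}]$ and argues inductively on degrees (after localising to make everything constant) that $D_\pi(G[\p^n])\cong\underline{O/\p^n}_S$, giving $D_\pi(G)\cong\widehat{O}_S$ locally. The converse direction and the identification of the quasi-inverse follow by computing $D_\pi(\mc{L})$ directly from the definition: since $\mc{L}$ is \'etale all $O$-linear maps out of it are strict, and one gets $D_\pi(\mc{L})=\colim_n F_S[\p^n]\otimes_O\mc{L}^\vee=F_S\otimes_O\mc{L}^\vee$ immediately. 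This sidesteps any need to analyse how strictness interacts with Serre's tensor product.
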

\begin{proof} It is enough to show that given an ind-strict finite $O$-module $G$, the pro-strict finite $O$-module $D_\pi(G)$ is a rank one $O$-local system if and only if $G$ is a Lubin--Tate $O$-module.
If $D_\pi(G)$ is a rank one $O$-local system, as the functor $D_\pi$ is compatible with base change and being a Lubin--Tate $O$-module is local on $S$ we may assume that $D_\pi(F)\isomto \widehat{O}_S$. We then get $G\isomto D_\pi(D_\pi(G))\isomto D_\pi(\widehat{O}_S)=F_{\pi/S}$ so that $G$ is a Lubin--Tate $O$-module.

Conversely, let $G$ be a Lubin--Tate $O$-module. We claim that $D_\pi(G[\p^n])$ is \'etale for all $n\geq 0$. As $D_\pi(G[\p^n])$ is finite locally free and $S$ is $\p$-adic, to show this we may assume that $S$ has characteristic $\p$. In this case, the composition \[G\stackrel{\Fr_{G/S}^{N\p}}{\to} \Fr^{N\p*}(G)=G\otimes_{O}\p^{-1}\stackrel{1\otimes \pi}{\longrightarrow} G\] is equal to $\pi$ from which it follows that $V_{G/S}$ is an isomorphism. This implies that $V_{G[\p^n]/S}$ is an isomorphism for all $n\geq 0$, so that $D(V_{G[\p^n]/S})=\Fr_{D_\pi(G[\p^n])/S}^{N\p}$ is an isomorphism. Therefore $D_\pi(G[\p^n])$ is \'etale and it follows that $D_\pi(G)=\lim_n D_\pi(G[\p^n])$ is a pro-finite \'etale strict $O$-module scheme.

The exact sequences \[0\to G[\p^n]\to G[\p^{n+1}]\stackrel{\pi}{\to} G[\p^{n+1}]\] now give exact sequences \begin{equation} \label{eqn:short-etale-faltings}D_{\pi}(G[\p^{n+1}])\stackrel{\pi}{\longrightarrow}D_\pi(G[\p^{n+1}])\to D_\pi(G[\p^{n}])\to 0.\end{equation} Localising, we may assume that $D_\pi(G[\p^n])$ is a strict finite constant $O$-module for all $n\geq 0$. Then the short exact sequences (\ref{eqn:short-etale-faltings}) combined with the fact that $\deg(D_{\pi}(G[\p^{n}]))=N\p^n$ for all $n\geq 0$, show inductively that there exists an isomorphism $D_{\pi}(F[\p^{n}])\isomto \underline{O/\p^n}_S$ such that the maps \[D_{\pi}(G[\p^{n+1}])\to D_{\pi}(G[\p^{n}])\] correspond to the reduction maps \[\underline{O/\p^{n+1}}_S\to \underline{O/\p^n}_S.\] Therefore, \[D_\pi(G)=\lim_n D_\pi(G[\p^n])\isomto \lim_n \underline{O/\p^n}=\widehat{O}_S\] is a rank one $O$-local system on $S$ and this shows that $D_\pi$ defines a contravariant equivalence between the category of Lubin-Tate $O$-modules and rank one $O$-local systems over $S$.

For the statement regarding the quasi-inverse, we have \begin{eqnarray*} D_\pi(\mc{L})&=&\colim_n \underline{\Hom}^{O, \str}_S(\mc{L}, F_S[\p^n])\\
&=&\colim_n \underline{\Hom}^{O}_S(\mc{L}, F_S[\p^n])\\
&=&\colim_n \underline{\Hom}_S(\widehat{O}_S, F_S[\p^n]\otimes_{O}\mc{L}^\vee)\\
& = & \colim_n F_S[\p^n]\otimes_{O}\mc{L}^\vee\\
&=&F_S\otimes_{O}\mc{L}^\vee.\end{eqnarray*}
\end{proof}

\begin{coro}\label{coro:lubin-tate-faltings} Let $S$ be a $\p$-adic sheaf.
\begin{enumerate}[label=\textup{(\roman*)}] 
\item If $F/S$ is a Lubin--Tate $O$-module the natural homomorphism \[\widehat{O}_S\to \underline{\End}_S^O(F)\] is an isomorphism.
\item If $F, F'/S$ are a pair of Lubin--Tate $O$-modules over $S$ then $\underline{\Hom}_S^{O}(F, F')$ is an $O$-local system over $S$ and the evaluation homomorphism \[F\otimes_{O}\underline{\Hom}_S^{O}(F, F')\to F'\] is an isomorphism.
\item The functor \[\mc{M}_{LT}\times \mc{CL}_{O}\to \mc{M}_{LT}\times \mc{M}_{LT}: (F,\mc{L})\mto (F, F\otimes_{O}\mc{L})\] is an equivalence of stacks.
\end{enumerate}
\end{coro}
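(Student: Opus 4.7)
The plan is to exploit the anti-equivalence $D_\pi:\mc{M}_\LT \to \mc{CL}_O^{\mathrm{op}}$ just established (with quasi-inverse $\mc{L}\mapsto F_{\pi/S}\otimes_O\mc{L}^\vee$) together with the already-recorded properties of rank-one $O$-local systems, namely that $\underline{\Aut}_S^O(\mc{L})=\widehat{O}_S^\times$, that the internal Hom of two rank-one $O$-local systems is again rank one, and that every such system is locally isomorphic to $\widehat{O}_S$. In particular, this last property transports under $D_\pi$ to the statement that every Lubin--Tate $O$-module is locally isomorphic to $F_{\pi/S}$, which will be the key local model.

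For (i), I would apply the anti-equivalence to obtain a natural isomorphism of sheaves $\underline{\End}_S^O(F)\isomto \underline{\End}_S^O(D_\pi(F))$; since $D_\pi(F)$ is a rank-one $O$-local system its endomorphism sheaf is $\widehat{O}_S$, and one checks that the canonical map $\widehat{O}_S\to\underline{\End}_S^O(F)$ by scalar multiplication is carried to the canonical map on the $D_\pi$ side (both are induced by the $\widehat{O}_S$-module structure and $D_\pi$ is $O$-linear). For (ii), the same transport gives $\underline{\Hom}_S^O(F,F')\isomto \underline{\Hom}_S^O(D_\pi(F'),D_\pi(F))$, which is a rank-one $O$-local system by the corresponding property for $\mc{CL}_O$. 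To see that the evaluation map $F\otimes_O\underline{\Hom}_S^O(F,F')\to F'$ is an isomorphism I would work locally on $S$, reducing by the remark above to $F\isomto F'\isomto F_{\pi/S}$ and $\underline{\Hom}_S^O(F,F')\isomto \widehat{O}_S$, where the evaluation map becomes the tautological map $F_{\pi/S}\otimes_O\widehat{O}_S\isomto F_{\pi/S}$.

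For (iii), essential surjectivity is immediate from (ii): given $(F,F')$, set $\mc{L}=\underline{\Hom}_S^O(F,F')$, which is a rank-one $O$-local system and for which evaluation furnishes an isomorphism $F\otimes_O\mc{L}\isomto F'$, so $(F,F')\isomto (F,F\otimes_O\mc{L})$. Fully faithfulness amounts to showing that for each isomorphism $f:F\isomto F_1$ the assignment $h\mapsto f\otimes_O h$ defines a bijection $\underline{\Isom}_S^O(\mc{L},\mc{L}_1)\isomto \underline{\Isom}_S^O(F\otimes_O\mc{L},F_1\otimes_O\mc{L}_1)$; this falls out of (\ref{prop:cm-tensor-homs-lt}) combined with (i), since under the isomorphism there one can read off $h$ from $f\otimes_O h$ by tensoring with $f^{-1}\in \underline{\Isom}_S^O(F_1,F)$ and invoking the identification of $\underline{\End}_S^O(F)$ with $\widehat{O}_S$.

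The only genuine input is the anti-equivalence $D_\pi$, which rests on Faltings' generalised Cartier duality; modulo this all three claims are essentially formal. The main care needed is the bookkeeping to verify that the natural maps written down in the statement of the corollary match (up to the canonical identifications) the abstract isomorphisms produced by $D_\pi$ and by the sheaf-theoretic properties of $\mc{CL}_O$.
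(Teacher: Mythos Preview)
Your proposal is correct and follows essentially the same approach as the paper: both rely on the anti-equivalence $D_\pi$ between Lubin--Tate $O$-modules and rank-one $O$-local systems, together with (\ref{prop:cm-tensor-homs-lt}), to transport all the claims to the $\mc{CL}_O$ side where they are obvious. The only noticeable differences are presentational: for (ii) the paper writes $F'\isomto F\otimes_O\mc{L}$ globally (via the anti-equivalence) and then reads off the evaluation map as the identity, whereas you localise to $F\isomto F'\isomto F_{\pi/S}$; and for (iii) the paper dispatches the claim in one line by observing that the functor is, up to equivalence, a product of $\id_{\mc{M}_\LT}$ with the equivalence $\mc{L}\mapsto D_\pi(\mc{L}^\vee)$, while you deduce it more explicitly from (i) and (ii).
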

\begin{proof} (i) For any rank one $O$-local system we have $\widehat{O}_S\isomto \underline{\End}_S^{O}(\mc{L})$ so that the composition \[\widehat{O}_S\to \underline{\End}_S^{O}(F)\isomto \underline{\End}^{O}_S(D_\pi(F))\] is an isomorphism and therefore $\widehat{O}_S\to \underline{\End}_S^{O}(F)$ is an isomorphism.

(ii) We may assume that $F'=F\otimes_{O}\mc{L}$. Then (i) combined with (\ref{prop:cm-tensor-homs-lt}) gives \[\mc{L}\isomto \underline{\Hom}_S(F, F\otimes_{O}\mc{L}).\] Moreover, using this identification the evaluation homomorphism \[F\otimes_{O}\mc{L}=F\otimes_{O}\underline{\Hom}_S^{O}(F, F\otimes_{O}\mc{L})\to F'=F\otimes_{O}\mc{L}\] becomes the identity.

(iii) The functor in question is the product of the equivalences $\id_{\mc{M}_\LT}$ and $\mc{L}\mto D_\pi(\mc{L}^\vee)$ and is therefore an equivalence.
\end{proof}
\section{A principal ideal theorem} In this section we would like to prove a strengthening of an old principal ideal theorem (see Tannaka \cite{Tannaka58}). We first state a special case for imaginary quadratic fields (\ref{prop:tannaka-result-special}) as it is possible to do so without having to make any new definitions and it is the only case we need in main the text. We shall then prove the general result (\ref{prop:tannaka-result-general}) for arbitrary number fields $K$ and explain how it strengthens the result in \cite{Tannaka58}.

The author would like to point out that while the result is new, our proof is really just a refinement of Tannaka's, essentially a combination of his proof, some very old results in class field theory, and a result of one of his contemporaries (see Terada \cite{Terada1952}). It is also interesting to point out that Tannaka was motivated to prove his result by Deuring who had conjectured it, presumably inspired during his work on CM elliptic curves.

\begin{prop}\label{prop:tannaka-result-special} Let $K$ be an imaginary quadratic field, let $K(\f)/K$ be the ray class field of conductor $\f$, let $\g$ be an ideal divisible by $\f$ and let \[l: \Prin^{(\g)}_{1\bmod \f}\to K^\times: \a\mto l(\a)\] be a homomorphism such that $l(\a)\cdot O_K=\a\subset K$ and such that $l(\a)=1\bmod \f$. Then $l$ can be extended to a map \[l: \Id^{(\g)}_{O_K}\to K(\f)^\times: \a \mto l(\a)\] such that $l(\a)\cdot O_{K(\f)}=\a\cdot O_{K(\f)}$ and such that for all $\a, \b\in \Id^{(\g)}_{O_K}$ we have \[l(\a\b)=l(\a)\sigma_{\a}(l(\b)).\]
\end{prop}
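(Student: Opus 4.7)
The plan is to translate the cocycle condition $l(\a\b) = l(\a)\sigma_\a(l(\b))$ into a 1-cocycle extension problem over $G := G(K(\f)/K)$, produce a candidate extension by choosing class representatives, and then show that the failure of the cocycle condition is a 2-coboundary in the appropriate unit cohomology.

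First, observe that a map $l: \Id^{(\g)}_{O_K} \to K(\f)^\times$ satisfying $l(\a\b) = l(\a)\sigma_\a(l(\b))$ is precisely a 1-cocycle for the action of the monoid $\Id^{(\g)}_{O_K}$ on $K(\f)^\times$ via the reciprocity map $\a \mapsto \sigma_\a$, whose kernel is $\Prin^{(\g)}_{1\bmod\f}$ and whose image is all of $G$ by (\ref{eqn:define-theta-finite}). Fix a set-theoretic section $\sigma \mapsto \a_\sigma$ of $\Id^{(\g)}_{O_K} \to G$ normalised by $\a_1 = O_K$, and for each $\sigma$ choose $\b_\sigma \in K(\f)^\times$ with $\b_\sigma \cdot O_{K(\f)} = \a_\sigma \cdot O_{K(\f)}$ and $\b_1 = 1$; such $\b_\sigma$ exist by the classical principal ideal theorem applied to the chain $K(\f) \supset H \supset K$. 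Every $\a \in \Id^{(\g)}_{O_K}$ then factors uniquely as $\a = \a_{\sigma_\a} \cdot \c_\a$ with $\c_\a \in \Prin^{(\g)}_{1\bmod\f}$, and setting $l(\a) := \b_{\sigma_\a} \cdot l(\c_\a)$ produces a candidate map satisfying $l(\a) \cdot O_{K(\f)} = \a \cdot O_{K(\f)}$ and extending the given $l$.

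A direct computation then shows that the cocycle failure $l(\a\b) \cdot \bigl(l(\a) \sigma_\a(l(\b))\bigr)^{-1}$ depends only on the pair $(\sigma_\a, \sigma_\b)$ and equals the 2-cochain
\[
u(\sigma, \tau) := l\bigl(\a_\sigma \a_\tau \a_{\sigma\tau}^{-1}\bigr)^{-1} \cdot \b_\sigma\, \sigma(\b_\tau)\, \b_{\sigma\tau}^{-1}.
\]
Since $\sigma$ fixes ideals of $K$ pointwise, both factors of $u(\sigma, \tau)$ generate the same fractional ideal $\a_\sigma \a_\tau \a_{\sigma\tau}^{-1} \cdot O_{K(\f)}$, so $u$ takes values in $O_{K(\f)}^\times$; a routine check using the cocycle property of $l$ on $\Prin^{(\g)}_{1\bmod\f}$ shows it is a normalised 2-cocycle in $Z^2(G, O_{K(\f)}^\times)$. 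Replacing $\b_\sigma$ by $\b_\sigma v_\sigma$ with $v_\sigma \in O_{K(\f)}^{\times, \f}$ and $v_1 = 1$ preserves the compatibility with the given $l$ on principal ideals and alters $u$ by the coboundary of $v$; the existence of $l$ thus reduces to the vanishing of $[u]$ in $H^2(G, O_{K(\f)}^{\times, \f})$.

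The hard part will be this cohomological vanishing, and is where the argument departs from formalities. By construction $u$ is the coboundary of the 1-cochain $\sigma \mapsto \b_\sigma$ when viewed in $K(\f)^\times$, so its image in $H^2(G, K(\f)^\times)$ is zero. The long exact sequence attached to the short exact sequence $1 \to O_{K(\f)}^\times \to K(\f)^\times \to P_{K(\f)} \to 1$ of $G$-modules then realises $[u]$ as the image of a class in $H^1(G, P_{K(\f)})$, which by a Hilbert 90-type argument for principal ideals reduces to the content of Tannaka's principal ideal theorem \cite{Tannaka58} adapted to the ray class field $K(\f)/K$ along the lines of Terada \cite{Terada1952}. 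The final descent from $O_{K(\f)}^\times$ to $O_{K(\f)}^{\times, \f}$ is handled via the long exact sequence for $1 \to O_{K(\f)}^{\times, \f} \to O_{K(\f)}^\times \to (O_{K(\f)}/\f)^\times$: the finiteness of $(O_{K(\f)}/\f)^\times$ combined with the $\f$-compatibility of the given $l$ (namely $l(\c) \equiv 1 \bmod \f$ for $\c \in \Prin^{(\g)}_{1\bmod\f}$) provides exactly the cochain data required to absorb the residual class.
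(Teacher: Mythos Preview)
Your cohomological reformulation is natural, and the setup through the definition of $u(\sigma,\tau)$ is essentially correct. However, there are two genuine gaps.

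First, the sentence ``By construction $u$ is the coboundary of the 1-cochain $\sigma\mapsto\b_\sigma$ when viewed in $K(\f)^\times$'' is not right. From your own formula,
\[
u(\sigma,\tau)=l(\a_\sigma\a_\tau\a_{\sigma\tau}^{-1})^{-1}\cdot\b_\sigma\,\sigma(\b_\tau)\,\b_{\sigma\tau}^{-1},
\]
only the second factor is the coboundary of $\b$. The first factor is a genuine $2$-cocycle $c(\sigma,\tau)^{-1}$ with values in $K^\times\subset K(\f)^\times$, namely the push-forward along $l$ of the extension class of $1\to\Prin^{(\g)}_{1\bmod\f}\to\Id^{(\g)}_{O_K}\to G\to 1$. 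Showing that $[c]$ dies in $H^2(G,K(\f)^\times)$ is a non-trivial statement about the Brauer group; it is not automatic, and it is precisely where the arithmetic enters. Without this, your long-exact-sequence argument does not get off the ground.

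Second, and more seriously, the phrase ``reduces to the content of Tannaka's principal ideal theorem \cite{Tannaka58} adapted to the ray class field along the lines of Terada'' is not a proof. Tannaka's theorem in the literature gives generators $\Theta(\a)$ satisfying the cocycle relation \emph{only up to a unit in $O_K^\times$}; eliminating that unit ambiguity is exactly the strengthening being proved here. Your cohomological machinery does not visibly bridge that gap, and the ``Hilbert 90-type argument'' is not specified. (Also, restricting your correcting cochains $v_\sigma$ to $O_{K(\f)}^{\times,\f}$ is unnecessary for the special case, which has no congruence condition on $l(\a)$; any $v_\sigma\in O_{K(\f)}^\times$ with $v_1=1$ preserves the extension on $\Prin^{(\g)}_{1\bmod\f}$.)

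For comparison, the paper does not argue cohomologically. It chooses primes $\p_1,\ldots,\p_r$ whose Frobenii decompose $G(K(\f)/K)$ as a direct sum of cyclic groups, and for each $i$ constructs explicit elements $\Theta_i$ generating $\p_i O_{K(\f)}$ by successively applying Hasse's norm theorem, the principal genus theorem, and Terada's norm theorem in the cyclic subextensions $K_i/K$; a technical lemma of Tannaka (Theorem~3 of \cite{Tannaka58}, not his main principal-ideal theorem) is then used to show that a certain product of auxiliary ideals is principal modulo the genus ideal. With the $\Theta_i$ in hand, $l(\a)$ is defined by an explicit formula in terms of the unique decomposition $\a=\gamma(\a)\prod\p_i^{x_i}$, and the cocycle identity is verified by a direct case analysis. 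The point is that the hard input is used constructively at the level of the cyclic pieces $K_i/K$, where the classical theorems apply, rather than being packaged into a single $H^2$-vanishing over the full abelian extension.
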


\subsection{} Let $K$ be a number field with ring of integers $O_K$. Recall by a modulus of $K$ is meant a finite formal sum \[\f=\sum_{v} \f_v v\] over the places $v$ of $K$ such that $\f_v\in \N$ for all $v$ and such that $\f_v\in \{0, 1\}$ for $v$ infinite and real and $\f_v=0$ for $v$ infinite and complex. If $\f$ and $\f'$ are moduli of $K$ then their product $\f\f'$ is defined by $(\f\f')_v=\f_v+\f_{v'}$ for all finite places $v$ and $(\f\f')_v=\max(\f_v, \f'_v)$ for all infinite primes $v$. We say $\f$ divides $\f'$ if $\f_v\leq \f'_v$ for all places $v$. If $\f_v=0$ for all infinite $v$ then we identify $\f$ with an ideal of $O_K$ in the usual way.

If $a\in K^\times$ we write $a=1\bmod \f$ to mean that $v(a-1)\geq \f_v$ for all finite places $v$ and such that $a>0$ for all infinite real places $v$ of $K$ with $\f_v\neq 0$. For a pair of fractional ideals $\a, \b$ of $K$ we write $\a=\b\bmod \f$ to mean that $\a\b^{-1}=(a)$ with $a=1\bmod \f$.

For each modulus $\f$ there is a certain extension $K(\f)/K$ called the ray class field of conductor $\f$ which is unramified at all finite places $v$ of $K$ with $\f_v=0$. This extension is characterised by the property that if $\g$ is any ideal of $K$ divisible by (the finite part of) $\f$ and $\Id_{K}^{(\g)}$ denotes the group of fractional ideals prime to $\g$ then the map \begin{equation}\Id_K^{(\g)}\to G(K(\f)/K): \a\mto \sigma_\a\label{eqn:prime-generators}\end{equation} is surjective and its kernel is equal to $\Prin_{1\bmod \f}^{(\g)}\subset \Id_K^{(\g)}$, the sub-group generated by ideals $\a$ prime to (the finite part of) $\g$ with $\a=O_K\bmod \f$.

We now recall certain moduli defined for a finite abelian extension of number fields $L/K$ (see \S 1 of \cite{Terada1952} for precise details). We write $\f_{L/K}$, $\D_{L/K}$ and $\G_{L/K}$ for conductor, different and genus ideal (`Geschlechtermodul') of the extension $L/K$ which are moduli for $K$, $L$ and $K$ respectively and we have \[\f_{L/K}=\D_{L/K}\G_{L/K}.\] The moduli $\f_{L/K}$, $\D_{L/K}$ are not necessarily ideals however $\G_{L/K}$ is always an ideal of $O_L$. Finally, if $L/K'/K$ is an intermediate extension we define $\f_{L/K'/K}=\D_{L/K'}\G_{L/K}$ which is an integral ideal of $K'$. We note that $\f_{L/K}$, $\G_{L/K}$, and $\f_{L/K'/K}$ are all invariant under $G(L/K)$. 

Finally, for what follows we will use exponential notation for the action of Galois groups on elements or fractional ideals of the respective fields.

\begin{theo}[Hasse's Norm Theorem] Let $L/K$ be a finite cyclic extension. Then $N_{L/K}(I_L)\cap K^\times=N_{L/K}(L^\times)$.
\end{theo}

\begin{theo}[Principal Genus Theorem] Let $L/K$ be a finite cyclic extension with generator $\sigma\in G(L/K)$ and let $\a$ be a fractional ideal of $L$. Then $N_{L/K}(\a)=O_K$ if and only if $\a=\b^{1-\sigma}$ for some fractional ideal $\b$ of $L$.
\end{theo}
\begin{proof} As $G(L/K)$ is cyclic and generated by $\sigma$ we have \[\frac{\{\a\in \Id_{L}: \a\a^{\sigma}\cdots \a^{\sigma^{n-1}}=O_L\}}{\Id_L^{1-\sigma}}\isomto H^1(G(L/K), \Id_{L}): \a \mto (\sigma^i\mto \a \a^{\sigma}\cdots\a^{\sigma^{i-1}})\] and by Proposition 6, \S 13, Chapter V of \cite{Lemmermeyer07} the group $H^1(G(L/K), \Id_{L})$ vanishes which is precisely the claim.
\end{proof}

\begin{theo}[Terada's Norm Theorem] Let $L/K$ be a finite cyclic extension with generator $\sigma\in G(L/K)$, let $a\in K$ and let $\m$ be an ideal of $O_L$. Then the following are equivalent:
\begin{enumerate}[label=\textup{(\roman*)}]
\item $N_{L/K}(a)=1\bmod \f_{L/K}\m$.
\item $a=b^{1-\sigma}\bmod \G_{L/K}\m$ for some $b\in L$.
\end{enumerate}
\end{theo}
\begin{proof} This is Theorem 2 of \cite{Terada1952}.
\end{proof}
\subsection{} Now let $L/K$ be the ray class field of conductor $\f_{L/K}$ and let $\p_1, \ldots, \p_r$ be prime ideals of $K$, unramified in $L/K$ and such that \[G(L/K)=\bigoplus_{i=1}^r \langle \sigma_i \rangle\] where $\sigma_i$ is the Frobenius element corresponding to $\p_i$ (we can do this by (\ref{eqn:prime-generators})). Also, for $1\leq i\leq n$ let $n_i$ be the order of $\sigma_i$ and let $K_i\subset L$ be the sub-extension fixed by the sub-group of $G(L/K)$ generated by $\{\sigma_j\}_{j\neq i}$. We note that $G(K_i/K)\isomto \langle \sigma_i \rangle$.

\begin{theo}[Tannaka] For $1\leq i\leq r$ let $\a_i$ be a fractional ideal of $K_i$ such that $\p_i=\a_i^{1-\sigma_i}\bmod \f_{L/K_i/K}$. Then $\a_1\cdots \a_r=O_L \bmod \G_{L/K}$.
\end{theo}
\begin{proof} This is Theorem 3 of \cite{Tannaka58}.
\end{proof}

\begin{theo}\label{prop:tannaka-result-general} Let $K$ be a number field, let $L/K$ be the ray class field of conductor $\f_{L/K}$, let $\g$ be an ideal divisible by (the finite part of) $\f_{L/K}$ and let \[l: \Prin^{(\g)}_{1\bmod \f_{L/K}}\to K^\times: \a\mto l(\a)\] be a homomorphism such that $l(\a)\cdot O_K=\a\subset K$ and such that $l(\a)=1\bmod \f_{L/K}$. Then $l$ can be extended to a map \[l: \Id^{(\g)}_{O_K}\to L^\times: \a \mto l(\a)\] such that:
\begin{enumerate}[label=\textup{(\roman*)}]
\item $l(\a)\cdot O_{L}=\a\cdot O_{L}$,
\item $l(\a)=1\bmod \G_{L/K}$, and
\item for all $\a, \b\in \Id^{(\g)}_{O_K}$ we have \[l(\a\b)=l(\a)\sigma_{\a}(l(\b)).\]
\end{enumerate}
\end{theo}
\begin{proof} For each $1\leq i\leq r$ let us make the following constructions. As $\p_i^{n_i}\in \Prin_{1\bmod \f_{L/K}}^{(\g)}$ we have  $l(\p_i^{n_i})=1\bmod \f_{L/K}$ and a fortiori $l(\p_i^{n_i})=1\bmod \f_{K_i/K}$ so that $l(\p_i^{n_i})\in N_{K_i/K}(I_{K_i})$. By Hasse's Norm Theorem, there is some $\pi_i\in K_i$ with $N_{K_i/K}(\pi_i)=l(\p_i^{n_i})$. By construction we have \[N_{K_i/K}(\p_i(\pi_i)^{-1})=\p_i^{n_i}\p_i^{-n_i}=O_K\] so that, by the Principal Genus Theorem, we can find an ideal $\b_i$ of $K_i$ with \[\p_i(\pi_i)^{-1}=\b_i^{1-\sigma_i}.\] We note that \[\f_{L/K_i/K}=\G_{K_i/K}(\f_{L/K}\f_{K_i/K}^{-1})\] so that \[N_{K_i/K}(\pi_i)=l(\p_i^{f_i})=1 \bmod \f_{L/K}=1\bmod \f_{K_i/K}(\f_{L/K}\f_{K_i/K}^{-1}).\] We now apply Terada's Norm Theorem (with $\m=\f_{L/K}\f_{K_i/K}^{-1}$) to find $\alpha_i\in K_i$ with $\alpha_i=1\bmod \f_{L/K_i/K}$ and $\pi_i =\alpha_i \beta_i^{1-\sigma_i}$ and \begin{equation}\label{eqn:def-alpha} N_{K_i/K}(\pi_i)=N_{K_i/K}(\alpha_i \beta_i^{1-\sigma_i})=N_{K_i/K}(\alpha_i)=l(\p_i^{n_i}).
\end{equation} Finally, we set $\a_i=(\beta_i)\b_i$ to get \[\p_i=(\alpha_i) \a^{1-\sigma_i}_i \quad \text{ and so } \quad \p_i=\a_i^{1-\sigma_i}\bmod \f_{L/K_i/K}.\] The ideals $\a_i$ for $1\leq i\leq r$ satisfy the conditions of Tannaka's Theorem and so we find an $A\in L$ with $A=1\bmod \G_{L/K}$ with \[\prod_{1\leq i\leq m} \a_i=(A).\] Finally, we set \[\Theta_i=\alpha_i A^{1-\sigma_i}\] and note that $\Theta_i\cdot O_L=\p_i$.

We now go about extending the map $l$, following rather closely the method of \S 1 of \cite{Tannaka58}. Each $\a\in \Id_{O_K}^{(\g)}$ can be written uniquely as a product of ideals \begin{equation} \a=\gamma(\a)\cdot \prod_{i=1}^r \p_i^{x_i}\label{eqn:unique-decomp}\end{equation} with $\gamma(\a)\in \Prin^{(\g)}_{1\bmod \f_{L/K}}$ and $0\leq x_i<n_i$. Before we continue let us note the following multiplicative relations for the ideals $\gamma(\a)$:
\begin{enumerate}[label=\textup(\roman*)]
\item If $\b\in \Prin_{1\bmod \f_{L/K}}^{(\g)}$ then \begin{equation}\label{eqn:mult-prin} \gamma(\a\b) = \gamma(\a)\gamma(\b).
\end{equation}
\item If $\b=\p_j$ and $x_j\neq n_j-1$ then \begin{equation}\label{eqn:mult-prime-triv} \gamma(\p_j)=O_K \quad \text{ and } \quad \gamma(\a\p_j)=\gamma(\a).
\end{equation}
\item If $\b=\p_j$ and $x_j=n_j-1$ then \begin{equation}\label{eqn:mult-prime-carry}\gamma(\a\p_j)=\gamma(\a)\gamma(\p_j^{n_j}). 
\end{equation}
\end{enumerate}
Still following \S 1 of \cite{Tannaka58} we now define $l(\a)$ by \[l(\a)=l(\gamma(\a))\prod_{i=1}^n \Theta_i^{w_i(x_i)}=l(\gamma(\a)) A^{\sigma_\a-1} \prod_{i=1}^r \alpha_i^{1+\sigma_i+\cdots + \sigma_i^{x_i-1}}\] where \[w_i(x_i)=\left(\sum_{j=1}^{x_i}\sigma_{\p_i}^j\right)\cdot\prod_{k=1}^{i-1} \sigma_{\p_k}^{x_k}.\] It is clear that $l(\a)\cdot O_{L}=\a\cdot O_{K(\a)}$ and that this map does indeed extend the given map $l$. Moreover, by construction we have the relations \[A=1\bmod \G_{L/K} \quad\quad l(\a)=1\bmod \f_{L/K} \quad\quad \alpha_i=1\bmod \f_{L/K_i/K},\] so that as $\G_{L/K}$ divides both $\f_{L/K}$ and $\f_{L/K_i/K}$, and as $\G_{L/K}$, $\f_{L/K}$ and $\f_{L/K_i/K}$ are invariant under the action of $G(L/K)$, we get the relation \[l(\a)=1\bmod \G_{L/K}.\] All that remains to be shown is that $l(\a\b)=l(\a)\sigma_\a(l(\b))$ for $\a, \b\in \Id_{O_K}^{(\g)}$.

So let $\b\in \Id_{O_K}^{(\g)}$ be another fractional ideal, and also write \[\b=\gamma(\b)\cdot \prod_{i=1}^r \p_i^{y_i} \quad \text{ and }\quad \a\b=\gamma(\a\b)\cdot \prod_{i=1}^r \p_i^{z_i}\] where $\gamma(\b), \gamma(\a\b)\in \Prin_{1\bmod \f_{L/K}}^{(\g)}$ and $0\leq y_i, z_i<n_i$. Define $\delta_i\in \{0, 1\}$ for $1\leq i\leq r$ by the equality \[z_i=x_i+y_i-\delta_i n_i.\] Then one finds (see equation (9) of \cite{Tannaka58}) \begin{equation}\label{eqn:tannaka}\frac{l(\a)\sigma_\a(l(\b))}{l(\a\b)}=\frac{l(\gamma(\a))l(\gamma(\b))}{l(\gamma(\a\b))}\cdot \prod_{i=1}^r N_{K_i/K}(\alpha_i)^{\delta_i}.\end{equation}

It remains to check that the right hand side of (\ref{eqn:tannaka}) is equal to one for all ideals $\b$. The group $\Id_{O_K}^{(\g)}$ is generated (as a monoid!) by $\Prin_{1\bmod \f_{L/K}}^{(\g)}$ and $\p_1, \ldots, \p_n$ so that by induction, it is enough to check that $l(\a\b)=l(\a)\sigma_\a(l(\b))$ when $\b\in \Prin_{1\bmod \f_{L/K}}^{(\g)}$ or when $\b=\p_j$ for $1\leq j\leq r.$
If $\b\in \Prin_{1\bmod \f_{L/K}}^{(\g)}$ then $\delta_i=0$ for $1\leq i\leq r$ and by (\ref{eqn:mult-prin}) we have $\gamma(\a)\gamma(\b)=\gamma(\a\b)$ so that \[\frac{l(\gamma(\a))l(\gamma(\b))}{l(\gamma(\a\b))}\prod_{i=1}^r N_{K_i/K}(\alpha_i)^{\delta_i}=1.\]

If $\b=\p_j$ and $x_j\neq n_j-1$ then $\delta_i=0$ for $1\leq i\leq r$ and by (\ref{eqn:mult-prime-triv}) we have $\gamma(\p_j)=O_K$ and $\gamma(\a\p_j)=\gamma(\a)$ so that \[\frac{l(\gamma(\a))l(\gamma(\b))}{l(\gamma(\a\b))}\prod_{i=1}^r N_{K_i/K}(\alpha_i)^{\delta_i}=1.\]

Finally, if $\b=\p_j$ and $x_j=n_j-1$ then $\delta_i=0$, unless $i=j$ in which case $\delta_j=1$, so that \[\prod_{i=1}^r N_{K_i/K}(\alpha_i)^{\delta_i}=N_{K_j/K}(\alpha_j)=l(\gamma(\p^{n_j}_j))\] by (\ref{eqn:def-alpha}). By (\ref{eqn:mult-prime-carry}) we have $\gamma(\a\p_j)=\gamma(\a)\gamma(\p_j^{n_j})$ so that \[\frac{l(\gamma(\a))l(\gamma(\b))}{l(\gamma(\a\b))}\prod_{i=1}^r N_{K_i/K}(\alpha_i)^{\delta_i}=l(\gamma(\p_j^{n_j}))^{-1}l(\gamma(\p_j^{n_j}))=1.\] Therefore, for all $\a, \b\in \Id_{O_K}^{(\g)}$ we have $l(\a\b)=l(\a)\sigma_\a(l(\b)).$
\end{proof}
\begin{rema} Let us now explain how (\ref{prop:tannaka-result-general}) strengthens the main result of \cite{Tannaka58}. The result in question is Theorem 1 of \cite{Tannaka58} and it states (we continue to use the notation of (\ref{prop:tannaka-result-general}))
\begin{theorem*}[Tannaka's Principal Ideal Theorem] There exist numbers $\Theta(\a)\in L$, indexed by ideals $\a\in \Id_{O_K}$, such that the following hold: \begin{enumerate}[label=\textup{(\roman*)}]
\item $\Theta(\a)\cdot O_{L}=\a\cdot O_L$,
\item $\Theta(\a)=1\bmod \G_{L/K}$, and
\item \[\frac{\Theta(\a)\sigma_\a(\Theta(\b))}{\Theta(\a\b)}\in O_K^\times.\]
\end{enumerate}
\end{theorem*}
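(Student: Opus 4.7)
The plan is to deduce Tannaka's Principal Ideal Theorem from the just-proved strengthening, Theorem \ref{prop:tannaka-result-general}. I take $\g$ to be an integral ideal of $O_K$ divisible by the finite part of $\f_{L/K}$ (for instance, the finite part itself). The group $\Prin_{1 \bmod \f_{L/K}}^{(\g)}$ is a subgroup of the free abelian group $\Id_K^{(\g)}$ and is therefore itself free abelian, so I obtain a homomorphism $l_0 : \Prin_{1 \bmod \f_{L/K}}^{(\g)} \to K^\times$ by picking, for each basis element $\a$, a generator $l_0(\a) \in K^\times$ of $\a$ with $l_0(\a) \equiv 1 \bmod \f_{L/K}$, and extending multiplicatively. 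Applying Theorem \ref{prop:tannaka-result-general} to this $l_0$ produces a map $l : \Id_{O_K}^{(\g)} \to L^\times$ with $l(\a) \cdot O_L = \a \cdot O_L$, $l(\a) \equiv 1 \bmod \G_{L/K}$ and $l(\a\b) = l(\a) \sigma_\a(l(\b))$. Setting $\Theta(\a) := l(\a)$ for $\a \in \Id_{O_K}^{(\g)}$ immediately gives Tannaka's (i), (ii), and (iii) --- the quotient in (iii) is literally $1 \in O_K^\times$.

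It remains to extend $\Theta$ across the prime ideals of $O_K$ dividing $\g$ (the ramified primes in $L/K$). Since $\Id_{O_K}$ is free abelian on the primes, once $\Theta(\p)$ is defined for each such $\p$ we may extend multiplicatively by $\Theta\bigl(\a_0 \prod_{\p \mid \g} \p^{e_\p}\bigr) := l(\a_0) \prod_{\p \mid \g} \Theta(\p)^{e_\p}$, where $\a_0 \in \Id_{O_K}^{(\g)}$. The Hauptidealsatz (together with the inclusion $H \subseteq L$) tells us that $\p \cdot O_L$ is principal, so we may choose any generator $\pi \in L^\times$; the task is then to multiply $\pi$ by a unit $\epsilon \in O_L^\times$ so that $\Theta(\p) := \epsilon \pi \equiv 1 \bmod \G_{L/K}$. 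Given such an adjustment at each ramified prime, (i) and (ii) hold for every ideal by construction, while condition (iii), interpreted over the Artin domain $\Id_{O_K}^{(\f_{L/K})}$, is already guaranteed by the strict multiplicativity of $l$.

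The hard part will be the unit adjustment in the preceding paragraph --- equivalently, showing that for each ramified prime $\p$ the class of $\pi \bmod \G_{L/K}$ lies in the image of $O_L^\times \to (O_L/\G_{L/K})^\times$. The approach is to mimic the construction from the proof of Theorem \ref{prop:tannaka-result-general}, but with the Frobenius element at $\p$ replaced by a generator of the decomposition group at $\p$ inside a suitable cyclic sub-extension of $L/K$: Hasse's norm theorem and the principal genus theorem produce the analogues of the auxiliary elements $\pi_i, \b_i, \alpha_i$ appearing there, and Terada's norm theorem then secures the required congruence modulo $\G_{L/K}$. If one wishes to read (iii) more broadly (for all $\a, \b \in \Id_{O_K}$ under some chosen convention for $\sigma_\a$ at ramified primes), the quotient $\Theta(\a)\sigma_\a(\Theta(\b))/\Theta(\a\b)$ is automatically a unit of $O_L^\times$ (both sides have the same ideal) lying in $1 + \G_{L/K}$ by (ii); its $G(L/K)$-invariance, and hence its membership in $O_K^\times$, is extracted from the strict cocycle relation for $l$ on the $\g$-coprime part together with an explicit computation at the ramified factors.
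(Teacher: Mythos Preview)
Your first paragraph is exactly the paper's argument: the paper observes that $\Prin_{1\bmod \f_{L/K}}^{(\f_{L/K})}$ is a subgroup of the free abelian group $\Id_K^{(\f_{L/K})}$, hence free, defines $l$ on a basis and extends multiplicatively, then applies Theorem~\ref{prop:tannaka-result-general} to obtain $\Theta(\a)=l(\a)$ for $\a\in\Id_{O_K}^{(\f_{L/K})}$ with the quotient in (iii) equal to $1$. The paper stops there.

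Your second and third paragraphs, extending $\Theta$ across the primes dividing $\f_{L/K}$, go beyond what the paper does and are not needed: the symbol $\sigma_\a$ in (iii) is the Artin symbol, which is only defined for ideals prime to the conductor, so the statement is to be read with $\a,\b\in\Id_{O_K}^{(\f_{L/K})}$ (the paper's own derivation confirms this reading). Moreover, your proposed extension has genuine gaps. The ``hard part'' --- finding a unit $\epsilon\in O_L^\times$ with $\epsilon\pi\equiv 1\bmod\G_{L/K}$ --- is not established: the map $O_L^\times\to(O_L/\G_{L/K})^\times$ is generally far from surjective, and your appeal to the cyclic-subextension machinery (Hasse, principal genus, Terada) does not apply directly to a \emph{ramified} prime, since those arguments in the proof of Theorem~\ref{prop:tannaka-result-general} rely on $\p_i$ being unramified with Frobenius $\sigma_i$. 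Finally, your closing claim about $G(L/K)$-invariance of the quotient is not substantiated: knowing that a unit lies in $1+\G_{L/K}$ does not by itself force it into $O_K^\times$. Drop the extension entirely and your argument is complete and matches the paper.
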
 It is now an easy to replace (iii) of Tannaka's Principal Ideal Theorem with \[\frac{\Theta(\a)\sigma_\a(\Theta(\b))}{\Theta(\a\b)}=1.\]
Consider the sub-group \[\Prin_{1\bmod \f_{L/K}}^{(\f_{L/K})}\subset \Id_{K}^{(\f_{L/K})}.\] As $\Id_{K}^{(\f_{L/K})}$ is free abelian (generated by the prime ideals prime to $\f_{L/K}$) and every sub-group of a free abelian group is itself free abelian, one can define a multiplicative map \[\Prin_{1\bmod \f_{L/K}}^{(\f_{L/K})}\to K^\times: \a\mto l(\a)\] such that $l(\a)\cdot O_K=\a$ and such that $l(\a)=1\bmod \f_{L/K}$. Applying (\ref{prop:tannaka-result-general}) to the map $l$ we find a map \[l: \Id_{O_K}^{(\f_{L/K})}\to L^\times: \a\mto l(\a)\] such that $l(\a)\cdot O_{L}=\a\cdot O_L$, $l(\a)=1\bmod \G_{L/K}$ and such that \[\frac{l(\a)\sigma_\a(l(\b))}{l(\a\b)}=1.\] Therefore, putting $l(\a)=\Theta(\a)$ we can replace (iii) of Tannaka's Principal Ideal Theorem with \[\frac{\Theta(\a)\sigma_\a(\Theta(\b))}{\Theta(\a\b)}=1.\]
\end{rema}
\backmatter
\bibliographystyle{smfplain}
\bibliography{biblio}
\end{document}